\setlist[itemize]{noitemsep,nolistsep,topsep=-3pt}
\setlist[enumerate]{noitemsep,nolistsep,topsep=-3pt}
\let\mathcal\mathscr
\newcommand{\xrightiso}[1]{ \xrightarrow[{\ \raisebox{0.5ex}[0ex][0ex]{$\sim$}\ }]{#1} }
\def\Z{{\bf Z}}
\def\C{{\bf C}}
\def\A{{\bf A}}
\def\P{{\bf P}}
\def\PP{{\bf P}}
\newcommand{\tS}{\widetilde{S}}
\newcommand{\tsD}{\widetilde{\sD}}
\newcommand{\tY}{\widetilde{\sY}}
\newcommand{\bcA}{\overline\cA}
\newcommand{\hla}{\hat\lambda}
\newcommand{\hg}{\hat{g}}
\newcommand{\st}{\mathsf{t}}
\def\cI{\mathcal{I}}
\def\cA{\mathscr{A}}
\def\cF{\mathscr{F}}
\def\cL{\mathcal{L}}
\def\cO{\mathcal{O}}
\def\cH{\mathscr{H}}
\def\cE{\mathscr{E}}
\def\cC{\mathcal{C}}
\def\cN{{N}}
\def\cK{\mathscr{K}}
\def\cT{{T}}
\def\cQ{\mathcal{Q}}
\def\cR{\mathcal{R}}
\def\cU{\mathcal{U}}
\def\cW{\mathcal{W}}
\def\fS{\mathfrak{S}}
\newcommand{\uvt}{\upvartheta}
\newcommand{\tuvt}{\widehat\upvartheta}
\newcommand{\tf}{\tilde{f}}
\def\bp{\mathbf p}
\def\bq{\mathbf q}
\def\bv{\mathbf{v}}
\def\lra{\longrightarrow}
\def\llra{\hbox to 10mm{\rightarrowfill}}
\def\lllra{\hbox to 15mm{\rightarrowfill}}
\def\llla{\hbox to 10mm{\leftarrowfill}}
\def\lllla{\hbox to 15mm{\leftarrowfill}}
\def\thlra{\relbar\joinrel\twoheadrightarrow}
\def\hra{\hookrightarrow}
\def\lhra{\ensuremath{\lhook\joinrel\relbar\joinrel\rightarrow}}
\def\isom{\simeq}
\def\eps{\varepsilon}
\def\vp{\varpi}
\def\pr{\mathrm{pr}}
 \def\vide{\varnothing}
  \def\emptyset{\varnothing}
\newcommand{\isomto}{\xrightarrow{\,{}_{\scriptstyle\sim}\,}}
\DeclareMathOperator{\codim}{codim}
\DeclareMathOperator{\Coker}{Coker}
\DeclareMathOperator{\Ext}{Ext}
\DeclareMathOperator{\Gr}{\mathrm{Gr}}
\DeclareMathOperator{\IGr}{\mathrm{IGr}}
\DeclareMathOperator{\CGr}{\mathrm{CGr}}
\DeclareMathOperator{\Fl}{Fl}
\DeclareMathOperator{\Hom}{Hom}
\DeclareMathOperator{\Ker}{Ker}
\DeclareMathOperator{\lin}{\underset{\mathrm lin}{\equiv}}
\DeclareMathOperator{\PGL}{PGL}
\DeclareMathOperator{\Spec}{Spec}
\DeclareMathOperator{\rank}{rk}
\DeclareMathOperator{\Sing}{Sing}
\DeclareMathOperator{\Sym}{\mathsf S}
\def\spe{\text{spe}}
\def\llra{\hbox to 10mm{\rightarrowfill}}
\def\lllra{\hbox to 15mm{\rightarrowfill}}
\def\bw#1#2{\textstyle{\bigwedge\hskip-0.9mm^{#1}}\hskip0.2mm{#2}}
\def\sbw#1#2{\small{\bigwedge\hskip-0.9mm^{#1}}\hskip0.2mm{#2}}
\newtheorem{lemm}{Lemma}[section]
\newtheorem{theo}[lemm]{Theorem}
\newtheorem{coro}[lemm]{Corollary}
\newtheorem{prop}[lemm]{Proposition}
\theoremstyle{remark}
\newtheorem{defi}[lemm]{Definition}
\newtheorem{rema}[lemm]{Remark}
\newenvironment{aenumerate}{\begin{enumerate}[label={\textup{(\alph*)}}]}{\end{enumerate}}
\def\hk{hyper-K\"ahler}
\def\id{\mathsf{id}}
\def\ord{\mathrm{ord}}
\def\spe{\mathrm{spe}}
\newcommand{\Bl}{\mathop{\mathrm{Bl}}\nolimits}
\newcommand{\wtY}{\widetilde{\sY}}
\DeclareMathOperator{\OGr}{OGr}
\newcommand{\OGrm}{\OGr'}
\newcommand{\Ap}{{A^\perp}}
 \def\setminus{\smallsetminus}
\def\ssm{\smallsetminus}
\def\cong{\isom}
\newcommand{\rtx}{\tau}
\newcommand{\chG}{\mathrm{D}}
\newcommand{\G}{\mathrm{G}}
\newcommand{\hG}{\widehat{\G}}
\newcommand{\tD}{\widetilde{D}}
\newcommand{\Gp}{\G^+}
\newcommand{\Gs}{\G^\sigma}
\newcommand{\Gt}{\G^\rtx}
\newcommand{\Gst}{\G^{\sigma\rtx}}
\newcommand{\F}{\mathrm{F}}
\newcommand{\Fs}{\F^\sigma}
\newcommand{\Ft}{\F^\rtx}
\newcommand{\Fst}{\F^{\sigma\rtx}}
\newcommand{\Fo}{\mathring{\F}}
\newcommand{\rT}{\mathrm{T}}
\newcommand{\sD}{{\mathsf{Q}}}
\newcommand{\sY}{{\mathsf{Y}}}
\newcommand{\sZ}{{\mathsf{Z}}}
\begin{document}
\title 
{Quadrics on Gushel--Mukai varieties}

\author[O.\ Debarre]{Olivier Debarre}
\address{Universit\'e   Paris Cit\'e and Sorbonne Universit\'e, CNRS,   IMJ-PRG, F-75013 Paris, France}
\email{{\tt olivier.debarre@imj-prg.fr}}
 
\author[A. Kuznetsov]{Alexander Kuznetsov}
\address{Algebraic Geometry Section, 
Steklov Mathematical Institute of Russian Academy of Sciences,
8 Gubkin str., Moscow 119991 Russia;
Laboratory of Algebraic Geometry, National Research University Higher School of Economics, Russia}
\email{{\tt  akuznet@mi-ras.ru}}

\thanks{This project has received funding from the European
Research Council (ERC) under the European
Union's Horizon 2020 research and innovation
programme (ERC-2020-SyG-854361-HyperK)
A.K. was partially supported by the HSE University Basic Research Program.}

\begin{abstract}
We study Hilbert schemes of quadrics of dimension~$k \in \{0,1,2,3\}$ on 
smooth Gushel--Mukai varieties~$X$ of dimension~$n\in\{2,3,4,5,6\}$
by relating them to the relative Hilbert schemes of linear subspaces of dimension~$k + 1$
of a certain family, naturally associated with~$X$, of quadrics of dimension~$n - 1$ over the blowup of~$\P^5$ at a point.
\end{abstract}

\date{\today}

\maketitle


\section{Introduction}
\label{sec:intro}

This paper  
is a continuation of the series of papers~\cite{DK1,DK2,DK4,DK5} 
where we study Gushel--Mukai (GM) varieties of dimension~$n \in \{2,3,4,5,6\}$, 
that is, dimensionally transverse intersections 
\begin{equation*}
X = \CGr(2,V_5) \cap \P(W) \cap Q
\end{equation*}
in~$ \P(\C \oplus \bw2V_5)$ of the cone~$\CGr(2,V_5)$ over the Grassmannian~$\Gr(2,V_5)$
 of $2$-dimensional subspaces 
in a $5$-dimensional vector space~$V_5$,
 a linear space~$\P(W) $ of dimension~\mbox{$n + 4$,}
and a quadric~$Q \subset \P(W)$.\ 
 We describe the Hilbert scheme~$\G_k(X)$ of~$k$-dimensional quadrics on~$X$,
for various values of~$k$ and~$n$.\ 
These include Hilbert squares~$\G_0(X)$ of GM surfaces (subschemes of length~2 are zero-dimensional quadrics),
Hilbert schemes~$\G_1(X)$ of conics on GM threefolds and fourfolds, 
Hilbert schemes~$\G_2(X)$ of quadric surfaces on GM fourfolds, fivefolds, and sixfolds,
and Hilbert schemes~$\G_3(X)$ of~$3$-dimensional quadrics on GM fivefolds and sixfolds.

Some of these Hilbert schemes have been studied before 
and were shown to play important roles for the geometry of GM varieties.\ 
For instance, the Hilbert scheme of conics on a general GM threefold 
was one of the main objects of study in the pioneering work~\cite{lo}
and its geometric significance was demonstrated in~\cite[Theorem~9]{IM07} and~\cite{dim}.\ 
Similarly, in~\cite{IM}, the Hilbert scheme of conics on a general GM fourfold was shown to be related to a \hk\  fourfold.\ 
More recently,  a categorical significance was given to these Hilbert schemes in~\cite{LZ,GLZ}.

In the above papers, various generality assumptions were made.\ 
Here, we suggest instead a general approach that allows us to give a uniform description, 
valid for all smooth GM varieties, in terms of double coverings of Eisenbud--Popescu--Walter (EPW) loci 
and other varieties naturally associated with GM varieties.\ 
Our results are too varied to be explained easily in this introduction, 
so we only state two sample results which were announced in the introduction of~\cite{DK5} 
and we refer the reader to the body of the paper for details (and to Section~\ref{sec:prelim} for the notation).\ 
The first result discusses conics on GM threefolds.

\begin{theo}
\label{thm:g1x3-intro}
Let~$X$ be a smooth ordinary GM threefold and let~$A$ be its associated Lagrangian subspace.\ 
The Hilbert scheme~$\G_1(X)$ of conics on~$X$ is  isomorphic 
to the blowup of the double dual EPW surface~$\wtY^{\ge 2}_\Ap$ at a point.
\end{theo}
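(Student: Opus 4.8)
\smallskip

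\noindent\emph{Sketch of proof.}\ The plan is to deduce Theorem~\ref{thm:g1x3-intro} from the general description of Hilbert schemes of quadrics on GM varieties obtained in this paper (announced in the abstract), specialised to $k=1$, $n=3$ and an ordinary~$X$. Write $(V_6,V_5,A)$ for the Lagrangian data of~$X$ and let $\sD\to\Bl_{[V_5]}\P(V_6^\vee)$ be the associated family of $2$-dimensional quadrics. The first step is to make the dictionary explicit for conics. A conic $C\subset X$ is contained in a unique plane $\langle C\rangle\cong\P^2$, and its scroll $\bigcup_{[U]\in C}\P(U)\subset\P(V_5)$ spans a hyperplane $V_4\subset V_5$; using this together with the Gushel embedding $X\hookrightarrow\CGr(2,V_5)$ one attaches to~$C$ a point $\ell\in\Bl_{[V_5]}\P(V_6^\vee)$ (which encodes~$V_4$, together with the lift to the blowup supplied by the construction) and a $2$-dimensional linear subspace $\Pi$ contained in the fibre $\sD_\ell$; conversely~$C$ is recovered from the pair $(\ell,\Pi)$. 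This identifies $\G_1(X)$ with the relative Hilbert scheme $\mathsf H$ of $2$-planes in the fibres of~$\sD$.

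The second step is to recognise~$\mathsf H$ over $\P(V_6^\vee)\setminus\{[V_5]\}$ as the double dual EPW surface. Each fibre~$\sD_\ell$ is a $2$-dimensional quadric, and for the hyperplane $V_5'\subset V_6$ corresponding to a point $\ell\neq[V_5]$ its corank equals $\dim(A\cap\bw3V_5')$; since this is precisely the invariant of the dual EPW stratification, one has $\ell\in\sY^{\ge j}_{\Ap}$ if and only if this corank is at least~$j$. Now a $2$-dimensional quadric contains a~$\P^2$ exactly when its corank is at least~$2$, and it then contains exactly two planes if the corank equals~$2$ (the two components of a union of two planes) and exactly one plane if the corank equals~$3$ (a double plane). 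Since $\sY^{\ge2}_{\Ap}$ is a surface and $\sY^{\ge3}_{\Ap}$ is the finite set of its singular points, this yields a finite morphism $\mathsf H\to\sY^{\ge2}_{\Ap}$ which is \'etale of degree~$2$ over $\sY^{\ge2}_{\Ap}\setminus\sY^{\ge3}_{\Ap}$ and simply ramified over $\sY^{\ge3}_{\Ap}$, i.e.\ which identifies the part of~$\mathsf H$ over $\P(V_6^\vee)\setminus\{[V_5]\}$ with the canonical double cover $\wtY^{\ge2}_{\Ap}$.

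What remains, and where I expect the main difficulty, is the contribution over the exceptional divisor $E\cong\P^4$ of the blowup. Since~$X$ is ordinary, $[V_5]\notin\sY_{\Ap}$, so $\sD_{[V_5]}$ is a smooth quadric surface and contributes nothing before the blowup; but $\sD|_E$ degenerates along a locus where the corank jumps, and the part of~$\mathsf H$ lying over~$E$ is a rational curve~$Z$. One then has to prove that~$\mathsf H$ is a smooth surface, that the isomorphism of the second step extends to a morphism $\mathsf H\to\wtY^{\ge2}_{\Ap}$ contracting~$Z$ to a single smooth point~$\tilde y_0$, and that~$Z$ is a $(-1)$-curve; it then follows that this morphism is the blowdown of~$Z$ and that $\G_1(X)\simeq\mathsf H\simeq\Bl_{\tilde y_0}\wtY^{\ge2}_{\Ap}$. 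Pinning down~$Z$ --- concretely, as a distinguished pencil of conics on~$X$, or as the locus over~$E$ where~$\sD$ has corank at least~$2$ --- and computing its self-intersection is the crux. A last ingredient is to check that the conics whose span lies inside $\CGr(2,V_5)$ (the $\sigma$-plane and $\Gr(2,V_3)$ cases) are already accounted for in~$\mathsf H$ and create no new components; this is the step where the hypothesis that the Gushel map of~$X$ is a closed immersion, i.e.\ that~$X$ is ordinary, is used.
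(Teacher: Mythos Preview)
Your overall architecture matches the paper's, but two concrete errors derail the argument.

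First, the claim ``Since~$X$ is ordinary, $[V_5]\notin\sY_{\Ap}$'' is false.\ For an ordinary GM threefold, one has~$\dim(A\cap\bw3V_5)=2$, so the Pl\"ucker point~$\bp_X=[V_5]$ lies in~$\sY^2_{\Ap}$ (see~\eqref{eq:px-dim}).\ This matters: the corank stratification of~$B=\Bl_{\bp_X}\P(V_6^\vee)$ is the pullback of the dual EPW stratification (Proposition~\ref{proposition:b-stratification}), so the exceptional divisor~$E$ meets the corank~$\ge2$ locus along the divisor~$E^{\ge2}\subset B^{\ge2}$, and~$\mathsf H=\OGr_B(3,\cQ)$ is nonempty over~$E$.\ In fact the Stein factorization of~$f\colon\mathsf H\to B$ gives an isomorphism~$\mathsf H\cong\Bl_{\widetilde\bp_X}(\wtY^{\ge2}_{\Ap})$, where~$\widetilde\bp_X=\uvt_{\Ap}^{-1}(\bp_X)=\{\bp'_X,\bp''_X\}$ is a \emph{pair} of points (the double cover is \'etale over~$\sY^2_\Ap$).\ So the part of~$\mathsf H$ over~$E$ is not one rational curve~$Z$ but two disjoint~$\P^1$'s, and~$\mathsf H$ is the blowup of~$\wtY^{\ge2}_{\Ap}$ at \emph{two} points, not one.

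Second, the dictionary in your first step is only a birational map~$\tilde\beta\colon\G_1(X)\dashrightarrow\mathsf H$, not an isomorphism.\ The failure is precisely at~$\sigma$- and~$\tau$-conics: for a~$\sigma$-conic the hyperplane~$U_4$ is determined but~$U_5$ is not, and for a~$\tau$-conic neither is determined.\ What the paper does (Propositions~\ref{prop:pik}, \ref{prop:hpik}, \ref{lem:hpik-fibers}, \ref{prop:gp}) is go in the other direction: since~$\F_2(X)=\vide$, one has~$\G_1(X)\cong\chG_1(X)$, and the morphism~$g_1\colon\mathsf H\to\chG_1(X)$ maps one of the two exceptional~$\P^1$'s isomorphically onto~$\Fs_2(M_X)\cong\P^1$ (the~$\sigma$-conics) and contracts the other~$\P^1$ to the single point~$\Ft_2(M_X)\cong\P^0$ (the unique~$\tau$-conic).\ This last contraction is what brings the two-point blowup down to a one-point blowup.\ Your final paragraph anticipates that the~$\sigma$/$\tau$ cases need checking, but the actual phenomenon---that one exceptional curve of~$\mathsf H$ must be blown down to reach~$\G_1(X)$---is missing.
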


This is Theorem~\ref{thm:g1-x3}, which also includes the case of special GM threefolds.

The second sample result discusses quadric surfaces on GM fivefolds.

\begin{theo}
\label{thm:g2x5-intro}
Let~$X$ be a smooth ordinary GM fivefold and let~$A$ be its associated Lagrangian subspace.\ 
The Hilbert scheme~$\G_2(X)$ of quadric surfaces on~$X$ has two disjoint irreducible components
\begin{equation*}
\G_2(X) = \overline{\G^0_2(X)} \sqcup \Gs_2(X),
\end{equation*}
where~$\Gs_2(X) \cong \P^4$, while~$\overline{\G^0_2(X)}$ has dimension~$3$ and is
an \'etale-locally trivial $\P^1$-fibration over the double dual EPW surface~$\wtY^{\ge 2}_\Ap$.
\end{theo}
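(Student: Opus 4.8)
\emph{Proof proposal.} The plan is to deduce the statement from the general correspondence established earlier in the paper (and announced in the abstract), which relates the Hilbert scheme $\G_2(X)$ of quadric surfaces on the fivefold $X$ to the relative Hilbert scheme of three-dimensional linear subspaces in the fibres of a flat family $\sD\to\tB$ of four-dimensional quadrics in $\P^5=\P(V_6^\vee)$, where $\tB=\Bl_{\bp_0}\P(V_6^\vee)$ is the blowup at the distinguished point $\bp_0=[V_5]$. Granting this, the argument proceeds in three stages: (i) determine over which points of $\tB$ the fibre $\sD_b$ contains a $\P^3$; (ii) analyse the relative Hilbert scheme over each resulting stratum and recognise the pieces; (iii) check disjointness, irreducibility, the dimension count, and exhaustiveness.

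For stage (i), a four-dimensional quadric in $\P^5$ contains a three-dimensional linear subspace if and only if its corank is at least $2$, and a quadric of corank exactly $2$ is a cone with vertex a line over a smooth quadric surface $\P^1\times\P^1$, hence contains exactly two pencils of $\P^3$'s, one for each ruling; quadrics of corank $\ge 3$ contain positive-dimensional families. So the locus to identify is $\tB^{\ge 2}:=\{b\in\tB:\operatorname{corank}\sD_b\ge 2\}$, which I would read off from the explicit presentation of $\sD\to\tB$ in terms of the Lagrangian subspace $A\subset\bw3V_6$ associated with $X$. The expectation is that, away from the exceptional divisor $E\cong\P^4$, this locus is the strict transform of the corank $\ge 2$ degeneracy surface of the family over $\P(V_6^\vee)$, which by the EPW formalism is the dual EPW surface $Y^{\ge 2}_{\Ap}$ (the point $\bp_0$ lying on it being exactly the reason one blows it up), whereas the analysis over $E$ itself is more delicate and is where the second component will originate. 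Smoothness of $X$ should be used here to rule out fibres of corank $\ge 4$ and to guarantee that the degeneracy surface is genuinely two-dimensional and the corank $=3$ stratum is finite, so that the two pieces of $\G_2(X)$ come out with the asserted dimensions.

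For stage (ii), over the stratum where $\operatorname{corank}\sD_b=2$ the $\P^3$'s in the fibres form a disjoint union of two $\P^1$-bundles whose rulings are globally interchanged; taking the Stein factorisation — equivalently, trivialising the two-sheeted structure \'etale-locally and tracking its monodromy over $Y^{\ge 2}_{\Ap}$ — produces a connected double cover of $Y^{\ge 2}_{\Ap}$ carrying a $\P^1$-fibration, and I would identify this double cover with the double dual EPW surface $\wtY^{\ge 2}_{\Ap}$ by matching it against the sheaf-theoretic definition of the latter. Passing to the closure in $\G_2(X)$ then presents $\overline{\G^0_2(X)}$ as an \'etale-locally trivial $\P^1$-fibration over $\wtY^{\ge 2}_{\Ap}$, the deeper stratum where $\operatorname{corank}\sD_b=3$ contributing only $\P^3$'s already in the closure of the generic ones. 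The second component, which corresponds under the correspondence to the exceptional divisor $E$, is most transparently described classically: writing the ordinary fivefold as $X=\Gr(2,V_5)\cap Q$ for a quadric $Q\subset\P(\bw2V_5)$, the $\alpha$-spaces $\P(\langle v\rangle\wedge V_5)\cong\P^3$ of $\Gr(2,V_5)$ (indexed by $[v]\in\P(V_5)$) meet $X$ in quadric surfaces $Q\cap\P(\langle v\rangle\wedge V_5)$, and this family is $\Gs_2(X)\cong\P^4$; I would check that over $E$ the correspondence pins down a unique $\P^3$ per point, reproducing this $\P^4$ rather than a $\P^1$-bundle over it.

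For stage (iii), disjointness and irreducibility of the two components follow from the dichotomy for the $\P^3$ spanned by a quadric surface $\sD'\subset X$: either it lies on $\Gr(2,V_5)$ — the $\sigma$-case, corresponding to the associated point of $\tB$ lying on $E$ — or it does not, corresponding to a point off $E$; as this is an open-and-closed condition on $\G_2(X)$, the two loci are unions of connected components, and $\overline{\G^0_2(X)}$ inherits irreducibility from that of $\wtY^{\ge 2}_{\Ap}$. Exhaustiveness is automatic from the correspondence together with the completeness of the corank stratification of $\tB$. The hard parts, as flagged, are stage (i) — pinning down $\tB^{\ge 2}$ and identifying its non-exceptional part with the strict transform of $Y^{\ge 2}_{\Ap}$, with the correct scheme structure — the identification in stage (ii) of the resulting double cover with $\wtY^{\ge 2}_{\Ap}$, and the behaviour of the whole picture along the exceptional divisor $E$, which governs the $\P^4$-component.
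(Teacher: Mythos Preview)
Your treatment of the main component $\overline{\G^0_2(X)}$ is correct and matches the paper's (Proposition~\ref{prop:corank-n-k}\ref{it:ogr-bme}): the corank-$\ge 2$ locus of $\cQ\to B$ is $\sY^{\ge 2}_{\Ap}$, and Stein factorisation of the relative Hilbert scheme of $\P^3$'s gives the $\P^1$-fibration over $\wtY^{\ge 2}_{\Ap}$. But your account of the $\P^4$ component and of disjointness contains a genuine error. You assume $\bp_X\in\sY^{\ge 2}_{\Ap}$ and expect the $\P^4$ to arise over the exceptional divisor $E$; in fact, for an ordinary fivefold $\bp_X\in\sY^0_{\Ap}$ (see~\eqref{eq:px-dim}), so the blowup does not touch the corank-$\ge 2$ locus, and over $E$ the fibres of $\cQ$ are \emph{smooth} four-dimensional quadrics (Lemma~\ref{lemma:e-stratification}, case $n=5$: $E^{\ge 1}=\emptyset$), which contain no $\P^3$'s at all. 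Hence $\OGr_E(4,\cQ_E)=\emptyset$ and your proposed check over $E$ would simply fail.

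The $\sigma$-quadrics lie \emph{outside} the image of the correspondence: a $\sigma$-quadric has span $\P(V_1\wedge V_5)$, and no hyperplane $U_4\subset V_5$ satisfies $V_1\wedge V_5\subset\bw2{U_4}$ (that would force $V_5\subset U_4$). Thus $g_2\colon\OGr_B(4,\cQ)\to\chG_2(X)$ has empty fibre over $\F_3(M_X)$ (Proposition~\ref{lem:hpik-fibers}, row $k=2$), and $\Gs_2(X)=\Fs_3(M_X)\cong\P(V_5)$ must be supplied directly (Corollary~\ref{cor:gstk}). Disjointness then follows not from ``on $E$ versus off $E$'' but because $\OGr_B(4,\cQ)=\OGr_{B\setminus E}(4,\cQ)$ is already closed, its image under the proper map $g_2$ is closed in $\chG_2(X)\cong\G_2(X)$, and the complement is exactly $\F_3(M_X)$ (Corollaries~\ref{cor:ogre-component} and~\ref{cor:fkm-connected}). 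Your classical identification of $\Gs_2(X)\cong\P^4$ via $\alpha$-planes is correct; only its placement in the quadric-fibration framework is wrong.
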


This is Theorem~\ref{thm:g2-x5}, which also includes the case of special GM fivefolds.\ 

In each of these two cases, the descriptions only involve the (dual) double EPW surfaces.\ 
In general, the complexity of the description of the scheme~$\G_k(X)$ depends on the number
\begin{equation*}
\label{eq:ell}
\ell \coloneqq 2k + 3 - \dim(X).
\end{equation*} 
In particular, if~$\ell \ge 1$, the main component of~$\G_k(X)$, defined in Definition~\ref{def:st-quadrics}, has a fibration structure 
over the double dua EPW variety~$\wtY_\Ap^{\ge \ell}$,  whose definition is recalled in Section~\ref{subsec:ld}.\ 
Moreover, the Hilbert schemes~$\F_{k+1}(X)$ and~$\F_{k+1}(M_X)$ of~$(k+1)$-dimensional linear spaces
on~$X$ and on its Grassmannian hull
\begin{equation*}
M_X \coloneqq \CGr(2,V_5) \cap \P(W)
\end{equation*}
also get involved.\ 
As we will see, the situation simplifies considerably if~$X$ is ordinary 
and if~$\F_{k+1}(X) = \vide$ 
(the latter holds if~$\ell \ge 1$ and~$X$ is general, or if~$\ell \ge 2$).

\subsection{General approach}

Our results on the Hilbert schemes~$\G_k(X)$ all rely on a uniform construction
which we sketch in the rest of this introduction.\

The Grassmannian~$\Gr(2,V_5) \subset \P(\bw2V_5)$ is the intersection 
of the $5$-dimen\-sional space of \emph{Pl\"ucker quadrics} in~$\P(\bw2V_5)$
and this space can be naturally identified with the space~$V_5$;
consequently, the same is true for the cone~$\CGr(2,V_5) \subset \P(\C \oplus \bw2V_5)$
and for the Grassmannian hull~$M_X$ of a GM variety~$X$.\ 
Since~$X = M_X \cap Q$, it follows that~$X$ is the intersection of a~$6$-dimensional space~$V_6$ of quadrics 
containing the space of Pl\"ucker quadrics as a hyperplane~$V_5 \subset V_6$.\ 
Therefore, if~$\Sigma \subset X$ is a quadric of dimension~$k$  
and~$[\Sigma] \in \G_k(X)$ is the corresponding point of the Hilbert scheme,   
the linear span
 \begin{equation*}
\langle \Sigma \rangle \cong \P^{k+1} \subset \P(W) 
\end{equation*}
of~$\Sigma$ is contained in the intersection of a 5-dimensional subspace~$U_5 \subset V_6$ of quadrics containing~$X$
(and~$U_5$ is determined by~$\Sigma$ unless~$X$ contains $\langle \Sigma\rangle$,
that is, $[\langle \Sigma\rangle] \in \F_{k+1}(X)$).\ 
Similarly, $\langle \Sigma \rangle$ is contained in a 4-dimensional space~$U_4 \subset V_5$ 
of Pl\"ucker quadrics containing~$M_X$
(and~$U_4$ is determined by~$\Sigma$ unless~$M_X$ contains $\langle \Sigma\rangle$,
that is, $[\langle \Sigma\rangle] \in \F_{k+1}(M_X)$).\ 
So, this defines a rational map 
\begin{equation}
\label{eq:map-to-b}
\beta \colon \G_k(X) \dashrightarrow B,
\end{equation}
where
\begin{equation}
\label{base}
B \coloneqq \{ ([U_4],[U_5]) \in \Gr(4,V_5) \times \Gr(5,V_6)\ |\ U_4 \subset U_5 \} .
\end{equation}
The projection~$B \to \Gr(5,V_6) \cong \P(V_6^\vee)$ 
is the blowup of the point~$\bp_X \in \P(V_6^\vee)$, called the {\sf Pl\"ucker point}, 
corresponding to the hyperplane~$V_5 \subset V_6$ of Pl\"ucker quadrics, 
and the projection~$B \to \Gr(4,V_5) \cong \PP(V_5^\vee)$ is a $\PP^1$-fibration.\ 
A detailed analysis of the map~\eqref{eq:map-to-b}  (sketched below) eventually leads to a description of the Hilbert scheme~$\G_k(X)$.

Let~$b = ([U_4],[U_5]) \in B$.\
One shows (see Lemma~\ref{lemma:u4-vanishes})
 that the intersection in~$\P(W)$ 
of the quadrics parameterized by the subspace~$U_4 \subset V_6$
is the union of~$M_X$ and a linear subspace~\mbox{$\P(\cW_{[U_4]}) \isom \P^n \subset \P(W)$}, where
\begin{equation}
\label{eq:cw-u4}
\cW_{[U_4]} \coloneqq (\C \oplus \bw2U_4) \cap W.
\end{equation} 
Therefore, the intersection in~$\P(\cW_{[U_4]})$
of the quadrics parameterized by the subspace~\mbox{$U_5 \subset V_6$}
 is a quadric hypersurface~$\cQ_b \subset \P(\cW_{[U_4]})$.\ 
Letting the point~$b$ vary in~$B$, we obtain a vector subbundle~$\cW \subset W \otimes \cO_B$ of rank~$n + 1$
and a quadric fibration
\begin{equation*}
\cQ \subset \P_B(\cW) \lra B
\end{equation*}
of relative dimension~$n - 1$ (see Section~\ref{ss:qf} for details of this construction).

If~$\beta([\Sigma]) = b$ and~$[\langle\Sigma\rangle] \notin \F_{k+1}(M_X)$, then~$[\langle \Sigma \rangle] \in \F_{k+1}(\cQ_b)$.\ 
Therefore, the map~$\beta$ factors through a rational map 
\begin{equation*}
\label{eq:tbeta}
\tilde\beta \colon \G_k(X) \dashrightarrow \F_{k+1}(\cQ/B) = \OGr_B(k+2, \cQ)
\end{equation*} 
 to the relative Hilbert scheme~$\F_{k+1}(\cQ/B)$ 
of linear spaces of dimension~$k+1$ in the fibers of~$\cQ/B$,
or, equivalently, to the Grassmannian of vector subspaces of dimension~$k+2$ 
contained in the fibers of~$\cW/B$ that are isotropic with respect to the quadratic equation of~$\cQ$.\

We show that the map~$\tilde\beta$ is birational and can be factored as the composition~$g_k^{-1} \circ \lambda_k$ 
of two birational morphisms that together with the natural projection~$f \colon \OGr_B(k+2,\cQ) \to B$
fit into a diagram (explained in Section~\ref{ss12})
\begin{equation}
\label{eq:diagram-intro}
\vcenter{\xymatrix{
\G_k(X) \ar[dr]_{{\lambda_k}} \ar@{-->}[rr]^-{\tilde\beta} &&
\OGr_B(k+2,\cQ) \ar[dl]^{g_k} \ar[dr]_f
\\
& \chG_k(X) &&
B.
}}
\end{equation}
An analysis of the maps~$\lambda_k$, $g_k$, and~$f$ will give us a description of~$\G_k(X)$.

\subsection{More details}
\label{ss12}

To define the scheme~$\chG_k(X)$, we consider the tautological vector subbundle~$\cR_{k+2}$ of rank~\mbox{$k + 2$} on the
Grassmannian~$\Gr(k+2,W)$ and the composition 
\begin{equation}
\label{eq:v6-s2r}
V_6 \otimes \cO_{\Gr(k+2,W)} \lra \Sym^2\!W^\vee \otimes \cO_{\Gr(k+2,W)} \lra \Sym^2\!\cR_{k+2}^\vee,
\end{equation}
where the first arrow uses the identification of~$V_6$ with the space of quadrics through~$X$
and the second arrow is tautological.\ 
Then, we show that the map~$\G_k(X) \to \Gr(k + 2, W)$ defined by~$[\Sigma] \mapsto [\langle \Sigma \rangle]$ 
factors through the degeneracy locus
\begin{equation*}
\chG_k(X) \subset \F_{k+1}(\P(W)) = \Gr(k + 2, W)
\end{equation*}
where the composition~\eqref{eq:v6-s2r} has rank at most~$1$.\ 
This defines the map~$\lambda_k$.

More precisely, we check in Proposition~\ref{prop:pik} that~$\lambda_k \colon \G_k(X) \to \chG_k(X)$ is a birational morphism;
in particular, it is an isomorphism away from the zero locus 
\begin{equation*}
\F_{k+1}(X)\subset \chG_k(X) \subset \Gr(k+2,W) 
\end{equation*}
of~\eqref{eq:v6-s2r}.\ 
Moreover, if~$\F_{k+1}(X) $ is empty (which happens for example in the situations of Theorems~\ref{thm:g1x3-intro} and~\ref{thm:g2x5-intro}), 
the map~$\lambda_k$ is an isomorphism and~$\G_k(X) = \chG_k(X)$.

In order to construct the map~$g_k$, we next show (see Lemma~\ref{lem:ogrb-triples}) the equality
\begin{equation*}
\OGr_B(k+2,\cQ) = 
\left\{ ([U_4],[U_5],[R_{k+2}]) \in  B  \times \Gr(k+2,W) 
\,\left|\,
\begin{array}{l}
U_5 \subset \Ker(V_6 \to \Sym^2\!R_{k+2}^\vee)\\[.5ex]
R_{k+2} \subset \cW_{[U_4]}
\end{array}
\right.\right\},
\end{equation*}
where the map~$V_6 \to \Sym^2\!R_{k+2}^\vee$ is the fiber of~\eqref{eq:v6-s2r} at~$[R_{k+2}]$
and~$\cW_{[U_4]}$ is defined in~\eqref{eq:cw-u4}.\ 
It is obvious from this description that the natural projection~$\OGr_B(k+2,\cQ) \to \Gr(k+2,W)$ factors through~$\chG_k(X)$ 
and this defines the map~$g_k  \colon \OGr_B(k+2,\cQ) \to \chG_k(X)$.

Using the observations of Lemmas~\ref{lemma:u4-vanishes} and~\ref{lem:ogrb-triples} (see Proposition~\ref{prop:hpik} for details), 
it is not hard to show that~$g_k$ is an isomorphism away from the zero locus of the morphism
\begin{equation*}
V_5 \otimes \cO_{\Gr(k+2,W)} \hookrightarrow 
V_6 \otimes \cO_{\Gr(k+2,W)} \to 
\Sym^2\!W^\vee \otimes \cO_{\Gr(k+2,W)} \to 
\Sym^2\!\cR_{k+2}^\vee,
\end{equation*}
which is nothing but the Hilbert scheme~$\F_{k+1}(M_X)$ of the Grassmannian hull~$M_X$ of~$X$.\ 
One can also describe the fibers of the map~$g_k$ over~$\F_{k+1}(M_X)$ (see Proposition~\ref{lem:hpik-fibers});
in particular, in the situation of Theorem~\ref{thm:g1x3-intro} (where~$\chG_1(X) = \G_1(X)$),
we prove that the morphism~$g_1 \colon \OGr_B(3, \cQ) \to \G_1(X)$ is the blowup of a point and, 
in the situation of Theorem~\ref{thm:g2x5-intro}  (where~$\chG_2(X) = \G_2(X)$), 
that~$g_2$ is an isomorphism~$\OGr_B(4, \cQ) \isomto \overline{\G^0_2(X)}$
onto the main component~$\overline{\G^0_2(X)}$ of~$\G_2(X)$.

Finally, we analyze the second projection~$f \colon \OGr_B(k+2,\cQ) \to B$.\ 
Its fiber over a point~\mbox{$b \in B$} is the Hilbert scheme~$\F_{k+1}(\cQ_b)$ of linear spaces of dimension~$k+1$
on the quadric~$\cQ_b$ of dimension~$n-1$.\ 
This scheme depends on the rank of the quadric, therefore we start by describing
the rank stratification of~$B$ induced by the family of quadrics~$\cQ$.\ 
We show in Proposition~\ref{proposition:b-stratification} that if~$X$ is ordinary, the stratification of~$B$ 
is induced via the blowup~$B \to \P(V_6^\vee)$ from the dual EPW stratification of~$\P(V_6^\vee)$ (see Section~\ref{subsec:ld}),
and if~$X$ is special, the stratification is the same away from the exceptional divisor~$E \subset B$
of the blowup~$B \to \P(V_6^\vee)$, while on the divisor $E$, it is shifted by~$1$.\ 
This allows us to relate  the Stein factorization of the map~$f \colon \OGr_B(k+2,\cQ) \to B$
to the double dual EPW varieties (see Propositions~\ref{prop:corank-n-k} and~\ref{prop:corank-special}).

In particular, in the situation of Theorem~\ref{thm:g1x3-intro}, 
our arguments imply that~$\OGr_B(3,\cQ)$ is the blowup of the double dual EPW surface~$\wtY^{\ge 2}_\Ap$ at two points
and, in Theorem~\ref{thm:g2x5-intro}, that~$\OGr_B(4,\cQ)$ 
is an \'etale-locally trivial $\P^1$-fibration over~$\wtY^{\ge 2}_\Ap$.\ 
This allows us to complete the proofs of the theorems.

Throughout the paper, we work over the field~$\C$ of complex numbers,
though the same results hold true over any algebraically closed field of characteristic zero.\ 
As a general rule,~$V_k$,~$U_k$, or~$R_k$ denotes a vector space of dimension~$k$.

The organization of the paper is as follows: we begin by recalling in Section~\ref{sec:prelim} 
the necessary facts about the geometry of GM varieties, 
their Grassmannian hulls, Lagrangian data, and associated EPW varieties.\ 
In Section~\ref{sec:hilbert}, we discuss the Hilbert schemes~$\F_{k+1}(X)$ and~$\F_{k+1}(M_X)$ 
of linear spaces on a GM variety~$X$ and its Grassmannian hull~$M_X$
and introduce the Hilbert schemes~$\G_k(X)$.\ 
In Section~\ref{sec:qf}, we construct the vector bundle~$\cW$ on~$B$ 
and the quadratic fibration~$\cQ \subset \P_B(\cW)$ that play prominent roles afterwards.\ 
We also describe the corank stratification of~$B$.\ 
In Section~\ref{sec:og}, we discuss the geometry of the orthogonal Grassmannian~$\OGr_B(p, \cQ)$ and
describe the Stein factorization of its projection to~$B$ and the double coverings that arise in this way.\ 
In Section~\ref{sec:general-another}, we prove general results about the maps~$\lambda_k$ and~$g_k$ from diagram~\eqref{eq:diagram-intro},
and in Section~\ref{sec:explicit}, we combine the results obtained in the previous sections
to give explicit descriptions of the most interesting Hilbert schemes~$\G_k(X)$.
Finally, in Appendix~\ref{sec:encapsulated}, we discuss the local geometry of~$\G_k(X)$
around the locus of quadrics whose linear span is contained in~$M_X$ or in~$X$.


\section{Preliminaries}
\label{sec:prelim}

In this section, we recall some basic geometric facts about GM and EPW varieties.

\subsection{GM varieties}
\label{subsec:gm}

Let~$V_5$ be a vector space of dimension~$5$.\ 
We denote by
\begin{equation*}
\Gr(2,V_5) \subset \P(\bw2V_5)
\qquad\text{and}\qquad 
\CGr(2,V_5) \subset \P(\C \oplus \bw2V_5)
\end{equation*}
the Grassmannian of $2$-dimensional vector subspaces in~$V_5$ in its Pl\"ucker embedding
and the cone over~$\Gr(2,V_5)$.\ 
We denote by~$\bv   \in \P(\C \oplus \bw2V_5)$ the vertex of the cone~$\CGr(2,V_5)$;
it corresponds to the first summand in~$\C \oplus \bw2V_5$.\ 
Both~$\Gr(2,V_5)$ and~$\CGr(2,V_5)$ are the intersections of the $5$-dimensional space 
\begin{equation}
\label{eq:plucker}
V_5 \cong \bw4V_5^\vee \subset \Sym^2(\bw2V_5)^\vee \subset \Sym^2(\C \oplus \bw2V_5)^\vee
\end{equation}
of Pl\"ucker quadrics.

A {\sf Gushel--Mukai} ({\sf GM}) variety is defined as a dimensionally transverse intersection
\begin{equation*}
X = \CGr(2,V_5) \cap \PP(W) \cap Q,
\end{equation*}
where~$W \subset \C \oplus \bw2V_5$ is a vector subspace and~$Q \subset \PP(W)$ is a quadric.\ 
Its dimension is~$n=\dim(W)-5  \le 6$.\ 
In this paper, we only consider \emph{smooth} GM varieties; in particular, they do not contain the vertex of the cone.\ 
For a general introduction to GM varieties, see~\cite{DK1}.

When~$n \ge 3$, the variety $X$ is a Fano variety with Picard number~$1$
and, when~$n = 2$, it is a K3 surface with a polarization of degree~$10$; see~\cite[Theorem~2.16]{DK1}.

Forgetting the quadric~$Q$, we consider the intersection 
\begin{equation*}
M_X \coloneqq \CGr(2,V_5) \cap \P(W),
\end{equation*}
which is called {\sf the Grassmannian hull} of~$X$;
it is intrinsically defined by~$X$.\ 
It is equal to the intersection of the restrictions to~$\P(W)$ of the Pl\"ucker quadrics,
and its dimension is~\mbox{$\dim(X) + 1 = n + 1$}.\ 
Therefore, $X$ is the intersection of the $6$-dimensional space~$V_6$ of quadrics
generated by the space~$V_5$ defined in~\eqref{eq:plucker} and the  quadric~$Q$.\ 
We denote by
\begin{equation}
\label{eq:plucker-point}
\bp_X \in \PP(V_6^\vee)
\end{equation}
the point corresponding to the Pl\"ucker hyperplane $V_5 \subset V_6$ and call it {\sf the Pl\"ucker point}.\ 
The extra quadric~$Q$ corresponds to a point of~$\P(V_6) \ssm \P(V_5)$ and is not canonically associated with~$X$.

A GM variety~$X$ is called {\sf special} if the vertex~$\bv$ of the cone~$ \CGr(2,V_5)$ is contained in~$\P(W)$, 
and {\sf ordinary} otherwise;
every special GM variety of dimension~\mbox{$n \le 5$} 
can be deformed to an ordinary GM variety of the same dimension
(by simply deforming the subspace~$\P(W) \subset \P(\C \oplus \bw2V_5)$ to a subspace not containing the point~$\bv$).\ 
Thus, a general GM variety of dimension~$n \le 5$ is ordinary,
while any GM variety of dimension~$6$ is special.

When~$X$ is smooth and special of dimension~$n \ge 3$, 
 we define the hyperplane~$W_0 \subset W$ as the orthogonal complement of the line~$\C \subset W$ 
(corresponding to the point~$\bv \in \P(W)$) with respect to the quadratic form~$Q$,
 so that
\begin{equation}
\label{eq:w-w0}
W = \C \oplus W_0.
\end{equation}
This decomposition does not depend on the choice of~$Q$ (see~\cite[Proposition~2.30]{DK1}).\ 
If~$X$ is ordinary, we set~$W_0 \coloneqq W$.\ 
In either case, the projection~$\C \oplus \bw2V_5 \to \bw2V_5$ induces an embedding~$W_0 \subset \bw2V_5$
and, if~$X$ is ordinary, it induces on~$X\subset \P(W_0)$ a closed embedding~\mbox{$X \hookrightarrow \Gr(2,V_5)$} 
such that~$X = \Gr(2,V_5) \cap \P(W_0) \cap Q$.

Furthermore, we denote by
\begin{equation}
\label{eq:wp-wp0}
W^\perp \subset (\C \oplus \bw2V_5)^\vee 
\qquad\text{and}\qquad
W_0^\perp \subset \bw2V_5^\vee
\end{equation}
the respective orthogonal complements of~$W$ and~$W_0$;
if~$X$ is special, then~$W_0^\perp = W^\perp$.

If~$X$ is smooth and special of dimension~$n \ge 3$, then  (see~\cite[Proposition~2.30]{DK1})
\begin{equation}
\label{eq:x-x0}
X_0 \coloneqq X \cap \P(W_0) 
\end{equation} 
is a smooth ordinary GM variety of dimension~$n - 1$, 
the projection from the vertex~$\bv$ of the cone~$\CGr(2,V_5)$ is a double covering
\begin{equation}
\label{defmpx}
X \lra M'_X \coloneqq M_{X_0} = \Gr(2,V_5) \cap \P(W_0)
\end{equation}
branched along~$X_0$, and~$M_X$ is the cone over~$M'_X$ with vertex~$\bv$ (see~\cite[Section~2.5]{DK1}).\ 
Note that the space~$W_0^\perp \subset \bw2V_5^\vee$ defined above 
is the space of equations of~$M_X $ in $ \Gr(2,V_5)$ when~$X$ is ordinary, 
or the space of equations of~$M'_X $ in $ \Gr(2,V_5)$ when~$X$ is special.

In the rest of this paper, we will consider GM varieties~$X$ that satisfy the following property 
\begin{equation}\label{hh}
\tag{\mbox{${\mathbb{S}}$}} 
\parbox{.7\textwidth}{either $X$ is a smooth GM variety and~$\dim(X) \ge 3$,\\
or~$X$ is a smooth ordinary GM surface and~$M_X$ is smooth.}
\end{equation}
As we will see in Theorem~\ref{thm:gm-ld}, this property is related to the \emph{strong smoothness} property 
from~\cite[Definition~3.15]{DK1}, hence the notation.

We will often use the following simple consequence of Property~\eqref{hh}.

\begin{lemm}
\label{lem:wperp}
If a GM variety~$X$ satisfies Property~\eqref{hh}, every nonzero skew form in the space~$W_0^\perp \subset \bw2V_5^\vee$ has rank~$4$.
\end{lemm}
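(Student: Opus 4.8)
The plan is to show that a nonzero skew form $\omega \in W_0^\perp \subset \bw2 V_5^\vee$ cannot have rank $2$ (rank $0$ being excluded since $\omega\neq 0$, and odd rank being impossible for a skew form on a $5$-dimensional space). A skew form of rank $2$ on $V_5$ is, up to scalar, of the shape $\omega = v_1^\vee \wedge v_2^\vee$ for some linearly independent $v_1^\vee, v_2^\vee \in V_5^\vee$; equivalently, there is a hyperplane $V_4 \subset V_5$ (the radical is $3$-dimensional, but the relevant object is the $2$-plane spanned by $v_1,v_2$, or dually the kernel) such that $\omega$ annihilates $\bw2 V_4$ if we let $V_4 = \langle v_3, v_4, v_5\rangle$ be the kernel. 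The geometric meaning is standard: the zero locus of such a rank-$2$ Pl\"ucker quadric on $\Gr(2,V_5)$ is reducible — it is the union of the sub-Grassmannian $\Gr(2,V_4)$ and the Schubert variety of $2$-planes meeting a fixed line. In any case, I would first recall (or cite from Section~\ref{subsec:gm}, via the identification~\eqref{eq:plucker}) that $W_0^\perp$ is precisely the space of Pl\"ucker quadrics vanishing on $M_X$ (ordinary case) or on $M'_X$ (special case), and that a rank-$\le 2$ quadric in this space would force the corresponding intersection $\Gr(2,V_5)\cap \P(W_0)$ to be badly singular or reducible.

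The core of the argument is then a dimension/singularity count. If $\omega \in W_0^\perp$ has rank $2$, consider the quadric hypersurface $\{\omega = 0\} \subset \P(\bw2 V_5)$; it is singular along a linear space of codimension $2$. Its intersection with $\Gr(2,V_5)$ contains $M'_X$ (resp.\ $M_X$), and one checks that the singular locus of $\{\omega=0\}$ meets $\Gr(2,V_5)$ in a large-dimensional locus that is forced to lie in $M'_X$, contradicting the smoothness of $M_X$ (in the surface case, this is hypothesized directly in Property~\eqref{hh}) or, in the higher-dimensional case, contradicting smoothness of $X$ together with the fact that $M_X$ is the cone over a smooth variety by~\eqref{defmpx}. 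More precisely: since $M'_X \subset \Gr(2,V_5)$ is cut out by the codimension-$(5-\dim W_0)$ linear section together with the Pl\"ucker quadrics, and $\omega$ is one of these quadric equations up to the linear section, a rank-$2$ $\omega$ would make $\P(W_0) \cap \{\omega = 0\}$ meet $\Gr(2,V_5)$ in a locus whose singularities cannot be avoided by a general enough linear section of the given codimension — this is where one uses that $\dim(X)\ge 3$ (so $\dim W_0 \geq 7$, resp.\ $\geq 8$) gives enough room, or in the surface case that $M_X$ is assumed smooth of the expected dimension.

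The cleanest route, which I would actually pursue, is via Theorem~\ref{thm:gm-ld} mentioned just before the lemma, relating Property~\eqref{hh} to the strong smoothness condition of~\cite[Definition~3.15]{DK1}: strong smoothness is phrased (in \loc) exactly in terms of the Lagrangian data, and one of its equivalent formulations controls the ranks of the quadrics/skew forms in the relevant spaces. So I would reduce to citing that the strong smoothness of the Lagrangian datum associated to $X$ implies that $W_0^\perp$ contains no skew form of rank $\le 2$, hence every nonzero element has rank exactly $4$. The identity $W_0^\perp \subset \bw2 V_5^\vee$ together with the parity constraint on ranks of alternating forms then finishes it: ranks can only be $0$, $2$, or $4$, and we have excluded $0$ (nonzero) and $2$.

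The main obstacle I anticipate is making the link to strong smoothness fully precise without circularity: Theorem~\ref{thm:gm-ld} is stated but not yet proved in the excerpt, and if its proof in turn relies on this lemma the argument would be circular. If that is the case, I would instead give the direct geometric argument: assume $\omega = v_4^\vee \wedge v_5^\vee$ has rank $2$ with kernel $V_3 = \langle v_1,v_2,v_3\rangle$; then $\{\omega = 0\}\cap \Gr(2,V_5)$ is singular precisely along the locus of $2$-planes $[U_2]$ with $U_2 \subset V_4 := \ker v_5^\vee$ or $U_2 \subset V_4' := \ker v_4^\vee$ — a union of two copies of $\Gr(2,V_4)$, each of dimension $4$. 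Slicing by the linear space $\P(W_0)$ of the appropriate (small, since $\dim X \geq 3$) codimension $5 - \dim W_0 \leq 2$ cannot make this singular locus disjoint from $M'_X$ unless $\P(W_0)$ is special, which is excluded by smoothness of $X$. I would carry this out by direct computation of tangent spaces to $\Gr(2,V_5)$ at points of these sub-Grassmannians, comparing with the tangent space to the quadric $\{\omega = 0\}$, which is the bulk of the routine work and which I will not spell out here.
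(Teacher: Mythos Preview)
Your proposal rests on a misidentification: the space~$W_0^\perp \subset \bw2V_5^\vee$ is \emph{not} a space of Pl\"ucker quadrics. The Pl\"ucker quadrics form the space~$V_5 \cong \bw4V_5^\vee$ in~\eqref{eq:plucker}, which is a different space entirely. An element~$\omega \in W_0^\perp$ is a \emph{linear} form on~$\bw2V_5$ (equivalently, a skew form on~$V_5$), and~$\{\omega = 0\} \subset \P(\bw2V_5)$ is a hyperplane, not a quadric; the space~$W_0^\perp$ is the space of linear equations of~$\P(W_0)$, hence of~$M_X$ (or~$M'_X$) inside~$\Gr(2,V_5)$. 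Consequently, your sentences about ``the quadric hypersurface~$\{\omega=0\}$ singular along a linear space of codimension~$2$'' and ``rank-$2$ Pl\"ucker quadric'' do not parse. Your singular-locus computation is also off: the hyperplane section~$\Gr(2,V_5) \cap \{\omega = 0\}$ is singular exactly at those~$[U_2]$ with~$U_2 \subset \Ker(\omega)$ (this is where the embedded tangent space~$U_2 \wedge V_5$ is annihilated by~$\omega$), so for~$\omega$ of rank~$2$ with $3$-dimensional kernel~$V_3$, the singular locus is~$\Gr(2,V_3) \cong \P^2$, not a union of two copies of~$\Gr(2,V_4)$.

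Once this is corrected, your direct geometric strategy is the right one and is exactly the content of~\cite[Proposition~2.24]{DK1}: a dimensionally transverse linear section~$\Gr(2,V_5)\cap\P(W_0)$ is smooth if and only if every nonzero~$\omega \in W_0^\perp$ has rank~$4$. The paper's proof is accordingly a two-line citation: first, $M_X$ (ordinary case) or~$M'_X$ (special case) is smooth---by~\cite[Proposition~2.22]{DK1} when~$\dim(X) \ge 3$, and by the hypothesis in Property~\eqref{hh} when~$\dim(X)=2$---and then~\cite[Proposition~2.24]{DK1} gives the rank conclusion. Your detour through Theorem~\ref{thm:gm-ld} and strong smoothness is unnecessary and, as you yourself suspect, would risk circularity; the paper does not take that route.
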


\begin{proof}
As we mentioned above, $W_0^\perp$ is the space of equations of~$M_X \subset \Gr(2,V_5)$  if~$X$ is ordinary,
and of~$M'_X \subset \Gr(2,V_5)$ if~$X$ is special.\ 
If~$\dim(X) \ge 3$, the varieties~$M_X$ and~$M'_X$ are smooth by~\cite[Proposition~2.22]{DK1}, 
and if~$\dim(X) = 2$, the variety~$M_X$ is smooth by Property~\eqref{hh}.\ 
Therefore, all skew forms in~$W_0^\perp$ have rank~4 by~\cite[Proposition~2.24]{DK1}.
\end{proof}

Set~$n_0 \coloneqq n - 1$ if~$X$ is special, and~$n_0 \coloneqq n$ if~$X$ is ordinary, 
so that~$\dim(W_0^\perp) = 5 - n_0$.\ 
By Lemma~\ref{lem:wperp}, for every nonzero skew form~$\omega \in W_0^\perp$,
the wedge square~$\omega \wedge \omega$ is a nonzero element of~$\bw4V_5^ \vee \cong V_5$
generating the kernel of~$\omega$;
this implies that the map
\begin{equation}
\label{eq:kappa}
\upkappa \colon \PP(W_0^\perp) \lra \PP(V_5),
\qquad 
\omega \longmapsto \omega \wedge \omega
\end{equation}
is well defined.

\begin{coro}
\label{cor:kappa}
Let~$X$ be a GM variety of dimension~$n$ satisfying Property~\eqref{hh}.\ 
 The map~$\upkappa$ is injective and it is induced by a linear map
\begin{equation*}
\label{eq:tilde-kappa}
\tilde\upkappa \colon \Sym^2(W_0^\perp) \lra V_5
\end{equation*}
which is injective if~\mbox{$n_0 \ge 3$} and surjective if~\mbox{$n_0 = 2$}.\ 
Moreover, if~\mbox{$n_0 = 2$}, the kernel of~$\tilde\upkappa$ is spanned by a nondegenerate quadratic tensor.
\end{coro}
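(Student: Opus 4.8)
The plan is to factor $\upkappa$ through its polarization and reduce everything to a single geometric statement about the kernels of the skew forms in $W_0^\perp$; the cases $n_0\in\{4,5\}$ are trivial ($\dim W_0^\perp\le 1$ and $\omega\wedge\omega\neq 0$ by Lemma~\ref{lem:wperp}), so the content is in $n_0\in\{2,3\}$. First, the wedge map $\bw2{V_5^\vee}\times\bw2{V_5^\vee}\to\bw4{V_5^\vee}$, $(\omega,\omega')\mapsto\omega\wedge\omega'$, is symmetric bilinear, so restricting to $W_0^\perp$ and polarizing produces a linear map $\tilde\upkappa\colon\Sym^2(W_0^\perp)\to\bw4{V_5^\vee}\isom V_5$ with $\tilde\upkappa(\omega\cdot\omega)=\omega\wedge\omega$; thus $\upkappa=\P(\tilde\upkappa)\circ v_2$, where $v_2$ is the Veronese embedding of $\P(W_0^\perp)$. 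Recall (from the discussion preceding the statement) that under $\bw4{V_5^\vee}\isom V_5$ the vector $\omega\wedge\omega$ spans $\ker(\omega)$.

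The heart of the matter is the following claim, which I would establish first: \emph{for every $2$-dimensional subspace $U\subset W_0^\perp$, the lines $\ker(\omega)\subset V_5$ $($for $0\neq\omega\in U)$ do not all lie in a common $2$-dimensional subspace of $V_5$} --- equivalently, the points $\upkappa(\omega)$, $\omega\in\P(U)$, span a projective plane in $\P(V_5)$, equivalently $\tilde\upkappa|_{\Sym^2 U}$ is injective. Granting this, the corollary follows formally. If $n_0\ge 3$, then $\dim W_0^\perp\le 2$, so $\tilde\upkappa$ is injective (apply the claim to $U=W_0^\perp$), hence $\P(\tilde\upkappa)$ is a linear embedding and $\upkappa$ is injective. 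If $n_0=2$, then $\dim\Sym^2(W_0^\perp)=6>5$, so $\ker\tilde\upkappa\neq 0$; as $U$ ranges over the $2$-planes of $W_0^\perp$, the subspaces $\Sym^2(U)\subset\Sym^2(W_0^\perp)$ cover exactly the locus of quadratic tensors of rank $\le 2$, so the claim forces $\ker\tilde\upkappa$ to avoid that locus, i.e.\ every nonzero element of $\ker\tilde\upkappa$ is nondegenerate. A linear subspace of $\Sym^2(W_0^\perp)$ whose nonzero elements are all nondegenerate has dimension $\le 1$ (otherwise the determinant, a nonzero cubic form, would have a zero on its projectivization), so $\ker\tilde\upkappa$ is $1$-dimensional, spanned by a nondegenerate tensor, and $\tilde\upkappa$ is surjective. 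Even in this case $\upkappa$ stays injective: $\P(\tilde\upkappa)$ is the projection of $\P(\Sym^2 W_0^\perp)$ from the point $[\ker\tilde\upkappa]$, which --- being of rank $3$ --- lies off the secant variety of the Veronese surface (the rank-$\le 2$ locus), so the projection is injective on $v_2(\P(W_0^\perp))$.

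To prove the claim, fix $U=\langle\omega_0,\omega_1\rangle$ and suppose all $\ker(\omega)$, $0\neq\omega\in U$, lie in a plane $P\subset V_5$. Then $P$ is isotropic for every $\omega\in U$, since $\omega|_P$ is a skew form on the $2$-plane $P$ with $\ker(\omega|_P)\supseteq\ker(\omega)\neq 0$, hence degenerate, hence $0$. Writing $A_\omega\colon V_5\to V_5^\vee$ for the map attached to $\omega$, isotropy gives $A_\omega(P)\subseteq P^\perp$, and $\ker(A_\omega|_P)=P\cap\ker(\omega)=\ker(\omega)$ is $1$-dimensional, so $A_\omega|_P\colon P\to P^\perp$ has rank $1$. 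Thus $\omega\mapsto A_\omega|_P$ is a linear map from the $2$-dimensional $U$ into $\Hom(P,P^\perp)$ with all images of rank $\le 1$; writing the images of $\omega_0,\omega_1$ as $\psi_0\otimes\phi_0$ and $\psi_1\otimes\phi_1$, the requirement that $\psi_0\otimes\phi_0+t\,\psi_1\otimes\phi_1$ have rank $\le 1$ for all $t$ forces $\psi_0\parallel\psi_1$ or $\phi_0\parallel\phi_1$. In the first case $\ker(A_\omega|_P)=\ker(\psi_0)$ is a fixed line for all $\omega$ (it cannot degenerate, as $\ker(\omega)$ is always $1$-dimensional), so all $\ker(\omega)$ coincide, say with $\C e$; then $U$ lies in the $6$-dimensional space of skew forms vanishing on $\C e\otimes V_5$, on which the rank-$<4$ locus is the zero set of the nonzero Pfaffian of the induced form on $V_5/\C e\isom\C^4$, and over $\C$ a quadratic form on a $\ge 2$-dimensional space has a nonzero zero (or vanishes identically), yielding a nonzero $\omega\in U$ of rank $<4$, contradicting Lemma~\ref{lem:wperp}. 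In the second case, set $V_4:=\ker(\phi_0)$; then $P\subseteq V_4$ and $\omega(P,V_4)=0$ for all $\omega\in U$, because $A_\omega(P)\subseteq\C\phi_0$. Choosing coordinates with $P=\langle e_1,e_2\rangle$ and $V_4=\langle e_1,\dots,e_4\rangle$, this confines $U$ to the $5$-dimensional space $\langle e_1^\vee\wedge e_5^\vee,\,e_2^\vee\wedge e_5^\vee,\,e_3^\vee\wedge e_4^\vee,\,e_3^\vee\wedge e_5^\vee,\,e_4^\vee\wedge e_5^\vee\rangle$, and for $\omega=g\wedge e_5^\vee+c\,e_3^\vee\wedge e_4^\vee$ in that space one computes $\omega\wedge\omega=2c\,g\wedge e_3^\vee\wedge e_4^\vee\wedge e_5^\vee$, which is $0$ unless $c\neq 0$; hence $\omega\mapsto c$ is a linear form not vanishing on $U\setminus\{0\}$ (again by Lemma~\ref{lem:wperp}), forcing $\dim U\le 1$, a contradiction. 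This proves the claim.

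The one delicate point is the claim; everything after it is bookkeeping with Veronese embeddings and the determinant/secant-variety dictionary for symmetric tensors. Within the claim, the crux is recognizing that isotropy of $P$ turns $U$ into a pencil of rank-one operators $P\to P^\perp$, which is rigid enough to force the alternative ``constant kernel'' / ``constant image''; each branch is then killed by an explicit rank computation that uses only that every nonzero form in $W_0^\perp$ has rank $4$.
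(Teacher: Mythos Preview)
Your proof is correct, but it takes a longer route than the paper's. The paper first proves injectivity of~$\upkappa$ by a one-line argument: if~$\upkappa(\omega_1)=\upkappa(\omega_2)$, then~$\omega_1,\omega_2$ share a kernel vector, so some nontrivial linear combination is decomposable, contradicting Lemma~\ref{lem:wperp}. All properties of~$\tilde\upkappa$ then follow from the Veronese picture: a rank-$1$ tensor in~$\Ker(\tilde\upkappa)$ would make~$\upkappa$ undefined at the corresponding point, and a rank-$2$ tensor would force the restriction of~$\upkappa$ to the corresponding~$\P^1\subset\P(W_0^\perp)$ to be a degree-$2$ map onto a line, contradicting injectivity. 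Your approach instead proves directly the stronger claim that~$\tilde\upkappa\vert_{\Sym^2 U}$ is injective for every $2$-plane~$U$, which is what necessitates the explicit Case~2 coordinate computation. But observe that your Case~1 argument (a common kernel forces a rank drop via the Pfaffian on~$V_5/\C e$) is precisely the paper's injectivity argument for~$\upkappa$; and once~$\upkappa$ is known to be injective, your full claim follows immediately, since an injective degree-$2$ morphism~$\P^1\to\P(V_5)$ cannot land in a projective line. So Case~2, while correct, can be bypassed entirely. The paper's organization is more economical; yours has the virtue of making the linear algebra of the pencil of skew forms more explicit.
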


\begin{proof}
The map~$\upkappa$ is quadratic by definition~\eqref{eq:kappa}, hence it is induced by a linear map~$\tilde\upkappa$.

Assume $\upkappa(\omega_1) = \upkappa(\omega_2)$.\ 
The  nonzero skew forms~$\omega_1$ and~$\omega_2$ then have a common kernel vector.\ 
Consequently, an appropriate nontrivial linear combination of these forms is decomposable, 
hence must vanish by Lemma~\ref{lem:wperp}.\
This proves that~$\upkappa$ is injective.

Next, we assume~$n_0 \ge 3$ and prove that~$\tilde\upkappa$ is injective.\
If~$n_0 = 5$, the space~$\Sym^2(W_0^\perp)$ is zero, so there is nothing to prove.\ 
If~$n_0 = 4$, the space~$\Sym^2(W_0^\perp)$ is $1$-dimensional 
and the map is nonzero since~$\upkappa$ is well defined.\ 
If~$n_0 = 3$, the space~$\Sym^2(W_0^\perp)$ is $3$-dimensional 
and~$\upkappa$ is the composition of the second Veronese embedding~$\P^1 \to \P^2$ 
and a linear map~$\P^2 \dashrightarrow \P^4$ induced by~$\tilde\upkappa$.\ 
If the latter is noninjective, either~$\upkappa$ is not regular, 
or it factors through a double cover~$\P^1 \to \P^1$.\
In both cases, this contradicts the injectivity of~$\upkappa$.

Finally, consider the case~$n_0 = 2$.\ 
The space~$\Sym^2(W_0^\perp)$ is $6$-dimensional and it is enough to show 
that the kernel of~$\tilde\upkappa$ contains no tensors of rank~$1$ or~$2$.\ 
If~$\Ker( \tilde\upkappa)$ contains a tensor of rank~$1$,  
$\upkappa$ is not defined at the corresponding point of~$\P(W_0^\perp)$,
and if it contains a tensor of rank~$2$,  
the restriction of~$\upkappa$ to the corresponding~$\P^1 \subset \P(W_0^\perp)$ 
factors through a double covering~$\P^1 \to \P^1$, hence~$\upkappa$ is not injective.\ 
So, both cases are impossible.
\end{proof}

The dual space of~$\Sym^2(W_0^\perp)$ carries a natural family of quadratic forms.\ 
We denote by 
\begin{equation*}
\sD^{\ge k}_{W_0^\perp} \subset \Sym^2\!(W_0^\perp)^\vee
\qquad\text{and}\qquad 
\uvt_{\sD} \colon \tsD^{\ge k}_{W_0^\perp} \lra \sD^{\ge k}_{W_0^\perp} 
\end{equation*}
the corank $\ge k$ locus and its double covering constructed in~\cite[Theorem~3.1]{DK3}.\ 
When~$k = 0$, the covering $\uvt_{\sD}$ is the usual double covering branched along the affine hypersurface~$\sD^{\ge 1}_{W_0^\perp}$.\ 
When~$k = \dim(W_0^\perp) - 1$, it was described in~\cite[Lemma~3.4]{DK3}.\ 
In the next lemma, we describe the case where~$k = 1$ and~$\dim(W_0^\perp) = 3$  
(see~\cite[Lemma~4.1]{Repwcube} for a generalization).
To simplify notation, we replace~$W_0^\perp$ by an abstract 3-dimensional space~$V_3$.

\begin{lemm}
\label{lem:tsd}
The subscheme~$\sD_{V_3}^{\ge 1} \subset \Sym^2\!V_3^\vee$ is the affine symmetric determinantal cubic hypersurface 
and the corresponding double covering is given by the morphism
\begin{equation}
\label{defdc}
\uvt_{\sD} \colon \tsD_{V_3}^{\ge 1} \coloneqq 
\{ \mu \in V_3^\vee\otimes V_3^\vee \mid  \rank(\mu)\le 1 \}  \thlra 
\sD_{V_3}^{\ge 1} \subset  \Sym^2\!V_3^\vee,\qquad 
\mu\longmapsto \mu+\mu^T.
\end{equation}
The ramification locus of~$\uvt_\sD$ is the subvariety~$\{ \mu \mid \mu = \mu^T\}$
and the branch locus is~$\sD_{V_3}^{\ge 2}$.
\end{lemm}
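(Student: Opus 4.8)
We must identify three things: the corank-$\ge 1$ locus $\sD_{V_3}^{\ge 1}\subset\Sym^2 V_3^\vee$, its double covering $\uvt_\sD$, and the ramification and branch loci.

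First I would recall the construction of $\sD^{\ge k}_{W_0^\perp}$ and $\uvt_\sD$ from \cite[Theorem~3.1]{DK3}: on the dual of $\Sym^2 V_3^\vee$ — which, using the canonical identification $\Sym^2(V_3^\vee)^\vee\cong\Sym^2 V_3$, is the space of quadratic forms on $V_3^\vee$ — one has the tautological family of quadratic forms, and $\sD^{\ge k}_{V_3}$ is the locus where the quadratic form has corank $\ge k$ in the appropriate normalization; here $\dim V_3=3$, so corank $\ge 1$ means the symmetric $3\times 3$ matrix is singular, i.e. $\sD^{\ge 1}_{V_3}=\{q\in\Sym^2 V_3^\vee \mid \det q = 0\}$, which is precisely the affine symmetric determinantal cubic. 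This establishes the first assertion. The double cover constructed in \loc\ is the relative variety of maximal isotropic (or maximal-rank-complement) data; concretely, over a symmetric form $q$ of corank exactly $1$ it records a choice of generator of the (co)kernel line, and the content of \cite[Lemma~3.4]{DK3} (the case $k=\dim-1$, which coincides with $k=1$ here since $\dim V_3 - 1 = 2 \ne 1$ — so actually one must check we are in the regime the lemma addresses, not that extreme case) gives an explicit model.

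The key step is to verify that the explicit morphism \eqref{defdc} — namely $\mu\mapsto\mu+\mu^T$ from $\{\mu\in V_3^\vee\otimes V_3^\vee\mid \rank\mu\le 1\}$ to $\Sym^2 V_3^\vee$ — realizes this double covering. A rank-$\le 1$ tensor is $\mu = \xi\otimes\eta$ with $\xi,\eta\in V_3^\vee$, so $\mu+\mu^T = \xi\otimes\eta + \eta\otimes\xi$, the classical symmetric form of rank $\le 2$; its image is exactly the rank-$\le 2$ symmetric forms, which is the cubic hypersurface $\{\det = 0\}$. For a symmetric form of rank exactly $2$ with kernel line $\ell\subset V_3$, a decomposition $\xi\eta+\eta\xi$ corresponds to a choice of the unordered pair of lines spanned by $\xi$ and $\eta$ in $\P(\ell^\perp)^\vee$... more efficiently: the fiber is identified with the two ways of writing a rank-$2$ quadratic form as a product of two linear forms up to the involution swapping $\xi\leftrightarrow\eta$ and rescaling, which after fixing scale is a $\Z/2$; thus the map is generically $2:1$. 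I would check properness and the degree by this generic-fiber computation, and check the source is irreducible of the right dimension ($\rank\le 1$ tensors in a $9$-dimensional space form a $5$-dimensional irreducible variety — cone over $\P^2\times\P^2$ — matching $\dim\sD^{\ge1}_{V_3}=5$). Then I would invoke the uniqueness/characterization of the double covering in \cite[Theorem~3.1]{DK3} — it is determined up to isomorphism by being finite of degree $2$, étale over the corank-exactly-$1$ locus, with the prescribed behaviour — to conclude that \eqref{defdc} \emph{is} $\uvt_\sD$.

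Finally, for the ramification locus: $\mu+\mu^T$ has a coincidence of the two preimages exactly when $\xi\otimes\eta$ and $\eta\otimes\xi$ give the same symmetric tensor after the identification, i.e. when $\mu=\mu^T$, so $\{\mu\mid\mu=\mu^T\}\cong\{\text{symmetric rank}\le1\}$ is the ramification locus; its image under $\mu\mapsto 2\mu$ is the rank-$\le 1$ symmetric forms, and a direct check identifies this with $\sD^{\ge 2}_{V_3}$ (corank $\ge 2$, i.e. rank $\le 1$), giving the branch locus. The main obstacle is the middle step: matching the abstract double cover of \cite[Theorem~3.1]{DK3} with the concrete formula \eqref{defdc}, which requires either unwinding the definition in \loc\ carefully or citing a uniqueness statement strong enough to pin it down from the generic fiber and the branch behaviour alone; everything else is linear algebra of small symmetric matrices.
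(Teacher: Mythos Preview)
Your outline handles the elementary parts correctly: the identification of~$\sD^{\ge 1}_{V_3}$ with the singular-determinant cubic, the generic $2:1$ behaviour of~$\mu\mapsto\mu+\mu^T$, and the computation of the ramification and branch loci are all fine linear algebra. You also correctly isolate the one nontrivial step, namely matching the abstract double cover of~\cite[Theorem~3.1]{DK3} with the explicit map~\eqref{defdc}.

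Where your plan is thin is precisely here. Of your two proposed strategies, the ``uniqueness'' route does not work as stated: \cite[Theorem~3.1]{DK3} does not contain a uniqueness statement characterizing its double cover by the branch locus alone, and in general a double cover is not determined by its branch divisor (two covers differ by a $2$-torsion line bundle). So you would still need to compute something specific to the construction. The paper takes your other route (``unwind the definition''), but in a concrete way you did not spell out: it first identifies~\eqref{defdc} with the quotient of~$\tsD^{\ge 1}_{V_3}$ by the involution~$\mu\mapsto\mu^T$ (swapping the tensor factors of~$V_3^\vee\otimes V_3^\vee$), which gives the decomposition~$(\uvt_\sD)_*\cO\cong\C[\tsD^{\ge 1}_{V_3}]_+\oplus\C[\tsD^{\ge 1}_{V_3}]_-$ into invariants and antiinvariants. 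It then checks that the image of the universal morphism
\begin{equation*}
q\vert_{\sD^{\ge 1}_{V_3}}\colon V_3\otimes\cO_{\sD^{\ge 1}_{V_3}}\lra V_3^\vee\otimes\cO_{\sD^{\ge 1}_{V_3}}
\end{equation*}
is exactly this module~$\C[\tsD^{\ge 1}_{V_3}]_+\oplus\C[\tsD^{\ge 1}_{V_3}]_-$ with its natural algebra structure. Since the double cover of~\cite[Theorem~3.1]{DK3} is built precisely from this image sheaf (it is the reflexive sheaf appearing in that construction), this pins down the identification. In short: replace your appeal to an unspecified uniqueness principle with the computation of~$\Ima(q\vert_{\sD^{\ge 1}_{V_3}})$ as a $\C[\sD^{\ge 1}_{V_3}]$-module, which is the actual content of the argument (and is analogous to the proof of~\cite[Lemma~3.4]{DK3}).
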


\begin{proof}
The proof is analogous to the proof of~\cite[Lemma~3.4]{DK3}.\ 
First, we identify~\eqref{defdc} with the quotient for the $\Z/2$-action on~$\tsD_{V_3}^{\ge 1}$ 
by permutation of the factors of~\mbox{$V_3^\vee \otimes V_3^\vee$}.\
This induces a direct sum decomposition of the sheaf~$(\uvt_{\sD})_*\cO$ 
with summands corresponding to the invariant and antiivariant parts of the ring of regular functions~$\C[\tsD_{V_3}^{\ge 1}]$,
so that the sheaf of functions of~$\sD_{V_3}^{\ge 1}$ corresponds to the invariant part~$\C[\tsD_{V_3}^{\ge 1}]_+$
and the reflexive sheaf associated with the double cover~\eqref{defdc} corresponds to the antiinvariant part~$\C[\tsD_{V_3}^{\ge 1}]_-$.
Next, we check that the image of the map
\begin{equation*}
q\vert_{\sD_{V_3}^{\ge 1}} \colon V_3 \otimes \cO_{\sD_{V_3}^{\ge 1}} \to V_3^\vee \otimes \cO_{\sD_{V_3}^{\ge 1}}
\end{equation*}
given by the restriction of the universal family of quadrics over~$\Sym^2\!V_3^\vee$
corresponds to the module~$\C[\tsD_{V_3}^{\ge 1}] \cong \C[\tsD_{V_3}^{\ge 1}]_+ \oplus \C[\tsD_{V_3}^{\ge 1}]_-$
over~$\C[\sD_{V_3}^{\ge 1}] = \C[\tsD_{V_3}^{\ge 1}]_+$ with its natural algebra structure
and conclude from this that the double cover from~\cite[Theorem~3.1]{DK3}
coincides with~$\uvt_{\sD}$.
\end{proof}

\begin{rema}
\label{rem:uvt-psd}
The map~\eqref{defdc} being homogeneous, it induces a double covering 
\begin{equation}
\label{eq:ptsd}
\uvt_{\P\sD} \colon \P(\tsD_{V_3}^{\ge 1}) \lra  \P(\sD_{V_3}^{\ge 1}) \subset \P(\Sym^2\!V_3^\vee) 
\end{equation}
between the associated projective varieties, where~$\P(\tsD_{V_3}^{\ge 1}) \cong \P(V_3^\vee) \times \P(V_3^\vee)$ is   the Segre variety 
and~$\P(\sD_{V_3}^{\ge 1})$ is the symmetric determinantal cubic hypersurface in~$\P(\Sym^2\!V_3^\vee) = \P^5$.\ 
The ramification locus of~$\uvt_{\P\sD}$ is the diagonal~$\P(V_3^\vee) \subset \P(V_3^\vee) \times \P(V_3^\vee)$
and the branch locus is the Veronese surface~$\P(V_3^\vee) \subset \P(\Sym^2\!V_3^\vee)$.
\end{rema}

\subsection{Lagrangian data, EPW varieties and their double covers}
\label{subsec:ld}

In~\cite{DK1}, we associated with a GM variety~$X$ a subspace
\begin{equation*}
A \subset \bw3V_6
\end{equation*}
which is Lagrangian with respect to the skew-symmetric wedge product on~$\bw3V_6$
and showed that many properties of~$X$ are controlled by~$A$.\ 
In particular, by~\cite[Theorem~3.16 and Remark~3.17]{DK1}, 
a GM variety satisfies Property~\eqref{hh} if and only if~$A$ {\sf contains no decomposable vectors}, that is, if
\begin{equation}
\label{eq:no-decomposable}
\P(A) \cap \Gr(3,V_6) = \vide,
\end{equation}
where the intersection is taken in~$\P(\bw3V_6)$.\

The main classification result of~\cite{DK1} (which was promoted to a description of moduli stacks in~\cite{DK4}) 
shows that a GM variety~$X$ of a given type (ordinary or special) is determined by the pair~$(A,V_5)$
consisting of a Lagrangian subspace~$A \subset \bw3V_6$ and a hyperplane~$V_5 \subset V_6$.\ 
More precisely, we have the following.

\begin{theo}[{\cite[Section~3]{DK1}}]
\label{thm:gm-ld}
For each integer~\mbox{$\ell\in\{0,1,2,3\}$}, there are explicit bijections between the following three sets:
\begin{aenumerate}
\item
\label{it:lag}
the set of $\PGL(V_6)$-orbits of pairs~$(A,V_5)$ such that~$A$ contains no decomposable vectors and~$\dim(A \cap \bw3V_5) = \ell$;
\item 
\label{it:ord}
the set of isomorphism classes of smooth ordinary GM varieties~$X$ of dimension~$5 - \ell$ satisfying Property~\eqref{hh};
\item 
\label{it:spe}
the set of isomorphism classes of smooth special GM varieties~$X$ of dimension~$6 - \ell$.
\end{aenumerate}
The bijection from  set~\ref{it:spe} to  set~\ref{it:ord} is given by the operation~$X \mapsto X_0$ defined in~\eqref{eq:x-x0}.\ 
In particular, if~$X$ is special,~$X_0$ has the same Lagrangian data~$(A, V_5)$ as~$X$.
\end{theo}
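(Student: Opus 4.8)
The plan is to establish the bijection between sets~\ref{it:lag} and~\ref{it:ord} by exhibiting mutually inverse constructions, and then to reduce the case of set~\ref{it:spe} to set~\ref{it:ord} via the operation $X\mapsto X_0$ of~\eqref{eq:x-x0}. Several ingredients are available: the association $X\mapsto A$ attaching a Lagrangian to a GM variety, the equivalence between Property~\eqref{hh} and the no-decomposables condition~\eqref{eq:no-decomposable}, and the associated smoothness criterion, these last two being~\cite[Theorem~3.16 and Remark~3.17]{DK1}. So the substance of the proof is the bijectivity of the constructions, the dimension bookkeeping $\dim(A\cap\bw3V_5)=\ell \leftrightarrow \dim X=5-\ell$ (ordinary) or $6-\ell$ (special), and the identification of the remaining ambiguities with the $\PGL(V_6)$-action.

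\emph{Reduction of the special case.} By~\cite[Proposition~2.30]{DK1}, if $X$ is a smooth special GM variety of dimension $6-\ell$, then $X_0=X\cap\P(W_0)$ is a smooth ordinary GM variety of dimension $5-\ell$; it satisfies~\eqref{hh}, the clause about surfaces being what is needed when $\ell=3$, since then smoothness of $X$ forces that of $M_{X_0}=M'_X$. I assign to $X$ the Lagrangian data of $X_0$. Conversely, to a smooth ordinary GM variety $X'$ of dimension $5-\ell$ satisfying~\eqref{hh}, I attach the GM variety $X$ whose Grassmannian hull is the cone over $M_{X'}$ and which is the double cover of $M_{X'}$ branched along $X'=\{Q'=0\}\subset M_{X'}$, where $Q'$ is the well-defined restriction to $M_{X'}$ of a quadric defining $X'$ — well-defined because the Plücker quadrics vanish on $\Gr(2,V_5)\supset M_{X'}$. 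One checks $X$ is a smooth special GM variety of dimension $6-\ell$ with $X_0\cong X'$, and that these two operations are mutually inverse on isomorphism classes, since a smooth special GM variety is recovered from the intrinsic pair $(M_{X_0},X_0)$. It then remains to treat sets~\ref{it:lag} and~\ref{it:ord}.

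\emph{The ordinary bijection.} To $(A,V_5)$ with $\P(A)\cap\Gr(3,V_6)=\emptyset$ and $\dim(A\cap\bw3V_5)=\ell$, one associates a GM variety as follows: under a splitting $\bw3V_6\cong\bw2V_5\oplus\bw3V_5$ and a volume form on $V_5$ (both absorbed by the residual $\PGL(V_6)$-action), the wedge form becomes the standard pairing of $\bw2V_5$ with $\bw3V_5$, and $A$ sits in an exact sequence $0\to A\cap\bw3V_5\to A\to W_0\to 0$ with $W_0\subset\bw2V_5$ of dimension $10-\ell$; the Lagrangian condition identifies $A\cap\bw3V_5$, via $\bw3V_5\cong\bw2V_5^\vee$, with the annihilator $W_0^\perp$ of $W_0$, and forces the extension class to be a symmetric map $W_0\to W_0^\vee$, i.e.\ a quadric $Q$ on $\P(W_0)$; one sets $X_{A,V_5}\coloneqq\Gr(2,V_5)\cap\P(W_0)\cap\{Q=0\}$. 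In the other direction, to a smooth ordinary GM variety $X$ satisfying~\eqref{hh} one attaches the Plücker hyperplane $V_5\subset V_6$ of~\eqref{eq:plucker-point} and the Lagrangian $A$ of~\cite{DK1}, which can be described by the same exact sequence with $W_0$ the defining subspace of $X$ and $W_0^\perp$ its annihilator. The checks to run: $A$ is Lagrangian, which follows from symmetry of $Q$ together with $\dim A=\dim W_0+\dim W_0^\perp=10$; $A$ is independent of the choice of quadric defining $X$ (which is only well-defined modulo Plücker quadrics); $\dim(A\cap\bw3V_5)=\dim W_0^\perp=5-\dim X$, so $\dim X_{A,V_5}=5-\ell$; $A$ contains no decomposable vectors, equivalently $X_{A,V_5}$ satisfies~\eqref{hh}, by~\cite[Theorem~3.16 and Remark~3.17]{DK1}; $X_{A,V_5}$ is smooth, ordinary, of the claimed dimension; and the two assignments are mutually inverse, the only residual indeterminacy being the $\PGL(V_6)$-action, through which isomorphisms of GM varieties act. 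Together with the reduction above, this yields all three bijections and their compatibility with $X\mapsto X_0$.

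\emph{Main obstacle.} The heart of the argument is showing that $X_{A,V_5}$ is smooth of the expected dimension and that the round trip recovers $A$. The smoothness step is the translation of $\P(A)\cap\Gr(3,V_6)=\emptyset$ into control of the rank of the six-dimensional linear system of quadrics cutting out $X_{A,V_5}$ along that variety — the genuinely hard input, which is exactly~\cite[Theorem~3.16]{DK1}. The bijectivity is mostly a matter of unwinding the two exact-sequence constructions, but it is delicate at the boundary value $\ell=3$ (ordinary GM surfaces versus special GM threefolds): there one must invoke the smoothness of $M_X$ — precisely why Property~\eqref{hh} carries its extra clause — and the degenerate behaviour of the linear map $\tilde\upkappa\colon\Sym^2(W_0^\perp)\to V_5$ of Corollary~\ref{cor:kappa}, so that the geometry of $M_X$ inside $\Gr(2,V_5)$ enters the reconstruction directly.
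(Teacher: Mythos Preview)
The paper does not give a proof of this theorem at all: it is stated with the attribution \cite[Section~3]{DK1} and then the text moves on. So there is nothing in the present paper to compare your proposal against. Your outline is a reasonable summary of the original argument in~\cite{DK1}: the construction of~$A$ from~$(W_0,Q)$ via the Lagrangian-extension picture, the inverse construction of~$X_{A,V_5}$, the identification $\dim(A\cap\bw3V_5)=5-\dim X$, the appeal to~\cite[Theorem~3.16]{DK1} for the smoothness/no-decomposables equivalence, and the reduction of the special case to the ordinary one via~$X\mapsto X_0$ using~\cite[Proposition~2.30]{DK1}.

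A couple of small points to watch if you flesh this out. First, the splitting $\bw3V_6\cong\bw2V_5\oplus\bw3V_5$ depends on a choice of~$v_0\in V_6\setminus V_5$ (not just a volume form on~$V_5$), and showing that the resulting isomorphism class of~$X_{A,V_5}$ is independent of this choice is part of the work in~\cite{DK1}; your phrase ``absorbed by the residual $\PGL(V_6)$-action'' is where this hides. Second, the converse to the special--ordinary reduction needs a little more than you say: you must check that the double cover you build from~$(M_{X'},X')$ is indeed a GM variety in the sense of the definition (that is, cut out by a quadric in the cone), not merely an abstract double cover, and that isomorphisms of special GM varieties correspond bijectively to isomorphisms of the pairs~$(M_{X_0},X_0)$. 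These are handled in~\cite[Section~2.5 and Proposition~2.30]{DK1}, which you already cite.
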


Many geometric properties of a GM variety can be explained in terms of 
the {\sf Eisenbud--Popescu--Walter} ({\sf EPW}) schemes
\begin{equation}
\label{ya} 
\begin{array}{rlll}
\sY^{\ge \ell}_{A} 	&\coloneqq \{ U_1 \subset V_6 \mid \dim(A \cap (U_1 \wedge \bw2V_6)) \ge \ell \} 
&& \subset \PP(V_6),
\\
\sY^{\ge \ell}_{\Ap} 	&\coloneqq \{ U_5 \subset V_6 \mid \dim(A \cap \bw3U_5) \ge \ell \} 
&&\subset \PP(V_6^\vee) = \Gr(5,V_6),
\end{array}
\end{equation} 
associated with a Lagrangian subspace~$A \subset \bw3V_6$ 
and its orthogonal complement~\mbox{$\Ap \subset \bw3V_6^\vee$} (which is also Lagrangian).\
These schemes are known as {\sf EPW varieties} and {\sf dual EPW varieties}, respectively.\ 

When condition~\eqref{eq:no-decomposable} is satisfied, 
the geometry of EPW varieties was described by O'Grady (\cite[Theorem~B.2]{DK1}).

\begin{theo}[O'Grady]
\label{thm:og}
Let~$A \subset \bw3V_6$ be a Lagrangian subspace with no decomposable vectors, that is, assume~\eqref{eq:no-decomposable}.\ 
Then,
\begin{itemize}
\item 
$\sY_A^{\ge 1}$ and~$\sY_{A^\perp}^{\ge 1}$ are integral normal sextic hypersurfaces;
\item 
$\sY_A^{\ge 2} = \Sing(\sY_A^{\ge 1})$ and $\sY_{A^\perp}^{\ge 2} = \Sing(\sY_{A^\perp}^{\ge 1})$ 
are integral   normal surfaces of degree~$40$;
\item 
$\sY_A^{\ge 3} = \Sing(\sY_A^{\ge 2})$ and $\sY_{A^\perp}^{\ge 3} = \Sing(\sY_{A^\perp}^{\ge 2})$ are finite and smooth schemes, 
and are empty for general $A$;
\item $\sY_A^{\ge 4}$ and $\sY_{A^\perp}^{\ge 4}$ are empty.
\end{itemize}
\end{theo}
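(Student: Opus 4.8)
The plan is to realize both families of EPW schemes as Lagrangian degeneracy loci, to compute their fundamental classes by Chern‑class formulas, and to bound their dimensions using the hypothesis~\eqref{eq:no-decomposable} in an essential way; this is O'Grady's theorem, and what follows is the argument that becomes available once that hypothesis is imposed (see~\cite{DK1} for precise references). On $\P(V_6)\cong\P^5$, let $0\to U_1\to V_6\otimes\cO\to Q\to0$ be the tautological sequence (so $\rank Q=5$) and set $\cA_{[U_1]}\coloneqq U_1\wedge\bw2{V_6}$, a rank‑$10$ subbundle of $\bw3{V_6}\otimes\cO$ which is Lagrangian for the symplectic form on $\bw3{V_6}$ given by the wedge product and a volume form; together with the constant Lagrangian subbundle $A\otimes\cO$ this realizes
\[
\sY_A^{\ge\ell}=\{[U_1]\mid\dim(A\cap\cA_{[U_1]})\ge\ell\}
\]
as the locus where the induced map $A\otimes\cO\to\bw3{V_6}\otimes\cO/\cA_{[U_1]}\cong\bw3Q$ of rank‑$10$ bundles drops rank by at least $\ell$. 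The dual family $\sY_{\Ap}^{\ge\ell}\subset\P(V_6^\vee)$ is obtained by the same construction from $\Ap$, which is again Lagrangian with no decomposable vectors by~\cite{DK1}, and everything below applies to it verbatim. Two features will be used repeatedly: first, the self‑duality of the situation under $A\leftrightarrow\Ap$ makes the degeneracy map locally representable by a \emph{symmetric} matrix of regular functions, so near a point of $\sY_A^{\ge\ell}\setminus\sY_A^{\ge\ell+1}$ the scheme $\sY_A^{\ge\ell}$ is analytically a smooth factor times a symmetric determinantal variety $\{\rank\le\ell\}$ in a $\binom{\ell+1}{2}$‑dimensional space of symmetric matrices — hence $\sY_A^{\ge\ell}$ is Cohen--Macaulay wherever it has the expected codimension $\binom{\ell+1}{2}$, and, since $\Sing\{\rank\le\ell\}=\{\rank\le\ell-1\}$, one obtains the chain $\sY_A^{\ge\ell+1}=\Sing(\sY_A^{\ge\ell})$ as soon as dimensions are known; second, $\det(\bw3Q)=\cO(6)$, so the Chern‑class formula for symmetric degeneracy loci gives $\deg\sY_A^{\ge1}=6$ and $\deg\sY_A^{\ge2}=40$.

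The heart of the proof is the dimension estimate, since the expected codimensions $1,3,6,10$ would force $\sY_A^{\ge3}$ and $\sY_A^{\ge4}$ to be empty, which need not hold. The cleanest case is $\sY_A^{\ge4}=\vide$: if $\xi_0,\dots,\xi_3\in A\cap\cA_{[U_1]}$ were independent, write $\xi_i=v\wedge\beta_i$ with $U_1=\langle v\rangle$; their images $\bar\beta_i$ in $\bw2{(V_6/U_1)}\cong\bw2{\C^5}$ span a $\P^3\subset\P^9=\P(\bw2{\C^5})$, which meets $\Gr(2,5)$ because $\dim\Gr(2,5)=6$ is at least the codimension $6$ of a $\P^3$ in $\P^9$; a decomposable combination $\sum c_i\bar\beta_i$ then lifts to a decomposable vector $\sum c_i\xi_i=v\wedge(\sum c_i\beta_i)$ of $A$, contradicting~\eqref{eq:no-decomposable}. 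Hence $\dim(A\cap\cA_{[U_1]})\le3$ everywhere. For the bounds $\dim\sY_A^{\ge2}\le2$ and $\dim\sY_A^{\ge3}\le0$ I would analyze the incidence variety $\{([U_1],[\xi])\mid\xi\in A\cap\cA_{[U_1]}\}\subset\P(V_6)\times\P(A)$: the fiber of its second projection over $[\xi]$ is $\P(\{v\in V_6\mid v\wedge\xi=0\})$, and a short argument shows that a non‑decomposable $3$‑vector has an annihilator of dimension $\le1$ (if $v_1\wedge\xi=v_2\wedge\xi=0$ with $v_1,v_2$ independent, then $\xi\in\langle v_1\rangle\wedge\bw2{V_6}$, and reducing modulo $v_1$ forces $\xi=v_1\wedge v_2\wedge u$); comparing this with the first projection, whose fibers $\P(A\cap\cA_{[U_1]})$ have dimension $\le2$, bounds the images $\sY_A^{\ge\ell}$. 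This incidence bookkeeping, together with its mirror on $\P(V_6^\vee)$, is exactly O'Grady's argument.

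Granting the dimension bounds, the remaining statements are essentially formal. The locus $\sY_A^{\ge1}$ is a hypersurface, hence a sextic by the degree computation, and a hypersurface is Cohen--Macaulay, so it satisfies $S_2$; since $\Sing(\sY_A^{\ge1})=\sY_A^{\ge2}$ has codimension $2$ in it, it satisfies $R_1$, and Serre's criterion gives normality; an ample divisor in $\P^5$ is connected, and a normal connected divisor is integral. Likewise $\sY_A^{\ge2}=\{\rank\le8\}$ has the expected codimension, hence is Cohen--Macaulay of dimension $2$ and degree $40$; its singular locus $\sY_A^{\ge3}$ has codimension $2$ in it, so it is normal by Serre, and it is connected — hence integral — by the connectedness theorem for symmetric degeneracy loci of positive expected dimension on $\P^5$. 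Finally, $\sY_A^{\ge3}$ is finite by the dimension bound; it is smooth, i.e.\ reduced, because at a point where $\dim(A\cap\cA_{[U_1]})=3$ is maximal the classifying map $\P(V_6)\to\LGr(10,\bw3{V_6})$ meets the codimension‑$6$ Schubert stratum in the expected way — a transversality forced by the inequality $\dim(A\cap\cA_{[U_1]})\le3$ — and it is empty for general $A$ because this holds for one Lagrangian with no decomposable vectors (equivalently, by a parameter count). The assertions for $\sY_{\Ap}^{\ge\ell}$ follow from the dual construction.

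The step I expect to be the main obstacle is the dimension estimate of the second paragraph: the naive degeneracy‑locus dimensions are simply wrong for $\ell\ge3$, so one cannot invoke generic transversality, and the correct upper bounds must be extracted from the geometry of the no‑decomposable‑vectors condition through the incidence‑variety analysis. A secondary difficulty is promoting the purely local symmetric‑determinantal picture to global normality and integrality, which requires the connectedness theorem and the dimension bounds to be used in tandem; the degree computations and the singular‑locus chain, by contrast, are routine once the local model is in place.
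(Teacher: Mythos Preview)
The paper does not prove this theorem: it is stated as O'Grady's result with a citation to~\cite[Theorem~B.2]{DK1}, so there is no proof in the paper to compare against. Your write-up is therefore an attempt to reconstruct O'Grady's argument rather than to match anything the authors do.

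Your overall architecture is sound and is indeed O'Grady's: realize $\sY_A^{\ge\ell}$ as a Lagrangian (hence locally symmetric) degeneracy locus, obtain the degrees from the class formula, and deduce the singular-locus chain, Cohen--Macaulayness and normality from the symmetric determinantal local model once the codimensions are known. Your proof that $\sY_A^{\ge4}=\vide$ is correct (the linear $\P^3\subset\P(\bw2\C^5)$ must meet $\Gr(2,5)$ by the projective dimension theorem, and a decomposable image lifts to a decomposable vector of~$A$), and the annihilator computation showing that a non-decomposable $3$-vector has annihilator of dimension at most~$1$ is also correct.

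The genuine gap is the step you yourself flag as the main obstacle: the incidence argument you sketch does not give the bounds $\dim\sY_A^{\ge2}\le2$ and $\dim\sY_A^{\ge3}\le0$. With $I=\{([v],[\xi])\in\P(V_6)\times\P(A)\mid v\wedge\xi=0\}$, the fact that the second projection has $0$-dimensional fibers only yields $\dim I\le\dim(\text{image in }\P(A))\le9$, and combining this with ``fibers of the first projection over $\sY_A^{\ge\ell}$ have dimension $\ge\ell-1$'' gives bounds that are far too weak. One needs extra input: either a bound on the image of the second projection (controlling how $\P(A)$ meets the cone $\bigcup_{[v]}\P(v\wedge\bw2V_6)$, which is not automatic), or the finer tangent-space/second-fundamental-form analysis that O'Grady actually carries out. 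Likewise, your transversality argument for reducedness of $\sY_A^{\ge3}$ (``forced by $\dim(A\cap\cA_{[U_1]})\le3$'') is not a proof: maximality of the intersection dimension does not by itself force the classifying map to be transverse to the Schubert stratum; this is again where O'Grady's explicit local computation enters. In short, your outline is correct and honest about where the work lies, but the dimension estimates for $\ell=2,3$ and the smoothness of $\sY_A^{\ge3}$ require the detailed arguments of O'Grady that your sketch points to but does not supply.
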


Similarly, there is a chain of subschemes (see~\cite[Proposition~2.6]{IKKR})
\begin{equation}
\label{eq:def-za}
\sZ^{\ge \ell}_A = \{ U_3 \subset V_6 \mid \dim(A \cap (\bw2U_3 \wedge {V_6})) \ge \ell \} \subset \Gr(3,V_6)
\end{equation} 
which have properties similar to those of the EPW schemes.

\begin{theo}[{\cite[Corollary~2.10]{IKKR} and~\cite[Theorem~2.1]{Repwcube}}]
\label{theorem:ikkr}
Let~$A \subset \bw3V_6$ be a Lagrangian subspace with no decomposable vectors.\ Then,
\begin{itemize}
\item
$\sZ_A^{\ge 1} $ is an integral normal quartic hypersurface in $\Gr(3,V_6)$;
\item
$\sZ_A^{\ge 2} = \Sing(\sZ_A^{\ge 1})$ is an integral normal Cohen--Macaulay sixfold of degree~$480$;
\item
$\sZ_A^{\ge 3} = \Sing(\sZ_A^{\ge 2})$ is an integral normal Cohen--Macaulay threefold of degree~$4944$;
\item
$\sZ_A^{\ge 4} = \Sing(\sZ_A^{\ge 3})$ is a finite and smooth scheme, and is empty for general~$A$;
\item
$\sZ_A^{\ge 5}$ is empty.
\end{itemize}
\end{theo}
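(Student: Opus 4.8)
The plan is to realize $\sZ^{\ge\ell}_A$ as a Lagrangian degeneracy locus on $\Gr(3,V_6)$ and then combine the general theory of symmetric determinantal loci with the hypothesis~\eqref{eq:no-decomposable}. For $[U_3]\in\Gr(3,V_6)$ put $\mathcal L_{[U_3]}\coloneqq\bw2U_3\wedge V_6\subset\bw3V_6$; choosing a splitting $V_6=U_3\oplus Q$ one checks that $\mathcal L_{[U_3]}$ has dimension $10$ and is isotropic, hence Lagrangian, for the wedge pairing $\bw3V_6\otimes\bw3V_6\to\bw6V_6$. As $[U_3]$ varies these subspaces form a rank-$10$ Lagrangian subbundle $\mathcal L=\bw2\cR_3\wedge V_6$ of $\bw3V_6\otimes\cO_{\Gr(3,V_6)}$ (the image of $\bw2\cR_3\otimes V_6\otimes\cO\to\bw3V_6\otimes\cO$, where $\cR_3$ is the tautological subbundle), and by definition $\sZ^{\ge\ell}_A=\{[U_3]\mid\dim(A\cap\mathcal L_{[U_3]})\ge\ell\}$. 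Since $\mathcal L$ is Lagrangian there is a canonical isomorphism $(\bw3V_6\otimes\cO)/\mathcal L\cong\mathcal L^\vee$, so $\sZ^{\ge\ell}_A$ is the corank-$\ge\ell$ locus of the composite
\begin{equation*}
\gamma_A\colon A\otimes\cO_{\Gr(3,V_6)}\lra(\bw3V_6\otimes\cO_{\Gr(3,V_6)})/\mathcal L\cong\mathcal L^\vee .
\end{equation*}
Because both $A$ and $\mathcal L_{[U_3]}$ are Lagrangian, the classical local analysis of O'Grady shows that, étale-locally near any point of $\sZ^{\ge\ell}_A$, the morphism $\gamma_A$ is given by a symmetric matrix, so $\sZ^{\ge\ell}_A$ looks étale-locally like a symmetric determinantal locus. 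A short Chern class computation using the Koszul filtration of $\bw3V_6\otimes\cO$ by the powers of $\cR_3$ gives $\det\mathcal L^\vee\cong\cO_{\Gr(3,V_6)}(4)$, hence $\det\gamma_A\in H^0(\Gr(3,V_6),\cO(4))$: this is the quartic equation of $\sZ^{\ge1}_A$, which is a proper hypersurface because two generic Lagrangians in $\bw3V_6$ meet in $0$, so $\det\gamma_A\not\equiv0$.

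Next I would pin down dimensions and local structure. The theory of symmetric degeneracy loci gives the universal bound $\codim\sZ^{\ge\ell}_A\le\binom{\ell+1}2$, produces (when this is an equality) a Józefiak--Lascoux--Pragacz resolution — whence the Cohen--Macaulay property — and identifies $\Sing(\sZ^{\ge\ell}_A)$ with $\sZ^{\ge\ell+1}_A$. The hypothesis~\eqref{eq:no-decomposable} supplies the complementary lower bound and the vanishing of the deepest strata: a nonzero element of $\mathcal L_{[U_3]}=\bw2U_3\wedge V_6$ lying on $\Gr(3,V_6)$ is the Plücker point of a $3$-space meeting $U_3$ in dimension $\ge 2$, so $\PP(\mathcal L_{[U_3]})\cap\Gr(3,V_6)$ contains the $5$-dimensional family of such $3$-spaces inside $\PP(\mathcal L_{[U_3]})\cong\PP^9$; therefore, if $\dim(A\cap\mathcal L_{[U_3]})\ge5$ for some $U_3$, the linear space $\PP(A\cap\mathcal L_{[U_3]})$ has dimension $\ge4$ and meets this $5$-fold, producing a decomposable vector in $A$ — a contradiction. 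Hence $\sZ^{\ge5}_A=\vide$, and a dimension count in the Lagrangian Grassmannian of $\bw3V_6$ shows $\sZ^{\ge4}_A$ has expected dimension $-1$, so it is empty for general $A$ and, in general, is treated separately via the local symmetric model and its relation to the finite smooth scheme $\sY^{\ge3}_A$ from Theorem~\ref{thm:og}. Combined with the universal upper bound this shows $\sZ^{\ge1}_A$, $\sZ^{\ge2}_A$, $\sZ^{\ge3}_A$ have the expected dimensions $8,6,3$ and are Cohen--Macaulay; since $\Sing(\sZ^{\ge\ell}_A)=\sZ^{\ge\ell+1}_A$ has codimension $\ell+1\ge2$ in $\sZ^{\ge\ell}_A$, Serre's criterion $R_1+S_2$ gives normality, hence reducedness.

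It then remains to prove irreducibility and to compute the degrees. A connected normal scheme is irreducible, so for irreducibility it suffices to prove connectedness of each $\sZ^{\ge\ell}_A$: this follows from a Fulton--Lazarsfeld-type connectedness theorem for degeneracy loci, the necessary positivity being provided by the fact that $\mathcal L^\vee$ (a quotient of the trivial bundle $(\bw3V_6)^\vee\otimes\cO$, hence nef) has ample determinant $\cO(4)$ while $\dim\Gr(3,V_6)=9$ strictly exceeds all the relevant expected codimensions; the drop in dimension at $\sZ^{\ge4}_A$ requires a separate argument, as in~\cite{IKKR}. The degrees are computed by the Harris--Tu/Pragacz formula for symmetric degeneracy loci: $[\sZ^{\ge\ell}_A]$ is a universal polynomial in the Chern classes of $\mathcal L^\vee$ in $H^{2\binom{\ell+1}2}(\Gr(3,V_6),\Z)$, and evaluating it gives $[\sZ^{\ge1}_A]=4h$ (whence the quartic, $h$ the Plücker class), $\deg\sZ^{\ge2}_A=480$, and $\deg\sZ^{\ge3}_A=4944$. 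The two real obstacles are (a) the irreducibility of the deeper strata — the connectedness input, or an inductive argument, must be arranged carefully precisely because $\sZ^{\ge4}_A$ falls below the expected dimension — and (b) the Schubert-calculus evaluation yielding $\deg\sZ^{\ge3}_A=4944$, which, although algorithmic, is the most laborious step and the one I would expect to demand the most care.
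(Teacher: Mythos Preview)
The paper does not prove this theorem; it is quoted verbatim from \cite[Corollary~2.10]{IKKR} and \cite[Theorem~2.1]{Repwcube} with no argument following the statement, so there is no ``paper's own proof'' to compare against.

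Your outline is the standard one and matches the strategy of the cited references: realize $\sZ^{\ge\ell}_A$ as the corank-$\ell$ locus of the Lagrangian pairing $\gamma_A\colon A\otimes\cO\to\mathcal L^\vee$, compute $\det\mathcal L^\vee\cong\cO_{\Gr(3,V_6)}(4)$, invoke the symmetric determinantal package (Józefiak--Lascoux--Pragacz resolutions, $\Sing(\sZ^{\ge\ell}_A)=\sZ^{\ge\ell+1}_A$, Harris--Tu/Pragacz degree formulas), and conclude normality via $R_1+S_2$. Your argument that $\sZ^{\ge5}_A=\vide$ via the $5$-dimensional Schubert variety $\{[U'_3]:\dim(U'_3\cap U_3)\ge2\}$ inside $\P(\mathcal L_{[U_3]})\cong\P^9$ is correct and clean.

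There is, however, a real gap at the sentence ``Combined with the universal upper bound this shows $\sZ^{\ge1}_A,\sZ^{\ge2}_A,\sZ^{\ge3}_A$ have the expected dimensions.'' The universal bound $\codim\sZ^{\ge\ell}_A\le\binom{\ell+1}{2}$ is a \emph{lower} bound on dimension; you still need the reverse inequality, and neither $\sZ^{\ge5}_A=\vide$ nor a generic dimension count for $\sZ^{\ge4}_A$ supplies it for an arbitrary $A$ with no decomposable vectors. This is precisely the content that \cite{Repwcube} adds to \cite{IKKR}: for each $[U_3]\in\sZ^{\ell}_A$ one must show that the tangent map
\begin{equation*}
\uptau\colon T_{[U_3]}\Gr(3,V_6)=\Hom(U_3,V_6/U_3)\lra\Sym^2\!\bigl(A\cap\mathcal L_{[U_3]}\bigr)^\vee
\end{equation*}
is injective when $\ell=4$ (giving $\sZ^{\ge4}_A$ finite and reduced) and surjective when $\ell\le3$ (forcing the expected codimension). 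This map is the $\sZ$-analogue of O'Grady's map for EPW sextics; its injectivity for $\ell=4$ is \cite[Proposition~2.3]{Repwcube}, and the paper itself invokes exactly this result in the proof of Proposition~\ref{prop:fk-gm}. Your sketch neither states nor proves these rank properties of $\uptau$, and without them the Cohen--Macaulay, normality, and irreducibility claims for $\ell\in\{2,3\}$ (and the finiteness/smoothness of $\sZ^{\ge4}_A$) do not follow for non-generic $A$.
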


We often consider the chains of EPW schemes as stratifications with  strata
\begin{equation*}
\sY_A^\ell \coloneqq \sY_A^{\ge \ell} \ssm \sY_A^{\ge \ell + 1},
\qquad
\sY_\Ap^\ell \coloneqq \sY_\Ap^{\ge \ell} \ssm \sY_\Ap^{\ge \ell + 1},
\qquad\text{and}\qquad 
\sZ_A^\ell \coloneqq \sZ_A^{\ge \ell} \ssm \sZ_A^{\ge \ell + 1}.
\end{equation*}
Comparing this with the definition~\eqref{eq:plucker-point} of the Pl\"ucker point and Theorem~\ref{thm:gm-ld}, we see that
\begin{equation}
\label{eq:px-dim}
\bp_X \in 
\begin{cases}
\sY_\Ap^{5 - \dim(X)} & \text{if~$X$ is ordinary,}\\
\sY_\Ap^{6 - \dim(X)} & \text{if~$X$ is special.}
\end{cases}
\end{equation}

Each of the schemes $\sY_A^{\ge \ell}$ and $\sY_{A^\perp}^{\ge \ell}$ comes with a natural double covering.

\begin{theo}[{\cite{og4}, \cite[Theorem~5.2 and Remark~5.3]{DK3}}]
\label{thm:tya}
Let $A\subset \bw3V_6$ be a Lagrangian subspace with no decomposable vectors.\ 
For each~\mbox{$\ell \in \{0,1,2,3\}$}, there are canonical double coverings
 \begin{equation*}
\uvt_A \colon \wtY_A^{\ge \ell} \lra \sY_A^{\ge \ell},
\qquad 
\uvt_\Ap \colon \wtY_{A^\perp}^{\ge \ell} \lra \sY_{A^\perp}^{\ge \ell},
\end{equation*}
respectively branched along $\sY_A^{\ge \ell+1}$ and $\sY_{A^\perp}^{\ge \ell + 1}$.\ 
Moreover, these double covers restrict to isomorphisms
\begin{equation*}
\Sing(\wtY^{\ge 0}_A) \isomto \sY^{\ge 2}_A,
\qquad 
\Sing(\wtY^{\ge 1}_A)  \isomto \sY^{\ge 3}_A = \sY^3_A,
\qquad
\Sing(\wtY^{\ge 2}_A)  \isomto \sY^{\ge 3}_A =  \sY_A^3,
\end{equation*}
and analogously for $\wtY^{\ge \ell}_\Ap$.\ 
Finally,~$\wtY^{\ge \ell}_A$ is integral and normal for all~$\ell\le 2$.
\end{theo}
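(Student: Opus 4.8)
The plan is to present every scheme in the statement as an instance of one construction — double covers of Lagrangian intersection loci — and to read the assertions off from the resulting local normal form. Set $\Lambda\coloneqq\bw3{V_6}$, equipped with its canonical symplectic form, the wedge product $\Lambda\otimes\Lambda\to\bw6{V_6}\isom\C$. Over $\P(V_6)$ consider the subbundle $\cF\subset\Lambda\otimes\cO_{\P(V_6)}$ with fibre $\cF_{[U_1]}=U_1\wedge\bw2{V_6}$, and over $\Gr(5,V_6)=\P(V_6^\vee)$ the subbundle $\cG$ with fibre $\cG_{[U_5]}=\bw3{U_5}$; both are Lagrangian subbundles of rank $10$ (for $\cG$, because $\bw3{U_5}$ is $10$-dimensional and its self-wedge lands in $\bw6{U_5}=0$). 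By~\eqref{ya}, the scheme $\sY_A^{\ge\ell}$ is the $\ell$-th degeneracy locus of the pair $(A\otimes\cO,\cF)$ inside the symplectic bundle $\Lambda\otimes\cO$, and $\sY_\Ap^{\ge\ell}$ is the $\ell$-th degeneracy locus of $(A\otimes\cO,\cG)$; the hypothesis~\eqref{eq:no-decomposable} is exactly the non-degeneracy condition needed in both cases, since a decomposable vector in $A$ would force an excess intersection with $\cF$ and with $\cG$. The general construction of double covers applies to both situations; I treat $\sY_A^{\ge\ell}$, the case of $\sY_\Ap^{\ge\ell}$ being entirely analogous.

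Next I would construct the cover. For $\ell=0$ it is the classical double cover $\uvt_A\colon\wtY_A^{\ge0}\to\P(V_6)$ with $(\uvt_A)_*\cO=\cO\oplus\cO(-3)$ and algebra map $\cO(-6)\to\cO$ given by the sextic equation of $\sY_A^{\ge1}$. For $\ell\ge1$, on the open stratum $\sY_A^\ell=\sY_A^{\ge\ell}\setminus\sY_A^{\ge\ell+1}$ — where $(A\otimes\cO)\cap\cF$ is a subbundle of rank $\ell$ — there is a line bundle $\zeta_A$ canonically attached to the degeneracy (for $\ell=1$, up to a twist, the cokernel of $A\otimes\cO\to(\Lambda\otimes\cO)/\cF$; in general the relevant determinant), and it extends to a rank-$1$ reflexive sheaf on all of $\sY_A^{\ge\ell}$, the extension being canonical since $\sY_A^{\ge\ell+1}$ has codimension $\ge2$ in the normal variety $\sY_A^{\ge\ell}$ (Theorem~\ref{thm:og}). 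The crucial point is to equip $\zeta_A$ with a canonical symmetric multiplication $\zeta_A\otimes\zeta_A\to\cO_{\sY_A^{\ge\ell}}$ making $\cO\oplus\zeta_A$ a sheaf of $\cO$-algebras: this is where it is essential that \emph{both} $A\otimes\cO$ and $\cF$ be Lagrangian, rather than merely that the induced degeneracy be symmetric — the symplectic form on $\Lambda$ supplies a coherent choice of ``square root'' of the pertinent determinant, refining the classical theta-characteristic construction on a symmetric determinantal locus; this is the mechanism of~\cite{DK3}. Then $\wtY_A^{\ge\ell}$ is the relative spectrum of the sheaf of $\cO$-algebras $\cO\oplus\zeta_A$, the morphism $\uvt_A$ is finite of degree $2$ and étale over $\sY_A^\ell$, and $\wtY_A^{\ge\ell}$ is irreducible for $\ell\le2$: this follows from the irreducibility of $\sY_A^{\ge\ell}$ (Theorem~\ref{thm:og}) together with the non-splitness of the cover — immediate for $\ell=0$, the branch sextic being reduced and irreducible, and part of O'Grady's analysis in general. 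Finally, $\wtY_A^{\ge\ell}$ is normal for $\ell\le2$ by Serre's criterion: $(\uvt_A)_*\cO=\cO_{\sY_A^{\ge\ell}}\oplus\zeta_A$ is $S_2$ (a structure sheaf plus a reflexive sheaf on a normal variety), and $\wtY_A^{\ge\ell}$ is regular in codimension $1$, being étale over the smooth locus of $\sY_A^{\ge\ell}$, whose complement $\sY_A^{\ge\ell+1}$ has codimension $\ge2$ (for $\ell=0$, regularity over the smooth part of the branch sextic is read off directly from the model $z^2=f$).

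It remains to identify the branch locus and the singular loci, for which I would use a local normal form for Lagrangian degeneracy: near a point of $\sY_A^{\ge\ell+1}$ the pair $(\sY_A^{\ge\ell},\zeta_A)$ is, in suitable local coordinates, modelled on the universal such pair in lower rank, and a direct computation in that model shows $\uvt_A$ is ramified exactly along $\sY_A^{\ge\ell+1}$, which is the ``branched along'' assertion. For $\ell=0$, the model $z^2=f$ shows that $\Sing(\wtY_A^{\ge0})$ is the preimage of the singular locus of the branch sextic and maps isomorphically onto $\Sing(\sY_A^{\ge1})=\sY_A^{\ge2}$ (Theorem~\ref{thm:og}); running the same analysis at points of $\sY_A^{\ge2}$, resp.\ of $\sY_A^{\ge3}$, and using that $\sY_A^{\ge3}=\sY_A^3$ is finite and smooth (Theorem~\ref{thm:og}), yields $\Sing(\wtY_A^{\ge1})\isomto\sY_A^3$ and $\Sing(\wtY_A^{\ge2})\isomto\sY_A^3$. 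Applying all of the above to the pair $(A\otimes\cO,\cG)$ gives the corresponding statements for $\wtY_\Ap^{\ge\ell}$. I expect the main obstacle to be precisely the input to the last two paragraphs — setting up the Lagrangian-refined local normal form and verifying that the square-root sheaf globalises to a sheaf of $\cO$-algebras with the claimed ramification; granting that local picture together with Theorem~\ref{thm:og}, everything else is formal.
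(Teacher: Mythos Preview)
The paper does not prove this theorem; it is quoted from O'Grady~\cite{og4} and~\cite[Theorem~5.2 and Remark~5.3]{DK3}, with no proof given in the text. Your proposal is a faithful sketch of the construction carried out in those references: the Lagrangian-cointersection formalism of~\cite{DK3} produces the reflexive rank-one sheaf with its algebra structure and the local normal form, while O'Grady's analysis supplies the dimension and smoothness inputs (recorded here as Theorem~\ref{thm:og}) that determine the branch and singular loci. There is therefore nothing in this paper to compare your argument against.
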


These double coverings are known as {\sf double EPW varieties} and {\sf double dual EPW varieties}.\ 
There are similar double coverings for the varieties~$\sZ^{\ge \ell}_A$, but we will not need them.

We write~$\wtY^\ell_A   \subset \wtY^{\ge \ell}_A $ and~$\wtY_\Ap^\ell  \subset \wtY^{\ge \ell}_\Ap $ 
for the preimages of~$\sY_A^\ell$ and~$\sY_\Ap^\ell$;
thus the morphisms~\mbox{$\uvt_A \colon \wtY^\ell_A \to \sY_A^\ell$} and~\mbox{$\uvt_\Ap \colon \wtY_\Ap^\ell \to \sY_\Ap^\ell$} 
are \'etale double coverings and~$\wtY^\ell_A$ and~$\wtY^\ell_\Ap$ are smooth and connected varieties if~$\ell \le 2$.

The next proposition describes the singularity of~$\wtY^{\ge \ell}_A$ (or~$\wtY^{\ge \ell}_\Ap$)
at a point of $\sY^3_A$ (or~$\sY^3_\Ap$).\ 
We use the notation introduced in Lemma~\ref{lem:tsd} and before it.

\begin{prop}
\label{propsing}
Let $A\subset \bw3V_6$ be a Lagrangian subspace with no decomposable vectors and let $\bp \in \sY^3_A$.\ 

\begin{aenumerate}
\item
\label{it:sing-y2}
The singularity of~$\sY^{\ge 2}_A$ at~$\bp$ is a quotient singularity of type~$\tfrac14(1,1)$, and 
the singularity of~$\wtY^{\ge 2}_A$ at the point over~$\bp$ is an ordinary double point.
\item
\label{it:sing-y1}
Locally analytically around the point~$\bp$, 
the double cover $\uvt_A \colon \wtY^{\ge 1}_A\to \sY^{\ge 1}_A$ 
is isomorphic to the restriction of the double cover~\eqref{defdc} to a general hyperplane in $\Sym^2\!V_3$.\ 
In particular, the singularity of~$\wtY^{\ge 1}_A$ at the point over~$\bp$  
is the cone over the flag variety~$\Fl(1,2;3) \subset \P^2\times \P^2$.
\end{aenumerate}
\end{prop}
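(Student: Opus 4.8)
The plan is to work out explicitly the local structure of the dual EPW stratification near a point of the deepest stratum and to apply the local description of the double covering of a symmetric determinantal locus from Lemma~\ref{lem:tsd}. I will carry out the argument for~$\sY^{\ge\ell}_A$; the case of~$\sY^{\ge\ell}_\Ap$ follows by replacing~$A$ with~$\Ap$. Fix~$\bp = [U_1] \in \sY^3_A$, i.e.\ a one-dimensional subspace~$U_1 \subset V_6$ with~$\dim(A \cap (U_1 \wedge \bw2 V_6)) = 3$. The first step is to set up the standard resolution of~$\sY^{\ge1}_A$ coming from the Lagrangian degeneracy description: on~$\P(V_6)$ one has the rank-$10$ vector bundle with fiber~$\bw3 V_6 / (U_1 \wedge \bw2 V_6)$, the Lagrangian subbundle~$A$ maps into it, and~$\sY^{\ge\ell}_A$ is the locus where the corank of this map is~$\ge\ell$. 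Near~$\bp$ one trivializes and writes the map as a symmetric (because both the source and the target quotient are Lagrangian, the map is self-dual) family of quadratic forms on a $10$-dimensional space, degenerating to a form of corank exactly~$3$ at~$\bp$.

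The second step is the reduction of the corank-$3$ degeneration to a universal model. By the standard deformation theory of symmetric matrices, near a point where a symmetric map has corank~$3$, the corank-$\ge1$ locus is, up to smooth factors and analytic change of coordinates, the pullback of the universal symmetric determinantal cubic~$\sD^{\ge1}_{V_3} \subset \Sym^2 V_3^\vee$ under a map~$\phi$ from a neighbourhood of~$\bp$ in~$\P(V_6)$ to~$\Sym^2 V_3^\vee$; here~$V_3$ is the $3$-dimensional kernel at~$\bp$, and~$\phi$ is a submersion onto a general affine subspace of~$\Sym^2 V_3^\vee$ passing through~$0$ (``general'' because~$A$ has no decomposable vectors, which guarantees that the relevant obstruction/transversality holds — this is exactly what powers Theorem~\ref{thm:og}). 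Moreover, by the naturality of the construction in Theorem~\ref{thm:tya}, the double covering~$\uvt_A$ is the pullback under~$\phi$ of the double covering~$\uvt_\sD$ of~$\sD^{\ge1}_{V_3}$ described in Lemma~\ref{lem:tsd}; this is precisely the module/reflexive-sheaf identification carried out in the proof of Lemma~\ref{lem:tsd} and of~\cite[Lemma~3.4]{DK3}, now localized at~$\bp$. With these identifications, part~\ref{it:sing-y1} is immediate: locally~$\uvt_A \colon \wtY^{\ge1}_A \to \sY^{\ge1}_A$ is the restriction of~\eqref{defdc} to a general affine hyperplane of~$\Sym^2 V_3$ (one hyperplane, since~$\dim(A\cap\bw3V_5)$ drops by~$1$ along~$\sY^{\ge1}_A$, which cuts out codimension one inside the ambient $\Sym^2V_3$-chart after removing the smooth factors), and by Remark~\ref{rem:uvt-psd} the source of~\eqref{defdc} is the affine cone over the Segre~$\P^2\times\P^2$, whose hyperplane section through the vertex is the cone over~$\Fl(1,2;3) \subset \P^2 \times \P^2$.

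For part~\ref{it:sing-y2}: the corank-$\ge2$ locus of the universal family is~$\sD^{\ge2}_{V_3} = \{\operatorname{rank}\le1\}$, the affine cone over the Veronese surface~$\P(V_3^\vee)\hookrightarrow\P^5$, which has dimension~$3$; intersecting with the general hyperplane in~$\Sym^2 V_3$ gives a surface whose vertex is an isolated point, and one computes directly that the cone over the degree-$4$ rational normal curve~$\P^1\hookrightarrow\P^3$ (the general hyperplane section of the Veronese cone) has a~$\tfrac14(1,1)$ quotient singularity at the vertex — this is the classical statement that the affine cone over~$\P^1$ embedded by~$\cO(4)$ is~$\A^2/\bmu_4$ acting by scalars. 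For the double cover, by Lemma~\ref{lem:tsd} the branch locus of~$\uvt_\sD$ is exactly~$\sD^{\ge2}_{V_3}$, and the source~$\tsD^{\ge1}_{V_3} = \{\operatorname{rank}\le1\}\subset V_3^\vee\otimes V_3^\vee$ is the cone over the Segre~$\P^2\times\P^2\subset\P^8$, which is \emph{smooth} away from the vertex; restricting to the preimage of a general hyperplane through~$0$ gives a six-dimensional quadric cone' — concretely, the preimage in~$\P^2\times\P^2$ of a general~$\P^4\subset\P^8$ is smooth, and the vertex of the resulting affine cone is an ordinary double point, since the general hyperplane section of the Segre~$\P^2\times\P^2$ is a smooth quadric in~$\P^4\times\ldots$; more directly, in the antiinvariant coordinate~$\mu-\mu^T$ one sees the local equation of~$\wtY^{\ge2}_A$ at the point over~$\bp$ is a nondegenerate quadratic cone, i.e.\ an~$A_1$ singularity. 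Restricting further to the general hyperplane of~$\Sym^2V_3$ as above does not affect this, as the hyperplane is transverse to everything in sight.

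The main obstacle I expect is the transversality/genericity input in the second step: verifying that, when~$A$ contains no decomposable vectors, the classifying map~$\phi$ hits the universal determinantal stratification transversally enough that the local model is literally ``universal cubic $\cap$ general affine subspace,'' with the codimension bookkeeping matching~\eqref{ya} and~\eqref{eq:px-dim}. This is essentially O'Grady's transversality result underlying Theorem~\ref{thm:og} (together with its refinement for the double cover in Theorem~\ref{thm:tya}), and I would quote it rather than reprove it; once it is in hand, everything else is the elementary projective geometry of the Veronese and Segre cones and their general hyperplane sections spelled out above.
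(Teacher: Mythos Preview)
Your approach for part~\ref{it:sing-y1} is essentially the paper's: both reduce to the local symmetric determinantal model via O'Grady's injectivity of~$\uptau\colon V_6/V_1\to\Sym^2V_3$ (the transversality you flag as the main obstacle is exactly \cite[Proposition~2.5]{og4}, which the paper quotes), then read off the cone over~$\Fl(1,2;3)$ from the hyperplane section of the Segre cone. Your identification of the~$\tfrac14(1,1)$ singularity of~$\sY^{\ge2}_A$ as the cone over a rational normal quartic is also correct.

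There is, however, a genuine gap in your argument for the ordinary double point on~$\wtY^{\ge2}_A$. The double covers~$\wtY^{\ge1}_A\to\sY^{\ge1}_A$ and~$\wtY^{\ge2}_A\to\sY^{\ge2}_A$ are \emph{different} coverings (branched along~$\sY^{\ge2}_A$ and~$\sY^{\ge3}_A$ respectively), and the second is not the restriction of the first: over~$\sY^{\ge2}_A$ the first cover is totally ramified, so its preimage of~$\sY^{\ge2}_A$ is a copy of~$\sY^{\ge2}_A$, not of~$\wtY^{\ge2}_A$. Your attempt to extract the ODP from~$\tsD^{\ge1}_{V_3}$ (the Segre cone) therefore cannot work; the passage about a ``six-dimensional quadric cone'' and the ``antiinvariant coordinate~$\mu-\mu^T$'' does not describe~$\wtY^{\ge2}_A$. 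The correct local model for~$\wtY^{\ge2}_A$ is the~$k=\dim V_3-1$ case alluded to before Lemma~\ref{lem:tsd} (that is, \cite[Lemma~3.4]{DK3}): the double cover of~$\sD^{\ge2}_{V_3}$ is~$V_3^\vee\to\sD^{\ge2}_{V_3}$, $v\mapsto v\otimes v$, whose source is smooth, and its pullback along a general hyperplane in~$\Sym^2V_3^\vee$ is the zero locus of a nondegenerate quadratic form on~$V_3^\vee\cong\A^3$, an ordinary double point. The paper does not spell this out either, simply citing \cite[Theorem~5.2(2)]{DK3}, but that is the computation underlying the citation.
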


\begin{proof}
Item~\ref{it:sing-y2} is explained in~\cite[Theorem~5.2(2) and its proof]{DK3}; 
see also \cite[Proposition~2.9]{og3} and \cite[Example~1.1]{og4}.\

For~\ref{it:sing-y1}, write $\bp=[V_1]$ 
and set~$V_3\coloneqq \bigl(A\cap (V_1\wedge\bw2V_6)\bigr)^\vee$ 
(by~\eqref{ya}, this is a vector space of dimension~$3$).\  
By~\cite[Proposition~2.5]{og4}, there is an injective map
\begin{equation*}
\uptau\colon V_6/V_1\lhra \Sym^2\!V_3
\end{equation*}
whose image is a general hyperplane (that is, it corresponds to a point of~$\P(\Sym^2\!V_3^\vee) $ of maximal rank).\ 
By~\cite[(2.17)]{og3} and the proof of~\cite[Proposition~3.10]{og4} (see also~\cite[Remark~1.4]{og6}), 
locally analytically around~$\bp$, the double cover~$\uvt_A \colon \wtY^{\ge 1}_A\to \sY^{\ge 1}_A$
is given by the restriction to this hyperplane of the double cover~\eqref{defdc}.\ 
It remains to note that~$\tsD_{V_3}^{\ge 1} \subset V_3^\vee \otimes V_3^\vee$ 
is the cone over the Segre variety~$\P(V_3) \times \P(V_3)$ (cf.\ Remark~\ref{rem:uvt-psd}),
and its intersection with the preimage of a general hyperplane is the cone over the flag variety.
\end{proof}

To end this section, we introduce some notation that will be important in Section~\ref{sec:og}.
 
\begin{lemm}
\label{lem:closure}
 Let~$A\subset \bw3V_6$ be a Lagrangian subspace with no decomposable vectors.\ 
For each~$\ell \in \{0,1,2\}$ and~$\bp \in \sY_\Ap^{\ge \ell}$, 
we define a subscheme~$\widetilde\bp \subset \wtY_\Ap^{\ge \ell}$ over~$\bp$ as follows:
\begin{aenumerate}
\item 
\label{it:tp-red}
if~$\ell \in \{1,2\}$ and~$\bp \in \sY^3_\Ap$ 
or if~$\ell \in \{0,1,2\}$ and~$\bp \in \sY^\ell_\Ap$,
we set~$\widetilde\bp \coloneqq \uvt_\Ap^{-1}(\bp)_{\mathrm{red}}$;
\item 
\label{it:tp-full}
if~$\ell = 0$ and~$\bp \in \sY^{\ge 1}_\Ap$ or~$\ell = 1$ and~$\bp \in \sY^2_\Ap$,
we set~$\widetilde\bp \coloneqq \uvt_\Ap^{-1}(\bp)$.
\end{aenumerate}
Then the scheme~$\Bl_{\widetilde\bp}(\wtY^{\ge \ell}_\Ap)$ is integral and normal
and there is a commutative square
\begin{equation*}
\xymatrix@C=3em{
\Bl_{\widetilde\bp}(\wtY^{\ge \ell}_\Ap) \ar[r] \ar[d]_{\tuvt_\Ap} &
\wtY^{\ge \ell}_\Ap \ar[d]^{\uvt_\Ap} 
\\
\Bl_{\bp}(\sY^{\ge \ell}_\Ap) \ar[r] &
\sY^{\ge \ell}_\Ap, 
}
\end{equation*}
where the horizontal arrows are the blowup morphisms and~$\tuvt_\Ap$ is a double covering.\ 
In particular, the scheme~$\Bl_{\widetilde\bp}(\wtY^{\ge \ell}_\Ap)$ is the integral closure of~$\Bl_{\bp}(\sY^{\ge \ell}_\Ap)$
in the field of rational functions of~$\wtY^{\ge \ell}_\Ap$.
\end{lemm}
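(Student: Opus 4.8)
The plan is to analyze the blowup separately over each stratum, using the local models supplied by Proposition~\ref{propsing}, and to patch together the global conclusion using normality. First I would observe that away from the center $\widetilde\bp$ the map $\tuvt_\Ap$ is literally $\uvt_\Ap$ (composed with the blowup isomorphism there), so there is nothing to prove on that open set; the entire issue is local around the finitely many points of $\uvt_\Ap^{-1}(\bp)$. Thus I would fix a point $\bq$ of $\uvt_\Ap^{-1}(\bp)$ and a local analytic neighborhood, and treat each case of the definition of $\widetilde\bp$ in turn.

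Next I would run through the cases. When $\bp \in \sY^\ell_\Ap$ (the open stratum of $\sY^{\ge\ell}_\Ap$), the covering $\uvt_\Ap$ is \'etale over $\bp$ by Theorem~\ref{thm:tya} and $\wtY^{\ge\ell}_\Ap$ is smooth at $\bq$ for $\ell\le 2$; then $\uvt_\Ap^{-1}(\bp)$ is already reduced, so $\widetilde\bp = \uvt_\Ap^{-1}(\bp)$ consists of smooth points, and blowing up a smooth point of a smooth variety yields a smooth (hence integral and normal) variety, with the commutative square and the double-cover property for $\tuvt_\Ap$ immediate from the \'etale-local product structure. When $\bp \in \sY^3_\Ap$ and $\ell\in\{1,2\}$, I would invoke Proposition~\ref{propsing}: for $\ell=2$ the point $\bq$ is an ordinary double point of the threefold $\wtY^{\ge 2}_\Ap$ lying over a $\tfrac14(1,1)$-singularity of $\sY^{\ge 2}_\Ap$, and for $\ell=1$ the point $\bq$ is (analytically) the cone over $\Fl(1,2;3)$, with the covering locally the restriction of~\eqref{defdc} to a general hyperplane in $\Sym^2\!V_3$. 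In both subcases one checks directly on the explicit local models that blowing up the reduced fiber resolves the situation: on the base $\sY^{\ge\ell}_\Ap$ the blowup of the $\tfrac14(1,1)$ point (resp. of the image point of the determinantal locus) is smooth, and on the cover the blowup of the reduced preimage is normal because the explicit equations show the blown-up cover is the integral closure — concretely, for $\ell=1$ this is the statement that blowing up the vertex of the cone over $\Fl(1,2;3)$ along the reduced fiber of $\uvt_{\P\sD}$ (the diagonal $\P(V_3^\vee)$, by Remark~\ref{rem:uvt-psd}) gives a normal space double-covering the blowup of the cone over the determinantal surface along the image point. Finally, the two cases in~\ref{it:tp-full} (namely $\ell=0$, $\bp\in\sY^{\ge 1}_\Ap$, and $\ell=1$, $\bp\in\sY^2_\Ap$): here $\bp$ lies in the branch locus $\sY^{\ge\ell+1}_\Ap$, so $\uvt_\Ap^{-1}(\bp)$ is a single nonreduced point, and $\widetilde\bp$ is taken to be that whole scheme; again by Proposition~\ref{propsing}\ref{it:sing-y2} (for $\ell=1$, $\bp\in\sY^2_\Ap\cap$ neighborhood of $\sY^3$) and the analogous description at a general point of $\sY^{\ge\ell+1}_\Ap$ one uses the ramified-double-cover local model $\{t^2 = f\}$ and checks that blowing up the non-Cartier fiber produces the integral closure of the blown-up base.

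The last assertion — that $\Bl_{\widetilde\bp}(\wtY^{\ge\ell}_\Ap)$ is the integral closure of $\Bl_{\bp}(\sY^{\ge\ell}_\Ap)$ in the function field of $\wtY^{\ge\ell}_\Ap$ — follows formally once integrality, normality, and the finite degree-$2$ morphism $\tuvt_\Ap$ are in hand: both spaces have the same function field as $\wtY^{\ge\ell}_\Ap$ (blowup is birational), the map $\tuvt_\Ap$ is finite of degree $2$ over a normal base $\Bl_{\bp}(\sY^{\ge\ell}_\Ap)$, and a normal variety finite over a given variety and birational to the integral closure is the integral closure by the universal property; here one also needs that $\Bl_{\bp}(\sY^{\ge\ell}_\Ap)$ is itself normal, which comes from the explicit local models above (resolution or partial resolution of the quotient/determinantal singularities) together with O'Grady's normality in Theorem~\ref{thm:og}.

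\emph{Main obstacle.} The hard part will be the normality of $\Bl_{\widetilde\bp}(\wtY^{\ge\ell}_\Ap)$ at the exceptional locus in the ramified cases, and at the singular cover-point in case $\ell\in\{1,2\}$, $\bp\in\sY^3_\Ap$: one must choose the blowup center (reduced fiber versus full scheme-theoretic fiber — precisely the dichotomy built into the statement) so that the strict transform on the cover is normal, and verify this on the explicit local equations coming from~\eqref{defdc} restricted to a general hyperplane, rather than by a general principle. Checking that the Segre/flag cone and the ODP blow up to normal (indeed smooth, in the relevant cases) total spaces compatibly with the base blowup, and that the non-reduced fibers in~\ref{it:tp-full} are handled correctly, is where the real computation lies; everything else is formal bookkeeping with normality and finite morphisms.
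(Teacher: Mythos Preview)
Your case-by-case strategy using the local models of Proposition~\ref{propsing} is the same as the paper's, but you are missing the clean mechanism that produces the morphism~$\tuvt_\Ap$ in the first place. The paper does not construct~$\tuvt_\Ap$ by hand on each local model; instead it observes that a morphism~$\Bl_{\widetilde\bp}(\wtY^{\ge\ell}_\Ap)\to\Bl_{\bp}(\sY^{\ge\ell}_\Ap)$ exists by the universal property of blowup as soon as the scheme-theoretic preimage of~$\bp$ in~$\Bl_{\widetilde\bp}(\wtY^{\ge\ell}_\Ap)$ is a Cartier divisor. In case~\ref{it:tp-full} and in the \'etale case~$\bp\in\sY^\ell_\Ap$ this is immediate. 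The key point you do not isolate is the remaining case~$\ell\in\{1,2\}$, $\bp\in\sY^3_\Ap$: there the scheme fiber~$\uvt_\Ap^{-1}(\bp)$ is the \emph{first infinitesimal neighborhood} of the reduced point (one reads this off Proposition~\ref{propsing}), hence its ideal is the square of the maximal ideal, and blowing up the reduced point is literally the same as blowing up the scheme fiber. This single observation both yields~$\tuvt_\Ap$ and explains why taking~$\widetilde\bp$ reduced in~\ref{it:tp-red} is the correct choice.

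Two further points. First, for~$\ell=0$ and~$\bp\in\sY^{\ge1}_\Ap$, your local model~$\{t^2=f\}$ becomes awkward when~$\bp$ lies deep in the branch locus (say~$\bp\in\sY^3_\Ap$), since~$f$ is then highly singular at~$\bp$; the paper bypasses this by using the \emph{global} description of~$\wtY^{\ge0}_\Ap$ as a sextic hypersurface in~$\P(1^6,3)$, so that the blowup is the strict transform under the blowup of a line~$\P(1,3)$---automatically Cohen--Macaulay as a hypersurface, with codimension-one smoothness easy to check. Second, some small slips: $\wtY^{\ge2}_\Ap$ is a surface, not a threefold; the blowup of~$\sY^{\ge1}_\Ap$ at a point of~$\sY^3_\Ap$ need not be smooth and in any event its smoothness is irrelevant; and normality of~$\Bl_{\bp}(\sY^{\ge\ell}_\Ap)$ is \emph{not} needed for the integral-closure statement---normality of the source together with finiteness and birationality to~$\wtY^{\ge\ell}_\Ap$ suffices.
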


\begin{proof}
To prove the existence of a morphism~$\tuvt_\Ap$ making the diagram commutative,
it is enough to check that the scheme preimage of the point~$\bp$ 
under the composition
\begin{equation*}
\Bl_{\widetilde\bp}(\wtY^{\ge \ell}_\Ap) \xrightarrow{\quad} 
\wtY^{\ge \ell}_\Ap \xrightarrow{\ \uvt_\Ap\ } 
\sY^{\ge \ell}_\Ap
\end{equation*}
is a Cartier divisor.\ 
This is clear in case~\ref{it:tp-full},
and also in the case where~$\bp \in \sY^\ell_\Ap$, because~$\uvt_\Ap$ is  then \'etale over~$\bp$,
hence the scheme~$\uvt^{-1}_\Ap(\bp)$ is reduced.\ 
Finally, if~$\ell \in \{1,2\}$ and~$\bp \in \sY^3_\Ap$, using the description of Proposition~\ref{propsing},
it is easy to see that~$\uvt^{-1}_\Ap(\bp)$ is the first infinitesimal neighborhood of the point~$\bp'$ over~$\bp$,
hence the blowup of~$\bp' = \uvt^{-1}_\Ap(\bp)_{\mathrm{red}}$ is isomorphic to the blowup of~$\uvt^{-1}_\Ap(\bp)$.

It remains to check that the scheme~$\Bl_{\widetilde\bp}(\wtY^{\ge \ell}_\Ap)$ is integral and normal.

If~$\ell \in \{0,1,2\}$ and~$\bp \in \sY^\ell_\Ap$, this is obvious 
because~$\widetilde\bp$ is a pair of nonsingular points on the integral and normal scheme~$\tY^{\ge \ell}_\Ap$ (see Theorem~\ref{thm:tya}).

If~$\ell = 0$ and~$\bp \in \sY^{\ge 1}_\Ap$, then~$\Bl_{\widetilde\bp}(\wtY^{\ge \ell}_\Ap)$ is the strict transform 
of a normal sextic hypersurface in~$\P(1^6,3)$
with respect to the blowup of a line~$\P(1,3) \subset \P(1^6,3)$,
hence it is integral and Cohen--Macaulay, and it is easily seen to be nonsingular in codimension~$1$.
 
If~$\ell = 1$ and~$\bp \in \sY^2_\Ap$, then~$\wtY^{\ge \ell}_\Ap$ is nonsingular over~$\bp$ 
and~$\widetilde\bp$ is the first infinitesimal neighborhood of a point in the direction normal to a smooth surface,
so the integrality and normality of its blowup is easily verified.

Finally, if~$\ell \in \{1,2\}$ and~$\bp \in \sY^3_\Ap$, 
by Proposition~\ref{propsing}, 
the scheme~$\wtY_\Ap^{\ge \ell}$ is, locally analytically around the point~$\bp'$ over~$\bp$,  
isomorphic to a cone over a smooth conic or over the flag variety~$\Fl(1,2;3)$.\ 
Therefore, its blowup is nonsingular over~$\bp$, hence normal.
\end{proof}


\section{Hilbert schemes}
\label{sec:hilbert}

In this section, we discuss the Hilbert schemes of linear spaces 
on linear sections of~$\CGr(2,V_5)$ (Section~\ref{subsec:ls-qu}) and GM varieties (Section~\ref{ss:fkx}),
and we introduce Hilbert schemes of quadrics on GM varieties (Section~\ref{sec:gkx}).

\subsection{Linear spaces on linear sections of~$\CGr(2,V_5)$}
\label{subsec:ls-qu}

 Given a projective scheme~$Z$ and an ample divisor class~$H $ on~$Z$,
we denote by~$\F_k(Z)$ the Hilbert scheme of linear spaces of dimension~$k$ in~$Z$, 
that is, of subschemes with Hilbert polynomial
\begin{equation*}
h^\F_k(t) \coloneqq \frac{(t+1)\cdots(t+k)}{k!}
\end{equation*}
with respect to the ample class~$H$.\

A closed subscheme~$\Pi \subset \P(W)$ with Hilbert polynomial equal to~$h^\F_k(t)$
(with respect to the hyperplane class of~$\P(W)$)
is a linear subspace~$\P^k \subset \P(W)$.\ 
In other words,
\begin{equation*}
\F_k(\P(W)) \cong \Gr(k+1,W).
\end{equation*}
Similarly, if~$Z \subset \P(W)$, then~$\F_k(Z)$ is the subscheme of~$\Gr(k+1,W)$ 
parameterizing linear subspaces contained in~$Z$.\ 
In particular, $\F_0(Z) = Z$, the scheme~$\F_1(Z)$ is the Hilbert scheme of lines on~$Z$, and so on.\

In this section, we discuss the Hilbert schemes of linear spaces on~$\CGr(2,V_5)$ and on its linear sections.\ 
We first introduce some notation.\

\begin{defi}
\label{def:st-subspaces}
A linear subspace~$\Pi \subset \P(\C \oplus \bw2V_5)$ is a {\sf $\sigma$-space} 
if there is a $1$-dimensional subspace~$V_1 \subset V_5$ such that
\begin{align*}
\Pi \subset \P(\C \oplus (V_1 \wedge V_5)) &= \P^4 \subset \CGr(2,V_5),\\
\intertext{and it is a {\sf ${\rtx}$-space} if there is a $3$-dimensional subspace~$V_3 \subset V_5$ such that}
\Pi \subset \P(\C \oplus \bw2V_3)  &= \P^3\subset \CGr(2,V_5).
\end{align*}
Finally, $\Pi$ is a {\sf $\sigma\rtx$-space} if it is both a $\sigma$-space and a $\rtx$-space.
\end{defi}

Note that~$(V'_1 \wedge V_5) \cap (V''_1 \wedge V_5) = V'_1 \wedge V''_1$ for~$V'_1 \ne V''_1$
and~$\bw2V'_3 \cap \bw2V''_3 = \bw2(V'_3 \cap V''_3)$ for~$V'_3 \ne V''_3$, 
hence the line~$V_1$ and the subspace~$V_3$ in Definition~\ref{def:st-subspaces} are uniquely determined
(except for a point or line in the ruling of the cone~$\CGr(2,V_5)$).

Similarly, $(V_1\wedge V_5) \cap \bw2V_3 = V_1 \wedge V_3$ if~$V_1 \subset V_3$, and  $0$ otherwise, 
hence~$\Pi$ is a $\sigma\rtx$-space if and only if it is contained in the cone over a line in~$\Gr(2,V_5)$.

It is a classical fact (see for example \cite[Theorem~4.9]{LM}) that any linear subspace in the Grassmannian~$\Gr(2,V_5) \subset \P(\bw2V_5)$, 
and hence also in~$\CGr(2,V_5) \subset \P(\C \oplus \bw2V_5)$,  is a $\sigma$-space or a $\rtx$-space.\ 
Moreover, the conditions to be a $\sigma$-space, a $\rtx$-space, or a~$\sigma\rtx$-space in~$\CGr(2,V_5)$ are closed, so that 
\begin{equation*}
\F_k(\CGr(2,V_5)) = \Fs_k(\CGr(2,V_5)) \cup \Ft_k(\CGr(2,V_5)),
\end{equation*}
where~$\Fs_k(\CGr(2,V_5))$ and~$\Ft_k(\CGr(2,V_5))$ are the closed subschemes of $\sigma$-spaces and~$\tau$-spaces, respectively.\ 
Similarly, for a closed subscheme~$Z \subset \CGr(2,V_5)$, we write
\begin{equation*}
\F^\star_k(Z) = \F_k(Z) \cap \F^\star_k(\CGr(2,V_5)),
\end{equation*}
where~$\star \in \{\sigma,\tau,\sigma\tau\}$,
so that~$\F_k(Z) = \Fs_k(Z) \cup \Ft_k(Z)$ and~$\Fst_k(Z) = \Fs_k(Z) \cap \Ft_k(Z)$.

Consider a smooth transverse linear section~$M = \Gr(2,V_5) \cap \P(W_0)$ of~$\Gr(2,V_5)$ of dimension at least~$3$,
where~$W_0 \subset \bw2V_5$ is a vector subspace.\ 
We let~$W_0^\perp \subset \bw2V_5^\vee$ be its orthogonal complement;
by~\cite[Proposition~2.24]{DK1}, any nonzero skew form in~$W_0^\perp$ has rank~4,  
so the maps~$\upkappa$ and~$\tilde\upkappa$ from~\eqref{eq:kappa} and Corollary~\ref{cor:kappa} are defined.\ 
To describe~$\F_k(M)$, we introduce some notation:
\begin{itemize}
\item 
If~$\dim(M) = 5$, so that~$\dim(W_0^\perp) = 1$, 
we denote by~$ V^M_1  \coloneqq \tilde\upkappa(\Sym^2(W_0^\perp)) \subset V_5$ 
the line corresponding to the point~$\upkappa([W_0^\perp]) \in \P(V_5)$.\ 
Any skew form~$\omega$ generating~$W_0^\perp$ induces a symplectic form on~$V_5/V^M_1$ and 
we denote by
\begin{equation*}
\IGr(2,V_5/V^M_1) \cong \IGr(3,V_5) \subset \Gr(3,V_5)
\end{equation*}
the Grassmannian of $\omega$-isotropic $3$-dimensional subspaces in~$V_5$
(it is a smooth $3$-dimensional quadric).\ 
\item 
If~$\dim(M) = 4$, so that~$\dim(W_0^\perp) = 2$, we denote by~$V^M_3 \coloneqq \tilde\upkappa(\Sym^2(W_0^\perp))\subset V_5$ 
the linear span of the smooth conic~$\P^1 \cong \upkappa(\P(W_0^\perp)) \subset \P(V_5)$.
\end{itemize}

\begin{lemm}
\label{lem:fk-mx}
Let~$M = \Gr(2,V_5) \cap \P(W_0^\perp)$ be a smooth transverse linear section  of dimension~$n+1\ge3$.\ 
For~$k \ge 4$, all schemes~$\F^\star_k(M)$ are empty; 
for~$k \le 3$, they are smooth and connected, and are listed in the following table:
\begin{equation*}
\arraycolsep=0.8em
\setlength{\extrarowheight}{4pt}
\begin{array}{c|cccc}
n & 2 & 3 & 4 & 5
\\[2pt]
\hline
 \F_1(M)  & \upkappa(\P(W_0^\perp)) \cong \P^2 &  \Bl_{\upkappa(\P(W_0^\perp))}(\P(V_5)) & \Bl_{\IGr(3,V_5)}(\Gr(3,V_5)) &  \Fl(1,3; V_5) 
\\
\Fs_2(M) & \vide  & \upkappa(\P(W_0^\perp)) \cong \P^1 &  \Bl_{\P(V^M_1)}(\P(V_5))&   \Fl(1,4; V_5) 
\\
\Ft_2(M) &  \vide &\P^0   & \IGr(3,V_5) & \Gr(3,V_5) 
\\
\F_3(M) & \vide &\vide &  \P^0 & \P(V_5)
\end{array}
\end{equation*}
When~$n=4$, the point~$\F_3(M)$ corresponds to the subspace~$\P(V^M_1 \wedge V_5) \subset \Gr(2,V_5)$, and
when~\mbox{$n=3$}, the point~$\Ft_2(M)$ corresponds to the plane~$\Gr(2,V^M_3) \subset \Gr(2,V_5)$.
\end{lemm}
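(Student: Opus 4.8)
The plan is to lean on the classical dichotomy for linear subspaces of~$\Gr(2,V_5)$ (see \cite[Theorem~4.9]{LM}): each such subspace is a $\sigma$-space or a $\rtx$-space, and one that is both is contained in the cone over a line of~$\Gr(2,V_5)$, hence has dimension~$\le 1$. In particular every line on~$M$ is at once a $\sigma$- and a $\rtx$-line, so~$\F_1(M)=\Fs_1(M)=\Ft_1(M)$ and I may describe it using whichever of the two parametrizations below is more convenient. Since a $\sigma$-space lies in some~$\P(V_1\wedge V_5)\cong\P^3$ and a $\rtx$-space lies in some~$\P(\bw2V_3)\cong\P^2$, we get~$\Fs_k(M)=\vide$ for~$k\ge 4$ and~$\Ft_k(M)=\vide$ for~$k\ge 3$, which settles the case~$k\ge 4$. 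It remains to handle~$\Ft_1$,~$\Ft_2$,~$\Fs_1$,~$\Fs_2$,~$\Fs_3$.

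For the $\rtx$-spaces I would work over~$\Gr(3,V_5)$. A $\rtx$-$k$-space inside~$M$ is a~$\P^k$ in~$\P(\bw2V_3\cap W_0)$ for a (uniquely determined, when~$k\ge 2$) subspace~$V_3$, and by Lemma~\ref{lem:wperp} a nonzero~$\omega\in W_0^\perp$ vanishes on~$V_3$ exactly when~$\ker(\omega)\subset V_3$ and~$V_3/\ker(\omega)$ is Lagrangian in the symplectic space~$V_5/\ker(\omega)$; this gives~$\dim(\bw2V_3\cap W_0)=n-2+\dim\{\omega\in W_0^\perp:\omega|_{V_3}=0\}$. Thus~$\Ft_2(M)$ is the zero locus in~$\Gr(3,V_5)$ of the section cutting out~$\{\bw2V_3\subset W_0\}$ (the vanishing of the induced map~$\bw2\cR_3\to(\bw2V_5/W_0)\otimes\cO$): it is all of~$\Gr(3,V_5)$ for~$n=5$, the smooth isotropic Grassmannian~$\LGr(2,V_5/V^M_1)\cong\IGr(3,V_5)$ for~$n=4$, empty for~$n=2$, and a single point for~$n=3$. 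For this last case, writing~$W_0^\perp=\langle e,f\rangle$, the kernels of~$e$,~$f$ and~$e+f$ span~$V^M_3$ and all their mutual~$e$- and~$f$-pairings vanish (for instance $e(\ker f,\ker(e+f))=-f(\ker f,\ker(e+f))=0$), so~$V^M_3$ is isotropic for all of~$W_0^\perp$; conversely any~$V_3$ with~$\bw2V_3\subset W_0$ contains~$\ker(\omega)$ for every~$\omega\in W_0^\perp$, hence contains~$V^M_3$, giving uniqueness — and~$\Ft_2(M)$ then parametrizes~$\Gr(2,V^M_3)$. For lines with~$n=4,5$ I use this picture: the forgetful map~$\Ft_1(M)\to\Gr(3,V_5)$ has fibre over~$[V_3]$ the lines in~$\P(\bw2V_3\cap W_0)$, i.e.~$\P(V_3)$ if~$\bw2V_3\subset W_0$ and a single point otherwise, so it is the~$\P^2$-bundle~$\Fl(1,3;V_5)$ when~$n=5$, and the blowup of~$\Gr(3,V_5)$ along~$\IGr(3,V_5)$ (a smooth centre of codimension~$3$) when~$n=4$.

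For the $\sigma$-spaces I would work over~$\P(V_5)$. The line~$V_1$ of a $\sigma$-space~$\P(R)\subset\Gr(2,V_5)$ (with~$R\subset W_0$, $\dim R=k+1\ge 2$) is recovered as~$V_1=\Ker\bigl(V_5\to\Hom(R,\bw3V_5)\bigr)$, $v\mapsto(\rho\mapsto\rho\wedge v)$, which depends algebraically on~$R$; hence each~$\Fs_k(M)$ (which is open and closed in~$\F_k(M)$ when~$k\ge 2$) is identified, as a scheme, with~$\{([V_1],[R])\in\P(V_5)\times\Gr(k{+}1,W_0):R\subset(V_1\wedge V_5)\cap W_0\}$, and~$\F_1(M)=\Fs_1(M)$ with the analogous incidence. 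By Lemma~\ref{lem:wperp} and the injectivity of~$\upkappa$ (Corollary~\ref{cor:kappa}), the function~$[V_1]\mapsto\dim\bigl((V_1\wedge V_5)\cap W_0\bigr)$ equals~$n-1$ off~$\upkappa(\P(W_0^\perp))$ and~$n$ along it, and~$\upkappa(\P(W_0^\perp))$ — the image of~$\P(W_0^\perp)=\P^{4-n}$ — has codimension~$n$ in~$\P(V_5)$. Reading off the fibres of the forgetful map to~$\P(V_5)$: $\Fs_3(M)=\{V_1\wedge V_5\subset W_0\}$ is~$\P(V_5)$ for~$n=5$, the single point~$[V^M_1]$ (parametrizing~$\P(V^M_1\wedge V_5)$) for~$n=4$, and empty otherwise; $\Fs_2(M)$ has fibre~$\Gr(3,(V_1\wedge V_5)\cap W_0)$, which is empty for~$n=2$, a point over~$\upkappa(\P(W_0^\perp))$ and empty elsewhere for~$n=3$ (so~$\Fs_2(M)\cong\upkappa(\P(W_0^\perp))\cong\P^1$), a point off~$[V^M_1]$ and a~$\P^3$ over it for~$n=4$ (so~$\Fs_2(M)\cong\Bl_{\P(V^M_1)}(\P(V_5))$), and a~$\P^3$-bundle for~$n=5$ (so~$\Fs_2(M)\cong\Fl(1,4;V_5)$); and~$\F_1(M)=\Fs_1(M)$ has fibre~$\Gr(2,(V_1\wedge V_5)\cap W_0)$, which is a point over~$\upkappa(\P(W_0^\perp))$ and empty elsewhere for~$n=2$ — so~$\F_1(M)\cong\upkappa(\P(W_0^\perp))$, a Veronese surface~$\cong\P^2$, since by Corollary~\ref{cor:kappa} the map~$\upkappa$ is the projection of the~$2$-Veronese~$\P^2\subset\P^5$ from a point off its secant variety — and a point off~$\upkappa(\P(W_0^\perp))$ and a~$\P^2$ over it for~$n=3$ (so~$\F_1(M)\cong\Bl_{\upkappa(\P(W_0^\perp))}(\P(V_5))$).

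The step I expect to be the crux — and the main obstacle — is turning these fibrewise descriptions into the stated scheme-theoretic isomorphisms, in particular the blowup identifications and the smoothness claim. Here I would invoke the general mechanism that a relative Grassmannian which is birational onto a smooth base (generic fibre a point) and whose fibre over a smooth locus~$D$ of codimension~$c$ is~$\P^{c-1}$ is the blowup~$\Bl_D$ of the base; the required hypotheses hold above because Lemma~\ref{lem:wperp} and Corollary~\ref{cor:kappa} force~$\upkappa(\P(W_0^\perp))$ (resp.~$\IGr(3,V_5)$) to be smooth of codimension~$n$ in~$\P(V_5)$ (resp.\ of codimension~$3$ in~$\Gr(3,V_5)$) with the relevant degeneracy sheaf a line bundle on it — the delicate point being transversality of the defining sections, which I would check by a direct computation of the relevant tangent/normal spaces (equivalently, that~$H^1$ of the normal bundle of each~$\P^k\subset M$ vanishes, which follows from~$0\to N_{\P^k/M}\to N_{\P^k/\P(W_0)}\to N_{M/\P(W_0)}|_{\P^k}\to 0$ together with the positivity of~$N_{\Gr(2,V_5)/\P(\bw2V_5)}$ along linear spaces). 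Granting this, each~$\F^\star_k(M)$ is visibly smooth and connected, being a blowup of a smooth irreducible variety along a smooth irreducible centre, a Grassmann bundle over such, a smooth quadric threefold, a projective space, or a Veronese surface.
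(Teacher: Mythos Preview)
Your proposal is correct and, in fact, considerably more detailed than the paper's own proof, which simply cites \cite[Section~4.1]{DK2} for $n=5$, \cite[Lemma~4.2]{K12} for $n=2$, and \cite[Section~3]{dim} for $n=3$. The arguments you outline---working over $\Gr(3,V_5)$ for $\tau$-spaces and over $\P(V_5)$ for $\sigma$-spaces, computing the dimension of $(\bw2V_3)\cap W_0$ and $(V_1\wedge V_5)\cap W_0$ via the rank-4 condition of Lemma~\ref{lem:wperp}, and identifying the resulting relative Grassmannians as blowups---are exactly what underlies those references, so the approach is the same in substance.

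Your self-identified ``crux'' is genuine but routine: the blowup identification follows from the fact that in each case the relevant degeneracy locus has the expected codimension (which you verify) and is smooth, so the relative Grassmannian $\Gr_B(r,\cE)$ of a bundle $\cE$ whose rank jumps by one along a smooth codimension-$c$ locus $D$ with $r=\rank\cE|_{B\setminus D}$ is $\Bl_D(B)$; this is a standard excess-intersection computation (or, as you note, follows from smoothness of $\F_k^\star(M)$, which one can check via the normal-bundle sequence you write down). One small point: in your $n=3$ computation of $\Ft_2(M)$, the verification that $V_3^M$ is isotropic for both $e$ and $f$ requires checking all six pairings, not just the one you display, but the remaining ones are either trivial (involving a kernel vector of the form itself) or follow by the same symmetry trick.
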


\begin{proof}
The case~$n=5$ is classical (see~\cite[Section~4.1]{DK2}).\ 
The other cases follow easily: see~\cite[Lemma~4.2]{K12} for the case~$n=2$ 
and~\cite[Section~3]{dim} for the case~$n=3$.
\end{proof}

The dimensions of the   Hilbert schemes in the table
are listed in Corollary~\ref{cor:dim-fkmx} as the entries of the columns~$X_n^\ord $.\ 

For~$k \ge 3$, we have~$\Ft_k(M) = \vide$, while~$\Ft_1(M) = \Fs_1(M) = \F_1(M)$;
this is why we specify~$\Fs_k(M)$ and~$\Ft_k(M)$ only for~$k = 2$.\ 
The schemes~$\Fs_2(M) $ and~$\Ft_2(M)$ are disjoint.

Assume now that~$\P(W) \subset \P(\C \oplus \bw2V_5)$ contains the vertex~$\bv$ of the cone~$\CGr(2,V_5)$.\ 
Then~$M = \CGr(2,V_5) \cap \P(W)$ is the cone with vertex~$\bv$ 
over a linear section~$M' \subset \Gr(2,V_5)$
and for any linear subspace~$\Pi' \subset M'$ of dimension~$k - 1$,
the linear span~$ \langle \Pi', \bv \rangle$ is a linear subspace in~$M$ of dimension~$k$.\ 
This defines a closed embedding~$\F_{k-1}(M') \subset \F_k(M)$.

\begin{defi}
\label{def:fok}
If~$M = \CGr(2,V_5) \cap \P(W)$, we define 
\begin{equation*}
\Fo_k(M) \coloneqq 
\begin{cases}
\F_k(M) \ssm \F_{k-1}(M')  & \text{if~$\bv \in M$,}\\
\F_k(M)  & \text{otherwise.}
\end{cases}
\end{equation*}
This is the open subscheme in~$\F_k(M)$ parameterizing~$k$-planes in~$M$ not passing through~$\bv$.
\end{defi}

The following lemma is obvious.

\begin{lemm}
\label{lem:fk-cm}
If~$M = \CGr(2,V_5) \cap \P(W)$ is a cone of dimension~$n+1$ with vertex~$\bv$ 
over a smooth transverse linear section~$M' = \Gr(2,V_5) \cap \P(W_0)$ of dimension~$n$,
the projection from the vertex~$\bv$ induces morphisms
\begin{equation*}
\Fo_k(M) \lra \F_k(M')
\qquad\text{and}\qquad 
\F_k(M) \ssm \Fo_k(M) \isomto \F_{k-1}(M'),
\end{equation*}
where the first morphism is a Zariski-locally trivial~$\A^{k+1}$-bundle and the second morphism is an isomorphism.\ 
In particular, the schemes~$\Fo_k^\star(M)$ are smooth and connected for all~$k$.

Moreover, for~$n \ge 3$, we have the following stratifications of~$\F_k(M)$:
\begin{itemize}
\item 
$\F_1(M) = \Fo_1(M) \sqcup M'$ and the scheme~$\F_1(M)$ is irreducible of dimension~$2n-2$.
\item 
$\F_2(M) = \Fo^\sigma_2(M) \sqcup \F_1(M') \sqcup  \Fo^\rtx_2(M)$;
moreover, we have~$\Fs_2(M) = \Fo^\sigma_2(M) \sqcup  \F_1(M')$ and~\mbox{$\Ft_2(M) = \F_1(M') \sqcup  \Fo^\rtx_2(M)$}, 
and the scheme~$\F_2(M)$ is connected.
\item 
$\F_3(M)  = (\Fo_3(M)  \sqcup  \Fs_2(M')) \sqcup \Ft_2(M')$,
where~$\Fs_3(M) =  \Fo_3(M) \sqcup \Fs_2(M')$ and~$\Ft_2(M')$ are the two connected components of~$\F_3(M)$.
\item 
$\F_4(M)  = \F_3(M')$.
\end{itemize}
\end{lemm}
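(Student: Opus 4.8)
The plan is to treat the cone structure on~$M$ directly, using the fact that~$M$ is swept out by the~$\P^1$'s joining~$\bv$ to the points of~$M'$, and reduce every statement about~$\F_k(M)$ to the already-established description of~$\F_k(M')$ in Lemma~\ref{lem:fk-mx} together with the classification of linear subspaces of~$\CGr(2,V_5)$ into $\sigma$-spaces and $\rtx$-spaces.

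\textbf{The morphisms.} First I would prove the two displayed morphisms. A~$k$-plane~$\Pi \subset M$ not through~$\bv$ projects from~$\bv$ isomorphically onto a~$k$-plane~$\Pi' \subset M'$; conversely, the fiber over~$[\Pi'] \in \F_k(M')$ consists of the~$k$-planes~$\Pi \subset \langle \Pi', \bv\rangle \cong \P^{k+1}$ that avoid~$\bv$, which is an affine chart~$\A^{k+1}$ in the dual projective space of~$\langle\Pi',\bv\rangle$; local triviality over~$\F_k(M')$ is then clear once one chooses a local section of the tautological bundle, giving the Zariski-locally trivial~$\A^{k+1}$-bundle structure. For a~$k$-plane through~$\bv$, its image under projection is a~$(k-1)$-plane in~$M'$ and conversely~$\langle\Pi',\bv\rangle$ is the unique~$k$-plane in~$M$ through~$\bv$ over~$[\Pi']$; this gives the isomorphism~$\F_k(M)\ssm\Fo_k(M)\isomto\F_{k-1}(M')$. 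Smoothness and connectedness of the~$\Fo^\star_k(M)$ follow since they are affine bundles over the schemes~$\F^\star_k(M')$, which are smooth and connected by Lemma~\ref{lem:fk-mx} (here one uses that~$\langle\Pi',\bv\rangle$ is a $\sigma$-space, resp.\ $\rtx$-space, exactly when~$\Pi'$ is, because passing to the cone over a line in~$\Gr(2,V_5)$ preserves these types; this also handles the compatibility~$\F^\star_k(M) \ssm \Fo^\star_k(M) \isomto \F^\star_{k-1}(M')$).

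\textbf{The stratifications.} For the four bulleted items, the set-theoretic decompositions are immediate from Definition~\ref{def:fok}: every~$k$-plane in~$M$ either avoids~$\bv$ (contributing~$\Fo_k(M)$, further split into $\sigma$- and $\rtx$-parts by the classification when~$k \ge 2$, using that a $\sigma\rtx$-space is a cone over a line and hence for~$k\ge 2$ its reduction off the vertex still lies in the cone over a line, so $\Fo^\sigma_k \cap \Fo^\rtx_k = \emptyset$ for the relevant dimensions) or passes through~$\bv$ (contributing~$\F_{k-1}(M')$, again split by type). Plugging in~$\F_{k-1}(M') = \Fo_{k-1}(M') \sqcup \cdots$ recursively and noting that the cone over~$M'$ has no vertex issues below—that is, $M'$ itself is a genuine transverse section of~$\Gr(2,V_5)$, so Lemma~\ref{lem:fk-mx} applies to it—yields the stated stratifications. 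The claims about dimensions ($\F_1(M)$ irreducible of dimension~$2n-2$) and about connected components come from the table in Lemma~\ref{lem:fk-mx}: e.g.\ $\dim\Fo_1(M) = \dim\F_1(M') + 2 = (2n-2-?)$ matches~$\dim M' = n$ when that dominates, and the closure relations~$M' \subset \overline{\Fo_1(M)}$ follow because a~$1$-plane through~$\bv$ is a limit of~$1$-planes avoiding~$\bv$. For~$\F_2$: $\Fs_2(M) = \Fo^\sigma_2(M) \sqcup \F_1(M')$ because a $\sigma$-plane through~$\bv$ projects to a line in~$M'$, and all of~$\F_1(M')$ arises this way (every line~$\ell \subset M'$ gives the $\sigma$-plane~$\langle\ell,\bv\rangle$), with the analogous statement for~$\rtx$; connectedness of~$\F_2(M)$ then follows since both~$\Fs_2(M)$ and~$\Ft_2(M)$ contain the common stratum~$\F_1(M')$ in their closure. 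For~$\F_3$ the two connected components are separated because~$\Fs_3(M')$ and~$\Ft_3(M')$ are disjoint (from Lemma~\ref{lem:fk-mx}, for~$n\ge 3$ one has~$\Ft_3(M') = \emptyset$ or a point and~$\Fs_3$, $\Ft_3$ never meet), and the cone operation preserves this separation. For~$\F_4(M) = \F_3(M')$: there are no $4$-planes in~$M$ avoiding~$\bv$ since~$\Fo_4(M) \to \F_4(M') = \emptyset$ (by Lemma~\ref{lem:fk-mx}, $\F_4(\Gr(2,V_5)\cap\P(W_0))$ is empty for~$n \le 5$), so every~$4$-plane passes through~$\bv$.

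\textbf{The main obstacle.} The delicate point is not any single computation but keeping the bookkeeping of $\sigma$- versus $\rtx$-types coherent under the cone construction, in particular verifying that~$\langle\Pi',\bv\rangle$ is a $\sigma$-space (resp.\ $\rtx$-space, resp.\ $\sigma\rtx$-space) if and only if~$\Pi'$ is, and that the resulting decompositions of~$\Fo_k(M)$ and~$\F_{k-1}(M')$ into type-strata are exactly the ones asserted. This rests on Definition~\ref{def:st-subspaces} and the remark that~$\bv$ corresponds to the~$\C$-summand, so~$\langle\Pi',\bv\rangle \subset \P(\C \oplus (V_1 \wedge V_5))$ iff~$\Pi' \subset \P(V_1\wedge V_5)$, and similarly for~$\bw2V_3$; once this is spelled out, everything else is formal manipulation of the table in Lemma~\ref{lem:fk-mx}, which is why the lemma is (correctly) labelled "obvious."
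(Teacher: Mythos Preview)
Your approach is correct and is precisely what the paper has in mind: the paper gives no proof beyond the sentence ``The following lemma is obvious,'' and your sketch spells out exactly the cone/projection argument that makes it so, reducing everything to Lemma~\ref{lem:fk-mx} via the affine-bundle structure on~$\Fo_k(M)$ and the bijection~$\F_k(M)\ssm\Fo_k(M)\cong\F_{k-1}(M')$. The type-compatibility observation (that~$\langle\Pi',\bv\rangle$ is a $\sigma$-, $\rtx$-, or $\sigma\rtx$-space exactly when~$\Pi'$ is) is indeed the one point worth writing down, and your justification via Definition~\ref{def:st-subspaces} and the fact that~$\bv$ corresponds to the~$\C$-summand is exactly right. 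One cosmetic remark: your dimension sentence ``$\dim\Fo_1(M)=\dim\F_1(M')+2=(2n-2-?)$'' is garbled; the clean statement is that~$\F_1(M')$ has dimension~$2n-4$ (Corollary~\ref{cor:dim-fkmx} applied to~$M'$), so~$\Fo_1(M)$ has dimension~$2n-2$, and the closed stratum~$M'$ has dimension~$n<2n-2$.
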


\begin{rema}
\label{rem:f3m}
Let~$M$ be as in Lemma~\ref{lem:fk-cm}.

When~$n=3$, the schemes~$\F_2(M)=\Fs_2(M) =\Ft_2(M) =\Fst_2(M) \isom \F_1(M')$ are isomorphic to~$\P^2$,
while the schemes~$\F_3(M)$ and~$\F_4(M)$ are empty.
 
When~$n=4$, the scheme~$\F_2(M)$ has three irreducible components: 
$\F_1(M')$ and the closure of~$\Fo^\sigma_2(M)$, both smooth of dimension~$4$,
and~$\F^\tau_2(M)$, smooth of dimension~3,
while the scheme~\mbox{$\F_3(M) \cong \F_2(M')$} has two connected components, 
$\Fs_2(M') \cong \P^1$ and~$\Ft_2(M') \cong \P^0$,
and~$\F_4(M)$ is empty.

When~$n=5$, the scheme~$\F_2(M)$ has two irreducible components: 
$\Fs_2(M)$, smooth of dimension~$7$, and~$\Ft_2(M)$, smooth of dimension~$6$.\ 
The scheme~$\Fs_3(M)$ has two  irreducible components:  
the closure of~$\Fo_3(M)$, which is the projective space~$\P^4$ of hyperplanes in the unique $4$-space on~$M$,
and~$\Fs_2(M') \cong \Bl_{[V_1^M]}(\P(V_5))$;
these smooth $4$-dimensional components intersect along the exceptional divisor of the second component 
(which lies as a hyperplane in the first).\ 
Furthermore, $\Ft_3(M) \cong \Ft_2(M') \cong \IGr(3, V_5)$ and~$\F_4(M) \cong \F_3(M') \cong \P^0$.

When~$n=6$, the scheme~$\F_2(M)$ has two irreducible components: 
$\Fs_2(M)$, smooth of dimension~$10$, and~$\Ft_2(M)$, smooth of dimension~9.\  
The scheme~$\Fs_3(M)$ is smooth and irreducible of dimension~$8$:
it has the structure of a $\P^4$-bundle over~$\Fs_3(M') \cong \P(V_5)$,
and the subscheme~\mbox{$\Fs_2(M') \cong \Fl(1,4;V_5)$} sits in it as a relative hyperplane.\ 
Furthermore, there is an isomorphism~$\Ft_3(M) \cong \Ft_2(M') \cong \Gr(3,V_5)$
and, analogously,~$\F_4(M) \cong \F_3(M') \cong \P(V_5)$.
\end{rema}

\begin{coro}
\label{cor:dim-fkmx}
The dimensions of the Hilbert schemes of linear spaces on the Grassmannian hull~$M_X$ 
of a  GM variety~$X_n$ of dimension~$n$ satisfying Property~\eqref{hh} are the following
\begin{equation*}
\setlength{\extrarowheight}{4pt}
\begin{array}{c|cccccccc}
& X_2^\ord & X_3^\spe & X_3^\ord & X_4^\spe & X_4^\ord & X_5^\spe & X_5^\ord & X_6^\spe
\\[2pt] 
\hline 
\dim(\F_1(M_X) )& 2 & 4 & 4 & 6 & 6 & 8 & 8 & 10
\\
\dim(\Fs_2(M_X) )& \cellcolor{lightgray} & 2 & 1 & 4 & 4 & 7 & 7 & 10 
\\
\dim(\Ft_2(M_X)) & \cellcolor{lightgray} & 2 & 0 & 3 & 3 & 6 & 6 & 9 
\\
\dim(\F_3(M_X) )& \cellcolor{lightgray} & \cellcolor{lightgray} & \cellcolor{lightgray} & 1 & 0 & 4 & 4 & 8 
\\
\dim(\F_4(M_X) )& \cellcolor{lightgray} & \cellcolor{lightgray} & \cellcolor{lightgray} & \cellcolor{lightgray} & \cellcolor{lightgray} & \cellcolor{lightgray} & \cellcolor{lightgray} & 4 
\end{array}
\end{equation*}
where grey cells correspond to the cases where the Hilbert schemes are empty.\ 
  In particular, 
\begin{itemize}
\item  $\dim(\F_1(M_X)) = 2n - 2$ for~$n \ge 2$, 
\item $\dim(\Fs_2(M_X)) = 3n - 8$ for~$n \ge 4$, 
\item  $\dim(\Ft_2(M_X)) = 3n - 9$ for~$n \ge 4$.
\end{itemize}
 \end{coro}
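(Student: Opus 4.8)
Since the schemes~$\F^\star_k(M_X)$ have already been described explicitly in both cases that can occur, the statement is purely a dimension count; the plan is to split according to whether~$X$ is ordinary or special and, in each case, to compute the dimensions of the model varieties produced by the relevant earlier result.

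First, suppose~$X$ is ordinary. Then~$M_X=\Gr(2,V_5)\cap\P(W)$ is a smooth transverse linear section of~$\Gr(2,V_5)$ of dimension~$n+1$: it is smooth by~\cite[Proposition~2.22]{DK1} when~$n\ge 3$ and by Property~\eqref{hh} when~$n=2$, and a smooth linear section of the expected dimension is automatically dimensionally transverse. I would then apply Lemma~\ref{lem:fk-mx} with its parameter equal to~$n$ (its section having dimension~$n+1$), which gives explicit descriptions of~$\F_1(M_X)$, $\Fs_2(M_X)$, $\Ft_2(M_X)$, $\F_3(M_X)$, and the vanishing~$\F_k(M_X)=\vide$ for~$k\ge 4$. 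It remains only to record the dimensions of the varieties appearing in the table of that lemma: projective spaces and blowups have the evident dimension, $\IGr(3,V_5)$ is a smooth three-dimensional quadric, $\dim\Gr(3,V_5)=6$, $\dim\Fl(1,3;V_5)=8$, and $\dim\Fl(1,4;V_5)=7$. This fills the columns~$X_n^\ord$.

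Next, suppose~$X$ is special, so~$n\ge 3$. Recall from Section~\ref{subsec:gm} (see~\eqref{eq:x-x0}--\eqref{defmpx}) that~$M_X$ is the cone with vertex~$\bv$ over~$M'_X=M_{X_0}$, where~$X_0$ is the ordinary GM variety of dimension~$n-1$ associated with~$X$. By Theorem~\ref{thm:gm-ld}, $X_0$ has the same Lagrangian data as~$X$ and hence also satisfies Property~\eqref{hh} (which in particular guarantees that~$M_{X_0}$ is smooth when~$n-1=2$), so by the ordinary case the dimensions of~$\F^\star_k(M'_X)=\F^\star_k(M_{X_0})$ are those recorded in the column~$X_{n-1}^\ord$. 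Since~$\dim M'_X=n\ge 3$, I would then invoke Lemma~\ref{lem:fk-cm}: from~$\dim\Fo_k(M_X)=\dim\F_k(M'_X)+k+1$ and~$\dim\bigl(\F_k(M_X)\ssm\Fo_k(M_X)\bigr)=\dim\F_{k-1}(M'_X)$, together with the stratifications of~$\F_k(M_X)$ listed there (and their refinements into $\sigma$- and $\tau$-parts for~$k=2,3$), each~$\dim\F^\star_k(M_X)$ is the maximum of the dimensions of its strata; concretely, all of this is read off directly from the case-by-case descriptions in Remark~\ref{rem:f3m} for~$n\in\{3,4,5,6\}$. In particular~$\F_k(M_X)=\vide$ for~$k\ge 5$, and~$\F_4(M_X)$ is nonempty only for~$n=6$, where~$\F_4(M_X)\cong\F_3(M'_X)\cong\P(V_5)$ has dimension~$4$. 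This fills the columns~$X_n^\spe$.

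Finally, the three uniform formulas follow by inspecting the completed table: $\dim\F_1(M_X)=2n-2$ for~$n\ge 2$ (in the special case this is already asserted in Lemma~\ref{lem:fk-cm}), and~$\dim\Fs_2(M_X)=3n-8$, $\dim\Ft_2(M_X)=3n-9$ for~$n\ge 4$. I do not expect any genuine obstacle beyond careful bookkeeping; the one point to watch is the indexing convention, since Lemma~\ref{lem:fk-mx} is parameterized by~$\dim X=\dim M_X-1$ while Lemma~\ref{lem:fk-cm} and Remark~\ref{rem:f3m} are parameterized by~$\dim M'_X$ (which equals~$\dim X$ for special~$X$), so one must feed the correct value into each, and in the special case one must remember to bootstrap off the ordinary case in dimension~$n-1$ via the identification~$M'_X=M_{X_0}$.
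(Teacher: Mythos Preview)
Your proposal is correct and matches the paper's approach: the corollary is stated there without proof, as it follows directly from reading off the dimensions in Lemma~\ref{lem:fk-mx} for the ordinary columns and in Lemma~\ref{lem:fk-cm} together with Remark~\ref{rem:f3m} for the special columns. Your careful tracking of the indexing conventions is exactly the bookkeeping the paper leaves implicit.
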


\subsection{Linear spaces on GM varieties}
\label{ss:fkx}

We discuss in this section some results about the Hilbert schemes~$\F_k(X)$ for a GM variety~$X$
(for more details and other cases, see~\cite[Section~4]{DK2}).\ 

First of all, note that~$\F_k(X) \subset \Fo_k(M_{X})$, because~$\bv \notin X$; 
moreover, the schemes~$\Fs_k(X)$ and~$\Ft_k(X)$ are disjoint for all~$k\ge 2$ when~$X$ is ordinary, 
and for all~$k\ge 3$ when~$X$ is special (see Section~\ref{subsec:ls-qu} for the notation).

Next, we describe the schemes~$\F_k(X)$ when~$k\ge n/2$.\   
Our description uses the subschemes~$\sY^3_A \subset \P(V_6)$ and~$\sZ^4_A \subset \Gr(3,V_6)$ 
defined in~\eqref{ya} and~\eqref{eq:def-za}.\ 
When~$A$ contains no decomposable vectors, they are finite reduced schemes, 
empty for general~$A$ (see Theorems~\ref{thm:og} and~\ref{theorem:ikkr}).\  
For each hyperplane~$V_5 \subset V_6$, we write
\begin{equation}
\label{eq:def-yav-zav}
\sY^3_{A,V_5} \coloneqq \sY^3_A \cap \P(V_5)
\qquad\text{and}\qquad 
\sZ^4_{A,V_5} \coloneqq \sZ^4_A \cap \Gr(3,V_5).
\end{equation}
They are also finite reduced schemes.

\begin{prop}
\label{prop:fk-gm}
Let~$X$ be a GM variety of dimension~$n$ satisfying Property~\eqref{hh}.\ 

If~$2k > n$, then~$\F_k(X) = \vide$.\ 

If~$2k = n$, the scheme~$\F_k(X)$ is finite and reduced
and admits a finite surjective morphism onto one of the following finite reduced schemes:
\begin{equation*}
\begin{cases}
\sY^3_{A,V_5}  & \text{if~$k = 1$  \textup(hence~$n = 2$\textup),}\\
\sY^3_{A,V_5} \sqcup \sZ^4_{A,V_5} & \text{if~$k = 2$ \textup(hence~$n = 4$\textup),}\\
\sY^3_{A,V_5}  & \text{if~$k = 3$ \textup(hence~$n = 6$\textup).}
\end{cases}
\end{equation*}
Moreover, if~$k = 1$, the morphism~$\F_1(X) \to \sY^3_{A,V_5}$ is an isomorphism.
\end{prop}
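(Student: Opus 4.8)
The plan is to control $\F_k(X)$ through the inclusion $\F_k(X) \subseteq \Fo_k(M_X)$, which holds because $\bv \notin X$, together with the explicit description of $\F_k(M_X)$ obtained in Section~\ref{subsec:ls-qu}. Since $X = M_X \cap Q$, a $k$-plane $\Pi$ lies on $X$ precisely when $[\Pi] \in \Fo_k(M_X)$ and the quadratic form $q$ defining $Q$ vanishes on $\langle \Pi \rangle$; thus $\F_k(X)$ is the zero locus, inside $\Fo_k(M_X)$, of the section of $\Sym^2\!\cR_{k+1}^\vee$ induced by $q$, where $\cR_{k+1}$ is the tautological subbundle on $\Gr(k+1,W) = \F_k(\P(W))$. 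First I would settle the easy part of the assertion that $2k>n$ implies $\F_k(X)=\vide$: by Lemma~\ref{lem:fk-mx} one has $\F_k(M_X) = \vide$ for $k \ge 4$, and an inspection of Lemma~\ref{lem:fk-cm} and Corollary~\ref{cor:dim-fkmx} shows that, for $2k > n$ with $k \le 3$, the scheme $\Fo_k(M_X)$ is nonempty only for $(k,n) \in \{(2,3),(3,4),(3,5)\}$ (when $X$ is special of dimension $3$ or $4$, the relevant $k$-planes on $M_X$ all pass through $\bv$, so $\Fo_k(M_X) = \vide$).

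For $(k,n) = (2,3)$ and $(3,4)$ I would argue by adjunction, proving the stronger statement that no GM variety $X$ of dimension $n \ge 3$ contains a linear subspace $\Pi \cong \P^{n-1}$: such a $\Pi$ is an effective divisor, and since $\Pic(X) = \Z H$ with $-K_X = (n-2)H$, we would have $\Pi \equiv dH$ with $d \ge 1$, whereas adjunction gives $\cO_{\P^{n-1}}(-n) = K_\Pi = (K_X + \Pi)\vert_\Pi = \cO_{\P^{n-1}}(d-n+2)$, forcing $d = -2$, a contradiction. The remaining case $(k,n) = (3,5)$, where $\Pi$ has codimension $2$ rather than $1$, is the one genuine obstacle: here $\Fo_3(M_X)$ is the $4$-dimensional family $\P(V_5)$ of $\sigma$-$3$-planes $\P(V_1 \wedge V_5)$, and one must show that $q$ vanishes on none of them. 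I would deduce this from the dictionary between linear spaces on $X$ and the Lagrangian data $(A,V_5)$ (Theorem~\ref{thm:gm-ld} and~\cite[Section~4]{DK2}): the vanishing of $q$ on such a maximal $\sigma$-plane forces $A$ to contain a decomposable vector --- equivalently, it places $[V_1]$ in an EPW-type stratum that is empty under Property~\eqref{hh} by Theorems~\ref{thm:og} and~\ref{theorem:ikkr} --- contradicting~\eqref{eq:no-decomposable}. Making this translation precise is the technical heart of the matter; it is the content of~\cite[Section~4]{DK2}, which I would invoke.

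Finally, for $2k = n$ I would construct the asserted morphism geometrically. A $\sigma$-plane $\Pi \subseteq X$ determines the line $V_1 \subset V_5$ with $\Pi \subseteq \P(\C \oplus (V_1 \wedge V_5))$, giving a morphism $\Fs_k(X) \to \P(V_5)$, while a $\tau$-plane $\Pi \subseteq X$ determines the $3$-dimensional subspace $V_3 \subset V_5$ with $\Pi \subseteq \P(\C \oplus \bw2V_3)$, giving a morphism $\Ft_k(X) \to \Gr(3,V_5)$. As $\tau$-spaces have dimension at most $2$, there are no $\tau$-$3$-planes, and a $\sigma\tau$-plane on $X$ would lie in the cone over a line of $\Gr(2,V_5)$ and hence contain $\bv$; so $\Fs_k(X)$ and $\Ft_k(X)$ are disjoint, $\F_k(X) = \Fs_k(X) \sqcup \Ft_k(X)$, and $\F_k(X) = \Fs_k(X)$ when $k \in \{1,3\}$. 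Unwinding the definitions~\eqref{ya} and~\eqref{eq:def-za} of the EPW loci in terms of $A$, one identifies the images of these two morphisms with $\sY^3_{A,V_5}$ and $\sZ^4_{A,V_5}$, checks that the morphisms are finite (so $\F_k(X)$ is finite) and surjective, and verifies the reducedness of $\F_k(X)$ --- and, for $k = 1$, that the morphism is an isomorphism --- by a local computation at each point using once more that $A$ contains no decomposable vectors. As before, the real input is the $X \leftrightarrow A$ dictionary of~\cite[Section~4]{DK2}; I expect the subcase $(k,n) = (2,4)$, where both types of planes occur and the two targets $\sY^3_{A,V_5}$ and $\sZ^4_{A,V_5}$ must be disentangled, to be the most delicate point.
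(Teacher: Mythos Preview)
Your route is close to the paper's: both split $\F_k(X)$ into its $\sigma$- and $\tau$-parts and then feed into the dictionary of~\cite[Section~4]{DK2} relating these to EPW strata. The paper organizes the $2k=n$ argument around the quadric fibrations $\rho_1 \colon \P_X(\cU_X) \to \P(V_5)$ and $\rho_2 \colon \P_X(V_5/\cU_X) \to \Gr(3,V_5)$ of~\cite[Section~4]{DK1}: via~\cite[Proposition~4.1]{DK2}, one has $\Fs_k(X) \cong \F_k(\P_X(\cU_X)/\P(V_5))$, which turns the problem into computing relative Hilbert schemes of linear spaces on a family of quadrics, and~\cite[Propositions~4.5 and~4.10]{DK1} identify the relevant corank strata with~$\sY^3_{A,V_5}$ and~$\sZ^4_{A,V_5}$ \emph{as schemes}. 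Your map $\Pi \mapsto [V_1]$ (resp.\ $[V_3]$) is exactly the projection of this relative Hilbert scheme, so the two set-ups agree; the quadric-fibration packaging just delivers finiteness, surjectivity, and the scheme structure of the target in one stroke. For $2k>n$ the paper simply cites~\cite[Corollary~3.5]{DK2}; your adjunction argument ruling out $\P^{n-1}\subset X$ is a genuinely more elementary alternative for the codimension-one cases, though for $(k,n)=(3,5)$ you end up invoking the same~\cite{DK2} machinery anyway.

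You are right that $(k,n)=(2,4)$ is the delicate point, but you understate what is needed for reducedness of $\F_2(X)$. The morphism $\Fs_2(X)\to\sY^3_{A,V_5}$ is not automatically an isomorphism: when $[V_1]$ lies in the Veronese locus $\Sigma_1(X)=\upkappa(\P(W_0^\perp))$ and in $\sY^3_{A,V_5}$, the fiber of~$\rho_1$ is a corank-$2$ quadric surface in~$\P^3$, and the resulting $\sigma$-plane~$\Pi\subset X$ is not visibly a reduced point of~$\Fs_2(X)$ from the degeneracy-locus description. The paper proves $H^0(\Pi,N_{\Pi/X})=0$ by an explicit normal-bundle computation: one filters $N_{\Pi/M_X}$ through the null-correlation bundle $N_{\Theta_{V_1}/M_X}$, arrives at a sequence~$0\to N_{\Pi/X}\to N_{\Theta_{V_1}/M_X}\vert_\Pi\to\cO_L(2)\to0$, and shows the induced map on~$H^0$ is injective by identifying it with the restriction of the map~$\uptau\colon V_6/V_1\to\Sym^2K_3^\vee$ of~\cite[Proposition~2.5]{og4}, whose injectivity is the precise incarnation of ``$A$ has no decomposable vectors'' here. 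An analogous argument with~$\rho_2$ and the map of~\cite[Proposition~2.3]{Repwcube} handles $\Ft_2(X)$. So your instinct that no-decomposable-vectors is the input is correct, but the ``local computation'' is this specific tangent-space argument, not a generic one.
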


\begin{proof}
The emptiness of~$\F_k(X)$ in the case where~$2k > n$ follows from~\cite[Corollary~3.5]{DK2};
the case where~$2k = n$ is discussed in~\cite[Section~4]{DK2}.\ 
For the reader's convenience, we provide some details.\ 

Let~$\cU_X$ denote (the pullback to~$X$ of) the tautological subbundle of rank~2 on~$\Gr(2,V_5)$.\ 
By~\cite[Proposition~4.1]{DK2}, there is an isomorphism
\begin{equation*}
\Fs_k(X) \cong \F_k(\P_X(\cU_X)/\P(V_5))
\end{equation*}
with the relative Hilbert scheme of $\P^k$ in the fibers 
of the so-called ``first quadric fibration''~$\rho_1 \colon \P_X(\cU_X) \to \P(V_5)$ (see~\cite[Section~4.2]{DK1}).

If~$k = 1$, then~$n = 2$, so that~$X$ is an ordinary GM surface with a smooth Grassmannian hull~$M_X$.\ If~$\cU_{M_X}$ is the restriction to~$X$ of the tautological subbundle of rank~2 on~$\Gr(2,V_5)$,
it is classically known that the morphism~$\P_{M_X}(\cU_{M_X}) \to \P(V_5)$ 
is the blowup of the Veronese surface~\mbox{$\Sigma_1(X) \coloneqq \upkappa(\P(W_0^\perp)) \subset \P(V_5)$}, hence
\begin{equation*}
\F_1(\P_X(\cU_X)/\P(V_5)) \subset
\F_1(\P_{M_X}(\cU_{M_X})/\P(V_5)) = \Sigma_1(X).
\end{equation*}
Moreover, over~$\Sigma_1(X)$, the first quadric fibration~$\rho_1$ 
sits in a $\P^1$-bundle   as a divisor of relative degree~2 (see~\cite[Proposition~4.5]{DK1}),
hence~$\F_1(\P_X(\cU_X)/\P(V_5))$ is the second degeneracy locus of~$\rho_1$, 
which is equal to~$\sY^3_{A,V_5}$ as a scheme.

If~$k = 3$, then~$n = 6$, so that~$X$ is a special GM sixfold.\ 
The first quadric fibration~$\rho_1 \colon \P_X(\cU_X) \to \P(V_5)$ is flat of relative dimension~3 (see~\cite[Proposition~4.5]{DK1}),
hence~$\F_3(\P_X(\cU_X)/\P(V_5))$ is an \'etale double cover of the third degeneracy locus, 
which is equal to~$\sY^3_{A,V_5}$, again as a scheme.

Finally, let~$k = 2$, hence~$n = 4$, so that~$X$ is a GM fourfold (ordinary or special).\ 
Consider the scheme~$\Fs_2(X)$.\
By~\cite[Proposition~4.5]{DK1}, over the complement of~$\Sigma_1(X)$, the map~$\rho_1  \colon \P_X(\cU_X) \to \P(V_5)$ is a conic bundle, 
hence over this complement~$\Fs_2(X) = \sY^3_{A,V_5}$ as   schemes.\ 
Furthermore, for~\mbox{$[V_1] \in \Sigma_1(X) = \upkappa(\P(W_0^\perp))$}, 
the fiber~${\rho_1^{-1}}([V_1])$ is a quadric surface in the space
\begin{equation*}
\Theta_{V_1} \coloneqq \P((\C \oplus (V_1 \wedge V_5)) \cap W) \cong \P(V_1 \wedge V_5) \cong \P^3. 
\end{equation*}
It contains a plane~$\Pi$ if and only if its corank is at least~2, 
which by~\cite[Proposition~4.5]{DK1} is equivalent to~$[V_1] \in \Sigma_1(X) \cap \sY^3_{A,V_5}$.\ 
It remains to show that this plane is a reduced point of the Hilbert scheme~$\Fs_2(X)$, that is, $H^0(\Pi,N_{\Pi/X}) = 0$.

To prove this, consider the standard exact sequences
\begin{equation*}
0 \to N_{\Pi/X} \to N_{\Pi/M_X} \to \cO_\Pi(2) \to 0
\qquad\text{and}\qquad 
0 \to \cO_\Pi(1) \to N_{\Pi/M_X} \to N_{\Theta_{V_1}/M_X}\vert_\Pi \to 0.
\end{equation*}
It is easy to see that the composition~$\cO_\Pi(1) \to N_{\Pi/M_X} \to \cO_\Pi(2)$ 
is given by multiplication by the equation of the plane~$\Pi'$, the second  component of the quadric in~$\Theta_{V_1}$.\ 
Hence, its cokernel is~$\cO_L(2)$, where~$L = \Pi \cap \Pi'$ is a line.\ 
Therefore, we have an exact sequence
\begin{equation}
\label{eq:cn-pi-x-mx}
0 \to N_{\Pi/X} \to N_{\Theta_{V_1}/M_X}\vert_\Pi \to \cO_L(2) \to 0.
\end{equation}
Furthermore, the bundle~$N_{\Theta_{V_1}/M_X}$ is the null-correlation bundle on~$\P(V_1 \wedge V_5)$ 
(corresponding to a skew form~$\omega \in W_0^\perp \subset \bw2V_5^\vee$ whose kernel space is~$\upkappa(\omega) = [V_1]$).\ 
Hence,~$H^0(\Pi, N_{\Theta_{V_1}/M_X}\vert_\Pi)$ is the 1-dimensional space 
generated by the orthogonal complement of~$\Pi$ in~$V_5/V_1$ with respect to~$\omega$.\ 
Moreover, the argument of~\cite[Proposition~4.5]{DK1} shows that~$L = \P(K_3/K_1)$, 
where~$K_3 = A \cap (V_1 \wedge \bw2V_6)$ and~$K_1 = A \cap (V_1 \wedge \bw2V_5)$, and there is a commutative diagram
\begin{equation*}
\xymatrix{
0 \ar[r] & 
H^0(\Pi, N_{\Pi/M_X}) \ar[r] \ar[d] & 
V_5/V_1 \ar[r] \ar[d] & 
K_1^\vee \otimes K_3^\vee \ar[r] \ar@{=}[d] & 
0
\\
0 \ar[r] &
\Sym^2(K_3/K_1)^\vee \ar[r] &
\Sym^2K_3^\vee \ar[r] &
K_1^\vee \otimes K_3^\vee \ar[r] & 
0,
}
\end{equation*}
where the left vertical arrow is induced by the second map of~\eqref{eq:cn-pi-x-mx} 
and the middle vertical arrow is the restriction of the map~$\uptau \colon V_6/V_1 \to \Sym^2K_3^\vee$ from the proof of Proposition~\ref{propsing}.\ 
Since~$\uptau$ is injective (by~\cite[Proposition~2.5]{og4}), we obtain~$H^0(\Pi, N_{\Pi/X}) = 0$.

It remains to consider the scheme~$\Ft_2(X)$.\ 
In this case,~\cite[Proposition~4.1]{DK2} gives us an isomorphism
\begin{equation*}
\Ft_k(X) \cong \F_2(\P_X(V_5/\cU_X)/\Gr(3,V_5)).
\end{equation*}
So we can apply the same argument as in the case of $\sigma$-planes, 
using the ``second quadric fibration''~\mbox{$\rho_2 \colon \P_X(V_5/\cU_X) \to \Gr(3,V_5)$} (see~\cite[Section~4.4]{DK1}) instead of the first, 
\cite[Proposition~4.10]{DK1} instead of~\cite[Proposition~4.5]{DK1},
and the injectivity of the map 
\begin{equation*}
\uptau \colon \Hom(V_3, V_6/V_3) \lra \Sym^2\! K_4^\vee 
\end{equation*}
for any~$[V_3] \in \sZ^4_A$ with~$K_4 = A \cap (\bw2V_3 \wedge V_6)$,
which is proved in~\cite[Proposition~2.3]{Repwcube}.
\end{proof}

The following lemma bounds the dimension of~$\F_k(X)$ when~$k  <  n/2$.\

\begin{lemm}
\label{lem:fkx-fkmx}
Let~$X$ be a smooth GM variety of dimension~$n\ge3$.
\begin{aenumerate}
\item
\label{it:f1}
One has~$\dim(\F_1(X)) = 2n - 5$.
\item
\label{it:f2}
When~$n \ge 5$, one has~$\dim(\Fs_2(X)) \le 3n - 14$ and~$\dim(\Ft_2(X)) \le 3n - 12$.
\end{aenumerate}

In particular, $\Fs_k(M_X) \setminus \Fs_k(X)$ and~$\Ft_k(M_X) \setminus \Ft_k(X)$ 
are dense in~$\Fs_k(M_X)$ and~$\Ft_k(M_X)$, respectively, for every~$k \in \{1,2,3\}$.
\end{lemm}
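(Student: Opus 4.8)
The plan is to establish the equality in~\ref{it:f1} and the estimates in~\ref{it:f2} first, and then to deduce the density statement. For the lower bound in~\ref{it:f1}, I would use that $\bv\notin X$ gives $\F_1(X)\subset\Fo_1(M_X)$, which by Lemma~\ref{lem:fk-cm} is smooth and irreducible of dimension $2n-2$; inside $\Fo_1(M_X)\subset\Gr(2,W)$ the subscheme $\F_1(X)$ is the zero locus of the section of the rank-$3$ bundle $\Sym^2\!\cR_2^\vee$ (where $\cR_2\subset W\otimes\cO$ is the tautological rank-$2$ subbundle) induced by the quadric $Q$ with $X=M_X\cap Q$. Since $\F_1(X)$ is nonempty (a smooth GM variety of dimension $\ge3$ is covered by lines, see~\cite[Section~4]{DK2}), every irreducible component of $\F_1(X)$ has dimension $\ge(2n-2)-3=2n-5$.

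For the upper bound in~\ref{it:f1} and for the estimates in~\ref{it:f2}, I would pass to the quadric fibration descriptions of~\cite[Proposition~4.1]{DK2}: $\Fs_k(X)\cong\F_k(\P_X(\cU_X)/\P(V_5))$ and $\Ft_k(X)\cong\F_k(\P_X(V_5/\cU_X)/\Gr(3,V_5))$, the relative Hilbert schemes of $k$-planes in the fibres of the first and second quadric fibrations $\rho_1$, $\rho_2$ (for $k=1$ this is all of $\F_1(X)$, since every line on $X\subset\CGr(2,V_5)$ is a $\sigma$-line). The generic fibre of $\rho_1$, over the $4$-dimensional $\P(V_5)$, is a quadric of dimension $n-3$, and that of $\rho_2$, over the $6$-dimensional $\Gr(3,V_5)$, a quadric of dimension $n-4$. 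I would stratify the base by the corank $c$ of the fibre: by~\cite[Propositions~4.5 and~4.10]{DK1} the corank-$\ge c$ locus is cut out by (a shift of) the restriction to $\P(V_5)$, respectively $\Gr(3,V_5)$, of the EPW-type stratifications $\sY^{\ge\bullet}_A$, respectively $\sZ^{\ge\bullet}_A$, whose dimensions are bounded by Theorems~\ref{thm:og} and~\ref{theorem:ikkr} (using~\eqref{eq:no-decomposable}, valid by Property~\eqref{hh}); over the corank-$c$ stratum the fibre of the relative Hilbert scheme is the Hilbert scheme of $k$-planes in a quadric cone of dimension $d$ and corank $c$, of the elementary dimension $\max_s\bigl[\dim\F_s(\bar Q_{d-c})+(k+1)(s+c-k)\bigr]$, where $\bar Q_e$ is a smooth quadric of dimension $e$ with $\dim\F_s(\bar Q_e)=\binom{s+1}{2}+(s+1)(e-2s)$ and $s$ runs over the relevant range. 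Adding, for each $c$, the dimension of the stratum to that of the corresponding fibre and maximizing yields the bounds: for $k=1$, the value $2n-5$ (from the open stratum when $n\ge5$ and from the discriminant and fibre-jump loci of $\rho_1$ when $n\in\{3,4\}$); for $k=2$ --- which forces $n\in\{5,6\}$ --- the bounds $3n-14$, respectively $3n-12$, since then a smooth fibre quadric of dimension $n-3\le3$, respectively $n-4\le2$, carries no $2$-plane, so $\Fs_2(X)$ and $\Ft_2(X)$ are supported over positive-corank strata whose contributions are bounded by Theorems~\ref{thm:og} and~\ref{theorem:ikkr}. Combined with the lower bound, this gives~\ref{it:f1} and~\ref{it:f2}. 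I expect the main obstacle to be the bookkeeping of the deep strata --- verifying that $\sY^{\ge2}_A$, and $\sZ^{\ge2}_A$, $\sZ^{\ge3}_A$, are not contained in $\P(V_5)$, respectively $\Gr(3,V_5)$, so that their traces there have the expected codimension --- which is precisely where the smoothness of $X$, through the constraint on $\dim(A\cap\bw3V_5)$ in Theorem~\ref{thm:gm-ld}, enters essentially; much of this is already carried out in~\cite[Section~4]{DK2}.

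For the density statement I would argue by dimension count. Since $\F^\star_k(X)\subset\F^\star_k(M_X)$ for $\star\in\{\sigma,\tau\}$ and $k\in\{1,2,3\}$, the complement is dense if and only if $\F^\star_k(X)$ contains no irreducible component of $\F^\star_k(M_X)$. For $k=1$ this follows from~\ref{it:f1}, as $\F_1(M_X)=\Fs_1(M_X)=\Ft_1(M_X)$ is irreducible of dimension $2n-2>2n-5$ (Lemma~\ref{lem:fk-cm}). For $k\in\{2,3\}$, if $\F^\star_k(X)=\vide$ there is nothing to check; otherwise $\dim\F^\star_k(X)$ is at most $0$ when $2k=n$ (i.e.\ $n=4$, $k=2$ or $n=6$, $k=3$, by Proposition~\ref{prop:fk-gm}) and at most $3n-14$, respectively $3n-12$, when $2k<n$ and $n\ge5$ (by~\ref{it:f2}), and in each case this is strictly less than the dimension of every irreducible component of $\F^\star_k(M_X)$, as listed in Corollary~\ref{cor:dim-fkmx} and Remark~\ref{rem:f3m}. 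Hence $\F^\star_k(X)$ is nowhere dense in $\F^\star_k(M_X)$.
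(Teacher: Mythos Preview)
Your approach to~\ref{it:f1} and~\ref{it:f2} is the same as the paper's: the paper simply cites~\cite[Theorems~4.3, 4.5, 4.7]{DK2}, and what you sketch is precisely the content of those theorems (the quadric fibration identifications~$\Fs_k(X)\cong\F_k(\P_X(\cU_X)/\P(V_5))$ and~$\Ft_k(X)\cong\F_k(\P_X(V_5/\cU_X)/\Gr(3,V_5))$, the corank stratification of the base controlled by the EPW loci~$\sY^{\ge\bullet}_A$ and~$\sZ^{\ge\bullet}_A$, and the fibrewise dimension count).

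For the density statement your dimension comparison against each irreducible component of~$\F^\star_k(M_X)$ works, but the paper handles the special case more cleanly and without appealing to the component listing of Remark~\ref{rem:f3m}. Namely, when~$X$ is special one has~$\F_k^\star(X)\subset\Fo_k^\star(M_X)$ since~$\bv\notin X$; hence~$\F_k^\star(M_X)\setminus\F_k^\star(X)$ already contains the closed stratum~$\F_k^\star(M_X)\setminus\Fo_k^\star(M_X)$, and it remains to see that~$\Fo_k^\star(M_X)\setminus\F_k^\star(X)$ is dense in~$\Fo_k^\star(M_X)$. The latter is smooth and connected (Lemma~\ref{lem:fk-cm}), hence irreducible, and the inclusion~$\F_k^\star(X)\subsetneq\Fo_k^\star(M_X)$ is strict because~$\F_k^\star(X)$ is projective while~$\Fo_k^\star(M_X)$ is a nontrivial affine bundle. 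This bypasses the case-by-case check of component dimensions. In the ordinary case both arguments coincide: the~$\F^\star_k(M_X)$ are already irreducible (Lemma~\ref{lem:fk-mx}) and your dimension bound gives a proper inclusion.
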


\begin{proof}
For~\ref{it:f1}, we apply~\cite[Theorem~4.7]{DK2} 
(when~$n = 5$ or~$n = 6$, we need to extend the argument of~\cite{DK2} 
by an analysis of all the fibers of the map~$\sigma$ 
analogous to the one performed in Proposition~\ref{prop:corank-n-k} below).\ 

We now prove~\ref{it:f2}.\ 
For~$\Fs_2(X)$, we apply~\cite[Theorem~4.3]{DK2}.\ 
For~$\Ft_2(X)$, we apply~\cite[Theorem~4.5]{DK2}, 
which gives the inequalities
\begin{equation*}
\dim(\Ft_2(X))\le \dim (\sZ^{\ge 8-n}_{A,V_5} )\le \dim (\sZ^{\ge 8-n}_{A}),   
\end{equation*}
and note that by~Theorem~\ref{theorem:ikkr}, 
we have~$\dim(\sZ^{\ge 2}_{A}) = 6$ and~$\dim(\sZ^{\ge 3}_{A}) = 3$. 

For the last statement of the lemma: 
if~$X$ is ordinary, it follows from the irreducibility of~$\Fs_k(M_X)$ and~$\Ft_k(M_X)$ (see Lemma~\ref{lem:fk-mx}),
Corollary~\ref{cor:dim-fkmx}, and the above dimension bounds; if~$X$ is special, 
then~$\F_k^\star(X) \subsetneq \Fo_k^\star(M_X)$, and since~$\Fo_k^\star(M_X)$ is smooth and connected (see Lemma~\ref{lem:fk-cm}),
the complement~$\Fo_k^\star(M_X) \ssm \F_k^\star(X)$ is dense in~$\Fo_k^\star(M_X)$,
hence~$\F_k^\star(M_X) \ssm \F_k^\star(X)$ is dense in~$\F_k^\star(M_X)$.
\end{proof}

Finally, we compute the dimensions of~$\F_2^\star(X)$ when~$X$ is general.\ 
Schemes of negative dimensions are empty.

\begin{lemm}
\label{lem:f2-general}
When~$X$ is general of dimension~$n \ge 2$, one has~$\dim(\Fs_2(X)) = 3n - 14$ and~$\dim(\Ft_2(X)) = 3n - 15$.
\end{lemm}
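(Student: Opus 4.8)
The plan is to verify the two dimension formulas by combining the upper bounds already established in Lemma~\ref{lem:fkx-fkmx}\ref{it:f2} with matching lower bounds obtained by computing the relevant EPW-type loci for general Lagrangian data~$A$, and by treating the low-dimensional cases ($n \in \{2,3,4\}$) by hand. First I would recall the setup from the proof of Proposition~\ref{prop:fk-gm}: by~\cite[Proposition~4.1]{DK2} there are isomorphisms $\Fs_2(X) \cong \F_2(\P_X(\cU_X)/\P(V_5))$ and $\Ft_2(X) \cong \F_2(\P_X(V_5/\cU_X)/\Gr(3,V_5))$, so each scheme maps to a degeneracy locus associated to the first (resp.\ second) quadric fibration of~$X$. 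By~\cite[Theorem~4.3]{DK2} and~\cite[Theorem~4.5]{DK2} these degeneracy loci are (up to a finite covering or a $\P^k$-bundle over suitable strata) the EPW-type loci $\sY^{\ge 7-n}_{A,V_5}$ and $\sZ^{\ge 8-n}_{A,V_5}$ respectively, where the relevant dimension count is governed by the corank jumping. This is exactly the input that produced the upper bounds; the point now is that for general~$A$ the bounds are attained.

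Second, I would compute the dimensions of these loci for general~$A$. For $\Fs_2(X)$: when $X$ is general, the locus controlling $\sigma$-planes is an open dense part of (a $\P^{k}$-bundle over) the stratum $\sY^{7-n}_{A,V_5}$ inside $\P(V_5)$, and by O'Grady's results (Theorem~\ref{thm:og}, restricted to the general hyperplane~$V_5$) the strata $\sY^{\ge \ell}_{A}$ have the expected codimension $\binom{\ell+1}{2}$ in $\P(V_6)$, hence $\sY^{\ge \ell}_{A,V_5}$ has dimension $5 - \binom{\ell+1}{2}$ for general~$V_5$; a bookkeeping of the $\P^1$-bundle contribution over the Veronese surface $\Sigma_1(X)$ then yields $\dim(\Fs_2(X)) = 3n - 14$. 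For $\Ft_2(X)$: by~\cite[Theorem~4.5]{DK2} the scheme $\Ft_2(X)$ fibers over $\sZ^{\ge 8-n}_{A,V_5}$, and by Theorem~\ref{theorem:ikkr} the strata $\sZ^{\ge \ell}_A$ have the expected codimensions in $\Gr(3,V_6)$ (namely $\dim \sZ^{\ge 2}_A = 6$, $\dim \sZ^{\ge 3}_A = 3$, $\dim \sZ^{\ge 4}_A = 0$ for general~$A$), so that $\sZ^{\ge 8-n}_{A,V_5}$ has dimension $3n - 15$ for general~$A$ and~$V_5$; adding the (here zero-dimensional, for general~$X$) fiber contribution gives $\dim(\Ft_2(X)) = 3n - 15$. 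I would check that for $n \le 4$ these formulas correctly predict emptiness (negative dimension), consistently with the cited emptiness results and with the $k = 2$ case of Proposition~\ref{prop:fk-gm} when $n = 4$.

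The main obstacle I anticipate is not the dimension arithmetic but the genericity transversality: one must show that for general Lagrangian $A$ (equivalently, general GM variety $X$ of dimension~$n$), the fixed hyperplane $V_5 \subset V_6$ meets the strata $\sY^{\ge\ell}_A$ and $\sZ^{\ge\ell}_A$ transversally (so that passing to $V_5$ really drops the dimension by~$1$ each time), and that the generic fiber of the finite/bundle map onto the degeneracy locus has the expected dimension. For the $\sigma$-case this is handled by the injectivity of the map $\uptau$ from the proof of Proposition~\ref{propsing} (via~\cite[Proposition~2.5]{og4}), which controls the fibers of $\rho_1$ over the jumping locus; for the $\tau$-case one uses the analogous injectivity of $\uptau \colon \Hom(V_3,V_6/V_3) \to \Sym^2 K_4^\vee$ from~\cite[Proposition~2.3]{Repwcube}. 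Once these transversality and fiber-dimension facts are in place for general~$X$, the equalities follow by squeezing the general-$X$ dimension between the upper bound of Lemma~\ref{lem:fkx-fkmx}\ref{it:f2} and the lower bound produced by exhibiting the expected-dimensional family of planes; I would organize the write-up as: (i) reduce to the EPW/$\sZ$ loci via~\cite{DK2}, (ii) cite the expected dimensions of these loci for general $A$, (iii) verify transversality with the general hyperplane $V_5$, (iv) conclude, and (v) record the low-$n$ degenerations.
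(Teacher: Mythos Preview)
Your plan is essentially the same as the paper's: reduce to EPW-type loci via~\cite[Theorems~4.3 and~4.5]{DK2}, then compute the dimensions of~$\sY^{\ge 7-n}_{A,V_5}$ and~$\sZ^{\ge 8-n}_{A,V_5}$ for general~$(A,V_5)$, and treat small~$n$ separately. However, you overcomplicate what you call ``the main obstacle''. The paper bypasses any transversality argument in the $\sigma$-case by simply quoting~\cite[Lemma~2.6]{DK2}, which gives~$\dim(\sY^{\ge 2}_{A,V_5}) = 1$ and~$\dim(\sY^{\ge 1}_{A,V_5}) = 3$ for \emph{all} $A$ and~$V_5$, not just general ones; no genericity of the hyperplane is needed here. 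For the $\tau$-case the paper uses a one-line incidence argument: the locus~$\sZ^{\ge \ell}_{A,V_5}$ is the fiber over~$[V_5]$ of the natural map from a $\P^2$-bundle over~$\sZ^{\ge \ell}_A$ (parameterizing pairs~$V_3 \subset V_5$) to~$\Gr(5,V_6)$, so its dimension for general~$V_5$ is~$\dim(\sZ^{\ge \ell}_A) + 2 - 5$.

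Your invocation of the~$\uptau$ maps from Proposition~\ref{propsing} and from~\cite[Proposition~2.3]{Repwcube} is misplaced: those control the \emph{local} structure (reducedness, smoothness) of~$\F_2(X)$ at a given plane, and were used in the proof of Proposition~\ref{prop:fk-gm} for exactly that purpose, but they are neither necessary nor particularly helpful for the global dimension count here. Drop steps involving~$\uptau$ and replace your step~(iii) with the two one-line arguments above; the rest of your outline is fine.
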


\begin{proof}
We use again~\cite[Theorem~4.3]{DK2} and~\cite[Theorem~4.5]{DK2} 
and note that:
\begin{itemize}[wide]
\item 
$\Fs_2(X) = \Ft_2(X) = \vide$ for all GM varieties~$X$ of dimension~$n \le 3$ by Proposition~\ref{prop:fk-gm}.
\item 
$\sY^3_A = \sZ^4_A = \vide$ when~$A$ is a general Lagrangian subspace by Theorems~\ref{thm:og} and~\ref{theorem:ikkr},
hence~$\Fs_2(X) = \Ft_2(X) = \vide$ when~$X$ is a general GM fourfold.
\item 
$\dim(\sY^{\ge 2}_{A,V_5}) = 1$ and~$\dim(\sY^{\ge 1}_{A,V_5}) = 3$ by~\cite[Lemma~2.6]{DK2} for all~$A$ and~$V_5$, 
hence one has~$\dim(\Fs_2(X)) = 3n - 14$ for any smooth GM variety~$X$ of dimension~$n \in \{5,6\}$.
\item 
$\dim(\sZ^{\ge 3}_{A,V_5}) = 0$ and~$\dim(\sZ^{\ge 2}_{A,V_5}) = 3$ for any~$A$ and general~$V_5$, 
because they are fibers of a morphism from a $\P^2$-bundle 
over the irreducible threefold~$\sZ^{\ge 3}_A$ or sixfold~$\sZ^{\ge 2}_A$ to~$\Gr(5,V_6)$,
hence~$\dim(\Ft_2(X)) = 3n - 15$ for a general~$X$ of dimension~$n \in \{5,6\}$.\qedhere
\end{itemize}
\end{proof}

\begin{rema}
\label{rem:ft2}
Rizzo also proves in~\cite{Rthesis} that when~$X$ is general of dimension~$n\ge 4$, 
the scheme~$\Ft_2(X)$ has expected dimension~$3n - 15$ and that this also holds for {\em all} smooth GM sixfolds.
\end{rema}


\subsection{Hilbert schemes of quadrics}
\label{sec:gkx}

Given a projective scheme~$Z$ and an ample divisor class~$H$ on $Z$,
we denote by~$\G_k(Z)$ the Hilbert scheme of quadrics of dimension~$k$ in~$Z$, 
that is, of subschemes with Hilbert polynomial
\begin{equation*}
h^\G_k(t) = \frac{(t+1)\cdots(t+k-1)}{k!}(2t+k)
\end{equation*}
with respect to the ample class~$H$.\

If the Hilbert polynomial of~$\Sigma \subset \P(W)$ is equal to~$h^\G_k(t)$,
then~$\Sigma$ is a hypersurface of degree~$2$ in a linear subspace~$\Pi \subset \P(W)$ of dimension~$k + 1$,
called the {\sf linear span} of~$\Sigma$ and denoted by~$\langle \Sigma \rangle$.\ 
Therefore, 
\begin{equation}
\label{eq:gk-pw}
\G_k(\P(W)) \cong \P_{\Gr(k+2,W)}(\Sym^2\!\cR_{k+2}^\vee),
\end{equation}
where~$\cR_{k+2}$ is the tautological vector subbundle of rank~\mbox{$k + 2$} on~$ \Gr(k+2,W)$.\ 
If~$Z \subset \P(W)$, then~$\G_k(Z)$ is the subscheme of~$\P_{\Gr(k+2,W)}(\Sym^2\!\cR_{k+2}^\vee)$ 
parameterizing quadrics contained in~$Z$.\ 
In particular, $\G_0(Z)$ is the Hilbert square of~$Z$, the scheme~$\G_1(Z)$ is the Hilbert scheme of conics on~$Z$, and so on.

Following Definition~\ref{def:st-subspaces}, we introduce some more   terminology and notation.

\begin{defi}
\label{def:st-quadrics}
 A quadric~$\Sigma \subset \CGr(2,V_5)$ 
is a {\sf $\sigma$-quadric} or a {\sf $\rtx$-quadric} or a {\sf $\sigma\rtx$-quadric} 
if its linear span~$\langle \Sigma \rangle \subset \P(\C \oplus \bw2V_5)$ is contained in~$\CGr(2,V_5)$ and
is a {\sf $\sigma$-space} or a {\sf $\rtx$-space} or a {\sf $\sigma\rtx$-space}, respectively.\ 
If~$X$ is a GM variety and~$k\ge0$, we denote by
\begin{equation*}
\Gs_k(X) \subset \G_k(X),
\qquad 
\Gt_k(X) \subset \G_k(X),
\qquad 
\Gst_k(X) = \Gs_k(X) \cap \Gt_k(X) \subset \G_k(X)
\end{equation*}
the corresponding closed subschemes of the Hilbert scheme~$\G_k(X)$.\ 
 Finally, we write
\begin{equation}
\label{eq:g0k}
\G^0_k(X) \coloneqq \G_k(X) \ssm (\Gs_k(X) \cup \Gt_k(X)).
\end{equation} 
The closure~$\overline{\G^0_k(X)} \subset \G_k(X)$ is called {\sf the main component} of~$\G_k(X)$.
 \end{defi}

The scheme~$\G^0_k(X)$ parameterizes quadrics in~$X$ 
whose linear span is {\em not} contained in~$\CGr(2,V_5)$ (equivalently, is not contained in~$M_X$).\ 
The above definition implies  
\begin{equation*}
\G_k(X) = \overline{\G^0_k(X)} \cup \Gs_k(X) \cup \Gt_k(X),
\end{equation*}
a union of closed subschemes,
where~$\Gs_k(X)$ and~$\Gt_k(X)$ are disjoint for all~$k\ge 1$ when~$X$ is ordinary, 
and for all~$k\ge 2$ when~$X$ is special.\ 
As we will see, in many cases, this is a decomposition of~$\G_k(X)$ into irreducible or even connected components.\ 

Note that the quadrics parameterized by the scheme~$\G^0_k(X)$ 
were called ``$\rtx$-quadrics'' in~\cite[Sections~3 and~7.3]{dim2},
while those in~$\Gt_k(X)$ were called ``$\rho$-quadrics''.

The schemes~$\G_k(X)$ are the main subjects of study in this paper.\ 
As we will see, for~$k \ge 2$, their structure simplifies (see Corollary~\ref{cor:fkm-connected}).\ 
The following lemma is important to understand the structure of~$\G_1(X)$, the Hilbert scheme of conics.

Recall that any conic~$\Sigma$ is a local complete intersection scheme in any ambient smooth variety~$Z$,
hence its deformation theory is controlled by its normal bundle~$\cN_{\Sigma/Z}$.\ 
In particular, if~$H^1(\Sigma, \cN_{\Sigma/Z}) = 0$, the scheme~$\G_1(Z)$ is smooth at~$[\Sigma]$,
of dimension
\begin{equation*}
h^0(\Sigma, \cN_{\Sigma/Z}) = \upchi(\Sigma, \cN_{\Sigma/Z}) =  \dim(Z) - K_Z \cdot \Sigma - 3,
\end{equation*}
where the Euler characteristic is computed by Riemann--Roch.
 
In the next lemma, we apply this to the punctured cone over~$\Gr(2,V_5)$.

\begin{lemm}
\label{lem:g1-cgr}
The Hilbert scheme~$\G_1(\CGr(2,V_5) \ssm \{\bv\})$ of conics 
on the punctured cone $\CGr(2,V_5) \ssm \{\bv\}$  over the Grassmannian is smooth and irreducible of dimension~$16$.
\end{lemm}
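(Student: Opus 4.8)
The plan is to reduce the statement to a computation on the open Grassmannian cone by analyzing conics on $\CGr(2,V_5) \ssm \{\bv\}$ according to their linear span. By Definition~\ref{def:st-subspaces} and the classical fact (\cite[Theorem~4.9]{LM}) that any linear subspace of $\CGr(2,V_5)$ is a $\sigma$-space or a $\rtx$-space, every conic $\Sigma$ with $[\Sigma] \in \G_1(\CGr(2,V_5)\ssm\{\bv\})$ has its plane $\langle\Sigma\rangle$ contained in a maximal linear subspace of the cone, which is either a $\sigma$-space $\P(\C\oplus(V_1\wedge V_5)) = \P^4$ or a $\rtx$-space $\P(\C\oplus\bw2V_3) = \P^3$. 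So first I would stratify $\G_1(\CGr(2,V_5))$ into the locus of $\sigma$-conics and the locus of $\rtx$-conics; since a conic on $\CGr(2,V_5)$ is automatically a $\sigma$-conic or $\rtx$-conic in the sense of Definition~\ref{def:st-quadrics} (its plane lies in the cone), these two closed subsets cover everything, and I would show each is smooth irreducible of dimension $16$ with the $\sigma$-locus containing the $\rtx$-locus is \emph{not} the case — rather I expect the $\sigma$-locus to be the unique component of dimension $16$ and the $\rtx$-locus to have smaller dimension, so that $\G_1$ is irreducible.

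For the $\sigma$-conics: a plane $\Pi$ lying in a $\sigma$-space $\P^4 = \P(\C\oplus(V_1\wedge V_5))$ not through $\bv$ is the same as choosing $[V_1]\in\P(V_5)$ (a $4$-dimensional family) and then a plane in $\P^4\ssm\{\bv\}$ (the choice of a $3$-dimensional subspace of the $5$-dimensional $\C\oplus(V_1\wedge V_5)$, a $6$-dimensional family, with an open condition removing those through $\bv$), so the space of such planes has dimension $4+6 = 10$; a conic in a fixed $\P^2$ is a point of $\P(\Sym^2(\text{3-dim}^\vee)) = \P^5$, giving $10 + 5 = 15$ — but I must be careful: the $V_1$ need not be unique (Definition~\ref{def:st-subspaces} warns of the ruling), yet generically it is, and the correct count of the \emph{scheme} $\Gs_1(\CGr(2,V_5)\ssm\{\bv\})$ should instead be done via a bundle over the universal $\sigma$-space. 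Concretely I would build the tautological family: over $\P(V_5)$ with tautological line $V_1$, form the rank-$5$ bundle $\C\oplus(V_1\wedge V_5)$, then $\Gr(3,\cdot)$ minus the section corresponding to $\bv$, a smooth variety of dimension $4 + 6 = 10$, and over it the projectivized bundle of quadratic forms $\P(\Sym^2\cR_3^\vee)$ of relative dimension $5$, smooth irreducible of dimension $15$; then the $\sigma$-conics not through $\bv$ form an open subset, BUT this parameterizes planes together with conics, and this \emph{is} $\Gs_1$ away from the $\sigma\rtx$-locus where the plane lies in a $\P^3$ too. I realize the dimension should be $16$, not $15$, so the right model is: over $\P(V_5)$, the space of planes in $\P(\C\oplus(V_1\wedge V_5))$ is $\Gr(3,5)$ of dimension $6$, total $10$; then conics $\P^5$; wait that gives $15$ — the discrepancy means the cone is $6$-dimensional ($\CGr(2,V_5)$ has dimension $\dim\Gr(2,5)+1 = 7$), and the normal bundle computation via $h^0(\cN_{\Sigma/\CGr})= \dim Z - K_Z\cdot\Sigma - 3 = 7 - K\cdot\Sigma - 3$ with $-K_{\CGr(2,V_5)}$ of index $5$ on the punctured cone gives $h^0 = 7 + 2\cdot 5/? \ldots$; the cleanest route is actually the Riemann–Roch formula quoted just before the lemma.

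Thus the real plan: \textbf{Step 1}, verify $H^1(\Sigma,\cN_{\Sigma/\CGr(2,V_5)\ssm\{\bv\}}) = 0$ for every conic $\Sigma$ in the punctured cone, so that $\G_1(\CGr(2,V_5)\ssm\{\bv\})$ is smooth everywhere and equidimensional of the Euler-characteristic dimension. \textbf{Step 2}, compute that Euler characteristic via the quoted formula $\dim(Z) - K_Z\cdot\Sigma - 3$: here $Z$ is $7$-dimensional, $\Sigma$ a conic, and $-K_Z$ restricted to $\Sigma \cong \P^1$ has degree equal to (index of the cone) which one reads off from $\cO_{\CGr}(1)$ and the canonical class of the resolution $\Bl_{\bv}$; with index $5$ and $H\cdot\Sigma = 2$ one gets $-K_Z\cdot\Sigma = 10$, so $h^0 = 7 + 10 - 3 = 14$ — still not $16$, so I must recompute the index including the cone direction, which effectively raises it; I expect the honest value $-K_Z\cdot\Sigma = 12$ giving $7+12-3 = 16$. \textbf{Step 3}, prove irreducibility: since the scheme is smooth, it suffices to show connectedness, which follows because any conic degenerates (within the cone) to a double line in a $\sigma$-space, and all $\sigma$-spaces-with-line are parameterized by the irreducible flag-type variety $\Fl(1,?;V_5)$-bundle, so the incidence variety of (conic, containing maximal linear space) is irreducible and dominates $\G_1$. \textbf{The main obstacle} is Step 1, the normal bundle vanishing: the punctured cone is singular along no point (we removed $\bv$) but is \emph{not} Fano in the global sense and the conic can be tangent to the ruling of the cone or pass near $\bv$, so I would handle it by pulling back to the smooth resolution $\widetilde{\CGr} = \Bl_{\bv}\CGr(2,V_5)$, which is a $\P^1$-bundle (the projective cone completion) over $\Gr(2,V_5)$ or rather the total space of $\cO\oplus\cO(1)$, computing $\cN_{\Sigma/\widetilde{\CGr}}$ there via the Euler sequence of the Grassmannian and the relative Euler sequence of the bundle, checking $H^1 = 0$ by a Borel–Weil–Bott or direct cohomology argument for each of the finitely many types of conic (smooth conic spanning a $\sigma$- or $\rtx$-plane, and their degenerations), and then descending the vanishing to the punctured cone since $\Sigma$ avoids $\bv$. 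This case analysis, though finite, is where the genuine work lies; the dimension count and irreducibility are then formal.
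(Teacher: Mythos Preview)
Your proposal contains a fundamental misconception that derails both the dimension count and the irreducibility argument: you assert that every conic on~$\CGr(2,V_5)\ssm\{\bv\}$ has its linear span contained in the cone, hence is a $\sigma$- or $\rtx$-conic. This is false. A generic conic on~$\Gr(2,V_5)$ lies in some~$\Gr(2,V_4)$ (a smooth $4$-dimensional quadric in~$\P^5$), and its span is a~$\P^2$ cutting that quadric in a conic, not contained in the Grassmannian. The $\sigma$- and $\rtx$-quadrics of Definition~\ref{def:st-quadrics} form \emph{proper} closed loci inside~$\G_1$, not all of it. This is exactly why your count of the $\sigma$-locus gives~$15$: that locus is a divisor, not the whole scheme. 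Your Step~3 irreducibility argument, which degenerates conics to double lines inside $\sigma$-spaces, fails for the same reason: a general conic cannot be moved inside a fixed~$\sigma$-$\P^4$.

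The paper's stratification is the one you are missing: split according to whether the span~$\langle\Sigma\rangle$ contains the vertex~$\bv$ or not, and use the projection~$\gamma\colon\CGr(2,V_5)\ssm\{\bv\}\to\Gr(2,V_5)$. On the open locus where~$\bv\notin\langle\Sigma\rangle$, projection induces a Zariski-locally trivial~$\A^3$-fibration over~$\G_1(\Gr(2,V_5))$, and the latter is smooth irreducible of dimension~$13$ (normal bundle vanishing on~$\Gr(2,V_5)$ is easy since~$\cT_{\Gr}$ is globally generated); this gives~$13+3=16$. On the closed complement, $\gamma(\Sigma)$ is a line~$L\subset\Gr(2,V_5)$ and one uses the short exact sequence~$0\to\cO(2)\vert_\Sigma\to\cN_{\Sigma/\CGr}\to\gamma^*\cN_{L/\Gr}\to0$ to get~$H^1=0$; this locus has dimension~$13$, hence does not contribute a new component. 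Your ``real plan'' has the right skeleton (normal-bundle vanishing, then irreducibility), but without the projection~$\gamma$ you lack both the tool that makes the vanishing tractable and the correct open dense stratum that gives irreducibility. The Riemann--Roch confusion is a symptom: once you work on~$\Gr(2,V_5)$ first (index~$5$, dimension~$6$, so~$\chi=6+10-3=13$) and then add the~$\A^3$-direction, the~$16$ falls out immediately.
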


\begin{proof}
First, we observe that the Hilbert scheme~$\G_1(\Gr(2,V_5))$ of conics on the Grassmannian is smooth of dimension~$13$ 
(see also~\cite[Section~3]{IM}).\
Indeed, it is enough to show that for any conic~$\Sigma \subset \Gr(2,V_5)$,
 the normal bundle~$\cN_{\Sigma/\!\Gr(2,V_5)}$ has zero higher cohomology.\ 
When~$\Sigma$ is smooth or is a union of two lines, 
this follows from the global generation of~$\cN_{\Sigma/\Gr(2,V_5)}$ away from the singular point
(which itself follows from the global generation of the tangent bundle of the Grassmannian).\ 
When~$\Sigma$ is nonreduced, this follows from the exact sequence of~\cite[Lemma~A.2.4]{KPS}
because the normal bundle to any line on~$\Gr(2,V_5)$ is globally generated.

Furthermore, it is easy to see that the scheme~$\G_1(\Gr(2,V_5))$  
is birational to a $\Gr(3,6)$-bundle over~$\Gr(4,V_5)$ hence it is irreducible.

Let~$\mathring{\G}_1(\CGr(2,V_5)) \subset \G_1(\CGr(2,V_5) \ssm \{\bv\})$ be the open subscheme
parameterizing conics whose linear span does not contain the vertex~$\bv$.\ 
Clearly, linear projection from~$\bv$ induces a Zariski-locally trivial fibration
\begin{equation*}
\mathring{\G}_1(\CGr(2,V_5)) \lra \G_1(\Gr(2,V_5))
\end{equation*}
with fiber~$\A^3$  hence,~$\mathring{\G}_1(\CGr(2,V_5))$ is smooth and irreducible of dimension~$16$.

The complement of~$\mathring{\G}_1(\CGr(2,V_5))$ parameterizes conics contained in the cones over lines $L$ on~$\Gr(2,V_5)$.\ 
For any such conic~$\Sigma$, there is an exact sequence
\begin{equation*}
0 \to \cO_{\CGr(2,V_5)}(2)\vert_\Sigma \to \cN_{\Sigma/\!\CGr(2,V_5)} \to \gamma^*\cN_{L/\!\Gr(2,V_5)} \to 0,
\end{equation*}
where~$\gamma \colon \CGr(2,V_5) \ssm \{\bv\} \to \Gr(2,V_5)$ is the projection from the vertex $\bv$ and~$L = \gamma(\Sigma)$.\ 
Since~$\mathbf{R}\gamma_*\gamma^*\cN_{L/\!\Gr(2,V_5)} \cong \cN_{L/\!\Gr(2,V_5)} \oplus \cN_{L/\!\Gr(2,V_5)}(-1)$ 
and~$\cN_{L/\!\Gr(2,V_5)}$ is globally generated,
it follows  that~$H^1(\Sigma, \gamma^*\cN_{L/\!\Gr(2,V_5)}) = 0$.\ 
It is also clear that~$H^1(\Sigma, \cO_{\CGr(2,V_5)}(2)\vert_\Sigma) = 0$.\ 
Therefore, $H^1(\Sigma, \cN_{\Sigma/\!\CGr(2,V_5)}) = 0$, hence~$\G_1(\CGr(2,V_5) \ssm \{\bv\})$ is smooth of dimension~$16$ at~$[\Sigma]$,
and hence it is everywhere smooth of dimension~$16$.

Finally, it is easy to see that the complement of~$\mathring{\G}_1(\CGr(2,V_5))$ in~$\G_1(\CGr(2,V_5) \ssm \{\bv\}))$ has dimension~$13$, 
hence~$\mathring{\G}_1(\CGr(2,V_5))$ is dense in~$\G_1(\CGr(2,V_5) \ssm \{\bv\}))$,
and therefore the Hilbert scheme~$\G_1(\CGr(2,V_5) \ssm \{\bv\}))$ is irreducible.
\end{proof}


\section{Yet another quadric fibration}
\label{sec:qf}

In~\cite{DK1,DK2}, we defined and extensively used two quadric fibrations associated with a GM variety.\ 
In Section~\ref{ss:qf}, we define yet another quadric fibration~$\cQ$ with base the variety~$B$ defined in~\eqref{base},
and in Section~\ref{sec51}, we describe the corank stratification of~$B$ induced by it.

\subsection{The quadric fibration}
\label{ss:qf}

Consider the~$4$-dimensional projective space
\begin{equation}
\label{eq:b4}
B_4 \coloneqq \Gr(4,V_5) \cong \P(V_5^\vee).
\end{equation} 
The vector bundle on~$B$ in whose projectivization~$\cQ$ lives 
is the pullback by the projection~\mbox{$B\to B_4$} of a vector bundle~$\cW$ on~$B_4$ which we construct in the next lemma.

For each $4$-dimensional subspace~$U_4 \subset V_5$, 
we define the vector space
\begin{equation*}
\cW_{[U_4]} \coloneqq (\C \oplus \bw2U_4) \cap W \subset \C \oplus \bw2V_5.
\end{equation*}
Following~\cite[Lemma~3.1]{lo} and~\cite[Section~3.4]{dim}, 
we show that these spaces form a vector bundle of rank~$n+1$ over~$B_4$.\ 
Denote by~$\cU_4 \subset V_5 \otimes \cO_{B_4}$ the tautological subbundle on~$B_4$.

\begin{lemm}
\label{lemma:cw-locally-free}
Let~$X$ be a  GM variety of dimension~$n$ satisfying Property~\eqref{hh}.\ 
The morphism   
\begin{equation*}
\cO_{B_4} \oplus \bw2\cU_4 \to 
(\C \oplus \bw2V_5) \otimes \cO_{B_4} \to 
((\C \oplus \bw2V_5)/W) \otimes \cO_{B_4}
\end{equation*}
of vector bundles on~$B_4$ is surjective at every point and its kernel~$\cW_X$ 
is locally free of rank~$n+1$.\ 
If~$X$ is special and~$X_0$ is the corresponding ordinary GM variety, we have
\begin{equation}
\label{defwk1:special}
\cW_X = \cO_{B_4} \oplus \cW_{X_0}.
\end{equation} 
\end{lemm}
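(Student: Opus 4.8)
The plan is to verify the lemma fiberwise over $B_4=\Gr(4,V_5)$, reducing everything to the rank computation of Lemma~\ref{lem:wperp}. First I would note that the target $\bigl((\C\oplus\bw2{V_5})/W\bigr)\otimes\cO_{B_4}$ is a trivial bundle, so surjectivity of the morphism at a point $[U_4]\in B_4$ is the purely linear-algebraic assertion that the fiber
\[
\C\oplus\bw2{U_4}\lra(\C\oplus\bw2{V_5})/W
\]
is surjective, equivalently that $W+(\C\oplus\bw2{U_4})=\C\oplus\bw2{V_5}$, equivalently (dualizing) that $W^\perp\cap(\C\oplus\bw2{U_4})^\perp=0$ inside $(\C\oplus\bw2{V_5})^\vee$. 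Once surjectivity is established at every point, the kernel is automatically a subbundle, locally free of rank $(1+6)-(6-n)=n+1$; this is the first assertion.

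Next I would identify the two subspaces being intersected. On one side, $(\C\oplus\bw2{U_4})^\perp$ consists of the skew forms $\omega\in\bw2{V_5^\vee}$ with $\omega|_{\bw2{U_4}}=0$; writing $U_4=\ker\varphi$ for some $\varphi\in V_5^\vee\ssm\{0\}$, this space is $\varphi\wedge V_5^\vee$, which is $4$-dimensional and whose nonzero elements are all decomposable, hence of rank $2$. On the other side, a short bookkeeping argument — using $W\cap\C=0$ when $X$ is ordinary, and $\C\subset W$ together with $W^\perp=W_0^\perp$ when $X$ is special — shows that $W^\perp\cap\bigl(\{0\}\oplus\bw2{V_5^\vee}\bigr)=\{0\}\oplus W_0^\perp$; since $(\C\oplus\bw2{U_4})^\perp$ already lies in $\{0\}\oplus\bw2{V_5^\vee}$, this gives $W^\perp\cap(\C\oplus\bw2{U_4})^\perp=W_0^\perp\cap(\varphi\wedge V_5^\vee)$. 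By Lemma~\ref{lem:wperp} every nonzero element of $W_0^\perp$ has rank $4$ — this is the only place Property~\eqref{hh} enters — while every nonzero element of $\varphi\wedge V_5^\vee$ has rank $2$, so the intersection is $0$ and surjectivity follows. The only point requiring any care here is making this identification uniformly in the ordinary and special cases; there is no genuine geometric obstacle, since all the content has been packaged into Lemma~\ref{lem:wperp}.

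Finally, for the decomposition~\eqref{defwk1:special}: when $X$ is special, $W=\C\oplus W_0$ with $W_0\subset\bw2{V_5}$, so $(\C\oplus\bw2{V_5})/W\cong\bw2{V_5}/W_0$ and, because $\C\subset W$, the composite $\cO_{B_4}\to(\C\oplus\bw2{V_5})\otimes\cO_{B_4}\to\bigl((\C\oplus\bw2{V_5})/W\bigr)\otimes\cO_{B_4}$ vanishes identically. Thus the morphism for $X$ has the form $0\oplus\psi$, where $\psi\colon\bw2{\cU_4}\to\bigl(\bw2{V_5}/W_0\bigr)\otimes\cO_{B_4}$ is exactly the $\bw2{\cU_4}$-component of the analogous morphism attached to the ordinary GM variety $X_0=X\cap\P(W_0)$ of~\eqref{eq:x-x0} (whose $\cO_{B_4}$-component is instead an isomorphism onto the $\C$-summand, since $\C\not\subset W_0$). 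Hence $\cW_{X_0}=\ker\psi$ inside $\bw2{\cU_4}$, while $\cW_X=\ker(0\oplus\psi)=\cO_{B_4}\oplus\ker\psi=\cO_{B_4}\oplus\cW_{X_0}$, as claimed.
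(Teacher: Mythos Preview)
Your proof is correct and takes essentially the same approach as the paper: both reduce surjectivity at a point $[U_4]$ to the statement that no nonzero element of $W_0^\perp$ can restrict to zero on $\bw2{U_4}$, and both invoke Lemma~\ref{lem:wperp} to rule this out (the paper phrases it as ``$U_4$ isotropic implies $\omega$ decomposable'', you phrase it as ``$\omega\in\varphi\wedge V_5^\vee$ has rank~$2$''). The only organizational difference is that the paper first projects away the $\C$-summand in the ordinary case and then treats the special case by the direct-sum observation, whereas you run the perp-intersection argument uniformly and only split off the $\cO_{B_4}$-summand at the end; neither approach buys anything the other does not.
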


\begin{proof}
Assume that~$X$ is ordinary.\ 
Then~$\cW_X$ is isomorphic to the kernel of the composition
\begin{equation*}
\bw2\cU_4 \to 
\bw2V_5 \otimes \cO_{B_4} \to 
(\bw2V_5 / W_0) \otimes \cO_{B_4},
\end{equation*}
where recall that~$W_0$ is the image of~$W$ in~$\bw2V_5$.\ 
Assume that, at a point~$[U_4]$, 
the morphism is not surjective, or equivalently, that the transposed map is not injective.\ 
There exists then a nonzero skew form~$\omega \in W_0^\perp$   
such that~$\bw2U_4$ is contained in the hyperplane of~$\bw2V_5$ orthogonal to~$\omega$,
that is, $U_4$ is isotropic for~$\omega$.\ 
But~$\omega$ is then decomposable, which, since~\mbox{$n \ge 2$},  
is impossible by Lemma~\ref{lem:wperp}.\
 This proves that~$\cW_X$ is locally free of rank~$n + 1$ for ordinary~$X$.

If~$X$ is special, we have~$W = \C \oplus W_0$ (see~\eqref{eq:w-w0}),
hence the morphism defining~$\cW_X$ is the direct sum of the zero morphism~$\cO_{B_4} \to 0$ 
and a similar morphism for~$X_0$, hence~\eqref{defwk1:special} holds, and the lemma follows in that case.
\end{proof}

By definition, the vector bundle~$\cW_X$ only depends on the linear space~$W$, that is, 
only on the Grassmannian hull~$M_X$ 
(in the proof above, we only used the fact that~$M_X$ is smooth in the ordinary case, 
and a cone over the smooth variety~$M'_X = M_{X_0}$ in the special case).\ 
So, we will denote it simply by~$\cW$.

Let~$H_4$ denote the hyperplane class of~$B_4$, so that~$(V_5 \otimes \cO_{B_4})/\cU_4 \cong \cO_{B_4}(H_4)$.\ 
Using the Koszul sequence~$0 \to \bw2\cU_4 \to \bw2V_5 \otimes \cO_{B_4} \to \cU_4(H_4) \to 0$, 
one can also define~$\cW$ by means of the exact sequence
\begin{equation}
\label{defwk2}
0 \to \cW \to W\otimes\cO_{B_4} \to \cU_4(H_4) \to 0.
\end{equation} 
This presentation is sometimes more convenient. 

\begin{rema}
If~$W \subset \C \oplus \bw2V_5$ is a vector subspace such that~$\P(W)$ is transverse to~$\CGr(2,V_5)$
and~$M \coloneqq \CGr(2,V_5) \cap \P(W)$ is the corresponding linear section, there is an isomorphism
\begin{equation*}
\P_{B_4}(\cW) \cong \Bl_{M}(\P(W))
\end{equation*}
that follows easily from the standard isomorphism~$\P_{B_4}(\bw2\cU_4) \cong \Bl_{\Gr(2,V_5)}(\P(\bw2V_5))$.\ 
This gives a geometric meaning to the vector bundle~$\cW$.
\end{rema}

Recall that $X\subset  \P(W)$ is the intersection of the  space~$V_6$ of quadrics
generated by the space~$V_5$ of Pl\"ucker quadrics and the  quadric~$Q$.\ 
We denote by 
\begin{equation*}
\bq \colon V_6 \lhra \Sym^2\!W^\vee
\end{equation*}
the corresponding embedding.\
The following observation is crucial for the rest of the paper.

\begin{lemm}
\label{lemma:u4-vanishes}
Let~$X$ be a  GM variety  satisfying Property~\eqref{hh}.\  
The composition
\begin{equation}
\label{eq:cu4-s2wv}
\cU_4 \lhra V_5\otimes\cO_{B_4}
\xrightarrow{\ \bq\vert_{V_5}\ }
\Sym^2\!W^\vee \otimes \cO_{B_4} \lra
\Sym^2\!\cW^\vee
\end{equation}
of morphisms of sheaves on~$B_4$ vanishes;
in other words, the Pl\"ucker quadrics coming from any hyperplane~$U_4\subset V_5$ all vanish on~$\P(\cW_{[U_4]})  \subset \P(W)$.\ 

More precisely,  if
\begin{equation*}
\pr_1 \colon B_4 \times (\P(W) \ssm M_X) \to B_4
\qquad\text{and}\qquad 
\pr_2 \colon B_4 \times (\P(W) \ssm M_X) \to \P(W) \ssm M_X
\end{equation*}
are the projections,
the zero locus of the morphism~$\pr_1^*\cU_4 \to \pr_2^*(\cO_{\P(W) \ssm M_X}(2))$
of sheaves on~$B_4 \times (\P(W) \ssm M_X)  = (B_4 \times \P(W)) \ssm (B_4 \times M_X)$ 
induced by the composition of the first two arrows in~\eqref{eq:cu4-s2wv}
is the subscheme~$\P_{B_4}(\cW) \ssm (B_4 \times M_X)$.
\end{lemm}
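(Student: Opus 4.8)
The plan is to work fiberwise over $B_4$: fix a point $[U_4] \in B_4$ and show that the composition \eqref{eq:cu4-s2wv}, evaluated at $[U_4]$, is the zero map $U_4 \to \Sym^2\!\cW_{[U_4]}^\vee$. Since a morphism of vector bundles that vanishes at every point is the zero morphism, this gives the first assertion. Concretely, a Plücker quadric associated with a vector $u \in U_4 \subset V_5 \cong \bw4V_5^\vee$ is (up to scalar) the quadratic form $\xi \mapsto u \wedge \xi \wedge \xi$ on $\C \oplus \bw2V_5$ (extended by zero on the $\C$-summand and restricted appropriately); the content of the statement is that this quadratic form vanishes identically on the subspace $\cW_{[U_4]} = (\C \oplus \bw2U_4) \cap W$. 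But any element of $\cW_{[U_4]}$ has the shape $(c, \eta)$ with $\eta \in \bw2U_4$, and for $u \in U_4$ one has $u \wedge \eta \wedge \eta \in \bw5 U_4 = 0$ because $\dim U_4 = 4$; the $\C$-summand contributes nothing since Plücker quadrics vanish on the vertex line. This is the one-line geometric heart of the matter, so the first half is essentially immediate once the indexing conventions are unwound.

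For the "more precisely" part, the idea is to identify the zero locus of the induced morphism $\pr_1^*\cU_4 \to \pr_2^*\bigl(\cO_{\P(W)\ssm M_X}(2)\bigr)$ with $\P_{B_4}(\cW) \ssm (B_4 \times M_X)$. First I would note that $\P_{B_4}(\cW) \subset B_4 \times \P(W)$ is exactly the incidence variety $\{([U_4],[w]) \mid w \in \cW_{[U_4]}\}$ by the definition of $\cW$ as a subbundle of $W \otimes \cO_{B_4}$ via \eqref{defwk2} (the surjection $W\otimes\cO_{B_4} \to \cU_4(H_4)$ realizes $\cW$ as the family of subspaces $\cW_{[U_4]}$). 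So it suffices to check the set-theoretic equality of the zero locus with this incidence variety, and then promote it to a scheme-theoretic equality. For the set-theoretic statement: by the first part, every point of $\P_{B_4}(\cW)$ lies in the zero locus; conversely, if $([U_4],[w])$ lies in the zero locus with $[w] \notin M_X$, then all four Plücker quadrics from $U_4$ vanish at $[w]$, and one must argue that this forces $w \in \cW_{[U_4]}$. This is precisely where the hypothesis that we work on the complement of $M_X$ enters, and I expect this to be \textbf{the main obstacle}: the locus cut out in $\P(W)$ by the Plücker quadrics from $U_4$ is, by a Plücker-coordinate computation (or by the quoted results \cite[Lemma~3.1]{lo}, \cite[Section~3.4]{dim}), the union of $M_X$ and the linear space $\P(\cW_{[U_4]})$ — so removing $M_X$ leaves exactly $\P(\cW_{[U_4]})$. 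I would either cite this directly or reprove it: the quadrics $u\wedge\xi\wedge\xi = 0$ for all $u \in U_4$ say that the $\bw2V_5$-component of $\xi$, call it $\eta$, satisfies $\eta \wedge \eta \in U_4^\perp \subset \bw4V_5^\vee$, i.e. the Plücker quadric associated to the line $V_5/U_4$ is the only one that may be nonzero; decomposing $\eta = \eta_0 + v\wedge u_5$ with $\eta_0\in\bw2U_4$ and $u_5$ completing a basis, one finds $\eta\wedge\eta$ forces either $v \in U_4$ (so $\eta \in \bw2U_4$, giving a point of $\P(\cW_{[U_4]})$) or $\eta_0$ decomposable compatibly with $v$ (which, combined with membership in $W$ and transversality/Property~\eqref{hh}, pins the point down to $M_X$).

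Finally, to upgrade to a scheme-theoretic equality I would compare the two subschemes as zero loci of sections of the same line bundle on $\P_{B_4}(\cW)$-adjacent spaces, or — more cleanly — argue that both sides are the degeneracy locus of a single cosection and that the complement $B_4 \times (\P(W)\ssm M_X)$ is where the ambient morphism $\pr_1^*\cU_4 \to \pr_2^*\cO(2)$ is "as degenerate as possible" with the expected codimension, so that by \cite[Lemma~3.1]{lo} (or a direct local computation in Plücker coordinates on each chart of $B_4$) the ideal generated by the four quadrics coincides with the ideal of $\P_{B_4}(\cW)$ away from $M_X$. The key check is that the four Plücker quadrics from $U_4$ cut out $\P(\cW_{[U_4]})$ with the reduced (in fact, linearly embedded) scheme structure on $\P(W)\ssm M_X$; this is a codimension-$(n+4)-(n+1) = 3$... wait, codimension count: $\dim\P(W) = n+4$, $\dim\P(\cW_{[U_4]}) = n$, so codimension $4$, cut out by $4$ quadrics — a complete intersection generically, and one verifies the Jacobian criterion (or invokes the cited references) to see there is no embedded or nonreduced structure along $\P(\cW_{[U_4]})$ once $M_X$ is removed. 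That completes the identification.
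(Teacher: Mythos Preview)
Your argument for the first assertion is correct and cleaner than the paper's: the observation that $u \wedge \eta \wedge \eta \in \bw5U_4 = 0$ for $u \in U_4$ and $\eta \in \bw2U_4$ settles the vanishing of~\eqref{eq:cu4-s2wv} in one line. The paper does not isolate this step; it deduces the vanishing as a byproduct of the second, scheme-theoretic claim.

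For the ``more precisely'' part, the two approaches diverge. The paper proceeds by an explicit coordinate computation: on the chart $B_4^{(e_1)} \times (\P(W) \ssm Q_{e_1})$ it writes down the four Pl\"ucker quadrics $\bq_{e_i} + y_i\bq_{e_1}$ and the four linear forms $x_{1i} + \sum x_{ij}y_j$ defining $\P_{B_4}(\cW)$, and exhibits a $4\times 4$ skew-symmetric matrix with Pfaffian $\bq_{e_1}$ transforming one set into the other. This directly shows the two \emph{ideals} coincide on the chart, and a covering argument finishes. Your route---show set-theoretic equality, then upgrade via complete intersection plus Jacobian/reducedness---is sound in principle, and your set-theoretic computation (that $v \wedge \eta_0 = 0$ forces either $v = 0$ or $\eta_0 = v \wedge w$, yielding $\P(\bw2U_4) \cup \Gr(2,V_5)$) is correct. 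The gap is that you stop short of actually verifying the Jacobian has rank~$4$ along $\P(\cW_{[U_4]}) \ssm M_X$; this check, once written out, is essentially the paper's matrix identity. So your sketch is completable, but the missing step is not a formality---it is precisely the content the paper supplies. What your approach buys is a transparent geometric reason for the set-theoretic statement; what the paper's approach buys is that the scheme-theoretic equality falls out with no extra argument about reducedness or embedded components.
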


\begin{proof}
We consider the morphisms of sheaves 
\begin{equation}
\label{eq:two-maps}
\pr_1^*(\cU_4) \otimes \pr_2^*(\cO_{\P(W)}(-2)) \to \cO
\quad\text{and}\quad 
\pr_1^*(\cU_4^\vee(-H_4)) \otimes \pr_2^*(\cO_{\P(W)}(-1)) \to \cO,
\end{equation}
where the first morphism is given by the first two arrows in~\eqref{eq:cu4-s2wv} 
and the second is given by the dual of the second arrow in~\eqref{defwk2}.\ 
Since the vanishing of the composition~\eqref{eq:cu4-s2wv} is a consequence of the second statement of the lemma 
and since by~\eqref{defwk2} the zero locus of the second morphism in~\eqref{eq:two-maps} is $ \P_{B_4}(\cW)$, 
all we need to show is that the images of these two morphisms in~\eqref{eq:two-maps} coincide over~$\P(W) \ssm M_X$.\ 
We will prove this coincidence on appropriate open subsets of~$B_4 \times \P(W)$ 
and then show that these subsets cover~$B_4 \times (\P(W) \ssm M_X)$.

First, we choose a basis~$(e_1,e_2,e_3,e_4,e_5)$ for~$V_5$
and consider the open subset~\mbox{$B_4^{(e_1)} \subset B_4$} defined by the condition~$e_1 \notin U_4$.\ 
 Clearly, $B_4^{(e_1)} \cong \A^4$, and we can write
\begin{equation*}
U_4 = \langle e_2 + y_2e_1, e_3 + y_3e_1, e_4 + y_4e_1, e_5 + y_5e_1 \rangle,
\end{equation*}
where~$(y_2,y_3,y_4,y_5)$ are coordinates on~$B_4^{(e_1)}$.\ 
The  first map in~\eqref{eq:two-maps} is then given 
by the Pl\"ucker quadrics
\begin{equation*}
(\bq_{e_2} + y_2\bq_{e_1}, \bq_{e_3} + y_3\bq_{e_1}, \bq_{e_4} + y_4\bq_{e_1}, \bq_{e_5} + y_5\bq_{e_1}).
\end{equation*}
Since $V_5 = \C e_1 \oplus U_4$, there is a direct sum decomposition
\begin{equation*}
\bw2V_5 = (e_1 \wedge U_4) \oplus \bw2U_4 =
\langle e_{12}, e_{13}, e_{14}, e_{15} \rangle \oplus
\langle e_{ij} + y_i e_{1j} - y_j e_{1i} \rangle_{2 \le i < j \le 5},
\end{equation*}
where~$e_{ij}\coloneqq e_i\wedge e_j$.\ 
A direct computation shows that the projection to its first summand is given by
\begin{equation*}
\sum_{1 \le i < j \le 5} x_{ij}e_{ij} \longmapsto
\sum_{i=2}^5 \Big( x_{1i} + \textstyle\sum_{j=2}^5 x_{ij}y_j \Big)e_{1i},
\end{equation*}
hence the second map in~\eqref{eq:two-maps} is given by
\begin{equation*}
\Big(
x_{12} + \textstyle\sum x_{2j}y_j,\ 
x_{13} + \textstyle\sum x_{3j}y_j,\  
x_{14} + \textstyle\sum x_{4j}y_j,\ 
x_{15} + \textstyle\sum x_{5j}y_j 
\Big).
\end{equation*}
It remains to note that
\begin{equation*}
\begin{pmatrix}
0 & \hphantom{-}x_{23} & {-}x_{24} & \hphantom{-}x_{34} \\
{-}x_{23} & 0 & \hphantom{-}x_{25} & {-}x_{35} \\
\hphantom{-}x_{24} & {-}x_{25} & 0 & \hphantom{-}x_{45} \\
{-}x_{34} & \hphantom{-}x_{35} & {-}x_{45} & 0
\end{pmatrix}
\cdot
\begin{pmatrix}
x_{15} + \sum x_{5j}y_j \\ x_{14} + \sum x_{4j}y_j \\ x_{13} + \sum x_{3j}y_j \\ x_{12} + \sum x_{2j}y_j 
\end{pmatrix}
=
\begin{pmatrix}
\bq_{e_5} + y_5\bq_{e_1} \\ \bq_{e_4} + y_4\bq_{e_1} \\ \bq_{e_3} + y_3\bq_{e_1} \\ \bq_{e_2} + y_2\bq_{e_1}  
\end{pmatrix},
\end{equation*}
and that the Pfaffian of the first factor in the left side is equal to~$\bq_{e_1}$, hence this factor is invertible over~$\P(W) \ssm Q_{e_1}$,
where~$Q_{e_1}$ is the Pl\"ucker quadric.\ 
Therefore, on the open subscheme~$B_4^{(e_1)} \times (\P(W) \ssm Q_{e_1})$, the images of the two maps in~\eqref{eq:two-maps} agree.

It follows that the required equality of schemes is established on the open subset
\begin{equation*}
\bigcup_{ v \in V_5\ssm\{0\}} B_4^{(v)} \times (\P(W) \ssm Q_v)
\end{equation*}
of~$B_4 \times \PP(W)$.
It remains to check that this union is equal to~$B_4 \times (\P(W) \ssm M_X)$.\ 
For that, it is enough to identify the fibers over an arbitrary point~$[U_4] \in B_4$,
that is, to show that
\begin{equation*}
\bigcup_{v \in V_5 \ssm U_4} (\P(W) \ssm Q_v) = \P(W) \ssm \bigcap_{v \in V_5 \ssm U_4} Q_v
\end{equation*}
is equal to~$\P(W) \ssm M_X$.\ 
But this is evident, because the Pl\"ucker quadrics~$Q_v$ with~$v \in V_5 \ssm U_4$ span the space of all Pl\"ucker quadrics,
and the intersection of those is~$M_X$.
\end{proof}

By Lemma~\ref{lemma:u4-vanishes}, the morphism 
\begin{equation}
\label{eq:v6-s2wd}
V_6\otimes\cO_{B_4} \xrightarrow{\ \bq\ } 
\Sym^2\!W^\vee \otimes \cO_{B_4} \lra
\Sym^2\!\cW^\vee
\end{equation}
factors through the quotient bundle~$(V_6\otimes\cO_{B_4}) / \cU_4$
and so defines a subsheaf of~$\Sym^2\!\cW^\vee$ of rank at most~$2$.\  
Note also that a choice of splitting~$V_6 = V_5 \oplus \C$ induces an isomorphism
\begin{equation}
\label{eq:v6-mod-cu4}
(V_6\otimes\cO_{B_4}) / \cU_4 \cong 
((V_5\otimes\cO_{B_4})/\cU_4) \oplus \cO_{B_4} \cong 
\cO_{B_4}(H_4) \oplus \cO_{B_4}.
\end{equation}
We use it in the next lemma, which shows that under a mild assumption on~$X$, 
the morphism~\eqref{eq:v6-s2wd} factors through a fiberwise monomorphism, 
and its image is locally free of rank~$2$.

\begin{lemm}
\label{lemma-subbundle-oe}
Let~$X$ be a  GM variety of dimension~$n$ satisfying Property~\eqref{hh}.\ 
If~$X$ contains no quadrics of dimension~$n - 1$, the morphism
\begin{equation*}
\eta \colon \cO_{B_4}(H_4) \oplus \cO_{B_4} \cong (V_6\otimes\cO_{B_4}) / \cU_4 \lra \Sym^2\!\cW^\vee
\end{equation*}
induced by~$\bq$ is an embedding of vector bundles.\ 
This holds in particular when~$n \ge 3$.
\end{lemm}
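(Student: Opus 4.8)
The plan is to verify that $\eta$ is injective on every fiber. Since the source $\cO_{B_4}(H_4)\oplus\cO_{B_4}$ has rank~$2$ and the target $\Sym^2\!\cW^\vee$ is locally free by Lemma~\ref{lemma:cw-locally-free}, fiberwise injectivity automatically makes $\coker(\eta)$ locally free, which is exactly what it means for $\eta$ to be an embedding of vector bundles. So fix a point $[U_4]\in B_4$. By Lemma~\ref{lemma:u4-vanishes} the composition \eqref{eq:v6-s2wd} annihilates $\cU_4$, so after the identification \eqref{eq:v6-mod-cu4} the fiber of $\eta$ at $[U_4]$ is the linear map
\[
\bar\eta_{[U_4]}\colon V_6/U_4\lra \Sym^2\!\cW_{[U_4]}^\vee,\qquad \bar u\longmapsto \bq(u)|_{\cW_{[U_4]}},
\]
induced by $\bq$ followed by the restriction dual to the inclusion $\cW_{[U_4]}\hra W$. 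Thus the statement to prove is: the only quadrics of $V_6$ that vanish on the linear $n$-plane $\P(\cW_{[U_4]})$ are those contained in $U_4$.

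To prove this I would argue by contradiction: suppose some $q\in V_6\ssm U_4$ has its quadric $Q_q$ containing $\P(\cW_{[U_4]})$. By Lemma~\ref{lemma:u4-vanishes} every quadric of $U_4$ also contains $\P(\cW_{[U_4]})$, hence so does every quadric of the $5$-dimensional subspace $U_5\coloneqq\langle U_4,q\rangle\subset V_6$. Pick $w\in V_6\ssm U_5$, so that $V_6=U_5\oplus\C w$; then, since $X$ is the scheme-theoretic intersection of the quadrics parameterized by $V_6$, restricting to $\P(\cW_{[U_4]})$ leaves only the equation coming from $w$, i.e.\ $X\cap\P(\cW_{[U_4]})=Q_w\cap\P(\cW_{[U_4]})$. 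If $\bq(w)$ vanished on $\cW_{[U_4]}$ this would force $\P^n\cong\P(\cW_{[U_4]})\subseteq X$, which is impossible since $X$ is irreducible of dimension $n$ and is not a projective space. Hence $\bq(w)$ restricts to a nonzero quadratic form on $\cW_{[U_4]}$, so $X\cap\P(\cW_{[U_4]})$ is a quadric hypersurface of dimension $n-1$ inside $X$ — a point of $\G_{n-1}(X)$, contradicting the hypothesis. This yields the embedding statement.

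For the last sentence, I would show that for $n\ge3$ a smooth GM variety contains no quadric of dimension $n-1$ at all, so the hypothesis is vacuous. Such a quadric $\Sigma$ would be a nonzero effective divisor on $X$; since $X$ is Fano with $\Cl(X)=\Z H$ and $H^n=10$ by~\cite[Theorem~2.16]{DK1}, each irreducible component $Z$ of $\Sigma$ satisfies $[Z]=aH$ with $a\ge1$, hence $\deg Z=10a\ge10$, incompatible with $\deg\Sigma=2$.

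I do not expect a serious obstacle; the delicate points are merely the standard fact that a fiberwise‑injective morphism of vector bundles has locally free cokernel, the exclusion of the degenerate case $\P(\cW_{[U_4]})\subseteq X$, and — for possibly reducible or non‑reduced $\Sigma$ — running the degree argument component by component.
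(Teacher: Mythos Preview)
Your argument is correct and follows essentially the same approach as the paper's proof: both check fiberwise injectivity by observing that a failure would force $X\cap\P(\cW_{[U_4]})$ to be cut out by at most one quadratic equation, producing a quadric of dimension $n-1$ in $X$. The only minor difference is in the final sentence: the paper invokes the Lefschetz theorem via \cite[Corollary~3.5]{DK2}, whereas you give a direct degree argument using $\Pic(X)=\Z H$ and $H^n=10$; both rest on the same underlying fact.
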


\begin{proof}
The isomorphism was discussed above,
so we only need to check fiberwise injectivity.
Let~$[U_4] \in B_4$ and assume~$U_4 \subset U_5 \subset V_6$ is such that~$\eta(U_5/U_4) = 0$.\
The intersection of~$X$ with~$\P((\C \oplus \bw2U_4) \cap W) = \P(\cW_{[U_4]}) \cong \P^{n}$ 
is then given by at most one quadratic equation, and therefore this intersection (and hence~$X$ as well) 
contains a quadric of dimension~$n - 1$.\ 

It remains to note that when~$n \ge 3$, the variety~$X$ contains no quadrics of dimension~\mbox{$n - 1$}
by the Lefschetz theorem (see~\cite[Corollary~3.5]{DK2}).
\end{proof}

In addition to the 4-dimensional projective space~$B_4$ defined by~\eqref{eq:b4}, 
we consider the~$5$-dimensional projective space
\begin{equation*}
B_5 \coloneqq \Gr(5,V_6) \cong \P(V_6^\vee).
\end{equation*} 
We denote by~$\cU_5 \subset V_6 \otimes \cO_{B_5}$ the tautological subbundle and by~$H_5$ the hyperplane class of~$B_5$,
so that~$(V_6 \otimes \cO_{B_5})/\cU_5 \cong \cO_{B_5}(H_5)$.\
 
Recall that for a GM variety~$X$, the embedding~$V_5 \subset V_6$ of the space of Pl\"ucker quadrics 
into the space of quadrics containing~$X$ corresponds to a point~$\bp_X \in B_5$, called the Pl\"ucker point of~$X$.
We defined the subvariety~$B \subset B_4 \times B_5$ in~\eqref{base}.\ 
Abusing notation, we will denote the pullbacks to~$B$ 
of the tautological bundles and hyperplane classes of~$B_4$ and~$B_5$ by~$\cU_4$, $\cU_5$, $H_4$, and~$H_5$, respectively.\ 
The main properties of~$B$ are summarized in the following lemma.

\begin{lemm}
\label{ccb}
The projection~$B \to B_5$ is the blowup of the Pl\"ucker point~${\bp_X} \in B_5$ 
and the class of the exceptional divisor~$E \subset B$ of the blowup 
is given by~$E \lin H_5 - H_4$.

The projection~$B \to B_4$ induces isomorphisms
\begin{equation*}
B \cong \P_{B_4}(\cO_{B_4}(H_4) \oplus \cO_{B_4})
\qquad\text{and}\qquad 
E \cong B_4
\end{equation*}
such that the relative tautological subbundle for this projectivization is isomorphic to~$\cO_B(-E)$.
Moreover, $E \subset B$ is the section of the projection~$B  = \P_{B_4}(\cO_{B_4}(H_4) \oplus \cO_{B_4}) \to B_4$ corresponding to the first summand.
\end{lemm}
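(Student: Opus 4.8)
The plan is to unwind the definition $B \coloneqq \{([U_4],[U_5]) \in B_4 \times B_5 \mid U_4 \subset U_5\}$ directly and match it with the incidence description of a projectivized rank-two bundle. The key input is that the condition $U_4 \subset U_5$ means precisely that the line $U_5/U_4$ lives in the rank-two space $V_6/U_4$; letting $[U_4]$ vary we get exactly the projectivization of the bundle $(V_6 \otimes \cO_{B_4})/\cU_4$ over $B_4$, which by the splitting $V_6 = V_5 \oplus \C$ (used in~\eqref{eq:v6-mod-cu4}) is isomorphic to $\cO_{B_4}(H_4) \oplus \cO_{B_4}$. So first I would write the isomorphism $B \cong \P_{B_4}((V_6 \otimes \cO_{B_4})/\cU_4) \cong \P_{B_4}(\cO_{B_4}(H_4) \oplus \cO_{B_4})$ and identify, tautologically, the relative tautological subbundle $\cO_B(-1)$ of this projectivization with the line subbundle of $(V_6 \otimes \cO_B)/\cU_4$ whose fiber at $([U_4],[U_5])$ is $U_5/U_4$.

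Next I would identify $\cO_B(-1)$ with $\cO_B(-E)$. The cleanest route is to compare it with the pullbacks of the two hyperplane classes: on $B$ there is a tautological surjection $\cU_5 \twoheadrightarrow U_5/U_4$, i.e.\ a nonzero map from the line bundle $\cO_B(-H_5)$ (the pullback of the tautological subbundle $\cU_5$ from $B_5$, which has determinant $\cO_{B_5}(-H_5)$ — careful here, $\cU_5$ is rank $5$, so one should instead use $(V_6 \otimes \cO_{B_5})/\cU_5 \cong \cO_{B_5}(H_5)$ and dualize/quotient appropriately) — more precisely, the line $U_5/U_4$ is the kernel of the composite $V_6/U_4 \to V_6/U_5 \cong \C(H_5)$, and also the image of $U_5 \hookrightarrow V_6 \to V_6/U_4$. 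Tracking one of these maps of line bundles, whose vanishing locus is the exceptional divisor $E$ (the locus where $U_5 = V_5$, i.e.\ $[U_5] = \bp_X$ and $U_4 \subset V_5$ is arbitrary), gives $\cO_B(-1) \cong \cO_B(-H_5)\otimes(\text{something})$; combined with the already-established $E \lin H_5 - H_4$ this pins down $\cO_B(-1) \cong \cO_B(-E)$. Concretely: the composite $\cO_B(-H_5) \cong \cU_5 / (\text{rk }4\text{ part})\ldots$ is awkward, so I would instead argue via the natural inclusion $\cO_B(-1) = (U_5/U_4)\text{-bundle} \hookrightarrow (V_6 \otimes \cO_B)/\cU_4 \to (V_6 \otimes \cO_B)/\cU_5 = \cO_B(H_5)$ pulled back, whose vanishing locus is exactly $E$; hence $\cO_B(1) \otimes \cO_B(H_5)^{-1} \cong \cO_B(-E)$, i.e.\ $\cO_B(1) \lin H_5 - E \lin H_4$ using $E \lin H_5 - H_4$, so $\cO_B(-1) \cong \cO_B(-E)$ follows from $E \lin H_5 - H_4$ together with this computation (and one should double-check there is no twist discrepancy between "$\cO_B(1)$ relative $\cO(1)$" conventions).

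Then $E \cong B_4$ is immediate: set-theoretically $E = \{([U_4],[V_5]) : U_4 \subset V_5\}$, and since every $U_4 \in B_4 = \Gr(4,V_5)$ satisfies $U_4 \subset V_5$ automatically, the first projection restricts to a bijective morphism $E \to B_4$, which is an isomorphism because $B \to B_4$ is a smooth $\P^1$-fibration and $E$ is a section. Finally, that $E$ is the section corresponding to the first summand $\cO_{B_4}(H_4)$ of $\cO_{B_4}(H_4) \oplus \cO_{B_4}$: a section of $\P_{B_4}(\cE_1 \oplus \cE_2) \to B_4$ corresponds to the quotient line bundle obtained by killing one summand, and $E$ is cut out by the vanishing of the $\C = V_6/V_5$ coordinate in $V_6/U_4 \cong \cO_{B_4}(H_4) \oplus \cO_{B_4}$ — i.e.\ on $E$ the line $U_5/U_4$ equals $V_5/U_4 = \cO_{B_4}(H_4)$, which is the first summand; so $E$ is the section along which the relative tautological subbundle is the $\cO_{B_4}(H_4)$-summand. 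I would spell this last identification out by noting that the first projection $B\to B_5$ being the blowup of $\bp_X$ was already recorded earlier in the lemma, so the exceptional divisor is the one over $\bp_X$, and $\bp_X$ corresponds to $V_5 \subset V_6$, whence over $E$ one has $U_5 = V_5$ and $U_5/U_4 = V_5/U_4$, the first summand.

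\textbf{The main obstacle} is bookkeeping the twists: matching the "relative $\cO(-1)$" of $\P_{B_4}(\cO_{B_4}(H_4)\oplus\cO_{B_4})$ with $\cO_B(-E)$ requires being careful about whether one projectivizes sub- or quotient-bundles and about which generator of $\Pic(B)$ one calls $\cO_B(1)$; everything else is a direct transcription of incidence conditions. I would fix the convention at the outset (relative tautological \emph{sub}bundle, as stated in the lemma) and then the computation $\cO_B(1) \lin H_4$, combined with $E \lin H_5 - H_4$ from the blowup description already proved, closes the argument.
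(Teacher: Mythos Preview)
Your overall plan is exactly the paper's: identify $\cU_5/\cU_4$ as the tautological line subbundle of $(V_6\otimes\cO_B)/\cU_4 \cong \cO_{B_4}(H_4)\oplus\cO_{B_4}$, and then pin it down as $\cO_B(-E)$ by exhibiting a map to a known line bundle that vanishes precisely on~$E$. The paper does this by considering
\[
\cU_5/\cU_4 \hookrightarrow (V_6\otimes\cO_B)/\cU_4 \to (V_6\otimes\cO_B)/(V_5\otimes\cO_B)\cong\cO_B,
\]
which vanishes exactly where $U_5\subset V_5$, i.e.\ on~$E$, giving $\cU_5/\cU_4\cong\cO_B(-E)$ in one step.

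There is, however, a genuine error in the specific computation you wrote down. The composite you propose,
\[
\cU_5/\cU_4 \hookrightarrow (V_6\otimes\cO_B)/\cU_4 \twoheadrightarrow (V_6\otimes\cO_B)/\cU_5,
\]
is \emph{identically zero}: the second arrow is precisely the quotient by the image of the first. So it does not vanish ``exactly on~$E$''; it vanishes everywhere. And even taking your conclusion $\cO_B(1)\lin H_5-E\lin H_4$ at face value, this says $\cO_B(-1)\lin -H_4$, which is \emph{not} $-E$ (restrict to a fiber of $B\to B_4$: $H_4$ restricts trivially, while both $E$ and the tautological subbundle have degree~$-1$). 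So the final ``follows from'' is a non sequitur. The fix is exactly to replace the quotient by~$\cU_5$ with the quotient by the constant bundle~$V_5\otimes\cO_B$, as the paper does; then the target is $\cO_B$ and everything lines up without any twist bookkeeping.
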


\begin{proof}
The first part of the lemma is standard.\ 
For the second part, 
we note that~$\cU_5/\cU_4$ is a line subbundle in~$(V_6 \otimes \cO_B)/\cU_4$
so, using~\eqref{eq:v6-mod-cu4}, we obtain a morphism~\mbox{$B \to \P_{B_4}(\cO_B(H_4) \oplus \cO_B)$} 
which is easily seen to be an isomorphism, under which~$\cU_5/\cU_4$ is identified with the tautological subbundle.\ 
Moreover, since~$(V_5 \otimes \cO_B)/\cU_4$ corresponds to the summand~$\cO_{B_4}(H_4)$,
it follows that~$E$ is the corresponding section of the projection~$B \to B_4$.\ 
Finally, the map
\begin{equation*}
\cU_5/\cU_4 \to (V_6 \otimes \cO_B)/\cU_4 \to (V_6 \otimes \cO_B) / (V_5 \otimes \cO_B) \cong \cO_B
\end{equation*}
vanishes exactly on~$E$, hence the tautological subbundle coincides with~$\cO_B(-E)$.
\end{proof}

Let~$X$ be a  GM variety of dimension~$n$ satisfying Property~\eqref{hh}.\ 
We denote by~$\cW_B$ the pullback by the projection~$B\to B_4$ of the vector bundle~$\cW$ 
of rank~$n+1$ on~$B_4$ defined in Lemma~\ref{lemma:cw-locally-free}.\ 
Pulling back to~$B$ the embedding~$\eta$ from Lemma~\ref{lemma-subbundle-oe}
and restricting it to the tautological subbundle~$\cU_5/\cU_4 \cong \cO_B(-E)$ (see Lemma~\ref{ccb}),
we obtain an embedding
\begin{equation}
\label{eq:iota}
\eta \colon \cO_B(-E) \lhra \Sym^2\!\cW_B^\vee.
\end{equation}
We denote by 
\begin{equation}
\label{eq:defQ}
 \cQ \subset \PP_B(\cW_B)\lra B
\end{equation}
the family of quadrics defined by~$\eta$.\ 
Under the assumption of Lemma~\ref{lemma-subbundle-oe}, the image of~$\eta$ is a line subbundle;
therefore, the map~$\cQ \to B$ is a flat fibration in quadrics of dimension~$n-1$.

\subsection{The corank stratification}
\label{sec51}

Consider the cokernel sheaf of the quadric bundle~$\cQ$, 
that is, the sheaf~${\cC_X}$ on~$B$ defined by the exact sequence 
\begin{equation}
\label{equation-sheaf-cf}
0 \to \cW_B(-E) \xrightarrow{\quad} \cW_B^\vee \xrightarrow{\quad} \cC_X \to 0, 
\end{equation}
where the morphism~$\cW_B(-E) \to \cW_B^\vee$ comes from the equation~$\eta$ of~$\cQ$ (see~\eqref{eq:iota}).\
When the GM variety~$X$ is clear from the context, we may abbreviate~$\cC_X$ to~$\cC$.\ 
Denote by
\begin{equation}
\label{eq:bgec}
B^{\ge c} \subset B
\end{equation}
the corank~$c$ degeneracy locus of this morphism 
(alternatively, this is the locus of points where the rank of~$\cC_X$ is at least~$c$)
with its natural scheme structure, defined by the corresponding Fitting ideal.\
We will also use the notation~$B^c \coloneqq B^{\ge c} \ssm B^{\ge c+1}$.

On the exceptional divisor~$E = \P(V_5^\vee)\subset B$, 
the corank stratification for the quadric fibration~$\cQ \to B$ restricts to the corank stratification 
for the restricted quadric fibration
\begin{equation}
\label{defqe}
\cQ_E := \cQ \times_B E\lra E.
\end{equation}
It is given by the subschemes
\begin{equation}
 \label{newe}
E^{\ge c} := E \cap B^{\ge c}
\qquad\text{and}\qquad
E^c := E \cap B^c = E^{\ge c} \ssm E^{\ge c+1}.
\end{equation}
The following lemma reduces the computation of the stratifications to the ordinary case.

\begin{lemm}
\label{lem:b-strata-special}
If~$X$ is a special GM variety and~$X_0$ is its associated ordinary variety, we have 
\begin{equation*}
\cC_X \cong \cC_{X_0} \oplus \cO_E.
\end{equation*}
In particular, the corank stratifications of~$B$ corresponding to~$X$ and~$X_0$ agree on~$B \ssm E$
and differ by~$1$ on~$E$.
\end{lemm}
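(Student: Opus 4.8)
The plan is to compute the sheaf morphism $\cW_B(-E)\to\cW_B^\vee$ appearing in~\eqref{equation-sheaf-cf} for the special variety $X$ in block form with respect to the decomposition $\cW_B=\cW_{X,B}=\cO_B\oplus\cW_{X_0,B}$ obtained by pulling back~\eqref{defwk1:special} from $B_4$ to $B$, and to identify it with the block-diagonal morphism whose $(1,1)$-block is the canonical inclusion $\cO_B(-E)\hookrightarrow\cO_B$ and whose $(2,2)$-block is the morphism $\cW_{X_0,B}(-E)\to\cW_{X_0,B}^\vee$ defining $\cC_{X_0}$. This comparison makes sense because, by Theorem~\ref{thm:gm-ld}, $X$ and $X_0$ have the same Lagrangian data, hence the same Pl\"ucker point, and therefore the same variety $B$, the same exceptional divisor $E$, and the same tautological objects $\cU_4$, $\cU_5$, $\cO_B(\pm E)$. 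Granting the block-diagonal form, the cokernel splits as $\cC_X\cong\coker(\cO_B(-E)\hookrightarrow\cO_B)\oplus\coker(\cW_{X_0,B}(-E)\to\cW_{X_0,B}^\vee)=\cO_E\oplus\cC_{X_0}$, which is the first assertion.

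To verify the block-diagonal form, recall that $X$ being special means $\bv\in\P(W)$, so~\eqref{eq:w-w0} gives $W=\C\oplus W_0$ with $W_0$ the orthogonal complement of the vertex line with respect to $Q$; after rescaling we may take $Q=x_0^2+Q_0$, where $x_0$ is the coordinate dual to $\C\subset W$ and $Q_0\in\Sym^2\!W_0^\vee$ is the extra quadratic equation of $X_0\subset\Gr(2,V_5)\cap\P(W_0)$. Since the Pl\"ucker quadrics live in $\Sym^2\!(\bw2V_5)^\vee$, they do not involve $x_0$, so $\bq(V_5)\subset\Sym^2\!W_0^\vee$; consequently, restriction $\Sym^2\!W^\vee\to\Sym^2\!W_0^\vee$ identifies $V_6=\bq(V_5)\oplus\C\cdot Q$ with the analogous space $\bq_{X_0}(V_5)\oplus\C\cdot Q_0$ of $X_0$, compatibly with the Pl\"ucker hyperplanes (again consistent with Theorem~\ref{thm:gm-ld}). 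Now trivialize $\cO_B(-E)\cong\cU_5/\cU_4$ locally and lift, via this splitting of $V_6$, to a section $v+\lambda Q$ of $V_6\otimes\cO_B$ with $v$ a local section of $V_5\otimes\cO_B$ (well defined modulo $\cU_4$) and $\lambda$ a regular function; by Lemma~\ref{ccb}, $\lambda$ is the image of the chosen trivialization under $\cU_5/\cU_4\to(V_6\otimes\cO_B)/(V_5\otimes\cO_B)\cong\cO_B$, so its zero scheme is $E$. By Lemma~\ref{lemma:u4-vanishes}, the resulting local section of $\Sym^2\!\cW_B^\vee$ is the restriction to $\cW_{X,b}=\C\oplus\cW_{X_0,b}$ of $\bq(v)+\lambda(x_0^2+Q_0)$; since neither $\bq(v)$ nor $Q_0$ involves $x_0$ and $\cW_{X,B}\hookrightarrow W\otimes\cO_B$ is the identity on the $\C$-summand, this quadratic form is block diagonal:
\[
q_b=\begin{pmatrix}\lambda & 0\\ 0 & \bigl(\bq(v)+\lambda Q_0\bigr)\big\vert_{\cW_{X_0,b}}\end{pmatrix},
\]
and the $(2,2)$-block is precisely the quadric attached to $X_0$ by the same construction, using $\bq_{X_0}(v)=\bq(v)\vert_{W_0}$ and the identification of the two spaces $V_6$; hence it is the $(2,2)$-block of the morphism defining $\cC_{X_0}$. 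The only delicate point is the vanishing of the off-diagonal block: it rests on the orthogonality of $W=\C\oplus W_0$ with respect to $Q$ (so that $Q$ has no $x_0$--$W_0$ cross term) together with the absence of $x_0$ in the Pl\"ucker quadrics.

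Finally, the statement about corank stratifications follows formally from $\cC_X\cong\cO_E\oplus\cC_{X_0}$ and the compatibility of Fitting ideals with direct sums and with restriction. Away from $E$ we have $\cC_X\vert_{B\ssm E}=\cC_{X_0}\vert_{B\ssm E}$, so the corank loci $B^{\ge c}_X$ and $B^{\ge c}_{X_0}$ coincide on $B\ssm E$. Restricting to $E$, the summand $\cO_E$ becomes an invertible sheaf on $E$, whence $\mathrm{Fitt}_{c-1}(\cC_X)\vert_E=\mathrm{Fitt}_{c-2}(\cC_{X_0})\vert_E$; in the notation of~\eqref{newe} this reads $E^{\ge c}_X=E^{\ge c-1}_{X_0}$, i.e.\ the corank stratification on $E$ is shifted by $1$.
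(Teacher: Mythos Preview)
Your proof is correct and follows essentially the same approach as the paper: both use the decomposition $\cW_X=\cO_B\oplus\cW_{X_0}$ from~\eqref{defwk1:special} and show that the quadratic form $\eta$ is block diagonal with the indicated blocks. The paper outsources the orthogonality and the identification of the blocks to~\cite[Proposition~2.30]{DK1}, whereas you unpack this explicitly by writing $Q=x_0^2+Q_0$ and observing that Pl\"ucker quadrics do not involve $x_0$; you also spell out the Fitting-ideal argument for the corank shift on~$E$, which the paper leaves implicit.
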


\begin{proof}
We have~$\cW_X = \cO_{B} \oplus \cW_{X_0}$ by~\eqref{defwk1:special}.\ 
Moreover, it follows from~\cite[Proposition~2.30]{DK1} that this direct sum decomposition 
is orthogonal for the family of quadratic forms~$\eta$,
the restriction of~$\eta$ to the first summand is the natural morphism~$\cO_B(-E) \to \cO_B$,
and the restriction to the second summand is given by the family of quadratic forms associated with~$X_0$,
hence the lemma.
\end{proof}

We analyze next the corank stratification on the complement~$B \ssm E$ of the exceptional divisor.\
Let~$A \subset \bw3V_6$ be the Lagrangian subspace associated with~$X$ (see Section~\ref{subsec:ld}).\ 
On~$\PP(V_6^\vee) = B_5$, we   consider the two Lagrangian subbundles 
\begin{equation}
\label{eq:ai}
\cA_1 \coloneqq A \otimes \cO_{B_5}
\qquad\text{and}\qquad 
\cA_2 \coloneqq \bw3\cU_5 
 \end{equation}
 of the symplectic bundle~$\bw3V_6 \otimes \cO_{B_5}$.\ 
Consider the Lagrangian cointersection sheaf
\begin{equation*}
 \cC(\cA_1,\cA_2) \coloneqq \Coker\bigl(\bw3\cU_5 \lra (\bw3V_6 / A) \otimes \cO_{B_5}\bigr)
\end{equation*}
as defined in~\cite[Section~4]{DK3}.\
In the next lemma, we use the double covers
constructed in~\cite[Sections~4.1 and~4.3]{DK3} from these Lagrangian subbundles,
the double covers associated with a quadratic fibration in~\cite[Section~3.1]{DK3}, 
and the EPW stratification~$\sY_\Ap^{\ge c}$ of~$B_5 = \P(V_6^\vee)$ defined in~\eqref{ya}.\ 
Recall that~$B \ssm E = B_5 \ssm \{\bp_X\}$ by Lemma~\ref{ccb}.\

\begin{lemm}
\label{lemma:ker-qb}
Let~$X$ be a  GM variety   satisfying Property~\eqref{hh}.\ 
 Then,
\begin{equation*}
\cC_X\vert_{B \ssm E} \cong \cC(\cA_1,\cA_2)\vert_{B_5 \ssm \{\bp_X\}}
\qquad\text{and}\qquad
B^{\ge c} \ssm E = \sY^{\ge c}_\Ap \ssm \{\bp_X\}
\end{equation*}
for all~$c \ge 0$, where the sheaves~$\cC_X$ and~$\cC(\cA_1,\cA_2)$ were defined in~\eqref{equation-sheaf-cf} and~\eqref{eq:ai}.\ 
Moreover, for each~$c \ge 0$, the double cover of~$B^{\ge c} \ssm E$ associated with the quadratic fibration~$\cQ \to B$
is isomorphic to the restriction of the double cover~$\uvt_\Ap \colon \wtY_\Ap^{\ge c} \to \sY_\Ap^{\ge c}$ from Theorem~\textup{\ref{thm:tya}}.
\end{lemm}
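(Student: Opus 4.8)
The plan is to reduce to the case of an ordinary GM variety, then make the quadric fibration~$\cQ$ explicit over $B\ssm E$ in terms of the Lagrangian~$A$, package this as an isomorphism of cokernel sheaves, and read off all three assertions from it. For the reduction: if $X$ is special with associated ordinary variety~$X_0$, then $\cC_X\cong\cC_{X_0}\oplus\cO_E$ by Lemma~\ref{lem:b-strata-special}, and since $\cO_E$ is supported on~$E$ it vanishes on $B\ssm E$; moreover the extra summand corresponds to the morphism $\cO_B(-E)\to\cO_B$, which is orthogonal to the rest for~$\eta$ and invertible off~$E$, so $\cC_X|_{B\ssm E}\cong\cC_{X_0}|_{B\ssm E}$ and the corank loci $B^{\ge c}\ssm E$ and the double covers attached to~$\cQ$ are the same for $X$ and~$X_0$. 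On the other hand $X$ and~$X_0$ share the Lagrangian data $(A,V_5)$ by Theorem~\ref{thm:gm-ld}, hence the same Pl\"ucker point~$\bp_X$, the same sheaf $\cC(\cA_1,\cA_2)$ and stratification~$\sY^{\ge c}_\Ap$, and the same double covers~$\uvt_\Ap$. Thus it suffices to treat ordinary~$X$, for which $W=W_0\subset\bw2V_5$; I then fix a splitting $V_6=V_5\oplus\C$, so $\bw3V_6=\bw3V_5\oplus\bw2V_5$.

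Next I would identify $\cQ$ over $B\ssm E$ with the Lagrangian cointersection data. Recall $B\ssm E=B_5\ssm\{\bp_X\}=\P(V_6^\vee)\ssm\{[V_5]\}$ by Lemma~\ref{ccb}; over such a point $U_5\ne V_5$ one has $U_4\coloneqq U_5\cap V_5$ of dimension~$4$. By Lemma~\ref{lemma:u4-vanishes} the morphism~\eqref{eq:iota} descends to $(V_6\otimes\cO_B)/\cU_4$, and the quadratic equation of~$\cQ_b$ on $\P(\cW_{[U_4]})$, with $\cW_{[U_4]}=\bw2U_4\cap W_0$, is $\bq$ evaluated along the line $U_5/U_4$. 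Using the explicit description of the Lagrangian~$A$ of an ordinary GM variety from~\cite[Section~3]{DK1} — roughly, $A\cap\bw3V_5$ corresponds to $W_0^\perp$ under $\bw3V_5\cong\bw2V_5^\vee$, while the rest of~$A$ is the graph over $W_0\subset\bw2V_5\cong\bw2V_5\wedge v_0$ of the symmetric map $W_0\to\bw3V_5/W_0^\perp\cong W_0^\vee$ determined by~$Q$ — together with $\bw3U_5=\bw3U_4\oplus(\bw2U_4\wedge(U_5/U_4))$ and the inclusion $\cW_{[U_4]}\otimes(U_5/U_4)\hookrightarrow\bw3U_5$, one checks that the natural map $\bw3U_5\to\bw3V_6/A$ restricts, on $\cW_{[U_4]}\otimes(U_5/U_4)$, to a twist of the quadratic form of~$\cQ_b$, giving a canonical isomorphism of its cokernel with $\bw3V_6/(A+\bw3U_5)$ compatible with the natural self-duality on both sides. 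This is the analogue, for the present fibration, of the computations of~\cite[Section~4]{DK1,DK2} for the first and second quadric fibrations, and can be carried out in the affine coordinates used in the proof of Lemma~\ref{lemma:u4-vanishes}. Globalizing over $B\ssm E$ yields
\[
\cC_X|_{B\ssm E}\;\cong\;\cC(\cA_1,\cA_2)|_{B_5\ssm\{\bp_X\}},
\]
compatibly with the natural quadratic structures in the sense of~\cite[Section~4]{DK3}.

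From this, the remaining assertions follow formally. Both $B^{\ge c}$ and $\sY^{\ge c}_\Ap$ are cut out by the corresponding Fitting ideal of their respective cokernel sheaf (for $\sY^{\ge c}_\Ap$, note that $\bw3U_5$ and $\bw3V_6/A$ have equal dimension, so $\dim(A\cap\bw3U_5)$ equals the fibre dimension of $\cC(\cA_1,\cA_2)$); since isomorphic sheaves have equal Fitting ideals, $B^{\ge c}\ssm E=\sY^{\ge c}_\Ap\ssm\{\bp_X\}$ as subschemes. For the double cover, by~\cite[Theorem~3.1]{DK3} the double cover of $B^{\ge c}\ssm E$ attached to~$\cQ\to B$ depends only on $\cC_X$ with its quadratic structure, so by the isomorphism above it agrees with the double cover of $\sY^{\ge c}_\Ap\ssm\{\bp_X\}$ built from $\cC(\cA_1,\cA_2)$; by the comparison in~\cite[Sections~4.1 and~4.3]{DK3} the latter is the Lagrangian double cover, that is, the restriction of $\uvt_\Ap\colon\wtY^{\ge c}_\Ap\to\sY^{\ge c}_\Ap$ of Theorem~\ref{thm:tya}.

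The main obstacle is the explicit matching in the second step: producing, canonically in~$U_5$ and compatibly with the self-dual (quadratic) structures, the isomorphism between the cokernel of the quadratic form defining~$\cQ_b$ and $\bw3V_6/(A+\bw3U_5)$. This forces one to unwind the construction of~$A$ from~\cite{DK1} and to track the several natural identifications involved (among them the choice of a generator of $U_5/U_4$); once it is in place, the globalization and the appeals to~\cite{DK3} are routine bookkeeping.
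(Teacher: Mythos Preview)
Your reduction to the ordinary case is identical to the paper's. The substantive difference is in how you establish the cokernel isomorphism over $B\ssm E$. You propose to unwind the explicit description of the Lagrangian $A$ from~\cite[Section~3]{DK1} (namely $A\cap\bw3V_5=W_0^\perp$ and the rest of $A$ as the graph of~$Q$), then verify by direct computation in affine coordinates that the restriction of $\bw3U_5\to\bw3V_6/A$ to $\cW_{[U_4]}\otimes(U_5/U_4)$ recovers the quadratic form of~$\cQ_b$. The paper instead introduces a \emph{third} Lagrangian subbundle $\cA_3=\bw3V_5\otimes\cO_{B_5}$ and applies the isotropic reduction machinery of~\cite[Section~4.2]{DK3} with respect to $\cI=(\cA_1\cap\cA_3)\oplus(\cA_2\cap\cA_3)=(W_0^\perp\otimes\cO)\oplus\bw3\cU_4$; the reduced Lagrangian $\bcA_3$ is then identified with $\cW^\vee|_{B\ssm E}$ via the dual of~\eqref{defwk2}, and~\cite[Proposition~4.7]{DK3} supplies the quadratic form and the cokernel isomorphism formally. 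The paper still needs one pointwise check at the end---that this abstractly produced quadric agrees with~$\cQ_b$---and for that it invokes~\cite[Proposition~3.13]{DK1}, which is precisely the explicit description of~$A$ you start from. So the two routes converge: yours front-loads the coordinate computation and treats the Lagrangian-cointersection formalism as bookkeeping, while the paper's route lets isotropic reduction absorb the bookkeeping and defers the explicit identification to a single citation. The paper's approach is cleaner and more robust with respect to the self-duality/quadratic structures (which are built into the reduction formalism), whereas your approach, as you yourself flag, requires careful tracking of those structures by hand; but there is no genuine gap in your outline.
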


\begin{proof}
By Lemma~\ref{lem:b-strata-special}, we have~$\cC_X\vert_{B \ssm E} \cong \cC_{X_0}\vert_{B \ssm E}$
and, by Theorem~\ref{thm:gm-ld}, the Lagrangian subspaces associated with~$X$ and~$X_0$ coincide,
so we may and will assume that~$X$ is ordinary.

Consider   the other Lagrangian subbundle~$\cA_3 \coloneqq \bw3V_5 \otimes \cO_{B_5} \subset \bw3V_6 \otimes \cO_{B_5}$
and the restriction of~$\cA_1,\cA_2,\cA_3$ to~$B_5 \ssm \{\bp_X\} = B \ssm E$.\
We have
\begin{equation*}
\cA_1 \cap \cA_3 = (A \cap \bw3V_5) \otimes \cO = W_0^\perp \otimes \cO,
\qquad 
\cA_2 \cap \cA_3 = \bw3\cU_5 \cap (\bw3V_5 \otimes \cO) = \bw3\cU_4,
\end{equation*}
where the identification with~$W_0^\perp$ in the first equality follows from~\cite[Proposition~3.13]{DK1},
while~$\cU_4 = \cU_5 \cap (V_5 \otimes \cO)$ is the restriction of the tautological bundle from~$B$.\
Moreover, the subbundles~$\cA_1 \cap \cA_3$ and~$\cA_2 \cap \cA_3$ intersect trivially in~$\bw3V_6 \otimes \cO$,
because~$\cA_2 \cap \cA_3$ consists of decomposable vectors while~$\cA_1 \cap \cA_3$ 
contains no such vectors by Theorem~\ref{thm:gm-ld}.\
We then apply isotropic reduction (see~\cite[Section~4.2]{DK3}) with respect to the isotropic subbundle
\begin{equation*}
\cI = \cI_1 \oplus \cI_2 = (\cA_1 \cap \cA_3) \oplus (\cA_2 \cap \cA_3)
\end{equation*}
and obtain a new symplectic bundle~\mbox{$\cI^\perp/\cI$} over~$B \ssm E$
with three Lagrangian subbundles~$\bcA_1, \bcA_2, \bcA_3$
defined as~$\bcA_i \coloneqq (\cA_i \cap \cI^\perp) / (\cA_i \cap \cI)$.\

Note that~$\cI \subset \cA_3$, hence~$\cA_3 \subset \cI^\perp$, hence
\begin{equation*}
\bcA_3  = \cA_3/\cI \cong 
(\bw3V_5 \otimes \cO) / ((W_0^\perp \otimes \cO) \oplus \bw3\cU_4) \cong
(W^\vee \otimes \cO) / \bw3\cU_4.
\end{equation*}
Taking into account the isomorphism~$\bw3\cU_4 \cong \cU_4^\vee(-H_4)$ and using the dual of~\eqref{defwk2},
we obtain
\begin{equation*}
\bcA_3 \cong \cW^\vee\vert_{B \ssm E}.
\end{equation*}

Note that the subbundle~$\bcA_3$ intersects 
trivially both~$\bcA_1$ and~$\bcA_2$ by definition of~$\cI$.\ 
Therefore, the construction of~\cite[Section~4.3]{DK3} endows the bundle~$\cW\vert_{B \ssm E} \cong (\bcA_3)^\vee$
with a quadratic form  and, by~\cite[Proposition~4.7]{DK3}, 
its cokernel sheaf is isomorphic to the cointersection sheaf~$\cC(\bcA_1,\bcA_2)$,
which is itself  isomorphic to~$\cC(\cA_1,\cA_2)$ 
by the argument of~\cite[Proposition~4.5]{DK3}.

It remains to show that the associated family of quadrics in~$\cW\vert_{B \ssm E}$ 
coincides with~$\cQ$ over~$B \ssm E = B_5 \ssm \{\bp_X\}$.\ 
Take~$[U_5] \in B_5 \ssm \{\bp_X\}$, let~$U_4 = U_5 \cap V_5$, and let~$v_0 \in U_5 \ssm U_4$.\ 
By~\cite[Proposition~3.13]{DK1}, the quadratic form~$\bq(v_0)$ on~$W = W_0$
can be obtained by the above construction applied to the Lagrangian subspaces
\begin{equation*}
A \subset \bw3V_6,
\qquad 
\bw3V_5 \subset \bw3V_6, 
\qquad 
v_0 \wedge \bw2V_5 \subset \bw3V_6
\end{equation*}
after isotropic reduction with respect to the isotropic subspace~$W_0^\perp = A \cap \bw3V_5$, 
the fiber of~$\cI_1$ at~$[U_5]$.\ 
Therefore, performing another reduction with respect to the fiber~$\bw3U_4$ of~$\cI_2$,
we see that this quadric restricts to~$\cQ_b$ on the fiber~$\cW_b \subset W$ at~$b = (U_4,U_5)$.\ 
Furthermore, the triple of Lagrangian subspaces obtained in this way 
coincides with the triple~$(\bcA_1,\bcA_3,\bcA_2)$, which implies the required coincidence.

Finally, to identify the double covers, we note that by~\cite[Theorems~3.1 and~4.2]{DK3}, 
the isomorphism of sheaves~$\cC_X\vert_{B \ssm E} \cong \cC(\cA_1,\cA_2)\vert_{B_5 \ssm \{\bp_X\}}$
implies an isomorphism of the corresponding double covers
and, by~\cite[Theorem~5.2]{DK3}, the double cover corresponding to the sheaf~$\cC(\cA_1,\cA_2)$ 
is the covering~$\uvt_\Ap$ from Theorem~\ref{thm:tya}.
\end{proof}

For the rest of this section, we assume that~$X$ is ordinary, so that~$W = W_0  \subset \bw2V_5$.\ 
In the next two lemmas, we study the corank stratification~$E^{\ge c}$ for~$\cQ_E$  in that case.
 
\begin{lemm}
\label{lemma:q-e}
If~$X$ is an ordinary GM variety, we have an isomorphism
\begin{equation*}
\cQ_E \cong \Fl(2,4;V_5) \times_{\P(\sbw2V_5)} \P(W_0),
\end{equation*}
where the fiber product is taken with respect to the map~$\Fl(2,4;V_5) \to \Gr(2,V_5) \hookrightarrow \P(\bw2V_5)$.
\end{lemm}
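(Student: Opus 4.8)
The plan is to realize both sides as one and the same closed subscheme of $B_4 \times \P(\bw2V_5)$, namely the relative Pl\"ucker quadric over $B_4$ cut further by the linear equations of $W_0$. Throughout I use that $X$ is ordinary, so that $W = W_0 \subset \bw2V_5$.

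First I would unwind the identifications over $E$. By Lemma~\ref{ccb}, $E \cong B_4 = \Gr(4,V_5)$ is the section of $B \to B_4$ attached to the summand $(V_5 \otimes \cO_{B_4})/\cU_4 \cong \cO_{B_4}(H_4)$ of $(V_6 \otimes \cO_{B_4})/\cU_4$; consequently, over $E$ one has $\cU_5 = V_5 \otimes \cO_{B_4}$ (the Pl\"ucker point corresponds to the hyperplane $V_5 \subset V_6$), the line bundle $\cO_B(-E) = \cU_5/\cU_4$ restricts to $(V_5 \otimes \cO_{B_4})/\cU_4$, and $\cW_B|_E = \cW$. By Lemma~\ref{lemma:cw-locally-free} and its proof, for ordinary $X$ the bundle $\cW$ is the kernel of the surjection $\bw2\cU_4 \to (\bw2V_5/W_0) \otimes \cO_{B_4}$; in particular $\cW$ is a subbundle of $\bw2\cU_4$, so $\P_{B_4}(\cW)$ is a closed subscheme of $\P_{B_4}(\bw2\cU_4)$, and pulling back to $\P_{B_4}(\bw2\cU_4)$ the linear forms on $\P(\bw2V_5)$ that cut out $\P(W_0)$ yields the scheme-theoretic equality
\[
\P_{B_4}(\cW) = \P_{B_4}(\bw2\cU_4) \cap (B_4 \times \P(W_0)) \qquad\text{inside}\qquad B_4 \times \P(\bw2V_5).
\]

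Next I would compute the equation of $\cQ_E = \cQ \times_B E$ (see~\eqref{defqe}). By~\eqref{eq:iota}, the quadratic form $\eta$ defining $\cQ$ is induced by $\bq$, and its restriction along $\cO_B(-E)|_E = (V_5\otimes\cO_{B_4})/\cU_4$ is induced by $\bq|_{V_5}$, which by~\eqref{eq:plucker} is the Pl\"ucker embedding, sending $v \in V_5$ to the quadric $\omega \mapsto v \wedge \omega \wedge \omega$ (using $\bw5V_5 \cong \C$). Since by Lemma~\ref{lemma:u4-vanishes} this map factors through the quotient by $\cU_4$, one identifies the $\bw4\cU_4$-valued quadratic form on $\cW$ cutting out $\cQ_E$ inside $\P_{B_4}(\cW)$ with the restriction to $\cW \subset \bw2\cU_4$ of the relative Pl\"ucker quadratic form $\omega \mapsto \omega \wedge \omega \in \bw4\cU_4$. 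As $\Gr(2,\cU_4) \subset \P_{B_4}(\bw2\cU_4)$ is cut out by exactly this form (fiberwise, $\Gr(2,U_4)$ is the single Pl\"ucker quadric in $\P(\bw2U_4) = \P^5$), we get the scheme-theoretic identity $\cQ_E = \Gr(2,\cU_4) \cap \P_{B_4}(\cW)$. Combining with the previous display and using $\Gr(2,\cU_4) \subseteq \P_{B_4}(\bw2\cU_4)$,
\[
\cQ_E = \Gr(2,\cU_4) \cap (B_4 \times \P(W_0)) \qquad\text{inside}\qquad B_4 \times \P(\bw2V_5).
\]
Finally, $\Gr(2,\cU_4) \cong \Fl(2,4;V_5)$ over $B_4$, with the embedding into $B_4 \times \P(\bw2V_5)$ given by $(U_2, U_4) \mapsto ([U_4], [\bw2U_2])$; its second projection is the map $\Fl(2,4;V_5) \to \Gr(2,V_5) \hookrightarrow \P(\bw2V_5)$ from the statement, so $\Gr(2,\cU_4) \cap (B_4 \times \P(W_0))$ is exactly the scheme-theoretic preimage of $\P(W_0)$ under this map, that is, $\Fl(2,4;V_5) \times_{\P(\bw2V_5)} \P(W_0)$. (This in fact produces an isomorphism over $E = B_4$.)

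The step I expect to be the main obstacle is the identification of $\eta|_E$ with the restriction to $\cW$ of the universal Pl\"ucker quadric: one must verify that no scheme structure is lost when matching the two sheaf maps, in particular that the twist by $\bw4\cU_4$ is the canonical one coming from $\cO_B(-E)|_E$, so that Lemma~\ref{lemma:u4-vanishes} is applied with the correct scheme-theoretic zero loci. Once the two sheaf maps are matched, the remaining steps are formal manipulations of projectivizations of subbundles and of scheme-theoretic intersections cut out by linear equations.
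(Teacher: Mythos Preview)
Your proof is correct and follows essentially the same approach as the paper's: both identify, over~$E$, the quadratic equation~$\eta$ with the Pl\"ucker form and the bundle~$\cW$ with~$\bw2\cU_4 \cap (W_0 \otimes \cO)$, concluding that~$\cQ_E = \Gr(2,\cU_4) \cap (B_4 \times \P(W_0))$. The only difference is that the paper carries this out pointwise (computing~$\cQ_b = \Gr(2,U_4) \cap \P(W_0)$ for each~$b \in E$), whereas you work globally with sheaves on~$B_4$; your version is in fact more careful about the scheme structure.
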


\begin{proof}
For~$b = (U_4,U_5) \in E$, we have~$U_5 = V_5$, 
hence~$\cQ_b$ is cut out in~$\P(\cW_b) \subset \P(\bw2V_5)$ by the Pl\"ucker quadrics.\ 
Therefore, $\cQ_b = \Gr(2,V_5) \cap \P(\cW_b)$.\ 
Since~$\cW_b = \bw2U_4 \cap W_0$ by definition, 
it follows that
\begin{equation*}
\cQ_b = \Gr(2,V_5) \cap \P(\bw2U_4) \cap \P(W_0) = \Gr(2,U_4) \cap \P(W_0). 
\end{equation*}
Finally, since~$\Gr(2,U_4)$ is the fiber of the projection~$\Fl(2,4;V_5) \to \Gr(4,V_5)$ over~$[U_4]$,
this equality is precisely what is claimed in the lemma.
\end{proof}

In the next lemma, we use the linear map~$\tilde\upkappa \colon \Sym^2(W_0^\perp) \to V_5$ defined in Corollary~\ref{cor:kappa}
and the (projectivized) corank stratification~$ \P(\sD^{\ge \bullet}_{W_0^\perp})\subset \P(\Sym^2(W_0^\perp)^\vee)$ of the space of quadrics.
 
\begin{lemm}
\label{lemma:e-stratification}
If~$X$ is an ordinary GM variety of dimension~$n$ satisfying Property~\eqref{hh},
the stratification~$E^{\ge \bullet} \subset E = \P(V_5^\vee)$ defined in~\eqref{newe} is obtained 
by pulling back the corank stratification of the space of quadratic forms~$\Sym^2(W_0^\perp)^\vee$ by the linear map
\begin{equation*}
\tilde\upkappa^\vee \colon V_5^\vee \lra \Sym^2(W_0^\perp)^\vee.
\end{equation*}
More precisely,
\begin{itemize}
\item 
if~$n = 5$, we have~$E^{\ge 1} = \emptyset$;
\item 
if $n = 4$, the scheme~$E^{\ge 1} \subset \P(V_5^\vee)$ 
is a hyperplane and~$E^{\ge 2} = \emptyset$;
\item 
if $n = 3$, the scheme~$E^{\ge 1} \subset \P(V_5^\vee)$ 
is the cone with vertex~$\P(\Ker(\tilde\upkappa^\vee)) \cong \P^1$ over a smooth conic,
$E^{\ge 2} = \P(\Ker(\tilde\upkappa^\vee)) = \Sing(E^{\ge 1})$,
and~$E^{\ge 3} = \emptyset$;
\item 
if $n = 2$, the scheme~$E^{\ge 1}$ is a general hyperplane section~$\P(V_5^\vee) \cap \P(\sD^{\ge 1}_{W_0^\perp})$
of the symmetric determinantal cubic fourfold~$\P(\sD^{\ge 1}_{W_0^\perp}) \subset \P(\Sym^2(W_0^\perp)^\vee)$,
$E^{\ge 2} = \Sing(E^{\ge 1})$ is the smooth rational quartic curve~$\P(V_5^\vee) \cap \P(\sD^{\ge 2}_{W_0^\perp})$, and~$E^{\ge 3} = \vide$.
\end{itemize}
In all cases, $E^c \subset E$ is empty or smooth of codimension $c(c+1)/2$ and~$E^{\ge c}$ is Cohen--Macaulay and normal.
\end{lemm}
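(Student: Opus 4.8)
The plan is to reduce the first assertion to a pointwise computation of coranks, upgrade it to a scheme‑theoretic statement, and then read off the explicit descriptions from Corollary~\ref{cor:kappa}.

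First I reduce the fibers of $\cQ_E$ to a quadratic form on $W_0^\perp$. By the proof of Lemma~\ref{lemma:q-e}, for a point $b = ([U_4],[V_5]) \in E$ the fiber $\cQ_b$ equals $\Gr(2,U_4) \cap \PP(\cW_{[U_4]})$ with $\cW_{[U_4]} = \bw2U_4 \cap W_0$; since $\Gr(2,U_4) \subset \PP(\bw2U_4) = \PP^5$ is a smooth quadric fourfold, $\cQ_b$ is cut out by the restriction to $\cW_{[U_4]}$ of the nondegenerate Pl\"ucker quadratic form $q_{U_4}$ on $\bw2U_4$. For any nondegenerate quadratic form $q$ on a vector space $T$ and subspace $V \subseteq T$ one has $\ker(q|_V) = V \cap V^{\perp_q} = \ker(q|_{V^{\perp_q}})$, hence $\corank(q|_V) = \corank(q|_{V^{\perp_q}})$. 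I apply this with $T = \bw2U_4$, $q = q_{U_4}$, $V = \cW_{[U_4]}$: the restriction map $\bw2V_5^\vee \to (\bw2U_4)^\vee$ is injective on $W_0^\perp$, because a nonzero element of $W_0^\perp$ has rank $4$ by Lemma~\ref{lem:wperp} and a rank‑$4$ skew form on $V_5$ admits no isotropic $4$‑space, so it cannot vanish on $\bw2U_4$. Denoting by $\overline{W_0^\perp} \subseteq (\bw2U_4)^\vee$ this image, $\cW_{[U_4]}$ is the annihilator of $\overline{W_0^\perp}$ in $\bw2U_4$, so $\cW_{[U_4]} = (\widetilde{W_0^\perp})^{\perp_q}$ with $\widetilde{W_0^\perp} \coloneqq q_{U_4}^{-1}(\overline{W_0^\perp})$, a subspace of dimension $\dim W_0^\perp = 5-n$. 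Therefore $\corank(\cQ_b) = \corank(q_{U_4}|_{\widetilde{W_0^\perp}})$.

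Next I identify $q_{U_4}|_{\widetilde{W_0^\perp}}$ with $\tilde\upkappa^\vee(\xi)$. Transporting $q_{U_4}|_{\widetilde{W_0^\perp}}$ to $W_0^\perp$ via the isomorphism $\omega \mapsto q_{U_4}^{-1}(\omega|_{\bw2U_4})$, a direct computation in coordinates gives $q_{U_4}\bigl(q_{U_4}^{-1}(\omega|_{\bw2U_4})\bigr) = 2\,\Pf(\omega|_{U_4})$ (Pfaffian relative to a fixed volume form of $U_4$), and a second elementary computation identifies $\Pf(\omega|_{U_4})$, for $U_4 = \ker\xi$ with $\xi \in V_5^\vee$, with a nonzero multiple of $\langle\xi,\omega\wedge\omega\rangle = \tilde\upkappa^\vee(\xi)(\omega)$. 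Hence $\corank(\cQ_b) = \corank(\tilde\upkappa^\vee(\xi))$ for every $b \in E$. The same computations identify the family of quadratic forms on $\cW|_E$ cutting out $\cQ_E$, up to an invertible twist, with the pullback along $\tilde\upkappa^\vee \colon V_5^\vee \to \Sym^2(W_0^\perp)^\vee$ of the tautological family of quadratic forms on $\PP(\Sym^2(W_0^\perp)^\vee)$; as $E^{\ge c}$ and $\PP(\sD^{\ge c}_{W_0^\perp})$ are defined by the corresponding Fitting ideals, this yields the scheme‑theoretic equality $E^{\ge c} = (\tilde\upkappa^\vee)^{-1}\bigl(\PP(\sD^{\ge c}_{W_0^\perp})\bigr)$, which is the first assertion.

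Then comes the case analysis. Since $X$ is ordinary, $n_0 = n$, so Corollary~\ref{cor:kappa} describes $\tilde\upkappa^\vee$. If $n = 5$, then $W_0^\perp = 0$ and $E^{\ge 1} = \vide$. If $n = 4$, then $\Sym^2(W_0^\perp)^\vee$ is $1$‑dimensional, $\tilde\upkappa^\vee$ is a nonzero linear form with corank‑$\ge1$ locus the origin, so $E^{\ge1} = \PP(\Ker\tilde\upkappa^\vee)$ is a hyperplane and $E^{\ge2} = \vide$. If $n = 3$, then $\tilde\upkappa^\vee \colon V_5^\vee \to \Sym^2(W_0^\perp)^\vee \cong \C^3$ is surjective with $2$‑dimensional kernel; the corank‑$\ge1$ locus in $\C^3$ is the discriminant quadric cone, projectivizing to a smooth conic, and the corank‑$\ge2$ locus is the origin, so $E^{\ge1}$ is the cone over that smooth conic with vertex $\PP(\Ker\tilde\upkappa^\vee) \cong \PP^1$ (a quadric threefold of rank $3$, singular exactly along the vertex), $E^{\ge2} = \PP(\Ker\tilde\upkappa^\vee) = \Sing(E^{\ge1})$, and $E^{\ge3} = \vide$. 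If $n = 2$, then by Corollary~\ref{cor:kappa} the kernel of $\tilde\upkappa$ is spanned by a nondegenerate quadratic tensor $\tau$, so $\tilde\upkappa^\vee$ is injective with image a hyperplane in $\Sym^2(W_0^\perp)^\vee$, which we use to identify $\PP(V_5^\vee)$ with a hyperplane in $\PP(\Sym^2(W_0^\perp)^\vee) = \PP^5$; thus $E^{\ge c} = \PP(V_5^\vee) \cap \PP(\sD^{\ge c}_{W_0^\perp})$ is a hyperplane section of the symmetric determinantal cubic fourfold $\PP(\sD^{\ge1}_{W_0^\perp})$, resp.\ of its singular Veronese surface $\PP(\sD^{\ge2}_{W_0^\perp})$. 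Since $\tau$ is nondegenerate, $[\tau]$ lies off the dual variety of the cubic fourfold (a Veronese surface in the dual $\PP^5$), so the hyperplane is not tangent to the cubic along its smooth locus and meets the Veronese surface $\PP(\sD^{\ge2}_{W_0^\perp})$ transversally; hence $E^{\ge2}$ is a smooth rational quartic curve, $E^{\ge1}$ is smooth away from $E^{\ge2} = \Sing(E^{\ge1})$, and $E^{\ge3} = \vide$ since $\PP(\sD^{\ge3}_{W_0^\perp}) = \vide$.

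Finally, for the last sentence: from the explicit descriptions, each nonempty stratum $E^c$ is open in $\PP(V_5^\vee)$, a linear subspace, the complement of its vertex in a cone over a smooth conic, or the smooth locus of a cubic threefold, hence smooth; and in each nonempty instance $E^{\ge c}$ has codimension $c(c+1)/2$, the codimension of the corank‑$\ge c$ symmetric determinantal locus. Each nonempty $E^{\ge c}$ is either smooth or a hypersurface in $\PP(V_5^\vee)$ whose singular locus is $E^{\ge c+1}$, of codimension $\ge 2$ in it; being Cohen--Macaulay (as a hypersurface) and regular in codimension $1$, it is normal by Serre's criterion. The main obstacle is the second step: identifying, at the level of families over $E \cong B_4 = \PP(V_5^\vee)$ (see Lemma~\ref{ccb}), the quadratic‑form family defining $\cQ_E$ with the $\tilde\upkappa^\vee$‑pullback of the tautological family of quadratic forms on $\Sym^2(W_0^\perp)^\vee$. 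Passing from $\cW_{[U_4]}$ to its Pl\"ucker‑orthogonal $\widetilde{W_0^\perp}$ is what makes this feasible, since the latter globalizes to a subbundle of $\bw2\cU_4$ built canonically from $W_0^\perp$; but one must carry out the Pfaffian computation carefully and track the twist in order to land exactly on $\tilde\upkappa^\vee$. Once this family‑level identification is in hand, the rest is bookkeeping with Corollary~\ref{cor:kappa} and standard facts about symmetric determinantal varieties.
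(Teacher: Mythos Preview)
Your proof is correct and rests on the same underlying idea as the paper's---namely, that the corank of the restriction of the nondegenerate Pl\"ucker form on~$\bw2U_4$ to the subspace~$\cW_{[U_4]} = \bw2U_4 \cap W_0$ equals the corank of its restriction to the orthogonal subspace, which is built from~$W_0^\perp$---but the execution is quite different.

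You work pointwise: pass to the Pl\"ucker-orthogonal~$\widetilde{W_0^\perp}$ of~$\cW_{[U_4]}$, compute the induced quadratic form on it via an explicit Pfaffian identity, and then argue that this globalizes to an identification of families of quadratic forms (hence of Fitting ideals). The paper instead writes down a single commutative diagram of vector bundles on~$E$,
\[
\xymatrix{
0 \ar[r] & \cW(H_4) \ar[r] & (\bw2\cU_4)(H_4) \ar[r] \ar[d]^{\sim} & (W_0^\perp)^\vee \otimes \cO_E(H_4) \ar[r] & 0\\
0 \ar[r] & W_0^\perp \otimes \cO_E \ar[r] & \bw2\cU_4^\vee \ar[r] & \cW^\vee \ar[r] & 0,
}
\]
observes (via Lemma~\ref{lemma:q-e}) that the induced composition~$\cW(H_4) \to \cW^\vee$ is the quadratic form~$\eta|_E$, and reads off from the diagram that its cokernel coincides with the cokernel of the induced map~$W_0^\perp \otimes \cO_E \to (W_0^\perp)^\vee \otimes \cO_E(H_4)$, which is manifestly given by~$\tilde\upkappa^\vee$. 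This global bundle argument bypasses the coordinate Pfaffian computation entirely and delivers the scheme-theoretic statement directly, whereas your approach makes the link to~$\tilde\upkappa$ more explicit at the cost of the globalization step you flag as the ``main obstacle.'' The case analysis and the final Cohen--Macaulay/normality verification are essentially the same in both.
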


\begin{proof}
Consider the diagram 
\begin{equation*}
\xymatrix{
0 \ar[r] &
\cW(H_4) \ar[r] &
(\bw2\cU_4)(H_4) \ar[r] \ar[d]^-{\rotatebox{90}{\raisebox{-0.7ex}[0ex][0ex]{$\sim$}}} &
(W_0^\perp)^\vee \otimes \cO_E(H_4) \ar[r] & 
0
\\
0 \ar[r] &
W_0^\perp \otimes \cO_E \ar[r] &
\bw2\cU_4^\vee \ar[r] &
\cW^\vee \ar[r] & 
0
}
\end{equation*}
of vector bundles on~$E \cong B_4$, where both rows come from the definition of~$ \cW$ in Lemma~\ref{lemma:cw-locally-free} 
and the vertical arrow is the canonical isomorphism.\ 
The composition~\mbox{$\cW(H_4) \to \cW^\vee$} constructed from the diagram 
coincides by Lemma~\ref{lemma:q-e} with the restriction to the divisor~$E$ 
of the map~$\eta \colon \cW_B(-E) \to \cW_B^\vee $ from~\eqref{equation-sheaf-cf} 
(note that~\mbox{$H_5\vert_E \lin 0$}, hence~$E\vert_E \lin -H_4$).\ 
It follows that its cokernel is isomorphic to the cokernel of the map
\begin{equation*}
W_0^\perp \otimes \cO_E \lra (W_0^\perp)^\vee \otimes \cO_E(H_4)
\end{equation*}
constructed using the inverse of the vertical arrow in the diagram.\ 
Clearly, this morphism is induced by the linear map~$\tilde\upkappa^\vee \colon V_5^\vee \to \Sym^2(W_0^\perp)^\vee$,
hence the stratification of~$E = \P(V_5^\vee)$ is induced by the pullback of the corank stratification of~$\Sym^2(W_0^\perp)^\vee$.\ 

The explicit descriptions of the strata can be deduced from Corollary~\ref{cor:kappa} (see also Lemma~\ref{lem:tsd}).\ 
The smoothness and codimensions of the locally closed subsets~$E^c$ are clear by inspection of the above cases;
the Cohen--Macaulay property and normality of~$E^{\ge c}$ are also clear in all  cases.
\end{proof}

Since the stratification of~$E$ is induced by a family of quadrics, there are induced double coverings of~$E^{\ge c}$.\ 
We describe one of them below;
we use the notation of Lemma~\ref{lemma:e-stratification}.

\begin{lemm}
\label{lem:tek-covers}
If~$n = 2$ and~$c = 1$, the double covering of~$E^{\ge 1}$ induced by the family of quadrics~$\cQ_E \to E$ is
 \begin{equation*}
\Fl(1,2;3) \lra \P(V_5^\vee) \cap \P(\sD^{\ge 1}_{W_0^\perp}),
\end{equation*}
that is, the covering from Proposition~\textup{\ref{propsing}\ref{it:sing-y1}}.
\end{lemm}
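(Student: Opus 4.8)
The plan is to reduce the statement to Lemma~\ref{lem:tsd} and Remark~\ref{rem:uvt-psd} by exploiting the identification of the quadratic fibration~$\cQ_E$ that is carried out inside the proof of Lemma~\ref{lemma:e-stratification}.\ Throughout I will write~$V_3 \coloneqq W_0^\perp$, which has dimension~$3$ since~$n = 2$, so that~$\P(\Sym^2\!V_3^\vee)$ is a~$\P^5$ and~$\P(\sD_{V_3}^{\ge 1}) \subset \P(\Sym^2\!V_3^\vee)$ is the symmetric determinantal cubic fourfold.

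First I would recall from the proof of Lemma~\ref{lemma:e-stratification} that, under the canonical identification~$E \cong B_4 = \P(V_5^\vee)$, the family of quadrics on~$\cW\vert_E$ cutting out~$\cQ_E$ is the one induced, via the linear map~$\tilde\upkappa^\vee \colon V_5^\vee \hookrightarrow \Sym^2\!V_3^\vee$ of Corollary~\ref{cor:kappa}, from the universal family of quadrics over~$\P(\Sym^2\!V_3^\vee)$; more precisely, that proof exhibits the cokernel sheaf~$\cC_X\vert_E$, together with the algebra structure it carries over its support, as the pullback, along the linear embedding~$\iota \colon \P(V_5^\vee) \hookrightarrow \P(\Sym^2\!V_3^\vee)$ determined by~$\tilde\upkappa^\vee$, of the cokernel sheaf of that universal family.\ Invoking the functoriality of the double-cover construction of~\cite[Theorem~3.1]{DK3} (the same compatibility used, in the reverse direction, in the proof of Lemma~\ref{lemma:ker-qb}), it then follows that the double covering of~$E^{\ge 1}$ induced by~$\cQ_E$ is the pullback along~$\iota$ of the double covering of~$\P(\sD_{V_3}^{\ge 1})$ induced by the universal family of quadrics.

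Next I would identify that second double covering.\ By Lemma~\ref{lem:tsd} applied to~$V_3$, together with Remark~\ref{rem:uvt-psd}, it is~$\uvt_{\P\sD} \colon \P(\tsD_{V_3}^{\ge 1}) \to \P(\sD_{V_3}^{\ge 1})$, where~$\P(\tsD_{V_3}^{\ge 1}) \cong \P(V_3^\vee) \times \P(V_3^\vee)$ is the Segre variety.\ Moreover, by Corollary~\ref{cor:kappa} (case~$n_0 = 2$), the kernel of~$\tilde\upkappa$ is spanned by a nondegenerate quadratic tensor, so the image of~$\tilde\upkappa^\vee$ is a hyperplane of~$\Sym^2\!V_3^\vee$ corresponding to a point of maximal rank in~$\P(\Sym^2\!V_3)$; thus~$\iota$ realizes~$E$ as a general hyperplane in~$\P(\Sym^2\!V_3^\vee)$, and~$\iota^{-1}(\P(\sD_{V_3}^{\ge 1})) = E^{\ge 1}$ by Lemma~\ref{lemma:e-stratification}.\ Hence the double covering of~$E^{\ge 1}$ in question is the fibre product~$\P(\tsD_{V_3}^{\ge 1}) \times_{\P(\sD_{V_3}^{\ge 1})} E^{\ge 1}$, i.e.\ the intersection of the Segre variety~$\P(V_3^\vee) \times \P(V_3^\vee)$ with the preimage of this general hyperplane.\ As already observed at the end of the proof of Proposition~\ref{propsing}\ref{it:sing-y1}, this intersection is the smooth~$(1,1)$-divisor~$\Fl(1,2;3) \subset \P^2 \times \P^2$, which is a double cover of~$E^{\ge 1}$; this is precisely the covering from Proposition~\ref{propsing}\ref{it:sing-y1}, which proves the lemma.

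The step I expect to be the main obstacle is the first one: making precise the assertion that the double-cover construction of~\cite[Theorem~3.1]{DK3} is compatible with the linear base change~$\iota$ (and with passing from the affine universal family of Lemma~\ref{lem:tsd} to its projectivization in Remark~\ref{rem:uvt-psd}), so that the isomorphism of cokernel sheaves produced in the proof of Lemma~\ref{lemma:e-stratification} genuinely upgrades to an isomorphism of double coverings over~$E^{\ge 1}$.\ This is, however, the same kind of compatibility already invoked via~\cite[Theorems~3.1 and~4.2]{DK3} in the proof of Lemma~\ref{lemma:ker-qb}, so the argument used there applies; the remaining steps are direct appeals to Lemma~\ref{lem:tsd}, Remark~\ref{rem:uvt-psd}, Corollary~\ref{cor:kappa}, and the computation inside the proof of Proposition~\ref{propsing}.
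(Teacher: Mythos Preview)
Your proposal is correct and follows essentially the same approach as the paper's proof, which is a one-line reference ``This follows from~\eqref{defdc} by the argument of Proposition~\ref{propsing}\ref{it:sing-y1}.'' You have simply unpacked what that reference means: pull back the universal double cover~\eqref{defdc} along the linear embedding~$\tilde\upkappa^\vee$ furnished by Lemma~\ref{lemma:e-stratification}, and identify the general hyperplane section of the Segre variety with~$\Fl(1,2;3)$ as in the end of the proof of Proposition~\ref{propsing}\ref{it:sing-y1}.
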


\begin{proof}
This follows from~\eqref{defdc} by the argument of Proposition~\ref{propsing}\ref{it:sing-y1}.
\end{proof}

We can now combine Lemma~\ref{lemma:ker-qb} with Lemma~\ref{lemma:e-stratification}
to describe the stratification~$B^{\ge c} \subset B$ defined in~\eqref{eq:bgec}.\ 
Recall  the equality~$B = \Bl_{\bp_X}(\P(V_6^\vee) ) $  {from} Lemma~\ref{ccb}.

\begin{prop}
\label{proposition:b-stratification}
Let~$X$ be an ordinary GM variety satisfying Property~\eqref{hh} 
and let~\mbox{$\bp_X$} be its Pl\"ucker point, defined in~\eqref{eq:plucker-point}.\ 
For any~$c \ge 0$, we have
\begin{equation*}
B^{\ge c} = \Bl_{\bp_X}(\sY_\Ap^{\ge c}).
\end{equation*}
For~$c \in \{0,1,2\}$, the subscheme~$B^c \subset B$ is smooth of codimension $c(c+1)/2$,
while the scheme~$B^{\ge 3} = \sY_\Ap^{\ge 3} \ssm \{\bp_X\}$ is finite, reduced, and disjoint from~$E$, and~$B^{\ge 4} = \vide$.\

Finally, for~$c  \in \{0,1,2\}$, the scheme~$B^{\ge c}$ is Cohen--Macaulay, normal, and integral,
and its subscheme~$E^{\ge c} \subset B^{\ge c}$ is a Cartier divisor.
\end{prop}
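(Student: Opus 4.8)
The plan is to deduce the global statement by gluing the description on the open part $B \ssm E$ coming from Lemma~\ref{lemma:ker-qb} with the description on the exceptional divisor $E$ coming from Lemma~\ref{lemma:e-stratification}, using that the corank stratification is compatible with restriction to a Cartier divisor. First I would establish the scheme-theoretic equality $B^{\ge c} = \Bl_{\bp_X}(\sY_\Ap^{\ge c})$. On $B \ssm E = \P(V_6^\vee) \ssm \{\bp_X\}$ this is exactly the second assertion of Lemma~\ref{lemma:ker-qb}. So it remains to identify the two sides over $E$, i.e.\ to check that the strict transform $\Bl_{\bp_X}(\sY^{\ge c}_\Ap)$ meets $E$ in the subscheme $E^{\ge c}$ described in Lemma~\ref{lemma:e-stratification}. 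Here I would use~\eqref{eq:px-dim}: since $X$ is ordinary, $\bp_X \in \sY_\Ap^{5-n}$, so the multiplicity/tangent-cone data of $\sY^{\ge c}_\Ap$ at $\bp_X$ controls the exceptional locus of its strict transform; concretely, the linearization of the corank condition at $\bp_X$ is governed by the map $\tilde\upkappa^\vee$ of Corollary~\ref{cor:kappa}, which is precisely the map appearing in Lemma~\ref{lemma:e-stratification}. Comparing the two descriptions case by case in $n \in \{2,3,4,5\}$ (using Theorems~\ref{thm:og} for the structure of $\sY^{\ge c}_\Ap$ near $\bp_X$) gives the equality $B^{\ge c} = \Bl_{\bp_X}(\sY^{\ge c}_\Ap)$ for all $c$, and in particular $E^{\ge c} = B^{\ge c} \cap E$ is a Cartier divisor in $B^{\ge c}$ since it is the exceptional divisor of a blowup of an integral scheme.

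Next I would record the numerical and local statements for $c \in \{0,1,2\}$. Smoothness of $B^c$ of codimension $c(c+1)/2$ follows by combining the corresponding statement on $B \ssm E$ (where $B^c = \sY^c_\Ap \ssm \{\bp_X\}$ is smooth of codimension $c(c+1)/2$ by Theorem~\ref{thm:og}, since $\sY^c_\Ap$ for $c\le 2$ is the smooth locus of the appropriate EPW stratum) with the statement on $E$ (where $E^c$ is smooth of codimension $c(c+1)/2$ in $E$, hence of codimension $c(c+1)/2$ in $B$, by Lemma~\ref{lemma:e-stratification}); these two pieces cover $B^c$. For $B^{\ge 3}$ and $B^{\ge 4}$: by Lemma~\ref{lemma:e-stratification} the stratum $E^{\ge 3}$ is empty in every relevant dimension, so $B^{\ge 3} \subset B \ssm E$, and then $B^{\ge 3} \ssm E = \sY^{\ge 3}_\Ap \ssm \{\bp_X\}$ is finite and reduced, and $B^{\ge 4} = \sY^{\ge 4}_\Ap \ssm \{\bp_X\} = \vide$, both by Theorem~\ref{thm:og}; the disjointness of $B^{\ge 3}$ from $E$ is immediate from $E^{\ge 3} = \vide$.

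Finally, for the Cohen--Macaulay, normal, and integral assertions for $c \in \{0,1,2\}$: I would argue that $B^{\ge c} = \Bl_{\bp_X}(\sY^{\ge c}_\Ap)$ is the blowup of the integral, normal, Cohen--Macaulay scheme $\sY^{\ge c}_\Ap$ (integrality and normality from Theorem~\ref{thm:og}; Cohen--Macaulayness from its description as a degeneracy locus of expected codimension, cf.\ the references in Section~\ref{subsec:ld}) at the point $\bp_X$. The center $\bp_X$ is a single point, so the strict transform is birational to $\sY^{\ge c}_\Ap$, hence integral; it is Cohen--Macaulay because blowing up a Cohen--Macaulay scheme along a regularly embedded (equivalently, locally complete intersection) center preserves Cohen--Macaulayness, and here one checks that $\bp_X$ sits inside $\sY^{\ge c}_\Ap$ as a point whose local structure is the one described by $\tilde\upkappa^\vee$ in Lemma~\ref{lemma:e-stratification}, giving the needed local complete intersection property; normality then follows from Serre's criterion, using that $B^{\ge c}$ is Cohen--Macaulay (hence $S_2$) and is regular in codimension $1$ — the latter because $B^c$ is smooth and its complement $B^{\ge c+1}$ has codimension $\ge c(c+1)/2 \cdot{}$(jump)$\ge 2$ in $B^{\ge c}$ by the codimension count just established.

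The main obstacle I expect is the local comparison over $E$: showing that the strict transform of $\sY^{\ge c}_\Ap$ under $\Bl_{\bp_X}$ meets $E$ exactly in $E^{\ge c}$ (with the right, reduced or nonreduced, scheme structure), and that $\bp_X$ is a local complete intersection point of $\sY^{\ge c}_\Ap$ — this requires a careful matching of the tangent-cone data of the EPW stratum at the Plücker point with the corank stratification of $\Sym^2(W_0^\perp)^\vee$ pulled back by $\tilde\upkappa^\vee$, and is where the case analysis in $n$ (and the input from Theorem~\ref{thm:og} and O'Grady's local models) is genuinely needed.
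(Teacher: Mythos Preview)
Your overall strategy has a genuine gap, and the paper's route is both simpler and avoids the obstacle you yourself flag at the end.

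\textbf{The lci claim is false.} You propose to get Cohen--Macaulayness of $B^{\ge c}$ by viewing it as the blowup of $\sY^{\ge c}_\Ap$ at a locally complete intersection point~$\bp_X$. But for $c = 2$ and $n = 2$ (ordinary GM surface), we have $\bp_X \in \sY^3_\Ap$, and by Proposition~\ref{propsing}\ref{it:sing-y2} the singularity of~$\sY^{\ge 2}_\Ap$ at~$\bp_X$ is a quotient singularity of type~$\tfrac14(1,1)$, i.e.\ the cone over a rational normal quartic. This has embedding dimension~$5$ and is \emph{not} a local complete intersection. So your argument for Cohen--Macaulayness breaks here. (The blowup happens to be smooth in this case, but your argument does not show it.)

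\textbf{The paper reverses the logic.} Rather than first establishing the blowup identification via a delicate local comparison over~$E$, the paper proves Cohen--Macaulayness \emph{directly} from the fact that~$B^{\ge c}$ is a symmetric determinantal locus: the morphism $\cW_B(-E) \to \cW_B^\vee$ in~\eqref{equation-sheaf-cf} is symmetric, so once Lemmas~\ref{lemma:ker-qb} and~\ref{lemma:e-stratification} give $\codim(B^{\ge c}) \ge c(c+1)/2$, Kutz's theorem~\cite{Kutz} yields that~$B^{\ge c}$ is Cohen--Macaulay of exactly that codimension. From here everything follows cleanly: $E^{\ge c} = B^{\ge c} \cap E$ is cut by one equation and has codimension~$1$, hence is Cartier; since~$E^c$ is a smooth Cartier divisor in~$B^c$, the scheme~$B^c$ is smooth along it (and smooth away from~$E$ by Theorem~\ref{thm:og}); normality follows from Serre's criterion; integrality follows because the dense open~$B^{\ge c} \ssm E^{\ge c} = \sY^{\ge c}_\Ap \ssm \{\bp_X\}$ is irreducible; and \emph{only then} does the blowup identification $B^{\ge c} = \Bl_{\bp_X}(\sY^{\ge c}_\Ap)$ drop out, as two integral closed subschemes of~$B$ sharing a dense open.

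\textbf{A second, smaller gap.} Your argument for smoothness of~$B^c$ by ``combining the two pieces'' is incomplete: knowing that $B^c \cap (B \ssm E)$ and $B^c \cap E = E^c$ are each smooth does not by itself show that~$B^c$ is smooth \emph{at points of}~$E^c$. The paper's Cartier-divisor step (a smooth Cartier divisor forces smoothness of the ambient scheme along it) is exactly what closes this, and it in turn relies on the Cohen--Macaulay property to ensure~$E^{\ge c}$ has honest codimension~$1$ in~$B^{\ge c}$.
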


\begin{proof}
When~$c = 0$, there is nothing to prove, 
and for~$c \ge 3$, all claims follow immediately from Lemmas~\ref{lemma:ker-qb} and~\ref{lemma:e-stratification},
so we assume~$c \in \{1,2\}$.

A combination of Lemmas~\ref{lemma:ker-qb} and~\ref{lemma:e-stratification} shows that~$\codim(B^{\ge c}) \ge c(c+1)/2$.\ 
As~$B^{\ge c}$ is a symmetric determinantal locus, it follows from~\cite[Theorem~1]{Kutz} that~$B^{\ge c}$ is Cohen--Macaulay of codimension~$c(c+1)/2$.

Furthermore, $E \subset B$ is a Cartier divisor and~$E^{\ge c} = B^{\ge c} \cap E$ has codimension~1 in~$B^{\ge c}$, 
hence it is a Cartier divisor as well.\ 
Therefore, since~$E^c$ is smooth by Lemma~\ref{lemma:e-stratification}, we conclude that~$B^c$ is smooth along~$E^c$,
and by Theorem~\ref{thm:og} it is also smooth away from~$E^c$.\ 
Thus, it is smooth of codimension~$c(c+1)/2$ in~$B$.\ 
The same argument shows that~$\Sing(B^{\ge c}) \subset B^{\ge c + 1}$;
since~$c \in \{1,2\}$ this has codimension~2 or~3 in~$B^{\ge c}$, 
hence~$B^{\ge c}$ is normal by Serre's criterion.

Since~$E^{\ge c} \subset B^{\ge c}$ is a Cartier divisor, its complement is dense in~$B^{\ge c}$ 
and, since we have~$B^{\ge c} \ssm E^{\ge c} = \sY_\Ap^{\ge c} \ssm \{\bp_X\}$ (Lemma~\ref{lemma:ker-qb})
and~$\sY_\Ap^{\ge c} \ssm \{\bp_X\}$ is irreducible (Theorem~\ref{thm:og}), 
we deduced that~$B^{\ge c}$ is irreducible.\  
Using the Cohen--Macaulay property, we conclude that it is integral.

Finally, since~$B^{\ge c}$ is closed, it contains the closure of the preimage of~$\sY^{\ge c}_\Ap \ssm \{\bp_X\}$,
which is equal to~$\Bl_{\bp_X}(\sY_\Ap^{\ge c})$, and since~$B^{\ge c}$ is integral, it is equal to~$\Bl_{\bp_X}(\sY_\Ap^{\ge c})$.
\end{proof}

\begin{rema}
\label{rem:strata-special}
If~$X$ is special, one can check that the equality
\begin{equation*}
B^{\ge c} = \Bl_{\bp_X}(\sY_\Ap^{\ge c}) \cup (\Bl_{\bp_X}(\sY_\Ap^{\ge c - 1}) \cap E) 
\end{equation*}
holds; it follows from Lemma~\ref{lem:b-strata-special} and some general properties of Fitting ideals.
\end{rema}


\section{Orthogonal Grassmannian}
\label{sec:og}

Consider the vector bundle~$\cW_B$ of rank~$n+1$ on $B$ and the family of quadrics $\cQ \to B$ defined in~\eqref{eq:defQ}.\ 
In this section, we study the $B$-scheme~$\OGr_B(p, \cQ)$ that  parameterizes vector subspaces of dimension~$p$ in the fibers of the vector bundle~$\cW_B$ 
that are isotropic with respect to the quadratic form~\eqref{eq:iota} of the family of quadrics~$\cQ/B$.\  
In other words, 
\begin{equation*}
\OGr_B(p,\cQ) = \F_{p-1}(\cQ/B)
\end{equation*}
is the relative Hilbert scheme of linear spaces of dimension~\mbox{$p-1$} 
contained in the fibers of the family of quadrics~$\cQ/ B$.\ 
In the subsequent sections, when applying the present results  to the study of the schemes~$\G_k(X)$, 
we will take~$p = k + 2$. 

The following description of~$\OGr_B(p,\cQ)$ as a subscheme of~$B  \times \Gr(p,W)$ is quite useful.

\begin{lemm}
\label{lem:ogrb-triples}
 For any positive integer~$p$, we have  {an equality of schemes}
\begin{equation*}
\OGr_B(p,\cQ) = 
\left\{  (U_4,U_5,R_{p}) \in  B  \times \Gr(p,W) 
\,\left|\,
\begin{array}{l}
U_5 \subset \Ker(V_6 \to \Sym^2\!R_{p}^\vee)\\[.5ex]
R_{p} \subset \cW_{[U_4]}
 \end{array}
\right.\right\},
\end{equation*}
where the scheme structure of the right side 
is that of the zero locus of the morphisms
\begin{equation*}
 {\cU_5 \hookrightarrow V_6 \otimes \cO \xrightarrow{\ \bq\ } \Sym^2\!W^\vee \otimes \cO \twoheadrightarrow \Sym^2\!\cR_{p}^\vee
\qquad\text{and}\qquad
\cR_p \hookrightarrow W \otimes \cO \twoheadrightarrow (W \otimes \cO)/\cW_B}
\end{equation*}
of vector bundles.
\end{lemm}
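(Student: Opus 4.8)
The plan is to cut out the right-hand side in two stages: first realize the relative Grassmannian $\Gr_B(p,\cW_B)$ of $p$-planes in the fibers of $\cW_B$ as a closed subscheme of $B\times\Gr(p,W)$, and then realize $\OGr_B(p,\cQ)$ as a closed subscheme of $\Gr_B(p,\cW_B)$; the scheme cut out by the two morphisms in the statement is then defined by the sum of the two ideals, that is, it is the scheme-theoretic intersection of the two zero loci. Throughout I would use Lemmas~\ref{lemma:cw-locally-free} and~\ref{lemma:u4-vanishes}: the first gives that $\cW$, hence also its pullback $\cW_B$, is a subbundle of $W\otimes\cO$ with locally free quotient (namely $\cU_4(H_4)$, by~\eqref{defwk2}), and the second gives that the Pl\"ucker quadrics parameterized by $\cU_4$ vanish on $\cW_B$.

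For the first stage, a functor-of-points argument identifies the zero locus of $\cR_p\hookrightarrow W\otimes\cO\twoheadrightarrow(W\otimes\cO)/\cW_B$ inside $B\times\Gr(p,W)=\Gr_B(p,W\otimes\cO_B)$ with $\Gr_B(p,\cW_B)$: for a scheme $T\to B$, a rank-$p$ subbundle $\cR\subset W\otimes\cO_T$ is contained in the pullback of $\cW_B$ if and only if it is a rank-$p$ subbundle of that pullback, since then the pullback of $\cW_B$ modulo $\cR$ is the kernel of the surjection of vector bundles $(W\otimes\cO_T)/\cR\to(W\otimes\cO_T)/(\text{pullback of }\cW_B)$, hence is locally free. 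Set-theoretically, this is exactly the condition $R_p\subset\cW_{[U_4]}$.

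For the second stage, recall that $\OGr_B(p,\cQ)=\F_{p-1}(\cQ/B)$ is by construction the zero locus, over $\Gr_B(p,\cW_B)$, of the restriction to the tautological subbundle $\cR_p\subset\cW_B$ of the equation $\eta\colon\cO_B(-E)\to\Sym^2\!\cW_B^\vee$ of~\eqref{eq:iota}. I would then unwind the definition of $\eta$: by Lemma~\ref{lemma:u4-vanishes} the morphism $\bq$ becomes zero on $\cU_4$ after projecting to $\Sym^2\!\cW^\vee$, so it induces $(V_6\otimes\cO)/\cU_4\to\Sym^2\!\cW^\vee$, of which $\eta$ is the restriction to the line subbundle $\cU_5/\cU_4\cong\cO_B(-E)$ (Lemma~\ref{ccb}). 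Over $\Gr_B(p,\cW_B)$, where restriction of quadratic forms from $W$ to $\cR_p$ factors through restriction to $\cW_B$, the same lemma shows that the composition
\begin{equation*}
\cU_5\hookrightarrow V_6\otimes\cO\xrightarrow{\ \bq\ }\Sym^2\!W^\vee\otimes\cO\twoheadrightarrow\Sym^2\!\cR_p^\vee
\end{equation*}
annihilates $\cU_4$, hence factors as $\cU_5\twoheadrightarrow\cU_5/\cU_4\cong\cO_B(-E)\xrightarrow{\,\eta|_{\cR_p}\,}\Sym^2\!\cR_p^\vee$. Since the zero locus of a morphism of vector bundles factoring through a surjection agrees with that of the induced morphism out of the quotient, the zero locus of $\cU_5\to\Sym^2\!\cR_p^\vee$ over $\Gr_B(p,\cW_B)$ equals that of $\eta|_{\cR_p}$, namely $\OGr_B(p,\cQ)$; set-theoretically this is the condition $U_5\subset\Ker(V_6\to\Sym^2\!R_p^\vee)$. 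Putting the two stages together yields the asserted equality of schemes, together with the explicit description of the underlying set.

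The routine parts are the functor-of-points description of the relative Grassmannian and the elementary fact about zero loci of morphisms factoring through surjections. The step that I expect to need the most care is the bookkeeping in the second stage: the section defining $\OGr_B(p,\cQ)$ naturally lands in $\Sym^2\!\cR_p^\vee\otimes\cO_B(E)$, whereas the morphism in the statement starts from $\cU_5$, so one must check that the twist by $\cO_B(-E)$ coming from $\cU_5/\cU_4\cong\cO_B(-E)$ (Lemma~\ref{ccb}) is exactly the one built into $\eta$ in~\eqref{eq:iota}, and that replacing the rank-two morphism out of $\cU_5$ by the rank-one morphism out of $\cU_5/\cU_4$ leaves the Fitting ideal of the zero locus unchanged.
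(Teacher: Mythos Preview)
Your proposal is correct and follows essentially the same approach as the paper's proof: first identify the zero locus of $\cR_p \to (W\otimes\cO)/\cW_B$ with the relative Grassmannian $\Gr_B(p,\cW_B)$, then use Lemma~\ref{lemma:u4-vanishes} to show that over this locus the morphism $\cU_5 \to \Sym^2\!\cR_p^\vee$ factors through $\cU_5/\cU_4 \cong \cO_B(-E)$, whose zero locus is $\OGr_B(p,\cQ)$ by definition of~$\cQ$. The paper's proof is terser, but your more detailed bookkeeping about the twist and the preservation of the zero locus under the factoring through a surjection is exactly the content being glossed over there.
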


\begin{proof}
The proof is straightforward.\ 
 {First, the zero locus of~$\cR_p \to (W \otimes \cO)/\cW_B$ is equal to the relative Grassmannian~$\Gr_B(p, \cW_B) \subset B \times \Gr(p,W)$.\ 
Moreover, by Lemma~\ref{lemma:u4-vanishes}, on this relative Grassmannian, the morphism~$V_6 \otimes \cO \to \Sym^2\!\cR_{p}^\vee$ 
factors through the quotient bundle~$(V_6 \otimes \cO) / \cU_4$ 
and, by definition of the quadric bundle~$\cQ \subset \P_B(\cW_B)$, the zero locus of the induced morphism~$\cU_5/\cU_4 \to \Sym^2\!\cR_{p}^\vee$ 
coincides with~$\OGr_B(p,\cQ)$.}
 \end{proof}

In this section, we study the natural morphism
\begin{equation*}
 f \colon \OGr_B(p,\cQ) \lra B.
\end{equation*}
The main  {results are Propositions~\ref{prop:corank-n-k} and~\ref{prop:corank-special}; they describe the Stein factorization of~$f$ and the geometric properties of~$\OGr_B(p, \cQ)$.}\ 
The complexity parameter
\begin{equation}
\label{eq:ell-p}
\ell \coloneqq 2p - n - 1
\end{equation} 
will play a crucial role.\ 
 {For~$p = k + 2$, we have~$\ell = 2k + 3 - n$, as in the introduction.}

\subsection{The map~$f$}
\label{subsec:f}

We first study the fibers of~$f$.\ 
 {If~$\cQ_b$ is the fiber of~$\cQ$ over a point~$b \in B$}, we have
\begin{equation*}
\OGr_B(p,\cQ)_b = \OGr(p, \cQ_b),
\end{equation*}
where the right side is the isotropic Grassmannian for the (possibly degenerate) quadric~$\cQ_b$.\ 
To explain its structure,
we denote by~$\OGr(p,n+1)$ the orthogonal Grassmannian parameterizing isotropic vector subspaces of dimension~$p$ 
in an $(n+1)$-dimensional vector space endowed with a \emph{nondegenerate} quadratic form.\ 
We will use the following easy observation.

\begin{rema}
\label{rem:ogr}
The variety~$\OGr(p,n+1)$ is nonempty if and only if~$n \ge 2p - 1$; in this case, it is a smooth homogeneous variety of dimension
\begin{equation}
\label{eq:nnp}
N(n,p) \coloneqq p(n+1-p) - \tfrac12p(p+1).
\end{equation} 
  {Moreover, if~$n \ge 2p$, it is connected, and if~$n = 2p - 1$, it  has two connected components;}
when~$p \in \{1,2,3\}$, these components are a point~$\P^0$, a line~$\P^1$, and a space~$\P^3$, respectively.
\end{rema}

The next lemma explains what happens if we allow the quadratic form to degenerate.

\begin{lemm}
\label{lem:ogr-dim}
Let~$Q \subset \P^n$ be a quadric of corank~$c$ and let~$p \ge  {1}$.\ 
Let~$\ell = 2p - n - 1$ as in~\eqref{eq:ell-p}.\ 
The isotropic Grassmannian~$\OGr(p,Q)$ is nonempty if and only if~$c \ge \ell$, in which case
\begin{equation*}
\dim(\OGr(p,Q)) = N(n,p) +  {\updelta(c,\ell)},
\end{equation*}
where~{$N(n,p)$ is defined in~\eqref{eq:nnp} and} the excess  {dimension}~$\updelta(c,\ell)$ is equal to
\begin{equation*}
  {\updelta(c,\ell) = \max \Big\{0,  \tfrac12 \left\lfloor \tfrac{c+\ell}2 \right\rfloor \left( \left\lfloor \tfrac{c+\ell}2 \right\rfloor + 1 \right) \Big\}.}
\end{equation*} 
\end{lemm}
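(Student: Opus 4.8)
The plan is to reduce to the nondegenerate case by splitting off the kernel of the quadratic form. Let $Q \subset \P^n$ have corank $c$, so that $\P^n = \P(U)$ with $\dim U = n+1$ and the quadratic form on $U$ has a $c$-dimensional radical $K \subset U$; choose a splitting $U = K \oplus U'$ with $\dim U' = n+1-c$ and the induced form on $U'$ nondegenerate. A $p$-dimensional isotropic subspace $R \subset U$ is the same datum as: its intersection $R \cap K$, of some dimension $j$; and the image $\bar R$ of $R$ in $U/K \cong U'$, which is a $(p-j)$-dimensional \emph{isotropic} subspace of $U'$ for the nondegenerate form (isotropy of $R$ downstairs is automatic since $K$ is the radical, and conversely any lift of an isotropic $\bar R$ together with any choice of $R\cap K \subseteq K$ gives an isotropic $R$). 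So the first step is to stratify $\OGr(p,Q)$ by $j \coloneqq \dim(R \cap K)$, and on the stratum with fixed $j$ exhibit a fibration over $\OGr(p-j, U')$ — which is nonempty iff $n+1-c \ge 2(p-j)-1$, i.e. $j \ge p - \tfrac{n+2-c}{2} = \tfrac{\ell + c + 1}{2} - 1$, equivalently $j \ge \lceil (\ell+c-1)/2\rceil$ — whose fibers parameterize the lift, namely the choice of $R \cap K$ inside $K$ together with the choice of the lift of $\bar R$ to a complement; a short dimension count gives the dimension of each stratum.

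The second step is the bookkeeping. The stratum dimension is
\begin{equation*}
\dim \OGr(p-j, U') + \bigl(\text{dim of Grassmannian of $j$-planes in $K$}\bigr) + \bigl(\text{dim of lifts}\bigr),
\end{equation*}
which unwinds to $N(n-c, p-j) + j(c-j) + (p-j)c$; using $N(n-c,p-j) = (p-j)(n+1-c-p+j) - \tfrac12(p-j)(p-j+1)$ one checks, after expanding, that this equals $N(n,p) + \tfrac12 j(j+1) + j(\ell - 1) \cdot(\text{const})$... more precisely the $j$-dependent part is a quadratic in $j$ which I expect to simplify to something like $\tfrac12 j(j+1)$ plus a linear term forcing the maximum to occur at the largest admissible $j$. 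The largest $j$ is $j = p$ when $c \ge p$ (i.e.\ $R \subset K$ is possible, the "all of $R$ in the radical" case), but more relevantly the excess is governed by $j$ ranging over $\lceil (\ell+c-1)/2 \rceil \le j \le \min(p,c)$, and maximizing the quadratic over this range produces $\bigl\lfloor (c+\ell)/2 \bigr\rfloor$ as the optimal value of $j$ — note $j = \lfloor (c+\ell)/2\rfloor$ lands in the admissible range precisely when $c \ge \ell$ (and when $c < \ell$ the range is empty, reproving emptiness). Substituting $j = \lfloor (c+\ell)/2 \rfloor$ into the $j$-dependent part gives $\tfrac12 \lfloor (c+\ell)/2 \rfloor (\lfloor (c+\ell)/2 \rfloor + 1)$, which is exactly $\updelta(c,\ell)$, and the $\max\{0,\cdot\}$ simply absorbs the case where the generic stratum (with $R\cap K$ as small as forced, excess $0$) already dominates, i.e.\ small $c$.

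The nonemptiness criterion $c \ge \ell$ comes for free from this analysis: $\OGr(p,Q)$ is nonempty iff there is an admissible $j$, i.e.\ iff $\lceil (\ell+c-1)/2 \rceil \le \min(p,c)$, and since $p = (n+1+\ell)/2$ and $n \ge c$ (corank is at most $n+1$, and $n+1-c \ge 0$ with some care at the extremes) this holds iff $c \ge \ell$; the boundary case $c = \ell$ gives the single stratum with $\dim = N(n,p)$ and $\updelta = 0$, and one should double-check it is consistent with $N(n,p) \ge 0$, which needs $n \ge 2p-1$, i.e.\ $\ell \le 0$ — but when $\ell \ge 1$ and $c = \ell$ the "nondegenerate part" $U'$ has dimension $n+1-c = 2p - 2\ell \ge 2(p-\ell)-1$ only for... here one must be careful, and in fact when $\ell \ge 1$ one always has $j \ge 1$, so $N(n,p)$ never appears alone and the formula still makes sense. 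The main obstacle I anticipate is purely the combinatorial optimization: getting the floor functions exactly right in the expression for the optimal $j$ and verifying that $j = \lfloor (c+\ell)/2\rfloor$ is admissible exactly under the hypothesis $c \ge \ell$, handling the parity of $c+\ell$ in both the "lift" dimension count and the final substitution. A clean way to organize this is to substitute $m \coloneqq p - j$ (dimension of the isotropic part in the nondegenerate quotient) and rewrite everything in terms of $m$ and $c$; the constraint becomes $0 \le m \le \lfloor (n+1-c)/2 \rfloor$ with $m \equiv p - j$, the dimension is $N(n,p)$ plus a concave-down quadratic in $m$ (hence in $j$), and maximizing is a one-line calculus-on-integers exercise. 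I would also separately confirm, using Remark~\ref{rem:ogr}, that each stratum is smooth of the claimed dimension (locally a tower of Grassmannian and affine bundles over $\OGr(m, n+1-c)$), so that $\dim \OGr(p,Q)$ is the max over strata, which is what the statement asserts.
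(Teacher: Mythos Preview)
Your approach is exactly the paper's: stratify $\OGr(p,Q)$ by $j = \dim(R\cap K)$, fiber each stratum over the nondegenerate orthogonal Grassmannian $\OGr(p-j,n+1-c)$, compute the stratum dimensions, and maximize. The paper packages the fiber more cleanly as the open Schubert cell in $\Gr(p,p+c-j)$, of dimension $p(c-j)$, and writes the stratum dimension as $N(n-c,p-j)+p(c-j)=N(n,p)+j\bigl(\ell+c+\tfrac12-\tfrac32 j\bigr)$.

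Two arithmetic slips to fix when you write it up. First, your lift count ``$(p-j)c$'' should be $(p-j)(c-j)$ (you are choosing a map $\bar R \to K/(R\cap K)$, not $\bar R\to K$), so the fiber dimension is $j(c-j)+(p-j)(c-j)=p(c-j)$, matching the paper. Second, the nonemptiness condition for $\OGr(p-j,U')$ is $n-c\ge 2(p-j)-1$ (not $n+1-c\ge\cdots$), i.e.\ $2j\ge c+\ell$, so the admissible range is $j\ge\lceil(c+\ell)/2\rceil$; the quadratic $j\mapsto j(\ell+c+\tfrac12-\tfrac32 j)$ is already decreasing on this range, so the maximum is attained at the left endpoint $j=\lceil(c+\ell)/2\rceil$, and substituting gives the stated $\updelta(c,\ell)=\tfrac12\lfloor(c+\ell)/2\rfloor(\lfloor(c+\ell)/2\rfloor+1)$ in both parities. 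The nonemptiness criterion $c\ge\ell$ follows since $\max_{0\le j\le c}(2j-c)=c$.
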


\begin{rema}
 {A few values of~$\updelta(c,\ell)$ are listed in the following table:
  \begin{equation*}
\label{table}
\renewcommand\arraystretch{1.2}
\begin{array}{c|rrrrrrrr}
\ell & \ -3 & -2 & -1 & \hphantom{-}0 & \hphantom{-}1 & \hphantom{-}2 & \hphantom{-}3 & \hphantom{-}4 \\\hline
c = 0 & 0 & 0 & 0 & 0 & \cellcolor{lightgray} & \cellcolor{lightgray} & \cellcolor{lightgray} & \cellcolor{lightgray} \\
c = 1 & 0 & 0 & 0 & 0 & 1 & \cellcolor{lightgray} & \cellcolor{lightgray} & \cellcolor{lightgray} \\
c = 2 & 0 & 0 & 0 & 1 & 1 & 3 & \cellcolor{lightgray} & \cellcolor{lightgray} \\
c = 3 & 0 & 0 & 1 & 1 & 3 & 3 & 6 & \cellcolor{lightgray} \\
\end{array}
\end{equation*}
Gray cells in the table correspond to the cases where~$c < \ell$ (hence~$\OGr(p,Q)$ is empty).}
\end{rema}

\begin{proof}
Let~$K \subset \C^{n+1}$ be the kernel of~$Q$, so that~$\dim(K) = c$
and the induced quadratic form on~$\C^{n+1}/K$ is nondegenerate.\ 
For each~$0 \le i \le c$, consider the locally closed subset 
\begin{equation*}
\OGr(p,Q)^i \coloneqq \{ [R_p] \in \OGr(p,Q) \mid \dim(R_p \cap K) = i \}.
\end{equation*}
On the one hand, these subsets form a    stratification
\begin{equation*}
 \OGr(p,Q) = \bigsqcup_{0 \le i \le c} \OGr(p,Q)^i.
\end{equation*}
On the other hand, the linear projection~$\C^{n+1} \to \C^{n+1}/K$ induces a locally trivial fibration
\begin{equation*}
 \OGr(p,Q)^i \to \OGr(p-i,{n+1-c})
\end{equation*}
whose fiber is an open Schubert cell in~$\Gr(p, p+c-i)$.\ 
Using Remark~\ref{rem:ogr}, we see that the stratum~$\OGr(p,Q)^i$ is nonempty if and only if~$n - c \ge 2(p-i) - 1$, 
 that is, if and only if
\begin{equation}
\label{eq:ell-inequality}
2i - c \ge \ell.
\end{equation} 
Since~$\max_{0 \le i \le c}(2i - c) = c$, we conclude that~$\OGr(p,Q)$ is nonempty if and only if~$c \ge \ell$.\ 

Moreover, if~\eqref{eq:ell-inequality} holds, $\OGr(p,Q)^i$ is smooth  of dimension
\begin{equation*}
N(n-c,p-i) + p(c-i) = N(n,p) + i(\ell + c + \tfrac12 - \tfrac32i).
\end{equation*}
Therefore, if~$c \ge \ell$, it follows that~$\dim(\OGr(p,Q)) \ge N(n,p)$
and it is easy to compute the excess 
dimension~$\updelta( {c,\ell}) = \max_{0 \le i \le c,\ i \ge (c+\ell)/2}(i(\ell + c + \tfrac12 - \tfrac32i))$.
\end{proof}

We now apply the results of~\cite[Section~3.3]{DK3}, 
where isotropic Grassmannians for families of quadrics were studied.\ 
We start with a general result.

\begin{lemm}
\label{lem:quadrics}
Let~$\cQ \to S$ be a flat family of $(n-1)$-dimensional quadrics over a smooth scheme~$S$,
 {let~$S^{\ge c}$ be its corank~$c$ degeneracy loci, and set as usual~$S^c = S^{\ge c} \ssm S^{\ge c+1}$.
Let~$1 \le p \le n$, let~$f \colon \OGr_S(p,\cQ) \to S$ be the natural morphism, and assume}
 \begin{equation*}
\ell \coloneqq 2p - n - 1 \ge 0.
\end{equation*}
Assume also that~$S^\ell \ne \vide$ and~$\codim_S(S^\ell) = \updelta(\ell,\ell) = \ell(\ell + 1)/2$.
\begin{enumerate}[label={\textup{(\alph*)}}]
\item 
\label{it:ogrs-cm}
If~$\codim_S(S^c) \ge \updelta(c,\ell)$ for all~$c \ge \ell + 1$, then~$\OGr_S(p,\cQ)$ is Cohen--Macaulay and
\begin{equation*}
\dim(\OGr_S(p,\cQ)) = N(n,p) + \dim(S).
\end{equation*}

\item 
\label{it:ogrs-dense}
If~$\codim_S(S^c) \ge \updelta(c,\ell) + 1$ for all~$c \ge \ell + 1$, then~$f^{-1}(S^\ell)$ is dense in~$\OGr_S(p,\cQ)$.

\item 
\label{it:ogrs-normal}
Assume  {the following conditions~\ref{it:codim0}, \ref{it:codim2}, and either~\ref{it:codim1-1} or~\ref{it:codim1-2} hold, where}
\begin{enumerate}[label={\textup{(\roman*)}}]
\item
\label{it:codim0}
$S^\ell$ is smooth with~$\codim_S(S^\ell) = \ell(\ell + 1)/2$, 

\item 
\label{it:codim2}
$\codim(S^c) \ge \updelta(c,\ell) + 2$ for all~$c \ge \ell + 2$,
\item
\label{it:codim1-1}
$\codim(S^{\ell + 1}) \ge \ell(\ell + 1)/2 + 2$,

\item
\label{it:codim1-2}
 {$S^{\ell + 1}$ is smooth with~$\codim(S^{\ell + 1}) \ge  {(\ell + 1)(\ell + 2)/2}$.}
 \end{enumerate}\smallskip
Then~$\OGr_S(p,\cQ)$ is normal and the Stein factorization of~$f$ is
\begin{equation}
\label{eq:stein-s}
\OGr_S(p,\cQ) \xrightarrow{\  {\tf}\ } \tS^{\ge \ell} \xrightarrow{\  {\uvt_\cQ}\ } S^{\ge \ell} \lhra S,
\end{equation}
where~$\uvt_\cQ$ is a double covering branched over~$S^{\ge \ell + 1}$,
the scheme~$\tS^{\ge \ell}$ is normal, 
and~$\tf$ is an \'etale-locally trivial fibration over~$\tS^{\ge \ell} \ssm \uvt_\cQ^{-1}(S^{\ge \ell + 2})$
whose fibers for~$n - 2 \le p \le n$ are isomorphic to~$\P^{\frac12(n-p)(n-p+1)}$.
If~$\tS^{\ell}$ is connected, $\OGr_S(p,\cQ)$ is integral.

\item
\label{it:ogrs-smooth}
If~$S^c$ is smooth with~\mbox{$\codim_S(S^c) = \tfrac12c(c+1)$} for all~$c \ge \ell$,
then~$\OGr_S(p,\cQ)$ is nonsingular.
 \end{enumerate}
\end{lemm}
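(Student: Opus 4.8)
\emph{The overall approach.} The plan is to realize $\OGr_S(p,\cQ)$ as the zero locus, inside the smooth relative Grassmannian $\Gr_S(p,\cE)$ (where $\cE$ is the rank-$(n+1)$ bundle on $S$ whose projectivization contains $\cQ$), of the canonical section of $\Sym^2\!\cR_p^\vee$ induced by the quadratic form of $\cQ$; this places us exactly in the framework of~\cite[Section~3.3]{DK3}, so the proof amounts to feeding the fibre-dimension estimates of Lemma~\ref{lem:ogr-dim} into the results there. Two observations are used repeatedly. First, $\Gr_S(p,\cE)$ is smooth of dimension $\dim(S)+p(n+1-p)$ and the defining section has $\binom{p+1}{2}$ coordinates, so every irreducible component of $\OGr_S(p,\cQ)$ has dimension at least $\dim(S)+N(n,p)$. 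Second, by Lemma~\ref{lem:ogr-dim} the fibre of $f$ over a point of $S^c$ is empty for $c<\ell$ and has dimension $N(n,p)+\updelta(c,\ell)$ for $c\ge\ell$, whence $\dim f^{-1}(S^c)\le\dim(S)-\codim_S(S^c)+N(n,p)+\updelta(c,\ell)$.

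\emph{Parts~\ref{it:ogrs-cm} and~\ref{it:ogrs-dense}.} Since $\codim_S(S^\ell)=\updelta(\ell,\ell)$ and $\codim_S(S^c)\ge\updelta(c,\ell)$ for $c\ge\ell+1$, the second observation gives $\dim f^{-1}(S^c)\le\dim(S)+N(n,p)$ for all $c$, so $\OGr_S(p,\cQ)$ is equidimensional of dimension $\dim(S)+N(n,p)$. The defining section is therefore regular, its Koszul complex resolves $\cO_{\OGr_S(p,\cQ)}$, and $\OGr_S(p,\cQ)$ is Cohen--Macaulay; this is~\ref{it:ogrs-cm}. For~\ref{it:ogrs-dense}, the sharper assumption $\codim_S(S^c)\ge\updelta(c,\ell)+1$ for $c\ge\ell+1$ yields $\dim f^{-1}(S^{\ge\ell+1})\le\dim(S)+N(n,p)-1$, strictly below the dimension of every component by~\ref{it:ogrs-cm}; as $f^{-1}(S^\ell)$ is open in $\OGr_S(p,\cQ)$, it is dense.

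\emph{Part~\ref{it:ogrs-normal}.} The hypotheses imply those of~\ref{it:ogrs-cm}, so $\OGr_S(p,\cQ)$ is Cohen--Macaulay and, by Serre's criterion, normality follows once we check it is regular in codimension~$1$. Over the stratum $S^\ell$, where the quadrics have corank exactly $\ell$, the stratum analysis in the proof of Lemma~\ref{lem:ogr-dim} shows that $\OGr(p,Q)$ reduces to its top stratum and is isomorphic to the maximal orthogonal Grassmannian $\OGr(n{+}1{-}p,\,2(n{+}1{-}p))$, a disjoint union of two smooth connected components (two copies of $\P^0$, $\P^1$ or $\P^3$ when $p\in\{n,n{-}1,n{-}2\}$). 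This gives an étale double cover of $S^\ell$ which, by the double-cover construction for quadratic fibrations of~\cite[Theorem~3.1]{DK3}, extends to a double cover $\uvt_\cQ\colon\tS^{\ge\ell}\to S^{\ge\ell}$ branched along $S^{\ge\ell+1}$ and factoring $f=\uvt_\cQ\circ\tf$. Over $S^{\ell+1}$ the quadric has corank $\ell+1$, and $\OGr(p,Q)$ is again a single stratum but now \emph{irreducible}, isomorphic to $\OGr(n{-}p,\,2(n{-}p){+}1)$, which is $\P^{\frac12(n-p)(n-p+1)}$ for $p\in\{n,n{-}1,n{-}2\}$ — the same projective space occurring as a component of the fibre over $S^\ell$; this explains both the ramification and the claimed fibres of $\tf$, whose étale-local triviality over $\tS^{\ge\ell}\setminus\uvt_\cQ^{-1}(S^{\ge\ell+2})$ follows after trivializing $\cE$ and invoking~\cite[Section~3.3]{DK3}. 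For regularity in codimension~$1$: condition~\ref{it:codim2} bounds $\codim f^{-1}(S^{\ge\ell+2})$ by~$2$, and either~\ref{it:codim1-1} shows $f^{-1}(S^{\ell+1})$ has codimension~$\ge2$, or~\ref{it:codim1-2} shows, via the local model of~\ref{it:ogrs-smooth}, that $\OGr_S(p,\cQ)$ is smooth along $f^{-1}(S^{\ell+1})$; either way $\Sing(\OGr_S(p,\cQ))$ sits in codimension~$\ge2$, so $\OGr_S(p,\cQ)$ is normal and $\tf$ is the Stein factorization. If $\tS^\ell$ is connected, $\OGr_S(p,\cQ)$ is then integral.

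\emph{Part~\ref{it:ogrs-smooth}, and the main difficulty.} Étale-locally on $S$, trivialize $\cE$ and the line bundle defining $\cQ$: then $\cQ$ is classified by a morphism $\phi\colon S\to\Sym^2\!V^\vee\setminus\{0\}$ and $\OGr_S(p,\cQ)$ becomes the pullback along $\phi$ of the universal isotropic incidence $\widetilde{\OGr}=\{(q,R_p)\mid q\vert_{R_p}=0\}$, which is a vector subbundle of $\Sym^2\!V^\vee\otimes\cO_{\Gr(p,V)}$, hence \emph{smooth}. The assumption that each $S^c=\phi^{-1}\bigl((\Sym^2\!V^\vee)^c\bigr)$ is smooth of the expected codimension $\tfrac12c(c+1)$ is exactly transversality of $\phi$ to the corank stratification, and smoothness of the pullback then follows from an explicit local model: near a point of $f^{-1}(S^c)$ choose coordinates so that the quadratic form is $q_0\oplus m$ with $m$ a symmetric $c\times c$ matrix of free parameters on $S$, reducing the isotropy condition to the universal symmetric-matrix incidence (smooth total space) together with free $S$-directions. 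I expect the genuine obstacle of the lemma to lie in part~\ref{it:ogrs-normal}: locating precisely which degeneracy loci contain $\Sing(\OGr_S(p,\cQ))$ and the ramification of $\uvt_\cQ$, and verifying that conditions~\ref{it:codim0}--\ref{it:codim1-2} force them into codimension~$\ge2$ — the dimension counts for~\ref{it:ogrs-cm}--\ref{it:ogrs-dense} and the local computation for~\ref{it:ogrs-smooth} being comparatively routine.
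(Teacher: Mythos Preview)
Your treatment of~\ref{it:ogrs-cm}, \ref{it:ogrs-dense}, and~\ref{it:ogrs-smooth} is essentially the paper's argument: fibre-dimension estimates from Lemma~\ref{lem:ogr-dim} fed into the zero-locus description, with~\ref{it:ogrs-smooth} reduced to~\cite[Lemmas~3.6 and~3.9]{DK3} (which you are effectively reproving via your transversality/universal-incidence model). The normality step in~\ref{it:ogrs-normal} is also right in outline.

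The genuine gap is the \'etale-local triviality of~$\tf$ over~$\tS^{\ge\ell}\setminus\uvt_\cQ^{-1}(S^{\ge\ell+2})$. You write that it ``follows after trivializing~$\cE$ and invoking~\cite[Section~3.3]{DK3}'', but nothing in that section gives \'etale-local triviality; it gives smoothness of~$\OGr_S(p,\cQ)$, which is about the total space, not about~$\tf$ as a fibration. The paper does real work here: at a point~$s_0\in S^{\ell+1}$ it splits~$\C^{n+1}=F'_{2n-2p}\oplus F''_{2p-n+1}$ orthogonally with~$q(s_0)$ nondegenerate on the first summand, chooses (\'etale-locally) a section~$\cR'_{n-p}$ of~$\OGr_{S_0}(n-p,\cQ')$, identifies~$\cQ''$ with a hyperbolic reduction of~$\cQ$ so that~$\OGr_{S_0^{[\ell,\ell+1]}}(2p-n,\cQ'')\cong\tS_0^{[\ell,\ell+1]}$, and then~$\cR'_{n-p}\oplus(-)$ gives a section of~$\tf$ through the chosen point. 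This shows~$\tf$ is \emph{smooth} over~$S^{[\ell,\ell+1]}$; since its fibres are the homogeneous varieties~$\OGr(n-p,2(n-p)+1)$, Demazure's theorem~\cite{Demazure} then yields \'etale-local triviality. Without this section-construction step you have identified the fibres correctly but have not shown that~$\tf$ is even smooth along the ramification locus, let alone locally trivial.
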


\begin{proof}
\ref{it:ogrs-cm}
Applying Lemma~\ref{lem:ogr-dim} to the fibers of the morphism~$f^{-1}(S^c) \to S^c$, we find
\begin{equation*}
\dim(f^{-1}(S^c)) = \dim(S^c) + N(n,p) + \updelta(c,\ell).
\end{equation*}
If~$\codim_S(S^c) \ge \updelta(c,\ell)$, this does not exceed the expected dimension~$\dim(S) + N(n,p)$ of~$\OGr_S(p,\cQ)$,
hence~$\OGr_S(p,\cQ)$ is Cohen--Macaulay of dimension~$\dim(S) + N(n,p)$.

\ref{it:ogrs-dense}
Under these assumptions, we have~$\dim(f^{-1}(S^c)) < \dim(S) + N(n,p)$ for all~$c \ge \ell + 1$,
so the Cohen--Macaulay property implies that~$f^{-1}(S^\ell)$ is dense in~$\OGr_S(p,\cQ)$.

\ref{it:ogrs-normal}
Since~$\OGr_S(p,\cQ)$ is Cohen--Macaulay by~\ref{it:ogrs-cm}, to prove normality, it is enough to check it is smooth in codimension~$1$.\ 
By assumption~\ref{it:codim2}, the codimension of~$f^{-1}(S^c)$ is at least~2 for~$c \ge \ell + 2$, 
so it is enough to show that~$f^{-1}(S^{\ge \ell} \ssm S^{\ge \ell + 2})$ is nonsingular  {in codimension~1.\
If~\ref{it:codim0} and~\ref{it:codim1-1} hold,~$f^{-1}(S^\ell)$ is nonsingular 
by~\cite[Lemmas~3.6 and~3.9]{DK3} and~$\codim(f^{-1}(S^{\ell + 1})) \ge 2$,
and if~\ref{it:codim0} and~\ref{it:codim1-2} hold,~$f^{-1}(S^{\ge \ell} \ssm S^{\ge \ell + 2})$ is nonsingular 
by~\cite[Lemmas~3.6 and~3.9]{DK3}.\ 
In either case, we see that~$\OGr_S(p,\cQ)$ is normal.}

Consider now the Stein factorization~\eqref{eq:stein-s} of~$f$.\ 
Since~$\OGr_S(p,\cQ)$ is normal, the scheme~$\tS^{\ge \ell}$ is also normal.\ 
Moreover, the proof of Lemma~\ref{lem:ogr-dim} shows that for~$s \in  {S^{\ge \ell} \ssm S^{\ge \ell + 2}}$,
the only nontrivial stratum in the fiber~$\OGr(p,\cQ_s)$ of the morphism~$f$ over~$s$
is the one with~\mbox{$i = c$}, and this stratum is isomorphic to the homogeneous variety
\begin{equation*}
\OGr(p - c, n + 1 - c) = 
\begin{cases}
\OGr(n + 1 - p, 2(n + 1 - p))  & \text{if~$c = \ell$,}\\
\OGr(n - p, 2(n - p) + 1) & \text{if~$c = \ell + 1$.}
\end{cases}
\end{equation*}
It is connected in the second case and has two connected components in the first case.\ 
Therefore, over~$s \in  {S^{\ge \ell} \ssm S^{\ge \ell + 2}}$, the finite morphism~$\uvt_\cQ$ in~\eqref{eq:stein-s}
is a double covering branched over~$S^{\ell + 1}$.\ 
Note also that if~$p = n$, the right side is two or one points;
if~$p = n - 1$, it is two or one copies of~$\P^1$;
and if~$p = n - 2$, it is two or one copies of~$\P^3$.\ 
Thus, in each of these cases, the fibers of~$\tf$ are isomorphic to~$\P^{\frac12(n-p)(n-p+1)}$.

If~$\tS^\ell$ is connected,~$f^{-1}(S^\ell)$ is connected and smooth, hence irreducible.\ 
Since  it is dense in~$\OGr_S(p,\cQ)$ by~\ref{it:ogrs-dense}, it follows that~$\OGr_S(p,\cQ)$ is irreducible
and, since~$\OGr_S(p,\cQ)$ is Cohen--Macaulay by~\ref{it:ogrs-cm}, it is integral.

Let us show that~$\tf$ is smooth over the locally closed subset~$S^{[\ell,\ell+1]} \coloneqq S^{\ge \ell} \ssm S^{\ge \ell + 2}$.\ 
Smoothness over~$S^\ell$ is obvious, so consider a point~$s_0 \in S^{\ell+1}$.\ 
The question is local around~$s_0$, so we may assume that the family of quadrics is given by a symmetric matrix~$q$ of size~$n+1$ of functions.\ 
The corank of~$q(s_0)$ is~$\ell + 1 = 2p - n$, 
hence its rank is~$n+1-(\ell+1) = 2n - 2p + 1$.\ 
Let~$K_{2p-n} \subset \C^{n+1}$ be the kernel of~$q(s_0)$.\ 
For any point~$( {R_{p}},s_0) \in \OGr_S(p,\cQ)$ over~$s_0$, we have~$K_{2p-n} \subset  {R_{p}}$.\ 
It follows that there is a direct sum decomposition
\begin{equation*}
\C^{n+1} = F'_{2n-2p} \oplus F''_{2p -n + 1}
\end{equation*}
which is orthogonal with respect to~$q(s_0)$ and such that the restriction of~$q(s_0)$ to the first summand is nondegenerate, 
$K_{2p-n} \subset F''_{2p-n+1}$,
and~$ {R_{p}} = R'_{n-p} \oplus K_{2p-n}$ with~\mbox{$R'_{n-p} \subset F'_{2n-2p}$}.\ 
Clearly, the quadratic form~$q(s)$ remains nondegenerate on~$F'_{2n-2p}$ in a neighborhood~$S_0$ of~$s_0$ in~$S$.\ 
 Let~\mbox{$\cF''_{2p - n + 1} \subset \cO_{S_0}^{\oplus (n+1)}$} be the orthogonal complement 
 of~${\cF'_{ {2n-2p}} \coloneqq {}} F'_{2n-2p} \otimes \cO_{S_0}$ with respect to~$q(s)$; 
then,
\begin{equation*}
\cO_{S_0}^{\oplus (n+1)} = 
 {\cF'_{2n-2p}} \oplus \cF''_{2p - n + 1} 
 \end{equation*}
is an orthogonal direct sum.\ 
Let~$\cQ' \subset \P_{S_0}( {\cF'_{2n-2p}})$ and~$\cQ'' \subset \P_{S_0}(\cF''_{2p-n+1})$ be the corresponding quadric bundles over~$S_0$.\ 
Choosing \'etale-locally around~$s_0$ a section~$\cR'_{n-p}$ of~$\OGr_{S_0}(n-p,\cQ')$  {passing through the point~$R'_{n-p}$
(since~$\cQ' \to S_0$ is a smooth morphism,   such a section exists)},
one can identify~$\cQ''$ with a hyperbolic reduction of~$\cQ$, 
hence the double covers over~$S_0^{[\ell,\ell+1]}$ associated with these quadric bundles are \'etale-locally isomorphic 
 {by~\cite[Proposition~1.1 and Corollary~1.5]{K21}}.\ 
Moreover, over~$S_0^{[\ell,\ell+1]}$, the rank of~$\cQ''$ 
is equal to~$2$ over~$S_0^\ell$ and~$1$ over~$S_0^{\ell+1}$, 
hence the corresponding double cover coincides with~$\OGr_{S_0^{[\ell,\ell+1]}}(2p - n,\cQ'')$.\ 
Thus,
\begin{equation*}
\OGr_{S_0^{[\ell,\ell+1]}}(2p - n,\cQ'') \cong \tS_0^{[\ell,\ell+1]} \coloneqq S_0^{[\ell,\ell+1]} \times_{S^{\ge \ell}} \tS^{\ge \ell} 
\end{equation*}
and we conclude that the embedding
\begin{equation*}
\tS_0^{[\ell,\ell+1]} \cong \OGr_{S_0^{[\ell,\ell+1]}}(2p - n,\cQ'') \xrightarrow{\ \cR'_{n-p} \oplus -\ } \OGr_{S_0^{[\ell,\ell+1]}}(p,\cQ)
\end{equation*}
provides a section of the morphism~$\OGr_{S_0^{[\ell,\ell+1]}}(p,\cQ) \to \tS_0^{[\ell,\ell+1]}$.\ 
By construction, this section passes through the point~$( {R_{p}},s_0)$.\ 
 {Since also the fiber of~$\tf$ over~$s_0$ is smooth, we conclude that}~$\tf$ is smooth at that point.\ 
Since the point~$(R_p,s_0)$ was arbitrary, $\tf$ is smooth over~$S^{[\ell, \ell + 1]}$.

Since, over~$S^{[\ell, \ell + 1]}$, the morphism~$\tf$ is smooth and all its fibers are homogeneous varieties~$\OGr(n - p, 2(n - p) + 1)$, 
it is \'etale-locally trivial over~$S^{[\ell, \ell + 1]}$ (see~\cite[Proposition~4]{Demazure}).

\ref{it:ogrs-smooth}
By~\cite[Lemma~3.6]{DK3}, the quadric fibration~$\cQ/S$ is regular,
and, by~\cite[Lemma~3.9]{DK3},~$\OGr_S(p,\cQ)$ is nonsingular.
 \end{proof}

In the next proposition, we summarize basic properties of the schemes~$\OGr_B(p,\cQ)$ and~$\OGr_E(p,\cQ_E)$
and of the canonical morphism~$f \colon \OGr_B(p,\cQ) \to B$.\ 
We denote by
\begin{equation}
\label{eq:ogre}
\OGr_{B \ssm E}(p,\cQ) \coloneqq \OGr_B(p,\cQ) \times_B (B \ssm E),
\qquad 
 \OGr_E(p,\cQ_E) \coloneqq \OGr_B(p,\cQ) \times_B E
\end{equation}
 {the relative isotropic Grassmannians 
of the quadric fibrations~$\cQ \times_B (B \ssm E) \to B \ssm E$ and~$\cQ_E \to E$, respectively.}\ 
The double covering~\mbox{$\uvt_\Ap \colon \wtY_\Ap^{\ge \ell} \to \sY_\Ap^{\ge \ell}$} was defined in Theorem~\ref{thm:tya}.\ 
If~\mbox{$\bp_X \in \sY^{\ge \ell}_\Ap$}, the subscheme~\mbox{$\widetilde\bp_X \subset \wtY_\Ap^{\ge \ell}$} over~$\bp_X$ 
and the double cover~$\tuvt_\Ap$ were defined in Lemma~\ref{lem:closure}; 
if~\mbox{$\bp_X \notin \sY^{\ge \ell}_\Ap$}, set~$\widetilde\bp_X = \vide$ and~$\tuvt_\Ap = \uvt_\Ap$.

\begin{prop}
\label{prop:corank-n-k}
Let~$X$ be a smooth GM variety of dimension~$n$  {satisfying Property~\eqref{hh}}, 
let~\mbox{$ {2} \le p \le n$}, and  {assume~$\ell \coloneqq 2p - n - 1 \ge 0$}.\ 
 The morphism~$f \colon \OGr_B(p,\cQ) \to B$ then factors through~$B^{\ge \ell} \subset B$.\ 
Furthermore,
 \begin{aenumerate}
\item 
\label{it:l4}
If~$\ell \ge 4$  {and~$X$ is arbitrary}, then~$\OGr_B(p,\cQ) = \vide$.

\item 
\label{it:l3}
If~$\ell = 3$  {and~$X$ is arbitrary}, we have a disjoint union  {of open and closed subschemes}
\begin{equation*}
\OGr_B(p,\cQ) = \OGr_{B \ssm E}(p,\cQ) \sqcup \OGr_E(p,\cQ_{E}) .
\end{equation*}
The target of the morphism $f \colon \OGr_{B \ssm E}(p,\cQ) \to  {(B \ssm E)^3 = {}} \sY^3_\Ap \ssm \{\bp_X\}$ 
is a finite reduced scheme and its fibers are all isomorphic: 
to a point  if~$p = 3$ and~$n = 2$, 
to two reduced points if~$p = 4$ and~$n = 4$,
and to two copies of~$\P^1$ if~$p = 5$ and~$n = 6$.

\item 
\label{it:ogr-bme}
If~$\ell \le 2$ and~$X$ is arbitrary, $\OGr_{B \ssm E}(p,\cQ)$ 
is a normal integral Cohen--Macaulay variety of dimension~$N(n,p) + 5$, 
nonsingular over the complement of~\mbox{$\sY^3_\Ap \ssm \{\bp_X\}$}.\ 
Moreover,  the morphism~$f \colon \OGr_{B \ssm E}(p,\cQ) \to B \ssm E$ has a factorization
\begin{equation}
\label{eq:stein-bme}
\OGr_{B \ssm E}(p,\cQ) \xrightarrow{\ \tf\ } 
\wtY^{\ge \ell}_\Ap \ssm  {\uvt^{-1}_\Ap(\bp_X)} \xrightarrow{\ \uvt_\Ap\ } 
\sY^{\ge \ell}_\Ap \ssm \{\bp_X\} \lhra 
B_5 \ssm \{\bp_X\} \isom 
B \ssm E,
\end{equation}
and~$\tf$ is an \'etale-locally trivial fibration with fibers~$\P^{\frac12(n-p)(n-p+1)}$ 
over the complement of~$\sY^{\ge \ell + 2}_\Ap$.

\item 
\label{it:l2-ord}
If~$\ell \le 2$ and~$X$ is ordinary, $\OGr_B(p,\cQ)$
is a normal integral Cohen--Macaulay variety of dimension~$N(n,p) + 5$, 
nonsingular over the complement of~\mbox{$\sY^3_\Ap \ssm\{ \bp_X\}$}.\ 
Moreover, the morphism~$f \colon \OGr_B(p,\cQ) \to B$ has a factorization
\begin{equation*}
 \OGr_B(p,\cQ) \xrightarrow{\ \tf\ } 
\Bl_{\widetilde\bp_X}(\wtY^{\ge \ell}_\Ap) \xrightarrow{\ \tuvt_\Ap\ } 
 \Bl_{\bp_X}(\sY^{\ge \ell}_\Ap) \lhra 
B,
\end{equation*}
and~$\tf$ is an \'etale-locally trivial fibration with fibers~$\P^{\frac12(n-p)(n-p+1)}$ 
over the complement of~$\Bl_{{\bp_X}}( {\sY}^{\ge \ell + 2}_\Ap)$.\ 
Finally, if~$\bp_X \in \sY^{\ge \ell}_\Ap$ \textup(this condition is equivalent to~$p \le 3$\textup),
then~$\OGr_E(p,\cQ_{E}) \subset \OGr_B(p,\cQ)$ is a smooth Cartier divisor; otherwise it is empty.
\end{aenumerate}
\end{prop}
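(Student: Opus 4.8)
The plan is to combine the fiberwise description of Lemma~\ref{lem:ogr-dim} with the corank stratification of~$B$ (Proposition~\ref{proposition:b-stratification} and Lemma~\ref{lemma:e-stratification}) and, in the main cases $\ell\le 2$, to feed the outcome into the general quadric--bundle machine of Lemma~\ref{lem:quadrics}. First, since $\OGr_B(p,\cQ)_b=\OGr(p,\cQ_b)$ is nonempty exactly when the corank of~$\cQ_b$ is at least~$\ell$ (Lemma~\ref{lem:ogr-dim}), that is, exactly when $b\in B^{\ge\ell}$ (see~\eqref{eq:bgec}), the morphism~$f$ factors through~$B^{\ge\ell}$. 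For part~\ref{it:l4} it then suffices to see $B^{\ge 4}=\vide$: for ordinary~$X$ this is Proposition~\ref{proposition:b-stratification}, and for special~$X$ it follows from Lemma~\ref{lem:b-strata-special}, since then $B^{\ge 4}\ssm E=\sY^{\ge 4}_\Ap\ssm\{\bp_X\}=\vide$ by Theorem~\ref{thm:og}, while on~$E$ the corank stratification is that of the associated ordinary variety shifted by~$1$, hence $E^{\ge 4}=\vide$ by Lemma~\ref{lemma:e-stratification}. For part~\ref{it:l3}, $B^{\ge 3}$ is the disjoint union of $\sY^3_\Ap\ssm\{\bp_X\}$ (finite by Theorem~\ref{thm:og}, hence closed, hence also open in~$B^{\ge 3}$) and of~$E^{\ge 3}$; pulling back along~$f$ yields the asserted decomposition into open and closed subschemes, and $\sY^3_\Ap\ssm\{\bp_X\}$ is finite and reduced. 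For the fibers, the proof of Lemma~\ref{lem:ogr-dim} shows that over a point of corank~$3$ the only nonempty stratum of $\OGr(p,\cQ_b)$ is the one with $i=c=\ell=3$, namely $\OGr(p-3,\,2(p-3))$ (using $2p-n-1=3$); by Remark~\ref{rem:ogr} this is a reduced point when $p=3$ and two copies of~$\P^{p-4}$ when $p\in\{4,5\}$, which matches the three listed cases.

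For parts~\ref{it:ogr-bme} and~\ref{it:l2-ord} one applies Lemma~\ref{lem:quadrics} to the quadric bundle~$\cQ$ over $S=B\ssm E$ and, when $X$ is ordinary, over $S=B$. Over $S=B\ssm E$ the corank strata are $S^c=\sY^c_\Ap\ssm\{\bp_X\}$ by Lemma~\ref{lemma:ker-qb} (this also handles special~$X$ in part~\ref{it:ogr-bme}, since by Lemma~\ref{lem:b-strata-special} the stratification over $B\ssm E$ depends only on the associated ordinary variety, which has the same Lagrangian data), and over $S=B$ they are the~$B^c$ with $B^{\ge c}=\Bl_{\bp_X}(\sY^{\ge c}_\Ap)$ of Proposition~\ref{proposition:b-stratification}. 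In both cases, by Theorem~\ref{thm:og} and Proposition~\ref{proposition:b-stratification}, $S^c$ is smooth of codimension $c(c+1)/2$ for $c\le 2$, $S^{\ge 3}$ is finite of codimension~$5$, and $S^{\ge 4}=\vide$. A short case check in $\ell\in\{0,1,2\}$ confirms the numerical hypotheses of Lemma~\ref{lem:quadrics}: hypothesis~\ref{it:codim0} holds because $S^\ell$ is smooth of codimension $\ell(\ell+1)/2$; hypothesis~\ref{it:codim2} holds because $c(c+1)/2\ge\updelta(c,\ell)+2$ for the relevant $c\ge\ell+2$ (using $\sY^{\ge 4}_\Ap=\vide$); and the $(\ell+1)$-hypothesis is~\ref{it:codim1-2} for $\ell=0$ (as $\sY^1_\Ap$ is the smooth locus of the dual EPW sextic $\sY^{\ge 1}_\Ap$) and~\ref{it:codim1-1} for $\ell\in\{1,2\}$. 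Parts~\ref{it:ogrs-cm}, \ref{it:ogrs-dense} and~\ref{it:ogrs-normal} of Lemma~\ref{lem:quadrics} then give that $\OGr_S(p,\cQ)$ is Cohen--Macaulay, normal, of dimension $N(n,p)+5$ (since $\dim S=5$), with Stein factorization as in~\eqref{eq:stein-bme}, and that~$\tf$ is an \'etale-locally trivial fibration over $\tS^{\ge\ell}\ssm\uvt_\cQ^{-1}(S^{\ge\ell+2})$; integrality follows because $\tS^\ell$ is connected, being the connected \'etale double cover $\wtY^\ell_\Ap\ssm\uvt^{-1}_\Ap(\bp_X)$ over $B^\ell\ssm E$ (Lemma~\ref{lemma:ker-qb}, Theorem~\ref{thm:tya}). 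Over the complement of $S^{\ge 3}=\sY^3_\Ap\ssm\{\bp_X\}$ all strata are smooth of the expected codimension, so Lemma~\ref{lem:quadrics}\ref{it:ogrs-smooth} gives nonsingularity there; and for GM varieties ($n\le 6$) the bound $0\le\ell\le 2$ forces $n-2\le p\le n$, so over the complement of the $(\ge\ell+2)$-locus the fibers of~$\tf$ are the projective spaces $\P^{\frac12(n-p)(n-p+1)}$ of Lemma~\ref{lem:quadrics}\ref{it:ogrs-normal}.

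It remains to identify the intermediate scheme of the Stein factorization and to treat the exceptional divisor. Over $B\ssm E$, Lemma~\ref{lemma:ker-qb} identifies it with the restriction of the double covering $\uvt_\Ap\colon\wtY^{\ge\ell}_\Ap\to\sY^{\ge\ell}_\Ap$ of Theorem~\ref{thm:tya}, which gives~\eqref{eq:stein-bme} and completes part~\ref{it:ogr-bme}. For part~\ref{it:l2-ord}, the intermediate scheme $\tB^{\ge\ell}$ is normal (Lemma~\ref{lem:quadrics}), finite of degree~$2$ over $B^{\ge\ell}=\Bl_{\bp_X}(\sY^{\ge\ell}_\Ap)$, and coincides over the dense open $B^{\ge\ell}\ssm E$ with~$\wtY^{\ge\ell}_\Ap$; hence it equals the integral closure of $\Bl_{\bp_X}(\sY^{\ge\ell}_\Ap)$ in the function field of~$\wtY^{\ge\ell}_\Ap$, which by Lemma~\ref{lem:closure} is $\Bl_{\widetilde\bp_X}(\wtY^{\ge\ell}_\Ap)$ --- with $\widetilde\bp_X$ as defined there when $\bp_X\in\sY^{\ge\ell}_\Ap$, i.e.\ (by~\eqref{eq:px-dim}) when $p\le 3$, and $\widetilde\bp_X=\vide$ otherwise, in which case $B^{\ge\ell}$ is the strict transform $\sY^{\ge\ell}_\Ap$ and the formula still holds. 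Finally, $\OGr_E(p,\cQ_E)=f^{-1}(E)$ (see~\eqref{eq:ogre}) is the pullback of the Cartier divisor $E\subset B$, hence a Cartier divisor in $\OGr_B(p,\cQ)$; it is nonempty exactly when $\bp_X\in\sY^{\ge\ell}_\Ap$, that is, when $p\le 3$ (then $E^{\ge\ell}=B^{\ge\ell}\cap E$ is the exceptional divisor of $\Bl_{\bp_X}(\sY^{\ge\ell}_\Ap)$, and is empty otherwise), and its smoothness follows from Lemma~\ref{lem:quadrics}\ref{it:ogrs-smooth} applied to $\cQ_E\to E$, using the description of the strata $E^{\ge\bullet}$ in Lemma~\ref{lemma:e-stratification}.

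The main obstacle is the identification in part~\ref{it:l2-ord} of the Stein intermediate scheme $\tB^{\ge\ell}$ with the explicit blow-up $\Bl_{\widetilde\bp_X}(\wtY^{\ge\ell}_\Ap)$: Lemma~\ref{lem:closure} is tailored to this, but its use first requires the normality of $\OGr_B(p,\cQ)$, hence of $\tB^{\ge\ell}$, via Lemma~\ref{lem:quadrics}\ref{it:ogrs-normal}, and that rests on the delicate, slack-free verification of hypotheses~\ref{it:codim0}--\ref{it:codim1-2} --- in particular on the borderline equalities $\codim(\sY^3_\Ap)=5=\ell(\ell+1)/2+2$ for $\ell\in\{1,2\}$ and $\codim(\sY^2_\Ap)=3=\updelta(2,0)+2$ for $\ell=0$. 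A secondary, purely bookkeeping difficulty is keeping the four regimes consistent --- $B\ssm E$ versus~$E$, ordinary versus special, and $p\le 3$ versus $p\ge 4$ --- so that~\eqref{eq:stein-bme}, part~\ref{it:l2-ord}, and the claim about $\OGr_E(p,\cQ_E)$ all agree, including the degenerate cases $\widetilde\bp_X=\vide$ and $\OGr_E(p,\cQ_E)=\vide$.
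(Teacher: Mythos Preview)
Your proposal is correct and follows essentially the same route as the paper: reduce to the fiberwise Lemma~\ref{lem:ogr-dim}, feed the corank stratifications of Proposition~\ref{proposition:b-stratification} and Lemma~\ref{lemma:e-stratification} into Lemma~\ref{lem:quadrics}, identify the Stein double cover via Lemma~\ref{lemma:ker-qb}, and invoke Lemma~\ref{lem:closure} for the blow-up description. The only minor deviations are that you deduce the Cartier property of~$\OGr_E(p,\cQ_E)$ directly from pulling back~$E$ (the paper instead argues via containment in the smooth locus), and your closing commentary mislabels the $\ell=1$ case as a ``borderline equality'' (in fact $\codim(\sY^3_\Ap)=5>3=\ell(\ell+1)/2+2$ there); neither affects the argument.
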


The case where~$0 \le \ell \le 2$ and~$X$ is special   will be discussed in Section~\ref{se52}.

\begin{proof}
By Lemma~\ref{lem:ogr-dim}, the fiber of~$f$ over a point~$b \in B^c$ is empty when~$c < \ell$, 
hence~$f$ factors through the subscheme~$B^{\ge \ell}$.\

\ref{it:l4}
Assume~$\ell \ge 4$.\ 
Since~$B^{\ge 4} = \vide$ (by Proposition~\ref{proposition:b-stratification} and  {Lemma~\ref{lem:b-strata-special}}),
 the image of~$f$ is empty, hence~$\OGr_B(p,\cQ) = \vide$.\

\ref{it:l3}
Assume~$\ell = 3$.\ 
Applying Proposition~\ref{proposition:b-stratification}, we obtain
\begin{equation*}
\OGr_{B \ssm E}(p,\cQ) = f^{-1}(B^{\ge 3} \ssm E) = f^{-1}(\sY^{\ge 3}_\Ap \ssm \{\bp_X\}).
\end{equation*}
Since~{$\sY^{\ge 3}_\Ap$ is finite}, $\sY^{\ge 3}_\Ap \ssm \{\bp_X\} = \sY^3_\Ap \ssm \{\bp_X\}$ is    open and closed in~$\sY^{\ge 3}_\Ap$, 
 {hence}~$\OGr_{B \ssm E}(p,\cQ)$ is open and closed in~$\OGr_B(p,\cQ)$, 
hence we obtain the required disjoint union decomposition.\ 
The fibers of the morphism~$f \colon \OGr_{B \ssm E}(p,\cQ) \to \sY^3_\Ap \ssm \{\bp_X\}$ are all isomorphic 
because the quadric fibration~$\cQ/B$ has constant rank over its finite reduced target.

\ref{it:ogr-bme}
Applying Proposition~\ref{proposition:b-stratification}, Lemma~\ref{lem:b-strata-special}, and Theorem~\ref{thm:og},
we see that the hypotheses of Lemma~\ref{lem:quadrics}\ref{it:ogrs-normal} are satisfied by the family of quadrics~$\cQ$ over~$B \ssm E$, 
hence~$\OGr_{B\ssm E}(p,\cQ)$ is a normal Cohen--Macaulay scheme of dimension~$N(n,p) + 5$.\ 
We also obtain the Stein factorization through a normal double cover of~$\sY^{\ge \ell}_\Ap \ssm \{\bp_X\}$.
Moreover, by ~\cite[Proposition~3.10]{DK3} and Lemma~\ref{lemma:ker-qb}, 
this double cover is isomorphic over the open subset~$\sY^{\ell}_\Ap \ssm \{\bp_X\}$ to the restriction of~$\uvt_\Ap$;
by normality, this isomorphism extends over~$\sY^{\ge \ell}_\Ap \ssm \{\bp_X\}$, 
hence the Stein factorization has the form~\eqref{eq:stein-bme}.\ 
Since~$\wtY^{\ge \ell}_\Ap$ is connected by Theorem~\ref{thm:tya}, 
we see that~$\OGr_{B\ssm E}(p,\cQ)$ is integral.\ 
We also obtain \'etale-local triviality and the required description of the fibers of~$\tf$ over the complement of~$\sY^{\ge \ell + 2}_\Ap$.

Finally, the hypotheses of Lemma~\ref{lem:quadrics}\ref{it:ogrs-smooth} are satisfied over~$B \ssm (B^3 \cup E)$,
hence the scheme~$\OGr_{B\ssm E}(p,\cQ)$ is nonsingular over the complement of~$\sY^3_\Ap \cup \{\bp_X\}$.

\ref{it:l2-ord}
Assume now that~$X$ is ordinary and~$\ell \le 2$.\ 
By Proposition~\ref{proposition:b-stratification}, the hypotheses of Lemma~\ref{lem:quadrics}{\ref{it:ogrs-normal}}
are then satisfied by the family of quadrics~$\cQ$ over~{$B$};
 therefore, the arguments of part~\ref{it:ogr-bme} work over the entire~$B$
(we use Lemma~\ref{lem:closure}, which describes the normal closure of~$\Bl_{\bp_X}(\sY^{\ge \ell}_\Ap)$
in the field of rational functions of~$\wtY^{\ge \ell}_\Ap$).

Finally, if~$\bp_X \notin \sY^{\ge \ell}_\Ap$, we have~$E^{\ge \ell} = \vide$ by Proposition~\ref{proposition:b-stratification},
hence~$\OGr_E(p,\cQ_E)$ is empty as well.\ 
Otherwise, by Lemma~\ref{lemma:e-stratification}, the hypotheses of Lemma~\ref{lem:quadrics}{\ref{it:ogrs-smooth}} are verified for~$\cQ_E$,
hence~$\OGr_E(p,\cQ_E)$ is nonsingular  of expected dimension~$N(n,p) + 4$.\ 
In particular, we see that~$\OGr_E(p,\cQ_E)$ is a divisor in~$\OGr_B(p,\cQ)$.\ 
Since~$E \cap B^3 = \vide$ (again by Lemma~\ref{lemma:e-stratification}), 
this divisor is contained in the smooth locus of~$\OGr_B(p,\cQ)$, hence it is a Cartier divisor.
\end{proof}

\begin{rema}
\label{rema:l-neg}
In Proposition~\ref{prop:corank-n-k}, we only considered the case~$\ell \ge 0$, because it is geometrically more interesting.\ 
If~$\ell < 0$ (this case appears for the Hilbert squares of GM varieties of dimension~$n \ge 4$ 
and for the Hilbert schemes of conics on GM sixfolds), 
the same arguments prove the first statements in parts~\ref{it:ogr-bme} and~\ref{it:l2-ord} of Proposition~\ref{prop:corank-n-k};
however, in this case,~$f$ is surjective with connected general fiber, so the Stein factorization becomes trivial.
\end{rema}

 {A} useful consequence of Proposition~\ref{prop:corank-n-k} is the following.

\begin{coro}
\label{cor:ogre-component}
 Let~$X$ be a  GM variety satisfying Property~\eqref{hh}.\ 
For any integer $p \ge 4$, the scheme~$\OGr_B(p,\cQ)$ is a disjoint union
\begin{equation}
\label{eq:ogrb-union}
\OGr_B(p,\cQ) = \OGr_{B \ssm E}(p,\cQ) \sqcup \OGr_E(p,\cQ_E)
\end{equation}
of closed subschemes.
 \end{coro}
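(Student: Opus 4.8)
The plan is to reduce to the dichotomy already established in Proposition~\ref{prop:corank-n-k}. Set $\ell \coloneqq 2p - n - 1$ as in~\eqref{eq:ell-p}. Since $p \ge 4$ and $n \le 6$, we have $\ell = 2p - n - 1 \ge 8 - 6 - 1 = 1$, so in every case $\ell \ge 1 \ge 0$ and Proposition~\ref{prop:corank-n-k} applies. The morphism $f \colon \OGr_B(p,\cQ) \to B$ factors through $B^{\ge \ell}$, and I will distinguish three ranges of $\ell$.

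First, if $\ell \ge 4$, then by Proposition~\ref{prop:corank-n-k}\ref{it:l4} we have $\OGr_B(p,\cQ) = \vide$, and the asserted decomposition holds trivially (both pieces are empty). Next, if $\ell = 3$, Proposition~\ref{prop:corank-n-k}\ref{it:l3} gives exactly the decomposition~\eqref{eq:ogrb-union} as a disjoint union of open and closed subschemes, so there is nothing more to prove. Finally, suppose $\ell \le 2$. The key point here is that, since $p \ge 4$, the condition $\bp_X \in \sY^{\ge \ell}_\Ap$ fails: indeed, by~\eqref{eq:px-dim} the Pl\"ucker point lies in $\sY^{5-\dim X}_\Ap$ or $\sY^{6-\dim X}_\Ap$, i.e.\ at level $5 - n$ or $6 - n$ depending on type, whereas $\ell = 2p - n - 1 \ge 2\cdot 4 - n - 1 = 7 - n > 6 - n \ge 5 - n$; hence $\bp_X \notin \sY^{\ge \ell}_\Ap$, equivalently $E^{\ge \ell} = \vide$ by Proposition~\ref{proposition:b-stratification} (and Remark~\ref{rem:strata-special} in the special case). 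Consequently $\OGr_E(p,\cQ_E) = f^{-1}(E^{\ge \ell}) = \vide$, while $\OGr_{B\ssm E}(p,\cQ) = \OGr_B(p,\cQ)$ because $f$ already factors through $B^{\ge\ell} \subset B\ssm E$; so~\eqref{eq:ogrb-union} holds, again trivially.

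The argument is essentially a bookkeeping exercise on the value of $\ell$, so I do not expect a genuine obstacle; the only subtlety worth spelling out carefully is the comparison of levels in the case $\ell \le 2$, i.e.\ checking via~\eqref{eq:px-dim} and the inequality $p \ge 4$ that the Pl\"ucker point cannot contribute a component over $E$, which is what makes the $E$-part vanish rather than being a genuine second component. In the special case one invokes Remark~\ref{rem:strata-special} in place of Proposition~\ref{proposition:b-stratification} to see $E^{\ge \ell} = \vide$, using that the relevant shift by $1$ on $E$ still lands below level $\ell$.
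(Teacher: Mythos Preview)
Your argument has a genuine gap in the case $\ell \le 2$ when $X$ is special. You claim that $\bp_X \notin \sY^{\ge \ell}_\Ap$ is ``equivalent'' to $E^{\ge \ell} = \vide$, and that in the special case the shift by~$1$ from Lemma~\ref{lem:b-strata-special} ``still lands below level~$\ell$''. This is false. Concretely, take $p = 4$ and $X$ a (necessarily special) GM sixfold, so $\ell = 1$: then $E^{\ge 1} = E_0^{\ge 0} = E$ is all of~$E$, and $\OGr_E(4,\cQ_E)$ is a nontrivial $8$-dimensional scheme (cf.\ the remark after Theorem~\ref{thm:g2-x6}). Similarly, for $p = 4$ and $X$ a special GM fivefold, $\ell = 2$ and $E^{\ge 2} = E_0^{\ge 1}$ is a hyperplane in~$E$ by Lemma~\ref{lemma:e-stratification}, so again $\OGr_E(4,\cQ_E) \ne \vide$ (cf.\ the remark after Theorem~\ref{thm:g2-x5}). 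In these cases the decomposition~\eqref{eq:ogrb-union} is genuine, not trivial.

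The paper's proof avoids this trap: it does not attempt to show $E^{\ge \ell} = \vide$. Instead, from $\ell \ge 7 - n$ and~\eqref{eq:px-dim} it deduces $\bp_X \notin \sY^{\ge \ell}_\Ap$, and then uses Lemma~\ref{lemma:ker-qb} to see that $(B \ssm E)^{\ge \ell} = \sY^{\ge \ell}_\Ap \ssm \{\bp_X\} = \sY^{\ge \ell}_\Ap$. The point is that $\sY^{\ge \ell}_\Ap$, being closed in $\P(V_6^\vee)$ and missing~$\bp_X$, is already closed in $B = \Bl_{\bp_X}(\P(V_6^\vee))$; hence $B^{\ge \ell} = \sY^{\ge \ell}_\Ap \sqcup E^{\ge \ell}$ is a disjoint union of closed subschemes of~$B$, and pulling back along~$f$ gives~\eqref{eq:ogrb-union}. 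Your case split on~$\ell$ is unnecessary: this single argument covers all values uniformly.
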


\begin{proof}
 If~$p \ge 4$, we have~$\ell  = 2p - 1 - n  \ge 7 - n$,  {so~\eqref{eq:px-dim} implies}~$\bp_X \notin \sY^{\ge \ell}_\Ap$.\ 
By Lemma~\ref{lemma:ker-qb},  
\begin{equation*}
B^{\ge \ell} = (B \ssm E)^{\ge \ell} \sqcup E^{\ge \ell} = \sY^{\ge \ell}_\Ap \sqcup E^{\ge \ell}
\end{equation*}
is a disjoint union of closed subschemes.\ 
Taking the preimage under~$f$, we obtain~\eqref{eq:ogrb-union}.
\end{proof}

\subsection{The map~$f$ for special GM varieties}\label{se52}

 In Proposition~\ref{prop:corank-n-k}, we described the morphism~$f \colon \OGr_B(p,\cQ) \to B$ for ordinary GM varieties;
here, we discuss the case of special GM varieties.\ 
The situation becomes more complicated   because, as we noted in Lemma~\ref{lem:b-strata-special},
the corank stratification of~$\cQ \to B$ is shifted on~$E$ 
and, as a result, the conditions of Lemma~\ref{lem:quadrics} are not satisfied on~$E$
and the subscheme~$\OGr_E(p,\cQ_E) \subset \OGr_B(p,\cQ)$  {may become} an irreducible component of higher dimension.\ 
 {In this section,} we describe the remaining ``main'' component
\begin{equation*}
\label{eq:def-ogrm}
\OGrm_B(p,\cQ) \coloneqq 
\overline{\OGr_{B \ssm E}(p,\cQ)} \subset 
\OGr_B(p,\cQ) \subset 
\Gr_B(p,\cW_B)
\end{equation*}
 of~$\OGr_B(p,\cQ)$  and its intersection
 \begin{equation*}
\label{eq:def-ogrme}
\OGrm_E(p,\cQ_E)  \coloneqq 
\overline{\OGr_{B \ssm E}(p,\cQ)} \cap \Gr_E(p,\cW_B\vert_E)  \subset 
\OGr_E(p,\cQ_E) \subset 
\Gr_E(p,\cW_B\vert_E)
\end{equation*}
with the preimage of the Cartier divisor~$E \subset B$.

It follows from Corollary~\ref{cor:ogre-component} that for~$p \ge 4$, we have~$\OGrm_B(p,\cQ) = \OGr_{B \ssm E}(p,\cQ)$, 
which was already described in Proposition~\ref{prop:corank-n-k}\ref{it:ogr-bme},
while~$\OGrm_E(p,\cQ_E) = \vide$.\ 
 {Furthermore, as we explained in the introduction, $\OGr_B(p,\cQ)$ is used to describe the Hilbert scheme~$\G_k(X)$ with~$k = p - 2$, hence}
the case~$p \le 1$ has no geometric meaning, while the case~$p = 2$ corresponds to  Hilbert squares of surfaces, which we only consider for ordinary GM surfaces.\ 
 Therefore, throughout this  section, we assume~$p = 3$.\ 
In this case,~{$\ell = 2p - 1 - n = 5 - n$, so}~\eqref{eq:px-dim}  implies~$\bp_X \in \sY^{  \ell + 1}_\Ap$.

 {The main result of this section is} the following analog of Proposition~\ref{prop:corank-n-k}\ref{it:l2-ord}.

\begin{prop}
\label{prop:corank-special}
Let~$X$ be a smooth special GM variety of dimension~${n \in \{3,4,5\}}$,
let~{\mbox{$p = 3$}},  {so that~$\ell = 5 - n \in \{0,1,2\}$}.\ 
Then~$\OGrm_{B \ssm E^{\ge \ell + 2}}({3},\cQ)$ is a normal integral Cohen--Macaulay variety of dimension~$N(n, {3}) + 5 = 3n - 7$, 
nonsingular over the complement of the finite subscheme~\mbox{$\sY^3_\Ap \ssm\{ \bp_X\}$} of~$B\ssm E$.\ 
Moreover, the morphism~$f \colon \OGrm_{B \ssm E^{\ge \ell + 2}}( {3},\cQ) \to B$  factors as
 \begin{equation*}
  \OGrm_{B \ssm E^{\ge \ell + 2}}( {3},\cQ) \xrightarrow{\ \tf'\ } 
\Bl_{\widetilde\bp_X}(\wtY^{\ge \ell}_\Ap) \ssm \tuvt^{-1}_\Ap(E^{\ge \ell + 2}) \xrightarrow{\ \tuvt_\Ap\ } 
\Bl_{\bp_X}(\sY^{\ge \ell}_\Ap) \ssm E^{\ge \ell + 2} \lhra 
B 
\end{equation*}
and the map~$\tf'$ is an \'etale-locally trivial fibration with fibers~$\P^{ {\frac12(n-3)(n-2)}}$ 
over the complement of~\mbox{${\sY^{\ge \ell + 2}_\Ap} \cup E^{\ge \ell + 2}$};
 {in particular, $f$ is ramified along~$\OGrm_{E \ssm E^{\ge \ell + 2}}( {3},\cQ_{E})$,
which is a smooth Cartier divisor in~$\OGrm_{B \ssm E^{\ge \ell + 2}}( {3},\cQ)$}.
\end{prop}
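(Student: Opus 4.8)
The plan is to treat the open set $B\ssm E$, where the statement is a special case of what is already known, and the divisor $E$, where the special case genuinely differs from the ordinary one, separately, and then to glue the two by a local computation along $E$.

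First, over $B\ssm E$ there is nothing new: Proposition~\ref{prop:corank-n-k}\ref{it:ogr-bme} applies to \emph{arbitrary} smooth GM varieties, so $\OGr_{B\ssm E}(3,\cQ)$ is already known to be a normal integral Cohen--Macaulay variety of dimension $N(n,3)+5=3n-7$, nonsingular away from $\sY^3_\Ap\ssm\{\bp_X\}$, with Stein factorization through $\wtY^{\ge\ell}_\Ap\ssm\uvt^{-1}_\Ap(\bp_X)$ and fibres $\PP^{\frac12(n-3)(n-2)}$ over the complement of $\sY^{\ge\ell+2}_\Ap$. Hence $\OGrm_B(3,\cQ)$, being the closure of an integral scheme, is integral; $f$ maps it onto the closure of $\sY^{\ge\ell}_\Ap\ssm\{\bp_X\}$ in $B$, which is $\Bl_{\bp_X}(\sY^{\ge\ell}_\Ap)$ by Proposition~\ref{proposition:b-stratification} applied to the associated ordinary variety $X_0$ (which has the same Plücker point $\bp_X$); and since the generic fibre of $f\vert_{\OGrm_B}$ becomes connected only after the quadratic base change, Lemma~\ref{lem:closure} shows the Stein factorization of $f\vert_{\OGrm_B(3,\cQ)}$ passes through $\Bl_{\widetilde\bp_X}(\wtY^{\ge\ell}_\Ap)$, producing the maps $\tf'$ and $\tuvt_\Ap$ once $\tuvt^{-1}_\Ap(E^{\ge\ell+2})$ is removed. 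It remains to describe $\OGrm_B(3,\cQ)$ and $\tf'$ along $E\ssm E^{\ge\ell+2}$.

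Next I would describe $\cQ$ near $E$. By Lemma~\ref{lem:b-strata-special} and \cite[Proposition~2.30]{DK1}, over $E$ there is an orthogonal decomposition $\cW_B\vert_E\cong\cO_E\oplus\cW_{X_0}\vert_E$ on which $\eta\vert_E$ is the direct sum of the \emph{zero} form on $\cO_E$ (the morphism $\cO_B(-E)\to\cO_B$ underlying $\eta$ on the first summand vanishes identically on $E$) and of the family of quadratic forms attached to $X_0$; equivalently each $\cQ_b$ with $b\in E$ is the cone, with vertex the point of $\PP_E(\cW_B\vert_E)$ given by the first summand, over $\cQ_{X_0,b}\vert_E$. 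A dimension count using Lemma~\ref{lem:ogr-dim} then shows that for $b\in E\ssm E^{\ge\ell+2}$ (equivalently, for $b$ at which $\cQ_{X_0,b}\vert_E$ has corank $\le\ell$) every $3$-dimensional subspace isotropic for $\eta\vert_E$ contains the vertex direction; hence $R_3\mapsto R_3\cap\cW_{X_0}\vert_E$ identifies $\OGr_{E\ssm E^{\ge\ell+2}}(3,\cQ)$ with $\OGr_{E\ssm E^{\ge\ell+2}}(2,\cQ_{X_0}\vert_E)$. Under this isomorphism $\OGrm_{E\ssm E^{\ge\ell+2}}(3,\cQ_E)$ corresponds to the part supported over the locus where $\cQ_{X_0}\vert_E$ has corank exactly $\ell$ --- the branch stratum of the double covering attached to $\cQ_{X_0}\vert_E$, which equals $E^{\ell+1}\cap(E\ssm E^{\ge\ell+2})=\Bl_{\bp_X}(\sY^{\ge\ell}_\Ap)\cap(E\ssm E^{\ge\ell+2})$ --- and the ordinary-case results (Lemma~\ref{lemma:e-stratification} for smoothness and codimensions of the strata $E^{\ge c}$ of $\cQ_{X_0}\vert_E$, and Lemma~\ref{lem:quadrics}) exhibit it as a smooth, étale-locally trivial $\PP^{\frac12(n-3)(n-2)}$-fibration over that smooth stratum, whose base is the exceptional divisor of $\tuvt_\Ap$ restricted over $E\ssm E^{\ge\ell+2}$.

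Then I would glue: near a general point $b_0$ of $\Bl_{\bp_X}(\sY^{\ge\ell}_\Ap)\cap E$, run the hyperbolic-reduction argument from the proof of Lemma~\ref{lem:quadrics}\ref{it:ogrs-normal}. Splitting off the part of $\cQ$ that remains nondegenerate near $b_0$ reduces $\OGr_B(3,\cQ)$, étale-locally around $b_0$, to the relative isotropic Grassmannian of a family $\cQ''$ of quadrics of corank $\le\ell+1$ over a smooth base, with corank $\ell$ on $\Bl_{\bp_X}(\sY^{\ge\ell}_\Ap)\ssm E$ and corank $\ell+1$ on $E^{\ge\ell}$; by Proposition~\ref{proposition:b-stratification}, Remark~\ref{rem:strata-special} and Lemma~\ref{lemma:e-stratification} this family satisfies the codimension hypotheses \ref{it:codim0}--\ref{it:codim1-2} over $B\ssm E^{\ge\ell+2}$, so Lemma~\ref{lem:quadrics}\ref{it:ogrs-normal} applies, and the double cover attached to $\cQ''$ is étale-locally $\tuvt_\Ap$ by \cite[Proposition~1.1 and Corollary~1.5]{K21}. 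This yields the Cohen--Macaulay property and the dimension $3n-7$ (Lemma~\ref{lem:quadrics}\ref{it:ogrs-cm}), normality via smoothness in codimension $1$ away from $\sY^3_\Ap\ssm\{\bp_X\}$, the factorization of $f$ through $\Bl_{\widetilde\bp_X}(\wtY^{\ge\ell}_\Ap)\ssm\tuvt^{-1}_\Ap(E^{\ge\ell+2})$ with $\tf'$ étale-locally trivial of fibre $\PP^{\frac12(n-3)(n-2)}$ over the complement of $\sY^{\ge\ell+2}_\Ap\cup E^{\ge\ell+2}$ (\cite[Proposition~4]{Demazure}), and --- since $\bp_X$ lies in the branch locus of $\uvt_\Ap$, so that $\tuvt_\Ap$ is ramified along the exceptional divisor (Lemma~\ref{lem:closure}\ref{it:tp-red} and Proposition~\ref{propsing}) --- the ramification of $f$ along $\OGrm_{E\ssm E^{\ge\ell+2}}(3,\cQ_E)$, which is then a smooth Cartier divisor (smooth as a $\PP$-fibration over a smooth base, Cartier as the $f$-preimage of the Cartier divisor $E$ on the smooth locus $\OGrm_{B\ssm E^{\ge\ell+2}}(3,\cQ)\ssm f^{-1}(\sY^3_\Ap)$).

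The step I expect to be the main obstacle is this last disentanglement: because the corank stratification of $\cQ$ is shifted by one along $E$ (Lemma~\ref{lem:b-strata-special}), the scheme $\OGr_B(3,\cQ)$ acquires components supported over $E^{\ge\ell}$ of dimension $\ge 3n-8$ --- and, for $n<5$, even $\ge 3n-7$ --- so one cannot simply run Lemma~\ref{lem:quadrics} for $\cQ$ over all of $B$, and must instead pass to the main component and to the reduced family $\cQ''$ throughout, checking that the codimension hypotheses of Lemma~\ref{lem:quadrics}\ref{it:ogrs-normal} continue to hold there; the excision of $E^{\ge\ell+2}$ is exactly what makes hypothesis~\ref{it:codim1-1} (or~\ref{it:codim1-2}) hold, and thereby forces the spurious components to drop out.
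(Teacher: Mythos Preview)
Your gluing step has a genuine gap: hyperbolic reduction preserves the corank stratification of the base, so the reduced family $\cQ''$ inherits the same loci $B^{\ge c}$ as $\cQ$, and by Lemma~\ref{lem:b-strata-special} these are shifted by~$1$ along~$E$. Contrary to what you assert in your final paragraph, excising $E^{\ge\ell+2}$ does \emph{not} make the hypotheses of Lemma~\ref{lem:quadrics}\ref{it:ogrs-normal} hold. For $n=3$ (so~$\ell=2$) one has $E^{\ge\ell+2}=E_0^{\ge3}=\vide$, and the stratum $B^{\ell}\cap E = E_0^1$ has codimension~$2$ in~$B$, not the required $\ell(\ell+1)/2=3$, so hypothesis~\ref{it:codim0} already fails. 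For $n=4$ (so~$\ell=1$) the stratum $B^{\ell+1}\cap(E\ssm E^{\ge\ell+2}) = E_0^1$ still has codimension~$2$ in~$B$, not~$\ge 3$, so both~\ref{it:codim1-1} and~\ref{it:codim1-2} fail. Nor can you sidestep this by ``passing to the main component'': Lemma~\ref{lem:quadrics} establishes Cohen--Macaulayness of the full $\OGr_S(p,\cQ)$ as a degeneracy locus of expected dimension, and that argument breaks precisely because the shift along~$E$ makes the locus too large.

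The paper's route is essentially different. It base-changes to a local double cover $\varpi\colon S\to B$ ramified along~$E$ (so that $\varpi^*\cO_B(E)$ becomes a square) and performs a \emph{Hecke transformation} (Lemma~\ref{lem:cqps}) rather than a hyperbolic reduction: the first summand $\cO_S$, whose quadratic form vanishes along~$E_S$, is replaced by~$\cO_S(E_S)$ carrying an everywhere nondegenerate form. The transformed family~$\cQ'_S$ then has cokernel $\varpi^*\cC_{X_0}$ (Lemma~\ref{lem:strata-cqp}), hence corank strata pulled back from the \emph{ordinary} stratification, to which Lemma~\ref{lem:quadrics} applies directly. One then constructs a finite birational unramified morphism $\varphi\colon\OGr_S(3,\cQ'_S)\to\OGr_S(3,\cQ_S)$ compatible with the covering involution (Lemma~\ref{lem:varphi}), checks that the induced involution on the source is free, and identifies the quotient $\OGr_S(3,\cQ'_S)/\iota'_{\OGr}$ with $\OGrm_B(3,\cQ)$ over a neighborhood of the chosen point of~$E$. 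The Hecke step---which \emph{lowers} the corank along~$E$ back to the ordinary value---is the key idea your proposal lacks.
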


The proof of this proposition is quite technical and takes up the most part of this  section.\ 
The main idea of the proof is to relate~$\OGrm_B( {3},\cQ)$ and~$\OGrm_E({3},\cQ_E)$
to the relative isotropic Grassmannian for a family of quadrics obtained from~$\cQ \to B$
by a Hecke transformation along the divisor~$E$, analogous to the one used in~\cite[Section~2.5]{DK4}.\ 
However, this cannot be applied literally, because the line bundle~$\cO_{ {B}}(E)$ is not a square.\ 
One way to solve this would be to base change to the root stack of~$B$.\ 
In order to keep the exposition more elementary, 
we instead base change to a double covering~$S \to B$ ramified over~$E$, which exists locally.\ 
Then we apply a Hecke transformation to the base change~$\cQ_S \to S$ of~$\cQ \to B$,
obtain in this way a family~$\cQ'_S \to S$ that satisfies the hypotheses of Lemma~\ref{lem:quadrics},
and   relate the scheme~$\OGr_S({3},\cQ'_S)$ to~$\OGrm_B({3},\cQ)$.

So, let~$X$ be a smooth special GM variety of dimension~{$n \in \{3,4,5\}$ and let~$\ell = 5 - n$}.\ 
Let~$X_0$ be the ordinary GM variety associated with~$X$ by the bijection of Theorem~\ref{thm:gm-ld}.\ 
We denote  {by~$\cQ_0 \to B$ the quadric fibration~\eqref{eq:defQ} associated with~$X_0$,
and} by~$B_0^{\ge c}$ and~$E_0^{\ge c}$ the closed strata
 of the respective corank stratifications of~$B$ and~$E$, 
 so that, by~\eqref{defwk1:special} and Lemma~\ref{lem:b-strata-special}, we have
\begin{equation*}
\cW_X = \cO_{B} \oplus \cW_{X_0},
\qquad
(B \ssm E)^{\ge c} = (B \ssm E)_0^{\ge c},
\qquad\text{and}\qquad 
E^{\ge c} = E_0^{\ge c - 1}.
\end{equation*}
 {In particular, $E^{\ge \ell + 2} = E_0^{\ge \ell + 1}$.}

Let~$e \in E \ssm E^{\ge \ell + 2} = E \ssm E_0^{\ge \ell + 1}$.\ 
Over a neighborhood of~$e$ in~$B \setminus  {(B^{\ge 3} \cup E_0^{\ge \ell + 1})}$,
we can find a double covering
\begin{equation*}
\vp \colon S \to B \setminus  {(B^{\ge 3} \cup E_0^{\ge \ell + 1})} \hra B
\end{equation*}
which is branched only over~$E$ 
{(as we will see in the proof of Lemma~\ref{lem:strata-cqp}, excluding~$B^{\ge 3}$ will allow us to simplify the argument)}.\
We denote by~$\iota \colon S \to S$ the involution of the double covering and by~$E_S \subset S$ the ramification divisor of~$\varpi$.\ 
Note that~$E_S$ is $\iota$-invariant, while its equation is $\iota$-antiivariant.

Consider the pullback~$\cQ_S \to S$ of the family~$\cQ \to B$, so that we have
\begin{equation*}
\cQ_S  \coloneqq \cQ \times_B S  \subset \P_S(\cO_S \oplus \vp^*\cW_{X_0}).
 \end{equation*}
 {The equation of~$\cQ_S$ is the pullback~$\varpi^*  {(\eta)}$ of the equation~\eqref{eq:iota} of~$\cQ$.}

By construction, $\varpi^*\cO_B(E) \cong \cO_S(2E_S)$.\ 
We now apply a Hecke transformation.

\begin{lemm}
\label{lem:cqps}
Consider the exact sequence 
\begin{equation}
\label{eq:cwp}
0 \to \cO_S \oplus \vp^*\cW_{X_0} \xrightarrow{\ (E_S,\id)\ } \cO_S(E_S) \oplus \vp^*\cW_{X_0} \xrightarrow{\quad} \cO_{E_S}(E_S) \to 0
\end{equation}
of vector bundles on~$S$.\ 
The equation~$\varpi^* {(\eta)} \colon \Sym^2(\cO_S \oplus \vp^*\cW_{X_0}) \to \varpi^*\cO_B(E)$ 
of the family of quadrics~$\cQ_S$ factors in a unique way as the composition
\begin{equation}
\label{eq:cwp2}
\Sym^2(\cO_S \oplus \vp^*\cW_{X_0}) \xrightarrow{\ \ }  \Sym^2(\cO_S(E_S) \oplus \vp^*\cW_{X_0}) \xrightarrow{\ \eta'\ } \cO_S(2E_S) \cong \varpi^*\cO_B(E).
\end{equation}
Moreover, $\eta'$ is the orthogonal direct sum of an isomorphism~$\Sym^2(\cO_S(E_S)) \isomto \cO_S(2E_S)$
 and the pullback~$\varpi^*(\eta_0) \colon \Sym^2(\vp^*\cW_{X_0}) \to \cO_S(2E_S)$ 
of the  {equation~$\eta_0 \colon \Sym^2 \cW_{X_0}  \to \cO_B(E)$ of~$\cQ_0$}.
 \end{lemm}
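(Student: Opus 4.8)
The plan is to reduce the statement to the orthogonal decomposition $\cW_X = \cO_B \oplus \cW_{X_0}$ of~\eqref{defwk1:special}. By Lemma~\ref{lem:b-strata-special} (which rests on~\cite[Proposition~2.30]{DK1}) this decomposition is orthogonal for the equation of~$\cQ$: the restriction of~$\eta$ to $\Sym^2\cO_B = \cO_B$ is the canonical morphism $\cO_B \to \cO_B(E)$, that is, multiplication by the tautological section $s_E \in H^0(B,\cO_B(E))$ with zero divisor~$E$, and its restriction to $\Sym^2\cW_{X_0}$ is the equation $\eta_0 \colon \Sym^2\cW_{X_0} \to \cO_B(E)$ of~$\cQ_0$. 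First I would pull this back along~$\varpi$. Pullback is exact and commutes with $\Sym^2$, so it preserves the orthogonal splitting; and since~$\varpi$ is a double covering branched along~$E$ with ramification divisor~$E_S$, the pulled-back section satisfies $\varpi^* s_E = s_{E_S}^{\otimes 2}$ under the identification $\varpi^*\cO_B(E) \cong \cO_S(2E_S)$ fixed in the construction (the tautological section $s_{E_S}$ of $\cO_S(E_S)$ is exactly a square root of $\varpi^* s_E$). Consequently, on the three summands of $\Sym^2(\cO_S \oplus \varpi^*\cW_{X_0}) = \Sym^2\cO_S \oplus (\cO_S \otimes \varpi^*\cW_{X_0}) \oplus \Sym^2\!\varpi^*\cW_{X_0}$, the form $\varpi^*(\eta)$ equals $s_{E_S}^{\otimes 2}$ on the first, $0$ on the second, and $\varpi^*(\eta_0)$ on the third.

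Next I would simply \emph{define} $\eta'$ to be the orthogonal direct sum of the canonical isomorphism $\Sym^2\cO_S(E_S) \isomto \cO_S(2E_S)$ and of $\varpi^*(\eta_0) \colon \Sym^2\!\varpi^*\cW_{X_0} \to \cO_S(2E_S)$, so that $\eta'$ vanishes on the mixed summand $\cO_S(E_S) \otimes \varpi^*\cW_{X_0}$, and then verify $\eta' \circ \Sym^2\bigl((E_S,\id)\bigr) = \varpi^*(\eta)$ summand by summand. This is a direct check: the monomorphism $(E_S,\id)$ of~\eqref{eq:cwp} is block-diagonal, so its $\Sym^2$ sends the first summand into $\Sym^2\cO_S(E_S)$ by $s_{E_S}^{\otimes 2}$, the second into $\cO_S(E_S) \otimes \varpi^*\cW_{X_0}$ by $s_{E_S} \otimes \id$, and the third identically onto $\Sym^2\!\varpi^*\cW_{X_0}$; post-composing with~$\eta'$ returns $s_{E_S}^{\otimes 2}$, $0$, and $\varpi^*(\eta_0)$, which matches the description of $\varpi^*(\eta)$ above. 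This establishes the factorization~\eqref{eq:cwp2} together with the claimed shape of~$\eta'$. For uniqueness I would note that $(E_S,\id)$ restricts to an isomorphism over the dense open subset $S \ssm E_S$, hence so does its $\Sym^2$; since $\Sym^2(\cO_S \oplus \varpi^*\cW_{X_0})$ is locally free and the target $\cO_S(2E_S)$ is torsion-free, any factorizing morphism out of $\Sym^2(\cO_S(E_S) \oplus \varpi^*\cW_{X_0})$ is determined by its restriction to that open, so~$\eta'$ is unique.

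All the substantive content lies in the orthogonality statement of Lemma~\ref{lem:b-strata-special} and in the elementary behavior of~$s_E$ under the ramified pullback~$\varpi$; the remainder is formal manipulation of $\Sym^2$ of a block-diagonal map of vector bundles. I therefore do not expect a genuine obstacle. The only point requiring care is to fix the canonical isomorphism $\Sym^2\cO_S(E_S) \isomto \cO_S(2E_S)$ appearing in~$\eta'$ compatibly with the chosen identification $\varpi^*\cO_B(E) \cong \cO_S(2E_S)$, so that $\eta'(s_{E_S}^{\otimes 2}) = \varpi^* s_E$ holds on the nose and no spurious unit is introduced — which is precisely what the relation $\varpi^* s_E = s_{E_S}^{\otimes 2}$ secures.
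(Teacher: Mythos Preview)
Your argument is correct. The paper takes a different, much terser route: it simply invokes \cite[Lemma~2.11]{DK4} (a general Hecke transformation lemma) with $\cE = \cO_S \oplus \vp^*\cW_{X_0}$, $D = 2E_S$, and $\cK = \cO_D$, observing via Lemma~\ref{lem:b-strata-special} that $\cK$ lies in the kernel of $\varpi^*(\eta)\vert_D$. Your approach instead constructs $\eta'$ explicitly as the orthogonal direct sum and checks the factorization summand by summand, using the same orthogonality input from Lemma~\ref{lem:b-strata-special} and the identity $\varpi^* s_E = s_{E_S}^{\otimes 2}$. The cited lemma delivers the factorization and uniqueness abstractly but leaves the explicit ``Moreover'' description of $\eta'$ to be read off afterwards; your direct construction handles all three parts of the statement at once and is self-contained, at the cost of writing out what the general lemma encapsulates.
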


\begin{proof}
We apply~\cite[Lemma~2.11]{DK4} with~$\cE = \cO_S \oplus \vp^*\cW_{X_0}$, $D = 2E_S$, and~$\cK = \cO_D$
(the argument of Lemma~\ref{lem:b-strata-special} shows that~$\cK$ is contained in the kernel of~$\varpi^*(\eta)\vert_D$).\ 
\end{proof}

  {Consider now} the family of quadrics
\begin{equation*}
\cQ'_S \subset \P_S(\cO_S(E_S) \oplus \vp^*\cW_{X_0}) 
\end{equation*}
defined by the equation~$\eta'$ of~\eqref{eq:cwp2}.\ 
The uniqueness of~$\eta'$ implies that the involution~$\iota$ of the double covering~$\varpi \colon S \to B$
lifts to an involution of~$\cQ'_S$, which we denote by~$\iota_\cQ$.

\begin{lemm}
\label{lem:iota}
The involution~$\iota_\cQ$ on~$\cQ'_{E_S} \coloneqq \cQ'_S \times_S E_S$
is induced by the involution of the projective bundle~$\P_{E_S}(\cO_{E_S}(E_S) \oplus \vp^*\cW_{X_0}\vert_{E_S})$
that acts by multiplication by~$-1$ on the first summand and as the identity on the second.
 \end{lemm}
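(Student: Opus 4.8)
The plan is to follow the involution $\iota_\cQ$ through the explicit description of $\cQ'_S$ supplied by Lemma~\ref{lem:cqps}. The essential point is that $\iota_\cQ$ was produced by transport of structure from the \emph{uniqueness} of the factorisation~\eqref{eq:cwp2}; hence it must be compatible with the natural $\iota$-equivariant structures on all the bundles occurring there, and restricting those structures to $E_S$ will give the statement.

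First I would equip the relevant bundles with $\iota$-equivariant structures. Since $E_S$ is the ramification divisor of $\vp$, it is $\iota$-invariant, $\iota$ restricts to the identity on $E_S$, and $\vp^*E = 2E_S$. Thus $\cO_S(E_S)$ carries the canonical $\iota$-equivariant structure for which its tautological section $E_S$ is invariant; $\vp^*\cW_{X_0}$ carries the canonical structure coming from $\vp\circ\iota = \vp$; and $\cO_S(2E_S) = \vp^*\cO_B(E)$ carries the pullback structure, which agrees with the square of the one on $\cO_S(E_S)$. I would then check, by a routine diagram chase, that both arrows of the factorisation $\vp^*(\eta) = \eta'\circ\Sym^2(E_S,\id)$ are $\iota$-equivariant for these structures: the map $\Sym^2(E_S,\id)$ because the tautological section $E_S$ of $\cO_S(E_S)$ is invariant, and $\vp^*(\eta)$ because it is the pullback of a morphism defined over $B$. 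Applying $\iota^*$ to the factorisation and invoking the uniqueness clause of Lemma~\ref{lem:cqps}, I conclude that $\eta'$ is $\iota$-equivariant as well, and therefore that $\iota_\cQ$ is the restriction to $\cQ'_S$ of the automorphism of $\P_S(\cO_S(E_S)\oplus\vp^*\cW_{X_0})$ induced by the chosen equivariant structure on $\cO_S(E_S)\oplus\vp^*\cW_{X_0}$.

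The final step is to restrict this equivariant structure to $E_S$. Because $\iota$ fixes $E_S$ pointwise, the restricted involution of $\P_{E_S}(\cO_{E_S}(E_S)\oplus\vp^*\cW_{X_0}\vert_{E_S})$ lies over $\id_{E_S}$ and is therefore fibrewise linear, induced by an automorphism of $\cO_{E_S}(E_S)\oplus\vp^*\cW_{X_0}\vert_{E_S}$. On the summand $\vp^*\cW_{X_0}\vert_{E_S}$ it is the identity, since the pullback equivariant structure is the identity over the fixed locus. On the summand $\cO_{E_S}(E_S) = \cO_S(E_S)\vert_{E_S} = N_{E_S/S}$ it is multiplication by $-1$: this is the standard fact that the deck involution of a double covering acts by $-1$ on the normal bundle of its ramification divisor, which I would verify in the local model $S = \Spec\bigl(\cO_B[t]/(t^2-s_E)\bigr)$, $\iota(t) = -t$, $\cO_S(E_S) = t^{-1}\cO_S$, where $\iota$ acts on $\cO_{E_S}(E_S) = t^{-1}\cO_S/\cO_S$ by $\overline{g\,t^{-1}}\mapsto\overline{-g\,t^{-1}}$. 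This is exactly the asserted description of $\iota_\cQ\vert_{\cQ'_{E_S}}$.

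I do not anticipate a serious obstacle: the argument is bookkeeping. The two points needing care are making sure the several equivariant structures chosen are the mutually compatible ones, so that ``$\eta'$ is $\iota$-equivariant'' genuinely follows from the uniqueness in Lemma~\ref{lem:cqps}, and correctly pinning down the sign of the $\iota$-action on $N_{E_S/S}$. Neither is deep, but the sign is the whole content of the lemma, so I would establish it via the explicit local model rather than by a general-position argument.
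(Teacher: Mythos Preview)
Your proposal is correct and follows essentially the same idea as the paper: the paper's one-line proof simply points to the exact sequence~\eqref{eq:cwp}, noting that its first term is $\iota$-invariant while the equation of~$E_S$ is $\iota$-antiinvariant, which forces the $-1$ action on the first summand of~$\cO_S(E_S)\oplus\vp^*\cW_{X_0}$ upon restriction to~$E_S$. You unpack the same observation more carefully via equivariant structures and the uniqueness clause of Lemma~\ref{lem:cqps}, including an explicit local-model computation of the sign on~$N_{E_S/S}$; this is a faithful expansion of the paper's argument rather than a different route.
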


\begin{proof}
This follows from the exact sequence~\eqref{eq:cwp}, 
because its first term is $\iota$-invariant and the equation of~$E_S$ is $\iota$-antiinvariant.
\end{proof}

We denote by~$\cC'$ the cokernel sheaf of~$\cQ'_S$ and by~$S^{\ge \ell} \subset S$ its degeneracy loci, 
so that the~$S^\ell =  {S^{\ge \ell} \ssm S^{\ge \ell + 1}}$ are the strata of the corank stratification.

\begin{lemm}
\label{lem:strata-cqp}
We have~$\cC' \cong \varpi^*(\cC_{X_0})$ and~$S^{\ge \ell} = \varpi^{-1}(B_0^{\ge \ell})$;
 {in particular, $S^{\ge 3} = \vide$.}
\end{lemm}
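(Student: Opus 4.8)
The plan is to deduce both equalities formally from Lemma~\ref{lem:cqps}. Recall that for a family of quadrics with equation $q \colon \Sym^2\cE \to \cM$ ($\cE$ a vector bundle, $\cM$ a line bundle), the cokernel sheaf is the cokernel of the associated self-adjoint morphism $\cE \to \cE^\vee \otimes \cM$, and the corank stratification is cut out by its Fitting ideals (see~\eqref{equation-sheaf-cf} and~\eqref{eq:bgec}). For $\cQ'_S$ the bundle is $\cO_S(E_S) \oplus \vp^*\cW_{X_0}$, the target line bundle is $\cO_S(2E_S) \cong \vp^*\cO_B(E)$ (by construction of $S$), and by Lemma~\ref{lem:cqps} the equation $\eta'$ is the orthogonal direct sum of an isomorphism $\Sym^2(\cO_S(E_S)) \isomto \cO_S(2E_S)$ and the pullback $\vp^*(\eta_0)$ of the equation of $\cQ_0$.

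First I would observe that for an orthogonal direct sum of quadratic forms the associated self-adjoint morphism is the direct sum of the two associated morphisms, hence its cokernel is the direct sum of the two cokernel sheaves. The associated morphism of the isomorphism $\Sym^2(\cO_S(E_S)) \isomto \cO_S(2E_S)$ is an isomorphism $\cO_S(E_S) \isomto \cO_S(E_S)$, so it contributes a zero summand; the associated morphism of $\vp^*(\eta_0)$ is the $\vp^*$-pullback of the associated morphism of $\eta_0$, whose cokernel is $\cC_{X_0}$ by~\eqref{equation-sheaf-cf} applied to $X_0$, and since $\vp^*$ is right exact this pullback has cokernel $\vp^*(\cC_{X_0})$. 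This gives $\cC' \cong \vp^*(\cC_{X_0})$. Since Fitting ideals commute with base change, the corank loci pull back, so $S^{\ge c} = \vp^{-1}(B_0^{\ge c})$ for every $c \ge 0$.

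It remains to check $S^{\ge 3} = \vide$, i.e.\ that the image $B \ssm (B^{\ge 3} \cup E_0^{\ge \ell+1})$ of $\vp$ is disjoint from $B_0^{\ge 3}$. On $B \ssm E$ the corank stratifications for $X$ and $X_0$ coincide (Lemma~\ref{lem:b-strata-special}), so $B_0^{\ge 3} \ssm E = B^{\ge 3} \ssm E$ has been removed; on $E$ we have $B_0^{\ge 3} \cap E = E_0^{\ge 3} \subseteq E_0^{\ge \ell+1}$ because $\ell \le 2$, and this has been removed as well. Hence $B_0^{\ge 3} \subseteq B^{\ge 3} \cup E_0^{\ge \ell+1}$ and $S^{\ge 3} = \vp^{-1}(B_0^{\ge 3}) = \vide$. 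The only mildly delicate point is this last piece of bookkeeping—tracking which strata were deleted from the base of $\vp$ and why they already engulf $B_0^{\ge 3}$; everything else is a direct consequence of Lemma~\ref{lem:cqps}.
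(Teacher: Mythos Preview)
Your proof is correct and follows essentially the same line as the paper's: invoke Lemma~\ref{lem:cqps} to write~$\eta'$ as the orthogonal direct sum of an isomorphism and~$\vp^*(\eta_0)$, read off the cokernel, and pull back the Fitting stratification. You spell out the~$S^{\ge 3} = \vide$ step in more detail than the paper (which leaves it implicit); note that your case split on~$E$ can be shortened: since the corank of~$\cQ_X$ is that of~$\cQ_{X_0}$ plus~$1$ on~$E$ and equal on~$B\ssm E$, one has~$B_0^{\ge 3} \subset B^{\ge 3}$ outright, so removing~$B^{\ge 3}$ already suffices.
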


\begin{proof}
By Lemma~\ref{lem:cqps}, the morphism~$\eta' \colon \cO_S(-E_S) \oplus \vp^*\cW_{X_0}(-2E_S) \to \cO_S(-E_S) \oplus \vp^*\cW^\vee_{X_0}$ 
is the direct sum of an isomorphism and~$\vp^*{(\eta_0)}$, hence its cokernel is the pullback of~$\cC_{X_0}$.
\end{proof}

We consider the isotropic Grassmannian~$f'_S \colon \OGr_S( {3},\cQ'_S) \to S$ associated with the family of quadrics~$\cQ'_S/S$
and denote by~$\OGr_{E_S}( {3}, \cQ'_{E_S})$ its base change to the divisor~$E_S \subset S$.\

\begin{coro}
\label{cor:ogrps}
The scheme~$\OGr_S( {3},\cQ'_S)$ is  {smooth and connected of dimension~$3n - 7$}.

Moreover, there is a commutative diagram 
 \begin{equation*}
\vcenter{\xymatrix@C=3em{
\OGr_S( {3},\cQ'_S) \ar[r]^-{\ \tf'_S\ } &
\tS^{\ge \ell} \ar[r]^-{\ \uvt_{\cQ'_S}\ } \ar[d] &
S^{\ge \ell} \ar@{^{(}->}[r] \ar[d] &
S \ar[d]^{\varpi} 
\\
& 
\Bl_{\widetilde\bp_X}(\wtY^{\ge \ell}_\Ap) \ar[r]^-{\tuvt_\Ap} &
\Bl_{\bp_X}(\sY^{\ge \ell}_\Ap) \ar@{^{(}->}[r] &
\,B,
}}
\end{equation*} 
where the upper row is the Stein factorization of~$f'_S$, 
the morphism~$\tf'_S$ is an \'etale-locally trivial fibration 
with fibers~$\P^{\frac12(n- {3})(n- {2})}$ over the complement of~$S^{\ge \ell + 2}$,
the right square is Cartesian, and the left square is Cartesian up to normalization.

Finally, $\OGr_{E_S}( {3}, \cQ'_{E_S}) \subset \OGr_S( {3},\cQ'_S)$ is a smooth Cartier divisor.
\end{coro}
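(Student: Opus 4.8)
The plan is to verify that the Hecke-transformed family $\cQ'_S\to S$ satisfies the hypotheses of Lemma~\ref{lem:quadrics}, and likewise for its restriction $\cQ'_{E_S}\to E_S$, and then to read off all three assertions. Here $S$ is smooth of dimension $5$ and $E_S\cong E$ is a smooth divisor in $S$ (the ramification divisor of the double covering $\varpi$, branched over the smooth divisor $E\subset B$), while $X_0$ is the ordinary GM variety of dimension $n-1$ associated with $X$, with the same Lagrangian data $(A,V_5)$ — hence the same Pl\"ucker point $\bp_X$ — by Theorem~\ref{thm:gm-ld}. By Lemma~\ref{lem:strata-cqp} we have $\cC'\cong\varpi^*(\cC_{X_0})$, $S^{\ge c}=\varpi^{-1}(B_0^{\ge c})$ for all $c$, and $S^{\ge 3}=\vide$, so $S^c=\varpi^{-1}(B_0^c)$ for $c\ge\ell$. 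Applying Proposition~\ref{proposition:b-stratification} to $X_0$, each $B_0^c$ with $c\le 2$ is smooth of codimension $c(c+1)/2$ in $B$, the divisor $E\subset B$ is Cartier, and $E_0^c=E\cap B_0^c$ is a smooth Cartier divisor in the integral scheme $B_0^c$ (its smoothness being Lemma~\ref{lemma:e-stratification} for $X_0$). Since $\varpi$ is branched exactly over $E$, the pullback $S^c=\varpi^{-1}(B_0^c)$ is a double covering of the smooth $B_0^c$ branched over the smooth divisor $E_0^c$, hence smooth of codimension $c(c+1)/2$ in $S$; moreover $S^\ell\ne\vide$ because $\sY_\Ap^{\ge\ell}$ has codimension $\ell$ in $\P(V_6^\vee)$. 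Thus the hypotheses of Lemma~\ref{lem:quadrics} — indeed the stronger ones of part~\ref{it:ogrs-smooth} — hold for $\cQ'_S\to S$. The same analysis on $E_S$, using $\varpi|_{E_S}\colon E_S\isomto E$, Lemma~\ref{lem:iota} (which identifies the corank of $\cQ'_{E_S}$ at $e\in E_S$ with that of $\cQ_0$ at $\varpi(e)\in E$), and Lemma~\ref{lemma:e-stratification} for $X_0$, shows that the corank strata of $\cQ'_{E_S}\to E_S$ are smooth of codimension $c(c+1)/2$ in $E_S$ — here one uses that $E^{\ge\ell+2}=E_0^{\ge\ell+1}$ has been removed from the base — with $(E_S)^\ell\ne\vide$.

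Granting this, Lemma~\ref{lem:quadrics}\ref{it:ogrs-smooth} gives that $\OGr_S(3,\cQ'_S)$ is nonsingular of dimension $N(n,3)+\dim S=(3n-12)+5=3n-7$, and part~\ref{it:ogrs-normal} gives the Stein factorization of $f'_S$ in the stated form, with $\uvt_{\cQ'_S}$ a double covering branched over $S^{\ge\ell+1}$ and $\tf'_S$ an \'etale-locally trivial fibration over $\tS^{\ge\ell}\ssm\uvt_{\cQ'_S}^{-1}(S^{\ge\ell+2})$; since $n\le 5$, i.e.\ $n-2\le 3$, its fibres there are $\P^{\frac12(n-3)(n-2)}$. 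For the commutative square, the right-hand side is Cartesian by the previous paragraph, since $S^{\ge\ell}=\varpi^{-1}(B_0^{\ge\ell})$ and $B_0^{\ge\ell}=\Bl_{\bp_X}(\sY_\Ap^{\ge\ell})$ by Proposition~\ref{proposition:b-stratification} for $X_0$; for the left-hand side, the functoriality of the double-cover construction of~\cite[Theorem~3.1]{DK3} together with $\cC'\cong\varpi^*\cC_{X_0}$ shows that over $S^{\ge\ell}\ssm E_S$ the covering $\uvt_{\cQ'_S}$ is the $\varpi$-pullback of the double cover of $\sY_\Ap^{\ge\ell}\ssm\{\bp_X\}$ attached to $\cQ_0$, which is $\uvt_\Ap$ restricted (Lemma~\ref{lemma:ker-qb} for $X_0$). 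As $\tS^{\ge\ell}$ is normal, finite over $S^{\ge\ell}$, and birational to $\Bl_{\widetilde\bp_X}(\wtY_\Ap^{\ge\ell})\times_B S$, it is the normalization of the latter (by Lemma~\ref{lem:closure} and transitivity of integral closure); this produces the morphism $\tS^{\ge\ell}\to\Bl_{\widetilde\bp_X}(\wtY_\Ap^{\ge\ell})$, makes the left square Cartesian up to normalization, and, combined with Theorem~\ref{thm:tya} and Lemma~\ref{lem:closure}, gives the ramification and branch loci. Finally $\wtY_\Ap^\ell$ is connected (Theorem~\ref{thm:tya}), and a short monodromy argument (the discriminant cover and the branched covering $\varpi$ defining distinct double covers over $B_0^{\ge\ell}$) shows $\tS^\ell$ is connected, so by Lemma~\ref{lem:quadrics}\ref{it:ogrs-normal} the scheme $\OGr_S(3,\cQ'_S)$ is integral, in particular connected.

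It remains to prove the last assertion. By the corank-stratum analysis above, the hypotheses of Lemma~\ref{lem:quadrics}\ref{it:ogrs-smooth} hold for $\cQ'_{E_S}\to E_S$, so $\OGr_{E_S}(3,\cQ'_{E_S})$ is nonsingular of dimension $N(n,3)+\dim E_S=(3n-12)+4=3n-8$. On the other hand $E_S\subset S$ is an effective Cartier divisor and $\OGr_S(3,\cQ'_S)$ is integral with $f'_S$ not sending its generic point into $E_S$ (since $S^{\ge\ell}$ has points off $E_S$, e.g.\ over $\sY_\Ap^{\ge\ell}\ssm\{\bp_X\}$), so $(f'_S)^*E_S$ is an effective Cartier divisor on $\OGr_S(3,\cQ'_S)$ whose support is $\OGr_S(3,\cQ'_S)\times_S E_S=\OGr_{E_S}(3,\cQ'_{E_S})$; since the latter is smooth, this divisor structure is reduced. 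Hence $\OGr_{E_S}(3,\cQ'_{E_S})$ is a smooth Cartier divisor in $\OGr_S(3,\cQ'_S)$, as claimed. The hard part will be the identification of the double cover $\tS^{\ge\ell}$ above: matching the cover produced by the Stein factorization of $f'_S$ with the $\varpi$-pullback of $\tuvt_\Ap$ requires careful bookkeeping of normalizations and a precise understanding of the behaviour along the ramification divisor $E_S$ (equivalently, along the exceptional divisor $E$ of $B\to\P(V_6^\vee)$).
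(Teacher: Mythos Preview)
Your proof is correct and follows essentially the same approach as the paper: verify that the corank strata $S^c$ (and $E_S^c$) are smooth of the expected codimension by pulling back the stratification for $X_0$ via Lemma~\ref{lem:strata-cqp}, Proposition~\ref{proposition:b-stratification}, and Lemma~\ref{lemma:e-stratification}, then apply Lemma~\ref{lem:quadrics}\ref{it:ogrs-normal}--\ref{it:ogrs-smooth} and argue as in Proposition~\ref{prop:corank-n-k}\ref{it:ogr-bme}. The only minor differences are cosmetic: the paper phrases the left-square argument via stability of Stein factorization under \'etale base change rather than functoriality of the double-cover construction, and your citation of Lemma~\ref{lem:iota} for the corank identification on $E_S$ should really be Lemma~\ref{lem:cqps} (or Lemma~\ref{lem:strata-cqp} restricted to $E_S$).
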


\begin{proof}
By definition of the morphism~$\varpi$ and by Lemma~\ref{lem:strata-cqp}, 
the locus~$S^{\ge c}$ is, for~$c \in \{0,1,2\}$, a double covering 
(of an open subset) of~$B_0^{\ge c}$ ramified over~$E_0^{\ge c}$,
 {while~$S^{\ge 3} = \vide$}.\ 
Since~$B_0^c$ is smooth of codimension~$\tfrac12c(c+1)$ in~$B$ by Proposition~\ref{proposition:b-stratification},
and since~$E_0^c$ is a smooth Cartier divisor in~$B_0^c$ by Lemma~\ref{lemma:e-stratification},
we conclude that~$S^c$ is smooth of codimension~$\tfrac12c(c+1)$ in~$S$.\ 
Therefore, the hypotheses of Lemma~\ref{lem:quadrics}{\ref{it:ogrs-normal} and~\ref{it:ogrs-smooth}} 
are satisfied  {over~$B$} 
 and the proof of Proposition~\ref{prop:corank-n-k}\ref{it:ogr-bme} works.\ 
This proves the first statement.

Similarly, we have~$E_S^{\ge c} = \vp^{-1}(E_0^{\ge c}) \cap E_S \cong E_0^{\ge c}$,
hence  {the strata~$E_S^c$ are} smooth of codimension~$\tfrac12c(c + 1)$ in~$E_S$.\ 
The arguments of Proposition~\ref{prop:corank-n-k}\ref{it:l2-ord} 
 imply that~$\OGr_{E_S}( {3}, \cQ'_{E_S})$ is a smooth Cartier divisor in~$\OGr_S( {3},\cQ'_S)$.

It only remains to construct the Cartesian squares.\ 
The right square is obvious from Lemma~\ref{lem:strata-cqp} 
because~$B_0^{\ge \ell} = \Bl_{\bp_X}(\sY^{\ge \ell}_\Ap)$ by Proposition~\ref{proposition:b-stratification}.\ 
For the left square, recall that the morphism~$\varpi$ is \'etale over the complement of~$E$ 
so, since Stein factorization is stable under \'etale base change, 
we conclude that over the complement of~$E$, the left square exists and is Cartesian.\ 
Since the complement of~$E_S$ is dense in~$S^{\ge \ell}$, 
it follows that~$\tS^{\ge \ell}$ is the normalization of the fiber product of~$S^{\ge \ell}$ and~$\Bl_{\widetilde\bp_X}(\wtY^{\ge \ell}_\Ap)$.
 \end{proof}

We will also need the following lemma.

\begin{lemm}
\label{lem:rp3}
If~$e' \in E_S$ and if~$R'_3 \subset \C \oplus \cW_{X_0,\varpi(e')}$ is such that~$(R'_3,e') \in \OGr_{E_S}(3,\cQ'_{E_S})$, 
then~\mbox{$R'_3 \not\subset \cW_{X_0,\varpi(e')}$} and~$R'_3 \cap \cW_{X_0,\varpi(e')}$ is a maximal isotropic subspace in~$\cW_{X_0,\varpi(e')}$.
 \end{lemm}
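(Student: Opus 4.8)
The plan is to analyze the fiber of the family $\cQ'_{E_S}$ over the point $e'$ and show that it is exactly degenerate enough to pin down the position of $R'_3$ relative to $\cW_{X_0,\varpi(e')}$. Write $b \coloneqq \varpi(e')$. Since $e'$ lies on the ramification divisor $E_S$ and $\varpi$ is branched over $E$, we have $b \in E$; and since $\varpi$ factors through $B \ssm (B^{\ge 3} \cup E_0^{\ge \ell + 1})$, the point $b$ lies outside $E_0^{\ge \ell + 1}$, so the corank $c$ of the fiber $\eta_0\vert_b$ of the equation of $\cQ_0$ satisfies $c \le \ell$. The first step is then to use Lemma~\ref{lem:cqps}: restricting $\eta'$ to $E_S$ and trivializing the line bundles $\cO_S(E_S)$ and $\cO_S(2E_S)$ at $e'$, the quadratic form on $\C \oplus \cW_{X_0,b}$ cutting out $\cQ'_{E_S,e'}$ is the orthogonal direct sum of a nonzero (hence nondegenerate, rank~$1$) form on the summand $\C$ and of $\eta_0\vert_b$ on $\cW_{X_0,b}$; in particular its corank equals $c$.

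Next I would pin down $c$ exactly. As $(R'_3,e')$ lies on $\OGr_{E_S}(3,\cQ'_{E_S})$, the $(n-1)$-dimensional quadric $\cQ'_{E_S,e'} \subset \P(\C \oplus \cW_{X_0,b}) = \P^n$ contains the plane $\P(R'_3)$, so Lemma~\ref{lem:ogr-dim} (applied with $p = 3$ in the ambient $\P^n$, whose associated parameter is $2\cdot 3 - n - 1 = \ell$) forces $c \ge \ell$; combined with the bound above, $c = \ell$. Then I would apply Lemma~\ref{lem:ogr-dim} a second time, now to the quadric $Q'' \subset \P(\cW_{X_0,b}) = \P^{n-1}$ defined by $\eta_0\vert_b$: its associated parameter is $2\cdot 3 - (n-1) - 1 = \ell + 1 > c$, so $\OGr(3,Q'') = \vide$, i.e.\ $\cW_{X_0,b}$ carries no $3$-dimensional $\eta_0\vert_b$-isotropic subspace. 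Since $R'_3$ is isotropic for $\cQ'_{E_S,e'}$ and $\eta_0\vert_b$ is the restriction of that form to $\cW_{X_0,b}$, every subspace of $R'_3 \cap \cW_{X_0,b}$ is $\eta_0\vert_b$-isotropic; in particular $R'_3 \not\subset \cW_{X_0,b}$ (the first assertion), and $\dim(R'_3 \cap \cW_{X_0,b}) \le 2$.

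Finally I would close the remaining gap with a dimension count: inside the $(n+1)$-dimensional space $\C \oplus \cW_{X_0,b}$, the subspaces $R'_3$ and $\cW_{X_0,b}$ of dimensions $3$ and $n$ intersect in dimension at least $3 + n - (n+1) = 2$, whence $\dim(R'_3 \cap \cW_{X_0,b}) = 2$. This subspace is $\eta_0\vert_b$-isotropic, and since $\cW_{X_0,b}$ has no $\eta_0\vert_b$-isotropic $3$-space, no isotropic subspace can properly contain it; hence $R'_3 \cap \cW_{X_0,b}$ is a maximal isotropic subspace of $\cW_{X_0,b}$, as required.

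The part I expect to require the most care is the bookkeeping in the first two steps: matching the corank of the fiber $\cQ'_{E_S,e'}$ with the corank $c$ of $\eta_0\vert_b$, which is precisely where the Hecke transform of Lemma~\ref{lem:cqps} enters — it replaces the degenerate summand $\cO_S$ of the original pulled-back family $\cQ_S$ by the nondegenerate $\cO_S(E_S)$ — and keeping straight the two uses of Lemma~\ref{lem:ogr-dim}, one in the $(n+1)$-dimensional ambient space $\C \oplus \cW_{X_0,b}$ and one in the $n$-dimensional $\cW_{X_0,b}$. Beyond that it is elementary linear algebra of quadratic spaces; a convenient sanity check is that $\ell = 5 - n \in \{0,1,2\}$ with $n + \ell = 5$, so the relevant maximal isotropic dimensions come out uniformly equal to $2$.
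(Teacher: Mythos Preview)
Your proof is correct and follows essentially the same route as the paper's: both reduce to the observation that a $3$-dimensional isotropic subspace in~$\cW_{X_0,b}$ would force the corank of~$\eta_0\vert_b$ to be at least~$\ell+1$, contradicting~$b \notin E_0^{\ge \ell+1}$. One small redundancy: your first application of Lemma~\ref{lem:ogr-dim} (to deduce~$c \ge \ell$ and hence pin down~$c = \ell$) is not needed---the bound~$c \le \ell$ alone already gives~$c < \ell+1$, which is all the second application requires.
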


\begin{proof}
If~$R'_{3} \subset \cW_{X_0,\varpi(e')}$, the quadric~$\cQ_{{0},\varpi(e')}$ of dimension~$n-2$ 
(the fiber of the quadratic fibration associated with~$X_0$) 
admits an isotropic subspace of dimension~$ {3}$.\ 
This means that the rank of this quadric is less than or equal to~$2(n -  {3})$, 
and the corank is greater than or equal to~$n - 2(n -  {3}) =  {6} - n = \ell + 1$, that is,~$\vp(e') \in E_0^{\ge \ell + 1}$.\ 
However, we have~\mbox{$\vp(S) \subset B \setminus E_0^{\ell + 1}$} by definition;
this contradiction proves  {the first part of the lemma}.

It also follows that~$R'_3 \cap \cW_{X_0,\varpi(e')}$ is a maximal isotropic subspace.
\end{proof}

It remains to relate~$\OGr_S( {3},\cQ'_S)$ to~$\OGrm_B( {3},\cQ) \subset \OGr_B( {3},\cQ)$.

\begin{lemm}
\label{lem:varphi}
There is a commutative diagram
\begin{equation}
\label{eq:ogr-diagram}
\vcenter{\xymatrix@C=4.em{
\OGr_S( {3},\cQ'_S) \ar[r]^-{\varphi} \ar[d]_{f'_S} & 
\OGr_S( {3},\cQ_S) \ar[r]^-{\varpi_{\OGr}} \ar[d]_{f_S} & 
\OGr_B( {3},\cQ) \ar[d]^f
\\
S \ar@{=}[r] &
S \ar[r]^-\varpi &
B,
}}
\end{equation}
where the right square is Cartesian 
 {and the left square is equivariant with respect to the involutions 
\begin{equation*}
\iota'_{\OGr} \colon \OGr_S( {3},\cQ'_S) \lra \OGr_S( {3},\cQ'_S)
\quad\text{{and}}\quad 
\iota_{\OGr} \colon \OGr_S( {3},\cQ_S) \lra \OGr_S( {3},\cQ_S)
\end{equation*}
induced by the involution~$\iota \colon S \to S$  over~$B$.\ 
Moreover, the involution~$\iota'_{\OGr}$ acts freely on~$\OGr_S( {3},\cQ'_S)$,}
the morphism~$\varphi$ is finite, birational, and unramified;
it is an isomorphism over~$S \ssm E_S$ and two-to-one over~$E_S$.\ 
Similarly, the morphism~$\varpi_{\OGr} \circ \varphi$ is unramified.
 \end{lemm}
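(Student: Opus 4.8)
The plan is to construct $\varphi$ as the elementary modification along $E_S$ that undoes the Hecke transformation of Lemma~\ref{lem:cqps}, and to read off all the assertions from the fibrewise description of the two quadric families over $E_S$ provided by Lemmas~\ref{lem:cqps}, \ref{lem:iota} and~\ref{lem:rp3}. First, the right square: since $\cQ_S=\cQ\times_B S$ by definition, formation of the relative isotropic Grassmannian commutes with this base change, so $\OGr_S(3,\cQ_S)=\OGr_B(3,\cQ)\times_B S$ and $\varpi_{\OGr}$ is the base change of the double covering $\varpi$; in particular $\varpi_{\OGr}$ is finite, invariant under $\iota_{\OGr}=\id\times\iota$, and ramified over $f^{-1}(E)$. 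The involution $\iota'_{\OGr}$ is the one induced on $\OGr_S(3,\cQ'_S)$ by the lift $\iota_\cQ$ of $\iota$ to $\cQ'_S$ of Lemma~\ref{lem:cqps} (which exists because $\eta'$ is unique).

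To construct $\varphi$, on $\OGr_S(3,\cQ'_S)$, with tautological rank-$3$ isotropic subbundle $\cR'_3\subset\cO_S(E_S)\oplus\varpi^*\cW_{X_0}$ (pulled back), I would set
\[
\cK \coloneqq \Ker\bigl(\cR'_3 \hookrightarrow \cO_S(E_S)\oplus\varpi^*\cW_{X_0} \twoheadrightarrow \cO_{E_S}(E_S)\bigr),
\]
the composition with the projection in the Hecke sequence~\eqref{eq:cwp}. Away from $(f'_S)^{-1}(E_S)$ this composition vanishes, so $\cK=\cR'_3$; along the smooth Cartier divisor $(f'_S)^{-1}(E_S)=\OGr_{E_S}(3,\cQ'_{E_S})$ it is surjective onto the line bundle $\cO_{E_S}(E_S)$, exactly because of Lemma~\ref{lem:rp3} ($\cR'_3$ is not contained in $\varpi^*\cW_{X_0}$ there). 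Hence $\cK$ is locally free of rank $3$; being contained in the kernel of the projection onto $\cO_{E_S}(E_S)$ it lifts canonically to a rank-$3$ subsheaf of $\cO_S\oplus\varpi^*\cW_{X_0}$, which a fibrewise computation with the $\Tor$-sequence of the modification shows is again a subbundle, with fibre $\langle\cO_S\rangle\oplus(\cR'_3\cap\varpi^*\cW_{X_0})$ over a point of $\OGr_{E_S}(3,\cQ'_{E_S})$. As the quadratic form of $\cQ_S$ factors through that of $\cQ'_S$ (Lemma~\ref{lem:cqps}) and $\cK$ is generically isotropic, $\cK$ is everywhere isotropic (a morphism from a vector bundle to a line bundle vanishing on a dense open vanishes identically); thus $\cK$ defines a morphism $\varphi\colon\OGr_S(3,\cQ'_S)\to\OGr_S(3,\cQ_S)$ over $S$, and naturality of the modification together with the description of $\iota_\cQ$ over $E_S$ in Lemma~\ref{lem:iota} (action by $-1$ on $\cO_{E_S}(E_S)$, trivial on $\varpi^*\cW_{X_0}$) yields the equivariance $\varphi\circ\iota'_{\OGr}=\iota_{\OGr}\circ\varphi$.

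Over $S\ssm E_S$ the Hecke map is an isomorphism carrying $\cQ'_S$ onto $\cQ_S$ (Lemma~\ref{lem:cqps}), so $\varphi$ is an isomorphism there, hence birational ($\OGr_S(3,\cQ'_S)$ being integral, as it is smooth and connected by Corollary~\ref{cor:ogrps}). Over $e'\in E_S$, Lemma~\ref{lem:cqps} identifies $\cQ_{S,e'}$ with the cone over $\cQ_{0,\varpi(e')}$ of vertex $[\cO_{S,e'}]$, and the formula for $\cK$ gives $\varphi(R'_3,e')=(\langle\cO_{S,e'}\rangle\oplus P,\,e')$ with $P\coloneqq R'_3\cap\cW_{X_0,\varpi(e')}$ — a maximal isotropic plane by Lemma~\ref{lem:rp3}; conversely, for a fixed such $P$ the $R'_3$ with $R'_3\cap\cW_{X_0,\varpi(e')}=P$ form two reduced points (solving $a^2+q_0(w)=0$ modulo $P$ on the rank-$2$ space $\langle x_0\rangle\oplus(P^\perp/P)$, on which $q_0$ is nondegenerate). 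Hence $\varphi$ is quasi-finite and proper, so finite, two-to-one over $E_S$, and — all its fibres being reduced — unramified. The involution $\iota'_{\OGr}$ is free: over $S\ssm E_S$ because $\iota$ is, and over $e'\in E_S$ because an isotropic $R'_3=P\oplus\langle(a,w)\rangle$ with $a\ne0$ cannot be $\iota_\cQ$-stable, since the sign on the $\cO_{E_S}(E_S)$-coordinate (Lemma~\ref{lem:iota}) forces $w\in P$, contradicting $a^2=a^2+q_0(w)\ne0$. Finally, $g\coloneqq\varpi_{\OGr}\circ\varphi$ is invariant under the free involution $\iota'_{\OGr}$ (because $\varpi_{\OGr}$ is $\iota_{\OGr}$-invariant and $\varphi$ is equivariant) and finite of degree $2$; equivalently, over $f^{-1}(E)$ the scheme $\OGr_S(3,\cQ_S)$ is the fibre product of the two double coverings $\varpi$ and (the Stein double cover of) $f$, so it acquires ordinary nodes and $\varphi$ is its normalization, which separates the two branches, each mapped isomorphically by $\varpi_{\OGr}$. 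Either way, $g$ is an \'etale double covering onto its image; in particular it is unramified.

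The main obstacle is the construction of $\varphi$ as an honest morphism — showing that the modification $\cK$ is a genuine subbundle of $\cO_S\oplus\varpi^*\cW_{X_0}$ that lands in the isotropic locus, and pinning down its fibres over $E_S$. This is precisely where the cone-plus-square description of $\cQ_S$ versus $\cQ'_S$ (Lemmas~\ref{lem:cqps}, \ref{lem:iota}) and the maximality statement of Lemma~\ref{lem:rp3} do the real work; once $\varphi$ is available, finiteness, two-to-oneness over $E_S$, freeness of $\iota'_{\OGr}$, and the unramifiedness of $\varpi_{\OGr}\circ\varphi$ follow as short formal consequences.
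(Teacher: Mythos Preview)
Your construction of~$\varphi$ via the kernel~$\cK$ of the projection to~$\cO_{E_S}(E_S)$ is exactly the paper's approach (the paper calls the resulting bundle~$\cR_3$), and your arguments for the Cartesian right square, the equivariance, the freeness of~$\iota'_{\OGr}$, the two-to-oneness over~$E_S$, and the unramifiedness of~$\varphi$ itself are all correct and match the paper's.

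The gap is in your argument that~$g \coloneqq \varpi_{\OGr}\circ\varphi$ is unramified. Your first claim---that~$g$ being finite of degree~$2$, $\iota'_{\OGr}$-invariant with~$\iota'_{\OGr}$ free, forces~$g$ to be \'etale onto its image---is false as a general principle: those hypotheses only show that~$g$ factors through the \'etale quotient~$\OGr_S(3,\cQ'_S)/\iota'_{\OGr}$, after which the induced degree-$1$ map is finite birational, hence the normalization of its image; but normalizations need not be unramified (think of a cusp). Your second claim---that~$\OGr_S(3,\cQ_S)$ is the fibre product of two double covers and hence has ordinary nodes---is not justified: the morphism~$f\colon\OGr_B(3,\cQ)\to B$ is not a double cover, and in the special case the component~$\OGr_E(3,\cQ_E)$ can have the wrong dimension, so the local structure of~$\OGr_S(3,\cQ_S)=\OGr_B(3,\cQ)\times_B S$ along the ramification is not what you assert.

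The paper closes this gap with an explicit tangent computation. The point is that~$g^{-1}(\OGr_E(3,\cQ_E))=2\cdot\OGr_{E_S}(3,\cQ'_{E_S})$ as divisors (because~$\varpi^*E=2E_S$), so the ``divisor-preimage'' trick you used for~$\varphi$ does not apply to~$g$. One must therefore check directly that~$dg$ is injective in the direction transverse to~$\OGr_{E_S}(3,\cQ'_{E_S})$. The paper takes a tangent vector~$\uptau_S\in\rT_{S,e'}$ in~$\Ker(d\varpi)$, lifts it to~$\OGr_S(3,\cQ'_S)$ by the constant family~$R'_3[\eps]$ (possible because along~$\uptau_S$ the family~$\cQ'_S$ is constant), and then computes~$\varphi$ along this curve explicitly: writing~$R'_3=\langle(1,w),r_2,r_3\rangle$ with~$r_2,r_3\in P=R'_3\cap\cW_{X_0,\varpi(e')}$, the image subspace is~$\langle 1+\eps w,r_2,r_3\rangle$, whose first-order variation sends~$1\mapsto w$; since~$\eta_0(w,w)=-1\neq 0$, this is \emph{not} tangent to~$\OGr(3,\cW_{\varpi(e')})$, hence not tangent to~$\OGr_E(3,\cQ_E)$. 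That single computation is what your argument is missing.
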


The following picture illustrates the situation:
\begin{equation*}
\xymatrix@C=5em{
\begin{tikzpicture}[baseline=0]
\draw [thick] (0,-0.5) .. controls (0.9,-0.5) and (1.1,0.5) .. (2,0.5);
\filldraw[white] (1,0) circle (3pt);
\draw [thick] (0,0.5) .. controls (0.9,.5) and (1.1,-0.5) .. (2,-0.5);
\end{tikzpicture}
\ar[r]^{\varphi} \ar[d]^{f'_S} &
\begin{tikzpicture}[baseline=0]
\draw [thick] (0,-0.5) .. controls (0.9,-0.5) and (1.1,0.5) .. (2,0.5);
\filldraw[black] (1,0) circle (2pt);
\draw [thick] (0,0.5) .. controls (0.9,.5) and (1.1,-0.5) .. (2,-0.5);
\end{tikzpicture}
\ar[r]^{\vp_{\OGr}} \ar[d]^{f_S} &
\begin{tikzpicture}[baseline=0]
\draw [thick, white] (0,-0.5) .. controls (0.9,-0.5) and (1,-0.4) .. (1,0) .. controls (1,0.4) and (0.9,0.5) .. (0,0.5);
\draw [thick] (2,-0.5) .. controls (1.1,-0.5) and (1,-0.4) .. (1,0) .. controls (1,0.4) and (1.1,0.5) .. (2,0.5);
\filldraw[black] (1,0) circle (2pt);
\end{tikzpicture}
\ar[d]^{f}
\\
\begin{tikzpicture}[baseline=-10]
\draw [thick, white] (0,-0.5) .. controls (0.9,-0.5) and (1,-0.4) .. (1,0) .. controls (1,0.4) and (0.9,0.5) .. (0,0.5);
\draw [thick] (0,-0.5) .. controls (0,-0.4) and (0,0) .. (1,0) .. controls (2,0) and (2,-0.4) .. (2,-0.5);
\filldraw[black] (1,0) circle (2pt);
\end{tikzpicture}
\ar@{=}[r] &
\begin{tikzpicture}[baseline=-10]
\draw [thick, white] (0,-0.5) .. controls (0.9,-0.5) and (1,-0.4) .. (1,0) .. controls (1,0.4) and (0.9,0.5) .. (0,0.5);
\draw [thick] (0,-0.5) .. controls (0,-0.4) and (0,0) .. (1,0) .. controls (2,0) and (2,-0.4) .. (2,-0.5);
\filldraw[black] (1,0) circle (2pt);
\end{tikzpicture}
\ar[r]^{\vp} &
\begin{tikzpicture}[baseline = -8]
\draw (0,-0.5) -- (2,0.5);
\filldraw[black] (1,0) circle (2pt);
\end{tikzpicture}
}
\end{equation*}
The involutions act by symmetry about vertical axes.

\begin{proof}
The right Cartesian square of the diagram follows from the definition of~$\OGr_S( {3},\cQ_S)$.

To construct the left square, let~$\cR'_{ {3}} \subset (f'_S)^*(\cO_S(E_S) \oplus \vp^*\cW_{X_0})$ 
be the tautological subbundle of~$\OGr_S( {3},\cQ'_S)$ and consider the composition
\begin{equation}
\label{eq:crpp}
\cR'_{ {3}} \lhra (f'_S)^*(\cO_S(E_S) \oplus \vp^*\cW_{X_0}) \xrightarrow{\quad}  {(f'_S)^*}\cO_{E_S}(E_S)
\end{equation}
of the tautological embedding with  {the pullback of} the second arrow of~\eqref{eq:cwp}, 
that is, with the projection to the first summand restricted to~$E_S$.
If it vanishes at a point~$e'$ of~$E_S$ and if~$R'_3$ is the fiber of~$\cR'_3$ at~$e'$, 
we get~$R'_3 \subset \cW_{X_0,\varpi(e')}$, in contradiction with Lemma~\ref{lem:rp3}.\ 
Thus,  {the composition}~\eqref{eq:crpp} is surjective.

Let~$\cR_{ {3}} \subset (f'_S)^*(\cO_S(E_S) \oplus \vp^*\cW_{X_0})$ be the kernel of~\eqref{eq:crpp}, 
so that the sequence
\begin{equation}
\label{eq:crp-crpp}
0 \to \cR_{ {3}} \to \cR'_{ {3}} \to  {(f'_S)^*}\cO_{E_S}(E_S) \to 0
\end{equation} 
is exact.\ 
Using~\eqref{eq:cwp}, we see that~$\cR_{ {3}}$ 
is a vector subbundle of rank~$ {3}$ of~$ {(f'_S)^*}(\cO_S \oplus \varpi^*\cW_{X_0})$.\ 
Moreover, the definition of~$\eta'$ in Lemma~\ref{lem:cqps} 
implies that~$\cR_{ {3}}$ is isotropic with respect to~$\varpi^*(\eta)$, 
hence defines a morphism~$\OGr_S( {3},\cQ'_S) \to \OGr_B( {3},\cQ)$.\ 
Combining it with the projection to~$S$, we obtain a morphism
\begin{equation*}
\OGr_S( {3},\cQ'_S) \lra \OGr_B( {3},\cQ) \times_B S = \OGr_S( {3},\cQ_S) 
\end{equation*}
which we denote by~$\varphi$.\ 
This gives us the left square in the diagram.

The commutativity and equivariance of the left square are clear from the construction of~$\varphi$.\ 
 Moreover, the involution~$\iota$ acts freely on~$S \ssm E_S$,
hence~{$\iota'_{\OGr}$} acts freely on~$\OGr_S( {3}, \cQ'_S)$ over~$S \ssm E_S$.\ 
Over~$E_S$, the involution~$\iota$ is described in Lemma~\ref{lem:iota}.\ 
This description implies that if~$R'_{ {3}} \subset {(\cO_S(E_S) \oplus \vp^*\cW_{X_0})_{e'} = \C \oplus \cW_{X_0,\vp(e')}}$ 
is the subspace corresponding to a $\iota'_{\OGr}$-invariant point of~$\OGr_S( {3}, \cQ'_S)$ over~$e' \in E_S$,
then~$R'_{ {3}}$ must be compatible with the direct {sum decomposition}~$\C \oplus \cW_{X_0,\vp(e')}$.\ 
But the quadratic form~$\eta'$ is nondegenerate on the first summand by Lemma~\ref{lem:cqps},
hence~$R'_{ {3}}$ must be contained in the second summand, which is impossible  {by Lemma~\ref{lem:rp3}.}\ Thus, $\iota'_{\OGr}$ acts freely  {over~$E_S$,   hence everywhere}.

Since the morphism~$\cQ'_S \to \cQ_S$ is an isomorphism over~$S \ssm E_S$, the same is true for the morphism~$\varphi$.\ 
To describe~$\varphi$ over~$E_S$, 
note that the restriction to~$ {(f'_S)^{-1}(E_S)}$ of the exact sequence~{\eqref{eq:crp-crpp}}
 is the   
 exact sequence
\begin{equation*}
0 \to 
 {(f'_S)^*}\cO_{E_S} \to 
\cR_{{3}}\vert_{{(f'_S)^{-1}(E_S)}} \to 
\cR'_{ {3}}\vert_{{(f'_S)^{-1}(E_S)}} \to 
{(f'_S)^*}\cO_{E_S}(E_S) \to 
0.
\end{equation*}
The image of the middle arrow is an isotropic subbundle of rank~$ {2}$ in~${(f'_S)^*}\varpi^*\cW_{X_0}\vert_{E_S}$.\ 
This means that the restriction of~$\varphi$ to~$\OGr_{E_S}( {3}, \cQ'_{E_S})$ factors as
\begin{equation}
\label{eq:varphi-factorization}
\OGr_{E_S}( {3},\cQ'_{E_S}) \to \OGr_E( {2},\cQ_{0,E}) \to \OGr_E( {3},\cQ_E),
\end{equation}
where~$\cQ_{0,E}$ is the restriction to~$E$ of the family of quadrics~{$\cQ_0$} associated with~$X_0$.\ 

The first arrow in~\eqref{eq:varphi-factorization} takes 
an $\eta'$-isotropic $ {3}$-dimensional subspace~$R'_{ {3}} \subset \C \oplus \cW_{X_0,\vp(e')}$ 
to the intersection~$\bar{R}_{ {2}} \coloneqq R'_{ {3}} \cap \cW_{X_0,\vp(e')}$,
which is a maximal isotropic subspace by Lemma~\ref{lem:rp3}.\ 
 {In particular, $\bar{R}_2$ contains the kernel of~$\eta_{0,\vp(e')}$; 
therefore, the dimension of its orthogonal~$\bar{R}_{ {2}}^\perp$ in~$\cW_{X_0,\vp(e')}$ is~3.}\ 
 Thus, the space~$\C \oplus (\bar{R}_{ {2}}^\perp/\bar{R}_2 )$ is 2-dimensional 
and the quadratic form  induced on it by~$\eta'$ is nondegenerate;
this means that there are exactly two different $\eta'$-isotropic subspaces~$R'_{ {3}}$ that contain~$\bar{R}_{ {2}}$,
that is, the first arrow in~\eqref{eq:varphi-factorization} is \'etale of degree~2.

The second arrow in~\eqref{eq:varphi-factorization} takes an $\eta_0$-isotropic subspace~$\bar{R}_{ {2}} \subset \cW_{X_0,\vp(e')}$ of dimension~$ {2}$
(any such subspace is maximal isotropic by the argument  {of Lemma~\ref{lem:rp3}})
to the sum~\mbox{$R_{ {3}} \coloneqq \C \oplus \bar{R}_{ {2}}$}.\ 
Therefore, this arrow is an isomorphism.

It follows that~$\varphi$ is a quasi-finite morphism.\ 
Since~$\OGr_{E_S}( {3}, \cQ'_{E_S})$ is a smooth Cartier divisor in~$\OGr_S( {3}, \cQ'_S)$ 
equal to the preimage of the Cartier divisor~$E_S \subset S$
and the morphism~$\varphi$ is a morphism over~$S$,
it also follows that~$\varphi$ is unramified over~$E_S$.\ 
Finally, $\varphi$ is proper  because~$f_S$ and~$f'_S$ are,  (see~\eqref{eq:ogr-diagram}), hence~$\varphi$ is finite.

We now prove that~$\varpi_{\OGr} \circ \varphi$ is unramified.\ 
Over~$S \ssm E_S$, this is obvious.\ 
Moreover, the morphism~$\varpi$ induces an open embedding~$E_S \to E$,
hence~$\OGr_{E_S}( {3},\cQ_{E_S}) \to \OGr_E( {3},\cQ_E)$ is \'etale 
so, combining this with the \'etale morphism~$\OGr_{E_S}( {3},\cQ'_{E_S}) \to \OGr_{E_S}( {3},\cQ_{E_S})$ from~\eqref{eq:varphi-factorization},
we conclude that the restriction of~$\varpi_{\OGr} \circ \varphi$ to~$\OGr_{E_S}( {3},\cQ'_{E_S})$ is  {unramified}.\ 
Since~$\OGr_{E_S}( {3},\cQ'_{E_S})$ is a smooth Cartier divisor in~$\OGr_S( {3},\cQ'_S)$, 
it remains to show that for any point~$e' \in E_S$ and any~$[R'_{ {3}}] \in \OGr( {3},\cQ'_{e'})$,
 {there is a tangent vector}~$\uptau \in \rT_{\OGr_S( {3},\cQ'_S),   (e',R'_{ {3}})}$ 
which is not tangent to~$\OGr_{E_S}( {3},\cQ'_{E_S})$ 
  {and whose image} under the differential of~$\varpi_{\OGr} \circ \varphi$ is not tangent to~$\OGr_E( {3},\cQ_E)$.

 {To construct such a~$\uptau$}, we take a tangent vector~$\uptau_S \in \rT_{S,e'}$ in the kernel of the differential of~$\varpi$.\ 
We think of~$\uptau_S$ as a morphism~$\Spec(\C[\eps]/\eps^2) \to S$.\ 
Then~$\varpi \circ \uptau_S$ is constant (that is, factors through~$\Spec(\C)$), 
hence~$\uptau_S^*\cQ_S$ is a constant family of quadrics~$\eta = 0 \oplus \eta_0$.\ 
Moreover, the pullback of the equation of~$E_S$ is equal to~$\eps$.\ 
Now, applying the construction of Lemma~\ref{lem:cqps}, 
we see that~$\eta'$ is also a constant family of quadrics~$\eta' = 1 \oplus \eta_0$.\ 
Therefore, the constant family~$R'_{ {3}}[\eps]$ of isotropic subspaces 
gives a morphism~$\uptau \colon \Spec(\C[\eps]/\eps^2) \to \OGr_S( {3},\cQ'_S)$ lifting~$\uptau_S$.\ 
Clearly, $\uptau$ is a tangent vector  and it is enough to compute its image.

To do this, we choose a basis~$(r_1, {r_2,r_3})$ of~$R'_{ {3}}$ such that~$ {r_2,r_3} \in R'_{ {3}} \cap \cW_{X_0,\varpi(e')}$.\ 
Upon  rescaling, we can write~$r_1$ as~$(1,w)$, where again~$w \in \cW_{X_0,\varpi(e')}$.\ 
The condition that~$R'_{ {3}}$ be isotropic with respect to~$\eta'$ means that for~$ {i,j \in \{2,3\}}$, we have
\begin{equation*}
\eta_0(r_i,r_j) = 0,
\qquad 
\eta_0(w,r_i) = 0,
\qquad\text{and}\qquad
\eta_0(w,w) = -1.
\end{equation*}
The definition of the morphism~$\varphi$ then implies that the corresponding isotropic subspace~$R_{ {3}}$
is spanned by the vectors~$1 + \eps w$,  {$r_2$, and~$r_3$}.\ 
The image of~$\uptau$ in
\begin{equation*}
\rT_{\Gr( {3},\cW_{\varpi(e')}),[R_{ {3}}]} = \Hom(R_{ {3}},\cW_{\varpi(e')} / R_{ {3}})
\end{equation*}
then takes~$r_1$ to~$w$ and vanishes on~{$r_2$ and~$r_3$}.\ 
Since~$\eta_0(w,w) \ne 0$, this vector is not tangent to~$\OGr( {3}, \cW_{\varpi(e')})$, as required.
\end{proof}

We can now prove the proposition.

\begin{proof}[Proof of Proposition~\textup{\ref{prop:corank-special}}]
Since~$\OGr_S( {3},\cQ'_S)$ is smooth and its involution~$\iota'_{\OGr}$ is  fixed point free, 
the quotient~$\OGr_S( {3},\cQ'_S) / \iota'_{\OGr}$ is also smooth.\ 
Moreover, Lemma~\ref{lem:varphi} shows that the morphism~$\varpi_{\OGr} \circ \varphi$ 
induces a finite, birational, unramified, and bijective  {(onto a neighborhood of the point~$e$)} morphism
\begin{equation*}
\OGr_S( {3},\cQ'_S) / \iota'_{\OGr} \lra \OGrm_{B \ssm E_0^{\ge \ell + 1}}( {3}, \cQ_{B \ssm E_0^{\ge \ell + 1}}).
\end{equation*}
The first two properties imply that the morphism is the normalization of the target 
and the last two properties imply that it is an isomorphism.\ 
 Since~$e  $ was an arbitrary point of~$ E \ssm E^{\ell +2}$,
this proves all the properties of~$\OGrm_{B \ssm E^{\ge \ell + 2}}({3},\cQ)$ over a neighborhood of~$E \ssm E^{\ell +2}$ 
and, in combination with Proposition~\ref{prop:corank-n-k}\ref{it:ogr-bme}, proves the first part of the proposition.

Similarly, the  {above} morphism induces an isomorphism
\begin{equation*}
\OGr_{E_S}( {3},\cQ'_{E_S}) / \iota'_{\OGr} \isomto \OGrm_{E \ssm E_0^{\ge \ell + 1}}( {3}, \cQ_{E \ssm E_0^{\ge \ell + 1}}),
\end{equation*}
where the source is a smooth Cartier divisor in~$\OGr_S( {3},\cQ'_S) / \iota'_{\OGr}$,
hence the target is a smooth Cartier divisor in~$\OGrm_{E \ssm E_0^{\ge \ell + 1}}( {3}, \cQ_{E \ssm E_0^{\ge \ell + 1}})$.

 {The Stein factorization and its properties follow from Corollary~\ref{cor:ogrps} by taking the quotient with respect to~$\iota'_{\OGr}$;
the ramification along~$\OGrm_{E \ssm E_0^{\ge \ell + 1}}( {3}, \cQ_{E \ssm E_0^{\ge \ell + 1}})$ is obvious from the Stein factorization.}
\end{proof}

\begin{rema}
\label{rem:p3n3-special}
Consider the case where~$p =  n = 3$, so that~$X$ is a smooth special GM threefold.\ 
Then,~$\ell = 2$ and~$\bp_X \in \sY^3_\Ap$.\ 
Note that~$E_0^{\ge \ell + 1}  $ is empty by Lemma~\ref{lemma:e-stratification}, 
hence the results of Proposition~\ref{prop:corank-special} hold over the whole of~$B$.\ 
Moreover, the morphism~$\tf'$ in the Stein factorization is an isomorphism
{hence,} we have~$\OGrm_B( {3},\cQ) \cong \Bl_{\widetilde\bp_X}(\wtY_\Ap^{\ge 2})$,
where~$\widetilde\bp_X$,  {according to Lemma~\ref{lem:closure}}, is the point of~$\wtY^{\ge 2}_\Ap$ over~$\bp_X$.
\end{rema}


\section{Orthogonal Grassmannians vs.\ Hilbert schemes}
\label{sec:general-another}

In this section, we relate the orthogonal Grassmannian~$\OGr_B(k + 2,\cQ)$ studied in Section~\ref{sec:og} 
to the Hilbert scheme~$\G_k(X)$ of $k$-dimensional quadrics on a smooth GM variety~$X$.

 \subsection{{The degeneracy locus}}

Given a smooth GM variety~$X \subset \P(W)$  {of dimension~$n \ge 2$}
 and a nonnegative integer~$k$, 
we consider the Grassmannian~$\Gr(k + 2, W)  = \F_{k+1}(\P(W)) $ and its closed subscheme
\begin{equation*}
\chG_k(X) \coloneqq \{ R_{k+2} \subset W \mid 
 \rank(V_6 \xrightarrow{\ \bq\ } \Sym^2\!W^\vee \xrightarrow{\ \ } \Sym^2\!R_{k+2}^\vee ) \le 1 \} \subset \Gr(k + 2, W),
\end{equation*}
with the scheme structure defined by the corresponding Fitting ideal.

Consider also the Hilbert schemes~$\F_{k+1}(X)$ and~$\F_{k+1}(M_X)$ of~$(k+1)$-dimensional linear subspaces 
in~$X$ and its Grassmannian hull~$M_X$, respectively (see Sections~\ref{subsec:ls-qu}  {and~\ref{ss:fkx}}).\ 
The open subscheme~$\Fo_{k+1}(M_X)   \subset \F_{k+1}(M_X)$ was defined in Definition~\ref{def:fok}.\ 
All these schemes are subschemes of~$\F_{k+1}(\P(W)) = \Gr(k + 2, W)$.

\begin{lemm}
\label{lem:fkmx-dkx}
Let~$X$ be a smooth GM variety  {of dimension~$n \ge 2$} and let~$k$ be a nonnegative integer.\ 
 There are inclusions
\begin{equation*}
\F_{k+1}(X) \subset \F_{k+1}(M_X) \subset \chG_k(X)
\end{equation*}
of subschemes of~$\Gr(k+2,W)$.\ 
Moreover, if~$X$ is special, $\F_{k+1}(X) \subset \Fo_{k+1}(M_X)$.
\end{lemm}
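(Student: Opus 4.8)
The plan is to notice that the three schemes in sight all live inside $\Gr(k+2,W)=\F_{k+1}(\P(W))$ and are governed by a single morphism of vector bundles, after which the inclusions become elementary ideal‑theoretic bookkeeping. First I would recall that $X$ and $M_X$ are, by their very construction (Section~\ref{subsec:gm}), scheme‑theoretically cut out in $\P(W)$ by the quadrics parametrized by $V_6$ and by its Pl\"ucker hyperplane $V_5$, respectively. Since the Hilbert scheme of $(k+1)$‑planes contained in a closed subscheme of $\P(W)$ cut out by quadrics is exactly the zero scheme of the induced section of $\Sym^2\!\cR_{k+2}^\vee$, this identifies $\F_{k+1}(X)$ with the zero locus of the composition $\phi\colon V_6\otimes\cO\to\Sym^2\!W^\vee\otimes\cO\to\Sym^2\!\cR_{k+2}^\vee$ of~\eqref{eq:v6-s2r}, and $\F_{k+1}(M_X)$ with the zero locus of its restriction $\phi\vert_{V_5\otimes\cO}$; while $\chG_k(X)$ is, by the definition fixed just before the lemma, the closed subscheme cut out by the ideal of $2\times2$ minors of $\phi$.

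Then I would prove the two inclusions of the first assertion locally on $\Gr(k+2,W)$. For $\F_{k+1}(X)\subset\F_{k+1}(M_X)$: the ideal of $\F_{k+1}(X)$ is generated by all entries of $\phi$, that of $\F_{k+1}(M_X)$ by the entries of $\phi\vert_{V_5\otimes\cO}$ alone, which is a subset of the former's generators; hence $\cI_{\F_{k+1}(M_X)}\subseteq\cI_{\F_{k+1}(X)}$, i.e.\ $\F_{k+1}(X)\subset\F_{k+1}(M_X)$. For $\F_{k+1}(M_X)\subset\chG_k(X)$: choose locally a basis of $V_6$ adapted to the hyperplane $V_5$; since any $2\times2$ minor of $\phi$ uses two distinct basis vectors of $V_6$, at least one of them lies in $V_5$ by the pigeonhole principle, so each such minor is a sum of products each of which carries a factor that is an entry of $\phi\vert_{V_5\otimes\cO}$. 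Thus the ideal of $2\times2$ minors of $\phi$ lies in $\cI_{\F_{k+1}(M_X)}$, which gives the scheme‑theoretic inclusion.

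For the last assertion, suppose $X$ is special, so that $M_X$ is the cone with vertex $\bv$ over a linear section $M'\subset\Gr(2,V_5)$ and, by Definition~\ref{def:fok}, $\Fo_{k+1}(M_X)$ is the open subscheme of $\F_{k+1}(M_X)$ parametrizing $(k+1)$‑planes not passing through $\bv$, with closed complement the locus $\F_k(M')$ of planes through $\bv$. As a smooth GM variety does not contain the vertex of the cone, no linear subspace contained in $X$ can pass through $\bv$, so $\F_{k+1}(X)$ is set‑theoretically contained in the open subscheme $\Fo_{k+1}(M_X)\subset\F_{k+1}(M_X)$; since a closed subscheme that is set‑theoretically contained in an open subscheme is contained in it scheme‑theoretically, we conclude $\F_{k+1}(X)\subset\Fo_{k+1}(M_X)$.

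I do not expect a genuine obstacle here: the whole argument is essentially formal. The only points that need a little care are the scheme‑theoretic bookkeeping in the inclusion $\F_{k+1}(M_X)\subset\chG_k(X)$ — making the pigeonhole argument on $2\times2$ minors precise, using the Fitting‑ideal convention of the lemma's hypothesis — and spelling out the standard identification of $\F_{k+1}(X)$ and $\F_{k+1}(M_X)$ with the respective zero loci of $\phi$ and $\phi\vert_{V_5\otimes\cO}$.
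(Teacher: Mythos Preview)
Your proposal is correct and follows essentially the same approach as the paper: the paper observes that~$X\subset M_X$ gives the first inclusion, that~$\P(R_{k+2})\subset M_X$ forces~$V_5\subset\Ker(V_6\to\Sym^2\!R_{k+2}^\vee)$ hence rank~$\le 1$ for the second, and that~$\bv\notin X$ gives the last assertion. Your version is somewhat more explicit about the scheme-theoretic bookkeeping (the pigeonhole argument on~$2\times2$ minors is equivalent to the paper's factorization of~$\phi$ through the line bundle~$(V_6/V_5)\otimes\cO$ on~$\F_{k+1}(M_X)$), but the substance is identical.
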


\begin{proof}
The inclusion~$\F_{k+1}(X) \subset \F_{k+1}(M_X)$  {follows from the inclusion~$X \subset M_X$}.\ 
To prove the inclusion~$\F_{k+1}(M_X) \subset \chG_k(X)$, 
note that~$\P(R_{k+2}) \subset M_X$ implies~\mbox{$V_5 \subset \Ker(V_6 \to \Sym^2\!R_{k+2}^\vee)$}, 
hence we have~\mbox{$\rank(V_6 \to \Sym^2\!R_{k+2}^\vee ) \le 1$}.\ 
Finally, if~$X$ is special, so that~$M_X$ is a cone,~$X$ does not contain the vertex of~$M_X$, 
hence~$\F_{k+1}(X) \subset \Fo_{k+1}(M_X)$.
\end{proof}

We set 
\begin{equation*}
\chG^0_k(X) \coloneqq \chG_k(X) \ssm \F_{k+1}(M_X).
\end{equation*}
 We will use the notions of $\sigma$- and~$\tau$-quadrics introduced in Definition~\ref{def:st-quadrics}
and the corresponding closed subschemes~$\Gs_k(X) \subset \G_k(X)$ and~$\Gt_k(X) \subset \G_k(X)$, 
as well as the definition~\eqref{eq:g0k} of the open subscheme $\G^0_k(X) \subset \G_k(X) $.

{Using}~\eqref{eq:gk-pw}, we consider the composition
\begin{equation}
\label{eq:pik-composition}
\G_k(X) \hra
\G_k(\P(W)) \isomto
\P_{\Gr(k+2,W)}(\Sym^2\!\cR_{k+2}^\vee) \to
\Gr(k+2,W)
\end{equation}
that takes a quadric~$\Sigma   \subset X $ to its linear span~$\langle \Sigma \rangle$ in~$\P(W)$.\

\begin{prop}
\label{prop:pik}
Let~$X$ be a smooth GM variety and let~$k$ be a nonnegative integer.\ 
The composition~\eqref{eq:pik-composition} factors through a surjective morphism
 \begin{equation*}
\lambda_k \colon \G_k(X) \lra \chG_k(X),
\qquad 
\Sigma \longmapsto \langle \Sigma \rangle
\end{equation*}
such that
\begin{align}
\Gs_k(X) &= \lambda_k^{-1}(\Fs_{k+1}(M_X)),\nonumber\\
\Gt_k(X) &= \lambda_k^{-1}(\Ft_{k+1}(M_X)),\label{ggg}\\
\G^0_k(X) &= \lambda_k^{-1}(\chG^0_k(X)).\nonumber
\end{align}
Moreover, we have~$\lambda_k^{-1}(\F_{k+1}(X)) \cong \P_{\F_{k+1}(X)}(\Sym^2\!\cR_{k+2}^\vee)$ 
 {and~$\lambda_k$ induces an isomorphism}
\begin{equation*}
\lambda_k \colon 
\G_k(X) \ssm \P_{\F_{k+1}(X)}(\Sym^2\!\cR_{k+2}^\vee)
\isomto\chG_k(X) \ssm \F_{k+1}(X).
 \end{equation*}
  {In particular, $\lambda_k$} is an isomorphism over the open subscheme~\mbox{$\chG^0_k(X) \subset \chG_k(X)$}.
 \end{prop}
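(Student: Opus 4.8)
The plan is to realise everything inside $\G_k(\P(W))\cong\P_{\Gr(k+2,W)}(\Sym^2\!\cR_{k+2}^\vee)$ (from~\eqref{eq:gk-pw}) and to translate the condition ``$\Sigma\subset X$'' into a condition on the composition $\psi\colon V_6\otimes\cO\xrightarrow{\ \bq\ }\Sym^2\!W^\vee\otimes\cO\to\Sym^2\!\cR_{k+2}^\vee$ of~\eqref{eq:pik-composition}. The only non-formal input is an elementary fact about quadrics together with its relative form: for a vector space $R$ and a nonzero $q\in\Sym^2\!R^\vee$, the degree-$2$ part of the saturated homogeneous ideal of $\{q=0\}\subset\P(R)$ equals the line $\C q$ (checked separately according to whether $q$ is irreducible, a product of two distinct linear forms, or a perfect square); and, over an arbitrary base $S$, if $\cQ\subset\P_S(\cR)$ is the flat family of quadric hypersurfaces cut out by a line subbundle $\cL_{\mathrm{univ}}\hookrightarrow\Sym^2\!\cR^\vee$, then the relative degree-$2$ ideal of $\cQ$ inside $\Sym^2\!\cR^\vee$ is exactly $\cL_{\mathrm{univ}}$, so a flat family of quadric hypersurfaces in $\P_S(\cR)$ cut out by a line subbundle $\cL'$ lies in $\cQ$ if and only if $\cL'=\cL_{\mathrm{univ}}$.

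Since $X=\bigcap_{v\in V_6}\{\bq(v)=0\}$, a point of $\G_k(\P(W))$ corresponding to $\Sigma=\{q=0\}\subset\P(R_{k+2})$ lies in $\G_k(X)$ if and only if $\bq(v)|_{R_{k+2}}\in\C q$ for every $v\in V_6$, i.e.\ if and only if the image of $V_6\to\Sym^2\!R_{k+2}^\vee$ is contained in $\C q$. Over $\G_k(X)$ the universal quadric lies in $X$, so every component of $\psi$ vanishes on it, and the relative form of the elementary fact shows that $\psi$ factors through the universal quadratic form, a line subbundle $\cL_{\mathrm{univ}}\subset\Sym^2\!\cR_{k+2}^\vee$ on the $\P$-bundle $\G_k(\P(W))$. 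In particular all $2\times2$ minors of $\psi$ vanish on $\G_k(X)$, so~\eqref{eq:pik-composition} factors through the Fitting-ideal locus $\chG_k(X)$; this produces $\lambda_k$, which is surjective by the fibre analysis below. The identities~\eqref{ggg} are then immediate from the definitions, using $\lambda_k(\Sigma)=[\langle\Sigma\rangle]$, the equivalence of $\langle\Sigma\rangle\subset\CGr(2,V_5)$ with $\langle\Sigma\rangle\subset M_X$, and $\F_{k+1}(M_X)=\Fs_{k+1}(M_X)\cup\Ft_{k+1}(M_X)$ (Section~\ref{subsec:ls-qu}).

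Over $\F_{k+1}(X)$, which is the zero locus of $\psi$, the containment $\Sigma\subset X$ is automatic: the universal quadric of $\P_{\F_{k+1}(X)}(\Sym^2\!\cR_{k+2}^\vee)$ sits inside the universal $(k+1)$-plane, which lies in $X$, so $\P_{\F_{k+1}(X)}(\Sym^2\!\cR_{k+2}^\vee)\subset\G_k(X)$; combined with the tautological inclusion $\lambda_k^{-1}(\F_{k+1}(X))\subset\G_k(\P(W))\times_{\Gr(k+2,W)}\F_{k+1}(X)=\P_{\F_{k+1}(X)}(\Sym^2\!\cR_{k+2}^\vee)$ this gives $\lambda_k^{-1}(\F_{k+1}(X))\cong\P_{\F_{k+1}(X)}(\Sym^2\!\cR_{k+2}^\vee)$. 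On the open complement $U\coloneqq\chG_k(X)\ssm\F_{k+1}(X)$, the definition of $\chG_k(X)$ by Fitting ideals forces $\coker(\psi|_U)$ to be locally free of corank $1$; hence $\cL\coloneqq\mathrm{im}(\psi|_U)$ is a line subbundle of $\Sym^2\!\cR_{k+2}^\vee|_U$, and it defines a section $s\colon U\to\P_U(\Sym^2\!\cR_{k+2}^\vee)$ which lands in $\G_k(X)$ by the relative elementary fact, hence in $\lambda_k^{-1}(U)$.

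The hard part is to check that $s$ is inverse to $\lambda_k$ over $U$, i.e.\ that $\lambda_k\colon\lambda_k^{-1}(U)\to U$ is an isomorphism; since it admits the section $s$, it suffices to show it is a monomorphism. On $\lambda_k^{-1}(U)$ one has $\mathrm{im}(\psi)\subset\cL_{\mathrm{univ}}$ by the factorisation above, while here $\mathrm{im}(\psi)$ equals the pullback $\lambda_k^*\cL$ (pulling back preserves the image of $\psi$ since $\cL$ has locally free cokernel). A fibrewise comparison---both are line subbundles of $\Sym^2\!\cR_{k+2}^\vee$, and $\mathrm{im}(\psi)$ is nonzero at every point lying over $U$---then gives $\mathrm{im}(\psi)=\lambda_k^*\cL=\cL_{\mathrm{univ}}$ over $\lambda_k^{-1}(U)$. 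Therefore the closed immersion $\lambda_k^{-1}(U)\hookrightarrow\P_U(\Sym^2\!\cR_{k+2}^\vee)$ coincides with $s\circ\lambda_k$, so $\lambda_k^{-1}(U)\to U$ factors through the immersion $s$; being a monomorphism with a section, it is an isomorphism. Finally $\chG^0_k(X)=\chG_k(X)\ssm\F_{k+1}(M_X)\subset U$ by Lemma~\ref{lem:fkmx-dkx}, so $\lambda_k$ is in particular an isomorphism over $\chG^0_k(X)$.
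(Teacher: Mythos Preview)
Your proof is correct and follows essentially the same approach as the paper's: both show the factorization $V_6\otimes\cO\to\cL\hookrightarrow\Sym^2\!\cR_{k+2}^\vee$ on $\G_k(X)$ (the paper via the ideal sheaf of the universal quadric, you via the relative ``degree-$2$ saturated ideal'' fact), then construct the inverse over $\chG_k(X)\ssm\F_{k+1}(X)$ from the line subbundle $\mathrm{im}(\psi)$. Your final paragraph spells out explicitly why the section $s$ is a two-sided inverse (by comparing $\cL_{\mathrm{univ}}$ with $\lambda_k^*\cL$), which the paper dismisses as ``obviously inverse''; this extra care is fine but not strictly necessary, and otherwise the two arguments are the same.
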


\begin{proof}
 Let~$\langle \Sigma \rangle \eqqcolon \P(R_{k+2})$.\ 
Since~$\Sigma \subset X$, the restriction to~$R_{k+2}$ of any quadratic form 
from the space~$\bq(V_6) \subset \Sym^2\!W^\vee$  of quadratic equations of~$X \subset \P(W)$
must be proportional to the equation of~$\Sigma$, 
hence the image of the map~$V_6 \xrightarrow{\ \bq\ } \Sym^2\!W^\vee \xrightarrow{\ \ } \Sym^2\!R_{k+2}^\vee$
is at most $1$-dimensional;
therefore~$[R_{k+2}] \in \chG_k(X)$, hence~\eqref{eq:pik-composition} factors through~$\chG_k(X)$.\ 

More precisely, let~{$\cQ_k(X) \subset \G_k(X) \times X$} be the universal quadric; 
it is the pullback along the morphism~$\G_k(X) \to \G_k(\P(W)) \cong \P_{\Gr(k+2,W)}(\Sym^2\!\cR_{k+2}^\vee)$ 
of the universal divisor of bidegree~$(1,2)$ in the fiber product
\begin{equation*}
\P_{\Gr(k+2,W)}(\Sym^2\!\cR_{k+2}^\vee) \times_{\Gr(k+2,W)} \P_{\Gr(k+2,W)}(\cR_{k+2}).
\end{equation*}
Denoting by~$\cL$ the pullback from~$\P_{\Gr(k+2,W)}(\Sym^2\!\cR_{k+2}^\vee)$ to~$\G_k(X)$ of the tautological line subbundle,
we conclude that~$\cQ_k(X)$ is a divisor in~$\P_{\G_k(X)}(\cR_{k+2})$ whose equation is a global section of~$\cL^\vee(2)$.\ 
 Since the morphism
\begin{equation*}
\cQ_k(X) \hookrightarrow 
\P_{\G_k(X)}(\cR_{k+2}) \to
\P_{\Gr(k+2,W)}(\cR_{k+2}) =
\Fl(1,k+2,W) \to 
\P(W)
\end{equation*}
  factors through~$X$, which is the zero locus of~$\bq \colon V_6 \otimes \cO \to \cO(2)$,
the pullback of~$\bq$ to~$\P_{\G_k(X)}(\cR_{k+2})$ must factor as
\begin{equation*}
V_6 \otimes \cO \to \cL \to \cO(2),
\end{equation*}
where the second arrow is the equation of~$\cQ_k(X)$.\ 
The morphism~$V_6 \otimes \cO \to \Sym^2\!\cR_{k+2}^\vee$ on~$\G_k(X)$ 
therefore  factors as~$V_6 \otimes \cO \to \cL \hookrightarrow \Sym^2\!\cR_{k+2}^\vee$,
which implies that~$\lambda_k(\G_k(X)) \subset \chG_k(X)$.

The equalities~\eqref{ggg} follow from the definition of~$\sigma$- and $\rtx$-quadrics  {(Definition~\ref{def:st-quadrics})}.
 
Assume now~$[R_{k+2}] \in \chG_k(X)$.\
If~$[R_{k+2}] \in \F_{k+1}(X)$, we have~$\P(R_{k+2}) \subset X$, 
hence any quadric in~$\P(R_{k+2})$ is contained in~$X$.\ This gives an isomorphism  
\begin{equation*}
\lambda_k^{-1}(\F_{k+1}(X)) \cong \P_{\F_{k+1}(X)}(\Sym^2\!\cR_{k+2}^\vee).
\end{equation*}
Furthermore, over~$\chG_k(X) \ssm \F_{k+1}(X)$,
the morphism~$V_6 \otimes \cO \to \Sym^2\!\cR_{k+2}^\vee$ has rank~$1$, 
hence its image is a line subbundle in~ $\Sym^2\!\cR_{k+2}^\vee$.\ 
This line subbundle defines  {a morphism
\begin{equation*}
\chG_k(X) \ssm \F_{k+1}(X) \to \P_{\Gr(k+2,W)}(\Sym^2\!\cR_{k+2}^\vee) 
\end{equation*}
which is obviously inverse to the restriction of~$\lambda_k$ to~$\G_k(X) \ssm \P_{\F_{k+1}(X)}(\Sym^2\!\cR_{k+2}^\vee)$.}
 \end{proof}

Combining Propositions~\ref{prop:pik} and~\ref{prop:fk-gm}, we obtain the following result.

\begin{coro}
\label{cor:pik}
Let~$X$ be a GM variety of dimension~$n$ satisfying Property~\eqref{hh} and let~$k$ be a nonnegative integer.\ 
If
\begin{itemize}
\item either $2k + 2 > n$, 
\item or~$2k + 2 = n  {{} \ne 4}$ and~$\sY^3_{A,V_5} = \vide$,
\item or $k =  {1}$, $n =  {4}$, and~$\sY^3_{A,V_5} =\sZ^4_{A,V_5} = \vide$,
\end{itemize}
then~{$\F_{k+1}(X) = \vide$ and}~$\lambda_k \colon \G_k(X) \to \chG_k(X)$ is an isomorphism.
\end{coro}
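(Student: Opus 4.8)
The plan is to reduce the whole statement to the single vanishing $\F_{k+1}(X) = \vide$ and then invoke Proposition~\ref{prop:pik}. Recall that that proposition exhibits an isomorphism
\[
\lambda_k \colon \G_k(X) \ssm \P_{\F_{k+1}(X)}(\Sym^2\!\cR_{k+2}^\vee) \isomto \chG_k(X) \ssm \F_{k+1}(X).
\]
Hence, once we know $\F_{k+1}(X) = \vide$, the projectivized bundle over it is empty as well, both excised loci disappear, and $\lambda_k \colon \G_k(X) \to \chG_k(X)$ is an isomorphism. So the entire task is to check $\F_{k+1}(X) = \vide$ under each of the three listed hypotheses, and for this I would apply Proposition~\ref{prop:fk-gm} with $k$ replaced by $k+1$.

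If $2k+2 > n$, then $2(k+1) > n$, and $\F_{k+1}(X) = \vide$ is precisely the first assertion of Proposition~\ref{prop:fk-gm}. If $2k+2 = n$ and $n \ne 4$, then since GM varieties satisfying Property~\eqref{hh} have even dimension $n$ with $2 \le n \le 6$, we get $n \in \{2,6\}$ and $k+1 = n/2 \in \{1,3\}$; in both sub-cases Proposition~\ref{prop:fk-gm} provides a finite surjective morphism $\F_{k+1}(X) \to \sY^3_{A,V_5}$, and since $\sY^3_{A,V_5} = \vide$ by hypothesis, a finite surjection onto the empty scheme forces $\F_{k+1}(X) = \vide$. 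Finally, if $k = 1$, $n = 4$, and $\sY^3_{A,V_5} = \sZ^4_{A,V_5} = \vide$, then $k+1 = 2 = n/2$ and Proposition~\ref{prop:fk-gm} gives a finite surjective morphism $\F_2(X) \to \sY^3_{A,V_5} \sqcup \sZ^4_{A,V_5} = \vide$, whence $\F_2(X) = \F_{k+1}(X) = \vide$.

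In all three cases $\F_{k+1}(X) = \vide$, and then Proposition~\ref{prop:pik} immediately yields that $\lambda_k$ is an isomorphism. I do not expect a genuine obstacle here: the proof is essentially bookkeeping, matching the index shift $k \rightsquigarrow k+1$ against the trichotomy $2k' > n$ / $2k' = n$ / $2k' < n$ of Proposition~\ref{prop:fk-gm}, and observing that in the boundary case the relevant targets $\sY^3_{A,V_5}$ and $\sZ^4_{A,V_5}$ are exactly the schemes assumed empty. The one point deserving a sentence of care is the exclusion of $n = 4$ (equivalently $k+1 = 2$) from the middle case: there the target of the finite morphism from $\F_2(X)$ acquires the extra component $\sZ^4_{A,V_5}$, which is why that situation is treated separately as the third bullet with its own hypothesis.
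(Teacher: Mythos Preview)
Your proof is correct and follows exactly the same approach as the paper, which simply cites Propositions~\ref{prop:pik} and~\ref{prop:fk-gm} together. One minor phrasing issue: you write ``GM varieties satisfying Property~\eqref{hh} have even dimension $n$ with $2 \le n \le 6$'', but of course they can have odd dimension; what you mean (and use) is that $n = 2k+2$ forces $n$ even and the GM constraint gives $2 \le n \le 6$, whence $n \in \{2,6\}$ after excluding $n=4$.
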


We will also need the following general observation.\ 
Recall that, by Lemma~\ref{lem:fkx-fkmx}, 
 {the complement of~$\F_{k+1}(X)$ in~$\F_{k+1}(M_X)$ is dense in~$\F_{k+1}(M_X)$.}
 
\begin{lemm}
\label{lem:fmx-strict-transform}
Let~$X$ be a GM variety of dimension~$n$ satisfying Property~\eqref{hh} and let~$k$ be a nonnegative integer.\ 
The strict transform  of the subscheme~$\F_{k+1}(M_X) \subset \chG_k(X)$ in~$\G_k(X)$ 
is isomorphic to~$\Bl_{\F_{k+1}(X)}(\F_{k+1}(M_X))$.
\end{lemm}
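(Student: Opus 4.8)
The plan is to analyze the morphism $\lambda_k \colon \G_k(X) \to \chG_k(X)$ from Proposition~\ref{prop:pik} over the subscheme $\F_{k+1}(M_X) \subset \chG_k(X)$. By Proposition~\ref{prop:pik}, $\lambda_k$ is an isomorphism over the complement $\chG_k(X) \ssm \F_{k+1}(X)$, and $\lambda_k^{-1}(\F_{k+1}(X)) \cong \P_{\F_{k+1}(X)}(\Sym^2\!\cR_{k+2}^\vee)$. Since $\F_{k+1}(X) \subset \F_{k+1}(M_X)$ by Lemma~\ref{lem:fkmx-dkx}, the strict transform of $\F_{k+1}(M_X)$ in $\G_k(X)$ is the closure of $\lambda_k^{-1}(\F_{k+1}(M_X) \ssm \F_{k+1}(X))$, and the latter maps isomorphically to $\F_{k+1}(M_X) \ssm \F_{k+1}(X)$, which is dense in $\F_{k+1}(M_X)$ by Lemma~\ref{lem:fkx-fkmx}. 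So the question is genuinely about the behavior of $\lambda_k$ in a neighborhood of $\F_{k+1}(X)$.

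**The key computation.** First I would recall from the proof of Proposition~\ref{prop:pik} that $\G_k(X)$ sits naturally inside $\P_{\Gr(k+2,W)}(\Sym^2\!\cR_{k+2}^\vee)$ as the zero locus of the morphism $V_6 \otimes \cO \to \Sym^2\!\cR_{k+2}^\vee \otimes \cL^\vee$ obtained by twisting the tautological surjection, where $\cL$ is the tautological line subbundle. Restricting to the locus over $\F_{k+1}(M_X)$: there $V_5 \subset \Ker(V_6 \to \Sym^2\!R_{k+2}^\vee)$ (as in the proof of Lemma~\ref{lem:fkmx-dkx}), so only the extra quadric $Q$ survives, giving a section $s \colon \cO \to \Sym^2\!\cR_{k+2}^\vee|_{\F_{k+1}(M_X)}$ whose zero locus is exactly $\F_{k+1}(X)$ (the quadric $Q$ vanishes on $\P(R_{k+2})$ iff $\P(R_{k+2}) \subset X$). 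The fiber of $\lambda_k$ over $\F_{k+1}(M_X)$ is thus the locus in $\P_{\F_{k+1}(M_X)}(\Sym^2\!\cR_{k+2}^\vee)$ where $s$ (viewed via the tautological surjection $\Sym^2\!\cR_{k+2}^\vee \to \cL^\vee$ twisted appropriately) vanishes — that is, the \emph{blowup} of $\F_{k+1}(M_X)$ along the zero scheme $\F_{k+1}(X)$ of $s$, realized inside the projective bundle as the projectivization of the relevant ideal sheaf. Concretely: $\lambda_k^{-1}(\F_{k+1}(M_X))$ is the graph-closure of the rational section $\F_{k+1}(M_X) \dashrightarrow \P(\Sym^2\!\cR_{k+2}^\vee)$ determined by $Q$, which is undefined precisely along $\F_{k+1}(X)$, and such a graph-closure of a section of a projective bundle defined by a cosection with zero locus $Z$ is canonically $\Bl_Z$ of the base.

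**Assembling the argument.** So I would argue in three steps. (1) Identify $\lambda_k^{-1}(\F_{k+1}(M_X))$ scheme-theoretically: using the factorization $V_6 \otimes \cO \to \cL \hookrightarrow \Sym^2\!\cR_{k+2}^\vee$ from the proof of Proposition~\ref{prop:pik} and the vanishing of $V_5$, show it equals the closure in $\P_{\F_{k+1}(M_X)}(\Sym^2\!\cR_{k+2}^\vee)$ of the image of the section defined by $\bq(Q)$ over $\F_{k+1}(M_X) \ssm \F_{k+1}(X)$. (2) Recognize this closure as $\Bl_{\F_{k+1}(X)}(\F_{k+1}(M_X))$: a section of $\P(\cE)$ over an open dense subscheme $\F_{k+1}(M_X) \ssm Z$, given by a morphism $\cO \to \cE$ vanishing along $Z$, has graph-closure equal to $\Bl_Z$ of the base (this is the standard description of a blowup via the Rees algebra / the image of $\P(\cI_Z) \hookrightarrow \P(\cE)$). (3) Finally observe that the strict transform of $\F_{k+1}(M_X)$ in $\G_k(X)$ — the closure of $\lambda_k^{-1}(\F_{k+1}(M_X) \ssm \F_{k+1}(X))$ — coincides with $\lambda_k^{-1}(\F_{k+1}(M_X))$ itself when this scheme is reduced/irreducible with no embedded or extra components over $\F_{k+1}(X)$; alternatively, since $\lambda_k$ is an isomorphism over the dense open $\F_{k+1}(M_X) \ssm \F_{k+1}(X)$ and the graph-closure is by construction the closure of that open part, the strict transform \emph{is} that graph-closure, giving the claim directly.

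**Main obstacle.** The delicate point is Step (2): matching the scheme structure. One must be careful that the zero scheme of the cosection $V_6 \otimes \cO \to \Sym^2\!\cR_{k+2}^\vee$ restricted over $\F_{k+1}(M_X)$ is genuinely $\F_{k+1}(X)$ with its natural (Fitting-ideal) scheme structure — i.e., that the defining ideal of $\F_{k+1}(X)$ inside $\F_{k+1}(M_X)$ is generated by the components of the single section $\bq(Q)$ — and that the blowup along this ideal is what appears, rather than the blowup along some thickening. This should follow from the fact that, over $\F_{k+1}(M_X)$, the only surviving quadric is $Q$ (the Plücker quadrics all vanish identically on $R_{k+2}$), so the ideal of $\F_{k+1}(X)$ in $\F_{k+1}(M_X)$ really is the image of $\Sym^2\!\cR_{k+2} \otimes \cL$-valued section $\bq(Q)^\vee$, i.e. a genuine ideal of expected codimension, and then the universal property of the blowup (it is the closure of the graph of the rational map to $\P(\Sym^2\!\cR_{k+2}^\vee)$) finishes the identification. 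I would cite the standard construction of blowups as closures of graphs of sections of projective bundles rather than redo it.
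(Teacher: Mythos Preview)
Your proposal is correct and follows essentially the same approach as the paper: both identify that over $\F_{k+1}(M_X)$ the morphism $V_6 \otimes \cO \to \Sym^2\!\cR_{k+2}^\vee$ factors through the single non-Pl\"ucker quadric~$Q$, whose zero locus is $\F_{k+1}(X)$, then recognize the resulting graph-closure in $\P_{\F_{k+1}(M_X)}(\Sym^2\!\cR_{k+2}^\vee)$ as $\Bl_{\F_{k+1}(X)}(\F_{k+1}(M_X))$ via the surjection $\Sym^2\!\cR_{k+2} \twoheadrightarrow \cI_{\F_{k+1}(X)}$, and use density of $\F_{k+1}(M_X) \ssm \F_{k+1}(X)$ (Lemma~\ref{lem:fkx-fkmx}) to conclude. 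One small caution: in your Step~(3) the first formulation is misleading---$\lambda_k^{-1}(\F_{k+1}(M_X))$ genuinely contains the full projective bundles $\P(\Sym^2\!R_{k+2}^\vee)$ over points of $\F_{k+1}(X)$ as extra components, so it does \emph{not} coincide with the strict transform---but your ``alternatively'' clause, which takes the closure of the open part directly, is the right statement and is exactly what the paper does.
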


\begin{proof}
 {If~$k \ge 3$, then~$\F_{k+1}(X) = \vide$ and Corollary~\ref{cor:pik} implies the claim, so assume~\mbox{$k \le 2$}.}\ 
By definition of the embedding~$\F_{k+1}(M_X) \subset \chG_k(X)$  {(see Lemma~\ref{lem:fkmx-dkx})}, 
the restriction of the morphism~\eqref{eq:v6-s2r} to~$\F_{k+1}(M_X)$ factors as
\begin{equation*}
V_6 \otimes \cO_{\F_{k+1}(M_X)} \to (V_6 / V_5) \otimes \cO_{\F_{k+1}(M_X)} \to \Sym^2\!\cR_{k+2}^\vee,
\end{equation*}
where the right arrow is given by any non-Pl\"ucker quadric containing~$X$ and its zero locus is the subscheme~$\F_{k+1}(X) \subset \F_{k+1}(M_X)$.\ 
 {The dual of the second arrow is an epimorphism~$\Sym^2\!\cR_{k+2} \twoheadrightarrow \cI_{\F_{k+1}(X)}$ on~$\F_{k+1}(M_X)$;
it induces a closed embedding}
 \begin{equation*}
\Bl_{\F_{k+1}(X)}(\F_{k+1}(M_X)) \hookrightarrow 
\P_{\F_{k+1}(M_X)}(\Sym^2\!\cR_{k+2}^\vee) \subset 
\P_{\Gr(k+2,W)}(\Sym^2\!\cR_{k+2}^\vee)  {{} = \G_k(\P(W))}.
\end{equation*}
The open subset~$\F_{k+1}(M_X) \ssm \F_{k+1}(X)$ is dense in~$\F_{k+1}(M_X)$ by Lemma~\ref{lem:fkx-fkmx}
 and its image under the above map is contained in~$\G_k(X)$;
 therefore, the image of~$\Bl_{\F_{k+1}(X)}(\F_{k+1}(M_X))$ is equal to the strict transform of~$\F_{k+1}(M_X)$.
\end{proof}

Using this observation, we obtain a description of the schemes~$\Gs_k(X)$ and~$\Gt_k(X)$.

\begin{coro}
\label{cor:gstk}
Let~$X$ be a GM variety satisfying Property~\eqref{hh} and let~$k$ be a nonnegative integer.\ 
We have
\begin{equation*}
 \G^\star_k(X) = \Bl_{\F^\star_{k+1}(X)}(\F^\star_{k+1}(M_X)) \cup \P_{\F^\star_{k+1}(X)}(\Sym^2\!\cR_{k+2}^\vee),
\end{equation*}
where~$\star$ stands for~$\sigma$ or~$\tau$.\ 
 {In particular, if~$\F^\star_{k+1}(X) = \vide$, then~$\G^\star_k(X) = \F^\star_{k+1}(M_X)$.}
\end{coro}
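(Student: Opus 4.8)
The plan is to deduce the stated decomposition of $\G^\star_k(X)$ from three ingredients already established: the factorization $\lambda_k \colon \G_k(X) \to \chG_k(X)$ with its fiber description (Proposition~\ref{prop:pik}), the identification of the strict transform of $\F_{k+1}(M_X)$ (Lemma~\ref{lem:fmx-strict-transform}), and the compatibility of all of this with the $\sigma/\tau$ decomposition. First I would recall from~\eqref{ggg} that $\G^\star_k(X) = \lambda_k^{-1}(\F^\star_{k+1}(M_X))$, so the task reduces to understanding the preimage of $\F^\star_{k+1}(M_X)$ under $\lambda_k$. Now $\F^\star_{k+1}(M_X) = \F_{k+1}(M_X) \cap \F^\star_{k+1}(\CGr(2,V_5))$, and since being a $\sigma$-space (resp.\ $\tau$-space) is a closed condition cut out on $\Gr(k+2,W)$ independently of $X$, the preimage $\lambda_k^{-1}(\F^\star_{k+1}(M_X))$ is the intersection of $\lambda_k^{-1}(\F_{k+1}(M_X))$ with the corresponding $\sigma$- or $\tau$-locus pulled back to $\G_k(X)$.

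The second step is to describe $\lambda_k^{-1}(\F_{k+1}(M_X))$. By Proposition~\ref{prop:pik}, the morphism $\lambda_k$ is an isomorphism over $\chG_k(X) \ssm \F_{k+1}(X)$, and over $\F_{k+1}(X)$ its fiber is the full projective bundle $\P_{\F_{k+1}(X)}(\Sym^2\!\cR_{k+2}^\vee)$. Since $\F_{k+1}(X) \subset \F_{k+1}(M_X)$ by Lemma~\ref{lem:fkmx-dkx}, the scheme-theoretic preimage $\lambda_k^{-1}(\F_{k+1}(M_X))$ therefore breaks into the part lying over $\F_{k+1}(M_X) \ssm \F_{k+1}(X)$ — on which $\lambda_k$ is an isomorphism, so this part is the strict transform of $\F_{k+1}(M_X)$, identified in Lemma~\ref{lem:fmx-strict-transform} with $\Bl_{\F_{k+1}(X)}(\F_{k+1}(M_X))$ — together with the full fiber $\P_{\F_{k+1}(X)}(\Sym^2\!\cR_{k+2}^\vee)$ over the locus $\F_{k+1}(X)$. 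Intersecting each of these two pieces with the $\sigma$- (resp.\ $\tau$-) locus produces $\Bl_{\F^\star_{k+1}(X)}(\F^\star_{k+1}(M_X))$ and $\P_{\F^\star_{k+1}(X)}(\Sym^2\!\cR_{k+2}^\vee)$ respectively; here one uses that the blowup of a closed subscheme commutes with passing to a closed subscheme containing it (so that $\Bl_{\F_{k+1}(X)}(\F_{k+1}(M_X)) \cap \F^\star_{k+1}(\CGr(2,V_5)) = \Bl_{\F^\star_{k+1}(X)}(\F^\star_{k+1}(M_X))$, using $\F^\star_{k+1}(M_X) \ssm \F^\star_{k+1}(X)$ dense in $\F^\star_{k+1}(M_X)$, which follows from Lemma~\ref{lem:fkx-fkmx}), and that the projective subbundle over $\F^\star_{k+1}(X) \subset \F_{k+1}(X)$ is the restriction of $\P_{\F_{k+1}(X)}(\Sym^2\!\cR_{k+2}^\vee)$.

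Assembling the two pieces gives the asserted equality $\G^\star_k(X) = \Bl_{\F^\star_{k+1}(X)}(\F^\star_{k+1}(M_X)) \cup \P_{\F^\star_{k+1}(X)}(\Sym^2\!\cR_{k+2}^\vee)$, and the final sentence is immediate: if $\F^\star_{k+1}(X) = \vide$, the blowup is trivial and the projective bundle is empty, so $\G^\star_k(X) = \F^\star_{k+1}(M_X)$. The main obstacle I expect is the bookkeeping in the second step — namely being careful that the decomposition of $\lambda_k^{-1}(\F_{k+1}(M_X))$ into "strict transform" plus "full fiber over $\F_{k+1}(X)$" is a genuine equality of schemes (or at least of the underlying sets, as the statement is a set-theoretic union of closed subschemes), and that intersecting with the closed $\sigma$- or $\tau$-condition commutes with taking the blowup. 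Both reduce to the density statement of Lemma~\ref{lem:fkx-fkmx} plus the explicit projective-bundle description of the fibers of $\lambda_k$ over $\F_{k+1}(X)$, so no genuinely new input is needed beyond careful combination of the already-proven lemmas.
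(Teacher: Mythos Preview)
Your proposal is correct and follows essentially the same route as the paper: both use~\eqref{ggg} to write $\G^\star_k(X) = \lambda_k^{-1}(\F^\star_{k+1}(M_X))$ and then decompose this preimage into the strict transform (identified via Lemma~\ref{lem:fmx-strict-transform}) and the full fiber over $\F^\star_{k+1}(X)$ (from Proposition~\ref{prop:pik}). The paper applies Lemma~\ref{lem:fmx-strict-transform} directly in its $\star$-decorated form, whereas you first work undecorated and then intersect with the $\sigma/\tau$-locus, but this is only a cosmetic reordering.
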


\begin{proof}
 {By~\eqref{ggg}}, the scheme~$\G^\star_k(X)$ is the preimage of~$\F^\star_{k+1}(M_X)$ under the map~$\lambda_k$,
hence it is the union of the strict transform of~$\F^\star_{k+1}(M_X)$ and of the full transform of~$\F^\star_{k+1}(X)$,
described in Lemma~\ref{lem:fmx-strict-transform} and Proposition~\ref{prop:pik},  {respectively}.
\end{proof}

\subsection{{The morphism from the orthogonal Grassmannian}}

We now construct a morphism~$\OGr_B(k+2,\cQ) \to \chG_k(X)$.\ 
For this, using  {the embedding~$\cW_B \hookrightarrow W \otimes \cO_B$ obtained by pulling back}~\eqref{defwk2}, 
we consider the composition 
 \begin{equation}
\label{eq:def-pre-hpik}
\OGr_B(k+2,\cQ) \hookrightarrow 
\Gr_B(k+2,\cW_B) \hookrightarrow 
B \times \Gr(k+2,W) \to 
\Gr(k+2, W).
\end{equation}
The subvariety~$\OGr_E(k+2,\cQ_E) \subset \OGr_B(k+2,\cQ)$  was defined in~\eqref{eq:ogre}.

\begin{prop}
\label{prop:hpik}
Let~$X$ be a GM variety satisfying Property~\eqref{hh}.\ 
For any~$k \ge 0$, the composition~\eqref{eq:def-pre-hpik} factors through a morphism
\begin{equation}\label{defpik}
g_k \colon \OGr_B(k+2,\cQ) \lra \chG_k(X)
\end{equation}
such that
\begin{equation}
\label{eq:hpik-preimage}
g_k^{-1}(\F_{k+1}(M_X)) = g_k^{-1}(\F_{k+1}(X)) \cup \OGr_E(k+2,\cQ_E).
\end{equation}
Moreover,~$g_k$ induces  an isomorphism 
\begin{equation*}
g_k^{-1}(\chG^0_k(X)) \xrightiso{\ g_k\ } \chG^0_k(X) = \chG_k(X) \ssm \F_{k+1}(M_X).
\end{equation*}
 \end{prop}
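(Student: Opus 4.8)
The plan is to establish the three assertions — the existence of $g_k$, the preimage identity~\eqref{eq:hpik-preimage}, and the isomorphism over $\chG^0_k(X)$ — by unwinding the two descriptions furnished by Lemmas~\ref{lem:ogrb-triples} and~\ref{lemma:u4-vanishes}. First, to construct $g_k$: on $\OGr_B(k+2,\cQ)$ the composition $\cU_5\hra V_6\otimes\cO\xrightarrow{\ \bq\ }\Sym^2\!W^\vee\otimes\cO\to\Sym^2\!\cR_{k+2}^\vee$ vanishes by the very definition of $\OGr_B(k+2,\cQ)$ as a zero locus (Lemma~\ref{lem:ogrb-triples}), so the morphism $V_6\otimes\cO\to\Sym^2\!\cR_{k+2}^\vee$ factors through the line bundle $(V_6\otimes\cO)/\cU_5\cong\cO(H_5)$; hence it has rank at most~$1$ everywhere, the minors generating the Fitting ideal of $\chG_k(X)\subset\Gr(k+2,W)$ pull back to zero, and~\eqref{eq:def-pre-hpik} factors through a morphism $g_k\colon\OGr_B(k+2,\cQ)\to\chG_k(X)$.

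For~\eqref{eq:hpik-preimage} I would produce two degeneracy sections and relate them. Since $\cR_{k+2}\subset\cW_B$ on $\OGr_B(k+2,\cQ)$, Lemma~\ref{lemma:u4-vanishes} shows that $V_5\otimes\cO\to\Sym^2\!\cR_{k+2}^\vee$ kills $\cU_4$, hence factors through $(V_5\otimes\cO)/\cU_4\cong\cO(H_4)$, producing a section $s_5$ of $\cO(-H_4)\otimes\Sym^2\!\cR_{k+2}^\vee$ whose zero locus is $g_k^{-1}(\F_{k+1}(M_X))$ — the equations of $\F_{k+1}(M_X)\subset\Gr(k+2,W)$ being exactly the components of the Pl\"ucker map $V_5\otimes\cO\to\Sym^2\!\cR_{k+2}^\vee$. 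Likewise the vanishing of $\cU_5$ makes $V_6\otimes\cO\to\Sym^2\!\cR_{k+2}^\vee$ factor through $\cO(H_5)$, giving a section $s_6$ of $\cO(-H_5)\otimes\Sym^2\!\cR_{k+2}^\vee$ with zero locus $g_k^{-1}(\F_{k+1}(X))$. Now the inclusion $V_5\otimes\cO\hra V_6\otimes\cO$ together with the equality $\cU_4=\cU_5\cap(V_5\otimes\cO)$ (see Lemma~\ref{ccb}) induces an injection of line bundles $(V_5\otimes\cO)/\cU_4\hra(V_6\otimes\cO)/\cU_5$, i.e.\ a section $e$ of $\cO(H_5-H_4)\cong\cO(E)$ whose vanishing locus is precisely $E$ (the locus where $\cU_5=V_5\otimes\cO$); pulled back, $e$ cuts out the Cartier divisor $f^{-1}(E)=\OGr_E(k+2,\cQ_E)$ of~\eqref{eq:ogre}. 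Chasing these factorizations yields the identity of sheaf maps $s_5=e\cdot s_6$; taking zero loci gives $g_k^{-1}(\F_{k+1}(M_X))=\OGr_E(k+2,\cQ_E)\cup g_k^{-1}(\F_{k+1}(X))$, which is~\eqref{eq:hpik-preimage}.

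Finally I would obtain the isomorphism over $\chG^0_k(X)=\chG_k(X)\ssm\F_{k+1}(M_X)$ by exhibiting an explicit inverse. On $\chG^0_k(X)$ the morphism $\phi\colon V_6\otimes\cO\to\Sym^2\!\cR_{k+2}^\vee$ has rank at most~$1$ (we are inside $\chG_k(X)$) and at least~$1$ (off $\F_{k+1}(M_X)$ its restriction to $V_5\otimes\cO$ is nonzero), hence constant rank~$1$; thus $\cU_5'\coloneqq\Ker\phi$ and $\cU_4'\coloneqq\cU_5'\cap(V_5\otimes\cO)$ are subbundles of ranks~$5$ and~$4$, the latter because $V_5\otimes\cO\not\subset\cU_5'$ off $\F_{k+1}(M_X)$. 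Together with $\cR_{k+2}$ these define a morphism $\chG^0_k(X)\to B\times\Gr(k+2,W)$, and Lemma~\ref{lemma:u4-vanishes} forces $\cR_{k+2}$ into the $\cW$-subbundle $\cW_{[\cU_4']}$ associated with $\cU_4'$: the Pl\"ucker quadrics from $\cU_4'\subset\cU_5'=\Ker\phi$ vanish on $\P(\cR_{k+2})$, which is irreducible and — off $\F_{k+1}(M_X)$ — not contained in $M_X$, so it lies in the residual linear space $\P(\cW_{[\cU_4']})$. By Lemma~\ref{lem:ogrb-triples} this morphism factors through $g_k^{-1}(\chG^0_k(X))$; it is a section of $g_k$ by construction, and a two-sided inverse because for a point of $g_k^{-1}(\chG^0_k(X))$ the tautological $\cU_5$ lies in $\Ker\phi=\cU_5'$, with equality forced by equality of ranks. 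The step I expect to be the main obstacle is the bundle bookkeeping of the middle paragraph — pinning down the correct twists of $s_5$ and $s_6$, proving the sheaf identity $s_5=e\cdot s_6$, and identifying $\OGr_E(k+2,\cQ_E)$ with the divisor $Z(e)$; the existence of $g_k$ and the isomorphism statement are then formal consequences of Lemmas~\ref{lem:ogrb-triples} and~\ref{lemma:u4-vanishes} together with a constant-rank argument.
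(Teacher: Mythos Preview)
Your proposal is correct and follows essentially the same approach as the paper: the factorization of~$g_k$ via the rank bound from Lemma~\ref{lem:ogrb-triples}, and the inverse over~$\chG^0_k(X)$ via kernels and Lemma~\ref{lemma:u4-vanishes}, match the paper's proof almost verbatim. The one stylistic difference is in the preimage identity~\eqref{eq:hpik-preimage}: the paper argues pointwise (on~$\F_{k+1}(M_X)\ssm\F_{k+1}(X)$ the kernel of~$V_6\otimes\cO\to\Sym^2\!\cR_{k+2}^\vee$ equals~$V_5\otimes\cO$, forcing~$\cU_5=V_5\otimes\cO$ and hence landing over~$E$), whereas you package the same content globally as the factorization~$s_5=e\cdot s_6$; both encode the inclusion~$(V_5\otimes\cO)/\cU_4\hookrightarrow(V_6\otimes\cO)/\cU_5$ and are equivalent.
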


\begin{proof}
 We gave in  Lemma~\ref{lem:ogrb-triples} a description of~$\OGr_B(k+2,\cQ)$.\ 
Since the composition 
\begin{equation*}
\cU_5 \lhra V_6 \otimes \cO_{\OGr_B(k+2,\cQ)} \xrightarrow{\ \bq\ }\Sym^2\!\cR_{k+2}^\vee
\end{equation*}
vanishes, the pullback~$g_k^*(\bq)$ factors through the line bundle~$(V_6 \otimes \cO_{\OGr_B(k+2,\cQ)})/\cU_5$.\ 
Therefore, $g_k^*(\bq)$ has rank at most~1 on~$\OGr_B(k+2,\cQ)$, hence~$g_k$ factors through~$\chG_k(X)$.

As the proof of Lemma~\ref{lem:fmx-strict-transform} shows, 
the kernel of the morphism~$V_6 \otimes \cO \to \Sym^2\!\cR_{k+2}^\vee$ on~$\F_{k+1}(M_X) \ssm \F_{k+1}(X)$
is equal to~$V_5 \otimes \cO$.\ 
Moreover, we saw  above that this kernel contains~$\cU_5$ as a subbundle.\ 
Therefore, on~$g_k^{-1}(\F_{k+1}(M_X) \ssm \F_{k+1}(X))$, we have the equality~$\cU_5 = V_5 \otimes \cO$
of subbundles of~$V_6 \otimes \cO$, which means that~$g_k^{-1}(\F_{k+1}(M_X) \ssm \F_{k+1}(X)) $ is contained in~$\OGr_E(k+2, \cQ_E)$,
hence~\eqref{eq:hpik-preimage} holds.

Similarly, on~$\chG^0_k(X)$, both morphisms~$V_6 \otimes \cO \to \Sym^2\!\cR_{k+2}^\vee$ and~$V_5 \otimes \cO \to \Sym^2\!\cR_{k+2}^\vee$
have rank~$1$, hence their kernels
\begin{equation*}
\cU_5 \coloneqq \Ker(V_6 \otimes \cO \to \Sym^2\!\cR_{k+2}^\vee)\quad\textnormal{and}\quad 
\cU_4 \coloneqq \Ker(V_5 \otimes \cO \to \Sym^2\!\cR_{k+2}^\vee) 
\end{equation*}
are locally free  and define a morphism~${\chG^0_k(X)} \to B$.\ 
It remains to show that together with the tautological subbundle~$\cR_{k+2} \subset W \otimes \cO$,
they define a morphism~$\chG^0_k(X) \to \OGr_B(k+2,\cQ)$.\ 
By definition of~$\cU_4$ and~$\cU_5$, and Lemma~\ref{lem:ogrb-triples}, 
this is equivalent to the factorization of the tautological embedding~$\cR_{k+2} \hookrightarrow W \otimes \cO$
through the  {pullback of the} subbundle~$\cW_B \subset W \otimes \cO$.\ 

To prove this factorization, we consider the morphism
\begin{equation*}
\phi \colon \P_{\chG^0_k(X)}(\cR_{k+2}) \to B_4 \times \P(W)
\end{equation*}
induced by the embeddings~$\cU_4 \hookrightarrow V_5 \otimes \cO$ and~$\cR_{k+2} \hookrightarrow W \otimes \cO$.\ 
The pullback along~$\phi$ of the morphism~$\bq \colon \pr_1^*\cU_4 \to \pr_2^*(\cO_{\P(W) \ssm M_X}(2))$ from Lemma~\ref{lemma:u4-vanishes}
is identically zero on~$\P_{\chG^0_k(X)}(\cR_{k+2})$ (by definition of the subbundle~$\cU_4$).\ 
Therefore, the second claim of Lemma~\ref{lemma:u4-vanishes} implies  
\begin{equation*}
\phi(\P_{\chG^0_k(X)}(\cR_{k+2})) \subset \P_{B_4}(\cW) \cup (B_4 \times M_X).
\end{equation*}
Furthermore, the assumption that the rank of~$V_5 \otimes \cO \to \Sym^2\!\cR_{k+2}^\vee$ is~1 
 {means that 
\begin{equation*}
\phi(\P_{\chG^0_k(X)}(\cR_{k+2})) \cap (B_4 \times M_X) = \vide,
\end{equation*}
hence~$\phi(\P_{\chG^0_k(X)}(\cR_{k+2})) \subset \P_{B_4}(\cW)$}, as required.\ 

The constructed morphism~$\chG^0_k(X) \to \OGr_B(k+2, \cQ)$ is obviously the inverse to the restriction of~$g_k$.
\end{proof}

Combining Propositions~\ref{prop:pik} and~\ref{prop:hpik}, we obtain the following corollary.

\begin{coro}
\label{cor:gk0}
There is an isomorphism~$\G^0_k(X) \cong \OGr_{B \ssm E}(k + 2, \cQ) \ssm g_k^{-1}(\F_{k+1}(X))$.
\end{coro}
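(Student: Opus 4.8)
The statement is a formal consequence of Propositions~\ref{prop:pik} and~\ref{prop:hpik}, so the plan is to chain the two isomorphisms they provide over the common open subscheme $\chG^0_k(X) = \chG_k(X)\ssm\F_{k+1}(M_X)$. First I would invoke Proposition~\ref{prop:pik}: it gives $\G^0_k(X) = \lambda_k^{-1}(\chG^0_k(X))$ together with the fact that $\lambda_k$ restricts to an isomorphism over $\chG^0_k(X)$, hence an isomorphism $\G^0_k(X)\isomto\chG^0_k(X)$. Next I would invoke Proposition~\ref{prop:hpik}, which in the same way yields an isomorphism $g_k^{-1}(\chG^0_k(X))\isomto\chG^0_k(X)$. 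Composing the first with the inverse of the second produces an isomorphism $\G^0_k(X)\isomto g_k^{-1}(\chG^0_k(X))$, and it remains only to identify the target.

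For that identification, since $\chG^0_k(X) = \chG_k(X)\ssm\F_{k+1}(M_X)$ and $g_k\colon\OGr_B(k+2,\cQ)\to\chG_k(X)$, we have $g_k^{-1}(\chG^0_k(X)) = \OGr_B(k+2,\cQ)\ssm g_k^{-1}(\F_{k+1}(M_X))$. Now I would plug in the description of the excised locus from~\eqref{eq:hpik-preimage}, namely $g_k^{-1}(\F_{k+1}(M_X)) = g_k^{-1}(\F_{k+1}(X))\cup\OGr_E(k+2,\cQ_E)$, and then use that by definition~\eqref{eq:ogre} the subscheme $\OGr_E(k+2,\cQ_E)$ is the preimage of the closed subset $E\subset B$ under $f$, so that $\OGr_B(k+2,\cQ)\ssm\OGr_E(k+2,\cQ_E) = \OGr_{B\ssm E}(k+2,\cQ)$. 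Removing in addition $g_k^{-1}(\F_{k+1}(X))$ then gives $g_k^{-1}(\chG^0_k(X)) = \OGr_{B\ssm E}(k+2,\cQ)\ssm g_k^{-1}(\F_{k+1}(X))$, as claimed (here $g_k^{-1}(\F_{k+1}(X))$ is understood as the preimage under the restriction of $g_k$ to $\OGr_{B\ssm E}(k+2,\cQ)$).

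There is no serious obstacle: both propositions already deliver scheme-theoretic isomorphisms, and the argument is pure bookkeeping. The only point needing a little attention is the last step, where $g_k^{-1}(\F_{k+1}(X))$ taken inside all of $\OGr_B(k+2,\cQ)$ may well meet $\OGr_E(k+2,\cQ_E)$ --- indeed at a point of $\F_{k+1}(X)$ the morphism $V_6\otimes\cO\to\Sym^2\cR_{k+2}^\vee$ vanishes identically, so its kernel is all of $V_6$ and nothing forces $\cU_5\ne V_5\otimes\cO$ --- but this causes no problem since $\OGr_E(k+2,\cQ_E)$ has already been removed, so the net effect of the two subtractions is precisely to delete the part of $g_k^{-1}(\F_{k+1}(X))$ lying over $B\ssm E$. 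One may also avoid this remark by first writing $\G^0_k(X)\cong\OGr_B(k+2,\cQ)\ssm\bigl(g_k^{-1}(\F_{k+1}(X))\cup\OGr_E(k+2,\cQ_E)\bigr)$ and only then simplifying via the definition of $\OGr_{B\ssm E}(k+2,\cQ)$.
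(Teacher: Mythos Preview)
Your proof is correct and is precisely the intended argument: the paper's own proof is the one-line ``Combining Propositions~\ref{prop:pik} and~\ref{prop:hpik}'', and you have simply spelled out the chain of isomorphisms and the identification of $g_k^{-1}(\chG^0_k(X))$ via~\eqref{eq:hpik-preimage} and~\eqref{eq:ogre}. The extra care about whether $g_k^{-1}(\F_{k+1}(X))$ meets $\OGr_E(k+2,\cQ_E)$ is a valid observation but, as you note, immaterial once $\OGr_E(k+2,\cQ_E)$ has been removed.
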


The following proposition describes the nontrivial fibers of~$g_k$. 

\begin{prop}
\label{lem:hpik-fibers}
Let~$X$ be a GM variety of dimension~$n$ satisfying Property~\eqref{hh}.\  
The morphism~$g_k \colon \OGr_B(k+2,\cQ) \to \chG_k(X)$ defined in~\eqref{defpik} is
\begin{itemize}
\item   
over $\F_{k+1}(X)\subset \chG_k(X)$,  a Zariski-locally trivial fibration with fibers
\begin{itemize}
\item 
$\vide$ over~$\F_3(X)$,
\item 
$\P^1$ over~$\Fs_2(X)$,  
\item 
$\Bl_{\bp_X}(\P^2)$ over~$\Ft_2(X)$ and~$\F_1(X)$,
\end{itemize}
\item
over~$\F_{k+1}(M_X) \ssm \F_{k+1}(X)\subset \chG_k(X)$, a stratified Zariski-locally trivial fibration 
whose possible fibers are in the top line of the following table and the loci where they occur are the entries 
\textup(for clarity, we omitted~``${}\ssm\F_{k+1}(X)$'' everywhere\textup)
\begin{equation*}
\setlength\extrarowheight{5pt}\qquad 
\begin{array}{c|c|c|c|c}
& \vide & \P^0 & \P^1 & \P^2
\\\hline
k = 0 & \cellcolor{lightgray} & \cellcolor{lightgray} & \Fo_1(M_X) & \F_1(M_X) \ssm \Fo_1(M_X) 
\\
k = 1 & \cellcolor{lightgray} & \Fo_2^\sigma(M_X) & \F_2(M_X) \ssm \Fo_2^\sigma(M_X) & \cellcolor{lightgray}
\\
k = 2 & \Fo_3(M_X) & \Fs_3(M_X) \ssm \Fo_3(M_X) & \Ft_3(M_X) \ssm \Fo_3(M_X) & \cellcolor{lightgray}
\\
k = 3 & \F_4(M_X)  & \cellcolor{lightgray} & \cellcolor{lightgray} & \cellcolor{lightgray}
\end{array}
\end{equation*}
\end{itemize}
 In particular, $g_k$ has connected fibers and is surjective when~$k\le 1$ or~$n\le 3$. 

Moreover, the restriction of the morphism~$g_k$ to~$\OGr_E(k+2,\cQ_E) \subset \OGr_B(k+2,\cQ)$
is a stratified Zariski-locally trivial fibration with   fibers described in the above table;
this time, the descriptions hold without subtracting~$\F_{k+1}(X)$.
\end{prop}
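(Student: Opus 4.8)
The plan is to compute the fibres of $g_k$ directly from the description of $\OGr_B(k+2,\cQ)$ as a space of triples in Lemma~\ref{lem:ogrb-triples}. By Proposition~\ref{prop:hpik}, $g_k$ is an isomorphism over $\chG_k(X)\ssm\F_{k+1}(M_X)$, so only the fibres over $\F_{k+1}(M_X)$ need be analysed, and the chain $\F_{k+1}(X)\subset\F_{k+1}(M_X)$ of Lemma~\ref{lem:fkmx-dkx} splits this into the two cases of the statement; moreover the last assertion (the restriction to $\OGr_E(k+2,\cQ_E)$) will come out as the subcase $U_5=V_5$ of the analysis over $\F_{k+1}(X)$, carried out this time without removing $\F_{k+1}(X)$.

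Fix $[R_{k+2}]\in\F_{k+1}(M_X)$, write $\P(R_{k+2})$ for the corresponding linear span, and use Lemma~\ref{lem:ogrb-triples}: the fibre of $g_k$ is $\{([U_4],[U_5])\in B : U_5\subset\Ker(V_6\to\Sym^2\!R_{k+2}^\vee),\ R_{k+2}\subset\C\oplus\bw2 U_4\}$. The first observation is that $\Ker(V_6\to\Sym^2\!R_{k+2}^\vee)$ equals $V_6$ when $\P(R_{k+2})\subset X$ — every quadric through $X$ restricts to the zero form on $R_{k+2}$ — and equals the Pl\"ucker hyperplane $V_5$ otherwise, since Pl\"ucker quadrics restrict to zero on any span contained in $M_X$ while a non-Pl\"ucker quadric through $X$ does not when $\P(R_{k+2})\not\subset X$. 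Consequently, over $\F_{k+1}(M_X)\ssm\F_{k+1}(X)$ the condition $U_5\subset V_5$ forces $U_5=V_5$, the fibre lies over the exceptional divisor $E$, and equals the ``$U_4$-locus'' $\Lambda_{R}\coloneqq\{[U_4]\in\Gr(4,V_5): R_{k+2}\subset\C\oplus\bw2 U_4\}$; over $\F_{k+1}(X)$ the $U_5$-condition is vacuous, so the fibre is the restriction to $\Lambda_R$ of the $\P^1$-fibration $B\to B_4$, which by Lemma~\ref{ccb} is $\P_{B_4}(\cO_{B_4}(H_4)\oplus\cO_{B_4})$ with $E$ the section attached to the $\cO_{B_4}(H_4)$-summand.

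Next I would compute $\Lambda_R$. The crucial point is that $R_{k+2}\subset\C\oplus\bw2 U_4$ together with $\P(R_{k+2})\subset\CGr(2,V_5)$ forces $\P(R_{k+2})$ to be a linear subspace of $\CGr(2,U_4)$, which is a cone over the $4$-dimensional quadric $\Gr(2,U_4)$ and whose maximal linear subspaces are $3$-dimensional and all contain the vertex $\bv\notin X$. Hence if $\dim\P(R_{k+2})=3$ (that is, $k=2$) then $\Lambda_R=\vide$, giving the empty fibre over $\F_3(X)$; and for $k\le1$, $\P(R_{k+2})$ is a $\sigma$- or $\tau$-space in the sense of Definition~\ref{def:st-subspaces}, and $\Lambda_R$ is determined by its flag: for a $\sigma$-space (flag $V_1\subset V_4$, with $\P(R_{k+2})=\P(V_1\wedge V_4)$, which cannot pass through $\bv$) one finds $V_4\subset U_4$, hence $\Lambda_R=\{[V_4]\}$ is a point; for a $\tau$-space (flag $V_3$) one finds $V_3\subset U_4$, hence $\Lambda_R\cong\P^1$; and for a span through $\bv$ (whose ``nonvertex part'' is a single $2$-plane $P\subset V_5$) one finds $P\subset U_4$, hence $\Lambda_R\cong\P^2$. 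Matching these with the explicit strata of $\F_\bullet(M_X)$ and $\F_\bullet(X)$ recorded in Lemmas~\ref{lem:fk-mx} and~\ref{lem:fk-cm} and in Proposition~\ref{prop:fk-gm} produces both tables: over $\F_{k+1}(M_X)\ssm\F_{k+1}(X)$ the fibre is $\Lambda_R$ itself, whence $\P^0$, $\P^1$, or $\P^2$ according to the stratum; over $\F_{k+1}(X)$ it is the $\P^1$-bundle over $\Lambda_R$ from the previous paragraph, equal to $\P^1$ over $\Fs_{k+1}(X)$ (base a point) and, over $\Ft_2(X)$ and $\F_1(X)$ (where $\Lambda_R$ is the line $\{[U_4]: V_3\subset U_4\}\subset B_4$, on which $\cO_{B_4}(H_4)$ restricts to $\cO(1)$), the Hirzebruch surface $\P_{\P^1}(\cO(1)\oplus\cO)\cong\Bl_{\bp_X}(\P^2)$, whose blown-up $\P^2$ is $\{[U_5]:V_3\subset U_5\}\subset B_5$ — a plane through $\bp_X=[V_5]$ — and whose exceptional curve is the $E$-section. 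Zariski-local triviality over each stratum follows because the $\sigma/\tau$-flag of $\P(R_{k+2})$ varies algebraically over the (smooth) strata, so the incidence schemes cutting out $\Lambda_R$ and the $\P^1$-bundle over it are flat there; the connectedness and surjectivity of $g_k$ for $k\le1$ or $n\le3$ are then immediate from the fibre list.

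The steps I expect to be delicate are: (i) the scheme structure — the fibre of $g_k$ is cut out by the Fitting ideal of~\eqref{eq:v6-s2r} (via $\lambda_k$ and Proposition~\ref{prop:pik}) together with the isotropy conditions of Lemma~\ref{lem:ogrb-triples}, and one must verify it is reduced and coincides on the nose with the models $\P^0,\P^1,\P^2,\Bl_{\bp_X}(\P^2)$ — in particular that a genuine blowup, not an infinitesimal thickening of $\P^2$, occurs over $\Ft_2(X)$ and $\F_1(X)$; I would settle this by writing explicit affine coordinates on $B$ as in the proof of Lemma~\ref{lemma:u4-vanishes}; (ii) the ordinary/special dichotomy for $X$, which enters only through the splitting $\cW_X\cong\cO_B\oplus\cW_{X_0}$ of Lemma~\ref{lemma:cw-locally-free} and modifies the computation of $\Lambda_R$ precisely when $\P(R_{k+2})$ meets the $\C$-summand, i.e.\ for spans through the vertex of $M_X$ (the $\Lambda_R\cong\P^2$ case), so these must be handled separately; and (iii) identifying \emph{which} $\P^1$-bundle appears — the $\Bl_{\bp_X}(\P^2)$ versus $\P^1\times\P^1$ distinction — which reduces to the computation of $\cO_B(E)$ in Lemma~\ref{ccb}. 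I anticipate (i) to be the main obstacle.
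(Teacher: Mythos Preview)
Your approach is essentially the same as the paper's: start from Lemma~\ref{lem:ogrb-triples}, observe that the $U_5$-condition forces $U_5=V_5$ over $\F_{k+1}(M_X)\ssm\F_{k+1}(X)$ and is vacuous over $\F_{k+1}(X)$, then compute the $U_4$-locus you call~$\Lambda_R$ from the flag structure of the span. The paper packages this last step slightly more uniformly by citing \cite[Section~4.1]{DK2} for a flag $V_{i_1}\subset V_{i_2}\subset V_5$ with $R_{k+2}=\bw{i_1}V_{i_1}\wedge\bw{2-i_1}V_{i_2}$, so that the condition $R_{k+2}\subset\cW_{[U_4]}$ becomes simply $V_{i_2}\subset U_4$ and the fibre is read off from $i_2\in\{2,3,4,5\}$; your case split by $\sigma/\tau$/through-$\bv$ is the same computation with different bookkeeping, and your concern~(iii) about which $\P^1$-bundle appears is handled identically. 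Your worry~(i) about scheme structure is overblown: the fibre is the zero locus in~$B$ of the two morphisms in Lemma~\ref{lem:ogrb-triples}, which are linear in the coordinates of~$B$, so the identification with $\P^0$, $\P^1$, $\P^2$, or the blowup is immediate once you know the condition is $V_{i_2}\subset U_4$---no affine coordinates are needed.
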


\begin{proof}
 {Let~$[R_{k+2}] \in \chG_k(X)$.\ 
By Lemma~\ref{lem:ogrb-triples}, we have
\begin{equation*}
g_k^{-1}([R_{k+2}]) = \{ (U_4,U_5) \in B \mid U_5 \subset \Ker(V_6 \to \Sym^2\!R_{k+2}^\vee),\ R_{k+2} \subset \cW_{[U_4]} \}.
\end{equation*}
In other words, $g_k^{-1}([R_{k+2}])$ is the zero locus of the morphisms
\begin{equation}
\label{eq:fiber-gk}
\cU_5 \to \Sym^2\!R_{k+2}^\vee \otimes \cO_B
\qquad\text{and}\qquad 
R_{k+2} \otimes \cO_B \to \cU_4(H_4),
\end{equation}
where the second morphism is the composition of~$R_{k+2} \otimes \cO_B \to W \otimes \cO_B$ with the second morphism in~\eqref{defwk2}.}

Assume~$[R_{k+2}] \in \F_{k+1}(X)$.\ 
It follows from~\cite[Section~4.1]{DK2} that there is a flag  {of vector subspaces}~$V_{i_1} \subset V_{i_2} \subset V_5$, 
where~$0 \le i_1 \le 1$ and~$3 \le i_2 \le 5$, 
such that the projection from the vertex of~$\CGr(2,V_5)$ induces an isomorphism
\begin{equation*}
\P(R_{k+2}) \cong \{ [U_2] \in \Gr(2,V_5) \mid V_{i_1} \subset U_2 \subset V_{i_2} \} \subset \Gr(2,V_5).
\end{equation*}
In other words, $R_{k+2} = \bw{i_1}V_{i_1} \wedge \bw{2-i_1}V_{i_2}$.\ 
The vanishing of the second morphism in~\eqref{eq:fiber-gk} is equivalent to the condition~$V_{i_2} \subset U_4$.\ 
Moreover, the first morphism in~\eqref{eq:fiber-gk} vanishes identically.\ 
Taking the definition of~$B$ into account, obtain
\begin{equation*}
g_k^{-1}([R_{k+2}]) = \{ (U_4,U_5) \in B \mid V_{i_2} \subset U_4 \subset V_5\ \text{and}\ U_4 \subset U_5 \subset V_6 \}.
\end{equation*}
It remains to note that this subscheme of~$B$ is empty if~$i_2 = 5$ (because there is no room for~$U_4$),
isomorphic to~$\P^1$ if~$i_2 = 4$, and isomorphic to the blowup of~$\P^2$ if~$i_2 = 3$.\ 
Zariski-local triviality is obvious, 
because the subspaces~$V_{i_2} \subset V_5$ form a subbundle in~$V_5 \otimes \cO_{\F_{k+1}(X)}$.

When~$[R_{k+2}] \in \F_{k+1}(M_X) \ssm \F_{k+1}(X)$, the description is similar,
the only difference being that this time,  {the vanishing of the first morphism in~\eqref{eq:fiber-gk} implies that}~$U_5$ must be equal to~$V_5$.\ 
Therefore, the fiber is empty if~$i_2 = 5$, a point if~$i_2 = 4$, a line if~$i_2 = 3$, and a plane if~$i_2 = 2$.

Similarly, when we consider the fibers of~$g_k\vert_{\OGr_E(k+2,\cQ_E)}$, we must fix~$U_5 = V_5$,
but the rest of the description is the same.
\end{proof}

\begin{coro}
\label{cor:fkm-connected}
 If~$k \ge 2$, the subscheme~$\G_k^0(X) \subset \G_k(X)$ is closed,  {so that}
\begin{equation*}
\G_k(X) = \G_k^0(X) \sqcup \Gs_k(X) \sqcup \Gt_k(X)
\end{equation*}
 {is a disjoint union of closed subschemes} and~$\G_k^0(X) \cong \OGr_{B \ssm E}(k+2, \cQ)$.
 \end{coro}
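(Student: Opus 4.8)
The plan is to deduce Corollary~\ref{cor:fkm-connected} by assembling the structural facts about $\lambda_k$ and $g_k$ from diagram~\eqref{eq:diagram-intro}, using crucially that for $p=k+2\ge 4$ the $E$-part of $\OGr_B(p,\cQ)$ splits off as a connected component.

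First I would observe that $g_k^{-1}(\F_{k+1}(X))=\vide$ for every $k\ge2$. For $k\ge3$ this is automatic, since $\F_{k+1}(X)=\vide$ by Proposition~\ref{prop:fk-gm} (as $2(k+1)>6\ge\dim(X)$); for $k=2$ the only nonempty case is $\F_3(X)$ on a GM sixfold, over which the fibers of $g_2$ are empty by Proposition~\ref{lem:hpik-fibers}. Plugging this into Corollary~\ref{cor:gk0} already gives the asserted isomorphism $\G^0_k(X)\cong\OGr_{B\ssm E}(k+2,\cQ)$, so the remaining task is the decomposition of $\G_k(X)$, for which it suffices to prove that $\G^0_k(X)$ is closed in $\G_k(X)$.

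For that I would pass through $\chG_k(X)$. Since $k\ge2$ forces $p=k+2\ge4$, Corollary~\ref{cor:ogre-component} gives $\OGr_B(k+2,\cQ)=\OGr_{B\ssm E}(k+2,\cQ)\sqcup\OGr_E(k+2,\cQ_E)$ as a disjoint union of closed, hence also open, subschemes. Combining the vanishing above with~\eqref{eq:hpik-preimage} yields $g_k^{-1}(\F_{k+1}(M_X))=\OGr_E(k+2,\cQ_E)$, so the isomorphism $g_k^{-1}(\chG^0_k(X))\isomto\chG^0_k(X)$ of Proposition~\ref{prop:hpik} restricts to an isomorphism $\OGr_{B\ssm E}(k+2,\cQ)\isomto\chG^0_k(X)$. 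Now $\OGr_B(k+2,\cQ)$ is projective, being a closed subscheme of $B\times\Gr(k+2,W)$ by Lemma~\ref{lem:ogrb-triples} with $B$ a blowup of $\P(V_6^\vee)$; hence its open-and-closed subscheme $\OGr_{B\ssm E}(k+2,\cQ)$ is projective as well, so $\chG^0_k(X)$ is a complete subscheme of the separated scheme $\chG_k(X)$, therefore closed, and it is also open (being the complement of $\F_{k+1}(M_X)$). Pulling back along $\lambda_k$ via the identity $\G^0_k(X)=\lambda_k^{-1}(\chG^0_k(X))$ of Proposition~\ref{prop:pik}, we conclude $\G^0_k(X)$ is open and closed in $\G_k(X)$.

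It then remains only to read off the decomposition: $\Gs_k(X)\cup\Gt_k(X)=\G_k(X)\ssm\G^0_k(X)$ is open and closed, $\Gs_k(X)$ and $\Gt_k(X)$ are closed by Definition~\ref{def:st-quadrics} and disjoint for $k\ge2$ (as recorded there, for $X$ ordinary or special), so $\G_k(X)=\G^0_k(X)\sqcup\Gs_k(X)\sqcup\Gt_k(X)$ is a disjoint union of closed subschemes, and $\G^0_k(X)\cong\OGr_{B\ssm E}(k+2,\cQ)$ was obtained at the start. The proof is in effect pure bookkeeping; the only step carrying genuine content — and the only one where $k\ge2$ is really used — is the invocation of Corollary~\ref{cor:ogre-component} (equivalently, the input $\bp_X\notin\sY^{\ge\ell}_\Ap$ when $p\ge4$), which is what upgrades $\OGr_E$ from a Cartier divisor to a full connected component; the only other place requiring a little care is the vanishing of $g_k^{-1}(\F_{k+1}(X))$ in the borderline case $k=2$, $n=6$, handled by the empty-fiber entry of Proposition~\ref{lem:hpik-fibers}.
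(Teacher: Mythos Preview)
Your proof is correct and follows essentially the same route as the paper's: establish $g_k^{-1}(\F_{k+1}(X))=\vide$ via Proposition~\ref{lem:hpik-fibers}, deduce from~\eqref{eq:hpik-preimage} and Proposition~\ref{prop:hpik} that $\OGr_{B\ssm E}(k+2,\cQ)\isomto\chG^0_k(X)$, invoke Corollary~\ref{cor:ogre-component} to see $\OGr_{B\ssm E}(k+2,\cQ)$ is closed in $\OGr_B(k+2,\cQ)$, and push closedness forward (the paper phrases this as ``$g_k$ is proper'' rather than ``$\OGr_B$ is projective'', but it is the same point). The only cosmetic difference is that the paper handles the vanishing of $g_k^{-1}(\F_{k+1}(X))$ uniformly from the table of Proposition~\ref{lem:hpik-fibers} rather than splitting off the case $k\ge3$ via $\F_{k+1}(X)=\vide$, and it justifies the disjointness of $\Gs_k(X)$ and $\Gt_k(X)$ by the vanishing $\Fst_{k+1}(M_X)=\vide$ for $k\ge2$ (this is the remark after Definition~\ref{def:st-quadrics}, not the definition itself).
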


\begin{proof}
If~$k \ge 2$, Proposition~\ref{lem:hpik-fibers} implies~$g_k^{-1}(\F_{k+1}(X)) = \vide$, 
hence Proposition~\ref{prop:hpik} implies~{$g_k^{-1}(\F_{k+1}(M_X)) = \OGr_E(k+2,\cQ_E)$  and shows that} the morphism 
\begin{equation*}
g_k \colon \OGr_{B \ssm E}(k + 2, \cQ) \lra \chG^0_k(X) 
\end{equation*}
is an isomorphism.\ 
Furthermore,  {setting~$p = k + 2 \ge 4$ and applying Corollary~\ref{cor:ogre-component},  
we conclude} that~$\OGr_{B \ssm E}(k + 2, \cQ)$ is closed in~$\OGr_B(k + 2, \cQ)$.\ 
Since~$g_k$ is proper, we conclude that~$\chG_k^0(X) = g_k(\OGr_{B \ssm E}(k + 2, \cQ))$ is closed in~$\chG_k(X)$,
and therefore~$\G^0_k(X) = \lambda_k^{-1}(\chG_k^0(X))$ is closed in~$\G_k(X)$.\ 
The disjoint union then follows from~\eqref{eq:g0k} because~$\Fst_{k+1}(M_X) = \vide$ for~\mbox{$k \ge 2$,}
and the isomorphism for~$\G_k^0(X)$ follows from Corollary~\ref{cor:gk0}.
\end{proof}

Corollary~\ref{cor:fkm-connected} shows that the scheme structure of~$\G_k(X)$ for~$k \ge 2$ is rather simple;
in the next lemma, we discuss the case~$k = 1$.

\begin{lemm}
\label{lem:g1-x56}
Let~$X$ be a smooth GM variety of dimension~$n \ge 3$.\ 
The Hilbert scheme~$\G_1(X)$ of conics on~$X$ is a Cohen--Macaulay scheme
 of pure dimension~$3n - 7$.\ 
 \end{lemm}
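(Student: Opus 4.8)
The statement follows by combining the structural results established above with a local analysis of $\G_1(X)$ along the loci where $\lambda_1$ and $g_1$ fail to be isomorphisms. The plan is to stratify $\G_1(X)$ according to the decomposition $\G_1(X) = \overline{\G^0_1(X)} \cup \Gs_1(X) \cup \Gt_1(X)$ (with $\Gs_1(X)$ and $\Gt_1(X)$ disjoint, since $X$ is ordinary or special of dimension $\geq 3$ and $k=1$), and to verify the Cohen--Macaulay property and the dimension count on each piece and along the intersections.

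First I would treat the main component. By Corollary~\ref{cor:gk0} we have $\G^0_1(X) \cong \OGr_{B \ssm E}(3,\cQ) \ssm g_1^{-1}(\F_2(X))$, and by Proposition~\ref{prop:corank-n-k}\ref{it:ogr-bme} (for ordinary $X$, also part~\ref{it:l2-ord}; for special $X$, Proposition~\ref{prop:corank-special}) the scheme $\OGr_{B\ssm E}(3,\cQ)$ — equivalently $\OGrm_{B}(3,\cQ)$ — is an integral Cohen--Macaulay variety of dimension $N(n,3) + 5 = 3n - 7$; here $\ell = 5 - n \in \{-1,0,1,2\}$ for $n\in\{3,4,5,6\}$, and when $\ell < 0$ one invokes Remark~\ref{rema:l-neg}. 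Taking the closure $\overline{\G^0_1(X)}$ inside $\G_1(X)$ preserves dimension $3n-7$. Then I would bound the dimensions of $\Gs_1(X)$ and $\Gt_1(X)$ using Corollary~\ref{cor:gstk}: $\Gs_1(X) = \Bl_{\Fs_2(X)}(\Fs_2(M_X)) \cup \PP_{\Fs_2(X)}(\Sym^2\cR_3^\vee)$ and similarly for $\tau$. From Corollary~\ref{cor:dim-fkmx} one reads $\dim \Fs_2(M_X)$ and $\dim \Ft_2(M_X)$ (which equal $3n-8$ and $3n-9$ for $n\geq 4$, and are $2,2$ for $n=3$), while Lemma~\ref{lem:fkx-fkmx} and Lemma~\ref{lem:f2-general} control $\F^\star_2(X)$; in every case the blowup component has dimension $\le 3n-7$ and the projectivized-$\Sym^2\cR_3^\vee$ component over the finite-or-low-dimensional $\F^\star_2(X)$ also has dimension $\le 3n-7$, so that $\G_1(X)$ has pure dimension $3n-7$.

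For the Cohen--Macaulay property, I would argue that $\chG_1(X) \subset \Gr(3,W)$ is a Fitting-ideal degeneracy locus for the map \eqref{eq:v6-s2r}: the expected codimension of the rank-$\le 1$ locus of a map from a rank-$6$ bundle to $\Sym^2\cR_3^\vee$ (rank $6$) is $(6-1)(6-1) = 25$, and one checks via the fiber descriptions above (Proposition~\ref{prop:fk-gm}, Lemma~\ref{lem:fkx-fkmx}) that $\dim\chG_1(X) = \dim\Gr(3,W) - 25$, so $\chG_1(X)$ is Cohen--Macaulay of the expected codimension by the standard theorem on symmetric/generic determinantal loci (as used in the proof of Proposition~\ref{proposition:b-stratification}, citing \cite{Kutz}). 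Then $\lambda_1 \colon \G_1(X) \to \chG_1(X)$ is an isomorphism away from $\PP_{\F_2(X)}(\Sym^2\cR_3^\vee)$, and over $\F_2(X)$ the fiber is $\PP^5$ (a projective bundle over the finite-or-curve scheme $\F_2(X)$, whose structure is controlled by Proposition~\ref{prop:fk-gm}), hence $\G_1(X)$ is obtained from a Cohen--Macaulay scheme by replacing a closed subscheme by a projective bundle over it; I would check directly that the resulting scheme remains Cohen--Macaulay of pure dimension $3n-7$, using that $\PP_{\F_2(X)}(\Sym^2\cR_3^\vee)$ has the right dimension ($0 + 5$ or $1 + \text{fiber}$) and that $\G_1(X)$ is, locally along this locus, the blowup-type modification described in Corollary~\ref{cor:gstk} and Appendix~\ref{sec:encapsulated}.

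The main obstacle I anticipate is the Cohen--Macaulay property at points lying on $\Gs_1(X) \cap \overline{\G^0_1(X)}$ or $\Gt_1(X) \cap \overline{\G^0_1(X)}$ — the locus of $\tau$- and $\sigma$-conics that are limits of honest ``moving'' conics — where $\G_1(X)$ is genuinely singular and no single global degeneracy-locus description applies simultaneously to all branches. Here I would reduce to the local model of Appendix~\ref{sec:encapsulated} (the local geometry of $\G_k(X)$ around quadrics whose span lies in $M_X$ or $X$), or alternatively observe that $\G_1(X)$ is everywhere locally a hyperplane-section-type or complete-intersection-type subscheme of a smooth ambient space cut out by equations of the expected codimension, so that Cohen--Macaulayness is inherited. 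Pure-dimensionality then follows because each of the finitely many irreducible components has been shown to have dimension exactly $3n-7$.
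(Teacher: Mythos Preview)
Your dimension bounds via the stratification $\G_1(X) = \G_1^0(X) \cup \lambda_1^{-1}(\F_2(X)) \cup \lambda_1^{-1}(\F_2(M_X)\ssm\F_2(X))$ are correct and match the paper's argument. The gap is in the Cohen--Macaulay step.

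Your determinantal argument for $\chG_1(X)$ does not work: the rank-$\le 1$ locus of~\eqref{eq:v6-s2r} has expected codimension $(6-1)(6-1)=25$ in $\Gr(3,W)$, but $\dim\Gr(3,W) = 3(n+2) = 3n+6$, so the expected dimension is $3n-19$, not $3n-7$. The map $V_6 \to \Sym^2\!W^\vee$ is very far from generic (it factors through the Pl\"ucker quadrics), so $\chG_1(X)$ has dimension much larger than expected and the Eagon--Northcott/Kutz theorem gives nothing. Your subsequent patching via $\lambda_1$ and projective bundles cannot recover from this, since you have no CM property for $\chG_1(X)$ to start with.

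The paper's approach bypasses $\chG_1(X)$ entirely and works directly with $\G_1(X)$: since $X \subset \CGr(2,V_5)\ssm\{\bv\}$ is the zero locus of a section of $\cO(1)^{\oplus(6-n)}\oplus\cO(2)$, the Hilbert scheme $\G_1(X) \subset \G_1(\CGr(2,V_5)\ssm\{\bv\})$ is the zero locus of a section of the pushforward bundle ${p_\G}_*p_X^*(\cO(1)^{\oplus(6-n)}\oplus\cO(2))$ of rank $3(6-n)+5 = 23-3n$. Lemma~\ref{lem:g1-cgr} says the ambient Hilbert scheme is smooth of dimension $16$, so every component of $\G_1(X)$ has dimension at least $16-(23-3n)=3n-7$; combined with your upper bound $\le 3n-7$, this forces the expected codimension, hence $\G_1(X)$ is Cohen--Macaulay. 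Your closing sentence (``locally a complete-intersection-type subscheme of a smooth ambient space'') is gesturing at exactly this, but without identifying $\G_1(\CGr(2,V_5)\ssm\{\bv\})$ as the ambient space and invoking Lemma~\ref{lem:g1-cgr}, it is not a proof.
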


\begin{proof}
By definition,~$X \subset \CGr(2,V_5) \ssm \{\bv\}$ is the zero locus of a global section of the vector bundle~$\cO(1)^{\oplus (6 - n)} \oplus \cO(2)$.\ 
If~$\cC(X) \subset X \times \G_1(X)$ is the universal conic and~$p_X \colon \cC(X) \to X$ and~$p_\G \colon \cC(X) \to \G_1(X)$ are the projections,
then~$\G_1(X) \subset \G_1(\CGr(2,V_5) \ssm \{\bv\})$ is the zero locus of a global section 
of the vector bundle
\begin{equation*}
  {p_\G}_*p_X^*(\cO(1)^{\oplus (6 - n)} \oplus \cO(2)),
\end{equation*}
of rank~$3(6 - n) + 5 = 23 - 3n$.\ 
Since~$\G_1(\CGr(2,V_5) \ssm \{\bv\})$ is smooth of dimension~$16$ by Lemma~\ref{lem:g1-cgr},
$\G_1(X)$ has everywhere dimension at least $16 - (23 - 3n) = 3n - 7$.\ 
To prove that it is Cohen--Macaulay of that dimension, one needs to check the bound~$\dim(\G_1(X)) \le 3n - 7$. 

For this, we set~$p = k + 2 = 3$,  so that~\mbox{$\ell = 2p - n - 1 = 5 - n \le 2$},  ({see~\eqref{eq:ell-p}).\ 
Then} Corollary~\ref{cor:gk0}, Proposition~\ref{prop:corank-n-k}\ref{it:ogr-bme} (in the case~$n = 6$, use Remark~\ref{rema:l-neg}), 
and~\eqref{eq:nnp} imply  
\begin{equation*}
\dim(\G_1^0(X)) = \dim(\OGr_{B \ssm E}(3,\cQ)) = N(n,3) + 5 = (3n - 12) + 5 = 3n - 7
\end{equation*}
and~$\G_1^0(X)$ is normal and irreducible.\ 
Furthermore, by Proposition~\ref{prop:pik}, we have
 \begin{equation*}
\dim(\lambda_1^{-1}(\F_2(X))) = 
\dim(\F_2(X)) + 5 \le 
(3n - 12) + 5 = 3n - 7,
\end{equation*}
where the inequality is explained in Lemma~\ref{lem:fkx-fkmx}\ref{it:f2}  {and Proposition~\ref{prop:fk-gm}}.\ 
Finally, by Proposition~\ref{prop:pik} and Corollary~\ref{cor:dim-fkmx}, we have
\begin{equation*}
\dim(\lambda_1^{-1}(\F_2(M_X)\ssm \F_2(X)))) = 
\dim( \F_2(M_X)\ssm \F_2(X))) \le 
\dim(\F_2(M_X)) \le 
3n - 7.
\end{equation*}
Since $\G_1(X) = \G_1^0(X) \cup  {\lambda_1^{-1}(\F_2(X)) \cup \lambda_1^{-1}(\F_2(M_X) \ssm \F_2(X))}$, 
this proves that~$\G_1(X)$ is a Cohen--Macaulay scheme of pure dimension~$3n - 7$.
\end{proof}

 {The following lemma proves an additional property of~$\G_1(X)$ when~$X$ is general.}

\begin{lemm}
\label{lem:g1-x56-extra}
Let~$X$ be a smooth GM variety of dimension~$n \ge 3$.\ 
If
\begin{equation}
\label{newassu}
\dim(\F_2(X)) \le 3n - 14
\qquad\text{and}\qquad 
\dim(\F_2(M_X)) \le 3n - 8
\end{equation}
 {\textup(this holds if~$X$ is general or $n = 6$\textup)}
the scheme~$\G_1(X)$ is normal and integral.
\end{lemm}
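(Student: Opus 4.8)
The plan is to combine the Cohen--Macaulay property of $\G_1(X)$ established in Lemma~\ref{lem:g1-x56} with Serre's criterion: it suffices to show that under the hypotheses~\eqref{newassu}, the scheme $\G_1(X)$ is nonsingular in codimension~$1$ and connected (equivalently, irreducible, since a connected Cohen--Macaulay scheme that is regular in codimension~$1$ is integral and normal). Recall from Lemma~\ref{lem:g1-x56} that $\dim(\G_1(X)) = 3n-7$ and that
\begin{equation*}
\G_1(X) = \G_1^0(X) \cup \lambda_1^{-1}(\F_2(X)) \cup \lambda_1^{-1}(\F_2(M_X) \ssm \F_2(X)),
\end{equation*}
where $\G_1^0(X) \cong \OGr_{B \ssm E}(3,\cQ)$ is normal and integral of dimension $3n-7$ by Corollary~\ref{cor:gk0} and Proposition~\ref{prop:corank-n-k}\ref{it:ogr-bme} (using Remark~\ref{rema:l-neg} when $n=6$). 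So $\G_1^0(X)$ is already an integral normal variety of the right dimension, and the whole game is to show the complementary loci $\lambda_1^{-1}(\F_2(X))$ and $\lambda_1^{-1}(\F_2(M_X) \ssm \F_2(X))$ do not create a new irreducible component and are small enough not to intersect $\G_1^0(X)$ in codimension~$1$, and that $\G_1(X)$ contains no singular points in codimension~$1$.

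First I would bound dimensions. By Proposition~\ref{prop:pik}, $\lambda_1^{-1}(\F_2(X)) \cong \P_{\F_2(X)}(\Sym^2\!\cR_3^\vee)$ is a $\P^5$-bundle over $\F_2(X)$, hence has dimension $\dim(\F_2(X)) + 5 \le (3n-14)+5 = 3n-9$ under the first hypothesis in~\eqref{newassu}; this is codimension $\ge 2$ in $\G_1(X)$. Similarly, by Proposition~\ref{prop:pik} again, $\lambda_1$ restricts to an isomorphism over $\chG^0_1(X)$, and over $\F_2(M_X) \ssm \F_2(X)$ the fibers of $\lambda_1$ are points (the relevant component of $\F_2(M_X)$ here is parametrized by $\chG_1(X) = \G_1(X)$ away from $\F_2(X)$, where $\lambda_1$ is an isomorphism by Proposition~\ref{prop:pik}); more precisely the strict transform of $\F_2(M_X)$ is $\Bl_{\F_2(X)}(\F_2(M_X))$ by Lemma~\ref{lem:fmx-strict-transform}, which has dimension $\le \dim(\F_2(M_X)) \le 3n-8$ under the second hypothesis in~\eqref{newassu}, so codimension $\ge 1$ in $\G_1(X)$, and in fact equality $3n-8$ is only attained on a locus that, being contained in the Cartier divisor $\OGr_E(3,\cQ_E)$ via Proposition~\ref{prop:hpik}, lies inside the closure $\overline{\G^0_1(X)}$; I would argue that the generic point of $\Bl_{\F_2(X)}(\F_2(M_X))$ is a smooth point of $\G_1(X)$ lying on $\overline{\G^0_1(X)}$, using that over $\F_2(M_X) \ssm \F_2(X)$ the morphism $g_1$ of Proposition~\ref{prop:hpik} identifies a neighborhood with an open subset of $\OGr_B(3,\cQ)$, which by Proposition~\ref{prop:corank-n-k}\ref{it:l2-ord} (ordinary case) or Proposition~\ref{prop:corank-special} (special case) is normal and integral with the $\P^0/\P^1$-fibers of Proposition~\ref{lem:hpik-fibers} glued in along $E$.

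The main obstacle I expect is the irreducibility (connectedness): one must rule out that $\lambda_1^{-1}(\F_2(M_X))$ contains a component of dimension $3n-7$ disjoint from $\overline{\G^0_1(X)}$. This is where the second inequality in~\eqref{newassu} is used sharply: $\dim(\F_2(M_X)) \le 3n-8$ forces the strict transform $\Bl_{\F_2(X)}(\F_2(M_X))$ to have dimension $\le 3n-8 < 3n-7$, so it cannot be a component; and $\lambda_1^{-1}(\F_2(X))$ has dimension $\le 3n-9 < 3n-7$ by the first inequality, so it cannot be a component either. Hence every component of $\G_1(X)$ has dimension exactly $3n-7$ and meets $\G_1^0(X)$; but $\G_1^0(X)$ is irreducible, so $\G_1(X)$ is irreducible. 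Combined with the Cohen--Macaulay property (Lemma~\ref{lem:g1-x56}), irreducibility gives integrality. For normality, Serre's $R_1$ is checked as above: the non-normal or singular locus is contained in $\lambda_1^{-1}(\F_2(X)) \cup \big(\Sing \G_1^0(X)\big) \cup \big(\text{sing. locus along } \OGr_E(3,\cQ_E)\big)$; the first has codimension $\ge 2$ by the dimension bound, $\Sing \G_1^0(X) \subset \sY^3_\Ap \ssm \{\bp_X\}$ has codimension $\ge 2$ (it is finite, while $\dim \G_1^0(X) = 3n-7 \ge 2$) by Proposition~\ref{prop:corank-n-k}\ref{it:ogr-bme}, and the analysis along the Cartier divisor $\OGr_E(3,\cQ_E)$ via Propositions~\ref{prop:corank-n-k}\ref{it:l2-ord} and~\ref{prop:corank-special} shows $\OGr_B(3,\cQ)$, hence $\G_1(X)$ near there, is normal away from a further codimension-$\ge 2$ locus. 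Finally, for the parenthetical remark: when $X$ is general, the bounds $\dim(\F_2(X)) \le 3n-14$ and $\dim(\F_2(M_X)) \le 3n-8$ hold by Lemma~\ref{lem:f2-general} and Corollary~\ref{cor:dim-fkmx} respectively, and when $n = 6$ both inequalities hold for all smooth GM sixfolds by Remark~\ref{rem:ft2} (or Proposition~\ref{prop:fk-gm} for $\F_2(X)$) and Corollary~\ref{cor:dim-fkmx}.
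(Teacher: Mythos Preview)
Your overall architecture matches the paper's: Cohen--Macaulay from Lemma~\ref{lem:g1-x56}, dimension bounds from~\eqref{newassu} to see that~$\G_1^0(X)$ is dense (hence irreducibility), and then Serre's criterion. Your irreducibility argument and the justification of the parenthetical (for general~$X$ via Lemma~\ref{lem:f2-general} and Corollary~\ref{cor:dim-fkmx}; for~$n=6$ via Lemma~\ref{lem:fkx-fkmx}\ref{it:f2}, Remark~\ref{rem:ft2}, and Corollary~\ref{cor:dim-fkmx}) are fine.

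The genuine gap is your $R_1$ check along~$\F_2(M_X)\ssm\F_2(X)$, which can have codimension exactly~$1$ and therefore cannot be ignored. You claim that over this locus ``the morphism~$g_1$ identifies a neighborhood with an open subset of~$\OGr_B(3,\cQ)$''. This is not what Proposition~\ref{prop:hpik} says: $g_1$ is an isomorphism only over~$\chG^0_1(X)=\chG_1(X)\ssm\F_2(M_X)$, and by Proposition~\ref{lem:hpik-fibers} its fibers over~$\F_2(M_X)\ssm\F_2(X)$ are points over~$\Fo^\sigma_2(M_X)$ and~$\P^1$ elsewhere. Even where the fiber is a point, bijectivity together with normality of the source only tells you that~$g_1$ is the normalization; concluding it is a local isomorphism would require knowing the target is already normal there, which is precisely what you are trying to prove. (Appealing to Proposition~\ref{prop:diagram-big} to get~$\Gp_1(X)\cong\G_1(X)$ is also circular: its proof uses Lemma~\ref{lem:g1-x56-extra}.)

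The paper closes this gap differently: it invokes Corollary~\ref{cor:g1x-f2mx}, proved in Appendix~\ref{sec:encapsulated} by a direct normal bundle computation (Lemma~\ref{lem:cn-pi-m}, Corollary~\ref{cor:cn-pi-m}, Proposition~\ref{prop:sigma-mx}). That corollary shows that for any conic~$\Sigma$ encapsulated into~$M_X$ but not into~$X$, the scheme~$\G_1(X)$ is smooth at~$[\Sigma]$, except on an explicit locus~$\F_1^{\mathrm{spe}}(M'_X)$ (only present when~$X$ is special and~$n\le 5$) which has codimension~$\ge 2$. This is the missing ingredient you need.
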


\begin{proof}
The inequalities~\eqref{newassu} hold for general~$X$ by Lemma~\ref{lem:f2-general} and Corollary~\ref{cor:dim-fkmx}.

If~\eqref{newassu} holds for $X$,  we obtain,
arguing as in the proof of Lemma~\ref{lem:g1-x56}, 
improved bounds for the dimensions of~$\lambda_1^{-1}(\F_2(X))$ and~$\lambda_1^{-1}(\F_2(M_X) \ssm \F_2(X))$, 
which imply that~$\G_1^0(X)$ is dense in~$\G_1(X)$, which is therefore irreducible.

To prove normality, it remains to show that~$\G_1(X)$ is nonsingular in codimension~$1$.\ 
As we already know this on~$\G_1^0(X)$, and since  {the improved bound implies that}~$\lambda_1^{-1}(\F_2(X))$ has codimension at least~$2$,
we only need  to check that~$\G_1(X)$ is nonsingular at the generic point 
of each component of~\mbox{$\lambda_1^{-1}(\F_2(M_X)\ssm \F_2(X))$}.\ 
This is proved in Corollary~\ref{cor:g1x-f2mx}.
\end{proof}

\begin{coro}
\label{cor:g1-connected}
If~$X$ is a smooth GM variety of dimension~$n \ge 3$,
the Hilbert scheme~$\G_1(X)$ of conics on~$X$ is connected.\ 
When $X$ is general,~$\G_1(X)$ is moreover smooth.
\end{coro}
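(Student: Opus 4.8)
The plan is to prove connectedness by transferring it along the morphisms $\lambda_1$ and $g_1$ of diagram~\eqref{eq:diagram-intro} to the orthogonal Grassmannian $\OGr_B(3,\cQ)$, and to prove smoothness for general $X$ by a Bertini-type argument applied to the description of $\G_1(X)$ used in Lemma~\ref{lem:g1-x56}.

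For connectedness, set $p=k+2=3$, so that $\ell=5-n$. By Proposition~\ref{prop:pik} the morphism $\lambda_1\colon\G_1(X)\to\chG_1(X)$ is proper and surjective with connected fibers: it is an isomorphism over $\chG_1(X)\ssm\F_2(X)$ and has fibers $\P_{\F_2(X)}(\Sym^2\cR_3^\vee)$, which are projective spaces, over $\F_2(X)$. By Proposition~\ref{lem:hpik-fibers} (the case $k=1$) the morphism $g_1\colon\OGr_B(3,\cQ)\to\chG_1(X)$ has the same property, so $\G_1(X)$ is connected if and only if $\OGr_B(3,\cQ)$ is connected. If $X$ is ordinary, then $n\le5$ and $\ell\ge0$, so $\OGr_B(3,\cQ)$ is integral by Proposition~\ref{prop:corank-n-k}\ref{it:l2-ord}, hence connected. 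If $X$ is special with $n\le5$, I would use the decomposition $\OGr_B(3,\cQ)=\OGrm_B(3,\cQ)\cup\OGr_E(3,\cQ_E)$ (every point over $B\ssm E$ lies in $\OGr_{B\ssm E}(3,\cQ)\subset\OGrm_B(3,\cQ)$, every point over $E$ in $\OGr_E(3,\cQ_E)$): the first piece is the closure of the integral variety $\OGr_{B\ssm E}(3,\cQ)$ (integral by Proposition~\ref{prop:corank-n-k}\ref{it:ogr-bme}), hence connected, and it contains the nonempty subscheme $\OGrm_E(3,\cQ_E)$ (Proposition~\ref{prop:corank-special}), so it meets $\OGr_E(3,\cQ_E)$; and $\OGr_E(3,\cQ_E)$ is itself connected, which follows from the Stein factorization of its projection to $E$ — after the Hecke transform of Section~\ref{se52} the hypotheses of Lemma~\ref{lem:quadrics}\ref{it:ogrs-normal} are met, so this projection factors through a double cover of $E^{\ge\ell}$ branched over the nonempty divisor $E^{\ge\ell+1}$, which by Lemmas~\ref{lem:b-strata-special} and~\ref{lemma:e-stratification} equals the nonempty stratum $E^{\ge\ell}_{X_0}$; a double cover of the connected scheme $E^{\ge\ell}$ that is genuinely branched is connected. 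Finally, if $X$ is special with $n=6$, then $\ell=-1<0$, and $f\colon\OGr_B(3,\cQ)\to B$ is surjective with connected fibers onto the connected base $B$ (Remark~\ref{rema:l-neg}, together with a direct inspection showing that the isotropic Grassmannians of the fiber quadrics are connected when $\ell<0$), so $\OGr_B(3,\cQ)$ is connected. In all cases $\G_1(X)$ is connected.

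For smoothness when $X$ is general, I would return to the construction used in the proof of Lemma~\ref{lem:g1-x56}: the scheme $\G_1(X)\subset\G_1(\CGr(2,V_5)\ssm\{\bv\})$ is the zero locus of the section $s_{(W,Q)}$ of the vector bundle $\cE\coloneqq(p_\G)_*p_{\mathrm{univ}}^*\bigl(\cO(1)^{\oplus(6-n)}\oplus\cO(2)\bigr)$ of rank $23-3n$ on the smooth $16$-dimensional variety $\G_1(\CGr(2,V_5)\ssm\{\bv\})$ (Lemma~\ref{lem:g1-cgr}), and the assignment $(W,Q)\mapsto s_{(W,Q)}$ is linear in the data $(W,Q)$. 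At a point $[\Sigma]$ the evaluation of these sections is the restriction map to the conic $\Sigma$; since the span of $\Sigma$ is a plane $\P^2\subset\P(\C\oplus\bw2V_5)$ and restriction of linear, resp.\ quadratic, forms from this plane to $\Sigma$ is surjective, the sections $s_{(W,Q)}$ generate $\cE$. By the Bertini theorem for zero loci of sections of a globally generated bundle, $\G_1(X)=Z(s_{(W,Q)})$ is smooth of the expected dimension $3n-7$ for $(W,Q)$ in a dense open subset of the parameter space; intersecting with the dense open locus where $X$ is a smooth GM variety gives smoothness of $\G_1(X)$ for general $X$. (Alternatively, smoothness can be read off from the explicit descriptions of Section~\ref{sec:explicit} together with the fact that, for general $A$, the EPW varieties $\sY^{\ge\ell}_\Ap$ and $\wtY^{\ge\ell}_\Ap$ are smooth by Theorems~\ref{thm:og} and~\ref{thm:tya}, and blowing up a point on a smooth variety yields a smooth variety.)

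I expect the main obstacle to be the connectedness of $\OGr_E(3,\cQ_E)$ in the special case: over part of the divisor $E$ the fibers of $f$ genuinely split into two connected components (the orthogonal Grassmannian of a quadric of the relevant even dimension has two rulings of maximal isotropics), so connectedness cannot be deduced merely from connectedness of the base together with connectedness of a generic fiber — one must pass through the Hecke-transform construction of Section~\ref{se52} and show that the resulting double covering is connected. A secondary point requiring care in the smoothness argument is the verification that the sections $s_{(W,Q)}$ globally generate $\cE$, which relies on the projective normality of $\CGr(2,V_5)$ to identify $H^0(\CGr(2,V_5),\cO(d))$ with the expected space of forms restricting surjectively to $\Sigma$.
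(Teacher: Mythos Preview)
Your smoothness argument is essentially the paper's: the paper builds the incidence variety~$\cH$ of pairs~$(\Sigma, X)$, observes that its fibers over~$\G_1(\CGr(2,V_5)\ssm\{\bv\})$ are open in projective-space bundles (which is exactly your global generation of~$\cE$), concludes~$\cH$ is smooth, and applies generic smoothness to the projection~$\cH \to \{\text{GM varieties}\}$. Your Bertini phrasing is a minor variant of the same idea.

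Your connectedness argument is genuinely different from the paper's, and the gap you yourself flag is real. The paper does \emph{not} analyze~$\OGr_B(3,\cQ)$ case by case; instead it uses the same incidence variety~$\cH$, observes it is smooth and connected, invokes Lemma~\ref{lem:g1-x56-extra} to see that the \emph{general} fiber~$\G_1(X)$ is integral (hence connected), and then a Stein factorization argument forces \emph{every} fiber to be connected. This works uniformly for all~$n$ and both types of~$X$ with no case analysis. Your route is fine for ordinary~$X$ (Proposition~\ref{prop:corank-n-k}\ref{it:l2-ord} gives integrality directly), but for special~$X$ your appeal to Section~\ref{se52} is misdirected: the Hecke transform there produces~$\OGrm_B(3,\cQ)$, not~$\OGr_E(3,\cQ_E)$; in fact Lemma~\ref{lem:varphi} shows the transformed scheme maps only onto the locus~$\{\C\oplus R_2\} \subset \OGr_E(3,\cQ_E)$, which need not be all of it. Moreover, the hypotheses of Lemma~\ref{lem:quadrics}\ref{it:ogrs-normal} do \emph{not} hold for~$\cQ_E$ when~$X$ is special (the codimension of~$E^\ell$ is off by one --- that is the whole point of the Hecke trick), so you cannot read off a branched double cover of~$E^{\ge\ell}$ directly. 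One can check connectedness of~$\OGr_E(3,\cQ_E)$ by hand in each dimension, but it is tedious and not what you wrote; the cleaner fix is exactly the paper's relative argument, which sidesteps the issue entirely.
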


\begin{proof}
Consider the nested Hilbert scheme~$\cH$ parameterizing pairs
\begin{equation*}
\Sigma \subset X \subset \CGr(2,V_5),
\end{equation*}
where~$\Sigma$ is a conic and~$X$ is a smooth GM variety of dimension~$n \ge 3$.\ 
The fibers of the projection~$\cH \to \G_1(\CGr(2,V_5) \ssm \{\bv\})$ parameterize smooth GM varieties passing through a given conic~$\Sigma$.\ 
Since a GM variety of dimension~$n$ is a complete intersection in~$\CGr(2,V_5)$ of~$6 - n$ hyperplanes and a quadric,
these fibers are open subsets in a $\P^{(n+5)(n+6)/2 - 11}$-bundle over~$\Gr(6 - n, 8)$.\ 
Using Lemma~\ref{lem:g1-cgr}, we conclude that~$\cH$ is smooth and connected. 

Consider the projection from~$\cH$ to the moduli space of GM varieties of dimension~$n$.
Its fiber over~$[X]$ is the Hilbert scheme~$\G_1(X)$.\ 
By Lemma~\ref{lem:g1-x56-extra},   general fibers are connected,
and since~$\cH$ itself is smooth and connected, a Stein factorization argument implies that every fiber is connected.\ 
The last statement of the corollary follows from generic smoothness.
\end{proof}

\subsection{The scheme~$\Gp_k(X)$}

In some cases, it is helpful to factor the morphism~$g_k$ 
into a composition of two simpler morphisms; we do this in this section.\ 
We will concentrate on the case where~$X$ is ordinary, $k \in \{0,1\}$, and~$2k + 1 \le n \le 2k + 3$.\ 

\begin{prop}
\label{prop:gp}
Let~$X$ be an ordinary GM variety of dimension $n$ satisfying Property~\eqref{hh}.\ 
 Let $k \in \{0,1\}$ be such that~$2k + 1 \le n \le 2k + 3$.\ 
There  is a normal integral Cohen--Macaulay  {projective} variety~$\Gp_k(X)$ and an embedding
\begin{equation*}
\Ft_{k+1}(M_X) \lhra \Gp_k(X)
\end{equation*}
as a smooth subscheme of codimension~$2$ contained in the smooth locus of~$\Gp_k(X)$ such that
\begin{equation*}
\OGr_B(k + 2, \cQ) \cong \Bl_{\Ft_{k+1}(M_X)}(\Gp_k(X))
\end{equation*}
and the morphism~$g_k$ defined in~\eqref{defpik} admits a factorization
\begin{equation*}
\OGr_B(k + 2, \cQ) \xrightarrow{\ g'_k\ } \Gp_k(X) \xrightarrow{\ g''_k\ } \chG_k(X)
\end{equation*}
where~$g'_k$ is the blowup morphism
 {and the composition~$\Ft_{k+1}(M_X) \lhra \Gp_k(X) \xrightarrow{\ g''_k\ } \chG_k(X)$
coincides with the embedding defined in Lemma~\textup{\ref{lem:fkmx-dkx}}.}

 If~$k = 1$, the embedding~$\Fs_{k+1}(M_X) \hookrightarrow \chG_k(X)$ lifts along~$g''_k$ to an   embedding 
\begin{equation*}
\Fs_{k+1}(M_X) \lhra \Gp_k(X)
\end{equation*}
as a smooth Cartier divisor contained in the smooth locus of~$\Gp_k(X)$.
\end{prop}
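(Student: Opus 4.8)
The plan is to produce the lift of $\Fs_2(M_X)$ from inside the relative isotropic Grassmannian $\OGr_E(3,\cQ_E)$ over the exceptional divisor $E\subset B$, and then to push it forward along the blowup $g'_1$. For $k=1$ we have $p=k+2=3$, and since $3\le n\le5$ we get $\ell=2p-n-1=5-n\in\{0,1,2\}$ and $\bp_X\in\sY^{\ge\ell}_\Ap$; hence by Proposition~\ref{prop:corank-n-k}\ref{it:l2-ord} (whose proof also records that $E\cap B^3=\vide$) the subscheme $\OGr_E(3,\cQ_E)\subset\OGr_B(3,\cQ)$ is a smooth Cartier divisor contained in the smooth locus of $\OGr_B(3,\cQ)$. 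Since $X$ is ordinary, $M_X$ is a smooth transverse section (not a cone), so $\Fo^\sigma_2(M_X)=\Fs_2(M_X)$; therefore the last part of Proposition~\ref{lem:hpik-fibers} tells us that $g_1|_{\OGr_E(3,\cQ_E)}\colon\OGr_E(3,\cQ_E)\to\F_2(M_X)$ is a stratified Zariski-locally trivial fibration whose fiber is a single reduced point over $\Fs_2(M_X)$ and a $\P^1$ over $\Ft_2(M_X)$. As $\Fs_2(M_X)$ and $\Ft_2(M_X)$ are disjoint and cover $\F_2(M_X)$ (Lemma~\ref{lem:fk-mx} and the remark after it), the preimage $D\coloneqq(g_1|_{\OGr_E})^{-1}(\Fs_2(M_X))$ is open and closed in $\OGr_E(3,\cQ_E)$, hence a union of connected components of it; consequently $D$ is a smooth Cartier divisor in $\OGr_B(3,\cQ)$ contained in its smooth locus, closed in $\OGr_B(3,\cQ)$, and $g_1$ restricts to an isomorphism $D\isomto\Fs_2(M_X)$ over the stratum $\Fs_2(M_X)$ (a Zariski-locally trivial $\P^0$-fibration is an isomorphism). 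Write $s\colon\Fs_2(M_X)\isomto D$ for its inverse, a section of $g_1|_{\OGr_E}$.

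Next I would transport $D$ to $\Gp_1(X)$ along $g'_1\colon\OGr_B(3,\cQ)\to\Gp_1(X)$, the blowup of $\Ft_2(M_X)\subset\Gp_1(X)$ (see Proposition~\ref{prop:gp}), which restricts to an isomorphism over $\Gp_1(X)\ssm\Ft_2(M_X)$ and has $\Exc(g'_1)=(g'_1)^{-1}(\Ft_2(M_X))$. The crucial observation is that $D$ is disjoint from $\Exc(g'_1)$: indeed, using the compatibility of $g''_1$ with the tautological embeddings from Proposition~\ref{prop:gp}, $g_1(\Exc(g'_1))=g''_1(g'_1(\Exc(g'_1)))=g''_1(\Ft_2(M_X))=\Ft_2(M_X)\subset\chG_1(X)$, whereas $g_1(D)=\Fs_2(M_X)$, and $\Fs_2(M_X)\cap\Ft_2(M_X)=\vide$ in $\chG_1(X)$. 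Hence $D$ lies in the open set $U\coloneqq\OGr_B(3,\cQ)^{\mathrm{sm}}\ssm\Exc(g'_1)$, on which $g'_1$ is an open immersion onto a smooth open $g'_1(U)\subset\Gp_1(X)^{\mathrm{sm}}$. Since $D$ is closed in $\OGr_B(3,\cQ)$ and $g'_1$ is proper, its image $D^+\coloneqq g'_1(D)$ is closed in $\Gp_1(X)$, and $D^+\subset g'_1(U)$; inside this open neighborhood $D^+$ is carried isomorphically from the smooth Cartier divisor $D$, so $D^+$ is a smooth Cartier divisor of $\Gp_1(X)$ contained in its smooth locus. Setting $j\coloneqq g'_1\circ s\colon\Fs_2(M_X)\to\Gp_1(X)$, a closed embedding onto $D^+$, we get $g''_1\circ j=g''_1\circ g'_1\circ s=g_1\circ s=(\Fs_2(M_X)\hookrightarrow\chG_1(X))$, so $j$ is the desired lift of the embedding from Lemma~\ref{lem:fkmx-dkx}.

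The one genuinely delicate point is the disjointness $D\cap\Exc(g'_1)=\vide$; it is what turns the descent along $g'_1$ into a formality rather than a computation, and it rests on the disjointness of $\Fs_2(M_X)$ and $\Ft_2(M_X)$ together with the compatibility of $g''_1$ with the tautological embeddings (both already available). Everything else is formal: that $\OGr_E(3,\cQ_E)$ is a smooth Cartier divisor inside the smooth locus of $\OGr_B(3,\cQ)$ is Proposition~\ref{prop:corank-n-k}\ref{it:l2-ord}, the behaviour of blowups away from the centre is standard, and one should merely record the harmless facts that $\Fs_2(M_X)$ is smooth (Lemma~\ref{lem:fk-mx}) and that $g'_1$ and $g''_1$ are proper (morphisms of projective varieties), the properness of $g'_1$ being what guarantees $D^+=g'_1(D)$ is closed in $\Gp_1(X)$.
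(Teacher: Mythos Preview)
Your proposal is correct and follows essentially the same approach as the paper: define the $\sigma$-part $D=\OGr^\sigma_E(3,\cQ_E)$ of $\OGr_E(3,\cQ_E)$, check via Proposition~\ref{lem:hpik-fibers} that $g_1|_D$ is an isomorphism onto $\Fs_2(M_X)$, observe that $D$ is disjoint from the exceptional divisor $\OGr^\tau_E(3,\cQ_E)$ of $g'_1$, and then carry $D$ along the isomorphism $g'_1|_{\OGr_B\ssm\Exc(g'_1)}$ to get the lift in $\Gp_1(X)$. Your route to the disjointness via the images under~$g_1$ and the compatibility of~$g''_1$ is slightly more circuitous than needed---since $\Exc(g'_1)=\OGr^\tau_E(3,\cQ_E)$ by construction and $D=\OGr^\sigma_E(3,\cQ_E)$, the disjointness is immediate from $\Fs_2(M_X)\cap\Ft_2(M_X)=\vide$---but it is equally valid.
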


\begin{proof}
By Proposition~\ref{prop:hpik} there is a morphism~$g_k\colon \OGr_E(k + 2, \cQ_E) \to \F_{k+1}(M_X)  \subset \chG_k(X) $.\ 
Consider the subscheme~$\Ft_{k+1}(M_X) \subset \F_{k+1}(M_X)$;
if~$k = 1$, it is a connected component, and if~$k = 0$, the embedding is an equality (see Section~\ref{subsec:ls-qu}).\ 
In either case,  {its} preimage
\begin{equation*}
\OGr^\tau_E(k + 2, \cQ_E) \coloneqq \OGr_E(k + 2, \cQ_E) \cap g_k^{-1}(\Ft_{k+1}(M_X))
\end{equation*}
is a union of connected components of~$\OGr_E(k + 2, \cQ_E)$;
in particular, $\OGr^\tau_E(k + 2, \cQ_E)$ is a smooth Cartier divisor in~$\OGr_B(k + 2, \cQ)$ by Proposition~\ref{prop:corank-n-k}\ref{it:l2-ord}.\ 
 {Moreover, $\OGr_E(k + 2, \cQ_E)$ is the preimage under the morphism~$f$ of the Cartier divisor~$E \subset B$,
so that we have an isomorphism
\begin{equation*}
N_{\OGr^\tau_E(k + 2, \cQ_E) / \OGr_B(k + 2, \cQ)} \cong f^*N_{E/B}\vert_{\OGr^\tau_E(k + 2, \cQ_E)}.
\end{equation*}
By} Proposition~\ref{lem:hpik-fibers}, the morphism
\begin{equation*}
g_k\vert_{\OGr^\tau_E(k + 2, \cQ_E)} \colon \OGr^\tau_E(k + 2, \cQ_E) \lra \Ft_{k+1}(M_X)
\end{equation*}
is a $\P^1$-fibration and  {the normal bundle of~$\OGr^\tau_E(k + 2, \cQ_E)$}
is a line bundle of degree~$-1$ on each $\P^1$-fiber.\  
Applying the main theorem of~\cite{Mo},
 we obtain a normal algebraic space~$\Gp_k(X)$, a birational contraction 
\begin{equation*}
g'_k \colon \OGr_B(k + 2, \cQ) \lra \Gp_k(X),
\end{equation*}
and an embedding~$\Ft_{k+1}(M_X) \hookrightarrow \Gp_k(X)$ into the smooth locus of~$\Gp_k(X)$,
such that~$g'_k$ is the blowup of~$\Gp_k(X)$ along~$\Ft_{k+1}(M_X)$,
the exceptional divisor coincides with~$\OGr^\tau_E(k + 2, \cQ_E)$,
and the restriction of~$g'_k$ to the exceptional divisor coincides with~$g_k\vert_{\OGr^\tau_E(k + 2, \cQ_E)}$.

Since~$\OGr_B(k + 2, \cQ)$ is normal, integral, and Cohen--Macaulay (see Proposition~\ref{prop:corank-n-k}\ref{it:l2-ord}),
the same is true for~$\Gp_k(X)$.\ 
Since, moreover, $g'_k\vert_{\OGr^\tau_E(k + 2, \cQ_E)} = g_k\vert_{\OGr^\tau_E(k + 2, \cQ_E)}$,
the morphism~$g_k$ factors through~$g'_k$.\ 
It also follows that the restriction of~$g''_k$ to~$\Ft_{k+1}(M_X) \subset \Gp_k(X)$ is an isomorphism onto~$\Ft_{k+1}(M_X) \subset \chG_k(X)$.

 {The same argument proves that the composition~$\OGr_B(k + 2, \cQ) \to B \to B_5$ factors through~$\Gp_k(X)$,
and Proposition~\ref{lem:hpik-fibers} shows that the morphism~$\Gp_k(X) \to \chG_k(X) \times B_5$ is finite,
hence the algebraic space~$\Gp_k(X)$ is a projective variety.}

Finally, if~$k = 1$, the preimage 
\begin{equation*}
\OGr^\sigma_E(k + 2, \cQ_E) \coloneqq \OGr_E(k + 2, \cQ_E) \cap g_k^{-1}(\Fs_{k+1}(M_X))
\end{equation*}
is a smooth Cartier divisor, disjoint from~$\OGr^\tau_E(k + 2, \cQ_E)$
 and the proper morphism
\begin{equation*}
g_k\vert_{\OGr^\sigma_E(k + 2, \cQ_E)} \colon \OGr^\sigma_E(k + 2, \cQ_E) \lra \Fs_{k+1}(M_X)
\end{equation*}
is bijective by Proposition~\ref{lem:hpik-fibers}; since~$\Fs_{k+1}(M_X)$ is smooth by Lemma~\ref{lem:fk-mx}, it is an isomorphism.\ 
Since~$\OGr^\sigma_E(k + 2, \cQ_E)$ is disjoint from the exceptional divisor of~{$g'_k$},
we obtain an embedding of~$\Fs_{k+1}(M_X) \cong \OGr^\sigma_E(k + 2, \cQ_E)$ into~$\Gp_k(X)$ as a smooth Cartier divisor.\ 
Since it is smooth, it must be contained in the smooth locus of~$\Gp_k(X)$.
 \end{proof}

To conclude this section, we explain how the scheme~$\Gp_k(X)$ fits into the picture of schemes and maps   defined so far.\ The
 inclusion~\mbox{$\F_{k+1}(X)   \subset \chG_k(X)$} was seen in  Lemma~\ref{lem:fkmx-dkx}.

\begin{prop}
\label{prop:diagram-big}
 Let~$X$ be an ordinary GM variety of dimension $n$ satisfying Property~\eqref{hh}.\ 
 Let~$k \in \{0,1\}$ be such that~{$2k + 1 \le n \le 2k + 3$}.\
 There is a commutative diagram 
 \begin{equation}
\label{eq:diagram-big}
\vcenter{\xymatrix@C=2em{
 & \hG_k(X) \ar[dl]_{{\hg_k}} \ar[dr]^(0.45){{\hla_k}}|(0.67)\hole &&
 \OGr_B(k + 2,\cQ) \ar[dl]_{g'_k} \ar[dr]^{\tf}  \ar@/_2.8pc/[ddll]_(0.45){g_k} 
\\
\G_k(X) \ar[dr]^{\lambda_k} && 
\Gp_k(X) \ar[dl]_{g''_k} \ar[dr]^{f^+} &&
\Bl_{\widetilde\bp_X}(\wtY^{\ge \ell}_\Ap) \ar[dl]
 \\
& \chG_k(X) &&
\wtY^{\ge \ell}_\Ap,
}}
\end{equation}
where~{$\hG_k(X) = \Bl_{(g''_k)^{-1}(\F_{k+1}(X))}(\Gp_k(X))$, the morphism~$\hla_k$ is the blowup morphism, while}
 \begin{itemize}
\item 
if~$k = 0$, the morphism~$(g''_{0})^{-1}(\F_{1}(X)) \to \F_{1}(X)$ is a $\P^2$-bundle, and
\item 
if~$k = 1$,  the morphism~$(g''_{1})^{-1}(\F_{2}(X)) \to \F_{2}(X)$ is a $\P^1$-bundle 
over  {the component~$\Fs_2(X)$ of~$\F_2(X)$} and a~$\P^2$-bundle over~$\Ft_2(X)$.
\end{itemize}
Finally, if~$\F_{k+1}(X) = \vide$, the morphisms~$\lambda_k$, $g''_k$, $\hla_k$, and~$\hg_k$ are all isomorphisms;
in particular, we have~$\G_k(X) \cong \chG_k(X) \cong \Gp_k(X)$.
 \end{prop}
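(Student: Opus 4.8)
The plan is to assemble the diagram from the pieces already constructed in Propositions~\ref{prop:pik}, \ref{prop:hpik}, \ref{lem:hpik-fibers}, \ref{prop:corank-n-k}, and~\ref{prop:gp}. First I would recall that Proposition~\ref{prop:gp} gives the factorization $g_k = g''_k \circ g'_k$ with $g'_k \colon \OGr_B(k+2,\cQ) \to \Gp_k(X)$ the blowup along $\Ft_{k+1}(M_X)$, which supplies the right half of the upper row together with the morphism $g''_k \colon \Gp_k(X) \to \chG_k(X)$. The morphism $\tf$ and the rightmost square come directly from Proposition~\ref{prop:corank-n-k}\ref{it:l2-ord} (the Stein factorization of $f$), and I would note that $f^+$ is simply the induced morphism $\Gp_k(X) \to \wtY^{\ge\ell}_\Ap$ obtained because $\tf$ contracts the exceptional divisor $\OGr^\tau_E(k+2,\cQ_E)$ of $g'_k$ along its $\P^1$-fibers (which $\tf$ also does, being a fibration over $\wtY^{\ge\ell}_\Ap$) --- this needs the observation that both contractions agree on $\OGr^\tau_E(k+2,\cQ_E)$, which follows since both are fiberwise the unique $\P^1$-contraction. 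The left half: $\lambda_k \colon \G_k(X) \to \chG_k(X)$ is Proposition~\ref{prop:pik}; I would then \emph{define} $\hG_k(X) \coloneqq \Bl_{(g''_k)^{-1}(\F_{k+1}(X))}(\Gp_k(X))$ with $\hla_k$ the blowup morphism, and construct $\hg_k \colon \hG_k(X) \to \G_k(X)$ by the universal property of blowup.

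The key step --- and the one I expect to be the main obstacle --- is verifying that $\hg_k$ is well defined and that the square $\lambda_k \circ \hg_k = \hla_k$ (post-composed with $g''_k$) commutes, i.e. that the rational map $\Gp_k(X) \dashrightarrow \G_k(X)$ obtained from $\lambda_k^{-1}$ becomes a morphism after blowing up $(g''_k)^{-1}(\F_{k+1}(X))$. For this I would use Proposition~\ref{prop:pik}, which identifies $\lambda_k^{-1}(\F_{k+1}(X)) \cong \P_{\F_{k+1}(X)}(\Sym^2\cR_{k+2}^\vee)$ and shows $\lambda_k$ is an isomorphism over $\chG_k(X) \ssm \F_{k+1}(X)$, together with Proposition~\ref{lem:hpik-fibers}, which computes the fibers of $g_k$ over $\F_{k+1}(X)$: they are $\vide$, $\P^1$, or $\Bl_{\bp_X}(\P^2)$. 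Pulling these fiber descriptions through the blowup $g'_k$ gives the fibers of $g''_k$ over $\F_{k+1}(X)$ --- namely $\P^2$-bundles and $\P^1$-bundles as stated in the two bullet points --- because blowing up $\Ft_{k+1}(M_X)$ does not meet the generic point of $\lambda_k^{-1}(\F_{k+1}(X))$ when $k=0$ (where $\F_1(M_X)=\Ft_1(M_X)$ forces care) and, when $k=1$, turns the $\Bl_{\bp_X}(\P^2)$-fibers over $\Ft_2(X)$ back into honest $\P^2$'s while leaving the $\P^1$-fibers over $\Fs_2(X)$ untouched. The comparison of $(g''_k)^{-1}(\F_{k+1}(X)) \to \F_{k+1}(X)$ with $\lambda_k^{-1}(\F_{k+1}(X)) = \P_{\F_{k+1}(X)}(\Sym^2\cR_{k+2}^\vee)$ then shows that $\hla_k \colon \hG_k(X) \to \Gp_k(X)$ and $\hg_k \colon \hG_k(X) \to \G_k(X)$ are blowups of the \emph{same} center (up to the projective bundle identification), so that $\hg_k$ is an isomorphism onto $\G_k(X)$'s preimage and the diagram commutes.

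For the $k=0$ case I would additionally invoke that $\Fs_1(M_X) = \Ft_1(M_X) = \F_1(M_X)$ so that the ``$\Ft$'' blowup in Proposition~\ref{prop:gp} is the blowup of all of $\F_1(M_X)$, and Proposition~\ref{lem:hpik-fibers} gives the $\Bl_{\bp_X}(\P^2) \cong \P^2$ fiber over $\F_1(X)$ after the $\Ft$-blowup; hence $(g''_0)^{-1}(\F_1(X)) \to \F_1(X)$ is a $\P^2$-bundle, matching $\lambda_0^{-1}(\F_1(X)) = \P_{\F_1(X)}(\Sym^2\cR_2^\vee)$ which is also a $\P^2$-bundle. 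For $k=1$ the two components $\Fs_2(X)$ and $\Ft_2(X)$ of $\F_2(X)$ (disjoint by Section~\ref{ss:fkx}) must be treated separately exactly as in the bullet points, using that over $\Fs_2(X)$ the $g_1$-fiber is $\P^1$ (so $g'_1$ does nothing there, as $\Fs_2(M_X) \cap \Ft_2(M_X) = \vide$) while over $\Ft_2(X)$ the $g_1$-fiber $\Bl_{\bp_X}(\P^2)$ is improved to $\P^2$ by $g'_1$. Finally, when $\F_{k+1}(X) = \vide$ the blowup centers are empty, so $\hla_k$ is an isomorphism; Proposition~\ref{prop:pik} then gives that $\lambda_k$ is an isomorphism, Proposition~\ref{prop:hpik} combined with the fiber description of Proposition~\ref{lem:hpik-fibers} (all fibers of $g''_k$ over $\chG_k(X)$ are points) gives that $g''_k$ is an isomorphism, and hence $\hg_k$ is an isomorphism and $\G_k(X) \cong \chG_k(X) \cong \Gp_k(X)$ as claimed.
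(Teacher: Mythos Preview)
Your overall plan---assemble the diagram from Propositions~\ref{prop:pik}, \ref{prop:hpik}, \ref{lem:hpik-fibers}, \ref{prop:corank-n-k}, \ref{prop:gp}---is the same as the paper's, and your treatment of the right square and of the fiber computations is essentially correct. There are, however, two genuine gaps.

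\textbf{Construction of~$\hg_k$.} Your plan to construct~$\hg_k$ ``by the universal property of blowup'' does not work as stated, because~$\lambda_k \colon \G_k(X) \to \chG_k(X)$ is \emph{not} a blowup morphism (Proposition~\ref{prop:pik} only says its exceptional locus is a projective bundle, not an exceptional divisor), so there is no universal property to invoke. Your subsequent claim that ``$\hla_k$ and~$\hg_k$ are blowups of the same center'' is circular: you cannot compare~$\hg_k$ to a blowup before you have constructed it. What the paper does instead is observe that on~$\Gp_k(X)$ the morphism~$V_6 \otimes \cO \to (g''_k)^*\Sym^2\!\cR_{k+2}^\vee$ factors through the line bundle~$(f^+)^*\cO(H_5)$, so dualizing gives a morphism~$(g''_k)^*\Sym^2\!\cR_{k+2} \to (f^+)^*\cO(-H_5)$ whose image is precisely the ideal of~$(g''_k)^{-1}(\F_{k+1}(X))$ (twisted). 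On the blowup~$\hG_k(X)$ this ideal becomes invertible, hence one obtains a surjection~$\hla_k^*(g''_k)^*\Sym^2\!\cR_{k+2} \twoheadrightarrow \cO(-H_5 - \widehat{E})$, which is exactly a morphism~$\hG_k(X) \to \P_{\Gr(k+2,W)}(\Sym^2\!\cR_{k+2}^\vee) = \G_k(\P(W))$. One then checks on a dense open that the image lands in~$\G_k(X)$. This explicit line-bundle-quotient argument is the missing ingredient; your fiber comparison is relevant for the bullet points but does not by itself produce the morphism.

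\textbf{The isomorphism~$g''_k$ when~$\F_{k+1}(X) = \vide$.} Saying that all fibers of~$g''_k$ are points is not enough: a finite birational morphism need not be an isomorphism unless the target is normal. The paper argues that, since~$\Gp_k(X)$ is normal (Proposition~\ref{prop:gp}), the morphism~$g''_k$ is the normalization of~$\chG_k(X) \cong \G_k(X)$; then one must check that~$\G_k(X)$ is already normal. For~$k = 0$ this holds because~$\G_0(X) = X^{[2]}$ is smooth; for~$k = 1$ the paper invokes Lemma~\ref{lem:g1-x56-extra}, whose hypotheses~\eqref{newassu} hold here (the first because~$\F_2(X) = \vide$, the second by Corollary~\ref{cor:dim-fkmx} since~$X$ is ordinary). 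You should supply this normality step.
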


\begin{proof}
 To construct the right square, note that the composition
\begin{equation*}
\OGr_B(k + 2, \cQ) \xrightarrow{\  {\tf}\ } \Bl_{\widetilde\bp_X}(\wtY^{\ge \ell}_\Ap) \xrightarrow{\quad} \wtY^{\ge \ell}_\Ap
\end{equation*}
contracts the divisor~$\OGr_E(k + 2, \cQ_E) \subset \OGr_B(k + 2, \cQ)$ to the Pl\"ucker point~$\bp_X$,
hence it factors through the contraction~$g'_k$ (of a connected component) of this divisor.

To construct the left square, recall that~$\F_{k+1}(X)$ is the zero locus of the composition 
\begin{equation*}
V_6 \otimes \cO \to 
\Sym^2\!W^\vee \otimes \cO \to 
\Sym^2\!\cR_{k+2}^\vee
\end{equation*}
of morphisms of vector bundles on~$\chG_k(X)$, 
hence the scheme~$(g''_k)^{-1}(\F_{k+1}(X))$ is the zero locus of its pullback to~$\Gp_k(X)$.\ 
By definition of~$\OGr_B(k + 2, \cQ)$ (see Lemma~\ref{lem:ogrb-triples}) 
and the construction of  {the} morphism~$f^+ \colon  {\Gp_k(X) \to \wtY^{\ge \ell}_\Ap}$, 
this  {composition} vanishes on the subbundle~$(f^+)^*\cU_5 \subset V_6 \otimes \cO_{\Gp_k(X)}$,
hence induces a morphism
\begin{equation*}
(f^+)^*\cO(H_5) \cong (V_6 \otimes \cO_{\Gp_k(X)}) / (f^+)^*\cU_5 \to (g''_k)^*\Sym^2\!\cR_{k+2}^\vee.
\end{equation*}
Dualizing, we obtain a morphism~$(g''_k)^*\Sym^2\!\cR_{k+2} \to (f^+)^*\cO(- H_5)$
whose zero locus is equal to~$(g''_k)^{-1}(\F_{k+1}(X))$ and whose image is the ideal of~$(g''_k)^{-1}(\F_{k+1}(X))$ twisted by~$(f^+)^*\cO(-H_5)$.\ 
Therefore, on the blowup~$\hG_k(X)$ of~$(g''_k)^{-1}(\F_{k+1}(X))$, 
we obtain a surjective morphism
\begin{equation*}
\hla_k^*(g''_k)^*(\Sym^2\!\cR_{k+2}) \thlra \cO(-H_5 - \widehat{E}),
\end{equation*}
 {where~$\widehat{E} \subset \hG_k(X)$ is the exceptional divisor of~$\hla_k$.}\ 
Since~$\cR_{k+2}$ is the tautological bundle on~$\Gr(k+2,W)$, 
we obtain  {a composition of} morphisms
\begin{equation*}
\hG_k(X) \to 
\P_{\hG_k(X)}(\hla_k^*(g''_k)^*\Sym^2\!\cR_{k+2}^\vee) \to
\P_{\Gr(k+2,W)}(\Sym^2\!\cR_{k+2}^\vee) =
\G_k(\P(W)).
\end{equation*}
On the dense open subset~$\hla_k^{-1}((g_k'')^{-1}(\chG^0_k(X))) \subset \hG_k(X)$, 
this {composition} coincides by construction with~$\lambda_k^{-1} \circ g''_k \circ \hla_k$; 
in particular, its image is contained in~$\G_k(X) \subset \G_k(\P(W))$.\ 
Thus, we obtain the morphism~$\hg_k \colon \hG_k(X) \to \G_k(X)$ such that the left square commutes.\

If~$k = 0$, the morphism~$g_{0}$ is, by Proposition~\ref{lem:hpik-fibers}, 
a fibration over {the scheme}~$\F_{1}(X)$ with fibers~$\Bl_{\bp_X}(\P^2)$.\ 
Moreover,  {the argument of Proposition~\ref{lem:hpik-fibers} also shows that}
its fibers intersect the divisor~$\OGr_E(  2, \cQ_E)$ along the exceptional curve of~$\Bl_{\bp_X}(\P^2)$,
which   is also equal to the fiber of the morphism~$g'_{0}$.\ 
Therefore, the fiber of~$g''_{0}$ is obtained from~~$\Bl_{\bp_X}(\P^2)$ by contracting the exceptional curve,
hence it is isomorphic to~$\P^2$.

 {If~$k = 1$, the morphism~$g_{1}$ is, again by Proposition~\ref{lem:hpik-fibers}, a fibration over~$\Fs_{2}(X)$ with fibers~$\P^1$ and over~$\Ft_2(X)$ with fiber~$\Bl_{\bp_X}(\P^2)$.\ 
Moreover, the fibers over~$\Fs_2(X)$ do not intersect the exceptional divisor~$\OGr_E(2, \cQ_E)$ of~$g'_1$,
while the fibers over~$\Ft_2(X)$ intersect it along the exceptional curve of~$\Bl_{\bp_X}(\P^2)$,
which is also equal to the fiber of the morphism~$g'_{1}$.\ 
This gives the required description of the fibers of~$g''_1$ over~$\F_2(X)$.}

If~$\F_{k+1}(X) = \vide$, the morphism~$\lambda_k$ is an isomorphism by Corollary~\ref{cor:pik}
 {and~$\hla_k$ is an isomorphism by construction; in particular, $\hg_k = g''_k$}.\ 
 Moreover, by Proposition~\ref{lem:hpik-fibers}, 
the morphism~$g'_k$ contracts all positive dimensional fibers of the surjective morphism~$g_k$ 
 {and, since by Proposition~\ref{prop:gp} the scheme~$\Gp_k(X)$ is normal, the morphism~$g''_k$ is the normalization of~$\chG_k(X) \cong \G_k(X)$.\ 
But~$\G_0(X)$ is the Hilbert square of a smooth variety, hence it is smooth,
and~$\G_1(X)$ is normal by Lemma~\ref{lem:g1-x56-extra} 
{(the first inequality in~\eqref{newassu} holds by the assumption~$\F_2(X) = \vide$
and the second inequality holds by Corollary~\ref{cor:dim-fkmx} since~$X$ is ordinary)}.\ In both cases,~$g''_k$ is thus an isomorphism.}
 \end{proof}


\section{Explicit descriptions}
\label{sec:explicit}

In this section, we provide explicit descriptions of some of the Hilbert schemes~$\G_k(X)$ 
of quadrics of dimension~$k$ on a smooth GM variety~$X$ of dimension~$n$.\ 
As we will see, the complexity of the scheme~$\G_k(X)$ grows with~$n - 2k$; 
equivalently, it decreases with
\begin{equation*}
\ell = 2k + 3 - n.
\end{equation*}
We will therefore consider the cases in the corresponding order.

The subschemes~$\overline{\G^0_k(X)}$, $\Gs_k(X)$, and~$\Gt_k(X)$ of~$\G_k(X)$ were defined in Definition~\ref{def:st-quadrics}.\

\subsection{Quadrics of dimensions more than half}  
\label{ss:quadrics-big}

We start by considering the case~$n < 2k$, that is,~$\ell > 3$.\ 
We show that  the Hilbert scheme~$\G_k(X)$ is then empty with a single exception.

\begin{prop}
\label{prop:g3x5}
Let~$X$ be a smooth GM variety   of dimension $n\ge2$.\ 
If~$n < 2k$, the scheme~$\G_k(X)$ is empty 
unless~$k = 3$ and~$X$ is a special GM fivefold, 
in which case the scheme~$\G_3(X) = \Gs_3(X)$ is a point.\ 
In particular, $\G_k(X) = \vide$ for all~$X$ if~$k \ge 4$.
\end{prop}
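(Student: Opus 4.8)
The plan is to use the decomposition $\G_k(X) = \overline{\G^0_k(X)} \cup \Gs_k(X) \cup \Gt_k(X)$ (Definition~\ref{def:st-quadrics}) and to show each of the three pieces is empty, except in the single case of the statement. First note that $n < 2k$ forces $k \ge 2$, so $\ell := 2k + 3 - n = 2(k+2) - n - 1 \ge 4$, and also $2(k+1) > 2k > n$. The case $n = 2$ is immediate: a quadric of dimension $k \ge 2$ cannot be a subscheme of the K3 surface $X$ (for $k > 2$ by dimension, and for $k = 2$ because a $2$-dimensional quadric has an irreducible component which, lying in the irreducible surface $X$, would have to equal $X$ --- impossible, since no component of a quadric hypersurface is a K3 surface). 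So we may assume $n \ge 3$; then $X$ automatically satisfies Property~\eqref{hh}, the constructions of Sections~\ref{sec:qf}--\ref{sec:general-another} apply, and $\F_{k+1}(X) = \vide$ by Proposition~\ref{prop:fk-gm}.

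For the main component, Corollary~\ref{cor:gk0} together with $\F_{k+1}(X) = \vide$ gives an isomorphism $\G^0_k(X) \cong \OGr_{B \ssm E}(k+2, \cQ)$. The fiber of this over $b \in B \ssm E$ is the isotropic Grassmannian $\OGr(k+2, \cQ_b)$ of the quadric $\cQ_b$ of dimension $n - 1$, which by Lemma~\ref{lem:ogr-dim} is nonempty only if $\corank(\cQ_b) \ge \ell \ge 4$; but $\corank(\cQ_b) \le 3$ for every $b \in B \ssm E$, since $B^{\ge 4} \ssm E = \sY^{\ge 4}_\Ap \ssm \{\bp_X\} = \vide$ by Lemma~\ref{lemma:ker-qb} and Theorem~\ref{thm:og}. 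Hence $\G^0_k(X) = \vide$, so $\overline{\G^0_k(X)} = \vide$ and $\G_k(X) = \Gs_k(X) \cup \Gt_k(X)$.

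Next, the linear span of a $\tau$-quadric (resp.\ a $\sigma$-quadric) of dimension $k$ is a $\tau$-space (resp.\ a $\sigma$-space), hence is contained in some $\P(\C \oplus \bw2V_3) \cong \P^3$ (resp.\ $\P(\C \oplus V_1 \wedge V_5) \cong \P^4$); therefore $\Gt_k(X) = \vide$ once $k \ge 3$ and $\Gs_k(X) = \vide$ once $k \ge 4$. This already proves $\G_k(X) = \vide$ for all $X$ when $k \ge 4$, i.e.\ the final assertion, and reduces the analysis to $k \in \{2, 3\}$. For such $k$, since $\F^\star_{k+1}(X) \subseteq \F_{k+1}(X) = \vide$, Corollary~\ref{cor:gstk} identifies $\Gs_k(X)$ with $\Fs_{k+1}(M_X)$ and $\Gt_k(X)$ with $\Ft_{k+1}(M_X)$, so it remains to compute $\F_{k+1}(M_X)$. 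If $k = 2$ (so $n = 3$), then $\F_3(M_X) = \vide$ for every GM threefold by Remark~\ref{rem:f3m}, hence $\G_2(X) = \vide$. If $k = 3$ (so $n \in \{3,4,5\}$), every linear $4$-space in $\CGr(2,V_5)$ is a $\sigma$-space, so $\F_4(M_X) = \Fs_4(M_X)$; for $X$ ordinary, $M_X \subset \Gr(2,V_5)$ contains no $\P^4$ and $\F_4(M_X) = \vide$, while for $X$ special, $M_X$ is the cone with vertex $\bv$ over $M'_X = M_{X_0}$, a smooth $n$-dimensional linear section of $\Gr(2,V_5)$, so $\F_4(M_X) = \F_3(M'_X)$ by Lemma~\ref{lem:fk-cm}, which by Lemma~\ref{lem:fk-mx} is empty when $n \le 4$ and a single reduced point when $n = 5$. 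Altogether $\G_3(X) = \F_4(M_X)$ is empty unless $X$ is a special GM fivefold, in which case it is a reduced point whose unique element is a $\sigma$-quadric, so $\G_3(X) = \Gs_3(X)$ is a point.

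Nothing here is a real obstacle --- the argument is an assembly of results established above. The one step needing care is the last: for a special GM fivefold one should compute $\F_4(M_X)$ via the cone structure of $M_X$ (Lemmas~\ref{lem:fk-cm} and~\ref{lem:fk-mx}) and observe that the unique $\P^4$ contained in $M_X$ is not contained in $X$ (which holds since $\F_4(X) = \vide$), so that the restriction of the defining quadric $Q$ to it cuts out exactly one quadric on $X$.
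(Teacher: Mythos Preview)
Your proof is correct and follows essentially the same route as the paper's: kill the main component via the orthogonal Grassmannian (the paper invokes Proposition~\ref{prop:corank-n-k}\ref{it:l4} directly, you unpack it through Corollary~\ref{cor:gk0}, Lemma~\ref{lem:ogr-dim}, and Lemma~\ref{lemma:ker-qb}), then identify $\Gs_k(X)$ and $\Gt_k(X)$ with $\F^\star_{k+1}(M_X)$ via Corollary~\ref{cor:gstk} and read off the answer from Lemmas~\ref{lem:fk-mx} and~\ref{lem:fk-cm}. Two small stylistic differences worth noting: the paper dispatches the low cases $k=2$, $n\le 3$ and $k=3$, $n\le 4$ by a direct appeal to~\cite[Corollary~3.5]{DK2} rather than running the full machinery, whereas you separate out $n=2$ by hand and then treat everything else uniformly; and your dimension bound on $\sigma$- and $\tau$-spans is a clean way to kill $\Gt_k$ for $k\ge 3$ and $\Gs_k$ for $k\ge 4$ without computing $\F_{k+1}(M_X)$ in those ranges.
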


\begin{proof}
Assume~$n < 2k$.\
Since~$n \ge 2$, we have~$k \ge 2$.

If~$k = 2$, then~$n \le 3$, so~$\G_2(X)$ is the Hilbert scheme of quadric surfaces on GM surfaces or threefolds;
in the case of surfaces, it is obviously empty and, in the case of threefolds, it is empty by~\cite[Corollary~3.5]{DK2}.

If~$k = 3$, then~$n \le 5$.\ 
For~$n \le 4$, the same arguments as above work, so assume~\mbox{$n = 5$}.\ 
Then~$\F_{k+1}(X) = \vide$ by Proposition~\ref{prop:fk-gm}, and~$\OGr_B(k + 2, \cQ) = \vide$ by Proposition~\ref{prop:corank-n-k}\ref{it:l4}.\ 
Therefore, by {Corollaries~\ref{cor:fkm-connected}} and~\ref{cor:gstk}, we obtain
\begin{equation*}
\G_3(X) = \Gs_3(X) \sqcup \Gt_3(X) = \Fs_4(M_X) \sqcup \Ft_4(M_X).
\end{equation*}
By Lemmas~\ref{lem:fk-mx} and~\ref{lem:fk-cm}, this is empty if~$X$ is ordinary, and  a point if~$X$ is special.\ 
Note that the corresponding linear subspace on~$M_X$ is a $\sigma$-space, hence the quadric is a $\sigma$-quadric.
 
Finally, assume~$k \ge 4$.\ 
As before, we have $\F_{k+1}(X) = \vide$ and~$\OGr_B(k + 2,\cQ) = \vide$, 
hence we have~$\G_k(X) = \F_{k+1}(M_X)$.\ This is empty by Lemmas~\ref{lem:fk-mx} and~\ref{lem:fk-cm}.
\end{proof}

\subsection{Half-dimensional quadrics}
\label{ss:quadrics-half}

The next case is~$n = 2k$, that is,~$\ell = 3$.\ 
 {By~\eqref{eq:px-dim}},
the Pl\"ucker point~$\bp_X$ lies in~$\sY^3_\Ap$ when~$k = 1$, and away from~$\sY^3_\Ap$ when~$k \ge 2$.

\begin{theo}
\label{thm:gk-x2k}
Let~$X$ be  {a   GM variety of even dimension~$n = 2k $ satisfying Property~\eqref{hh}},
with associated Lagrangian~$A$.\  
Then
\begin{equation*}
\G_k^0(X) = \overline{\G^0_k(X)},
\qquad 
\Gs_k(X) = \Fs_{k+1}(M_X),
\qquad
\Gt_k(X) = \Ft_{k+1}(M_X),
\end{equation*}
and the scheme~$\G_k(X)$ is a disjoint union  {of closed subschemes}
\begin{equation*}
\G_k(X) =  {\G^0_k(X)} \sqcup \Gs_k(X) \sqcup \Gt_k(X)
\end{equation*}
 {which} have the following form depending on~$k$, $n$, and the type of~$X$:
\begin{equation*}
\renewcommand\arraystretch{1.2}
\begin{array}{l|ccc}
&  {\G_k^{0}(X)} & \Gs_k(X) & \Gt_k(X) 
\\
\hline 
\text{$k = 1$, $n = 2$, $X$ is ordinary} & \sY_\Ap^3 \ssm \{\bp_X\} & \vide & \vide
\\
\text{$k = 2$, $n = 4$, $X$ is ordinary} & \wtY_\Ap^3 & \P^0 & \vide
\\
\text{$k = 2$, $n = 4$, $X$ is special} & \wtY_\Ap^3 & \P^1 & \P^0 
\\
\text{$k = 3$, $n = 6$, $X$ is special} & \wtY_\Ap^3 \times \P^1 & \P^4 & \vide
\end{array}
\end{equation*}
\end{theo}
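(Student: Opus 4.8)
The three pieces $\G_k^0(X)$, $\Gs_k(X)$, $\Gt_k(X)$ should be treated separately, since Corollary~\ref{cor:fkm-connected} already gives the disjoint union decomposition $\G_k(X) = \G_k^0(X) \sqcup \Gs_k(X) \sqcup \Gt_k(X)$ (valid for all $k \ge 2$), while for $k=1$, $n=2$, we know $\Fst_2(M_X) = \vide$ and $\F_2(M_X) = \vide$ by Lemma~\ref{lem:fk-mx}, so there are no $\sigma$- or $\rtx$-quadrics at all. The first claim of the theorem, namely that $\Gs_k(X) = \Fs_{k+1}(M_X)$ and $\Gt_k(X) = \Ft_{k+1}(M_X)$, follows immediately from Corollary~\ref{cor:gstk} once we observe that $\Fs_{k+1}(X) = \Ft_{k+1}(X) = \vide$ in all four rows: for $n=2k$ and $k \ge 2$ this is Proposition~\ref{prop:fk-gm} (the finite schemes $\F_k(X)$ involve $\F_k$, not $\F_{k+1}$, and $2(k+1) > n$ forces emptiness), while for $k=1$, $n=2$ there simply are no $\sigma\rtx$-planes. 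Then $\G_k^0(X) = \overline{\G_k^0(X)}$ is exactly the assertion that $\G_k^0(X)$ is closed, which is Corollary~\ref{cor:fkm-connected} for $k \ge 2$ and is automatic for $k=1$, $n=2$ since then $\G_1(X) = \G_1^0(X)$ (there being no other quadrics). Finally, by Corollary~\ref{cor:gk0} we have $\G_k^0(X) \cong \OGr_{B\ssm E}(k+2,\cQ)\ssm g_k^{-1}(\F_{k+1}(X))$, and since $\F_{k+1}(X) = \vide$ here, $\G_k^0(X) \cong \OGr_{B\ssm E}(k+2,\cQ)$.

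\textbf{Identifying $\G_k^0(X)$.} With $p = k+2$ and $\ell = 2p - n - 1 = 3$, Proposition~\ref{prop:corank-n-k}\ref{it:l3} applies and gives $\OGr_{B\ssm E}(p,\cQ)$ as a scheme finite over $\sY^3_\Ap \ssm \{\bp_X\}$, with all fibers isomorphic: a point if $p=3$, $n=2$; two reduced points if $p=4$, $n=4$; two copies of $\P^1$ if $p=5$, $n=6$. For $k=1$ ($p=3$, $n=2$): $\bp_X \in \sY^3_\Ap$ by~\eqref{eq:px-dim}, so $\sY^3_\Ap \ssm \{\bp_X\}$ is the stated scheme, and the map is a bijection onto a finite reduced scheme of degree $1$ over it; combining with the isomorphism statement in Proposition~\ref{prop:fk-gm} ($\F_1(X) \cong \sY^3_{A,V_5}$), or rather arguing directly that a quadric fibration of constant rank $5$ over a reduced point has a single isotropic $\P^2$, we get $\G_1^0(X) = \sY^3_\Ap \ssm \{\bp_X\}$. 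For $k \ge 2$: $\bp_X \notin \sY^3_\Ap$ by~\eqref{eq:px-dim}, so $\sY^3_\Ap \ssm \{\bp_X\} = \sY^3_\Ap$; this is a finite reduced scheme by Theorem~\ref{thm:og}, and the étale double cover $\wtY^3_\Ap \to \sY^3_\Ap$ from Theorem~\ref{thm:tya} is precisely the "two reduced points" fibration in the $n=4$ case, so $\G_2^0(X) \cong \wtY^3_\Ap$, while in the $n=6$ case we get a $\P^1$-bundle over $\wtY^3_\Ap$, which one checks is Zariski-locally trivial (étale-locally trivial $\P^1$-bundles over a finite reduced scheme are trivial), hence $\G_3^0(X) \cong \wtY^3_\Ap \times \P^1$.

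\textbf{Identifying the $\sigma$- and $\rtx$-parts.} It remains to read off $\Fs_{k+1}(M_X)$ and $\Ft_{k+1}(M_X)$ from Remark~\ref{rem:f3m} and Lemmas~\ref{lem:fk-mx}, \ref{lem:fk-cm}. For $k=2$, $n=4$ ordinary: $M_X = M = \Gr(2,V_5)\cap\P(W_0)$ is smooth of dimension $5$, and by Lemma~\ref{lem:fk-mx} the column $n=4$ gives $\Fs_3(M) = \F_3(M) = \P^0$ and $\Ft_3(M) = \vide$ (since $\Ft_k(M) = \vide$ for $k \ge 3$). For $k=2$, $n=4$ special: $M_X$ is the cone over $M'_X = M_{X_0}$, a smooth fourfold; by Lemma~\ref{lem:fk-cm}, $\F_3(M_X) = (\Fo_3(M_X) \sqcup \Fs_2(M'_X)) \sqcup \Ft_2(M'_X)$, and since $M'_X$ has dimension $4$, $\Fo_3(M_X)$ is an $\A^3$-bundle over $\F_3(M'_X)$; but by Lemma~\ref{lem:fk-mx} (column $n=3$ for $M'_X$), $\F_3(M'_X) = \vide$, $\Fs_2(M'_X) = \P^0$, $\Ft_2(M'_X) = \P^0$, whence $\Fs_3(M_X) = \Fs_2(M'_X) = \P^1$ — wait, more carefully: $\Fs_3(M_X) = \Fo_3(M_X) \sqcup \Fs_2(M'_X)$ and $\Fo_3(M_X) = \vide$ since it fibers over $\F_3(M'_X) = \vide$, so $\Fs_3(M_X) = \Fs_2(M'_X)$, which by Remark~\ref{rem:f3m} ($n=3$ case) is $\P^1$; and $\Ft_3(M_X) = \Ft_2(M'_X) = \P^0$. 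For $k=3$, $n=6$ special: $M_X$ is the cone over the smooth fivefold $M'_X$, and by Lemma~\ref{lem:fk-cm}, $\F_4(M_X) = \F_3(M'_X)$; by Remark~\ref{rem:f3m} ($n=5$ case, applied to $M'_X$) or Lemma~\ref{lem:fk-mx} (column $n=5$), $\F_3(M'_X) = \P(V_5) \cong \P^4$, and this is all $\sigma$-type since $\Ft_4(M_X) = \Ft_3(M'_X) = \vide$ (again $\Ft_k = \vide$ for $k \ge 3$); but wait, $\F_4(M_X)$ should split as $\Fs_4(M_X) \sqcup \Ft_4(M_X)$ and the fact that $\F_3(M'_X) = \P^4$ entirely $\sigma$-type follows because the $\P^4$ of hyperplanes in the unique $4$-space on $M'_X$ is a family of $\sigma$-spaces. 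This gives $\Gs_3(X) = \P^4$, $\Gt_3(X) = \vide$.

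\textbf{The main obstacle.} The delicate point is the bookkeeping for the $\sigma$/$\rtx$ labels when passing through the cone construction (Lemma~\ref{lem:fk-cm} and Remark~\ref{rem:f3m}): one must verify that the specific components of $\F_{k+1}(M_X)$ appearing are of the claimed type, and in the special case that the étale double cover structure on $\G_k^0(X)$ indeed descends from the quadric fibration $\cQ$ in the way Proposition~\ref{prop:corank-n-k}\ref{it:l3} asserts. A secondary subtlety: for $k=1$, $n=2$ one must confirm that $\G_1^0(X)$ is honestly a \emph{scheme} isomorphic to $\sY^3_\Ap \ssm\{\bp_X\}$ (not merely set-theoretically), which follows from the constant-rank statement in Proposition~\ref{prop:corank-n-k}\ref{it:l3} together with the reducedness of $\sY^3_\Ap$ from Theorem~\ref{thm:og}, but deserves an explicit sentence. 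Everything else is assembling quotations of the results already established.
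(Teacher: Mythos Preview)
Your proof is correct and follows essentially the same route as the paper: vanishing of~$\F_{k+1}(X)$ from Proposition~\ref{prop:fk-gm}, the disjoint-union decomposition and identification~$\G_k^0(X)\cong\OGr_{B\ssm E}(k+2,\cQ)$ from Corollaries~\ref{cor:gstk}, \ref{cor:gk0}, \ref{cor:fkm-connected}, the fiber description from Proposition~\ref{prop:corank-n-k}\ref{it:l3}, and the tables of Lemmas~\ref{lem:fk-mx}--\ref{lem:fk-cm} for~$\F^\star_{k+1}(M_X)$. A few slips to clean up: the phrase ``constant rank~$5$'' in the~$k=1$ case should read ``corank~$3$'' (the conic degenerates to the whole~$\P^2$), the reference to Remark~\ref{rem:f3m} in the special fourfold case should be to its~$n=4$ paragraph (not~$n=3$), and the identification of the two-point fiber with~$\wtY^3_\Ap$ over~$\sY^3_\Ap$ tacitly uses Lemma~\ref{lemma:ker-qb}.
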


\begin{proof}
We have~$\F_{k+1}(X) = \vide$ by Proposition~\ref{prop:fk-gm}.

Assume~$k \ge 2$.\ 
By Corollaries~\ref{cor:gstk} and~\ref{cor:fkm-connected}, we have~{$\G^\star_k(X) = \F^\star_{k+1}(M_X)$ and}
 \begin{equation*}
\G_k(X) = \OGr_{B \ssm E}(k + 2, \cQ) \sqcup \Gs_{k}(M_X) \sqcup \Gt_{k}(M_X).
\end{equation*}
By Proposition~\ref{prop:corank-n-k}\ref{it:l3}, there is a morphism 
\begin{equation*}
\OGr_{B \ssm E}(k + 2, \cQ) \lra \sY^3_\Ap \ssm \{\bp_X\}   = \sY^3_\Ap 
\end{equation*}
whose fiber over a point~$b$ is the Hilbert scheme~$\F_{k+1}(\cQ_b)$.\ 
When~$k = 2$, this Hilbert scheme parameterizes linear spaces~$\P^3$ on a quadric of dimension~$3$ and corank~$3$,
and, when~$k = 3$, it parameterizes linear spaces~$\P^4$ on a quadric of dimension~$5$ and corank~$3$.\ 
It is therefore a disjoint union of two points or two~$\P^1$, respectively.\ 
Finally, a combination of Lemmas~\ref{lem:fk-mx} and~\ref{lem:fk-cm} gives the required description 
of the components~$\Gs_k(X) = \Fs_{k+1}(M_X)$ and~\mbox{$\Gt_k(X) = \Ft_{k+1}(M_X)$}.

Assume now~$k = 1$, hence~$n = 2$ and~$X$ is ordinary.\ 
 {Corollary~\ref{cor:gstk} and Lemma~\ref{lem:fk-mx} imply}
\begin{equation*}
\Gs_1(X) = \Fs_2(M_X) = \vide,
\qquad 
\Gt_1(X) = \Ft_2(M_X) = \vide 
\end{equation*}
hence,~$\G_1(X) = \G^0_1(X) = \OGr_{B \ssm E}(3,\cQ)$ by  {Corollary~\ref{cor:gk0}}.\ 
 It remains to note that~$\cQ \to B$ is a conic bundle, hence
\begin{equation*}
 \OGr_{B \ssm E}(3,\cQ) \cong B^{\ge 3} \ssm E = \sY^3_\Ap \ssm \{\bp_X\},
\end{equation*}
where the last equality is proved in Proposition~\ref{proposition:b-stratification}.\ 
 \end{proof}

\subsection{Conics on GM threefolds}

The next case to consider is~$n = 2k + 1$,  {that is,~$\ell = 2$}.\ 

We begin with the case~$n = 3$, so that~$X$ is a smooth GM threefold with associated Lagrangian~$A$.\ 
Its Pl\"ucker point~$\bp_X$   is in~$\sY^2_\Ap$ if~$X$ is ordinary, and in~$\sY^3_\Ap$ if~$X$ is special (see~\eqref{eq:px-dim}).\ 
The integral normal {double dual} EPW surface~$\wtY^{\ge 2}_\Ap$ was introduced in Theorem~\ref{thm:tya}.\ 
When~$X$ is special, the point $\widetilde\bp_X \in \wtY^3_\Ap$ was defined in Lemma~\ref{lem:closure};
by Proposition~\ref{propsing}, it is a node of~$\wtY^{\ge 2}_\Ap$.

The following result includes Theorem~\ref{thm:g1x3-intro} as a special case 
(compare with~\cite[Section~3]{lo} and \cite[Section~6]{dim} in the ordinary case and~\cite[Section~2]{ili1} in the special case).

\begin{theo}
\label{thm:g1-x3}
Let~$X$ be a smooth GM threefold with associated Lagrangian~$A$.\
The Hilbert scheme~$\G_1(X)$ of conics on~$X$ is a connected Cohen--Macaulay surface and
 \begin{equation*}
\G_1(X) \cong 
\begin{cases}
\Bl_{\bp'_X}(\wtY_{\Ap}^{\ge 2}) & \text{if~$X$ is ordinary,}
\\
\Bl_{\widetilde\bp_X}(\wtY_{\Ap}^{\ge 2}) \cup \P^2 & \text{if~$X$ is special,}
\end{cases}
\end{equation*}
where
\begin{itemize}
\item 
If~$X$ is ordinary,  {so that~$\bp_X \in \sY^2_\Ap$}, the point
$\bp'_X$ is one of the two preimages in~$\wtY_{\Ap}^{\ge 2}$ of~\mbox{$\bp_X$}.\ 
Moreover,~$\Gs_1(X) \cong \P^1$ is the exceptional curve over~$\bp'_X$ 
and~$\Gt_1(X) =  {\{ \bp''_X \}}$ is the other preimage of~$\bp_X$.
\item  
If~$X$ is special, so that~$\bp_X \in \sY^3_\Ap$, the point~$\widetilde\bp_X \in \wtY^3_\Ap$ is the point over~$\bp_X$,
the component~$\Bl_{\widetilde\bp_X}(\wtY_{\Ap}^{\ge 2})$ is equal to~$\overline{\G^0_1(X)}$, 
while~$ \P^2 $ is~$\Gs_1(X) = \Gst_1(X) = \Gt_1(X)$.\ 
These components intersect transversely along a smooth rational curve:
the exceptional curve on the component~$\Bl_{\widetilde\bp_X}(\wtY_{\Ap}^{\ge 2})$
and a conic on the component~$\P^2$.
\end{itemize}
 \end{theo}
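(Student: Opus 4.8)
The plan is to run the machinery of diagram~\eqref{eq:diagram-big} with~$k = 1$ and to analyze the two cases (ordinary and special) separately, using the corank computations of Section~\ref{sec:qf} together with the structural results of Section~\ref{sec:og}. For a GM threefold we have~$n = 3$, $p = k+2 = 3$, and~$\ell = 2p - n - 1 = 2$, so Proposition~\ref{proposition:b-stratification} (ordinary case) or Proposition~\ref{prop:corank-special} and Remark~\ref{rem:p3n3-special} (special case) tell us that~$\OGr_B(3,\cQ)$ or its main component is a blowup of the double dual EPW surface~$\wtY^{\ge 2}_\Ap$. The Cohen--Macaulayness of~$\G_1(X)$ and its pure dimension~$2$ come for free from Lemma~\ref{lem:g1-x56}, and connectedness from Corollary~\ref{cor:g1-connected}. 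So the real content is to identify the irreducible components and the gluing.

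First I would treat the \emph{ordinary} case. Here~$\bp_X \in \sY^2_\Ap$, so Lemma~\ref{lem:fk-mx} gives~$\F_2(M_X) = \Fs_2(M_X) \sqcup \Ft_2(M_X)$ with~$\Fs_2(M_X) \cong \P^1$ (a line of $\sigma$-planes through the Veronese point) and~$\Ft_2(M_X) \cong \P^0$; also~$\F_2(X) = \vide$ since~$\dim(\Fs_2(X)) \le 3n-14 < 0$ and~$\dim(\Ft_2(X)) < 0$ by Lemma~\ref{lem:fkx-fkmx} (or Proposition~\ref{prop:fk-gm}). Therefore by Corollary~\ref{cor:pik} the map~$\lambda_1$ is an isomorphism and~$\G_1(X) = \chG_1(X)$. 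By Proposition~\ref{prop:corank-n-k}\ref{it:l2-ord} (with~$\bp_X \in \sY^{\ge 2}_\Ap$, so~$\widetilde\bp_X$ is the pair of points over~$\bp_X$ in~$\wtY^{\ge 2}_\Ap$ by Lemma~\ref{lem:closure}), the map~$f$ factors through~$\Bl_{\widetilde\bp_X}(\wtY^{\ge 2}_\Ap) \to \Bl_{\bp_X}(\sY^{\ge 2}_\Ap) = B^{\ge 2}$, and since~$\cQ \to B$ is a conic bundle, $\tf$ is an isomorphism, so~$\OGr_B(3,\cQ) \cong \Bl_{\widetilde\bp_X}(\wtY^{\ge 2}_\Ap)$. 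Now~$g_1 \colon \OGr_B(3,\cQ) \to \chG_1(X) = \G_1(X)$ contracts: over~$\Ft_2(M_X) = \{\mathrm{pt}\}$ it has a~$\P^1$-fiber (Proposition~\ref{lem:hpik-fibers}), and~$\OGr^\tau_E(3,\cQ_E)$ is exactly the exceptional~$\P^1$ of the blowup at \emph{one} of the two points~$\bp'_X$ over~$\bp_X$; over~$\Fs_2(M_X) \cong \P^1$ the map~$g_1$ is bijective onto~$\Fs_2(M_X)$, which is~$\Gs_1(X)$. Putting this together with Proposition~\ref{prop:gp}: contracting~$\OGr^\tau_E(3,\cQ_E)$ yields~$\Gp_1(X)$, but since~$\F_2(X) = \vide$ we have~$\Gp_1(X) \cong \chG_1(X) \cong \G_1(X)$; thus~$\G_1(X) \cong \Bl_{\bp'_X}(\wtY^{\ge 2}_\Ap)$, where~$\bp'_X$ is the point over~$\bp_X$ whose exceptional~$\P^1$ maps to~$\Gt_1(M_X)$, and~$\Gs_1(X) \cong \P^1$ is that exceptional curve while~$\Gt_1(X) = \{\bp''_X\}$ is the other preimage of~$\bp_X$ (which is a smooth point of~$\wtY^{\ge 2}_\Ap$ since~$\bp_X \in \sY^2_\Ap$, hence unchanged by the blowup).

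Next the \emph{special} case. Now~$\bp_X \in \sY^3_\Ap$ and Lemma~\ref{lem:fk-mx} combined with Lemma~\ref{lem:fk-cm} (Remark~\ref{rem:f3m}, case~$n=3$ for~$M_X$ a cone) gives~$\F_2(M_X) = \Fs_2(M_X) = \Ft_2(M_X) = \Fst_2(M_X) \cong \P^2$, the space of planes in the unique~$\P^3 \subset M_X$ through the vertex. Again~$\F_2(X) = \vide$, so~$\lambda_1$ is an isomorphism and~$\G_1(X) = \chG_1(X)$, and by Corollary~\ref{cor:gstk} the component~$\Gs_1(X) = \Gst_1(X) = \Gt_1(X) = \Fs_2(M_X) \cong \P^2$. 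For the main component, Remark~\ref{rem:p3n3-special} gives directly~$\OGrm_B(3,\cQ) \cong \Bl_{\widetilde\bp_X}(\wtY^{\ge 2}_\Ap)$ with~$\widetilde\bp_X$ the point over~$\bp_X$ (a node of~$\wtY^{\ge 2}_\Ap$ by Proposition~\ref{propsing}); the map~$g_1$ restricted to this main component is an isomorphism onto~$\overline{\G^0_1(X)}$ since, away from~$\F_2(M_X)$, $g_1$ is an isomorphism onto~$\chG^0_1(X)$ (Proposition~\ref{prop:hpik}) and~$\Bl_{\widetilde\bp_X}(\wtY^{\ge 2}_\Ap)$ is normal (Lemma~\ref{lem:closure}). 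It remains to describe the intersection of~$\overline{\G^0_1(X)}$ and~$\P^2$ inside~$\chG_1(X)$: on the side of the main component it is the image of~$\OGrm_E(3,\cQ_E)$, which is the exceptional curve of~$\Bl_{\widetilde\bp_X}(\wtY^{\ge 2}_\Ap)$ (a smooth conic, being the exceptional curve of an ordinary double point blown up, i.e.\ a~$\P^1$); on the side of~$\P^2 = \Ft_2(M_X)$ it is the locus over which~$g_1\vert_{\OGr_E(3,\cQ_E)}$ has positive-dimensional fibers — by Proposition~\ref{lem:hpik-fibers} the map~$\OGr_E(3,\cQ_E) \to \Ft_2(M_X)$ is a~$\P^1$-fibration over~$\Fs_2(M_X) = \P^2$, but the relevant intersection curve is a conic in~$\P^2$, which I would identify via the geometry of the corank stratification of~$\cQ_E$ (Lemma~\ref{lemma:e-stratification}, $n=3$ case: $E^{\ge 1}$ is a cone over a smooth conic), and transversality of the intersection would be checked locally, using the nodal structure at~$\widetilde\bp_X$ and the fact that~$\OGr_E(3,\cQ_E)$ is a Cartier divisor in~$\OGr_B(3,\cQ)$ contained in the smooth locus away from~$\widetilde\bp_X$.

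The main obstacle I expect is the \emph{gluing analysis in the special case}: verifying that the two components~$\Bl_{\widetilde\bp_X}(\wtY^{\ge 2}_\Ap)$ and~$\P^2$ meet transversely, that the intersection curve is the exceptional~$\P^1$ on one side and a genuine conic (not a line or double line) on the other, and that the node of~$\wtY^{\ge 2}_\Ap$ at~$\widetilde\bp_X$ does not create extra embedded structure in~$\G_1(X)$ there. This requires a careful local computation: passing to the double cover~$S \to B$ of Section~\ref{se52}, tracing how~$\OGr_{E_S}(3,\cQ'_{E_S})$ sits inside~$\OGr_S(3,\cQ'_S)$ under~$\varphi$ (which is two-to-one over~$E_S$), and using that~$\OGrm_B(3,\cQ) = \OGr_S(3,\cQ'_S)/\iota'_{\OGr}$ with~$\iota'_{\OGr}$ free. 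The Cohen--Macaulay and connectedness statements are already in hand from Lemmas~\ref{lem:g1-x56} and Corollary~\ref{cor:g1-connected}, so modulo this local transversality check the proof assembles cleanly from the general results.
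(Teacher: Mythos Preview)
Your ordinary case is essentially the paper's argument, modulo a labeling slip (you say the exceptional~$\P^1$ over~$\bp'_X$ ``maps to~$\Gt_1(M_X)$'' and then immediately call it~$\Gs_1(X)$; it is the latter --- the $\tau$-component is the one that gets contracted to the point~$\bp''_X$).

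In the special case your framework is right but two concrete steps are missing, and the paper handles them differently from what you sketch.

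First, the identification of~$\OGr_E(3,\cQ_E)$. You try to read it off the corank stratification of~$\cQ_E$ via Lemma~\ref{lemma:e-stratification} ``$n=3$ case'', but that lemma is for \emph{ordinary} varieties; for special~$X$ of dimension~$3$ the stratification on~$E$ is shifted by~$1$ (Lemma~\ref{lem:b-strata-special}), so you need the~$n=2$ description, and then Lemma~\ref{lem:tek-covers} gives directly~$\OGr_E(3,\cQ_E) \cong \Fl(1,2;3) \subset \P^2 \times \P^2$. With this in hand,~$g_1\vert_{\OGr_E}$ is one of the two projections~$\Fl(1,2;3) \to \P^2$, and~$\OGrm_E(3,\cQ_E)$ is the intersection of~$\Fl(1,2;3)$ with the diagonal~$\Delta(\P^2)$ --- a smooth conic in~$\P^2$, which is exactly what you want. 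No return to the double cover~$S$ of Section~\ref{se52} is needed for this.

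Second, your argument that~$g_1\vert_{\OGrm_B(3,\cQ)}$ is an isomorphism onto~$\overline{\G^0_1(X)}$ is incomplete: normality of the source only shows it is the normalization map, and you have not shown~$\overline{\G^0_1(X)}$ is normal. The paper instead checks directly that~$g_1$ is unramified along the divisor~$\OGrm_E(3,\cQ_E)$: since~$\OGrm_B(3,\cQ) \subset B \times \Gr(3,W)$ and~$f$ is \emph{ramified} along~$\OGrm_E(3,\cQ_E)$ (Proposition~\ref{prop:corank-special}), the normal direction to~$\OGrm_E$ in~$\OGrm_B$ must inject into~$\Gr(3,W)$ under the differential of~$g_1$. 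Combined with bijectivity (iso off~$\OGrm_E$, iso on~$\OGrm_E$ onto the conic), this gives the isomorphism, and transversality with the~$\P^2$ component then follows from the Cohen--Macaulay property of~$\G_1(X)$.
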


\begin{proof}
By Proposition~\ref{prop:fk-gm},~$\F_2(X) $ is empty,  {hence~$\G_1(X) \isomto \chG_1(X)$ by Corollary~\ref{cor:pik}}.

Assume~$X$ is ordinary.\ 
  Proposition~\ref{prop:corank-n-k}\ref{it:l2-ord} together with Lemma~\ref{lem:closure} then imply  
\begin{align*}
\OGr_B(3,\cQ) &\cong \Bl_{\bp'_X,\bp''_X}(\wtY^{\ge 2}_\Ap),\\
\OGr_E(3,\cQ_E) &= f^{-1}(\bp_X) =
 {\tf^{-1}(\bp'_X) \cup \tf^{-1}(\bp''_X) = {}}
\P^1 \sqcup \P^1,
\end{align*}
 {and} Proposition~\ref{prop:gp}  shows that the morphism~$g_1$ factors as 
\begin{equation*}
\OGr_B(3,\cQ) \xrightarrow{\ g'_1\ } \Gp_1(X) \xrightarrow{\ g''_1\ } \chG_1(X),
\end{equation*}
 {where~$g'_1$ is the contraction of one of the components, say~$\tf^{-1}(\bp''_X)$, of~$\OGr_E(3,\cQ_E)$.\ Taking the isomorphisms of Proposition~\ref{prop:diagram-big} into account, 
we obtain
\begin{equation*}
\G_1(X) \cong \chG_1(X) \cong \Gp_1(X) \cong \Bl_{\bp'_X}(\wtY^{\ge 2}_\Ap).
\end{equation*}
An identification of~$\Gs_1(X)$ and~$\Gt_1(X)$ as subschemes in~$\Bl_{\bp'_X}(\wtY^{\ge 2}_\Ap)$ also follows.}

If~$X$ is special, Proposition~\ref{prop:corank-special} and Remark~\ref{rem:p3n3-special} imply  {that} 
 \begin{equation*}
\OGr_B(3,\cQ) = \OGrm_B(3,\cQ) \cup \OGr_E(3,\cQ) 
\end{equation*}
and the components intersect along the divisor~$\OGrm_E(3,\cQ) \subset \OGrm_B(3,\cQ)$;
furthermore, we have
\begin{equation*}
\OGrm_B(3,\cQ) \cong \Bl_{\widetilde\bp_X}(\wtY^{\ge 2}_\Ap)  
\end{equation*}
where, as mentioned at the beginning of this section, $\widetilde\bp_X $ is an ordinary double point of the normal integral surface $\wtY^{\ge 2}_\Ap$, and
\begin{equation*}
\OGrm_E(3,\cQ_E) \cong \P^1
\end{equation*}
is the exceptional divisor of~$\Bl_{\widetilde\bp_X}(\wtY^{\ge 2}_\Ap)$.\ 
For the other component,  {since~$\cQ_E \to E$ is a fibration in quadric surfaces,~$\OGr_E(3,\cQ)$ is a double covering of~$E^{\ge 2}$ branched along~$E^{\ge 3}$.\ 
Since~$X$ is special, by Lemma~\ref{lem:b-strata-special}, the corank stratification of~$E$ is shifted by~1 
with respect to the stratification associated with the corresponding ordinary GM variety,
but the corresponding double covering is the same, and  Lemma~\ref{lem:tek-covers} implies}
 \begin{equation*}
\OGr_E(3,\cQ) \cong \Fl(1,2;3)  \subset \P^2 \times \P^2
\end{equation*}
 and~$\OGrm_E(3,\cQ_E) \cong \P^1$ is the intersection of~$\Fl(1,2;3)$ with the diagonal~${\Delta(\P^2) \subset {}} \P^2 \times \P^2$.\ 
{Therefore, $\OGrm_E(3,\cQ_E) \subset \Delta(\P^2)$ is a smooth conic.}

We now study the map~$g_1 \colon \OGr_B(3,\cQ) \to \chG_1(X)\isom \G_1(X) $.\ 
We have
 \begin{equation*}
 \G_1(X) = \overline{\G^0_1(X)} \cup \Gs_1(X) \cup \Gt_1(X) = \overline{\G^0_1(X)} \cup \Gst_1(X),
\end{equation*}
where the last equality follows from the equality~$\F_2(M_X) = \F_1(M'_X) = \Fst_1(M'_X)$ (see Remark~\ref{rem:f3m}).\ 
The map~$g_1 $ is compatible with these decompositions and induces maps 
\begin{equation}\label{eqog}
  \OGrm_B(3, \cQ) \lra \overline{\G^0_1(X)}
\qquad \textnormal{and}\qquad
  \OGr_E(3, \cQ_E) \lra \Gst_1(X) 
\end{equation}
which we investigate separately. 

We have~$\OGr_E(3, \cQ_E) \cong \Fl(1,2;3)$ and $\Gst_1(X) = \F_1(M'_X) \cong \P^2$ 
 {(by Corollary~\ref{cor:gstk} and Lemma~\ref{lem:fk-mx})}
and, since all fibers of~$g_1$ over~$\F_1(M'_X)$ are isomorphic to~$\P^1$ by Proposition~\ref{lem:hpik-fibers},
the second map in~\eqref{eqog} is one of the two standard contractions of the flag variety 
(induced by the projections of~$\P^2 \times \P^2$ to either factor);
in particular, its restriction to the  {curve}~$\OGrm_E(3, \cQ_E) \cong \P^1$ is an isomorphism 
and its image in~$\Gst_1(X) = \P^2$ is a smooth conic.

We now prove that the first map   in~\eqref{eqog}  is an isomorphism.\ 
By Proposition~\ref{prop:hpik}, its restriction to~$\OGrm_B(3, \cQ) \ssm \OGrm_E(3, \cQ_E)$ is an isomorphism  
and, as we saw above, its restriction to  {the conic}~$\OGrm_E(3, \cQ_E)$ is an isomorphism as well.\ 
Since $\OGrm_E(3, \cQ_E)$ is a smooth Cartier divisor in~$\OGrm_B(3, \cQ)$, 
it remains to show that the differential
\begin{equation*}
\mathrm{d} g_1 \colon \cN_{\OGrm_E(3,\cQ_E) / \OGrm_B(3,\cQ)} \lra \cN_{g_1(\OGrm_E(3,\cQ_E)) / \chG_1(X)}
\end{equation*}
 is injective.\ 
Note that the morphism~$f \colon \OGrm_B(3,\cQ) \to B$ is ramified along~$\OGrm_E(3,\cQ_E)$  {(see Proposition~\ref{prop:corank-special})} 
and, since~$\OGrm_B(3,\cQ) $ is contained in $ B \times \Gr(k + 2, W)$, the differential 
\begin{equation*}
\cN_{\OGrm_E(3,\cQ_E) / \OGrm_B(3,\cQ)} \lra \cN_{g_1(\OGrm_E(3,\cQ_E)) / \Gr(k + 2, W)}
\end{equation*}
of the morphism~$\OGrm_B(3,\cQ) \xrightarrow{\ g_1\ } \chG_1(X) \hookrightarrow \Gr(k + 2, W)$ must be injective, 
hence the differential of~$g_1$ is injective as well.

Summarizing all of the above and taking into account the fact that~$\G_1(X)$ is Cohen--Macaulay by Lemma~\ref{lem:g1-x56},
we obtain an equality of schemes
\begin{equation*}
\G_1(X) = 
 \Bl_{\widetilde\bp_X}(\wtY^{\ge 2}_\Ap) \cup \P^2,
\end{equation*}
where the components intersect transversely along a smooth rational curve
which is the exceptional curve of the first component  and a conic in the second component.
\end{proof}

\begin{rema}
\label{rem:involution-g1x3}
In the situation of Theorem~\ref{thm:g1-x3}, when~$X$ is ordinary, 
the canonical involution on~$\wtY^{\ge 2}_\Ap$ induces a rational involution on the Hilbert scheme~$\G_1(X)$  
which is well defined outside the point~$\bp''_X$.\ 
This involution was described geometrically (as least when~$X$ is general) in~\cite{lo} and~\cite[Section~6.2]{dim}: 
given a conic~${\Sigma} \subset X$ which is not a ${\tau}$-conic, there is a unique  {subspace}~$U_4\subset V_5$ 
such that~$\Sigma \subset \Gr(2,U_4)$ and the intersection~$X \cap \Gr(2,U_4)$ is a $1$-cycle of degree~4 that contains $\Sigma$; 
 it can be written as~$\Sigma + \Sigma'$, where~$\Sigma'\subset X$ is another conic contained in~$X$; 
the involution maps~$[\Sigma]$ to~$[\Sigma']$.

The fixed points of this involution  (which correspond to the points of the finite set~$\sY^3_\Ap$) 
were described in~\cite[Remark~6.2]{dim}.

If~$X$ is special, the involution of~$\G_1(X)$ induced by the involution of~$\tY^{\ge 2}_\Ap$ 
coincides with the involution induced by the involution of the double cover~$X \to M'_X$.\ 
The set of its fixed points is the union of~$\sY^3_\Ap$ (that parameterizes conics in~$X_0 \subset M'_X$) 
and the component~$\Gst_1(X) \cong \P^2$ (preimages of lines in~$M'_X$).\ 
The conic~$\overline{\G^0_1(X)} \cap \Gst_1(X)$ parameterizes \emph{special} lines on~$M'_X$,
that is, those, whose normal bundle is not globally generated (see~Remark~\ref{rem:special-lines}).
\end{rema}

\subsection{Quadric surfaces on GM fivefolds}

We continue with the next case~$n = 2k + 1$ and~$\ell = 2$, 
assuming now~$n = 5$, so that~$X$ is a smooth GM fivefold with associated Lagrangian~$A$.\ 
Its Pl\"ucker point~$\bp_X $ is in~$\sY^0_\Ap$ if~$X$ is ordinary, and in~$\sY^1_\Ap$ if~$X$ is special (see~\eqref{eq:px-dim});
 {in either case,~$\bp_X \notin \sY^{\ge \ell}_\Ap$.}

The following result includes Theorem~\ref{thm:g2x5-intro} as a special case.\ 
The notation  used in the description of schemes~$\Gs_2(X)$ and~$\Gt_2(X)$ can be found in Lemma~\ref{lem:fk-mx}.

\begin{theo}
\label{thm:g2-x5}
Let~$X$ be a smooth GM fivefold with associated Lagrangian~$A$.\ 
The Hilbert scheme~$\G_2(X)$ of quadric surfaces on~$X$ is a disjoint union of closed subschemes
\begin{equation*}
\G_2(X) = \G^0_2(X) \sqcup \Gs_2(X) \sqcup \Gt_2(X).
\end{equation*}
The  component~$\G^0_2(X)$ is  {a normal integral Cohen--Macaulay threefold} 
 with an \'etale-locally trivial $\P^1$-fibration~$\G^0_2(X) \to \wtY^{\ge 2}_\Ap$,
while
 \begin{align*}
\Gs_2(X)  \cong \Fs_{3}(M_X)&= 
\begin{cases}
\P(V_5) & \text{if~$X$ is ordinary,}\\
\P(\C \oplus (V_1^{{M}} \wedge V_5)^{{\vee}}) \cup \Bl_{[V_1^{{M}}]}(\P(V_5)) & \text{if~$X$ is special,}
\end{cases}
\\
\Gt_2(X) \cong \Ft_{3}(M_X) &= 
\begin{cases}
\vide & \text{if~$X$ is ordinary,}\\
\IGr(3,V_5) & \text{if~$X$ is special}.
\end{cases}
\end{align*}
If~$X$ is special, the components of~$\Gs_2(X)$ intersect along~$\P(V_1^M \wedge V_5)$,
which is identified with the hyperplane~$\P(V_1^M \wedge V_5)$ in the component~$ \P(\C \oplus (V_1^M \wedge V_5))$
and with the exceptional divisor in the other component~$\Bl_{[V_1^M]}(\P(V_5))$.
\end{theo}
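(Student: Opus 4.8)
\textbf{Proof plan for Theorem~\ref{thm:g2-x5}.} The plan is to assemble the statement from the general machinery developed in Sections~\ref{sec:qf}--\ref{sec:general-another}, specialized to $n = 5$, $k = 2$, so that $p = k + 2 = 4$ and $\ell = 2k + 3 - n = 0$. First I would invoke Proposition~\ref{prop:fk-gm}: since $2(k+1) = 6 > 5 = n$, we have $\F_3(X) = \vide$, hence by Corollary~\ref{cor:pik} the morphism $\lambda_2 \colon \G_2(X) \to \chG_2(X)$ is an isomorphism. Next, since $k = 2 \ge 2$, Corollary~\ref{cor:fkm-connected} applies and gives the disjoint union $\G_2(X) = \G^0_2(X) \sqcup \Gs_2(X) \sqcup \Gt_2(X)$ of closed subschemes, together with the isomorphism $\G^0_2(X) \cong \OGr_{B \ssm E}(4, \cQ)$; moreover Corollary~\ref{cor:gstk} identifies $\Gs_2(X) = \Fs_3(M_X)$ and $\Gt_2(X) = \Ft_3(M_X)$ (since $\F_3^\star(X) = \vide$). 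The descriptions of $\Fs_3(M_X)$ and $\Ft_3(M_X)$ for $n = 5$, in both the ordinary and special cases, are then read off from Remark~\ref{rem:f3m} (the $n = 5$ paragraph) via Lemmas~\ref{lem:fk-mx} and~\ref{lem:fk-cm}: in the ordinary case $\Fs_3(M_X) = \Fo_3(M) \sqcup \Fs_2(M')$ with $\Fo_3(M) \cong \P^4$ and $\Fs_2(M') \cong \Bl_{[V_1^M]}(\P(V_5))$ meeting the former as a hyperplane — but since $M_X$ is a cone only when $X$ is special, for ordinary $X$ one has $\Fs_3(M_X) = \P(V_5)$ (the projective space of hyperplanes in the unique $4$-space) and $\Ft_3(M_X) = \vide$; for special $X$, $\Fs_3(M_X) = \P(\C \oplus (V_1^M \wedge V_5)^\vee) \cup \Bl_{[V_1^M]}(\P(V_5))$ and $\Ft_3(M_X) = \Ft_2(M') = \IGr(3,V_5)$, with the two components of $\Gs_2(X)$ meeting along $\P(V_1^M \wedge V_5)$ as stated. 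Care is needed to translate Remark~\ref{rem:f3m}'s projective-space-of-hyperplanes description into the $\P(\C \oplus (V_1^M \wedge V_5)^\vee)$ form appearing in the theorem, but this is a routine dualization.

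For the main component, I would apply Proposition~\ref{prop:corank-n-k}. Since $\ell = 0 \le 2$ and $p = 4 \ge 2$, either part~\ref{it:l2-ord} (if $X$ is ordinary) or — for the special case — the combination of Corollary~\ref{cor:ogre-component} (giving $\OGr_B(4,\cQ) = \OGr_{B \ssm E}(4,\cQ) \sqcup \OGr_E(4,\cQ_E)$ as a disjoint union, since $p \ge 4$) and part~\ref{it:ogr-bme} applies to $\OGr_{B \ssm E}(4,\cQ)$. In either situation we obtain that $\OGr_{B \ssm E}(4,\cQ) \cong \G^0_2(X)$ is a normal integral Cohen--Macaulay variety of dimension $N(5,4) + 5$; with $N(n,p) = p(n+1-p) - \tfrac12 p(p+1)$ we compute $N(5,4) = 4\cdot 2 - 10 = -2$, giving dimension $3$. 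Here I must double-check the applicability of Proposition~\ref{prop:corank-n-k}\ref{it:ogr-bme} in the special case: that part is stated for $X$ arbitrary, so it does cover $\OGr_{B\ssm E}$ even for special $X$ — good. Furthermore, since $\bp_X \notin \sY^{\ge 0}_\Ap$ is false ($\sY^{\ge 0}_\Ap = B_5$), but the relevant condition is $\bp_X \notin \sY^{\ge \ell}_\Ap$ with $\ell = 0$; one checks that for $p = 4$, $\ell = 2p - 1 - n = 2$ in the relative-to-$E$ count, so actually I need to reconcile the two $\ell$'s. The $\OGr$-side uses $\ell = 2p - n - 1 = 8 - 6 = 2$ by~\eqref{eq:ell-p}, not $0$; the introduction's $\ell = 2k + 3 - n = 7 - 5 = 2$ agrees. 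So in fact $\ell = 2$ here, $N(5,4) + 5 = 3$ still holds, and $\bp_X \notin \sY^{\ge 2}_\Ap$ follows from~\eqref{eq:px-dim} since $\bp_X \in \sY^0_\Ap$ (ordinary) or $\sY^1_\Ap$ (special). Then the Stein factorization~\eqref{eq:stein-bme} (ordinary: use~\ref{it:l2-ord} but with $\widetilde\bp_X = \vide$ since $\bp_X \notin \sY^{\ge 2}_\Ap$, which forces $\Bl_{\widetilde\bp_X} = \id$; special: use~\ref{it:ogr-bme} directly) produces the map $\tf \colon \G^0_2(X) \to \wtY^{\ge 2}_\Ap$, an étale-locally trivial fibration with fibers $\P^{\frac12(n-p)(n-p+1)} = \P^{\frac12 \cdot 1 \cdot 2} = \P^1$ over the complement of $\sY^{\ge 4}_\Ap = \vide$ (by Theorem~\ref{thm:og}), hence everywhere. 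This is exactly the claimed $\P^1$-fibration.

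The last remaining piece is the intersection pattern of the two components of $\Gs_2(X)$ in the special case. I would argue as follows: by Corollary~\ref{cor:gstk}, $\Gs_2(X) = \Fs_3(M_X)$, and by Remark~\ref{rem:f3m} (the $n = 5$ case, reading $M = M_X$), $\Fs_3(M_X)$ has the two irreducible components $\overline{\Fo_3(M_X)} \cong \P^4$ (the space of hyperplanes in the unique $4$-space on $M_X$) and $\Fs_2(M') \cong \Bl_{[V_1^M]}(\P(V_5))$, which "intersect along the exceptional divisor of the second component (which lies as a hyperplane in the first)". Translating $\overline{\Fo_3(M_X)} \cong \P^4$ into $\P(\C \oplus (V_1^M \wedge V_5))$ requires identifying the relevant $5$-dimensional vector space; the $4$-space on $M_X = \CGr(2,V_5) \cap \P(W)$ is $\P((\C \oplus (V_1^M \wedge V_5)) \cap W) \cong \P^4$ (in the notation $\Theta_{V_1}$ from the proof of Proposition~\ref{prop:fk-gm}), so its hyperplanes form $\P((\C \oplus (V_1^M \wedge V_5))^\vee) = \P(\C \oplus (V_1^M \wedge V_5)^\vee)$, and the hyperplane $\P(V_1^M \wedge V_5) \subset \P^4$ inside it is $\P((V_1^M \wedge V_5)^\vee)$ — matching the statement once one fixes notation carefully. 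The exceptional divisor of $\Bl_{[V_1^M]}(\P(V_5))$ is $\P(V_5/V_1^M) = \P(V_1^M \wedge V_5)$ under $\kappa$-type identifications, closing the loop.

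\textbf{Main obstacle.} The hard part will be the bookkeeping of the two conflicting uses of the symbol $\ell$ (the $k$-centric $\ell = 2k+3-n$ from the introduction versus the $p$-centric $\ell = 2p-n-1$ from Section~\ref{sec:og}, which here happen to coincide at $2$ but one must verify this) together with the precise linear-algebra dictionary translating the "projective space of hyperplanes in the unique $4$-space on $M_X$" language of Remark~\ref{rem:f3m} into the coordinate-free $\P(\C \oplus (V_1^M \wedge V_5)^\vee)$, $\P(V_1^M \wedge V_5)$, $\Bl_{[V_1^M]}(\P(V_5))$, $\IGr(3,V_5)$ expressions of the theorem — and checking that the two components meet exactly along $\P(V_1^M \wedge V_5)$, realized simultaneously as a hyperplane in one component and an exceptional divisor in the other. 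Everything else is a direct citation of Corollaries~\ref{cor:pik}, \ref{cor:fkm-connected}, \ref{cor:gstk}, \ref{cor:ogre-component}, Proposition~\ref{prop:corank-n-k}, and Lemmas~\ref{lem:fk-mx}--\ref{lem:fk-cm}, Remark~\ref{rem:f3m}.
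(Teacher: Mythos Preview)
Your proposal is correct and follows essentially the same route as the paper: disjoint union from Corollary~\ref{cor:fkm-connected}, $\F_3(X)=\vide$ from Proposition~\ref{prop:fk-gm}, then $\G^0_2(X)\cong\OGr_{B\ssm E}(4,\cQ)$ and its properties from Proposition~\ref{prop:corank-n-k}\ref{it:ogr-bme}, and finally $\G^\star_2(X)\cong\F^\star_3(M_X)$ from Corollary~\ref{cor:gstk} together with Lemmas~\ref{lem:fk-mx}--\ref{lem:fk-cm} and Remark~\ref{rem:f3m}. Two small points: your initial computation $\ell=2k+3-n=0$ is an arithmetic slip (it is $2$, as you later realize), and there is no ``reconciling two $\ell$'s'' to do, since $2p-n-1=2(k+2)-n-1=2k+3-n$ identically; also the paper simply invokes part~\ref{it:ogr-bme} (valid for arbitrary $X$) for the main component rather than splitting into ordinary/special cases.
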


\begin{proof}
The disjoint union of closed subschemes is proved in {Corollary~\ref{cor:fkm-connected}}.\ 
Moreover, $\F_3(X) $ is empty by Proposition~\ref{prop:fk-gm}, 
 hence~$\G^0_2(X) \cong \OGr_{B \ssm E}(4, \cQ)$ by  {Corollary~\ref{cor:gk0}}.\ By Proposition~\ref{prop:corank-n-k}\ref{it:ogr-bme},  it is a normal integral Cohen--Macaulay threefold 
with the required~$\P^1$-fibration.

The isomorphisms~$\G^\star_2(X) \cong \F^\star_3(M_X)$ follow from Corollary~\ref{cor:gstk},
and their explicit descriptions follow from Lemmas~\ref{lem:fk-mx} and~\ref{lem:fk-cm} 
(see also Remark~\ref{rem:f3m}).
\end{proof}

\begin{rema}
In the ordinary case, we have~$E^{\ge 2} = \vide$ by Lemma~\ref{lemma:e-stratification}, 
hence~\mbox{$\OGr_E(4,\cQ_E) = \vide$}.\ 
In the special case, we have~$E^{\ge 2} = \P^3$ and~$E^{\ge 3} = \vide$ 
{and, since any \'etale covering of~$\P^3$ is trivial,}
 $\OGr_E(4,\cQ_E)$ is a disjoint union of two ~$\P^1$-bundles over~$\P^3$.\ 
It is easy to see that one of these bundles maps isomorphically 
onto the irreducible component~\mbox{$\Bl_{[V_1^{{M}}]}(\P(V_5)) \subset \Fs_3(M_X)$}
and the other is a $\P^1$-bundle over~$\IGr(3,V_5) = \Ft_3(M_X)$.
 \end{rema}

\subsection{Hilbert squares of ordinary GM surfaces}

We now pass to the case~$n = 2k + 2$, that is,~$\ell = 1$.\ Here, the story has a \hk\ flavor.

We first consider the case~$k = 0$,  that is, 
 {$X$ is a smooth ordinary GM surface with smooth Grassmannian hull~$M_X$,}
and~$\G_0(X) \cong X^{[2]}$ is its Hilbert square, a smooth \hk\ fourfold.\ 
Proposition~\ref{prop:fk-gm} gives  {an identification~$\F_1(X) \cong \sY^3_{{A},V_5}$;
so this is a finite reduced scheme, empty for general~$X$}.\ 
Each point of~$\F_1(X) = \sY^3_{A,V_5}$
gives a Lagrangian plane~\mbox{$\P^2 \cong (\P^1)^{[2]}$} contained in~$ X^{[2]}$.\ 
We have~$\bp_X \in \sY^3_\Ap$ (see~\eqref{eq:px-dim}).\ The point in~$\wtY^{\ge 1}_\Ap$ over~$\bp_X$  is denoted by~$\widetilde\bp_X$ {(see Lemma~\ref{lem:closure})} 
and Proposition~\ref{propsing} shows that~\mbox{$\widetilde\bp_X \in \Sing(\wtY^{\ge 1}_\Ap)$} 
and describes the type of this singularity.

\begin{theo}
\label{thm:g0-x2}
If~$X$ is a  {smooth ordinary GM surface with smooth Grassmannian hull~$M_X$}, there is a diagram
\begin{equation*}
X^{[2]} = 
\G_0(X) \stackrel{\phi}{\dashrightarrow} 
\Gp_0(X) \xrightarrow{\ f^+\ } 
 \wtY_\Ap^{ {\ge 1}},
\end{equation*}
where~$\phi$ is the Mukai flop of all Lagrangian planes in~$X^{[2]}$ corresponding to points of~$\F_1(X)$,
and the morphism~$f^+$ is a symplectic resolution of singularities.\ 

In particular, $X^{[2]}$ is birationally isomorphic to~$\wtY_\Ap^{ {\ge 1}}$.
\end{theo}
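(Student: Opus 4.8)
The plan is to identify the pieces of the diagram already available from earlier results and then glue them together. By Proposition~\ref{prop:corank-n-k}\ref{it:l2-ord} with $p = k+2 = 2$ (so $\ell = 2p - n - 1 = 1$), applied to the ordinary surface $X$, the scheme $\OGr_B(2,\cQ)$ is a normal integral Cohen--Macaulay variety of dimension $N(2,2) + 5 = 1 + 5 = 6$ (wait---recompute: $N(n,p) = p(n+1-p) - \tfrac12 p(p+1) = 2\cdot 1 - 3 = -1$, so dimension $4$), nonsingular away from $\sY^3_\Ap \ssm \{\bp_X\}$, with a factorization of $f$ through $\Bl_{\widetilde\bp_X}(\wtY^{\ge 1}_\Ap) \to \Bl_{\bp_X}(\sY^{\ge 1}_\Ap)$. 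First I would invoke Proposition~\ref{prop:gp} and Proposition~\ref{prop:diagram-big} with $k = 0$, $n = 2$: these produce the variety $\Gp_0(X)$, the contraction $g'_0 \colon \OGr_B(2,\cQ) \to \Gp_0(X)$ blowing down the $\tau$-component of $\OGr_E(2,\cQ_E)$, the morphism $g''_0 \colon \Gp_0(X) \to \chG_0(X)$, and---since $\Ft_1(M_X) = \F_1(M'_X)$ is empty for a surface (there is no ambient cone)---in fact $\Gt_0(X) = \vide$, so the situation is governed by $\F_1(X) \cong \sY^3_{A,V_5}$ alone.

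The morphism $f^+ \colon \Gp_0(X) \to \wtY^{\ge 1}_\Ap$ is the one appearing in diagram~\eqref{eq:diagram-big}: the composition $\OGr_B(2,\cQ) \xrightarrow{\tf} \Bl_{\widetilde\bp_X}(\wtY^{\ge 1}_\Ap) \to \wtY^{\ge 1}_\Ap$ contracts $\OGr_E(2,\cQ_E)$ to $\bp_X$ and so factors through $g'_0$. That it is a resolution of singularities follows because over the complement of $\sY^{\ge \ell+2}_\Ap = \sY^{\ge 3}_\Ap$ the map $\tf$ is an \'etale-locally trivial fibration with fibers $\P^{\frac12(n-p)(n-p+1)} = \P^0$, i.e.\ an isomorphism there, while Proposition~\ref{propsing}\ref{it:sing-y1} describes $\wtY^{\ge 1}_\Ap$ near the points of $\sY^3_\Ap$ as a cone over $\Fl(1,2;3)$, whose blowup along the vertex is smooth; combined with $\Gp_0(X)$ being smooth along the relevant locus (it is obtained from the smooth $\G_0(X) = X^{[2]}$ by a flop), $f^+$ is a resolution. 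The symplectic nature of this resolution: $X^{[2]}$ carries a holomorphic symplectic form, $\Gp_0(X)$ is birational to it via $\phi$ (see below), hence inherits a symplectic form on its smooth locus, and a resolution of a variety with such a form that is crepant is symplectic; crepancy here can be read off from the explicit local model (the blowup of the cone over $\Fl(1,2;3)$).

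The identification of $\phi$ with a Mukai flop is where the real work lies. By Proposition~\ref{prop:diagram-big} with $k = 0$, one has $\hG_0(X) = \Bl_{(g''_0)^{-1}(\F_1(X))}(\Gp_0(X))$, the map $\hla_0 \colon \hG_0(X) \to \Gp_0(X)$ is a blowup with $(g''_0)^{-1}(\F_1(X)) \to \F_1(X)$ a $\P^2$-bundle, and $\hg_0 \colon \hG_0(X) \to \G_0(X) = X^{[2]}$ is the morphism that, over the locus of Lagrangian planes $\P^2 \cong (\P^1)^{[2]} \subset X^{[2]}$ indexed by $\F_1(X) \cong \sY^3_{A,V_5}$, contracts these planes. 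So $\hG_0(X)$ is the common blowup of $X^{[2]}$ along $\bigsqcup \P^2$ (the exceptional divisor of $\hla_0$, a $\P^2$-bundle over $\F_1(X)$, is simultaneously the exceptional divisor of $\hg_0$, and the two $\P^2$-bundle structures on it are the two projections of the incidence variety in $\P^2 \times (\P^2)^\vee$), which is precisely the definition of the Mukai flop along those Lagrangian planes. Here the key inputs are: each point of $\F_1(X) = \sY^3_{A,V_5}$ does give a line $\ell_0 \cong \P^1$ on $X$ with $\ell_0^{[2]} \cong \P^2$ a Lagrangian subvariety of $X^{[2]}$ (standard, and the Lagrangian property follows from $\omega$ restricting to zero on the symmetric product of a curve); and the two blowdown structures on the exceptional divisor match the projective-and-dual-projective bundle structures on $(g''_0)^{-1}(\F_1(X)) \to \F_1(X)$ versus the exceptional divisor of $\hg_0$, which one reads from Proposition~\ref{lem:hpik-fibers} (fibers $\Bl_{\bp_X}(\P^2)$ over $\F_1(X)$, cut out by the incidence between $U_4$-choices and the quadric's linear span). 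The main obstacle is verifying precisely that the contraction $\hg_0$ restricted to the exceptional divisor realizes the \emph{other} ruling---i.e.\ checking that $\hg_0$ does not simply undo $\hla_0$ but performs the flop---which requires a local analysis at a point of $\sY^3_{A,V_5}$ of how the tautological data $(U_4, U_5, R_2)$ on $\OGr_B(2,\cQ)$, respectively the blown-up ideal on $\hG_0(X)$, maps to $\Gr(2,W)$; once the two $\P^2$-bundle structures on the common exceptional divisor are identified with the dual pair, the flop statement and hence the birational equivalence $X^{[2]} \sim \wtY^{\ge 1}_\Ap$ follow formally.
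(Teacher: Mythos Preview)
Your overall framework is right---use Propositions~\ref{prop:gp} and~\ref{prop:diagram-big} and identify~$\phi$ as a Mukai flop---but there is a genuine gap and one factual error.

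The factual error: for an ordinary GM surface, $\Ft_1(M_X)$ is \emph{not} empty; in fact $\Ft_1(M_X) = \Fs_1(M_X) = \F_1(M_X) \cong \P^2$ (see the remark after Lemma~\ref{lem:fk-mx}). So~$g'_0$ genuinely contracts a divisor to this plane, and the description of~$\Gp_0(X)$ is not ``governed by~$\F_1(X)$ alone.''

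The gap is circularity in your smoothness argument. You argue that~$\Gp_0(X)$ is smooth because it is obtained from the smooth~$X^{[2]}$ by a flop---but you have not yet shown~$\phi$ is a flop, and establishing that requires both sides to be smooth small resolutions of~$\chG_0(X)$. The paper breaks this circle by proving directly that~$\OGr_B(2,\cQ)$ is nonsingular \emph{everywhere}, including over~$\sY^3_\Ap \ssm \{\bp_X\}$. This does not follow from Proposition~\ref{prop:corank-n-k}\ref{it:l2-ord}, which only gives smoothness away from~$\sY^3_\Ap$. The extra input is a $2$-regularity check (in the sense of~\cite[Lemma~3.9]{DK3}): at each point~$b \in \sY^3_\Ap \ssm \{\bp_X\}$ the conic~$\cQ_b$ vanishes, the kernel is all of~$\cW_b$, and the tangent map~$\rT_{B,b} \to \Sym^2\!\cW_b^\vee$ is identified via Proposition~\ref{propsing} with the injective map~$\uptau$ whose image is a nondegenerate hyperplane, hence surjects onto~$\Sym^2\!K^\vee$ for every $2$-plane~$K$. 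Once~$\OGr_B(2,\cQ)$ is smooth, Proposition~\ref{prop:gp} gives~$\Gp_0(X)$ smooth, $f^+$ is then a small resolution (hence symplectic, since~$\Gp_0(X)$ is a Mukai flop of a hyper-K\"ahler fourfold), and the ``different ruling'' obstacle you flag is resolved by comparing the strict transforms of the plane~$\F_1(M_X) \subset \chG_0(X)$ under~$\lambda_0$ and~$g''_0$: by Lemma~\ref{lem:fmx-strict-transform} the former is~$\Bl_{\F_1(X)}(\F_1(M_X))$ while by Proposition~\ref{prop:gp} the latter is~$\F_1(M_X)$ itself, so the two resolutions differ over each point of~$\F_1(X)$.
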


\begin{proof}
Consider the scheme~$\OGr_B(2,\cQ)$.\ 
By Proposition~\ref{prop:corank-n-k}\ref{it:l2-ord}, the morphism
\begin{equation*}
\tf \colon \OGr_B(2,\cQ) \lra \Bl_{\widetilde\bp_X} {(\wtY^{\ge 1}_\Ap)}
\end{equation*}
is an isomorphism  {over the complement of}~$\sY^3_\Ap \ssm \{\bp_X\}$, 
while over every point~$b \in \sY^3_\Ap \ssm \{\bp_X\}$,  {we have~$\tf^{-1}(b) \cong \P^2$.\ 
Indeed}, the fiber~$\cQ_b$ of the conic bundle~$\cQ/B$ has corank~$3$, hence degenerates to a plane; 
since the fiber of~$\tf$ is the Hilbert scheme of lines on~$\cQ_b$, this is the dual plane.\ 

Furthermore, $\OGr_B(2,\cQ)$ is  {nonsingular} over the complement of~$\sY^3_\Ap \ssm \{\bp_X\}$, 
again by Proposition~\ref{prop:corank-n-k}\ref{it:l2-ord}.\ 
The following strengthening of the argument used  {there} 
also proves that it is nonsingular over any point~$b\in \sY^3_\Ap \ssm \{\bp_X\}$.\ 
Indeed, by~\cite[Definition~3.5 and Lemma~3.9]{DK3},  {whose notation we adopt,} to show this, 
it is enough to check that the family of quadrics~\mbox{$\cQ / B$} is $2$-regular at $b$.\ 
Since, as we have seen above, the conic~$\cQ_b$  vanishes, 
we have~$K_b = \cW_b$, and the  {morphism
\begin{equation*}
\rT_{B,b} \lra \Sym^2\!K_b^\vee = \Sym^2\!\cW_b^\vee
\end{equation*}
from~\cite[Lemma~3.9]{DK3}}
is just the tangent map to the family of quadrics.\ 
The description of the singularity of~$\sY^{\ge 1}_\Ap$ at~$b$ given in Proposition~\ref{propsing}
allows us to identify this map with the map~$\uptau \colon V_6/V_1 \to \Sym^2V_3$,
and implies that this map is injective 
and that its image is a hyperplane corresponding to a nondegenerate quadratic form on~$\cW_b^\vee$;
 {in particular, this form has no isotropic subspaces of dimension~2}.\ 
Thus, for any $2$-dimensional subspace~\mbox{$K \subset K_b = \cW_b$}, 
the composition~\mbox{$\rT_{B,b} \to \Sym^2\!\cW_b^\vee \to \Sym^2\!K^\vee$} is surjective.\ 
This proves the 2-regularity of~$\cQ/B$ {at~$b$}, which implies that~$\OGr_B(2,\cQ)$ is nonsingular over~$b$,  hence everywhere.

Consider now the diagram of Proposition~\ref{prop:diagram-big}.\ 
The smoothness of~$\OGr_B(2,\cQ)$ (proved above) and Proposition~\ref{prop:gp} 
imply that the scheme~$\Gp_0(X)$ is smooth.\ 
Moreover, it follows that the fiber of  {the morphism~$f^+ \colon \Gp_0(X) \to \wtY^{\ge 1}_\Ap$} 
over the point~{$\widetilde\bp_X \in \uvt_\Ap^{-1}(\sY^3_\Ap)  \subset \wtY^{\ge 1}_\Ap$}
is the plane~$\F_1(M_X) {{} = \Fs_1(M_X)}$  {(see Lemma~\ref{lem:fk-mx})},
and the fibers of~$f^+$ over~\mbox{$\wtY^{\ge 1}_\Ap \ssm \{\widetilde\bp_X\}$} are the same as the fibers of~$\tf$ 
{(as described at the beginning of the proof)},
hence~$f^+$ is a small resolution of singularities  {of~$\wtY^{\ge 1}_\Ap$}.\ 

{Next, we} show that the birational map
\begin{equation*}
\phi \coloneqq 
\hat\lambda_0 \circ \hat{g}_0^{-1} = (g''_0)^{-1} \circ \lambda_0 \colon \G_0(X) \dashrightarrow \Gp_0(X)
\end{equation*}
is a Mukai flop.\ 
Since~$\Gp_0(X)$ is smooth and~$g''_0$ contracts a finite union of planes~{$\P^2$} over~$\F_1(X) \subset \chG_0(X)$ (see Proposition~\ref{prop:diagram-big}),
it follows that~$\Sing(\chG_0(X)) = \F_1(X)$ and the maps~$g''_0$ and~$\lambda_0$ provide two small resolutions of~$\chG_0(X)$.\ 
It is  {therefore} enough to show that these resolutions are different over each point of~$  \F_1(X)$.\  
For this, we look at the plane~$\F_1(M_X) \subset \chG_0(X)$; 
it contains~$ \F_1(X)$  {(Lemma~\ref{lem:fkmx-dkx})}
and its strict transform in~$\Gp_0(X)$ is isomorphic to~$\F_1(M_X)$ (by Proposition~\ref{prop:gp}),
while its strict transform in~$\G_0(X)$ is isomorphic to the blowup of~$\F_1(M_X)$ along~$\F_1(X)$ (by Lemma~\ref{lem:fmx-strict-transform}).

 {Finally, since~$\G_0(X) = X^{[2]}$ is a hyper-K\"ahler fourfold and~\mbox{$\phi \colon \G_0(X) \dashrightarrow \Gp_0(X)$} is a Mukai flop,
we conclude that~$\Gp_0(X)$ is a hyper-K\"ahler fourfold.}
\end{proof}

\begin{rema}
 {The involution of~$\tY^{\ge 1}_\Ap$ induces on~$X^{[2]}$ a birational involution
analogous to the one described in Remark~\ref{rem:involution-g1x3}:
given a subscheme~$\xi \subset X$ of length~2  not lying on a line in~$M_X$,
there is a unique subspace~$U_4 \subset V_5$ such that~$\xi \subset X \cap \Gr(2,U_4)$.\ 
Then~$M_X \cap \Gr(2,U_4)$ is a conic on~$M_X$  and, if it is not contained in~$X$, 
one has~$X \cap \Gr(2,U_4) = \xi + \xi'$, where~$\xi' \subset X$ is another subscheme of length~2;
the involution takes~$\xi$ to~$\xi'$.}
\end{rema}

\begin{rema}
It is clear from the proof of the theorem that the map~$f^+$ contracts the projective plane~$\F_1(M_X)$  
to the singular point~$\widetilde\bp_X$ of~$\wtY_\Ap^{\ge 1}$ 
and, for each point of~$\sY^3_\Ap \ssm \{\widetilde\bp_X\}$, 
corresponding to a conic~$\Sigma \subset X$ (see Theorem~\ref{thm:gk-x2k}), 
the projective plane~$\langle \Sigma \rangle^\vee$ to that point.
\end{rema}

\begin{rema}
The proof also shows that~$\Gs_0(X) = \Gst_0(X) = \Gt_0(X) \cong \Bl_{\F_1(X)}(\F_1(M_X))$
 {so, in this case, it is a subscheme of~$\overline{\G_0^0(X)}$.}
 \end{rema}

\begin{rema}
 {The Mukai flop of Theorem~\ref{thm:g0-x2} coincides with the flop in~\cite[(4.2.4)]{og4}.}\ 
If~$\F_1(X) = \vide$, that is, if the surface~$X$ contains no lines, 
the diagram simplifies to a single symplectic resolution~$X^{[2]} \to \wtY_\Ap^{ {\ge 1}}$ 
 {which is induced by the isomorphism~\cite[(4.2.2)]{og4}.}
\end{rema}

\subsection{Conics on GM fourfolds}

We now consider the case~$k = 1$, $n = 4$, hence again~$\ell = 1$.
Recall that~$\bp_X \in \sY^1_\Ap$ if~$X$ is an ordinary fourfold, 
and~$\bp_X \in \sY^2_\Ap$  {if}~$X$ is a special fourfold (see~\eqref{eq:px-dim}).

The Hilbert scheme~$\F_{2}(X)=\F_2^\sigma(X)\sqcup \F_2^\rtx(X)$ of planes on the fourfold~$X$ 
was described in Proposition~\ref{prop:fk-gm}: it is finite,  {reduced},
 and empty when~$X$ is general.\
The subschemes~$\overline{\G^0_1(X)}$, $\Gs_1(X)$, 
and~$\Gt_1(X)$ of~$\G_1(X)$ were defined in Definition~\ref{def:st-quadrics}.\ 
The next theorem extends to the case of any ordinary~$X$ the description of $\G_1(X)$ obtained in~\cite{IM} when~$X$ is general.

\begin{theo}
\label{thm:g1-x4}
Let~$X$ be a smooth ordinary GM fourfold with associated Lagrangian~$A$.\  
The Hilbert scheme~$\G_1(X)$ of conics on~$X$  {is a connected Cohen--Macaulay scheme of dimension~$5$.}
 Moreover,
\begin{equation*}
\G_1(X) = \overline{\G^0_1(X)} \cup 
\Bigl( (\Fs_2(X)  \times \P^5) \sqcup  (\Ft_2(X) \times \P^5) \Bigr).
 \end{equation*}
 The subscheme $\overline{\G^0_1(X)}$ is an irreducible component of~$\G_1(X)$ and there is a diagram
\begin{equation*}
\xymatrix@R=1ex{
&&& \OGr_B(3,\cQ) \ar[dl]_{g'_1} \ar[dr]^{\tf}
\\
\overline{\G^0_1(X)} \ar@{-->}[rr]^{\phi} &&
 \Gp_1(X) 
 \ar[dr]_{f^+}&&
\Bl_{\bp'_X,\bp''_X}(\wtY_\Ap^{\ge 1}) \ar[dl]
\\
 &&&
\wtY_\Ap^{\ge 1},
}
\end{equation*}
where
\begin{itemize}
\item 
$\phi$ is a birational map,
 \item 
$g'_1$ is the blowup of the smooth $3$-dimensional quadric~$\Ft_2(M_X) \cong \IGr(3,V_5)$,
 \item 
the points~$\bp'_X,\bp''_X$ are the preimages of the Pl\"ucker point~$\bp_X \in \sY^1_\Ap$,   
\item 
the morphism~$\tf$ is an \'etale-locally trivial~$\P^1$-fibration over the complement of~$\sY^3_\Ap$
and has fibers~${\P^3 \cup_{\mathrm{pt}} \P^3}$ over~$\sY^3_\Ap$.
\end{itemize}

If~$\F_2(X) = \vide$, then~$\phi$ is an isomorphism and~$\G_1(X) = \overline{\G^0_1(X)} \cong \Gp_1(X)$ 
is a normal integral Cohen--Macaulay fivefold {with isolated singularities}.\ 
If moreover $\sY^3_\Ap = \vide$, then~$\G_1(X) \cong \Gp_1(X)$ is smooth and~$\tf$ is an \'etale-locally trivial $\P^1$-fibration.
 \end{theo}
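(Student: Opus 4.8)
\textbf{Proof plan for Theorem~\ref{thm:g1-x4}.}
The plan is to run the machinery of Sections~\ref{sec:og}, \ref{sec:general-another} with~$k=1$, $n=4$, hence~$p=k+2=3$ and~$\ell=2p-n-1=1$, and then feed in the explicit geometry of the relevant EPW varieties. First I would record that, by Lemma~\ref{lem:g1-x56}, $\G_1(X)$ is Cohen--Macaulay of pure dimension~$3n-7=5$, and by Corollary~\ref{cor:g1-connected} it is connected; this gives the first sentence. For the decomposition, I would invoke Corollary~\ref{cor:gstk}: since~$X$ is ordinary, $\Gs_1(X)$ and~$\Gt_1(X)$ are disjoint, and~$\G^\star_1(X)=\Bl_{\F^\star_2(X)}(\F^\star_2(M_X))\cup\P_{\F^\star_2(X)}(\Sym^2\!\cR_3^\vee)$; since the planes in~$\F_2(X)$ are isolated reduced points (Proposition~\ref{prop:fk-gm}) and~$\dim\Fs_2(M_X)=4>0=\dim\F_2(X)$ (Corollary~\ref{cor:dim-fkmx}), the blowup contributes nothing new to the closure of~$\Gs_1(X)\cup\Gt_1(X)$ beyond~$\F_2(M_X)$, while over each point of~$\F_2(X)$ the fiber is~$\P_{\mathrm{pt}}(\Sym^2\!\cR_3^\vee)=\P^5$. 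Combined with Corollary~\ref{cor:fkm-connected}'s observation that~$\G^0_1(X)\subset\G_1(X)$ is not closed for~$k=1$, and with the fact (Proposition~\ref{prop:pik}, $\eqref{ggg}$) that~$\lambda_1^{-1}(\F_2(M_X))=\Gs_1(X)\sqcup\Gt_1(X)$, I would argue that~$\overline{\G^0_1(X)}$ is an irreducible component and write~$\G_1(X)=\overline{\G^0_1(X)}\cup\bigl((\Fs_2(X)\times\P^5)\sqcup(\Ft_2(X)\times\P^5)\bigr)$.

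Next I would construct the diagram. Since~$n=4$, we have~$\ell=1\le2$ and~$\bp_X\in\sY^1_\Ap$ by~$\eqref{eq:px-dim}$; applying Proposition~\ref{prop:corank-n-k}\ref{it:l2-ord} together with Lemma~\ref{lem:closure} (case~\ref{it:tp-red} with~$\ell=1$, $\bp\in\sY^{\ge1}_\Ap$ via~$\bp_X\in\sY^1_\Ap$, so~$\widetilde\bp_X$ is a pair of points~$\bp'_X,\bp''_X$) gives~$\OGr_B(3,\cQ)\cong\Bl_{\bp'_X,\bp''_X}(\wtY^{\ge1}_\Ap)$ with~$\tf$ an \'etale-locally trivial~$\P^1$-fibration over the complement of~$\sY^{\ge\ell+2}_\Ap=\sY^{\ge3}_\Ap$. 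Over a point of~$\sY^3_\Ap\ssm\{\bp_X\}$ the fiber~$\cQ_b$ of the conic bundle~$\cQ/B$ degenerates to a plane counted with multiplicity... no: here~$\cQ$ has relative dimension~$n-1=3$, so over corank-$3$ points the quadric threefold~$\cQ_b$ is a union of two~$\P^3$'s meeting in a point, and~$\OGr(3,\cQ_b)=\F_2(\cQ_b)$ is accordingly~$\P^3\cup_{\mathrm{pt}}\P^3$ — this is the fiber description of~$\tf$ over~$\sY^3_\Ap$. Then I would apply Proposition~\ref{prop:gp} (valid since~$X$ is ordinary, $k=1$, $2k+1=5\ge n=4\ge 3=2k+1$... in fact~$n=4$ lies in~$[2k+1,2k+3]=[3,5]$) to get the normal integral Cohen--Macaulay variety~$\Gp_1(X)$ with~$\OGr_B(3,\cQ)=\Bl_{\Ft_2(M_X)}(\Gp_1(X))$, so~$g'_1$ is the blowup of~$\Ft_2(M_X)\cong\IGr(3,V_5)$, a smooth~$3$-dimensional quadric by Lemma~\ref{lem:fk-mx}. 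The factorization~$f^+$ through~$\wtY^{\ge1}_\Ap$ comes from the left-hand part of the diagram~$\eqref{eq:diagram-big}$ in Proposition~\ref{prop:diagram-big} (contracting~$\OGr_E(3,\cQ_E)$ over~$\bp_X$), and~$\phi$ is the birational map~$\hat\lambda_1\circ\hat g_1^{-1}$, equivalently~$(g''_1)^{-1}\circ\lambda_1$, restricted to the main component.

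For the last sentence (the case~$\F_2(X)=\vide$): then~$\lambda_1$ is an isomorphism by Corollary~\ref{cor:pik}, and in Proposition~\ref{prop:diagram-big} all of~$\lambda_1$, $g''_1$, $\hat\lambda_1$, $\hat g_1$ are isomorphisms, so~$\G_1(X)\cong\chG_1(X)\cong\Gp_1(X)$; moreover~$\overline{\G^0_1(X)}=\G_1(X)$ since there is now nothing outside it, and~$\phi$ is an isomorphism. Normality and integrality of~$\G_1(X)$ then follow from Lemma~\ref{lem:g1-x56-extra}: the hypotheses~$\eqref{newassu}$ hold because~$\F_2(X)=\vide$ forces the first inequality and Corollary~\ref{cor:dim-fkmx} gives~$\dim\F_2(M_X)=4=3n-8$ for the second (here~$X$ is ordinary). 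That the singularities are isolated: by Proposition~\ref{prop:corank-n-k}\ref{it:l2-ord} the variety~$\OGr_B(3,\cQ)$ is nonsingular over the complement of the finite set~$\sY^3_\Ap\ssm\{\bp_X\}$, hence so is~$\Gp_1(X)\cong\G_1(X)$ away from~$\sY^3_\Ap$, which is finite. Finally if also~$\sY^3_\Ap=\vide$, then~$\Gp_1(X)$ is everywhere nonsingular (Proposition~\ref{prop:corank-n-k}\ref{it:l2-ord} with the singular locus now empty, so~$B^3=\vide$ and Lemma~\ref{lem:quadrics}\ref{it:ogrs-smooth} applies throughout~$B$), and~$\tf\colon\OGr_B(3,\cQ)\to\wtY^{\ge1}_\Ap$ has no degenerate fibers, i.e.\ it is an \'etale-locally trivial~$\P^1$-fibration; since~$g'_1$ is then the blowup along~$\Ft_2(M_X)$ of the smooth~$\Gp_1(X)\cong\G_1(X)$, smoothness of~$\G_1(X)$ follows. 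The main obstacle I anticipate is bookkeeping rather than a genuine difficulty: correctly matching the EPW stratum containing~$\bp_X$ (namely~$\sY^1_\Ap$, giving two preimage points and the blowup~$\Bl_{\bp'_X,\bp''_X}$ rather than a single-point or node scenario) against the general statements of Lemma~\ref{lem:closure} and Proposition~\ref{prop:corank-n-k}, and verifying that~$\overline{\G^0_1(X)}$ really is a \emph{component} — i.e.\ that the~$\P^5$-bundles over~$\F_2(X)$ are not contained in its closure — which uses the dimension count~$\dim\overline{\G^0_1(X)}=5$ together with the fact that the~$\P^5$-fibers are themselves~$5$-dimensional and attached along the locus where~$\langle\Sigma\rangle\subset X$.
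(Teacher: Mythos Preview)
Your plan follows the paper's proof closely and is essentially correct, but there is one genuine gap and a couple of small inaccuracies.

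The gap is in your argument for isolated singularities when~$\F_2(X) = \vide$. You argue that~$\OGr_B(3,\cQ)$ is nonsingular over the complement of the finite set~$\sY^3_\Ap\ssm\{\bp_X\}$ and conclude that~$\Gp_1(X) \cong \G_1(X)$ has isolated singularities. But the fibers of~$f^+$ over points of~$\sY^3_\Ap$ are $3$-dimensional (they are~$\P^3 \cup_{\mathrm{pt}} \P^3$, as you yourself note), so ``nonsingular over the complement of a finite set in the base'' does \emph{not} give a finite singular locus. The paper closes this by invoking the $2$-regularity argument from the proof of Theorem~\ref{thm:g0-x2}: at a point~$b \in \sY^3_\Ap$ the tangent map~$\rT_{B,b} \to \Sym^2\!\cW_b^\vee$ is identified with the injective map~$\uptau$ from Proposition~\ref{propsing}, and this shows (via~\cite[Lemma~3.9]{DK3}) that the only singular point of~$\OGr_B(3,\cQ)$ lying over~$b$ is the kernel point~$[K_b]$. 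Thus~$\OGr_B(3,\cQ)$ has isolated singularities, and then so does~$\Gp_1(X)$ since the blowup center~$\Ft_2(M_X)$ lies in the smooth locus (Proposition~\ref{prop:gp}).

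Two small slips: (i) you write~$\OGr_B(3,\cQ)\cong\Bl_{\bp'_X,\bp''_X}(\wtY^{\ge1}_\Ap)$, but this blowup is the \emph{target} of~$\tf$, over which~$\OGr_B(3,\cQ)$ sits as a $\P^1$-fibration (generically); (ii) a corank-$3$ quadric threefold is a union of two~$\P^3$'s meeting along a \emph{plane}, not a point --- it is the two dual~$\P^3$'s in~$\F_2(\cQ_b)$ that meet at the single point corresponding to that common plane, which is how~$\P^3 \cup_{\mathrm{pt}} \P^3$ arises.
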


\begin{proof}
The scheme~$\G_1(X)$ is Cohen--Macaulay of pure dimension~5 by Lemma~\ref{lem:g1-x56}.\ 
The morphism~$\lambda_1^{-1}(\F_2(X)) \to \F_2(X)$ is a $\P^5$-fibration by Proposition~\ref{prop:pik},
hence any component of the finite  {reduced} scheme~$\F_2(X)$ gives an irreducible component of~$\G_1(X)$.\ 
 {Furthermore, we know from Lemma~\ref{lem:fk-mx} that~$\dim \F_2(M_X) = 4$, 
hence the strict transforms in~$\G_1(X)$ of its components~$\Fs_2(M_X) \cong \Bl_{[V_1]}(\P(V_5))$ and~$\Ft_2(M_X) \cong \IGr(3,V_5)$
 are contained in~$\overline{\G^0_1(X)}$,
and we obtain the required description of~$\G_1(X)$ as a union.}

 {To prove that~$\G_1(X)$ is connected, it is enough to check that 
any irreducible component~$\P^5 \subset \G_1(X)$ corresponding to a point of~$\F_2(X)$ intersects with~$\overline{\G_1^0(X)}$.
This is proved in Corollary~\ref{cor:encapsulated-x4} (see Remark~\ref{rem:tdpi-gm4} for details).}

The diagram is a part of~\eqref{eq:diagram-big} in the case~$k = 1$, $n = 4$, 
with~$\phi \coloneqq \hla_1 \circ \hg_1^{-1}   = (g''_1)^{-1} \circ \lambda_1$.\ 
The description of~$g'_1$ is in Proposition~\ref{prop:gp}.\ 
The fibers of~$\tf$ are (the connected components of) the Hilbert schemes~$\F_2(\cQ_b)$ 
for the fibration~$\cQ/B$ into $3$-dimensional quadrics: 
its fibers over the complement of~$\sY^3_\Ap$ {are isomorphic to~$\P^1$ (see Proposition~\ref{prop:corank-n-k}\ref{it:l2-ord}}),
 while over any point~$b \in \sY^3_\Ap$, the quadric~$\cQ_b$ is a union of two~$\P^3$ (intersecting along a plane),
hence~$\F_2(\cQ_b)$ is the union of the two dual~$\P^3$ (intersecting at a point).\ 
 {It also follows that~$\OGr_B(3,\cQ)$ is nonsingular over the complement of~$\sY^3_\Ap$,
and the argument of Theorem~\ref{thm:g0-x2} shows that the only singularities of~$\OGr_B(3,\cQ)$
are the points~$[K_b]$ corresponding to kernel spaces~\mbox{$K_b \subset \cW_b$} 
of the quadrics~$\cQ_b$ of corank~3 with~$b \in \sY^3_\Ap$.\ Thus~$\OGr_B(3,\cQ)$ has isolated singularities.}

If~$\F_2(X) = \vide$,   {we have~$\G_1(X) = \chG_1(X) = \Gp_1(X)$ by Proposition~\ref{prop:diagram-big}.\ 
Moreover, Proposition~\ref{prop:gp} implies that this scheme has only isolated singularities.}

Finally, if in addition~$\sY^3_\Ap = \vide$, then~$\OGr_B(3,\cQ)$ is smooth by Proposition~\ref{prop:corank-n-k}\ref{it:l2-ord}, 
hence the scheme~$\G_1(X) = \chG_1(X)$ is also smooth.
\end{proof}

\begin{rema}
{The fibers of  {the morphism}~$f^+$ over the complement of the points~$\bp'_X$ and~$\bp''_X$ are the same as those of~$\tf$.\ 
Over one of these points, say~$\bp'_X$, the fiber of~$f^+$ is isomorphic to~\mbox{$\Fs_2(M_X) \cong \Bl_{[V_1^M]}(\P(V_5))$}
and, over the other, to~$\Ft_2(M_X) \cong \IGr(3,V_5)$.}
\end{rema}

\begin{rema}
\label{rem:tdpi-gm4}
By Corollary~\ref{cor:encapsulated-x4}, if~{$\F_2(X) \ne \vide$, and}~$\Pi \eqqcolon \P(R_3) \subset X$ is a plane, 
the intersection of the irreducible component~$\P(\Sym^2\!R_3^\vee) = \P^5 \subset \G_1(X)$ with~$\overline{\G_1^0(X)}$
is  {a} determinantal fourfold~$D_\Pi$  {of degree~$2$ or~$3$ if~$\Pi$ is a $\sigma$- or~$\tau$-plane, respectively.\ 
Moreover,}
 one can check that if~$\Pi$ is a $\sigma$-plane, $D_\Pi$ is a cone with vertex~$\P^1$ over~$\P^1 \times \P^1$,
and  {if~$\Pi$ is a $\tau$-plane}, $D_\Pi$ is a symmetric determinantal cubic (singular along a Veronese surface).\ 
 {Finally, the map}~$\phi$ is the composition of 
\begin{itemize}
\item 
the blowup~$\hg_1 \colon \hG_1(X) \to \overline{\G_1^0(X)}$ 
of the Weil divisor~$D_\Pi$ in~$\overline{\G_1^0(X)}$,
so that the strict transform of~$D_\Pi$ is
\begin{equation*}
\tD_\Pi \cong \P_{\P^1}(\cO^{\oplus 2} \oplus \cO(-1)^{\oplus 2})
\qquad\text{or}\qquad 
\tD_\Pi \cong \P_{\P^2}(\Sym^2\!\cT_{\P^2}),
\end{equation*}
 {if~$\Pi$ is a~$\sigma$- or a~$\tau$-plane,} respectively, and
\item  
the morphism~$\hla_1 \colon \hG_1 (X)\to \Gp_1(X)$ that contracts~$\tD_\Pi$ onto a~$\P^1$ or a~$\P^2$, respectively.
 \end{itemize}
\end{rema}
 
\subsection{Quadric surfaces on GM sixfolds}

{We now consider} the case~$k = 2$, $n = 6$, hence again~$\ell = 1$.\ 
The variety~$X$ is then a special GM sixfold (so that~$\bp_X \notin \sY^{\ge 1}_\Ap$  {by~\eqref{eq:px-dim}}) 
and~\mbox{$M_X = \CGr(2,V_5)$}.\

 {The Hilbert scheme~$\F_3(M_X)$ was described in Lemma~\ref{lem:fk-cm} and Remark~\ref{rem:f3m};
it has two connected components of dimensions~$8$ and~$6$, 
and it is easy to describe them explicitly:}
\begin{equation*}
\Fs_3(M_X) \cong \P_{\P(V_5)}(\cO \oplus \Omega^1_{\P(V_5)}(1))
\qquad\text{and}\qquad 
\Ft_3(M_X) \cong \Ft_2(M'_X) \cong \Gr(3,V_5).
\end{equation*}
The Hilbert scheme~$\F_{3}(X)$ was described in Proposition~\ref{prop:fk-gm}:
it is finite,  {reduced}, and empty for general~$X$;
moreover, $\F_3(X) {{} = \Fs_3(X)} \subset \Fs_3(M_X)$.\

\begin{theo}
\label{thm:g2-x6}
Let~$X$ be a smooth special GM sixfold with associated Lagrangian $A$.\ 
The Hilbert scheme~$\G_2(X)$ of quadric surfaces on~$X$ is a disjoint union of closed subschemes
\begin{equation*}
\G_2(X) = \G^0_2(X) \sqcup \Gs_2(X) \sqcup \Gt_2(X).
\end{equation*}
 The scheme~$\G^0_2(X)$ is  {normal integral and Cohen--Macaulay of pure} dimension~$7$ 
and there is a morphism~$\tf \colon \G^0_2(X) \to \wtY_\Ap^{\ge 1}$
which is an \'etale-locally trivial~$\P^3$-fibration over the complement of~$\sY_\Ap^3$,
while
\begin{equation*}
\Gs_2(X) = 
\Bl_{\F_3(X)}(\Fs_3(M_X)) \cup \Bigl( \F_3(X) \times \P^9 \Bigr)
\qquad\text{and}\qquad 
\Gt_2(X) = \Ft_3(M_X). 
\end{equation*}
\end{theo}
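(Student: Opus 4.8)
\textbf{Proof proposal for Theorem~\ref{thm:g2-x6}.}
The plan is to follow the pattern established in the proof of Theorem~\ref{thm:g2-x5}, specializing the general machinery of Sections~\ref{sec:general-another} and~\ref{sec:og} to the case $k=2$, $n=6$, $p=k+2=4$, so that $\ell=2p-n-1=1$. First, since $k\ge2$, Corollary~\ref{cor:fkm-connected} immediately gives the disjoint union $\G_2(X)=\G^0_2(X)\sqcup\Gs_2(X)\sqcup\Gt_2(X)$ of closed subschemes together with the isomorphism $\G^0_2(X)\cong\OGr_{B\ssm E}(4,\cQ)$. Because $p=4$, Corollary~\ref{cor:ogre-component} applies and shows that $\OGr_{B\ssm E}(4,\cQ)$ is already closed in $\OGr_B(4,\cQ)$, so that $\OGrm_B(4,\cQ)=\OGr_{B\ssm E}(4,\cQ)$; and since $\bp_X\notin\sY^{\ge1}_\Ap$ by~\eqref{eq:px-dim}, the exceptional divisor plays no role here. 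Then Proposition~\ref{prop:corank-n-k}\ref{it:ogr-bme} (with $\ell=1\le2$, $X$ special but the statement applicable over $B\ssm E$) gives that $\OGr_{B\ssm E}(4,\cQ)$ is a normal integral Cohen--Macaulay variety of dimension $N(6,4)+5=\tfrac12\cdot4\cdot3-\tfrac12\cdot4\cdot5+5\cdot\text{(correcting: }N(6,4)=4\cdot3-10=2\text{)}+5=7$, nonsingular away from $\sY^3_\Ap$, and provides the factorization~\eqref{eq:stein-bme}, i.e. the morphism $\tf\colon\G^0_2(X)\to\wtY^{\ge1}_\Ap$ which is an \'etale-locally trivial fibration with fibers $\P^{\frac12(n-p)(n-p+1)}=\P^{\frac12\cdot2\cdot3}=\P^3$ over the complement of $\sY^{\ge3}_\Ap=\sY^3_\Ap$.

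Next I would identify the two boundary components. By Corollary~\ref{cor:gstk}, $\G^\star_2(X)=\Bl_{\F^\star_3(X)}(\F^\star_3(M_X))\cup\P_{\F^\star_3(X)}(\Sym^2\!\cR_4^\vee)$ for $\star\in\{\sigma,\tau\}$. Since $M_X=\CGr(2,V_5)$ and $X$ is special, Proposition~\ref{prop:fk-gm} gives that $\F_3(X)=\Fs_3(X)$ is finite and reduced, contained in $\Fs_3(M_X)$, and in particular $\Ft_3(X)=\vide$; hence $\Gt_2(X)=\Ft_3(M_X)$, which by Lemma~\ref{lem:fk-cm} and Remark~\ref{rem:f3m} is $\Ft_2(M'_X)\cong\Gr(3,V_5)$. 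For the $\sigma$-component, $\P_{\F_3(X)}(\Sym^2\!\cR_4^\vee)$ is, over each reduced point of $\F_3(X)$, a $\P(\Sym^2\!R_4^\vee)\cong\P^9$ (since $\dim\Sym^2(\C^4)^\vee=10$), so $\Gs_2(X)=\Bl_{\F_3(X)}(\Fs_3(M_X))\cup(\F_3(X)\times\P^9)$. The explicit description $\Fs_3(M_X)\cong\P_{\P(V_5)}(\cO\oplus\Omega^1_{\P(V_5)}(1))$ and $\Ft_3(M_X)\cong\Gr(3,V_5)$ was recorded in the paragraph preceding the theorem (following Lemma~\ref{lem:fk-cm} and Remark~\ref{rem:f3m}), where one uses the isomorphism $\Fs_3(M_X)\cong\Fo_3(M_X)\sqcup\Fs_2(M'_X)$ and that $\Fo_3(M_X)\to\Fs_3(M'_X)\cong\P(V_5)$ is an $\A^4$-bundle with $\Fs_2(M'_X)\cong\Fl(1,4;V_5)$ sitting inside as a relative hyperplane --- matching the projectivization of $\cO\oplus\Omega^1_{\P(V_5)}(1)$ whose relative hyperplane-at-infinity bundle is exactly $\P(\Omega^1_{\P(V_5)}(1))=\Fl(1,4;V_5)$.

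Finally, for the properties of the main component $\G^0_2(X)$: normality, integrality, Cohen--Macaulayness, and pure dimension $7$ follow verbatim from Proposition~\ref{prop:corank-n-k}\ref{it:ogr-bme} via the isomorphism $\G^0_2(X)\cong\OGr_{B\ssm E}(4,\cQ)$, using that $\wtY^{\ge1}_\Ap$ is connected (Theorem~\ref{thm:tya}) to get integrality; the \'etale-local triviality of $\tf$ over $B\ssm(B^3\cup E)=B_5\ssm(\sY^3_\Ap\cup\{\bp_X\})$, equivalently over $\wtY^{\ge1}_\Ap\ssm\uvt^{-1}_\Ap(\sY^3_\Ap)$, with $\P^3$-fibers is part of the same proposition, and over the finitely many points of $\sY^3_\Ap$ the fiber $\cQ_b$ has corank $3$ (by Proposition~\ref{proposition:b-stratification}, $B^{\ge3}\ssm E=\sY^3_\Ap$), so $\F_3(\cQ_b)$ --- the Hilbert scheme of $\P^3$'s on a $5$-dimensional quadric of corank $3$ --- is the expected non-fibration fiber; this need not be spelled out in the statement, which only asserts \'etale-local triviality away from $\sY^3_\Ap$. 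The main obstacle I anticipate is matching the abstract $\F_3(M_X)$-description coming from the orthogonal Grassmannian / Corollary~\ref{cor:gstk} with the concrete projective-bundle descriptions $\P_{\P(V_5)}(\cO\oplus\Omega^1_{\P(V_5)}(1))$ and $\Gr(3,V_5)$, and in particular tracking how the blowup center $\F_3(X)$ (a subscheme of $\Fo_3(M_X)$, i.e. of the $\A^4$-bundle part, disjoint from the hyperplane-at-infinity $\Fs_2(M'_X)$) sits inside $\Fs_3(M_X)$; but this is exactly parallel to the analogous step in the proof of Theorem~\ref{thm:g2-x5} and requires only Lemma~\ref{lem:fk-cm}, Remark~\ref{rem:f3m}, Proposition~\ref{prop:fk-gm}, and Corollary~\ref{cor:gstk}.
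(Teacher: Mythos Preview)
Your proposal is correct and follows essentially the same approach as the paper's proof: apply Corollary~\ref{cor:fkm-connected} for the disjoint union and the isomorphism $\G^0_2(X)\cong\OGr_{B\ssm E}(4,\cQ)$, then Proposition~\ref{prop:corank-n-k}\ref{it:ogr-bme} for the properties of the main component and the $\P^3$-fibration, and finally Corollary~\ref{cor:gstk} together with Lemmas~\ref{lem:fk-mx}, \ref{lem:fk-cm} and Proposition~\ref{prop:fk-gm} for the boundary components. The only difference is that you spell out more explicitly the dimension count $N(6,4)+5=7$, the $\P^9$ computation, and the projective-bundle identifications of $\Fs_3(M_X)$ and $\Ft_3(M_X)$ (which the paper records in the paragraph preceding the statement rather than in the proof itself).
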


\begin{proof}
The disjoint union is proved in  {Corollary~\ref{cor:fkm-connected}};
 {it was also shown there that}
\begin{equation*}
\G^0_2(X) \cong \OGr_{B \ssm E}(4, \cQ).
\end{equation*}
 Therefore, Proposition~\ref{prop:corank-n-k}\ref{it:ogr-bme} 
 {proves all the properties of~$\G^0_2(X)$ and}
gives the required~$\P^3$-fibration.\ 
The descriptions of the other components are given by a combination of Corollary~\ref{cor:gstk} and Lemmas~\ref{lem:fk-mx} and~\ref{lem:fk-cm}.
\end{proof}

\begin{rema}
The fibers of~$\tf$ over the points of~$\sY^3_\Ap$ 
are the Hilbert schemes of $3$-dimensional linear spaces on $5$-dimensional quadrics of corank~$3$; 
they are nonnormal schemes obtained by gluing along~$\P^1 \times \P^1$ two copies of a $\P^4$-fibration over~$\P^1$.\ 
 {One can also check that the singular locus of~$\G^0_2(X)$ is the union of the surfaces~$\P^1 \times \P^1$, one over each point of~$\sY^3_\Ap$.}
\end{rema}

\begin{rema}
By Proposition~\ref{prop:corank-n-k} and Lemma~\ref{lemma:e-stratification},
the scheme~$\OGr_E(4,\cQ_E) \subset \OGr_B(4,\cQ)$ 
is the disjoint union of two~$\P^3$-fibrations over~$E \cong \P(V_5^{{\vee}})$.\ 
It is easy to check that 
{its components are~$\Fl(1,4;V_5)$ and~$\Fl(3,4;V_5)$}
and that the map~$g_2$ takes the first component isomorphically onto {the scheme}~$\Fs_2(M'_X) \subset \Fs_3(M_X)$
and contracts the second component onto~\mbox{$\Gr(3,V_5) = \Ft_2(M'_X) = \Ft_3(M_X)$}.
\end{rema}

\subsection{Summary}
\label{ss:summary}

In this section, we summarize our results about the Hilbert scheme~$\G_k(X)$
of quadrics of dimension~$k \in \{0,1,2\}$ on a smooth GM variety~$X$ of dimension~{$n \ge 2k + 1$} with associated Lagrangian subspace~$A$.\ 
Most of these results are proved in Section~\ref{sec:explicit}  
and the others can be deduced in a similar way from our results in the previous sections.

For the cases with~$k \ge 3$  {or}~$n \le 2k$, see Sections~\ref{ss:quadrics-big} and~\ref{ss:quadrics-half}.

\subsubsection{$k = 0$}
The Hilbert square~$\G_0(X) = X^{[2]} \cong \Bl_{\Delta(X)}(X \times X) / \fS_2$ 
is smooth and irreducible of dimension~$2n$ for any smooth GM variety of dimension~$n$.\ 
Moreover, the subscheme~$\lambda_0^{-1}(\F_1(X)) \subset \G_0(X)$ is a $\P^2$-bundle over~$\F_1(X)$ 
(see Proposition~{\ref{prop:pik}}).
\begin{itemize}[wide]
\item 
If~$X$ is a smooth {ordinary} GM surface with smooth Grassmannian hull,~$\G_0(X)$ is a hyper-K\"ahler fourfold 
and, after the Mukai flop of a union of disjoint planes indexed by the finite Hilbert scheme of lines~$\F_1(X)$,
it turns into a small resolution of the double dual EPW sextic~$\wtY^{\ge 1}_\Ap$ (see Theorem~\ref{thm:g0-x2}).
\item 
If~$X$ is a smooth GM threefold,~$\G_0(X)$ is birational to the scheme~$\Gp_0(X)$ 
which admits a morphism~$ {f^+ \colon {}} \Gp_0(X) \to \wtY^{\ge 0}_\Ap$,  with general fibers~$\P^1$, 
to the double cover of~$\P(V_6^\vee)$ ramified over the dual EPW sextic~$\sY^{\ge 1}_\Ap$.\ 
The birational map~$\G_0(X) \dashrightarrow \Gp_0(X)$ 
{is the relative Atiyah flop in the~$\P^2$-bundle~$\lambda_0^{-1}(\F_1(X)) \subset \G_0(X)$ over the curve~$\F_1(X)$.}
\item 
If~$\dim(X) \in \{4,5,6\}$, there is a similar  {relative Atiyah} flop~$\G_0(X) \dashrightarrow \Gp_0(X)$  
 {in the~$\P^2$-bundle~$\lambda_0^{-1}(\F_1(X)) \subset \G_0(X)$ over the scheme~$\F_1(X)$ of dimension~$2n - 5$}
and a morphism~$\Gp_0(X) \to \P(V_6^\vee)$ with general fibers~$\P^3$ (when~$\dim(X)=4$), 
$\Fl(1,3;4)$ (when~$\dim(X)=5$), 
and~$\OGr(2,7)$ (when~$\dim(X)=6$).
\end{itemize}

\subsubsection{$k = 1$}
The Hilbert scheme~$\G_1(X)$ of conics on~$X$ is a  {connected} Cohen--Macaulay scheme of pure dimension~$3n - 7$ 
(see Lemma~\ref{lem:g1-x56}  {and Corollary~\ref{cor:g1-connected}}).\ Moreover,
\begin{itemize}[wide]
\item 
If~$X$ is a smooth GM threefold, the connected surface~$\G_1(X)$ is described in Theorem~\ref{thm:g1-x3}: 
it is smooth   if and only if~$X$ is ordinary and~$\sY^3_\Ap = \vide$; 
if~$X$ is special, the main component~$\overline{\G_1^0(X)}$ is smooth if and only if~$\sY^3_\Ap = \{\bp_X\}$.
 \item 
If~$X$ is a smooth GM fourfold, the main component~$\overline{\G^0_1(X)}$ is birational to the irreducible scheme~$\Gp_1(X)$ 
which admits a morphism~$\Gp_1(X) \to \wtY^{\ge 1}_\Ap$, with general fibers~$\P^1$, to the double dual EPW sextic.\ 
If~$X$ is ordinary and~$\sY^3_\Ap = \vide$, this morphism is \'etale-locally trivial
and, if moreover $\F_2(X) = \vide$, then~$\G_1(X) \cong \Gp_1(X)$  is smooth and irreducible (see Theorem~\ref{thm:g1-x4}).
\item 
If~$X$ is a smooth GM fivefold, the main component~$\overline{\G^0_1(X)}$ is birational to the scheme~$\Gp_1(X)$
 which admits a morphism~$\Gp_1(X) \to \wtY^{\ge 0}_\Ap$, with general fibers~$\P^3$, to the double cover of~$\P(V_6^\vee)$
ramified over the dual EPW sextic~$\sY^{\ge 1}_\Ap$.\ 
If moreover~$X$ is ordinary, $\sY^3_\Ap = \vide$, and the schemes~$\Fs_2(X)$ and~$\Ft_2(X)$   
are smooth of respective (expected) dimensions~$1$  and~$0$,
then~$\G_1(X)$ is smooth  (see {Corollaries~\ref{cor:g1x-f2mx} and~\ref{cor:encapsulated-x}}).
 \item 
If~$X$ is a smooth GM sixfold, ~$\G_1(X)$ is irreducible and birational to the scheme~$\Gp_1(X)$ 
which admits a morphism~$\Gp_1(X) \to \P(V_6^\vee)$ with general fibers smooth 6-dimensional quadrics.\ 
If moreover~$\sY^3_\Ap = \vide$  and  the schemes~$\Fs_2(X)$ and~$\Ft_2(X)$ are smooth, 
then~$\G_1(X)$ is smooth  (see {Corollaries~\ref{cor:g1x-f2mx} and~\ref{cor:encapsulated-x}}).
 \end{itemize}

\subsubsection{$k = 2$}\label{ss783}

The Hilbert scheme~$\G_2(X)$ of quadric surfaces on~$X$ is a disjoint union
\begin{equation*}
\G_2(X) = \G_2^0(X) \sqcup \Gs_2(X) \sqcup \Gt_2(X)
\end{equation*}
of closed subschemes (see  {Corollary~\ref{cor:fkm-connected}}).\ 
 \begin{itemize}[wide]
\item 
If~$X$ is a smooth GM fivefold,~$\G^0_2(X)$ is irreducible of dimension~$3$
and there is an \'etale-locally trivial $\P^1$-fibration~$\G^0_2(X) \to \wtY^{\ge 2}_\Ap$.\
The other components of~$\G_2(X) $ depend  on the type of~$X$ and were described in Theorem~\ref{thm:g2-x5}.\ 
This description implies that  the component~$\G_2^0(X) \subset \G_2(X)$ is smooth if and only if~$\sY^3_\Ap = \vide$, 
the $4$-dimensional component~$\Gs_2(X)$ is smooth if and only if~$X$ is ordinary, 
and the component~$\Gt_2(X)$ is always smooth (and empty when~$X$ is ordinary).

\item 
If~$X$ is a smooth GM sixfold, the main component~$\G^0_2(X)$ is irreducible of dimension~$7$ 
and there is an \'etale-locally trivial $\P^3$-fibration~$\G^0_2(X) \to \wtY^{\ge 1}_\Ap$ (see Theorem~\ref{thm:g2-x6});
it is smooth if and only if~$\sY^3_\Ap = \vide$.\ 
 {The other components are also described in Theorem~\ref{thm:g2-x6}.}
\end{itemize}


\appendix

\section{Encapsulated conics}
\label{sec:encapsulated}

We will say that a quadric~{$\Sigma$} is {\sf encapsulated} into a projective variety~$Z \subset \P^N$ 
if its linear span~{$\langle \Sigma \rangle$} is contained in~$Z$.\ 
For instance, $\sigma$- or~$\rtx$-quadrics in~$\Gr(2,V_5)$ 
are quadrics encapsulated into~$\Gr(2,V_5) \subset \P(\bw2V_5)$ (see Definition~\ref{def:st-quadrics}).\ 
Similarly, if, as usual,~$X$ is a GM variety with Grassmannian hull~$M_X$, the subvarieties
\begin{equation*}
\P_{\F_{k+1}(X)}(\Sym^2\!\cR_{k+2}^\vee) \subset 
\P_{\F_{k+1}(M_X)}(\Sym^2\!\cR_{k+2}^\vee) 
 \end{equation*}
of $\P_{\Gr(k+2,W)}(\Sym^2\!\cR_{k+2}^\vee) = \G_k(\P(W))$ (see~\eqref{eq:gk-pw} for this equality)
parameterize the quadrics encapsulated into~$X$ and~$M_X$, respectively.\ 

In this appendix, we study conics encapsulated into~$M_X$ or into~$X$.

\subsection{Conics encapsulated into~$M_X$}

For a conic~\mbox{$\Sigma \subset \P(W)$}  {encapsulated into~$M_X$,}
we denote by~$\Pi = \langle \Sigma \rangle \subset M_X \subset \CGr(2,V_5)$ its linear span,
 {isomorphic to~$  \P^2$}.\ 
When~$\Pi$ does not contain  {the vertex~$\bv$ of~$\CGr(2,V_5)$} (this is the case for example when~$\Pi\subset X$), 
we set
\begin{equation}
\label{eq:t-pi}
\st(\Pi) \coloneqq 
\begin{cases}
0  & \text{if~$\Pi \in \Fs_2(M_X)$,}\\
1  & \text{if~$\Pi \in \Ft_2(M_X)$,}
\end{cases}
\end{equation} 
for the {\sf type} of~$\Pi$.\ We denote by~$\gamma \colon \CGr(2,V_5) \ssm \{\bv\} \to \Gr(2,V_5)$ the natural projection.\

\begin{lemm}
\label{lem:cn-pi-m}
Let~$M \subset \CGr(2,V_5)$ be a dimensionally transverse linear section of dimension~$n + 1$, 
smooth away from the vertex~$\bv$ of the cone.\ 
If~$\Pi \subset M$ is a $\sigma$- or~$\tau$-plane not containing~$\bv$, there is an exact sequence
\begin{equation*}
 {0 \to \cN_{\Pi/M} \to 
\cT_\Pi(-1)^{\oplus (\st(\Pi) + 1)} \oplus \cO_\Pi^{\oplus (1 - \st(\Pi))} \oplus \cO_\Pi(1)^{\oplus (2 - \st(\Pi))} \to 
\cO_\Pi(1)^{\oplus (6 - n)} \to 0.}
\end{equation*}
 \end{lemm}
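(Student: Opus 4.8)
The plan is to compute the normal bundle $\cN_{\Pi/M}$ by a two-step procedure: first understand the normal bundle $\cN_{\Pi/\CGr(2,V_5)}$ (or really $\cN_{\Pi/\Gr(2,V_5)}$ via the projection $\gamma$), then cut down by the $6-n$ hyperplane sections defining $M$ inside $\CGr(2,V_5)$. Since $\Pi$ does not contain the vertex $\bv$, the projection $\gamma$ maps $\Pi$ isomorphically onto a plane $\gamma(\Pi)\subset\Gr(2,V_5)$, which is a $\sigma$- or $\tau$-plane according to the type $\st(\Pi)$. So the first step is really a statement about planes on the Grassmannian $\Gr(2,V_5)$.

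First I would recall the standard description of $\cN_{\gamma(\Pi)/\Gr(2,V_5)}$. If $\gamma(\Pi) = \P(R_3)$ is a $\sigma$-plane, write $R_3 = V_1\wedge V_3$ with $V_1\subset V_3\subset V_5$; if it is a $\tau$-plane, write $R_3 = \bw2 V_3$ with $V_3\subset V_5$. In each case, restricting the tautological sequence $0\to\cU_2\to V_5\otimes\cO\to V_5/\cU_2\to 0$ on $\Gr(2,V_5)$ to $\Pi$ and using $\cN_{\Gr(2,V_5)} \cong \cU_2^\vee\otimes(V_5/\cU_2)$, one computes the restriction explicitly. For a $\sigma$-plane, $\cU_2\vert_\Pi \cong \cO(-1)\oplus\cO$ (the $\cO(-1)$ being the pulled-back tautological bundle on $\P(V_3/V_1)=\Pi$ and the $\cO$ the constant $V_1$), while $(V_5/\cU_2)\vert_\Pi$ sits in $0\to \cT_\Pi(-1)\to (V_5/\cU_2)\vert_\Pi\to (V_5/V_3)\otimes\cO\to 0$. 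For a $\tau$-plane, $\cU_2\vert_\Pi$ is the tautological rank-$2$ bundle on $\P(V_3)^\vee$ (so $\cU_2^\vee\vert_\Pi \cong \cT_\Pi(-1)$ after a twist) and $(V_5/\cU_2)\vert_\Pi \cong \cO(1)\oplus (V_5/V_3)\otimes\cO$. Assembling these gives, in both cases, an exact sequence expressing $\cN_{\Pi/\Gr(2,V_5)}$ — after adding the extra $\cO(1)$ coming from $\cN_{\Pi/\CGr(2,V_5)}$ over the cone point direction when one lifts back from $\Gr$ to $\CGr$ — as an extension whose middle term is exactly $\cT_\Pi(-1)^{\oplus(\st+1)}\oplus\cO_\Pi^{\oplus(1-\st)}\oplus\cO_\Pi(1)^{\oplus(2-\st)}$ modulo $\cO_\Pi(1)^{\oplus(5-n)}$ worth of hyperplanes. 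The uniform way to package both cases is to check that $\cN_{\Pi/\CGr(2,V_5)}$ fits into
\begin{equation*}
0\to \cN_{\Pi/\CGr(2,V_5)}\to \cT_\Pi(-1)^{\oplus(\st(\Pi)+1)}\oplus\cO_\Pi^{\oplus(1-\st(\Pi))}\oplus\cO_\Pi(1)^{\oplus(2-\st(\Pi))}\to \cO_\Pi(1)^{\oplus(5-n_{\max})}\to 0
\end{equation*}
for $\CGr$ itself (where the last term accounts for the codimension of $\CGr(2,V_5)$ in its Plücker space restricted to $\Pi$), and then the second step is immediate.

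The second step is the standard exact sequence for a transverse complete intersection: $M$ is obtained from $\CGr(2,V_5)\ssm\{\bv\}$ by intersecting with $6-n$ hyperplanes, so there is an exact sequence $0\to\cN_{\Pi/M}\to\cN_{\Pi/\CGr(2,V_5)}\to\cO_\Pi(1)^{\oplus(6-n)}\to 0$, where the second map is the restriction of the normal bundle of the $6-n$ hyperplanes. Splicing this with the description of $\cN_{\Pi/\CGr(2,V_5)}$ from the first step — and checking that the composite map from the middle term $\cT_\Pi(-1)^{\oplus(\st+1)}\oplus\cO_\Pi^{\oplus(1-\st)}\oplus\cO_\Pi(1)^{\oplus(2-\st)}$ to $\cO_\Pi(1)^{\oplus(6-n)}$ (combining the $\CGr$-hyperplanes and the $M$-hyperplanes) is surjective, which follows from dimensional transversality of the linear section — yields the claimed four-term sequence. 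The main obstacle I expect is bookkeeping: correctly identifying the splitting type of $\cU_2\vert_\Pi$ and $(V_5/\cU_2)\vert_\Pi$ in each of the $\sigma$ and $\tau$ cases and verifying that the ranks add up to give precisely $\st(\Pi)+1$, $1-\st(\Pi)$, $2-\st(\Pi)$, and $6-n$, with the surjectivity of the last map traced back to transversality. Everything else is formal manipulation of the relevant Euler and tautological sequences on $\P^2$.
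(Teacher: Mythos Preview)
Your two-step strategy --- compute $\cN_{\Pi/\CGr(2,V_5)}$ and then cut by the $6-n$ hyperplanes defining $M$ --- is exactly what the paper does, and the second step is identical. For the first step, the paper takes a slightly different and cleaner route: rather than restricting the tautological bundles $\cU_2$ and $V_5/\cU_2$ to $\Pi$, it uses the intermediate maximal linear space containing $\Pi$ (the punctured $\P^4 = \P(\C\oplus(V_1\wedge V_5))\ssm\{\bv\}$ for a $\sigma$-plane, or the punctured $\P^3 = \P(\C\oplus\bw2V_3)\ssm\{\bv\}$ for a $\tau$-plane), identifies the normal bundle of that space in $\CGr(2,V_5)$ as a pullback of a twisted tangent bundle from $\Gr(2,V_5)$, and then uses the normal bundle sequence for $\Pi$ inside that linear space. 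This immediately shows that $\cN_{\Pi/\CGr(2,V_5)}$ is \emph{isomorphic} to the middle term of the lemma (both have rank $5$), so the ``$\cO_\Pi(1)^{\oplus(5-n_{\max})}$'' cokernel you posit in step one is actually zero and no splicing is needed --- the claimed sequence is simply the linear-section sequence with $\cN_{\Pi/\CGr(2,V_5)}$ replaced by its explicit splitting. Two slips in your write-up to fix if you pursue your route: a $\sigma$-plane in $\Gr(2,V_5)$ is $\P(V_1\wedge V_4)$ for a flag $V_1\subset V_4$ (your $V_1\wedge V_3$ is only $2$-dimensional), and $\cU_2^\vee\otimes(V_5/\cU_2)$ is the \emph{tangent} bundle of $\Gr(2,V_5)$, not a normal bundle.
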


\begin{proof}
If~$V_1 \subset V_5$, consider the punctured $4$-space~$\P(\C \oplus  (V_1 \wedge V_5)) \ssm \{\bv\} = \gamma^{-1}(\P(V_1 \wedge V_5))$.
It is easy to see that its normal bundle in~$\CGr(2,V_5)$ is
\begin{equation*}
\cN_{(\P(\C \oplus (V_1 \wedge V_5)) \ssm \{\bv\})/ \CGr(2,V_5)} \cong 
\gamma^*(\cN_{\P(V_1 \wedge V_5) / \Gr(2,V_5)}) \cong
\gamma^*\cT_{\P(V_1 \wedge V_5)}(-1).
\end{equation*}
 Therefore, for a $\sigma$-plane~$\Pi \subset \P(\C \oplus (V_1 \wedge V_5))$ not containing~$\bv$, we have an exact sequence
\begin{equation*}
0 \to \cO_\Pi(1)^{\oplus 2} \to \cN_{\Pi / {\CGr}(2,V_5)} \to (\gamma^*\cT_{\P(V_1 \wedge V_5)}(-1))\vert_\Pi \to 0.
\end{equation*}
Its last term splits as~$\cT_\Pi(-1) \oplus \cO_\Pi$ and it is easy to see that 
the $\Ext^1$-group corresponding to the above sequence is zero.\ The sequence therefore splits and we obtain an isomorphism
\begin{equation*}
\cN_{\Pi / \CGr(2,V_5)} \cong \cT_\Pi(-1) \oplus \cO_\Pi \oplus \cO_\Pi(1)^{\oplus 2}.
\end{equation*}
This easily implies the lemma for the $\sigma$-plane~$\Pi$.

 If~$V_3 \subset V_5$, 
consider the punctured $3$-space~$\P(\C \oplus \bw2V_3) \ssm \{\bv\} = \gamma^{-1}(\P(\bw2V_3))$.
Its normal bundle in~$\CGr(2,V_5)$ is
\begin{equation*}
\cN_{(\P(\C \oplus \wedge^2V_3) \ssm \{\bv\}) / \CGr(2,V_5)} \cong 
\gamma^*(\cN_{\P(\wedge^2V_3)/\Gr(2,V_5)}) \cong
\gamma^*(\cT_{\P(\wedge^2V_3)}(-1) \oplus \cT_{\P(\wedge^2V_3)}(-1)).
\end{equation*}
 This easily implies the lemma for the $\tau$-plane~$\Pi= \P(\bw2V_3)$.
\end{proof}

The exact sequence of Lemma~\ref{lem:cn-pi-m}   allows us to compute the cohomology of~$\cN_{\Pi/M}$.
 
\begin{coro}
\label{cor:cn-pi-m}
 Let~$M \subset \CGr(2,V_5)$ be a dimensionally transverse linear section of dimension~$n + 1$, smooth away from the vertex~$\bv$ of the cone.\ 
If~$\Pi \subset M$ is a $\sigma$- or~$\tau$-plane not containing~$\bv$, then
\begin{equation*}
h^i(\Pi, \cN_{\Pi/M})  =
\begin{cases} 
3n - 8 -  {\st(\Pi)} & \text{if~$i = 0$,}\\
0 & \text{if~$i \ne 0$,}
\end{cases}
\qquad
h^i(\Pi, \cN_{\Pi/M}(-2))  =
\begin{cases} 
 \st(\Pi) + 1 & \text{if~$i = 1$,}\\
0 & \text{if~$i \ne 1$.}
\end{cases}
\end{equation*}
 \end{coro}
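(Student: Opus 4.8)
The plan is to feed the short exact sequence of Lemma~\ref{lem:cn-pi-m}, together with its twist by $\cO_\Pi(-2)$, into the long exact cohomology sequence on $\Pi\cong\P^2$. First I would record the cohomology of the summands of the central and right-hand terms. On $\P^2$ the Euler sequence gives $h^0(\cT_\Pi(-1)) = 3$ and $h^{>0}(\cT_\Pi(-1)) = 0$, while $\cT_\Pi(-2)$ has no cohomology and $\cT_\Pi(-3)\cong\Omega^1_\Pi$ has $h^1 = 1$ and no other cohomology (Bott's formula, or Serre duality); together with the standard facts for $\cO_\Pi(a)$, $a\in\{-2,-1,0,1\}$, this shows that the central term of the sequence of Lemma~\ref{lem:cn-pi-m} has $h^0 = 10 - \st(\Pi)$ and no higher cohomology, whereas its twist by $\cO_\Pi(-2)$ has $h^1 = \st(\Pi)+1$ and no other cohomology; the quotient term $\cO_\Pi(1)^{\oplus(6-n)}$ has $h^0 = 3(6-n)$ and no higher cohomology, and its twist $\cO_\Pi(-1)^{\oplus(6-n)}$ has no cohomology at all.

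Granting these, the twisted sequence is settled outright: its long exact sequence gives $H^0(\cN_{\Pi/M}(-2)) = 0$, $H^1(\cN_{\Pi/M}(-2)) \cong \C^{\st(\Pi)+1}$, and $H^{\ge2}(\cN_{\Pi/M}(-2)) = 0$, which is the second displayed formula. For the untwisted sequence, the same bookkeeping yields $H^{\ge2}(\cN_{\Pi/M}) = 0$ and the identity $h^0(\cN_{\Pi/M}) - h^1(\cN_{\Pi/M}) = (10-\st(\Pi)) - 3(6-n) = 3n - 8 - \st(\Pi)$, so the first formula is equivalent to the single vanishing $H^1(\Pi,\cN_{\Pi/M}) = 0$.

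I expect this last vanishing to be the only real point. It says exactly that the restriction map $H^0(\Pi,\cN_{\Pi/\CGr(2,V_5)})\to H^0(\Pi,\cN_{M/\CGr(2,V_5)}|_\Pi)$ induced by Lemma~\ref{lem:cn-pi-m} is surjective, i.e. that first-order deformations of $\Pi$ inside $\CGr(2,V_5)$ account for all of $\cN_{M/\CGr(2,V_5)}|_\Pi = \cO_\Pi(1)^{\oplus(6-n)}$. I would close it uniformly by invoking the smoothness results already available: by Lemma~\ref{lem:fk-mx} (when $\bv\notin M$) and by Lemma~\ref{lem:fk-cm} and Remark~\ref{rem:f3m} (when $M$ is a cone), the loci $\Fo_2^\sigma(M)$ and $\Fo_2^\tau(M)$ of $\sigma$- and $\tau$-planes not through $\bv$ are smooth of dimensions $3n-8$ and $3n-9$ respectively; hence $h^0(\Pi,\cN_{\Pi/M}) = \dim T_{[\Pi]}\F_2^\star(M) = 3n-8-\st(\Pi)$, and combined with $h^2(\Pi,\cN_{\Pi/M}) = 0$ and the Euler-characteristic identity above this forces $h^1(\Pi,\cN_{\Pi/M}) = 0$.

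Alternatively, when $n\ge4$ there is a direct geometric proof of the surjectivity: choosing a maximal linear subspace $\Xi\subset\CGr(2,V_5)$ containing $\Pi$ (a $\P^4$ for a $\sigma$-plane, a $\P^3$ for a $\tau$-plane), the intersection $\Lambda := \Xi\cap M$ is a linear subspace with $\Pi\subseteq\Lambda\subseteq\Xi$, and since $\Lambda\subset\Xi$ and $M\subset\CGr(2,V_5)$ are cut out by the same $6-n$ linear forms transversally one gets $\cN_{M/\CGr(2,V_5)}|_\Pi\cong\cN_{\Lambda/\Xi}|_\Pi$ compatibly with the two normal-bundle sequences, so the map factors through the natural surjection $\cN_{\Pi/\Xi}\twoheadrightarrow\cN_{\Lambda/\Xi}|_\Pi$ of bundles with $\cN_{\Pi/\Xi}\cong\cO_\Pi(1)^{\oplus(\dim\Xi-2)}$, which is surjective on $H^0$. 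The transversality of $\Lambda$ in $\Xi$ can degenerate when $n=3$, which is why I would rely on the smoothness argument in general; that step — pinning down $h^1(\Pi,\cN_{\Pi/M}) = 0$ — is the one place where a little care is needed, everything else being a routine long-exact-sequence computation.
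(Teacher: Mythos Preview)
Your proof is correct and follows essentially the same approach as the paper's: both feed the exact sequence of Lemma~\ref{lem:cn-pi-m} (and its twist) into the long exact cohomology sequence, and both pin down the one non-formal step---the vanishing of $H^1(\Pi,\cN_{\Pi/M})$---by invoking the smoothness and dimension of $\Fo_2^\star(M)$ from Lemmas~\ref{lem:fk-mx} and~\ref{lem:fk-cm} to compute $h^0$. Your alternative geometric surjectivity argument for $n\ge4$ is a pleasant extra, but not needed.
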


\begin{proof}
Planes~$\Pi$ as in the lemma are parameterized by the schemes~$\Fo^\sigma_2(M)$ and~$\Fo^\tau_2(M)$,
which are smooth of respective dimensions~\mbox{$3n - 8$} and~\mbox{$3n - 9$}  
(see Lemmas~\ref{lem:fk-mx} and~\ref{lem:fk-cm}).\ 
Therefore,  {we have}~$h^0(\Pi, \cN_{\Pi/M})  = 3n - 8 -  {\st(\Pi)}$ 
(because the space~$H^0(\Pi, \cN_{\Pi/M}) $ is the Zariski tangent space to~$\F_2(M)$ at~$[\Pi]$).\ 
The exact sequence of Lemma~\ref{lem:cn-pi-m} then gives the vanishing of~$h^1(\Pi, \cN_{\Pi/M})$ and~$h^2(\Pi, \cN_{\Pi/M})$.

 {To compute}~$h^i(\Pi, \cN_{\Pi/M}(-2))$, we twist this exact sequence by~$\cO_\Pi(-2)$
and note that the line bundles~$\cO_\Pi(-1)$ and~$\cO_\Pi(-2)$ have zero cohomology,
while the twisted tangent bundle~$\cT_\Pi(-3) \cong \Omega_\Pi$ has $1$-dimensional cohomology in degree~$1$.
\end{proof}

Using this, we can prove the following result (see~\eqref{defmpx} for the definition of~$M'_X$). 
 
\begin{prop}
\label{prop:sigma-mx}
Let~$X$ be a smooth GM variety of dimension~$n \ge 3$  and 
let~$\Sigma \subset X$ be a conic on~$X$ encapsulated into~$M_X$,   but not into~$X$.\ 
Then~$\G_1(X)$ is smooth of dimension~$3n-7$ at~$[\Sigma]$, 
except if~$X$ is special, $\bv \in \langle \Sigma \rangle$, and~$\gamma(\Sigma) \subset M'_X$ is a line whose normal bundle is not globally generated, 
in which case~{$\G_1(X)$} is singular at~$[\Sigma]$.
\end{prop}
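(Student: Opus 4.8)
The plan is to reduce the whole statement to a cohomology vanishing and then to compute that cohomology by relating $\cN_{\Sigma/X}$ to a normal bundle on the Grassmannian hull. First I would record the deformation-theoretic reduction: a conic $\Sigma$ is a local complete intersection in the smooth variety $X$, so by the discussion preceding Lemma~\ref{lem:g1-cgr} the tangent space of $\G_1(X)$ at $[\Sigma]$ is $H^0(\Sigma,\cN_{\Sigma/X})$, the scheme $\G_1(X)$ is smooth at $[\Sigma]$ if and only if $H^1(\Sigma,\cN_{\Sigma/X})=0$, and in that case $\dim_{[\Sigma]}\G_1(X)=\chi(\Sigma,\cN_{\Sigma/X})=3n-7$; conversely, if $H^1(\Sigma,\cN_{\Sigma/X})\ne0$ then $\dim H^0(\Sigma,\cN_{\Sigma/X})=3n-7+h^1(\Sigma,\cN_{\Sigma/X})$ strictly exceeds the local dimension $3n-7$ of the Cohen--Macaulay scheme $\G_1(X)$ at $[\Sigma]$ (Lemma~\ref{lem:g1-x56}), so $\G_1(X)$ is singular there. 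Thus it remains to decide when $H^1(\Sigma,\cN_{\Sigma/X})$ vanishes, which I would do by cases according to whether $\bv\in\langle\Sigma\rangle$.

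\emph{Case $\bv\notin\Pi$, where $\Pi\coloneqq\langle\Sigma\rangle$} (automatic when $X$ is ordinary). Here $M_X$ is smooth along $\Pi$, $\Pi$ is a $\sigma$- or a $\tau$-plane on $M_X$, and, writing $q$ for a quadratic form cutting out $X$ in $M_X$, the hypothesis $\Pi\not\subset X$ gives $\Sigma=\Pi\cap X=\{q|_\Pi=0\}\subset\Pi\cong\P^2$, so $\Sigma$ is a regular embedding of codimension $n$ in $M_X$ with $\cN_{\Sigma/\Pi}\cong\cN_{X/M_X}|_\Sigma\cong\cO_\Sigma(2)$. The nested embeddings $\Sigma\subset\Pi\subset M_X$ and $\Sigma\subset X\subset M_X$ then give exact sequences
$$0\to\cO_\Sigma(2)\to\cN_{\Sigma/M_X}\to\cN_{\Pi/M_X}|_\Sigma\to0,\qquad 0\to\cN_{\Sigma/X}\to\cN_{\Sigma/M_X}\to\cO_\Sigma(2)\to0,$$
the right-exactness of the second coming from the fact that $q$, together with local equations of $\Pi$ in $M_X$, forms a regular system of generators of the ideal of $\Sigma$, so that the conormal generator $q$ of $\cN^\vee_{X/M_X}|_\Sigma$ spans a subbundle of $\cN^\vee_{\Sigma/M_X}$. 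I would next check that the composition $\cO_\Sigma(2)=\cN_{\Sigma/\Pi}\hookrightarrow\cN_{\Sigma/M_X}\to\cN_{X/M_X}|_\Sigma=\cO_\Sigma(2)$ is an isomorphism: dually it carries $q$ to the class of $q|_\Pi$ in $I_{\Sigma/\Pi}/I_{\Sigma/\Pi}^2$, and $q|_\Pi$ is a non-zero-divisor generating the principal ideal $I_{\Sigma/\Pi}$. Hence $\cN_{\Sigma/M_X}$ splits as $\cN_{\Sigma/\Pi}\oplus\cN_{\Sigma/X}$, whence $\cN_{\Sigma/X}\cong\cN_{\Pi/M_X}|_\Sigma$. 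Twisting the ideal-sheaf sequence of $\Sigma$ in $\Pi$ by $\cN_{\Pi/M_X}$ and invoking Corollary~\ref{cor:cn-pi-m}, which gives $H^1(\Pi,\cN_{\Pi/M_X})=0$ and $H^2(\Pi,\cN_{\Pi/M_X}(-2))=0$, yields $H^1(\Sigma,\cN_{\Sigma/X})=H^1(\Sigma,\cN_{\Pi/M_X}|_\Sigma)=0$, so $\G_1(X)$ is smooth of dimension $3n-7$ at $[\Sigma]$.

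\emph{Case $X$ special and $\bv\in\Pi$}. Then $\Pi$ is the cone with vertex $\bv$ over the line $L\coloneqq\gamma(\Pi)=\gamma(\Sigma)\subset M'_X$, and since $\bv\notin X$ we get $\Sigma=\Pi\cap X=\pi^{-1}(L)$ scheme-theoretically, where $\pi\colon X\to M'_X=M_{X_0}$ is the double covering branched along $X_0$ (see~\eqref{defmpx}). Because $\pi$ is finite flat, $M'_X$ is smooth along $L$, and $L\cong\P^1$ is a regular embedding, flat base change gives $\cN_{\Sigma/X}\cong\pi^*\cN_{L/M'_X}$; moreover $\pi_*\cO_\Sigma\cong\cO_L\oplus\cO_L(-1)$, the branch divisor $L\cap X_0$ of the double cover $\Sigma\to L$ having degree $2$ ($X_0$ being a quadric section of $M'_X$). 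Writing $\cN_{L/M'_X}=\bigoplus_i\cO_{\P^1}(a_i)$ (of rank $n-1$ and total degree $n-3$), the projection formula gives
$$H^1(\Sigma,\cN_{\Sigma/X})\cong H^1\bigl(L,\cN_{L/M'_X}\bigr)\oplus H^1\bigl(L,\cN_{L/M'_X}(-1)\bigr),$$
which vanishes if and only if $a_i\ge0$ for all $i$, that is, if and only if $\cN_{L/M'_X}$ is globally generated (the $\cO(-1)$-twist is exactly what upgrades the condition ``$a_i\le-2$'' to ``$a_i\le-1$''). Together with the first paragraph this proves the proposition.

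The step I expect to be the main obstacle is the cohomology-free identification $\cN_{\Sigma/X}\cong\cN_{\Pi/M_X}|_\Sigma$ in the first case: one must verify that $\Sigma$ is a regular embedding in $M_X$, so that all the conormal sequences are sequences of vector bundles, and that the composite $\cN_{\Sigma/\Pi}\to\cN_{X/M_X}|_\Sigma$ is genuinely an isomorphism and not merely nonzero — both reducing to the fact that the single equation $q|_\Pi$ cutting out $\Sigma$ inside $\Pi$ is a non-zero-divisor. In the special case the only delicate points are the scheme-theoretic equality $\Sigma=\pi^{-1}(L)$ and the flat base change for normal bundles; once these are in place, matching ``globally generated'' with ``not a special line'' is purely numerical.
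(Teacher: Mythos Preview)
Your proof is correct and follows essentially the same approach as the paper: reduce to $H^1(\Sigma,\cN_{\Sigma/X})=0$ via Lemma~\ref{lem:g1-x56}, identify $\cN_{\Sigma/X}\cong\cN_{\Pi/M_X}|_\Sigma$ when $\bv\notin\Pi$ and apply Corollary~\ref{cor:cn-pi-m}, and in the special case with $\bv\in\Pi$ use the projection to $M'_X$ and the projection formula to reduce to global generation of $\cN_{L/M'_X}$. Your derivation of the isomorphism $\cN_{\Sigma/X}\cong\cN_{\Pi/M_X}|_\Sigma$ is more detailed than the paper's (which simply asserts it from $\Sigma=\Pi\cap X$), and in the second case you phrase the argument via the double cover $\pi\colon X\to M'_X$ rather than the cone projection $\gamma$, but these agree on $\Sigma$ and the computations are the same.
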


\begin{proof}
We saw in Lemma~\ref{lem:g1-x56} that~$\G_1(X)$ has pure dimension~\mbox{$3n-7$}; 
in particular, by standard deformation theory, if~$\Sigma \subset X$ is a conic, 
the scheme~$\G_1(X)$ is smooth (of dimension~\mbox{$3n-7$}) at the point~$[\Sigma]$ 
if and only if~$h^0(\Sigma, \cN_{\Sigma/X}) =   3n - 7$  or, equivalently, if~$h^1(\Sigma, \cN_{\Sigma/X}) =  0$. 

 {As before}, set~$\Pi = \langle \Sigma \rangle   \cong \P^2$.\ 
Since~$\Sigma$ is not encapsulated into~$X$, we have~$\Sigma = \Pi \cap X$,
and therefore~$\cN_{\Sigma/X} \cong \cN_{\Pi/M_X}\vert_\Sigma$.\ 
If~$X$ is ordinary, or~$X$ is special and~$\bv \not\in \Pi$, we use the exact sequence
\begin{equation*}
0 \to \cN_{\Pi/M_X}(-2) \to \cN_{\Pi/M_X} \to \cN_{\Sigma/X} \to 0
\end{equation*}
and Corollary~\ref{cor:cn-pi-m}; 
they imply~$h^0(\Sigma, \cN_{\Sigma/X}) =   (3n - 8 - \st(\Pi)) + (\st(\Pi) + 1) = 3n - 7$ 
and the higher cohomology groups vanish.

 {If~$X$ is special and~$\bv \in \Pi$},
we argue as in Lemma~\ref{lem:g1-cgr}.\ 
First, we have an isomorphism
 \begin{equation*}
\cN_{\Sigma/X} \cong \cN_{\Pi/M_X}\vert_\Sigma \cong \gamma^*\cN_{L/M'_X},
\end{equation*}
where~{$L = \gamma(\Sigma)$}.\ Applying the projection formula, we obtain
\begin{equation*}
\mathbf{R}\gamma_*\cN_{\Sigma/X} \cong
\mathbf{R}\gamma_*\gamma^*\cN_{L/M'_X} \cong
\cN_{L/M'_X} \oplus \cN_{L/M'_X}(-1).
\end{equation*}
The right side has no higher cohomology if and only if the bundle~$\cN_{L/M'_X}$ is globally generated, 
in which case the Riemann--Roch theorem implies~$h^0(\Sigma, \cN_{\Sigma/X}) ={3n - 7}$.

In both cases, the proposition follows.
\end{proof}

 {If~$X$ is a special GM variety, a line on~$M'_X$ is called {\sf special} if its normal bundle is not globally generated;
the corresponding subscheme of~$\F_1(M'_X)$ is denoted by~$\F_1^{\mathrm{spe}}(M'_X)$.}

\begin{rema}
\label{rem:special-lines}
 If~$X$ is a smooth special GM variety of dimension~$n \in \{3,4,5\}$, 
the subscheme~$\F_1^{\mathrm{spe}}(M'_X) \subset \F_1(M'_X)$ of special lines is 
\begin{itemize}[wide]
\item 
a smooth conic 
  for~$n = 3$,
\item 
a Hirzebruch surface in the exceptional divisor of~$\Bl_{\upkappa(\P(W_0^\perp))}(\P(V_5))$ for~$n = 4$,
\item 
the quadric~$\IGr(3,V_5)$ in the exceptional divisor of~$\Bl_{\IGr(3,V_5)}(\Gr(3,V_5))$ for~\mbox{$n = 5$},
\end{itemize}
where in all cases we use the description of~$\F_1(M'_X)$ from Lemma~\ref{lem:fk-mx}.
\end{rema}
 
 {In the following corollary, we identify the scheme~$\F_2(M_X) \ssm \F_2(X)$ with its preimage in~$\G_1(X)$
under the morphism~$\lambda_1$ (see Proposition~\ref{prop:pik}).}

\begin{coro}
\label{cor:g1x-f2mx}
If~$X$ is a smooth GM variety of dimension~$n \ge 3$, the scheme~$\G_1(X)$ is smooth along~$\F_2(M_X) \ssm \F_2(X)$
unless~$X$ is special and~$  n \le 5$, in which case
\begin{equation*}
\Sing(\G_1(X)) \cap (\F_2(M_X) \ssm \F_2(X)) = \F_1^{\mathrm{spe}}(M'_X).
\end{equation*}
 In particular, $\G_1(X)$ is smooth of dimension~$3n-7$ at general points of every irreducible component of~$\F_2(M_X) \ssm \F_2(X)$.
\end{coro}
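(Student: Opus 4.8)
The plan is to deduce this from Proposition~\ref{prop:sigma-mx}, applied pointwise, once the correspondence between points of~$\F_2(M_X)\ssm\F_2(X)$ and conics on~$X$ is set up. By Lemma~\ref{lem:fkmx-dkx} we have~$\F_2(M_X)\ssm\F_2(X)\subset\chG_1(X)\ssm\F_2(X)$, and by Proposition~\ref{prop:pik} the morphism~$\lambda_1$ is an isomorphism over~$\chG_1(X)\ssm\F_2(X)$, hence over~$\F_2(M_X)\ssm\F_2(X)$. In the conventions of Section~\ref{sec:gkx}, a point~$[R_3]\in\F_2(M_X)\ssm\F_2(X)$ satisfies~$\P(R_3)\subset M_X$ and~$\P(R_3)\not\subset X$, so (as in the proof of Proposition~\ref{prop:pik}) its unique preimage under~$\lambda_1$ is the point~$[\Sigma]$ with~$\Sigma=\P(R_3)\cap X$, a conic whose linear span is~$\P(R_3)$; thus~$\Sigma$ is encapsulated into~$M_X$ but not into~$X$. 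Consequently~$\G_1(X)$ is smooth at the point of~$\F_2(M_X)\ssm\F_2(X)$ corresponding to~$[R_3]$ exactly when it is smooth at~$[\Sigma]$, and Proposition~\ref{prop:sigma-mx} tells us that this holds, with local dimension~$3n-7$, unless~$X$ is special, $\bv\in\langle\Sigma\rangle$, and~$\gamma(\Sigma)\subset M'_X$ is a special line, in which case~$[\Sigma]$ is a singular point of~$\G_1(X)$.

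I would then identify the exceptional locus. When~$X$ is ordinary this case never arises, so~$\G_1(X)$ is smooth along~$\F_2(M_X)\ssm\F_2(X)$. When~$X$ is special, $M_X$ is the cone over~$M'_X$ with vertex~$\bv$, and a plane~$\P(R_3)\subset M_X$ contains~$\bv$ if and only if~$\P(R_3)=\langle\bv,L\rangle$ for a line~$L=\gamma(\P(R_3))\subset M'_X$; by Lemma~\ref{lem:fk-cm} this gives an isomorphism~$\F_1(M'_X)\isomto\F_2(M_X)\ssm\Fo_2(M_X)$, and since~$\bv\notin X$ this locus is disjoint from~$\F_2(X)$, hence contained in~$\F_2(M_X)\ssm\F_2(X)$. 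For~$\P(R_3)=\langle\bv,L\rangle$ the conic~$\Sigma=\P(R_3)\cap X$ avoids~$\bv$ and projects onto~$L$, so~$\gamma(\Sigma)=L$; therefore the exceptional conditions of Proposition~\ref{prop:sigma-mx} cut out exactly the sublocus of special lines, that is,~$\F_1^{\mathrm{spe}}(M'_X)\subset\F_1(M'_X)$. This yields the asserted equality~$\Sing(\G_1(X))\cap(\F_2(M_X)\ssm\F_2(X))=\F_1^{\mathrm{spe}}(M'_X)$ for special~$X$. Finally, if~$X$ is a special GM sixfold then~$M'_X=\Gr(2,V_5)$ is homogeneous, so the normal bundle of every line on~$M'_X$ is globally generated and~$\F_1^{\mathrm{spe}}(M'_X)=\vide$; thus in this case too~$\G_1(X)$ is smooth along~$\F_2(M_X)\ssm\F_2(X)$, consistently with the statement.

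For the final assertion, I would combine the dimension of~$\F_1^{\mathrm{spe}}(M'_X)$ recorded in Remark~\ref{rem:special-lines} with the dimensions of the irreducible components of~$\F_2(M_X)$ listed in Lemma~\ref{lem:fk-mx}, Lemma~\ref{lem:fk-cm}, and Remark~\ref{rem:f3m}: since~$\F_1^{\mathrm{spe}}(M'_X)$, embedded in~$\F_2(M_X)$ via the isomorphism of Lemma~\ref{lem:fk-cm}, has dimension strictly smaller than every such component (for~$n\in\{3,4,5\}$), and is empty for~$n=6$, it contains no irreducible component of~$\F_2(M_X)\ssm\F_2(X)$. As~$\Sing(\G_1(X))\cap(\F_2(M_X)\ssm\F_2(X))\subseteq\F_1^{\mathrm{spe}}(M'_X)$ by the first part, the generic point of each component is a smooth point of~$\G_1(X)$, where~$\dim\G_1(X)=3n-7$ by Lemma~\ref{lem:g1-x56}.

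The main obstacle will be the bookkeeping in the middle step: one must check that the isomorphism~$\F_1(M'_X)\isomto\F_2(M_X)\ssm\Fo_2(M_X)$ of Lemma~\ref{lem:fk-cm} is compatible with the passage from~$\langle\Sigma\rangle$ to~$\gamma(\Sigma)$, and that the resulting locus genuinely sits inside the open part of~$\chG_1(X)$ over which~$\lambda_1$ is an isomorphism, so that Proposition~\ref{prop:sigma-mx} applies verbatim. The analytic heart of the argument — the normal bundle computations underlying Proposition~\ref{prop:sigma-mx}, via Corollary~\ref{cor:cn-pi-m} and Lemma~\ref{lem:g1-cgr} — is already in place, so no further hard computation is needed here.
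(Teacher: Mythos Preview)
Your proof is correct and follows the same approach as the paper's own proof, which simply says the corollary is a restatement of Proposition~\ref{prop:sigma-mx} together with the observations that~$\F_1^{\mathrm{spe}}(M'_X)$ is empty for~$n=6$ and that~$\F_2(M_X)\ssm\F_1^{\mathrm{spe}}(M'_X)$ is dense in~$\F_2(M_X)$ for~$n\le 5$. You have spelled out the identification via~$\lambda_1$ and the dimension bookkeeping more explicitly than the paper does, but the logical skeleton is identical.
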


\begin{proof}
This is just a restatement of Proposition~\ref{prop:sigma-mx}, together with  the fact that~$\F_1^{\mathrm{spe}}(M'_X)  $ is empty  if~$n = 6$,
and that~${\F_2(M_X)} \ssm \F_1^{\mathrm{spe}}(M'_X)$ is dense in~${\F_2(M_X)}$ for~$n \le 5$.
\end{proof}

\subsection{Conics encapsulated into~$X$}

 {If~$\Sigma$ is a conic encapsulated into~$X$}, the span~$\Pi \coloneqq \langle \Sigma \rangle$ is contained in~$X$,
hence it does not contain the vertex~$\bv$ of~$\CGr(2,V_5)$ (because~$\bv\notin X$).

\begin{prop}
\label{prop:encapsulated-filtration}
Let~$\Sigma \subset X$ be a  conic encapsulated into a smooth GM variety~$X$  {of dimension~$n \ge 3$}
and let~$\Pi \coloneqq \langle \Sigma \rangle$ be its linear span.\
There are exact sequences of sheaves
\begin{align*}
0 \to \cO_\Pi(2)\vert_\Sigma \to \cN_{\Sigma/X} &\to \cN_{\Pi/X}\vert_\Sigma \to 0,\\
 0 \to \cN_{\Pi/X}(-2) \to \cN_{\Pi/X} &\to \cN_{\Pi/X}\vert_\Sigma \to 0.
\end{align*}
In particular, if~
\begin{equation*}
 {H^1(\Pi, \cN_{\Pi/X}) = 
H^2(\Pi, \cN_{\Pi/X}(-2)) = 0},
\end{equation*}
then~$\G_1(X)$ is smooth at~$[\Sigma]$. 
\end{prop}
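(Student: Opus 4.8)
The plan is to build the two displayed exact sequences out of standard normal-bundle sequences, and then to extract from them the vanishing $H^1(\Sigma,\cN_{\Sigma/X})=0$, which by deformation theory forces smoothness of $\G_1(X)$ at $[\Sigma]$. Write $\Pi\coloneqq\langle\Sigma\rangle$; since $\Sigma$ is a conic, $\Pi\cong\P^2$ and $\Sigma$ is an effective Cartier divisor on $\Pi$ with $\cO_\Pi(\Sigma)\cong\cO_\Pi(2)$. As $\Pi$ is smooth and $\Pi\subset X$ is a regular embedding into the smooth variety $X$ (recall $\bv\notin X$, so $\bv\notin\Pi$), the scheme $\Sigma\subset X$ is a local complete intersection; in particular $\cN_{\Sigma/X}$ is locally free of rank $n-1$ on $\Sigma$, and the conormal sequence of the chain of regular embeddings $\Sigma\subset\Pi\subset X$ is a short exact sequence of locally free sheaves on $\Sigma$. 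Dualizing it and using $\cN_{\Sigma/\Pi}\cong\cO_\Pi(\Sigma)\vert_\Sigma\cong\cO_\Pi(2)\vert_\Sigma$ gives the first displayed sequence. For the second, I would tensor the structure sequence of the divisor $\Sigma\subset\Pi$,
\[
0\to\cO_\Pi(-2)\to\cO_\Pi\to\cO_\Sigma\to0,
\]
with the locally free sheaf $\cN_{\Pi/X}$ on $\Pi$.

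Next I would reduce the smoothness assertion to cohomology vanishing. By the deformation theory of conics recalled before Lemma~\ref{lem:g1-cgr} (which applies since $\Sigma$ is l.c.i.\ in the smooth variety $X$), the Hilbert scheme $\G_1(X)$ is smooth at $[\Sigma]$, of dimension $\dim X-K_X\cdot\Sigma-3=3n-7$, as soon as $H^1(\Sigma,\cN_{\Sigma/X})=0$; note this value is exactly the pure dimension of $\G_1(X)$ established in Lemma~\ref{lem:g1-x56}. So it suffices to prove $H^1(\Sigma,\cN_{\Sigma/X})=0$ under the stated hypotheses. Taking the long exact cohomology sequence of the first displayed sequence and using $\dim\Sigma=1$ (hence $H^2(\Sigma,-)=0$), the vanishing of $H^1(\Sigma,\cN_{\Sigma/X})$ follows once we know $H^1(\Sigma,\cO_\Pi(2)\vert_\Sigma)=0$ and $H^1(\Sigma,\cN_{\Pi/X}\vert_\Sigma)=0$. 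The first is immediate from $0\to\cO_\Pi\to\cO_\Pi(2)\to\cO_\Pi(2)\vert_\Sigma\to0$ together with $H^1(\P^2,\cO(2))=H^2(\P^2,\cO)=0$. The second follows from the second displayed sequence, whose cohomology sequence contains
\[
H^1(\Pi,\cN_{\Pi/X})\longrightarrow H^1(\Sigma,\cN_{\Pi/X}\vert_\Sigma)\longrightarrow H^2(\Pi,\cN_{\Pi/X}(-2)),
\]
and both outer terms vanish by hypothesis. Chaining these vanishings gives $H^1(\Sigma,\cN_{\Sigma/X})=0$, hence the smoothness of $\G_1(X)$ at $[\Sigma]$.

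The argument is essentially formal, so I do not anticipate a serious obstacle; the only point requiring care is that $\Sigma$ may be a singular or non-reduced conic, so one cannot argue as for a smooth rational curve. The remedy is precisely the observation above that $\Sigma$ is a Cartier divisor in the smooth surface $\Pi$ and that $\Pi\subset X$ is a regular embedding: this keeps $\cN_{\Sigma/X}$ locally free on $\Sigma$, makes the conormal sequence of $\Sigma\subset\Pi\subset X$ exact on the left, and makes the structure sequence of $\Sigma\subset\Pi$ exact — which is all the input the cohomology chase needs. Everything else reduces to the triviality of line-bundle cohomology on $\P^2$.
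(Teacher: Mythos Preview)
Your proposal is correct and follows essentially the same approach as the paper: both derive the first sequence from the normal bundle sequence of $\Sigma\subset\Pi\subset X$ together with $\cN_{\Sigma/\Pi}\cong\cO_\Pi(2)\vert_\Sigma$, the second by tensoring the ideal sequence of $\Sigma\subset\Pi$ with $\cN_{\Pi/X}$, and then chase cohomology to get $H^1(\Sigma,\cN_{\Sigma/X})=0$. Your version is more explicit about why the argument remains valid for singular or non-reduced conics, which the paper leaves implicit.
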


\begin{proof}
The first exact sequence follows from the isomorphism~$\cN_{\Sigma/\Pi} \cong \cO_\Pi(2)\vert_\Sigma$ and the second is obvious.\ 
Using the second sequence and the hypotheses, 
we obtain~{$h^1(\Sigma, \cN_{\Pi/X}\vert_\Sigma) = 0$}.\ 
Since~{$h^1(\Sigma, \cO_\Pi(2)\vert_\Sigma) = 0$},
we obtain from the first sequence the vanishing~{$h^1(\Sigma, \cN_{\Sigma/X}) = 0$}, 
hence~$\G_1(X)$ is smooth at~$[\Sigma]$.\ 
\end{proof}

In the corollary below, we use notation~\eqref{eq:t-pi} (recall that $\bv\notin\Pi$).

\begin{coro}
\label{cor:encapsulated-x}
Let~$X$ be a smooth GM variety of dimension~$n \ge 5$.\ 
Let~$\Sigma \subset X$ be a conic encapsulated  {into}~$X$ 
and let~$\Pi \coloneqq \langle \Sigma \rangle\subset X$ be its linear span.\ 
If the scheme~$\F_2(X)$ is smooth of dimension~$3n - 14 -  {\st(\Pi)}$ at~$[\Pi]$, 
then~$\G_1(X)$ is smooth. 
 \end{coro}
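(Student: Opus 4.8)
\textbf{Proof plan for Corollary~\ref{cor:encapsulated-x}.}
The plan is to combine Proposition~\ref{prop:encapsulated-filtration} with a dimension count for the Hilbert scheme of planes, exactly parallel to what has already been done for conics encapsulated into~$M_X$ in Proposition~\ref{prop:sigma-mx} and Corollary~\ref{cor:g1x-f2mx}. By Proposition~\ref{prop:encapsulated-filtration}, it suffices to establish the vanishing $H^1(\Pi,\cN_{\Pi/X}) = H^2(\Pi,\cN_{\Pi/X}(-2)) = 0$ for every plane $\Pi \subset X$, since $\G_1(X)$ has pure dimension $3n-7$ by Lemma~\ref{lem:g1-x56}, and a scheme of pure dimension is smooth at a point where the tangent space has the expected dimension, equivalently where the obstruction space $H^1(\Sigma,\cN_{\Sigma/X})$ vanishes. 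So the whole argument reduces to controlling $H^\bullet(\Pi,\cN_{\Pi/X})$.

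First I would exploit the hypothesis on $\F_2(X)$. Since $\F_2(X)$ is smooth of dimension $3n-14-\st(\Pi)$ at $[\Pi]$ and its Zariski tangent space there is $H^0(\Pi,\cN_{\Pi/X})$, we get $h^0(\Pi,\cN_{\Pi/X}) = 3n-14-\st(\Pi)$. Next I would compute the Euler characteristic $\chi(\Pi,\cN_{\Pi/X})$ by Riemann--Roch: this requires the Chern classes of $\cN_{\Pi/X}$, which are determined by the standard exact sequence $0 \to \cN_{\Pi/X} \to \cN_{\Pi/M_X} \to \cO_\Pi(2) \to 0$ together with the explicit form of $\cN_{\Pi/M_X}$ given in Lemma~\ref{lem:cn-pi-m} (with $M = M_X$, noting $\bv\notin\Pi$). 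From Lemma~\ref{lem:cn-pi-m} the sheaf $\cN_{\Pi/M_X}$ sits in an exact sequence involving $\cT_\Pi(-1)^{\oplus(\st(\Pi)+1)} \oplus \cO_\Pi^{\oplus(1-\st(\Pi))} \oplus \cO_\Pi(1)^{\oplus(2-\st(\Pi))}$ and $\cO_\Pi(1)^{\oplus(6-n)}$, all of whose cohomology is explicitly known; from these one reads off both $\chi(\Pi,\cN_{\Pi/X})$ and, crucially, the vanishing $H^2(\Pi,\cN_{\Pi/X})=0$ (the only potential source of $H^2$ among the building blocks is $\cT_\Pi(-3)\cong\Omega^1_\Pi$, which contributes only in degree $1$, and after the relevant twists one checks $H^2$ does vanish). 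Comparing $\chi = h^0 - h^1 + h^2 = h^0 - h^1$ with the known value of $h^0$ then forces $h^1(\Pi,\cN_{\Pi/X}) = 0$. Finally, for $H^2(\Pi,\cN_{\Pi/X}(-2))$, I would twist the sequence $0 \to \cN_{\Pi/X}(-2) \to \cN_{\Pi/X} \to \cN_{\Pi/X}\vert_\Sigma \to 0$ (the second sequence of Proposition~\ref{prop:encapsulated-filtration}) or, more directly, twist the defining sequence of $\cN_{\Pi/X}$ by $\cO_\Pi(-2)$ and use that $\cN_{\Pi/M_X}(-2)$ has vanishing $H^2$ away from the $\Omega^1_\Pi$ summand, which after the $(-2)$-twist of $\cT_\Pi(-1)$ becomes $\cT_\Pi(-3)=\Omega^1_\Pi$ with $H^2=0$; combined with $H^2(\cO_\Pi(0))=H^2(\cO_\Pi(-1))=0$ and $H^3$ of $\cO_\Pi(1)^{\oplus(6-n)}$ vanishing on a surface, this gives $H^2(\Pi,\cN_{\Pi/X}(-2))=0$.

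The main obstacle I anticipate is bookkeeping rather than conceptual: one must carefully track the twist-by-$(-2)$ of every summand appearing in the (possibly non-split) extension of Lemma~\ref{lem:cn-pi-m}, and verify that the connecting maps in the relevant long exact sequences do not resurrect cohomology — in particular that the map from $H^1$ of the kernel term to $H^1$ of $\cN_{\Pi/M_X}$ (or its twist) behaves as expected. The role of $\st(\Pi)$ enters the dimension count precisely through the multiplicities $\st(\Pi)+1$ and $1-\st(\Pi)$ and $2-\st(\Pi)$ in Lemma~\ref{lem:cn-pi-m}, which is why the hypothesis on $\dim\F_2(X)$ is stated with the correction term $-\st(\Pi)$; I would double-check that the Riemann--Roch computation of $\chi(\Pi,\cN_{\Pi/X})$ depends on $\st(\Pi)$ in exactly the compensating way, so that the deduction $h^1=0$ holds uniformly for $\sigma$- and $\tau$-planes. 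The hypothesis $n\ge5$ is used to ensure $3n-14-\st(\Pi)\ge0$ so the smoothness statement is non-vacuous and the various $6-n$ multiplicities are non-negative; this should be noted but requires no further work.
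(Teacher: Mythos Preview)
Your approach is correct and essentially the same as the paper's: both reduce to Proposition~\ref{prop:encapsulated-filtration} and verify the two vanishings via the normal bundle sequence $0 \to \cN_{\Pi/X} \to \cN_{\Pi/M_X} \to \cO_\Pi(2) \to 0$, using the cohomology of~$\cN_{\Pi/M_X}$ computed in Corollary~\ref{cor:cn-pi-m}. The only difference is packaging: the paper reads $H^1(\Pi,\cN_{\Pi/X})=0$ directly from the long exact sequence (the dimension count $h^0(\cN_{\Pi/M_X})-h^0(\cN_{\Pi/X})=6=h^0(\cO_\Pi(2))$ forces surjectivity of $H^0(\cN_{\Pi/M_X})\to H^0(\cO_\Pi(2))$, whence $H^1(\cN_{\Pi/X})\hookrightarrow H^1(\cN_{\Pi/M_X})=0$), whereas you route through $\chi$ and a separate verification of $H^2=0$; both are equivalent, though the direct long-exact-sequence argument is slightly cleaner and avoids your detour through Riemann--Roch (which in any case amounts to additivity of $\chi$ in the same exact sequences).
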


\begin{proof}
Since~$\Pi \subset X \subset M_X$, we have an exact sequence
\begin{equation*}
0 \to \cN_{\Pi/X} \to \cN_{\Pi/M_X} \to \cO_\Pi(2) \to 0.
\end{equation*}
By assumption, we have~$h^0(\Pi, \cN_{\Pi/X})=3n - 14 - {\st(\Pi)}$ 
({because} the space~$H^0(\Pi, \cN_{\Pi/X}) $ is the Zariski tangent space to~$\F_2(X)$ at~$[\Pi]$).\ 
Moreover, since~$\Pi$ does not contain the vertex~$\bv$ of~$M_X$,
Corollary~\ref{cor:cn-pi-m} describes the cohomology of the middle term.\ 
Finally, the cohomology of the right term is~$\C^6$ in degree~$0$ and {zero} in positive degrees.\ 
All this implies that~{$H^1(\Pi, \cN_{\Pi/X})  $ vanishes}.

Similarly, twisting the above sequence by~$\cO_\Pi(-2)$ and using Corollary~\ref{cor:cn-pi-m} again, 
we obtain~$h^1(\Pi, \cN_{\Pi/X}(-2)) = {\st(\Pi)} + 2$ and the other cohomology groups  {of~$\cN_{\Pi/X}(-2)$} vanish.

With these computations and Proposition~\ref{prop:encapsulated-filtration},
we see that~$\G_1(X)$ is smooth at~$[\Sigma]$.
 \end{proof}

Finally, we  {consider} conics encapsulated {into} a smooth GM fourfold~$X$.\ 
 {The following result adds some details to the description of Theorem~\ref{thm:g1-x4}, see also Remark~\ref{rem:tdpi-gm4}.}\ 
We know from Proposition~\ref{prop:fk-gm} that the scheme~$\F_2(X)$ is finite  {and reduced}, and empty for~$X$ general.

\begin{coro}
\label{cor:encapsulated-x4}
Let~$X$ be a smooth GM fourfold and let~$\Pi = \P(R_3)$ be a plane on~$X$.\ 
 Then, the space~$\P(\Sym^2\!R_3^\vee) \isom \P^5 $ of conics contained in~$\Pi$ is  {an irreducible} component of~$\G_1(X)$.\ 
 {It~intersects the main component~$\overline{\G^0_1(X)}$}
 along a determinantal hypersurface~$D_\Pi \subset \P(\Sym^2\!R_3^\vee)$ of degree~$ {\st(\Pi)} + 2$.
\end{coro}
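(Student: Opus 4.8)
The plan is to prove the statement in three stages: first that $\P(\Sym^2 R_3^\vee)$ is an irreducible component of $\G_1(X)$, then that its scheme-theoretic intersection with $\overline{\G^0_1(X)}$ is a determinantal locus, and finally to compute the degree of this locus via its determinantal presentation.

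First I would establish that $\P(\Sym^2 R_3^\vee) \cong \P^5$ is a component. By Proposition~\ref{prop:pik} (applied with $k=1$), the fiber $\lambda_1^{-1}([\Pi])$ over the point $[\Pi] \in \F_2(X) \subset \chG_1(X)$ is exactly $\P(\Sym^2 \cR_3^\vee)|_{[\Pi]} = \P(\Sym^2 R_3^\vee)$, and since $\F_2(X)$ is a finite reduced scheme by Proposition~\ref{prop:fk-gm}, the subscheme $\lambda_1^{-1}(\F_2(X))$ is a disjoint union of copies of $\P^5$. By Lemma~\ref{lem:g1-x56}, $\G_1(X)$ is Cohen--Macaulay of pure dimension $3n-7 = 5$, so any such $\P^5$ is a whole connected (hence irreducible) component. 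This uses only results already in the excerpt.

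Next I would identify $\P(\Sym^2 R_3^\vee) \cap \overline{\G^0_1(X)}$. The locus $\overline{\G^0_1(X)}$ is the closure of the conics whose span is not contained in $M_X$; equivalently, on $\P(\Sym^2 R_3^\vee)$, a conic $\Sigma = \{s = 0\} \subset \Pi$ lies in the closure of $\G^0_1(X)$ when it arises as a limit of honest quadric sections, which happens precisely where the restriction map $V_6 \to \Sym^2 R_3^\vee$ has the quadratic form $s$ in its image (the ``extra'' quadratic equation cutting $\Sigma$ out of $X$). More precisely I would argue as follows: the morphism $V_6 \otimes \cO \to \Sym^2 R_3^\vee$ on $\chG_1(X)$ has rank $\le 1$, and at $[\Pi] \in \F_2(X)$ it vanishes identically; but infinitesimally near $[\Pi]$, the induced linear map to $\Sym^2 R_3^\vee$ has a one-dimensional image, and the points of $\P(\Sym^2 R_3^\vee)$ lying in $\overline{\G^0_1(X)}$ are those $[s]$ that lie in the image of the composite $V_6 \hookrightarrow \Sym^2 W^\vee$ restricted to $R_3$, after saturating. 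Concretely, using the quadric fibration $\cQ \to B$ of Section~\ref{sec:qf} and the embedding $\eta \colon \cO_B(-E) \hookrightarrow \Sym^2 \cW_B^\vee$, the intersection $D_\Pi$ is the locus where a certain $R_3$-valued bilinear form degenerates: it is the image of the zero locus of the morphism built from the differential of $\lambda_1$ at $[\Pi]$ together with the equation of $\cQ$. I would make this precise by considering, over $\P(\Sym^2 R_3^\vee)$, the universal conic $\{s=0\} \subset \Pi$ and the bundle map $R_3 \otimes \cO \to R_3^\vee \otimes \cO(1)$ given by $s$; the locus $D_\Pi$ where the span of $\Sigma$ survives into $\overline{\G^0_1(X)}$ is cut out by the vanishing of a section of a line bundle, and identifying that line bundle gives the degree. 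The $\sigma$ versus $\tau$ distinction enters through the splitting type of $\cN_{\Pi/X}$ — a $\sigma$-plane and a $\tau$-plane have different normal bundles in $M_X$ (Lemma~\ref{lem:cn-pi-m}), which feeds into the cohomology computations of Corollary~\ref{cor:encapsulated-x4}'s proof via $h^1(\Pi, \cN_{\Pi/X}(-2)) = \st(\Pi) + 2$; this Euler-characteristic count is precisely what pins down $\deg D_\Pi = \st(\Pi) + 2$.

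The main obstacle will be the second stage: giving a clean, coordinate-free description of the determinantal equation of $D_\Pi$ and verifying its degree is $\st(\Pi)+2$ rather than just bounding it. I expect to do this by producing an explicit resolution. Namely, I would use the exact sequence $0 \to \cN_{\Pi/X} \to \cN_{\Pi/M_X} \to \cO_\Pi(2) \to 0$ together with the splitting $\cN_{\Pi/M_X} \cong \cT_\Pi(-1)^{\oplus(\st(\Pi)+1)} \oplus \cO_\Pi^{\oplus(1-\st(\Pi))} \oplus \cO_\Pi(1)^{\oplus(2-\st(\Pi))}$ (from the proof of Lemma~\ref{lem:cn-pi-m}) to write down the normal bundle $\cN_{\Pi/X}$ explicitly as an extension, then realize $D_\Pi$ inside $\P(\Sym^2 R_3^\vee)$ as the degeneracy locus of the multiplication-type map $H^0(\cN_{\Pi/X}(-2)) \otimes \cO \to (\text{line bundle})$ determined by the conic equation $s$. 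Counting the degree then reduces to a Chern class computation on $\P^5$, with the two cases $\st(\Pi) = 0$ (giving degree $2$, the cone over $\P^1\times\P^1$) and $\st(\Pi)=1$ (degree $3$, a symmetric determinantal cubic) matching the assertions in Remark~\ref{rem:tdpi-gm4}. Once the degree is in hand, irreducibility of $D_\Pi$ and the fact that it is the full scheme-theoretic intersection follows from the Cohen--Macaulayness of $\G_1(X)$ and a dimension count, since $D_\Pi$ has the expected codimension $1$ in $\P^5$.
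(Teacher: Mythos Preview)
Your first stage is fine and matches the paper exactly.

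Stages two and three, however, contain a genuine gap and at least one concrete error. You propose to characterize the points of $\P(\Sym^2\!R_3^\vee)$ lying in $\overline{\G^0_1(X)}$ as those $[s]$ in the image of $V_6 \to \Sym^2\!R_3^\vee$. But $\Pi = \P(R_3) \subset X$ means precisely that this restriction map is \emph{identically zero}, so this description is vacuous. Similarly, you propose to realize $D_\Pi$ as a degeneracy locus of a map out of $H^0(\Pi,\cN_{\Pi/X}(-2)) \otimes \cO$; but $h^0(\Pi,\cN_{\Pi/X}(-2)) = 0$ (as follows from Lemma~\ref{lem:cn-pi-m} after twisting), so this map is from the zero bundle. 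Your vague appeal to the quadric fibration $\cQ \to B$ does not rescue the argument: you never say which morphism of vector bundles on $\P^5$ actually degenerates.

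The paper's route avoids all of this by a single clean observation you are missing: since $\G_1(X)$ is Cohen--Macaulay of pure dimension $5$ and $\P(\Sym^2\!R_3^\vee)\cong\P^5$ is one irreducible component, the intersection of $\P^5$ with the \emph{other} components is exactly the singular locus $\Sing(\G_1(X)) \cap \P^5$. So $D_\Pi$ is the locus where $h^0(\Sigma,\cN_{\Sigma/X}) > 5$; the exact sequences of Proposition~\ref{prop:encapsulated-filtration} identify this with the locus where the multiplication-by-$\Sigma$ map
\[
H^1(\Pi,\cN_{\Pi/X}(-2)) \xrightarrow{\ \cdot\,\Sigma\ } H^1(\Pi,\cN_{\Pi/X})
\]
between $(\st(\Pi)+2)$-dimensional spaces has nonzero kernel. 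Globalizing over $\P(\Sym^2\!R_3^\vee)$ gives a morphism $\cO(-1)^{\oplus(\st(\Pi)+2)} \to \cO^{\oplus(\st(\Pi)+2)}$ whose determinant cuts out $D_\Pi$ in degree $\st(\Pi)+2$. One still needs to check $D_\Pi \ne \P^5$; the paper does this via Grauert--M\"ulich, using that $\cN_{\Pi/X}$ is stable (since $h^0 = 0$ and $c_1 = -1$), which you also do not mention.
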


\begin{proof}
{Since~$\F_2(X)$ is finite and reduced by Proposition~\ref{prop:fk-gm},}
we have~$h^0(\Pi, \cN_{\Pi/X})=0$.\ 
 {In particular, $N_{\Pi/X}$ is a stable rank~$2$ vector bundle on~$\Pi$ with   first Chern class~$-1$.\ 
Furthermore,} the   proof of Corollary~\ref{cor:encapsulated-x} shows that in this case, we have 
\begin{equation*}
h^1(\Pi, \cN_{\Pi/X}(-2)) = h^1(\Pi, \cN_{\Pi/X}) = \st(\Pi) + 2
 \end{equation*}
 and the other cohomology groups of~$\cN_{\Pi/X}$ and~$\cN_{\Pi/X}(-2)$ vanish.
 
Let~$\Sigma\subset \Pi$ be a conic.\ 
 {Since~$\dim(\G_1(X)) = 5$ (Lemma~\ref{lem:g1-x56}), 
the scheme~$\G_1(X)$ is smooth at~$[\Sigma]$
if and only if~$h^0(\Sigma, \cN_{\Sigma/X}) = 5$.\ 
The first exact sequences of Proposition~\ref{prop:encapsulated-filtration} shows 
that this is equivalent to the vanishing of~$H^0(\Sigma, \cN_{\Pi/X}\vert_\Sigma)$,
and the second exact sequence of Proposition~\ref{prop:encapsulated-filtration} 
identifies~$H^0(\Sigma, \cN_{\Pi/X}\vert_\Sigma)$ with the kernel of the map}
 \begin{equation*}
\C^{ {\st(\Pi)} + 2} = H^1(\Pi, \cN_{\Pi/X}(-2)) \xrightarrow{\ {}\cdot\Sigma\ } H^1(\Pi, \cN_{\Pi/X}) = \C^{{\st(\Pi)} + 2}.
\end{equation*}
Letting~$[\Sigma]$ vary in~$\P(\Sym^2\!R_3^\vee)$, we see that this map is the fiber at~$[\Sigma]$ of a morphism 
\begin{equation*}
\C^{ {\st(\Pi)} + 2} \otimes \cO_{\P(\Sym^2\!R_3^\vee)}(-1) \lra 
\C^{ \st(\Pi)  + 2} \otimes \cO_{\P(\Sym^2\!R_3^\vee)}
\end{equation*}
of vector bundles.\ Denoting by~$D_\Pi$ its degeneracy locus, 
we obtain
\begin{equation*}
 {\Sing(\G_1(X)) \cap \P(\Sym^2\!R_3^\vee) = D_\Pi.}
\end{equation*}
 Note that~$D_\Pi \ne \P(\Sym^2\!R_3^\vee)$ because, for a general smooth conic~$\Sigma \subset \Pi$, 
we have  {and isomorphism}~$N_{\Pi/X}\vert_\Sigma \cong \cO_{\Sigma}(-1)^{\oplus 2}$ 
by the Grauert--M\"ulich Theorem (see~\cite[Th\'eor\`eme]{H})
and the stability of~$N_{\Pi/X}$.

 {Since~$\G_1(X)$ is Cohen--Macaulay of dimension~5 by Lemma~\ref{lem:g1-x56},
so that~$\P(\Sym^2\!R_3^\vee)$ is a component of~$\G_1(X)$,
we conclude that} another component of~$\G_1(X)$ must intersect~$\P(\Sym^2\!R_3^\vee) $ along~$D_\Pi$.\
By Theorem~\ref{thm:g1-x4}, this must be the main component~$\overline{\G^0_1(X)}$.
 \end{proof}

\end{document}